\newcommand{\mf}[1]{\mathfrak{#1}}
\newcommand{\mbb}[1]{\mathbb{#1}}
\newcommand{\N}{\mathbb{N}}
\newcommand{\Z}{\mathbb{Z}}
\newcommand{\R}{\mathbb{R}}
\newcommand{\T}{\mathbb{T}}
\newcommand{\C}{\mathbb{C}}
\newcommand{\mcO}{\mathcal{O}}
\newcommand{\K}{\mathbb{K}}
\renewcommand{\Im}{\operatorname{Im}}
\DeclareMathOperator*{\supp}{supp}
\DeclareMathOperator*{\dom}{dom}
\DeclareMathOperator{\ev} {ev}
\DeclareMathOperator{\Sym} {Sym}
\DeclareMathOperator{\Spec} {Spec}
\DeclareMathOperator{\Diff} {Diff}
\DeclareMathOperator{\Map} {Map}
\DeclareMathOperator{\Mult}{Mult}
\newcommand{\HolInd}{\mathrm{HolInd}}
\renewcommand{\set}[1]{ \left\{ \,#1\,\right\}}
\newcommand{\olE}{\overline{E \vphantom{\widehat{a}}}}
\newcommand{\bcdot}{} 
\DeclareRobustCommand\bcdot{%
	\mathord{\mathpalette\bcdot@{0.5}}%
}
\newcommand{\bcdot@}[2]{%
	\vcenter{\hbox{\scalebox{#2}{$\m@th#1\bullet$}}}%
}
\newcommand{\squotes}[1]{`#1'}
\newcommand{\dquotes}[1]{``#1''}
\newcommand{\mrm}[1]{\mathrm{#1}}
\newcommand{\mc}{\mathcal}
\renewcommand{\mf}{\mathfrak}
\renewcommand{\sl}{\mathfrak{sl}}
\renewcommand{\a}{\mathfrak{a}}
\renewcommand{\b}{\mathfrak{b}}
\newcommand{\g}{\mathfrak{g}}
\renewcommand{\t}{\mathfrak{t}}
\renewcommand{\k}{\mathfrak{k}}
\newcommand{\h}{\mathfrak{h}}
\newcommand{\n}{\mathfrak{n}}
\newcommand{\s}{\mathfrak{s}}
\renewcommand{\L}{\mathcal{L}}
\renewcommand{\H}{\mathcal{H}}
\newcommand{\M}{\mathcal{M}}
\newcommand{\A}{\mathcal{A}}
\newcommand{\B}{\mathcal{B}}
\newcommand{\mcD}{\mathcal{D}}
\newcommand{\F}{\mathcal{F}}
\newcommand{\U}{\mathrm{U}}
\renewcommand{\O}{\mathcal{O}}
\renewcommand{\P}{\mathrm{P}}
\newcommand{\p}{\mathfrak{p}}
\newcommand{\Heis}{\textrm{Heis}}
\newcommand{\res}{\textrm{res}}
\newcommand{\ad}{\textrm{ad}}
\newcommand{\Ad}{\textrm{Ad}}
\renewcommand{\S}{\mathcal{S}}
\newcommand{\restr}[2]{\left. #1\right|_{#2}}
\DeclareMathOperator*{\st}{\; : \;}
\newcommand{\Span}{\textrm{Span}}
\newcommand{\pos}{\textrm{pos}}
\newcommand{\eff}{\textrm{eff}}
\newcommand{\GL}{\textrm{GL}}
\newcommand{\Gau}{\textrm{Gau}}
\newcommand{\gau}{\mathfrak{gau}}
\newcommand{\HSp}{\mathrm{HSp}}
\newcommand{\Sp}{\mathrm{Sp}}
\newcommand{\SU}{\mathrm{SU}}
\newcommand{\SL}{\mathrm{SL}}
\renewcommand{\sp}{\mathfrak{sp}}
\newcommand{\Aut}{\textrm{Aut}}
\newcommand{\id}{\textrm{id}}
\theoremstyle{plain}
\newtheorem{theorem}{Theorem}[subsection]
\theoremstyle{definition}
\newtheorem{definition}[theorem]{Definition}
\theoremstyle{plain}
\newtheorem{lemma}[theorem]{Lemma}
\newtheorem{proposition}[theorem]{Proposition}
\newtheorem{corollary}[theorem]{Corollary}
\theoremstyle{plain}
\theoremstyle{remark}
\newtheorem{remark}[theorem]{Remark}
\theoremstyle{definition}
\newtheorem{example}[theorem]{Example}
\theoremstyle{problem}
\newtheorem{problem}[theorem]{Problem}
\numberwithin{equation}{section}
\newcommand*{\fancyrefthmlabelprefix}{thm}
  \providecommand*{\frefthmname}{theorem}%
  \providecommand*{\Frefthmname}{Theorem}%
\newcommand*{\fancyreflemlabelprefix}{lem}
  \providecommand*{\freflemname}{lemma}%
  \providecommand*{\Freflemname}{Lemma}%
\newcommand*{\fancyrefproplabelprefix}{prop}
  \providecommand*{\frefpropname}{proposition}%
  \providecommand*{\Frefpropname}{Proposition}%
\newcommand*{\fancyrefcorlabelprefix}{cor}
  \providecommand*{\frefcorname}{corollary}%
  \providecommand*{\Frefcorname}{Corollary}%
\newcommand*{\fancyrefdeflabelprefix}{def}
  \providecommand*{\frefdefname}{definition}%
  \providecommand*{\Frefdefname}{Definition}%
\newcommand*{\fancyrefasmlabelprefix}{asm}
	\providecommand*{\frefasmname}{assumption}%
	\providecommand*{\Frefasmname}{Assumption}%
\newcommand*{\fancyrefremlabelprefix}{rem}
	\providecommand*{\frefremname}{remark}%
	\providecommand*{\Frefremname}{Remark}%
\newcommand*{\fancyrefexlabelprefix}{ex}
	\providecommand*{\frefexname}{example}%
	\providecommand*{\Frefexname}{Example}%
\newcommand*{\fancyrefproblabelprefix}{prob}
	\providecommand*{\frefprobname}{problem}%
	\providecommand*{\Frefprobname}{Problem}%
\title{Holomorphic Induction Beyond the Norm-Continuous Setting,\\ With Applications to Positive Energy Representations}
\date{\today}
\author{Milan Niestijl}
\begin{document}
	\pagenumbering{roman}
	\maketitle

	\begin{abstract}
		\noindent
		We extend the theory of holomorphic induction of unitary representations of a possibly infinite-dimensional Lie group $G$ beyond the setting where the representation being induced is required to be norm-continuous. We allow the group $G$ to be a connected BCH(Baker--Campbell--Hausdorff) Fr\'echet--Lie group. Given a smooth $\R$-action $\alpha$ on $G$, we proceed to show that the corresponding class of so-called positive energy representations is intimately related with holomorphic induction. Assuming that $G$ is regular, we in particular show that if $\rho$ is a unitary ground state representation of $G \rtimes_\alpha \R$ for which the energy-zero subspace $\H_\rho(0)$ admits a dense set of $G$-analytic vectors, then $\restr{\rho}{G}$ is holomorphically induced from the representation of the connected subgroup $H := (G^\alpha)_0$ of $\alpha$-fixed points on $\H_\rho(0)$. As a consequence, we obtain an isomorphism $\B(\H_\rho)^G \cong \B(\H_\rho(0))^H$ between the corresponding commutants. We also find that any two such ground state representations are necessarily unitary equivalent if their energy-zero subspaces are unitarily equivalent as $H$-representations. These results were previously only available under the assumption of norm-continuity of the $H$-representation on $\H_\rho(0)$.
	\end{abstract}
	
	\tableofcontents

	\section{Introduction}~
	\pagenumbering{arabic}	
	
	\noindent
	This paper is concerned with unitary representations of a possibly infinite-dimensional connected Lie group $G$ that is modeled on a locally convex vector space (cf.\ \citep{milnor_inf_lie, neeb_towards_lie}). Let $\alpha : \R \to \Aut(G)$ be a smooth action of $\R$ on $G$. We consider those $G$-representations that extend to a unitary representation $\rho$ of $G \rtimes_\alpha \R$ which is \textit{smooth}, in the sense that it admits a dense set of smooth vectors, and which is of \textit{positive energy}, meaning that the self-adjoint generator $-i \restr{d\over dt}{t=0}\rho(1_G, t)$ of the unitary $1$-parameter group $t\mapsto \rho(1_G, t)$ has non-negative spectrum. \\
	
	\noindent
	For infinite-dimensional Lie groups, a full classification of all irreducible representations is typically not tractable, and even less so for factor representations. The positive energy condition serves to isolate a class of representations that are more susceptible to systematic study. It is also quite natural from a physical perspective, because the Hamiltonian in quantum physics is nearly always required to be a positive self-adjoint operator. It is then no surprise that positive energy representations of Lie groups are abundant in physics literature \citep{Wightman_pct_spin_statistics, Borchers_spec_locality, Borchers_energy_momentum, Haag_loc_qt_ph, Luscher_Mack_glob_conf_inv, Olshanski_inv_cones_hol_discr_series, Segal_Loop_Groups, Segal_unreps_of_some_inf_dim_gps}.\\

	\noindent
	Holomorphic induction has proven to be a particularly effective tool in the study of positive energy representations. Let us first describe the main idea of holomorphic induction in the case where $G$ is finite-dimensional. Let $H := (G^\alpha)_0$ be the connected subgroup of $\alpha$-fixed points in $G$, with Lie algebra $\h = \bm{\mrm{L}}(H)$. A unitary $G$-representation $\rho$ is typically called holomorphically induced from the unitary $H$-representation $\sigma$ on $V_\sigma$ if the homogeneous Hermitian vector bundle $\mbb{V} := G \times_H V_\sigma$ over $G/H$ can be equipped with a $G$-invariant complex-analytic bundle structure, with respect to which the Hilbert space $\H_\rho$ can be $G$-equivariantly embedded into the space of holomorphic sections $\mcO(G/H, \mbb{V})$ of $\mbb{V}$, in such a way that the corresponding point evaluations $\mc{E}_x : \H_\rho \to \mbb{V}_x$ are continuous and satisfy $\mc{E}_x\mc{E}_x^\ast = \id_{\mbb{V}_x}$ for every $x \in G/H$. In particular, these conditions imply that $\H_\rho$ is unitarily equivalent to the $G$-representation on a reproducing kernel Hilbert space, and that $\H_\rho$ contains $V_\sigma$ as an $H$-subrepresentation.\\
	
	\noindent
	An important special case is obtained when $V_\sigma$ is one-dimensional. If $\rho$ is holomorphically induced from $\sigma$, we may identify $V_\sigma$ with a cyclic ray $[v_0]$ in $\H_\rho$, whose $G$-orbit in the projective space $\P(\H_\rho)$ is a complex submanifold. This means that $\rho$ is a so-called \textit{coherent state representation} \citep[Def.\ XV.2.1]{Neeb_book_hol_conv}. In this case, the $G$-homogeneous line bundle $\mbb{V}$ is the pull-back of the tautological line bundle over $\P(\H_\rho)$ along the map $G/H \to \P(\H_\rho), \; gH \mapsto [\rho(g)v_0]$, and elements in the image of the corresponding map $\mbb{V} \to \H_\rho$ are usually called \textit{coherent states}. This is also the setting of the well-known Borel--Weil Theorem \citep[Thm.\ 4.12.5]{Duistermaat_book}. Such representations have been studied extensively \citep{Perelomov_coh_states, Neeb_book_hol_conv, Lisiecki_coh_states_survey}, and are known to be tightly related to highest-weight representations \citep[Def.\ X.2.9, Ch.\ XV]{Neeb_book_hol_conv}. In particular, every unitary highest weight representation of $G$ is a coherent state representation \citep[Prop.\ XV.2.6]{Neeb_book_hol_conv}. The converse is not true. The Schr\"odinger representation of the Heisenberg group $\Heis(\R^2, \omega)$ provides a counterexample \citep[Ex.\ XV.3.5]{Neeb_book_hol_conv}.\\
	
	\noindent
	Holomorphic induction, defined as above, was studied in \citep{Neeb_hol_reps} in the context where $G$ is a Banach--Lie group and where $\sigma$ is \textit{bounded}, meaning that it is continuous with respect to the norm-topology on $\B(V_\sigma)$. Writing $\g$ for the Lie algebra of $G$ and $\g_\C$ for its complexification, invariant complex structures on $G/H$ correspond to closed Lie subalgebras $\b \subseteq \g_\C$ satisfying $\b + \overline{\b} = \g_\C$, $\b \cap \overline{\b} = \h_\C$ and $\Ad_h(\b) \subseteq \b$ for all $h \in H$ \citep[Thm.\ 15]{Beltita_inv_cpls_str} (cf.\ \citep[p.\ 203]{Kirillov_book_representations} for the case where $G$ is finite dimensional). The corresponding $G$-invariant holomorphic bundle structures on $\mbb{V}$ then turn out to be parametrized by extensions of $d\sigma : \h \to \B(V_\sigma)$ to a Lie algebra homomorphism $\chi : \b \to \B(V_\sigma)$ satisfying $\chi(\Ad_h(\xi)) = \sigma(h)\chi(\xi)\sigma(h)^{-1}$ for all $\xi \in \b$ and $h \in H$ \citep[Thm.\ 2.6]{Neeb_hol_reps}, as is to be expected from the finite-dimensional setting \citep[Thm.\ 3.6]{Tirao_Wolf_hom_hol_vb}. The holomorphic structure is used to relate various important properties of the $G$-representation $\rho$ with those of $\sigma$. For example, \citep[Thm.\  3.12]{Neeb_hol_reps} entails that the commutants $\B(\H_\rho)^{G} \cong  \B(V_\sigma)^{H, \chi}$ are isomorphic as von Neumann algebras if $V_\sigma \subseteq \H_\rho$ is invariant under $\B(\H_\rho)^{G}$, which implies in particular that $\rho$ is irreducible, multiplicity-free or of type $\mrm{I}, \mrm{II}$ or $\mrm{III}$ if and only if this is true for $\sigma$ \citep[Cor.\  3.14]{Neeb_hol_reps}. Moreover, \citep[Cor.\ 3.16]{Neeb_hol_reps} states that there is up to unitary equivalence at most one unitary $G$-representation $\rho$ that is holomorphically induced from a given pair $(\sigma, \chi)$. The relation between holomorphically induced representations and the positive energy condition is then explained by \citep[Thm.\ 4.12, 4.14]{Neeb_hol_reps}, which essentially state that in the above context, and under suitable assumptions, holomorphically induced representations correspond to so-called \textit{semibounded} ones, the semiboundedness condition being a \squotes{stable} and stronger version of the positive energy condition (cf.\ \citep{neeb_semibounded_inv_cones}). These observations suggest that the class of holomorphically induced representations may well admit a fruitful classification theory of its factor representations. This line of reasoning was pursued in \citep[Thm.\ 5.4, 5.10]{Neeb_semibounded_hilbert_loop} and \citep[Thm.\ 6.1, 7.3, 8.1]{Neeb_semibounded_hermitian}, resulting in a classification of the irreducible semibounded unitary representations of certain double extensions of Hilbert Loop groups and of hermitian Lie groups corresponding to infinite-dimensional irreducible symmetric spaces. \\

	\noindent
	In \citep[Appendix C]{Neeb_semibounded_hilbert_loop}, the theory of holomorphic induction was further developed, allowing $G$ to be a connected BCH Fr\'echet--Lie group, under certain additional assumptions. (Recall that $G$ is BCH if it is real-analytic and has an analytic exponential map which is a local diffeomorphism in $0 \in \g$.) Still, $\sigma$ was required to be norm-continuous. Let us mention that a particular and well-known special case of such a situation had already appeared in the study of smooth positive energy representations of loop groups. In fact, these had been completely classified using holomorphic induction \citep{Segal_Loop_Groups} (cf.\ \cite{Neeb_borel_weil_loop_gps}). \\

	\noindent
	Still, the assumption of norm-continuity of $\sigma$ is too restrictive in numerous examples, some of which we encounter in \Fref{sec: examples} below. It is typically only suitable for describing the class of semibounded unitary representations of $G$. In order to obtain a theory that can be used to describe the possibly larger class of all positive energy representations, one must go beyond the norm-continuity of $\sigma$. \\

	\noindent
	The purpose of this paper is to remove this assumption of norm-continuity of the representation $\sigma$ being induced, whilst still allowing $G$ to be a connected BCH Fr\'echet--Lie group. A main difficulty in this direction is that of equipping the homogeneous vector bundle $G \times_H V_\sigma$ with a $G$-invariant complex-analytic bundle structure. The proof of \citep[Thm.\  2.6]{Neeb_hol_reps} breaks down beyond the norm-continuous setting, so a new approach is required. \\

	\noindent
	We provide two possible solutions to this problem. As in \citep[Appendix C]{Neeb_semibounded_hilbert_loop}, we assume that $\g_\C$ admits a triangular decomposition of the form $\g_\C = \n_- \oplus \h_\C \oplus \n_+$, where $\n_\pm$ and $\h_\C$ are closed Lie subalgebras of $\g_\C$ satisfying $\overline{\n_\pm} \subseteq \n_\mp$, and where $\b = \h_\C \oplus \n_-$. In the first, which we call the \textit{general approach}, we avoid specifying a complex-analytic vector bundle altogether. Instead we replace the space of holomorphic sections by a suitable subspace $C^\omega(G, V_\sigma)^{H, \chi}$ of the space of real-analytic $H$-equivariant maps $C^\omega(G, V_\sigma)^{H}$, defined directly in terms of an extension $\chi : \b \to \L(\mcD)$ of $d\sigma$ to $\b$ with some domain $\mcD \subseteq V_\sigma^\omega$ consisting of analytic vectors. This also avoids the need for a $G$-invariant complex structure on the homogeneous space $G/H$. In the second, which we call the \textit{geometric approach}, we define a stronger notion of holomorphic induction. In this case, $\H_\rho^\infty$ actually embeds into a space of holomorphic mappings on a homogeneous vector bundle. It therefore requires complex geometry. A significant drawback of this approach is that it requires a dense set of so-called b-strongly-entire vectors, whose availability is usually not known, unless $G$ happens to be finite-dimensional, in which case it is completely understood by the results of \citep{Goodman_analytic_entire_vectors} and \citep{Penney_holomorphic_extensions}, see also \Fref{thm: hol_extensions_of_reps} below.\\
	
	\noindent
	Let us also mention that this paper does not complete the story of holomorphic induction. The developed theory still excludes Fr\'echet--Lie groups that are not BCH, such as the Virasoro group. Yet, it is known that holomorphic induction can be used to obtain a complete classification of the positive energy representations of the Virasoro group \citep{Neeb_Salmasian_Virasoro_pe}. Nevertheless, the present paper makes substantial progress towards a more complete understanding of holomorphic induction in the infinite-dimensional context. In relation to positive energy representations, progress was made in a different direction in \citep{neeb_russo_ground_state_top}, where the class of ground state representations is studied in the setting of topological groups.\\
	
	\subsubsection*{Structure of the paper}
	
	\begin{itemize}
		\item In \Fref{sec: preliminaries_holind}, we first recall some preliminaries regarding analytic functions on locally convex spaces. We proceed to define smooth, analytic and strongly-entire representations, which are increasingly regular. We also recall some important results related to positive energy and ground state representations.
		\item We proceed in \Fref{sec: entire_vectors} to define and study the spaces $\H_\rho^{\mcO}$ and $\H_\rho^{\mcO_b}$ of so-called strongly-entire and b-strongly-entire vectors, respectively. We equip these spaces with a locally convex topology, and extend the results of \citep{Goodman_analytic_entire_vectors} from the setting of finite-dimensional Lie groups to the present one, where $G$ is allowed to be infinite-dimensional. In particular, if $G_\C$ is a complex $1$-connected regular BCH Fr\'echet--Lie group with Lie algebra $\g_\C$, we obtain that both $\H_\rho^{\mcO}$ and $\H_\rho^{\mcO_b}$ carry a representation of $G_\C$ that has holomorphic orbit maps. The space $\H_\rho^{\mcO_b}$ plays an important role in the geometric approach to holomorphic induction.
		\item In \Fref{sec: hol_induced} we present the general approach towards holomorphic induction. After determining a useful equivalent formulation, we characterize the inducibility of pairs $(\sigma, \chi)$ in terms of positive definite functions on $G$, which leads to the uniqueness of the holomorphically induced representation up to unitary equivalence. We then proceed to show that there is an isomorphism of von Neumann algebras $\B(\H_\rho)^G \cong \B(V_\sigma)^{H, \chi}$ between the commutants, provided that $V_\sigma \subseteq \H_\rho$ is invariant under $\B(\H_\rho)^G$, in complete analogy with the previously described norm-continuous setting. We also briefly discuss holomorphic induction in stages.
		\item After equipping the $G$-homogeneous vector bundle $\mbb{V}_{\sigma}:= G \times_H V_\sigma^{\mcO_b}$ with a complex-analytic bundle structure, using a suitable extension $\chi$ of $d\sigma$ with domain $V_\sigma^{\mcO_b}$ and under certain assumptions, we define in \Fref{sec: geom_hol_ind} the geometric notion of holomorphically induced representations. We also compare the geometric notion to the one presented in \Fref{sec: hol_induced}.
		\item In relating holomorphic induction with the positive energy condition, we shall have need for a suitably general notion of Arveson spectral subspaces. We therefore generalize in \Fref{sec: Arveson_spectral_theory} the results of \citep[Sec.\ A.3]{KH_Zellner_inv_smooth_vectors} and \citep[Sec.\ A.2]{Neeb_hol_reps} to the level of generality needed in the next section.
		\item In \Fref{sec: pe_and_hol_ind} we study the relation between holomorphic induction and the positive energy condition, under the additional assumption that $G$ is regular. In particular, we show that if $\rho$ is a unitary ground state representation of $G \rtimes_\alpha \R$ for which the energy-zero subspace $\H_\rho(0)$ admits a dense set of $G$-analytic vectors, then $\restr{\rho}{G}$ is holomorphically induced from the $H$-representation on $\H_\rho(0)$. As a consequence, we obtain an isomorphism $\B(\H_\rho)^G \cong \B(\H_\rho(0))^H$ of von Neumann algebras between the corresponding commutants. We also find that any two such ground state representations are necessarily unitary equivalent if their energy-zero subspaces are unitarily equivalent as $H$-representations.
		\item In \Fref{sec: examples}, we consider numerous interesting examples of unitary representations that are holomorphically induced from representations that are not norm-continuous.
	\end{itemize}
	
	\subsubsection*{Acknowledgments:}
	This research is supported by the NWO grant 639.032.734 \dquotes{Cohomology and representation theory of infinite-dimensional Lie groups}. I would like to thank my PhD supervisor Bas Janssens for his guidance. I am moreover grateful to Karl-Hermann Neeb, who has carefully read an earlier version of this manuscript and given various suggestions for improvement. The conversations with Karl-Hermann Neeb were also enlightening.

	\section{Preliminaries}\label{sec: preliminaries_holind}
	\subsection{Analytic functions on locally convex vector spaces}\label{sec: analytic_fns}
	
	\noindent
	Let us recall some definitions and properties of analytic functions between locally convex vector spaces. The main references are \citep{Bochnak_Siciak_1}, \citep{Bochnak_Siciak_2} and \citep{Glockner_inf_dim_lie_without_completeness}. Throughout the following, we fix locally convex vector spaces $E$ and $F$ over the field $\mbb{K}$ that both are complete and Hausdorff, where $\K$ is either $\R$ or $\C$. Define $$\Delta_k : E \to E^k, \; \Delta_k(h) = (h, \ldots, h).$$
	
	\subsubsection{Homogeneous polynomials}

	\begin{definition}
		Suppose $U \subseteq E$ is open and $f \in C^\infty(U, F)$. For $x \in U$ and $k\in \N$, define $\delta^0_x(f):E \to F$ and $\delta^k_x(f) : E \to F$ by $\delta^0_x(f)(v) := f(x)$ and $\delta^k_x(f)(v) := d^kf(x; v, \ldots, v)$.
	\end{definition}
	
	\begin{definition}
		Let $k \in \N$. A map $f : E \to F$ is called a \textit{homogeneous polynomial of degree $k$} if there exists a $k$-linear symmetric map $\widetilde{f} : E^k \to F$ such that $f = \widetilde{f} \circ \Delta_k$. Let $P^k(E, F)$ denote the space of continuous homogeneous polynomials $E \to F$ of degree $k$. For $k = 0$, we set $P^0(E, F) := F$.
	\end{definition}
	
	\noindent
	Set $E^0 := \mbb{K}$. For $k \in \N_{\geq 0}$, we write $\Mult(E^k, F)$ for the space of continuous $k$-linear maps $E^k \to F$, equipped with the topology of uniform convergence on products of compact sets in $E$. For the case $k=1$, we also write $\B(E, F) := \Mult(E, F)$. Let $\Sym^k(E, F) \subseteq \Mult(E^k, F)$ denote the closed subspace of continuous symmetric $k$-linear maps $E^k \to F$. Let $E \widehat{\otimes} F$ denote the completed projective tensor product of $E$ and $F$ \citep[Def.\ 43.2, 43.5]{Treves_tvs}. Define $E^{\widehat{\otimes}k} := E \widehat{\otimes} \cdots \widehat{\otimes} E \; (k \text{ times})$. The topology on $E^{\widehat{\otimes}k}$ is defined by the seminorms $q_1\otimes \cdots \otimes q_k$, where each $q_i$ is a continuous seminorms on $E$, see also \citep[Def.\ 43.3]{Treves_tvs}. On algebraic tensors $t \in E^{\otimes k}$, this seminorm is given by
	\begin{equation}\label{eq: seminorms_on_proj_tensors}
		(q_1 \otimes \cdots \otimes q_k)(t) := \inf\set{ \sum_j \prod_{i=1}^kq_i(\xi_i^{(j)}) \st t = \sum_{j}\xi_1^{(j)}\otimes \cdots \otimes \xi_k^{(j)},  \text{ with } \xi_i^{(j)} \in E}.
	\end{equation}
	\noindent
	On simple tensors we have $(q_1 \otimes \cdots \otimes q_k)(\xi_1\otimes \cdots \otimes \xi_k) = \prod_{i=1}^k q_i(\xi_i)$, where $\xi_i \in E$ \citep[Prop.\ 43.1]{Treves_tvs}. 
	
	\begin{proposition}[{\citep[Prop.\ 43.4, Cor.\ 3 on p.\ 465]{Treves_tvs}}]\label{prop: univ_property_proj_tensors}~\\
		There is a canonical linear isomorphism $\Mult(E^k, F) \cong \B(E^{\widehat{\otimes}k}, F)$. It is a homeomorphism if $E$ is Fr\'echet.
	\end{proposition}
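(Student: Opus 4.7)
The plan is to construct the isomorphism from the universal property of the algebraic tensor product together with a continuity estimate using the seminorm formula \eqref{eq: seminorms_on_proj_tensors}, and then handle the two topologies as a separate matter. The main obstacle will be the homeomorphism claim in the Fr\'echet case, which requires appealing to a nontrivial theorem of Grothendieck on the structure of compact subsets of projective tensor products.

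First I would build the bijection. Given $\phi \in \Mult(E^k, F)$, the universal property of the algebraic tensor product yields a unique linear map $\widetilde{\phi} : E^{\otimes k} \to F$ with $\widetilde{\phi}(\xi_1 \otimes \cdots \otimes \xi_k) = \phi(\xi_1, \ldots, \xi_k)$. To extend $\widetilde{\phi}$ to $E^{\widehat{\otimes}k}$, I would show it is continuous on $E^{\otimes k}$: continuity of $\phi$ furnishes, for each continuous seminorm $p$ on $F$, continuous seminorms $q_1, \ldots, q_k$ on $E$ with $p(\phi(\xi_1, \ldots, \xi_k)) \leq \prod_{i=1}^k q_i(\xi_i)$. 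Applying the triangle inequality to a representation $t = \sum_j \xi_1^{(j)} \otimes \cdots \otimes \xi_k^{(j)}$ and taking the infimum over all such representations yields $p(\widetilde{\phi}(t)) \leq (q_1 \otimes \cdots \otimes q_k)(t)$ directly from \eqref{eq: seminorms_on_proj_tensors}. Since $F$ is complete and Hausdorff, $\widetilde{\phi}$ extends uniquely to a continuous linear map on $E^{\widehat{\otimes}k}$. Conversely, any $\psi \in \B(E^{\widehat{\otimes}k}, F)$ restricts along the canonical continuous $k$-linear map $\iota_k : E^k \to E^{\widehat{\otimes}k}$, $(\xi_1, \ldots, \xi_k) \mapsto \xi_1 \otimes \cdots \otimes \xi_k$, to give $\psi \circ \iota_k \in \Mult(E^k, F)$. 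Uniqueness of the linear extension shows these two assignments are mutually inverse, and both are visibly linear.

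For the topological claim, continuity of the forward map $\phi \mapsto \widetilde{\phi}$ is routine: whenever $K_1, \ldots, K_k \subseteq E$ are compact, $\iota_k(K_1 \times \cdots \times K_k) \subseteq E^{\widehat{\otimes}k}$ is compact, so the seminorm on $\B(E^{\widehat{\otimes}k}, F)$ associated to this compact set is dominated by the seminorm of $\Mult(E^k, F)$ associated to $K_1 \times \cdots \times K_k$.

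The hard part will be continuity of the inverse map, which is where I expect the Fr\'echet hypothesis to be needed. Here I would invoke Grothendieck's theorem that in a Fr\'echet space every compact subset of $E \widehat{\otimes} E$ is contained in the closed absolutely convex hull of $K_1 \otimes K_2$ for some compact $K_1, K_2 \subseteq E$. Iterating this inductively across the $k$-fold projective tensor product, every compact $K \subseteq E^{\widehat{\otimes}k}$ lies in the closed absolutely convex hull of $\iota_k(K_1 \times \cdots \times K_k)$ for suitable compacts $K_i \subseteq E$. Since a continuous linear map is bounded on a set precisely when it is bounded on its closed absolutely convex hull, uniform convergence of a net of linear maps on compacts in $E^{\widehat{\otimes}k}$ is then controlled by uniform convergence of the associated multilinear maps on products of compacts in $E^k$, giving continuity of $\psi \mapsto \psi \circ \iota_k$ and completing the homeomorphism claim.
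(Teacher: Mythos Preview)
The paper does not actually supply a proof of this proposition: it is stated with a bare citation to \citep[Prop.\ 43.4, Cor.\ 3 on p.\ 465]{Treves_tvs} and nothing more. So there is no argument in the paper to compare yours against; you have simply done more than the paper does. Your sketch of the bijection via the universal property and the seminorm estimate from \eqref{eq: seminorms_on_proj_tensors} is correct, and invoking Grothendieck's structure theorem for compact subsets of projective tensor products of Fr\'echet spaces is the right tool for the nontrivial direction of the homeomorphism.

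There is, however, one genuine slip: you have the two directions of continuity reversed. Continuity of the \emph{inverse} map $\psi \mapsto \psi \circ \iota_k$ is the routine one---if $K_1,\ldots,K_k \subseteq E$ are compact then $\iota_k(K_1\times\cdots\times K_k)$ is compact in $E^{\widehat{\otimes}k}$, so uniform convergence on compacts of $E^{\widehat{\otimes}k}$ immediately forces uniform convergence on products of compacts in $E^k$. It is continuity of the \emph{forward} map $\phi \mapsto \widetilde{\phi}$ that is delicate: given an arbitrary compact $K \subseteq E^{\widehat{\otimes}k}$, one must control $\sup_{t\in K} p(\widetilde{\phi}(t))$ by a seminorm of $\phi$ on a product of compacts, and this is precisely what Grothendieck's theorem (that $K$ lies in the closed absolutely convex hull of $\iota_k(K_1\times\cdots\times K_k)$ for suitable compact $K_i$) provides. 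Swap the labels and your argument stands.
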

	
	\noindent
	Equip $P^k(E, F)$ with the topology of uniform convergence on compact sets. If $p$ is a continuous seminorm on $F$, $B \subseteq E$ is a subset and $f : E \to F$ is a function, we write $p_B(f) := \sup_{x \in B}p(f(x))$.
	
	\begin{proposition}\label{prop: isom_polys_mult_maps}
		Let $k \in \N_{\geq 0}$. Then $P^k(E, F) \cong \Sym^k(E, F)$ as locally convex vector spaces.
	\end{proposition}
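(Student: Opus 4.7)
The plan is to define the obvious evaluation map
\[
\Phi : \Sym^k(E, F) \to P^k(E, F), \quad \Phi(\widetilde{f}) := \widetilde{f}\circ \Delta_k,
\]
and to show that it is a linear homeomorphism. Linearity and surjectivity are immediate: surjectivity is built into the definition of $P^k(E, F)$. Injectivity will follow from the polarization identity, which I would state and verify in the form
\[
\widetilde{f}(h_1, \ldots, h_k) = \frac{1}{k!}\sum_{S \subseteq \{1, \ldots, k\}}(-1)^{k-|S|}\, f\Bigl(\sum_{i \in S} h_i\Bigr),
\]
valid for any symmetric $k$-linear $\widetilde{f}$ and its diagonal restriction $f$. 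This identity both shows $\Phi$ is injective and gives a concrete formula for $\Phi^{-1}$.

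For continuity of $\Phi$, I would use that for any compact $K \subseteq E$ the image $\Delta_k(K) \subseteq K^k$ is compact, so for every continuous seminorm $p$ on $F$,
\[
p_K(\Phi(\widetilde{f})) \leq p_{K^k}(\widetilde{f}),
\]
which directly compares the defining seminorms of the two topologies.

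For continuity of $\Phi^{-1}$, I would fix continuous seminorms $q_1, \ldots, q_k$ on $E$ and a continuous seminorm $p$ on $F$ and bound $p(\widetilde{f}(h_1, \ldots, h_k))$ using the polarization identity. Given compact sets $K_1, \ldots, K_k \subseteq E$, the set $L := \bigl\{\sum_{i\in S}h_i : S \subseteq \{1, \ldots, k\},\; h_i \in K_i\bigr\}$ is a finite union of continuous images of the compact set $K_1 \times \cdots \times K_k$, hence compact in $E$, and the polarization identity yields
\[
p_{K_1 \times \cdots \times K_k}(\widetilde{f}) \leq \frac{2^k}{k!}\, p_L(f),
\]
which shows $\Phi^{-1}$ is continuous. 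Concluding that $\Phi$ is a homeomorphism of locally convex spaces then only uses that the topology on $\Sym^k(E,F)$ is generated by seminorms of the form $p_{K_1\times\cdots\times K_k}(\cdot)$, which follows from $\Sym^k(E, F) \subseteq \Mult(E^k, F)$ being equipped with the induced topology.

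The only step that needs real care is the continuity of $\Phi^{-1}$: one must check that polarization, which a priori manipulates only algebraic expressions, indeed converts the topology of uniform convergence on compacts in $E$ into the topology of uniform convergence on products of compacts in $E^k$. Once the compactness of $L$ is noted, however, the bound is routine. Everything else is formal.
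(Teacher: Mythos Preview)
Your proposal is correct and follows essentially the same approach as the paper: both define the diagonal-restriction map $\widetilde{f}\mapsto \widetilde{f}\circ\Delta_k$, use the polarization identity for bijectivity, bound $p_K(f)\le p_{K^k}(\widetilde f)$ for one direction, and for the other direction form the compact set of all partial sums $\sum_{i\in S}h_i$ to obtain the $\tfrac{2^k}{k!}$ estimate. The paper even records this last inequality explicitly (its equation used later in the text), so your argument matches it step for step.
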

	\begin{proof}
		If $\widetilde{f} : E^k \to F$ is a symmetric $k$-linear map and $f = \widetilde{f} \circ \Delta_k$ is the corresponding homogeneous polynomial, then $\widetilde{f}$ can be recovered from $f$ using the formula \citep[Thm.\ A]{Bochnak_Siciak_1}:
		\begin{equation}\label{eq: polarization_formula}
			\widetilde{f}(x_1, \ldots, x_k) = {1\over k!}\sum_{\epsilon_1, \ldots, \epsilon_k = 0}^1 (-1)^{k - (\epsilon_1 + \cdots + \epsilon_k)}f(\epsilon_1x_1 + \cdots + \epsilon_k x_k).
		\end{equation}
		This formula moreover shows that $\widetilde{f}$ is continuous if and only if $f$ is so, and there is a linear isomorphism $\Sym^k(E, F) \to P^k(E, F)$ given by $\widetilde{f} \mapsto \widetilde{f} \circ \Delta_k =: f$. It remains to show that this map is also a homeomorphism. Suppose that $f = \widetilde{f} \circ \Delta_k$ for some $\widetilde{f} \in \Sym^k(E, F)$. If $B \subseteq E$ is a compact subset and $p$ is a continuous seminorm on $F$, then $p_B(f) \leq p_{B^k}(\widetilde{f})$. Hence $\Sym^k(E, F) \to P^k(E, F), \widetilde{f} \mapsto f$ is continuous. For the continuity of the inverse, we use \eqref{eq: polarization_formula}, from which it follows that if $B_i \subseteq E$ are compact subsets for $i \in \N$ and $p$ is a continuous seminorm on $F$, then 
		\begin{equation}\label{eq: estimate_symk_cts_in_Pk}
			\sup_{x_i \in B_i} p(\widetilde{f}(x_1, \ldots, x_k)) \leq  {2^k \over k!} p_B(f),
		\end{equation}
		where 
		$$B = \set{ \epsilon_1 x_1 + \cdots + \epsilon_k x_k \st \epsilon_i  \in \{0, 1\}, \; x_i \in B_i \quad \text{ for } i \in \{1, \ldots, k\} },$$
		which is a compact subset of $E$. Consequently the map $f \mapsto \widetilde{f}$ is continuous $P^k(E, F) \to \Sym^k(E, F)$.
	\end{proof}
	
	\noindent
	Define the locally convex space $P(E, F) := \prod_{k=0}^\infty P^k(E, F)$, equipped with the product topology. If $F=\mbb{K}$, we simply write $P^n(E) := P^n(E, \mbb{K})$.

	\subsubsection{Analytic functions}
	
	\noindent
	Let $U \subseteq E$ be open and let $f : U \to F$ be a function.
	
	\begin{definition}\label{def: analytic}~
		\begin{itemize}
			\item Suppose $\mbb{K} = \C$. The function $f : U \to F$ is called \textit{complex-analytic} or \textit{holomorphic} if it is continuous, and for every $x \in U$ there exists a $0$ neighborhood $V$ in $E$ with $x + V \subseteq U$ and functions $f_k \in P^k(E,F)$ for $k \in \N_{\geq 0}$ such that:
			$$ f(x+h) = \sum_{k=0}^\infty f_k(h), \qquad \forall h \in V.$$
			\item Suppose $\mbb{K} = \R$. The function $f : U \to F$ is called \textit{real-analytic} if it extends to some complex-analytic map $f_\C : U_\C \to F_\C$ for some open neighborhood $U_\C$ of $U$ in $E_\C$.
			\item Suppose $\mbb{K} = \C$ and $U = E$. The function $f : E \to F$ is called \textit{entire} if it is continuous and there exist functions $f_k \in P^k(E,F)$ for $k \in \N_{\geq 0}$ such that $f(x) = \sum_{k=0}^\infty f_k(x)$ for all $x \in E$.
		\end{itemize}
	\end{definition}
	
	\begin{remark}\label{rem: different_notions_of_real_analytic}
		The above definition of a real-analytic map differs from the one used in \citep{Bochnak_Siciak_2}, where a function $f : U \to F$ is called real-analytic if it is continuous and for every $x \in U$ there exists a $0$-neighborhood $V$ in $U$ with $x + V \subseteq U$ and homogeneous polynomials $f_k :E \to F$ such that $f(x+h) = \sum_{k=0}^\infty f_k(h)$ holds for all $h \in V$. The two notions are equivalent if $E$ and $F$ are Fr\'echet spaces \citep[Rem.\ 2.9]{Glockner_inf_dim_lie_without_completeness}, \citep[Thm.\ 7.1]{Bochnak_Siciak_2}. 
	\end{remark}
	
	\begin{proposition}[{\citep[Prop.\ 5.1]{Bochnak_Siciak_2}}]\label{prop: normally_convergent_series}~\\
		Suppose $\mbb{K} = \C$. Let $f_k \in P^k(E, F)$ for every $k \in \N_{\geq 0}$. Let $U \subseteq E$ be a $0$-neighborhood s.t.\ $f(h) := \sum_{k}f_k(h)$ is convergent for every $h \in U$. Assume that $f : U \to F$ is continuous at $0\in U$. Then, for every continuous seminorm $p$ on $F$, there exists a $0$-neighborhood $V \subseteq U$ such that $\sum_{k=0}^\infty p_{V}(f_k) < \infty$.
	\end{proposition}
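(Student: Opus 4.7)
The plan is to reduce to a one-variable scalar problem via Hahn--Banach and then apply the classical Cauchy inequalities. First I would use continuity of $f$ at $0$, combined with the fact that $E$ is locally convex, to pick a balanced open $0$-neighborhood $W \subseteq U$ and a constant $M > 0$ with $p(f(h)) \leq M$ for all $h \in W$. Fix any $r > 1$ (for concreteness $r = 2$) and set $V := r^{-1} W$, which is again a balanced $0$-neighborhood. For $h \in V$ and $z \in \C$ with $|z| \leq r$, balancedness of $W$ gives $zh \in W \subseteq U$, so the series
\[
\sum_{k \geq 0} z^k f_k(h) = f(zh)
\]
converges in $F$ and satisfies $p(f(zh)) \leq M$.

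Now for each continuous linear functional $\phi \in F^*$ with $|\phi(\cdot)| \leq p(\cdot)$, applying $\phi$ termwise gives a scalar power series $\sum_k \phi(f_k(h))\, z^k$ that converges pointwise on the closed disc $\{|z| \leq r\}$. Its radius of convergence is therefore at least $r$, so it defines a holomorphic function on $\{|z| < r\}$ equal to $z \mapsto \phi(f(zh))$. Cauchy's inequality for each $r' \in (0, r)$ yields
\[
|\phi(f_k(h))| \leq (r')^{-k}\sup_{|z|=r'}|\phi(f(zh))| \leq M (r')^{-k},
\]
and letting $r' \to r$ gives $|\phi(f_k(h))| \leq M r^{-k}$. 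A standard Hahn--Banach argument supplies, for every $v \in F$, a $\phi_v \in F^*$ with $|\phi_v(\cdot)| \leq p(\cdot)$ and $|\phi_v(v)| = p(v)$, so $p(f_k(h)) = \sup_{|\phi| \leq p}|\phi(f_k(h))| \leq M r^{-k}$. Taking the supremum over $h \in V$ yields $p_V(f_k) \leq M r^{-k}$, and the geometric series gives $\sum_{k \geq 0} p_V(f_k) \leq M/(1 - r^{-1}) < \infty$.

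No step is genuinely difficult; the key conceptual move is passing from the $F$-valued power series on $E$ to an ordinary scalar power series in one complex variable via Hahn--Banach, so that the classical Cauchy inequalities become available. The one technical point to be careful about is arranging the strict inclusion $rV \subseteq W \subseteq U$ for some $r > 1$, using balancedness of $W$, so that the scalar series $\sum_k \phi(f_k(h))z^k$ converges on a disc \emph{strictly} larger than the closed unit disc and thus delivers the geometric decay $r^{-k}$ required for summability of $\sum_k p_V(f_k)$.
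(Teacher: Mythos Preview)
The paper does not supply its own proof of this proposition; it simply cites \citep[Prop.\ 5.1]{Bochnak_Siciak_2}. So there is no in-paper argument to compare against, and your task reduces to whether your proof stands on its own.

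It does. Your argument is the standard one: pick a balanced $W$ on which $p \circ f$ is bounded, shrink by a factor $r > 1$, restrict to complex lines $z \mapsto zh$, pass to scalars via Hahn--Banach, and apply the one-variable Cauchy estimates to get geometric decay $p_V(f_k) \leq M r^{-k}$. Each step is correct in the present setting, since $E$ and $F$ are assumed Hausdorff and locally convex (so Hahn--Banach gives, for each $v \in F$, a functional $\phi$ with $|\phi| \leq p$ and $\phi(v) = p(v)$), and the balancedness of $W$ ensures $zh \in W$ whenever $h \in r^{-1}W$ and $|z| \leq r$. The only cosmetic point: you do not really need to take a limit $r' \to r$ in the Cauchy inequality, since you already have the bound $|\phi(f(zh))| \leq M$ on the full closed disc $|z| \leq r$ and the series converges there; but the argument as written is fine.
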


	\begin{lemma}\label{lem: uniform_absol_conv_on_bdd_for_entire_fns}
		Suppose $\mbb{K} = \C$. Let $f_n \in P^n(E,F)$ for every $n \in \N_{\geq 0}$. Consider the following assertions:
		\begin{enumerate}
			\item $f := \sum_{n=0}^\infty f_n$ defines an entire function $E \to F$.
			\item $\sum_{n=0}^\infty p_B(f_n) < \infty$ for any compact subset $B \subseteq E$ and continuous seminorm $p$ on $F$.
		\end{enumerate}
		We have that $\mrm{(1)} \implies \mrm{(2)}$. If $E$ is a Fr\'echet space, then also $\mrm{(2)} \implies \mrm{(1)}$ holds true.
	\end{lemma}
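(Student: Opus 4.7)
The plan is to handle the two directions with rather different tools: $\mrm{(1)} \implies \mrm{(2)}$ via Cauchy-type coefficient estimates applied to the one-dimensional slices $\lambda \mapsto f(\lambda x)$, and $\mrm{(2)} \implies \mrm{(1)}$ via the sequential characterization of continuity on a metrizable space.

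For $\mrm{(1)} \implies \mrm{(2)}$, fix a compact subset $B \subseteq E$ and a continuous seminorm $p$ on $F$. For each $x \in E$ the map $g_x : \C \to F$, $g_x(\lambda) := f(\lambda x) = \sum_{n=0}^\infty \lambda^n f_n(x)$, is an $F$-valued entire function of one complex variable (it is continuous as a composition of $f$ with $\lambda \mapsto \lambda x$, and its series representation comes from the homogeneity of each $f_n$). Standard Cauchy-type coefficient estimates for such $F$-valued holomorphic functions (valid because $F$ is complete, Hausdorff and locally convex, and readily derived by pairing against continuous linear functionals) yield
$$ p\bigl(f_n(x)\bigr) \leq R^{-n} \sup_{|\lambda| = R} p\bigl(f(\lambda x)\bigr), \qquad \forall R > 0, \; n \in \N_{\geq 0}.$$
The set $K_R := \set{ \lambda x \st |\lambda| \leq R,\; x \in B }$ is the continuous image of $\overline{D_R} \times B$ under scalar multiplication, hence compact, and $M := \sup_{y \in K_R} p(f(y)) < \infty$ by continuity of $p \circ f$. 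Thus $p_B(f_n) \leq M R^{-n}$, and choosing any $R > 1$ yields $\sum_{n=0}^\infty p_B(f_n) \leq MR/(R-1) < \infty$.

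For $\mrm{(2)} \implies \mrm{(1)}$, assume $E$ is Fr\'echet. Assumption $\mrm{(2)}$ applied to singletons gives pointwise absolute convergence of $\sum_n f_n$ to a function $f : E \to F$, and applied to an arbitrary compact $B \subseteq E$ makes the partial sums converge uniformly on $B$; since each $f_n$ is continuous, the restriction $f|_B$ is continuous for every compact $B$. Because $E$ is metrizable, continuity on $E$ can be tested sequentially: for any convergent sequence $x_m \to x_0$ in $E$, the set $\set{x_m \st m \in \N} \cup \set{x_0}$ is compact, so $f(x_m) \to f(x_0)$. Together with the everywhere-convergent homogeneous expansion, this shows that $f$ is entire in the sense of \Fref{def: analytic}. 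The main technical step is the Cauchy estimate in $\mrm{(1)} \implies \mrm{(2)}$; everything else is routine.
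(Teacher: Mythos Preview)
Your proof is correct and follows essentially the same approach as the paper: for $\mrm{(1)}\Rightarrow\mrm{(2)}$ both you and the paper reduce to one-variable Cauchy estimates on the slices $\lambda\mapsto f(\lambda x)$ and exploit compactness of the scaled balanced set (your $K_R$ is exactly what the paper obtains after replacing $B$ by its balanced hull and scaling); for $\mrm{(2)}\Rightarrow\mrm{(1)}$ both argue that uniform convergence on compacta plus metrizability of $E$ gives sequential continuity of $f$, which is the ``standard $3\epsilon$ argument'' the paper alludes to.
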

	\begin{proof}
		Assume that $f = \sum_{n=0}^\infty f_n$ defines an entire function $E \to F$. Let $B \subseteq E$ be a compact subset and let $p$ be a continuous seminorm on $F$. We may assume that $B$ is balanced. As $f$ is continuous, $f(2B) \subseteq F$ is compact and hence bounded. So $M_p := p_{2B}(f) < \infty$. As $f$ is entire, we have $f(zx) = \sum_{n=0}^\infty f_n(x)z^n$ for any $x \in E$ and $z \in \C$. Let $x \in 2B$. Then also $zx \in 2B$ for any $z\in \C$ with $|z| \leq 1$, as $B$ is balanced. Applying \citep[Cor.\ 3.2]{Bochnak_Siciak_2} to the holomorphic map $g : \C \to F, \; g(z) := f(zx)$, we find that $f_n(x) = {1\over 2\pi i} \int_{|z|=1} {g(z) \over z^{n+1}}dz$ and moreover that
		$$p(f_n(x)) \leq  \sup_{|z|=1}p(g(z)) \leq p_{2B}(f) = M_p, \qquad \forall n \in \N_{\geq 0}.$$
		Hence $p_{2B}(f_n)\leq M_p$, so that $p_B(f_n) \leq M_p 2^{-n}$ for all $n \in \N_{\geq 0}$. Thus $\sum_{n=0}^\infty p_B(f_n) \leq M_p \sum_{n=0}^\infty 2^{-n} < \infty$.\\
		
		\noindent
		Suppose that $E$ is a Fr\'echet space. Assume that $(2)$ holds true. Then in particular the series $\sum_{n=0}^\infty f_n(x)$ is convergent for any $x \in E$. So $f := \sum_{n=0}^\infty f_n$ defines a function $E \to F$. To show $f$ is entire, it remains only to show that it is continuous. The condition $(2)$ implies that $s_N \to f$ uniformly on compact subsets, where $s_N := \sum_{n=0}^N f_n$ for any $N \in \N$. As $s_N$ is continuous for every $N \in \N$ and $E$ is Fr\'echet by assumption, this implies that $f$ is continuous (by a standard $3\epsilon$ argument).
	\end{proof}
	
	
	\begin{proposition}[{\citep[Prop.\ 2.4]{Glockner_inf_dim_lie_without_completeness}}]\label{prop: analytic_implies_smooth}~\\
		Every real- or complex-analytic map is smooth. 
	\end{proposition}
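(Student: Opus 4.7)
The plan is to reduce the statement to the complex-analytic case and then establish smoothness by differentiating the local power series term by term, using Cauchy's integral formula to produce estimates strong enough to justify interchanging sum and derivative.

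For the reduction, suppose $\K = \R$ and $f : U \to F$ is real-analytic. By Definition \ref{def: analytic} there is a complex-analytic extension $f_\C : U_\C \to F_\C$ on an open neighborhood $U_\C$ of $U$ in $E_\C$. Once complex-analytic maps between complete Hausdorff complex locally convex spaces are known to be smooth, $f_\C$ is $C^\infty$ as a complex map and hence a fortiori as a real map, and since $E \subseteq E_\C$ and $F \subseteq F_\C$ are closed real subspaces, the restriction $f = f_\C|_U$ inherits real-smoothness. So it suffices to handle the case $\K = \C$.

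Fix $f : U \to F$ complex-analytic and $x_0 \in U$. Choose a balanced open $0$-neighborhood $V \subseteq E$ with $x_0 + V \subseteq U$ and $f_k \in P^k(E, F)$ satisfying $f(x_0 + h) = \sum_{k=0}^\infty f_k(h)$ for $h \in V$. After possibly shrinking $V$, Proposition \ref{prop: normally_convergent_series} furnishes $\sum_{k=0}^\infty p_V(f_k) < \infty$ for every continuous seminorm $p$ on $F$. By Proposition \ref{prop: isom_polys_mult_maps} each $f_k$ is of the form $\widetilde{f}_k \circ \Delta_k$ with $\widetilde{f}_k \in \Sym^k(E, F)$, and continuous multilinear maps between complete Hausdorff locally convex spaces are smooth, so each $f_k$ is smooth with directional derivatives computable by the Leibniz rule. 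For $h \in \tfrac{1}{2}V$ and $v \in E$, the map $\phi(z) := f(x_0 + h + zv)$ is holomorphic on a disk $|z| < r$ (with $r > 0$ chosen so that $h + zv \in V$ for $|z| \leq r$), and Cauchy's integral formula gives
\begin{equation*}
df(x_0 + h;\, v) \;=\; \phi'(0) \;=\; \frac{1}{2\pi i}\oint_{|z|=r}\frac{f(x_0 + h + zv)}{z^2}\,dz.
\end{equation*}
Substituting the series and interchanging sum and contour integral, justified by normal convergence on the compact circle $|z|=r$, produces the termwise-differentiated series $df(x_0 + h; v) = \sum_{k=0}^\infty df_k(h; v)$.

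The same device, applied to the holomorphic map $(z_1, \dots, z_j) \mapsto f(x_0 + h + z_1 v_1 + \cdots + z_j v_j)$, expresses each mixed directional derivative $d^j f$ as an iterated contour integral, and the Cauchy estimates force the resulting series of $j$-th order symmetric $j$-linear maps again to converge normally on a slightly smaller neighborhood of $x_0$. Iterating in $j$ delivers continuous directional derivatives of $f$ to all orders on $x_0 + \tfrac{1}{2}V$, which is smoothness in the locally convex sense. The main technical obstacle is the careful bookkeeping: at each differentiation step one must shrink the neighborhood on which normal convergence persists and verify joint continuity of $d^j f$ as a function of the base point and all direction vectors; both follow from the Cauchy estimates together with Lemma \ref{lem: uniform_absol_conv_on_bdd_for_entire_fns}, but the details are non-trivial and we refer to \citep[Prop.\ 2.4]{Glockner_inf_dim_lie_without_completeness} for the full verification.
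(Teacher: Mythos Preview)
The paper does not actually prove this proposition: it is stated with a citation to \citep[Prop.\ 2.4]{Glockner_inf_dim_lie_without_completeness} and no proof is given. Your proposal therefore goes further than the paper itself, and since you likewise defer to the same reference for the full verification, there is nothing to compare at the level of strategy.

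Your sketch is a reasonable outline of the standard argument, but one point deserves care. You write that ``after possibly shrinking $V$, Proposition~\ref{prop: normally_convergent_series} furnishes $\sum_{k=0}^\infty p_V(f_k) < \infty$ for every continuous seminorm $p$ on $F$.'' That proposition only produces, for each fixed seminorm $p$, a $0$-neighborhood $V_p$ on which the series is $p$-normally convergent; there is no single $V$ that works simultaneously for all $p$ unless $F$ is normable. This is not fatal---smoothness is a local property and one can check each $d^j f$ seminorm by seminorm---but the phrasing suggests a uniformity that is not available. The rest of the sketch (reduction to the complex case via the definition of real-analyticity, Cauchy integrals to justify termwise differentiation, iterated contour integrals for higher derivatives) is correct in spirit, and your deferral to the cited source for the bookkeeping is appropriate.
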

	
	\begin{proposition}[{\citep[Prop.\ 5.5]{Bochnak_Siciak_2}}]\label{prop: Taylor_expansion}~\\
		Suppose $\mbb{K} = \C$. If $f : U \to F$ is complex-analytic, then $f(x+h) = \sum_{k=0}^\infty {1\over k!}\delta_x^k(f)(h)$ for all $h \in V$, where $V$ is the maximal balanced $0$-neighborhood of $E$ such that $x + V \subseteq U$. 
	\end{proposition}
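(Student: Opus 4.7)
The plan is to reduce the statement to one-variable complex analysis by restricting $f$ to complex lines through $x$, after first identifying the coefficients of the local expansion with the normalized Gâteaux derivatives $\tfrac{1}{k!}\delta_x^k(f)$.

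First, by the definition of complex-analyticity, there exists a balanced $0$-neighborhood $V_0 \subseteq V$ and homogeneous polynomials $f_k \in P^k(E,F)$ with $f(x+h) = \sum_{k=0}^\infty f_k(h)$ for all $h \in V_0$. I would apply \Fref{prop: normally_convergent_series} to deduce that, after possibly shrinking $V_0$, this series converges normally with respect to any continuous seminorm $p$ on $F$. This normal convergence is exactly what is needed to differentiate the series term-by-term: applying $\delta^k_0(\cdot)(v)$ (which is a finite-order directional derivative at $0$) to both sides and interchanging with the sum, one obtains $\delta_x^k(f)(v) = \sum_n \delta_0^k(f_n)(v)$. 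Since $f_n$ is a homogeneous polynomial of degree $n$, a direct computation (using the polarization formula \eqref{eq: polarization_formula} and the chain rule applied to the scaling $t \mapsto tv$) shows $\delta_0^k(f_n) = 0$ for $n \neq k$ and $\delta_0^k(f_k) = k! \, f_k$. Hence $f_k = \tfrac{1}{k!}\delta_x^k(f)$ on $V_0$.

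Next, to extend the expansion from $V_0$ to the maximal balanced neighborhood $V$, I would fix $h \in V$ and consider the auxiliary map $g : \Omega \to F$ defined by $g(z) := f(x + zh)$, where $\Omega := \{z \in \C : x + zh \in U\}$ is open in $\C$. Because $V$ is balanced and $x + V \subseteq U$, the set $\Omega$ contains the closed unit disk, so $g$ is an $F$-valued holomorphic function on an open neighborhood of $\overline{\D}$. Vector-valued one-variable complex analysis (the Cauchy integral formula for $F$-valued holomorphic functions on $\C$, valid since $F$ is complete Hausdorff locally convex) then yields a power series expansion $g(z) = \sum_{k=0}^\infty a_k z^k$ converging on a disk of radius $>1$; the coefficients are $a_k = \tfrac{1}{k!}g^{(k)}(0)$, and the chain rule gives $g^{(k)}(0) = \delta_x^k(f)(h)$. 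Evaluating at $z=1$ produces the desired identity $f(x+h) = \sum_{k=0}^\infty \tfrac{1}{k!}\delta_x^k(f)(h)$.

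The main obstacle is the term-by-term differentiation step used to identify the coefficients $f_k$ with $\tfrac{1}{k!}\delta_x^k(f)$ on $V_0$, which in the infinite-dimensional setting is nontrivial and relies on the normal convergence provided by \Fref{prop: normally_convergent_series}. Once this identification is in hand, the passage from $V_0$ to the full balanced neighborhood $V$ is essentially a formality, since it only invokes the well-established one-variable theory of holomorphic functions with values in a complete locally convex space applied to the restriction of $f$ along the complex line $z \mapsto x + zh$.
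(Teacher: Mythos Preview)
The paper does not give its own proof of this proposition; it is cited directly from \citep[Prop.~5.5]{Bochnak_Siciak_2}. Your argument is correct. The second step alone already yields the full statement: for fixed $h\in V$ the set $\Omega=\{z\in\C: x+zh\in U\}$ is open and contains the compact set $\overline{\D}$, hence contains a disk $\{|z|<1+\epsilon\}$ by compactness of $\overline{\D}$ (a point you leave implicit but which is needed for the radius of convergence to exceed $1$), and the one-variable Cauchy theory for $F$-valued holomorphic functions (as in \citep[Cor.~3.2]{Bochnak_Siciak_2}, invoked elsewhere in the paper) gives $g(1)=\sum_k \tfrac{1}{k!}g^{(k)}(0)$ with $g^{(k)}(0)=\delta_x^k(f)(h)$ by the chain rule and \Fref{prop: analytic_implies_smooth}. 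Your first step---identifying the local coefficients $f_k$ with $\tfrac{1}{k!}\delta_x^k(f)$ via term-by-term differentiation---is correct but redundant, since the one-variable computation in the second step already produces the coefficients directly and never uses the local expansion $\sum f_k$.
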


	\begin{proposition}[{\citep[Lem.\ 2.5]{Glockner_inf_dim_lie_without_completeness}}]\label{prop: complex_analytic_and_complex_linear_differential}~\\
		Suppose $\mbb{K} = \C$. Then $f$ is complex-analytic if and only if $f$ is smooth and the map $\delta^1_x = df(x; \--) : E \to F$ is complex-linear for every $x \in U$.
	\end{proposition}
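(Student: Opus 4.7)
The forward implication is essentially immediate from results already established in the excerpt. By \Fref{prop: analytic_implies_smooth} every complex-analytic map is smooth, and by \Fref{prop: Taylor_expansion} the local expansion takes the form $f(x+h) = \sum_{k=0}^\infty \tfrac{1}{k!}\delta_x^k(f)(h)$ on a suitable balanced $0$-neighborhood, with $\delta_x^k(f) \in P^k(E,F)$. In particular $\delta_x^1(f) \in P^1(E,F)$, which, because the base field is $\C$, is precisely the space of continuous $\C$-linear maps $E \to F$; hence $df(x;\cdot)$ is $\C$-linear.

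For the converse I would reduce first to one complex variable and then glue via a Hartogs-type principle. Fix $x_0 \in U$ and $h \in E$, pick an open $0$-neighborhood $\Omega \subseteq \C$ so small that $x_0 + zh \in U$ for every $z \in \Omega$, and define $\gamma : \Omega \to F$ by $\gamma(z) := f(x_0 + zh)$. Smoothness of $f$ together with the chain rule makes $\gamma$ a (Bastiani) smooth map. Writing $z = a + ib$ and using the assumed $\C$-linearity of $df(y;\cdot)$ for $y \in U$, one computes
\begin{equation*}
\partial_a \gamma(z) = df(x_0+zh;\,h), \qquad \partial_b \gamma(z) = df(x_0+zh;\,ih) = i\cdot df(x_0+zh;\,h),
\end{equation*}
so $\gamma$ satisfies the Cauchy--Riemann equation $\partial_b \gamma = i\, \partial_a \gamma$. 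The standard one-variable argument, which works verbatim for maps into a Hausdorff complete locally convex space $F$, then makes $\gamma$ holomorphic on $\Omega$. In other words, $f$ is Gateaux-holomorphic on $U$.

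To conclude, I would invoke the infinite-dimensional Hartogs-type theorem of Bochnak--Siciak from \citep{Bochnak_Siciak_2}: a continuous map from an open subset of a locally convex space to a complete Hausdorff locally convex space which is holomorphic along every complex-affine line is complex-analytic in the sense of \Fref{def: analytic}. Smoothness of $f$ supplies the continuity hypothesis, and the preceding step supplies the line-wise holomorphy, so the conclusion follows.

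The main obstacle is the final Hartogs-type step. In finite dimensions this is Osgood's theorem, but in the locally convex setting one must verify the precise hypotheses under which line-wise holomorphy plus continuity yields a locally convergent power series with terms in $P^k(E,F)$; this is exactly where the completeness of $F$ and the continuity coming from smoothness are essential. By comparison, the Cauchy--Riemann step is routine modulo care with the Bastiani notion of smoothness and the identification of real and complex directional derivatives that the $\C$-linearity hypothesis provides.
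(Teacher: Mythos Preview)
The paper does not supply its own proof of this proposition; it merely records the statement and cites \citep[Lem.\ 2.5]{Glockner_inf_dim_lie_without_completeness} for the proof. Your argument is correct and follows the standard route that Gl\"ockner also takes: the forward direction drops out of \Fref{prop: analytic_implies_smooth} and \Fref{prop: Taylor_expansion} as you say, and for the converse one verifies G\^ateaux holomorphy on each complex affine line via the Cauchy--Riemann equations (your computation is exactly right) and then upgrades to complex analyticity using the Hartogs-type theorem of Bochnak--Siciak. The precise reference you want is \citep[Thm.\ 6.2]{Bochnak_Siciak_2}, which asserts that a continuous G-analytic map between locally convex spaces (with the target sequentially complete) is analytic; smoothness of $f$ supplies the continuity, and completeness of $F$ is part of the paper's standing hypotheses from the start of \Fref{sec: analytic_fns}. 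Your identification of this step as the only nontrivial one is accurate.
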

	
	\begin{proposition}[{\citep[Lem.\ 2.6]{Glockner_inf_dim_lie_without_completeness}}]~\\
		Suppose $\mbb{K} = \C$. If $f : U \to F$ is complex-analytic, then so is $df : U \times E \to F$.
	\end{proposition}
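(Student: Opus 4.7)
The plan is to apply Proposition \ref{prop: complex_analytic_and_complex_linear_differential} to $df : U \times E \to F$, reducing the task to verifying that $df$ is smooth and that its linearization is $\C$-linear at every point. Smoothness is immediate from Proposition \ref{prop: analytic_implies_smooth}, since $f$ is smooth and hence so is $df$. For the linearization, separating the partial derivatives and using that $v \mapsto df(x; v)$ is already linear (so coincides with its own derivative in $v$), a routine calculation gives
\[
    d(df)_{(x,v)}(h, w) = d^2 f(x; h, v) + df(x; w).
\]
The term $df(x; w)$ is $\C$-linear in $w$ and independent of $h$, by Proposition \ref{prop: complex_analytic_and_complex_linear_differential} applied to $f$ itself. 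It remains to show that, for each fixed $v$, the map $h \mapsto d^2 f(x; h, v)$ is $\C$-linear; both pieces then combine to full $\C$-linearity of $d(df)_{(x,v)}$ on $E \times E$.

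For that last piece I would introduce the auxiliary map $g_v : U \to F, \; y \mapsto df(y; v)$ and show it is complex-analytic. Once established, a second application of Proposition \ref{prop: complex_analytic_and_complex_linear_differential}, this time to $g_v$, yields that $dg_v(x; h) = d^2 f(x; h, v)$ is $\C$-linear in $h$. To prove complex-analyticity of $g_v$ at $x \in U$, I would use the Taylor expansion (Proposition \ref{prop: Taylor_expansion}) $f(x + h) = \sum_{k \geq 0} \frac{1}{k!} \widetilde{f}_k^{x}(h, \ldots, h)$ on a balanced $0$-neighborhood, with $\widetilde{f}_k^{x} \in \Sym^k(E, F)$ the symmetric multilinear associate of $\delta_x^k(f)$, and differentiate term by term in the direction $v$ to obtain the candidate expansion
\[
    g_v(x + h) = \sum_{k \geq 1} \frac{1}{(k-1)!} \widetilde{f}_k^{x}(h, \ldots, h, v),
\]
whose $k$-th summand is (by Proposition \ref{prop: isom_polys_mult_maps}) a continuous homogeneous polynomial of degree $k - 1$ in $h$.

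The main obstacle is justifying this termwise differentiation and establishing normal convergence of the differentiated series on a smaller neighborhood of $0$. The ingredients are the uniform seminorm bounds $\sum_k p_W\!\bigl(\widetilde{f}_k^{x} \circ \Delta_k\bigr) < \infty$ supplied by Proposition \ref{prop: normally_convergent_series} on some $0$-neighborhood $W$, combined with the polarization-type estimate \eqref{eq: estimate_symk_cts_in_Pk} used to control $\widetilde{f}_k^{x}(h, \ldots, h, v)$ in terms of the values of $\widetilde{f}_k^{x} \circ \Delta_k$ on a compact set formed from a slightly shrunk neighborhood of $0$ together with the singleton $\{v\}$. A standard dominated-convergence argument then identifies $g_v$ with the sum of the differentiated series on a neighborhood of $x$, showing that $g_v$ is complex-analytic and closing the proof.
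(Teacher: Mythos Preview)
The paper does not prove this statement; it simply cites \citep[Lem.\ 2.6]{Glockner_inf_dim_lie_without_completeness}. Your argument is correct, and the reduction via Proposition~\ref{prop: complex_analytic_and_complex_linear_differential} to the $\C$-linearity of $h \mapsto d^2f(x;h,v)$ is exactly the right strategy. The Taylor-series route you outline for showing $g_v$ is complex-analytic does work, and your sketch of the estimates (normal convergence from Proposition~\ref{prop: normally_convergent_series} combined with the polarization bound~\eqref{eq: estimate_symk_cts_in_Pk}) is sound.

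That said, the detour through termwise differentiation is heavier than necessary. A shorter route to the $\C$-linearity of $h \mapsto d^2f(x;h,v)$: since $f$ is smooth, the second differential $d^2f(x;\--,\--)$ is symmetric, so it suffices to show that $h \mapsto d^2f(x;v,h)$ is $\C$-linear. But $d^2f(x;v,h) = \restr{\frac{d}{dt}}{t=0} df(x+tv;h)$, and for each fixed $t$ the map $h \mapsto df(x+tv;h)$ is $\C$-linear by Proposition~\ref{prop: complex_analytic_and_complex_linear_differential} applied to $f$ at the point $x+tv$. Hence $df(x+tv;ih) = i\,df(x+tv;h)$ for all $t$, and differentiating at $t=0$ gives $d^2f(x;v,ih) = i\,d^2f(x;v,h)$. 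This bypasses all the series manipulation.
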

	
	\noindent
	With these definitions, the chain rule holds for both real- and complex-analytic mappings. One proceeds to define real- and complex- analytic manifolds and Lie groups, see e.g.\ \citep{milnor_inf_lie} and \citep{neeb_towards_lie} for more details.

	\begin{definition}
		If $M$ is a real-analytic manifold and $V$ is a locally convex vector space, we write $C^\omega(M, V)$ for the set of analytic functions $M \to V$. If $M$ is a complex-analytic manifold and $V$ is complex, we write $\mcO(M, V)$ for the space of complex-analytic mappings $M \to V$. 
	\end{definition}
	
	%
	
	\begin{proposition}[Identity Theorems {\citep[Prop.\ 6.6]{Bochnak_Siciak_2}}]\label{prop: identity_theorem}~
		\begin{enumerate}
			\item Suppose that $E$ and $F$ are complex. Let $f : U \to F$ be complex-analytic and assume that $U$ is connected. If $f(x) = 0$ for all $x \in V$ for some open and non-empty $V \subseteq U$, then $f = 0$. 
			\item Suppose that $E$ is real and $F$ is complex. Let $f : U_\C \to F$ be complex-analytic, where $U_\C \subseteq E_\C$ is open and connected. If $U_\C$ contains a non-empty subset $V \subseteq E$ that is open in $E$ and $f(x) = 0$ holds for every $x\in V$, then $f = 0$. 
		\end{enumerate}		
	\end{proposition}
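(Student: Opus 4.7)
My plan is to use the standard connectedness argument, extracting from the hypothesis a set on which all Taylor coefficients vanish and showing it is clopen.

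For part (1), I would introduce $A := \set{ x \in U \st \delta^k_x(f) = 0 \text{ for all } k \in \N_{\geq 0}}$ and prove $A = U$ by showing that $A$ is nonempty, closed, and open in $U$. Nonemptiness follows from the assumption $V \subseteq A$: since $f$ is identically $0$ on the open set $V$, each iterated differential $d^k f(x; h_1, \ldots, h_k)$ computed at $x \in V$ can be obtained as a limit of difference quotients supported in $V$, hence vanishes, whence $\delta^k_x(f) = 0$. Closedness follows by noting that $\delta^k_x(f)(h) = d^k f(x; h, \ldots, h)$ depends continuously on $x$ by \Fref{prop: analytic_implies_smooth} and smoothness of $d^k f$, so $A = \bigcap_{k, h} \set{ x \st \delta^k_x(f)(h) = 0}$ is an intersection of closed sets. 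For openness, fix $x \in A$ and apply the Taylor expansion of \Fref{prop: Taylor_expansion}: on the maximal balanced neighborhood $W$ of $0$ with $x + W \subseteq U$ we have $f(x + h) = \sum_{k=0}^\infty {1\over k!}\delta^k_x(f)(h) = 0$, so $f$ vanishes on the open set $x + W$, and hence so do all its Taylor coefficients at points of $x + W$, placing a whole neighborhood of $x$ in $A$. Connectedness of $U$ then forces $A = U$, and the same local vanishing via Taylor expansion gives $f \equiv 0$.

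For part (2), I would reduce to part (1) applied to $f : U_\C \to F$, the only task being to promote the hypothesis \dquotes{$f = 0$ on a subset $V$ of $E$ open in $E$} to the hypothesis \dquotes{all Taylor coefficients vanish at points of $V$} inside $E_\C$. Fix $x \in V$. As above, $f\restriction_V \equiv 0$ implies $d^k f(x; h_1, \ldots, h_k) = 0$ for all $h_i \in E$. Now by \Fref{prop: complex_analytic_and_complex_linear_differential} together with \Fref{prop: isom_polys_mult_maps}, the symmetric $k$-linear map $\widetilde{\delta^k_x(f)} : E_\C^k \to F$ associated to $\delta^k_x(f) \in P^k(E_\C, F)$ is complex $k$-multilinear (iterating complex-linearity of the first differential). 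Vanishing of a complex $k$-multilinear map on $E^k$ extends by $\C$-linearity in each argument and $E_\C = E + iE$ to vanishing on $E_\C^k$, hence $\delta^k_x(f) = 0$ on $E_\C$. This places $V$ inside the set $A := \set{x \in U_\C \st \delta^k_x(f) = 0 \text{ for all } k}$, and the argument of part (1) now runs verbatim in $U_\C$.

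The only genuinely delicate step is the multilinear-extension argument in part (2): one needs that $\delta^k_x(f)$, a priori defined through real Gâteaux differentials, corresponds to a $\C$-multilinear symmetric map on $E_\C^k$. This is precisely the content of \Fref{prop: complex_analytic_and_complex_linear_differential} applied inductively (or equivalently, to the derivative map $df : U_\C \times E_\C \to F$, which is again complex-analytic), combined with the polarization identity in \Fref{prop: isom_polys_mult_maps}; once this is in place, everything else is the classical clopen argument.
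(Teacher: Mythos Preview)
The paper does not give its own proof of this proposition; it is simply quoted from \citep[Prop.\ 6.6]{Bochnak_Siciak_2}. Your argument is correct and is essentially the standard proof one finds in that reference: the clopen set $A$ of points where all Taylor coefficients vanish, combined with \Fref{prop: Taylor_expansion} for openness, is exactly how Bochnak--Siciak proceed. Your handling of part~(2) --- extending the vanishing of $d^kf(x;\,\cdot\,)$ from $E^k$ to $E_\C^k$ via the $\C$-multilinearity supplied by iterating \Fref{prop: complex_analytic_and_complex_linear_differential} (or equivalently the fact, stated just after it, that $df$ is again complex-analytic) --- is the right maneuver and fills in the one nontrivial point.
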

	
	\begin{proposition}\label{prop: jet_vanishes_then_function_trivial}
		Let $x \in U$. The following linear map is continuous:
		\[
			j^\infty_x : C^\infty(U, F) \to P(E,F), \qquad
			f \mapsto \sum_{k=0}^\infty{1\over k!}\delta_x^k(f)
		\]
		If $U$ is connected, then its restriction to $C^\omega(U, F)$ is injective.
	\end{proposition}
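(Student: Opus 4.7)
The statement splits into two independent claims.

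For continuity, I would equip $C^\infty(U, F)$ with its standard topology, namely the initial topology making every iterated differential $f \mapsto d^k f$ continuous into $C^0(U \times E^k, F)$ (uniform convergence on compact subsets). Fix $k \in \N_{\geq 0}$, a compact $B \subseteq E$, and a continuous seminorm $p$ on $F$. Since $\delta_x^k(f)(v) = d^k f(x; v, \ldots, v)$, one has
\[
p_B(\delta_x^k(f)) \;\leq\; \sup_{(v_1,\ldots,v_k) \in B^k} p\bigl(d^k f(x; v_1, \ldots, v_k)\bigr),
\]
and the right-hand side is the restriction of a standard seminorm on $C^\infty(U, F)$ (namely $p_{\{x\} \times B^k} \circ d^k$). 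This shows that each component $f \mapsto \tfrac{1}{k!}\delta_x^k(f)$ is continuous into $P^k(E, F)$, and hence that $j^\infty_x$ is continuous into the product $P(E, F)$.

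For injectivity on $C^\omega(U, F)$, let $f$ be real-analytic with $\delta_x^k(f) = 0$ for all $k$. Consider the set
\[
A \;:=\; \{\, y \in U \;:\; \delta_y^k(f) = 0 \text{ for all } k \in \N_{\geq 0}\,\}.
\]
Since $f$ is smooth, each map $y \mapsto d^k f(y; v, \ldots, v)$ is continuous, so $A$ is closed in $U$. To show $A$ is open, fix $y \in A$. By real-analyticity, $f$ extends near $y$ to a complex-analytic map $f_\C : W_\C \to F_\C$ on some connected open neighborhood $W_\C$ of $y$ in $E_\C$. Since $f_\C$ extends $f$, the restriction $\delta_y^k(f_\C)|_E$ agrees with $\delta_y^k(f)$ and hence vanishes. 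Via the polarization isomorphism of \Fref{prop: isom_polys_mult_maps}, $\delta_y^k(f_\C)$ corresponds to a continuous symmetric $\C$-multilinear map $E_\C^k \to F_\C$ that vanishes on the real subspace $E^k$; being $\C$-linear in each entry, it therefore vanishes on all of $E_\C^k$, so $\delta_y^k(f_\C) = 0$ for every $k$. By the complex Taylor expansion of \Fref{prop: Taylor_expansion}, $f_\C \equiv 0$ on a balanced neighborhood of $y$ in $W_\C$, and then by the identity theorem \Fref{prop: identity_theorem}(1), $f_\C \equiv 0$ on the connected set $W_\C$. In particular $f \equiv 0$ on $W_\C \cap U$, so all derivatives of $f$ vanish throughout this open neighborhood of $y$, proving $A$ is open.

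Since $A$ is nonempty (it contains $x$), open and closed in the connected space $U$, we conclude $A = U$, and in particular $f \equiv 0$. The only mildly delicate step is the passage from vanishing of the real Taylor coefficients at $y$ to vanishing of the complex Taylor coefficients of the extension $f_\C$; this is handled cleanly by the polarization formula \eqref{eq: polarization_formula} together with $\C$-multilinearity, which is precisely what \Fref{prop: isom_polys_mult_maps} packages. Everything else reduces to an open/closed argument on the connected set $U$, in the spirit of the classical identity principle.
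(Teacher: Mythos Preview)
Your proof is correct. The continuity argument is essentially identical to the paper's: each component $\delta_x^k$ is continuous by definition of the compact-open $C^\infty$-topology, and $P(E,F)$ carries the product topology.

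For injectivity you take a slightly different route. The paper argues in two lines: if $j^\infty_x(f)=0$, then by the Taylor expansion (\Fref{prop: Taylor_expansion}, applied to the complex-analytic extension $f_\C$) one has $f(x+h)=0$ for $h$ in a $0$-neighborhood of $E$; then the Identity Theorem (\Fref{prop: identity_theorem}) gives $f=0$ on the connected set $U$ in one stroke. You instead re-derive the identity principle by an explicit open/closed argument on the set $A$ of points where the full jet vanishes, invoking Taylor and the complex Identity Theorem \emph{locally} to show $A$ is open. This is perfectly valid and arguably more self-contained, but it duplicates the connectedness argument already packaged in \Fref{prop: identity_theorem}. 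The step you flag as delicate---passing from vanishing of $\delta_y^k(f)$ on $E$ to vanishing of $\delta_y^k(f_\C)$ on $E_\C$ via polarization and $\C$-multilinearity---is exactly what is needed in either approach to apply \Fref{prop: Taylor_expansion} (which is stated only for $\mbb{K}=\C$), and you handle it correctly.
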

	\begin{proof}
		The map $j^\infty_x$ is linear, as each $\delta_x^k : C^\infty(U, F) \to P^k(E,F)$ is so. As $P(E,F) = \prod_{n=0}^\infty P^n(E, F)$ carries the product topology, to see $j^\infty_x$ is continuous it suffices to show that $\delta_x^k$ is continuous for every $k \in \N_{\geq 0}$. This is immediate from the definition of the compact-open $C^\infty$-topology on $C^\infty(U, F)$ \citep[Def.\ I.5.1(d)]{neeb_towards_lie}, and the topology of uniform convergence on compact subsets carried by $P^k(E,F)$. Assume that $U$ is connected. Let $f \in C^\omega(U, F)$ and suppose that $j^\infty_x(f) = 0$. Using \Fref{prop: Taylor_expansion} it follows that $f(x+h) = 0$ for all $h$ in some $0$-neighborhood of $E$. By \Fref{prop: identity_theorem} this implies that $f = 0$.
	\end{proof}
	
	\subsection{Smooth, analytic and strongly-entire representations}\label{sec: regularity_of_reps}
	
	\noindent
	Let $G$ be a BCH(Baker--Campbell--Hausdorff) Fr\'echet--Lie group with Lie algebra $\g$. We write $\g_\C$ for the complexification of $\g$. We refer to \citep{neeb_towards_lie} and \citep{milnor_inf_lie} for an overview on locally convex Lie theory. Let us introduce some notation. If $\mcD$ is a pre-Hilbert space, we write $\L(\mcD)$ for the set of linear operators on $\mcD$. We further define the algebra
	$$\L^\dagger(\mc{D}) := \set{ X \in \L(\mc{D}) \st \exists X^\dagger \in \L(\mc{D}) \st \forall \xi, \eta \in \mc{D} \st \langle X^\dagger \psi, \eta \rangle = \langle \psi, X \eta\rangle }.  $$ 
	The element $X^\dagger$ corresponding to $X \in \L^\dagger(\mc{D})$ is unique, again an element of $\L^\dagger(\mc{D})$ and satisfies $(X^\dagger)^\dagger = X$, so $(\--)^\dagger$ endows $\L^\dagger(\mc{D})$ with an involution. We will also have need for various involutions on the universal enveloping algebra $\mc{U}(\g_\C)$ of $\g_\C$. Let $\theta : \g_\C \to \g_\C$ be defined by $\theta(\xi + i\eta) := \xi - i\eta$ for $\xi, \eta \in \g$.
	
	\begin{definition}\label{def: involutions}
		Extend the conjugation $\theta$ on $\g_\C$ to a complex conjugate-linear automorphism of $\mc{U}(\g_\C)$. Let $\tau$ denote the involutive anti-automorphism of $\mc{U}(\g_\C)$ extending $\xi \mapsto -\xi$ on $\g_\C$. Define $x^\ast := \tau(\theta(x))$ for $x \in \mc{U}(\g_\C)$. Explicitly, $\theta, \tau$ and $(\--)^\ast$ satisfy the following relations, where $\xi_j \in \g_\C$ for $j \in \N$:
		\begin{align*}
			\theta(\xi_1\cdots \xi_n) &= \theta(\xi_1)\cdots \theta(\xi_n), \\
			\tau(\xi_1\cdots \xi_n) &= (-1)^n\xi_n\cdots \xi_1,\\
			(\xi_1\cdots \xi_n)^\ast &= (-1)^n\theta(\xi_n)\cdots \theta(\xi_1).
		\end{align*}
	\end{definition}
	
	\noindent
	If $(\rho, \H_\rho)$ is a unitary $G$-representation, we say that it is continuous if it is so with respect to the strong operator topology on $\U(\H_\rho)$. 
	
	\begin{definition}\label{def: smooth_analytic_vectors}
		Let $(\rho, \H_\rho)$ be a continuous unitary representation of $G$. A vector $\psi \in \H_\rho$ is called \textit{smooth}, resp.\ \textit{analytic}, if the orbit map $G \to \H_\rho, g \mapsto \rho(g)v$ is smooth, resp.\ analytic. We write $\H_\rho^\infty$ and $\H_\rho^\omega$ for the linear subspaces of smooth and analytic vectors, respectively. We say that the representation $\rho$ is \textit{smooth} if $\H_\rho^\infty$ is dense in $\H_\rho$ and \textit{analytic} if $\H_\rho^\omega$ is dense in $\H_\rho$.
	\end{definition}
	
	\begin{remark}\label{rem: involution_rep}
		If $\rho$ is a smooth unitary representation of $G$, then the derived representation $d\rho$ of $\g_\C$ on $\H_\rho^\infty$ extends to a homomorphism $d\rho : \mc{U}(\g_\C) \to \L^\dagger(\H_\rho^\infty)$ satisfying $d\rho(x)^\dagger = d\rho(x^\ast)$ for any $x \in \mc{U}(\g_\C)$.
	\end{remark}

	\begin{definition}\label{def: smooth_analytic_entire_vectors}
		Let $(\rho, \H_\rho)$ be a smooth unitary representation of $G$. 
		\begin{itemize}
			\item Following \citep[Def.\ 3.9]{BasNeeb_ProjReps}, we define two locally convex topologies on $\H_\rho^\infty$:
			\begin{itemize}
				\item The \textit{weak topology} on $\H_\rho^\infty$ is defined by the seminorms 
				$$p_{\xi}(\psi) := \|d\rho(\xi_1\cdots \xi_n)\psi\|, \qquad \text{where } n \in \N_{\geq 0} \text{ and } \xi = (\xi_1, \ldots, \xi_n) \in \g^n.$$
				\item The \textit{strong topology} is defined by the seminorms 
				$$p_B(\psi) := \sup_{\xi \in B} \|d\rho(\xi_1\cdots \xi_n)\psi\|,$$
				where $B \subseteq \g^n$ is bounded and $n \in \N_{\geq 0}$.
			\end{itemize}
			If $G$ is regular, then the space $\H_\rho^\infty$ is complete w.r.t.\ to either of these topologies \citep[Prop.\ 3.19]{BasNeeb_ProjReps}, where we also used that $G$ is a Fr\'echet--Lie group. 
			\item A vector $\psi \in \H_\rho^\infty$ is called \textit{entire} if for every compact $B \subseteq \g_\C$:
			$$\sum_{n=0}^\infty {1\over n!}\sup_{\xi \in B}\|d\rho(\xi^n)\psi\| < \infty.$$
			\item If $\psi \in \H_\rho^\infty$ and $B \subseteq \g_\C$, we define 
			$$p_B^n(\psi) := \sup_{\xi_1, \ldots, \xi_n \in B} \|d\rho(\xi_1\cdots \xi_n)\psi\|$$
			and 
			$$q_B(\psi) := \sum_{n=0}^\infty {1\over n!}p_B^n(\psi).$$
			\item A vector $\psi \in \H_\rho^\infty$ is called \textit{strongly-entire} if $q_B(\psi) < \infty$ for every compact subset $B \subseteq \g_\C$. It is said to be \textit{b-strongly-entire} if $q_B(\psi) < \infty$ for every bounded subset $B \subseteq \g_\C$.
			\item We write $\H_\rho^{\mcO}\subseteq \H_\rho^\infty$ and $\H_\rho^{\mcO_b}$ for the linear subspace of strongly-entire and b-strongly-entire vectors, respectively. Equip $\H_\rho^{\mcO}$ (resp.\ $\H_\rho^{\mcO_b}$) with the locally convex topology defined by the seminorms $q_B$ for compact (resp.\ bounded) subsets $B \subseteq \g_\C$.
			\item We say that the representation $\rho$ is \textit{strongly-entire} if $\H_\rho^{\mcO}$ is dense in $\H_\rho$, and that it is \textit{b-strongly-entire} if $\H_\rho^{\mcO_b}$ is dense in $\H_\rho$.\\
		\end{itemize}
	\end{definition}
	
	\noindent
	If $\psi \in \H_\rho^\infty$, we write $f^\psi : G \to \H_\rho$ for the orbit map $f^\psi(g) = \rho(g)\psi$. As $f^\psi$ is smooth, the homogeneous polynomial $f_n^\psi(\xi) := {1\over n!}d\rho(\xi^n)\psi$ is continuous as a map $\g_\C \to \H_\rho$, so $f_n^\psi \in P^n(\g_\C, \H_\rho)$. Notice further that $j^\infty_0(f^\psi) = \sum_{n=0}^\infty f_n^\psi \in P(\g_\C, \H_\rho)$. Let $\beta_n^\psi$ be the unique element of $\Sym^n(\g_\C, \H_\rho)$ satisfying $f_n^\psi = \beta_n^\psi \circ \Delta_n$. Explicitly, $\beta_n^\psi(\xi_1, \ldots, \xi_n) = {1\over (n!)^2} \sum_{\sigma \in S_n} d\rho(\xi_{\sigma_1}\cdots \xi_{\sigma_n})v$.
	
	\begin{lemma}\label{lem: entire_vectors_complexified_seminorms}
		Let $\psi \in \H_\rho^\infty$. Assume that $q_B(\psi) < \infty$ for every compact subset $B \subseteq \g$. Then $q_B(\psi) < \infty$ for every compact subset $B \subseteq \g_\C$.
	\end{lemma}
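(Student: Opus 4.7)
The goal is to upgrade the finiteness hypothesis from compact subsets of $\g$ to compact subsets of $\g_\C$. The plan is to reduce any compact $B \subseteq \g_\C$ to a compact subset of $\g$, at the cost of an extra factor $2^n$ inside each term of the series defining $q_B(\psi)$, and then absorb this factor by rescaling.

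\medskip
First, I would use that the real and imaginary part projections $\pi_1, \pi_2 : \g_\C \to \g$ (coming from the decomposition $\g_\C = \g \oplus i\g$) are continuous. Given a compact $B \subseteq \g_\C$, set $K := \pi_1(B) \cup \pi_2(B) \subseteq \g$, which is compact. Then $B \subseteq K + iK$. The key step is a pointwise estimate: for $\xi_j = a_j + i b_j$ with $a_j, b_j \in K$, expand in $\mc{U}(\g_\C)$,
\[
 \xi_1 \cdots \xi_n \;=\; \sum_{\epsilon \in \{0,1\}^n} i^{|\epsilon|} \, c_1^\epsilon \cdots c_n^\epsilon, \qquad c_j^\epsilon := \begin{cases} a_j & \epsilon_j = 0 \\ b_j & \epsilon_j = 1 \end{cases},
\]
so that $c_j^\epsilon \in K$ for every choice of $\epsilon$. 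Applying $d\rho$ and the triangle inequality yields
\[
 \|d\rho(\xi_1 \cdots \xi_n)\psi\| \;\leq\; \sum_{\epsilon \in \{0,1\}^n} \|d\rho(c_1^\epsilon \cdots c_n^\epsilon)\psi\| \;\leq\; 2^n\, p_K^n(\psi).
\]

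\medskip
Taking the supremum over $\xi_1, \ldots, \xi_n \in B$ gives $p_B^n(\psi) \leq 2^n p_K^n(\psi) = p_{2K}^n(\psi)$, since $p_{\lambda K}^n = \lambda^n p_K^n$ for $\lambda > 0$. Summing against $1/n!$ yields
\[
 q_B(\psi) \;\leq\; \sum_{n=0}^\infty \frac{1}{n!} p_{2K}^n(\psi) \;=\; q_{2K}(\psi).
\]
Since $2K$ is a compact subset of $\g$, the hypothesis gives $q_{2K}(\psi) < \infty$, completing the proof.

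\medskip
There is no real obstacle here; the only thing to be careful about is the non-commutativity of $\mc{U}(\g_\C)$, which is precisely why I distribute the product $\prod_j(a_j + ib_j)$ keeping the order of the factors intact (so each summand is a genuine word in $K$, not a reordered one) before applying $d\rho$ and the triangle inequality.
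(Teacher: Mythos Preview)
Your proof is correct and follows essentially the same approach as the paper's: reduce a compact $B \subseteq \g_\C$ to a compact subset of $\g$ via the inclusion $B \subseteq K + iK$, then use the binomial-type expansion in $\mc{U}(\g_\C)$ to pick up a factor $2^n$, which is absorbed by passing to $2K$. The only cosmetic difference is that the paper first replaces $B$ by its balanced hull and takes $K = \{\xi + \overline{\xi} : \xi \in B\}$, whereas you use the real and imaginary part projections directly; both routes lead to the same estimate $q_B(\psi) \leq q_{2K}(\psi)$.
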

	\begin{proof}
		Let $B_\C \subseteq \g_\C$ be compact. Replacing $B_\C$ by its balanced hull, we may assume that $B_\C$ is balanced. Let $B := \set{ \xi + \overline{\xi} \st \xi \in B_\C} \subseteq \g$, which is compact in $\g$. Then $B_\C \subseteq B + iB$ and so $q_{B_\C}(\psi) \leq q_{2B}(\psi) < \infty$.
	\end{proof}
	
	\begin{proposition}\label{prop: analytic_vectors_equivalent_definitions}
		Let $(\rho, \H_\rho)$ be a smooth unitary representation of $G$. Let $\psi \in \H_\rho^\infty$. The following assertions are equivalent:
		\begin{enumerate}
			\item $\psi \in \H_\rho^\omega$.
			\item There exists a $0$-neighborhood $V\subseteq \g$ such that $\sum_{n=0}^\infty {1\over n!}d\rho(\xi^n)\psi$ converges for every $\xi \in V$ and the map $V \to \H_\rho, \; \xi \mapsto \sum_{n=0}^\infty {1\over n!}d\rho(\xi^n)\psi$ is continuous.
			\item $\sum_{n=0}^\infty {1\over n!}d\rho(\xi^n)\psi$ converges for every $\xi$ in a $0$-neighborhood $\g$.
			\item There is a $0$-neighborhood $V \subseteq \g$ such that $\sum_{n=0}{1\over n!}p_V^n(\psi) < \infty$.
			\item There is a $0$-neighborhood $V \subseteq \g$ such that $\sum_{n=0}^\infty {1\over n!}\langle \psi, d\rho(\xi^n)\psi\rangle$ converges for all $\xi \in V$.
			\item The map $G \to \C, \; g \mapsto \langle \psi, \rho(g)\psi\rangle$ is analytic on a neighborhood of $1 \in G$.
		\end{enumerate}
	\end{proposition}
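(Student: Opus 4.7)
The plan is to close equivalence via the chain $(1) \Rightarrow (2) \Rightarrow (3) \Rightarrow (4) \Rightarrow (1)$, together with $(1) \Rightarrow (6) \Rightarrow (5) \Rightarrow (3)$ to bring in the scalar conditions. The straightforward implications are: $(1) \Rightarrow (2)$ by Taylor-expanding the complex-analytic extension of $\xi \mapsto \rho(\exp\xi)\psi$ (which exists by \Fref{def: analytic}), identifying the Taylor coefficients as $\delta_0^n(f^\psi \circ \exp)(\xi) = d\rho(\xi^n)\psi$ via the chain rule and applying \Fref{prop: Taylor_expansion}; $(2) \Rightarrow (3)$ is trivial; $(4) \Rightarrow (2)$ follows from the dominating estimate $\|\tfrac{1}{n!}d\rho(\xi^n)\psi\| \le \tfrac{1}{n!}p_V^n(\psi)$ yielding uniform and thus continuous convergence; $(1) \Rightarrow (6)$ by composing the analytic orbit map with the continuous conjugate-linear functional $\langle \psi, \cdot \rangle$; and $(6) \Rightarrow (5)$ by Taylor-expanding $F(g) := \langle \psi, \rho(g)\psi\rangle$ pulled back via $\exp$ and computing $\frac{d^n}{dt^n}\big|_{t=0}F(\exp t\xi) = \langle \psi, d\rho(\xi^n)\psi\rangle$. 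For $(4) \Rightarrow (1)$, \Fref{lem: entire_vectors_complexified_seminorms} promotes the summability of $p_V^n(\psi)$ to complex compact subsets, making $\phi(\xi) := \sum \tfrac{1}{n!}d\rho(\xi^n)\psi$ converge uniformly on a complex 0-neighborhood $V_\C \subseteq \g_\C$; as a normally convergent series of continuous homogeneous polynomials it is complex-analytic, and Nelson's analytic-vector theorem along 1-parameter subgroups identifies $\phi(\xi) = \rho(\exp\xi)\psi$ on the real part, whence $f^\psi$ is real-analytic near $1 \in G$ and everywhere by $G$-equivariance.

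For $(5) \Rightarrow (3)$ I would use the skew-symmetry $d\rho(\xi)^\dagger = -d\rho(\xi)$ for $\xi \in \g$, which gives $\|d\rho(\xi^n)\psi\|^2 = (-1)^n\langle \psi, d\rho(\xi^{2n})\psi\rangle$. A scaling argument (replacing $V$ by $V/r$ for some $r > 1$ and invoking convergence of the scalar series at $r\xi$) first upgrades (5) to absolute convergence $\sum \tfrac{1}{n!}|\langle \psi, d\rho(\xi^n)\psi\rangle| \le Ms^n$ with $s$ as small as desired on a smaller neighborhood. A Cauchy--Schwarz estimate combined with $\binom{2n}{n} \le 4^n$ then yields $\sum \tfrac{1}{n!}\|d\rho(\xi^n)\psi\| < \infty$, establishing (3) on a smaller 0-neighborhood.

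The main obstacle is $(3) \Rightarrow (4)$: upgrading pointwise convergence of the diagonal series to summability of the non-symmetric multilinear seminorm $p_V^n(\psi) = \sup_{\xi_j \in V}\|d\rho(\xi_1 \cdots \xi_n)\psi\|$. I would proceed in three steps. First, for any $r > 1$, the scaling argument renders $r^n\|f_n^\psi(\xi)\|$ pointwise bounded on $V/r$; a Baire-category argument applied to the closed sets $A_k := \{\xi \in V/r : r^n\|f_n^\psi(\xi)\| \le k \text{ for all } n\}$ combined with a Chebyshev estimate along 1-parameter slices through the interior point of some $A_{k_0}$ produces $\sup_{\eta \in W_r}\|f_n^\psi(\eta)\| \le C(2/r)^n$ on a 0-neighborhood $W_r$. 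Second, polarization (\Fref{prop: isom_polys_mult_maps} and its proof) bounds the symmetrization $\tfrac{1}{n!}\sum_\sigma d\rho(\xi_{\sigma_1}\cdots\xi_{\sigma_n})\psi$ by an $n^n/n!$-type factor times the diagonal supremum, which is absorbed by choosing $r$ sufficiently large. Third, Poincar\'e--Birkhoff--Witt re-ordering expresses $d\rho(\xi_1\cdots\xi_n)\psi$ as this symmetrization plus a commutator correction of $\mc{U}(\g_\C)$-degree $\le n-1$, giving a recursive estimate controlling $p_V^n(\psi)$ in terms of $p_V^k(\psi)$ for $k < n$. The accumulated polarization losses and commutator corrections from iterated PBW reorderings constitute the delicate bookkeeping step, requiring $r$ to be chosen large enough to dominate all exponential factors; the BCH Fr\'echet structure ensures the Lie-bracket continuity constants entering the PBW recursion are finite.
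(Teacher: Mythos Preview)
Your overall chain of implications is sound and largely parallels the paper's, and your explicit argument for $(5)\Rightarrow(3)$ via the identity $\|d\rho(\xi^n)\psi\|^2=(-1)^n\langle\psi,d\rho(\xi^{2n})\psi\rangle$ together with the scaling trick and $\binom{2n}{n}\le 4^n$ is correct and in fact reproduces the content of \citep[Prop.~3.4,~6.3]{Neeb_analytic_vectors}, which the paper simply cites. Likewise the paper invokes \citep[Thm.~5.2]{Neeb_analytic_vectors} for $(1)\iff(6)$, whereas you route through $(6)\Rightarrow(5)\Rightarrow(3)$. One difference worth noting: for $(2)\Rightarrow(1)$ the paper avoids Nelson's theorem entirely by showing directly that $t\mapsto\langle\rho(e^{t\xi})\eta,\,h^\psi(t\xi)\rangle$ is constant for $\eta\in\H_\rho^\infty$, which identifies $h^\psi(\xi)=\rho(e^\xi)\psi$ without invoking one-parameter analytic-vector theory.

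The genuine gap is in your third step of $(3)\Rightarrow(4)$. Your Baire argument and polarization get you to the \emph{symmetrized} bound $\sup_{\xi_i\in W}\|\beta_n^\psi(\xi_1,\ldots,\xi_n)\|$, but the PBW reordering you propose to reach the non-symmetric $p_V^n(\psi)=\sup_{\xi_i\in V}\|d\rho(\xi_1\cdots\xi_n)\psi\|$ does not close in a Fr\'echet--Lie algebra. The commutator corrections are products containing factors $[\xi_a,\xi_b]$ with $\xi_a,\xi_b\in V$, and continuity of the bracket only says that for each seminorm $p$ there is a \emph{different} seminorm $q$ with $p([\xi,\eta])\le q(\xi)q(\eta)$; there is no reason for $[V,V]$ to lie in any multiple of $V$. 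Iterating the recursion therefore forces you through an increasing chain of seminorms, and the phrase ``BCH Fr\'echet structure ensures the constants are finite'' does not address this: the constants are finite but attached to the wrong neighborhoods. In Banach--Lie algebras your argument would go through, but not here.

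The paper handles $(3)\Rightarrow(4)$ in one line by citing \citep[Prop.~5.2]{Bochnak_Siciak_2}, using that $\g$ is Baire; that result delivers uniform summability of the homogeneous terms on a $0$-neighborhood directly, without any PBW bookkeeping. If you want a self-contained route to the full non-symmetric $p_V^n$-bound, the clean mechanism is again the $\ast$-structure you already exploit for $(5)\Rightarrow(3)$: this is precisely what underlies \citep[Prop.~3.4]{Neeb_analytic_vectors}, and it bypasses Lie-bracket estimates entirely.
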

	\begin{proof}
		Assume that $\psi \in \H_\rho^\omega$. Then the orbit map $f^\psi : G \to \H_\rho$ is real-analytic, and hence so is $f^\psi \circ \exp : \g \to \H_\rho$. Notice that $f^\psi(e^\xi) = \rho(e^\xi)\psi$, so that $\delta_0^n(f^\psi \circ \exp) = d\rho(\xi^n)\psi$. Using \Fref{prop: Taylor_expansion}, it follows that $f^\psi(e^\xi) = \sum_{n=0}^\infty {1\over n!}d\rho(\xi^n)\psi$ on some balanced $0$-neighborhood $V \subseteq \g$. So $(1) \implies (2)$.\\
		
		\noindent
		We show that $(2) \implies (1)$. Let $V\subseteq \g$ be a $0$-neighborhood such that $\sum_{n=0}^\infty {1\over n!}d\rho(\xi^n)\psi$ converges for every $\xi \in V$ and s.t.\ the map $\xi \mapsto \sum_{n=0}^\infty {1\over n!}d\rho(\xi^n)\psi$ is continuous on $V$. Replacing $V$ by some smaller balanced open set, we may assume that $V$ is balanced. Define $h^\psi(\xi) := \sum_{n=0}^\infty {1\over n!}d\rho(\xi^n)\psi$. In view of \Fref{rem: different_notions_of_real_analytic}, the assumptions imply that $h^\psi$ is real-analytic on $V$, where it was used that $\g$ is Fr\'echet and $\H_\rho$ is a Hilbert space. Then $h^\psi$ is smooth by \Fref{prop: analytic_implies_smooth}. Let $\xi \in V$. We show that $h^\psi(\xi) = \rho(e^\xi)\psi$. Let $s \in I := [-1, 1]$. Then $s\xi \in V$, because $V$ is balanced. Notice that 
		$$\restr{d\over dt}{t=s}h^\psi(t\xi)\psi = d\rho(\xi)h^\psi(s\xi), \qquad \text{ and }\qquad \restr{d\over dt}{t=s}\rho(e^{t\xi})\psi = d\rho(\xi)\rho(e^{s\xi})\psi.$$
		Let $\eta \in \H_\rho^\infty$. Using $d\rho(\xi)^\ast \eta = -d\rho(\xi)\eta$ it follows that $\restr{d\over dt}{t=s} \langle \rho(e^{t\xi})\eta, h^\psi(t\xi)\rangle = 0$ for all $s \in I$. Hence $\langle \eta, \rho(e^{-t\xi})h^\psi(t\xi)\rangle = \langle \eta, \psi\rangle$ for all $t \in I$. As this is valid for any $\eta$ in the dense set $\H_\rho^\infty$ it follows that $\rho(e^{-t\xi})h^\psi(t\xi)\psi = \psi$ or equivalently that $h^\psi(t\xi)\psi = \rho(e^{t\xi})\psi$ for all $t \in I$. In particular, taking $t=1$ we conclude that $h^\psi(\xi) = \rho(e^{\xi})\psi$ for all $\xi \in V$. As $h^\psi$ is real-analytic on $V$, so is $\xi \mapsto \rho(e^\xi)\psi$. Since $G$ is BCH, this implies that $g \mapsto \rho(g)\psi$ is analytic on a neighborhood of $1 \in G$. In turn, this implies that it is analytic everywhere, where we have used that $G$ is a real-analytic Lie group and that the composition of real-analytic maps is again real-analytic \citep[Proposition 2.8]{Glockner_inf_dim_lie_without_completeness}. Thus $\psi \in \H_\rho^\omega$.\\
		
		\noindent
		The implication $(2) \implies (3)$ is trivial whereas $(3) \implies (4)$ follows from \citep[Prop.\ 5.2]{Bochnak_Siciak_2} because $V$ is absorbing and $\g$ is a Baire space, as it is Fr\'echet. To see that $(4) \implies (2)$, assume that $V\subseteq \g$ is a $0$-neighborhood such that $\sum_{n=0}^\infty{1\over n!}p_V^n(\psi) < \infty$. For $\xi \in V$, we write $s_N(\xi) := \sum_{n=0}^N {1\over n!}d\rho(\xi^n)\psi$ and $s(\xi) := \sum_{n=0}^\infty {1\over n!}d\rho(\xi^n)\psi$. It remains only to prove that $s$ is continuous on $V$. Let $\xi \in V$. Suppose that $(\xi_k)$ is a sequence in $V$ with $\xi_k \to \xi$. Let $\epsilon > 0$. Let $N \in \N$ be s.t.\ $\sum_{n=N+1}^\infty {1\over n!}p_V^n(\psi) < \epsilon$. Then for any $\eta \in V$ we have $\|s(\eta) - s_N(\eta)\| \leq \sum_{n=N+1}^\infty {1\over n!}p_V^n(\psi) < \epsilon$. Using that $s_N$ is continuous, let $N^\prime \in \N$ be s.t.\ $\|s_N(\xi) - s_N(\xi_k)\| < \epsilon$ and $\xi_k \in V$ for all $k \geq N^\prime$. Then 
		$$\|s(\xi) - s(\xi_k)\| \leq \|s(\xi) - s_N(\xi)\| + \|s_N(\xi) - s_N(\xi_k)\| + \|s_N(\xi_k) - s(\xi_k)\| < 3 \epsilon, \quad \forall k \geq N^\prime.$$
		Thus $s(\xi_k) \to s(\xi)$. Hence $s$ is sequentially continuous at $0$. As $\g$ is Fr\'echet, this implies that $s$ is continuous at $\xi$. Thus $(1) \iff (2) \iff (3) \iff (4)$. It  is trivial that $(3) \implies (5)$ whereas $(5) \implies (3)$ follows immediately from \citep[Prop.\ 3.4, 6.3]{Neeb_analytic_vectors} (by considering $\mc{D} := \H_\rho^\infty$ and $v := \psi$). Finally, $(6) \iff (1)$ is precisely \citep[Thm.\ 5.2]{Neeb_analytic_vectors}. This completes the proof.\qedhere\\
	\end{proof}

	\noindent
	Let us consider an analogous statements for entire vectors:
	
	\begin{proposition}\label{prop: entire_vectors_seminorm_finite}
		Let $\psi \in \H_\rho^\infty$. The following assertions are equivalent:
		\begin{enumerate}
			\item The series $\sum_{n=0}^\infty f_n^\psi(\xi) =  \sum_{n=0}^\infty {1\over n!}d\rho(\xi^n)\psi$ defines an entire function 
			$$\g_\C \to \H_\rho, \qquad \xi \mapsto \sum_{n=0}^\infty f_n^\psi(\xi).$$
			\item $\psi$ is an entire vector for $\rho$, i.e., $\sum_{n=0}^\infty {1\over n!}\sup_{\xi \in B}\|d\rho(\xi^n)\psi\| < \infty$ for every compact $B \subseteq \g_\C$.
			\item The map $\g \to \H_\rho, \; \xi \mapsto \rho(e^\xi)\psi$ extends to an entire function $\g_\C \to \H_\rho$.
			\item $\sum_{n=0}^\infty \sup_{\xi_i \in B} \|\beta_n^\psi(\xi_1, \ldots, \xi_n)\| < \infty$ for every compact $B \subseteq \g$.
		\end{enumerate}
	\end{proposition}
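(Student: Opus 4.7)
The plan is to establish the equivalences in three pieces: $(1)\iff(2)$ directly from \Fref{lem: uniform_absol_conv_on_bdd_for_entire_fns}; $(1)\iff(3)$ via \Fref{prop: analytic_vectors_equivalent_definitions} and the identity theorem; and $(1)\iff(4)$ through the polarization formula \eqref{eq: polarization_formula} combined with a Stirling estimate.

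The equivalence $(1)\iff(2)$ is a translation of \Fref{lem: uniform_absol_conv_on_bdd_for_entire_fns} applied with $E=\g_\C$ (Fr\'echet, since $G$ is a Fr\'echet--Lie group), $F=\H_\rho$, and $f_n=f_n^\psi\in P^n(\g_\C,\H_\rho)$, noting that $\sup_{\xi\in B}\|f_n^\psi(\xi)\|=\tfrac{1}{n!}\sup_{\xi\in B}\|d\rho(\xi^n)\psi\|$.

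For $(1)\Rightarrow(3)$: let $h:\g_\C\to\H_\rho$ be the entire function furnished by $(1)$. Its restriction to $\g$ converges at every $\xi$, so by \Fref{prop: analytic_vectors_equivalent_definitions} the vector $\psi$ is analytic for $\rho$; the Taylor expansion at $0\in\g$ of the real-analytic orbit map $\xi\mapsto\rho(e^\xi)\psi$ equals $\sum_n f_n^\psi(\xi)=h(\xi)$, so $h\big|_\g$ and $\xi\mapsto\rho(e^\xi)\psi$ agree on some $0$-neighborhood of $\g$, and hence on all of $\g$ by \Fref{prop: identity_theorem}. Thus $h$ is the entire extension required by $(3)$. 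For $(3)\Rightarrow(1)$: if $F=\sum_n F_n$ with $F_n\in P^n(\g_\C,\H_\rho)$ is any such entire extension, then comparing Taylor expansions at $0\in\g$ forces $F_n\big|_\g=f_n^\psi\big|_\g$ for every $n$; since $F_n-f_n^\psi$ is an $n$-homogeneous complex polynomial vanishing on the real form $\g$, it vanishes on $\g_\C$ by \Fref{prop: identity_theorem}, giving $F=\sum_n f_n^\psi$.

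For $(4)\Rightarrow(2)$: any compact $B_\C\subseteq\g_\C$ lies in $B_0+iB_0$ where $B_0:=\{\Re\xi:\xi\in B_\C\}\cup\{\Im\xi:\xi\in B_\C\}$ is compact in $\g$. Expanding $\beta_n^\psi(\xi,\ldots,\xi)$ multilinearly for $\xi=\eta_1+i\eta_2\in B_0+iB_0$ yields
\[
\tfrac{1}{n!}\sup_{\xi\in B_\C}\|d\rho(\xi^n)\psi\|=\sup_{\xi\in B_\C}\|\beta_n^\psi(\xi,\ldots,\xi)\|\le 2^n\sup_{\zeta_i\in B_0}\|\beta_n^\psi(\zeta_1,\ldots,\zeta_n)\|=\sup_{\zeta_i\in 2B_0}\|\beta_n^\psi(\zeta_1,\ldots,\zeta_n)\|,
\]
which is summable by $(4)$ applied to the compact set $2B_0\subseteq\g$. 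The main obstacle is the reverse $(1)\Rightarrow(4)$, where one must defeat the factor of $n!$ separating $\beta_n^\psi$ from $f_n^\psi$. Given compact $B\subseteq\g$, let $B'\subseteq\g$ be the closed convex hull of $B\cup\{0\}$, which is compact since $\g$ is Fr\'echet. The polarization formula \eqref{eq: polarization_formula} and $n$-homogeneity of $f_n^\psi$ yield
\[
\sup_{\xi_i\in B}\|\beta_n^\psi(\xi_1,\ldots,\xi_n)\|\le \tfrac{2^n}{n!}\sup_{\eta\in nB'}\|f_n^\psi(\eta)\|=\tfrac{(2n)^n}{n!}\sup_{\eta\in B'}\|f_n^\psi(\eta)\|.
\]
Stirling's approximation bounds $\tfrac{(2n)^n}{n!}$ by a constant multiple of $(2e)^n/\sqrt{n}$, so folding $(2e)^n$ back into the supremum via $n$-homogeneity yields summability from $(1)\iff(2)$ applied to the compact set $2e\,B'\subseteq\g_\C$.
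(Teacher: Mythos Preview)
Your proof is correct and follows essentially the same strategy as the paper's. The only differences are cosmetic: for $(1)\Rightarrow(4)$ you invoke Stirling directly and apply $(2)$ to $2eB'$, whereas the paper picks any $t>2e$, uses $(2)$ on $tB$ to get $\sup_{\xi\in B}\|f_n^\psi(\xi)\|\le Ct^{-n}$, and then sums $\sum_n \tfrac{1}{n!}(2n/t)^n$; and for $(4)\Rightarrow(2)$ you spell out the reduction from compact $B_\C\subseteq\g_\C$ to a real compact $B_0\subseteq\g$ via multilinear expansion, which the paper simply calls ``trivial''.
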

	\begin{proof}
		As $\g_\C$ is Fr\'echet by assumption, we know using \Fref{lem: uniform_absol_conv_on_bdd_for_entire_fns} that the series $\sum_{n=0}^\infty f_n^\psi(\xi) =  \sum_{n=0}^\infty {1\over n!}d\rho(\xi^n)\psi$ defines an entire function on $\g_\C$ if and only if 
		$$\sum_{n=0}^\infty  {1\over n!}\sup_{\xi \in B} \|d\rho(\xi^n)\psi\| < \infty, \qquad \forall B \subseteq \g_\C \text{ compact}.$$
		That is, if and only if $(2)$ holds true. Thus $(1) \iff (2)$. Assume next that $(2)$ is valid. As singletons are compact, it follows in particular that $\sum_{n=0} f_n^\psi(\xi)$ converges for every $\xi \in \g_\C$. By \Fref{prop: analytic_vectors_equivalent_definitions}, this implies that $\psi \in \H_\rho^\omega$. Hence the orbit map $f^\psi : G \to \H_\rho$ is real-analytic. As $G$ is BCH, the exponential map $\exp : \g \to G$ is real-analytic and hence $\xi \mapsto f^\psi(e^\xi) = \rho(e^\xi)\psi$ is a real-analytic map $\g \to \H_\rho$. Since $\delta_0^n(f^\psi\circ \exp; \xi) = d\rho(\xi^n)\psi$ for every $n \in \N$, \Fref{prop: Taylor_expansion} implies that $f^\psi(e^\xi) = \sum_{n=0}^{\infty}{1\over n!}d\rho(\xi^n)\psi$ on some $0$-neighborhood in $V$. As $(2)$ and hence $(1)$ hold by assumption, it follows that $\sum_{n=0}^\infty f_n^\psi$ is an entire function extending $\xi \mapsto \rho(e^\xi)\psi$. Thus $(3)$ holds true. Suppose conversely that $(3)$ is valid, so that $f^\psi \circ \exp$ extends to an entire function $F : \g_\C \to \H_\rho$. By \Fref{prop: Taylor_expansion} and using that $\delta_0^n(f^\psi\circ \exp; \xi) = d\rho(\xi^n)\psi$ for $n \in \N$, we find that $F(\xi) = \sum_{n=0}^\infty {1\over n!}d\rho(\xi^n)\psi$ for every $\xi \in \g_\C$. Thus $(1)$ holds true. We have shown $(1) \iff (2) \iff (3)$. Next we show $(2) \implies (4)$. Let $B \subseteq \g_\C$ be compact. As $\g_\C$ is complete, the closed convex hull of $B$ is again compact \citep[p.\ 67]{Treves_tvs}. Thus we may assume that $B$ is convex. Replacing $B$ further by its balanced hull, we may assume that $B$ is balanced. Then $B + \cdots + B \;(n \text{ times}) \subseteq nB$. From \fref{eq: estimate_symk_cts_in_Pk} it follows that 
		$$ 
		\sup_{\xi_i \in B}\|\beta_n^\psi(\xi_1, \ldots, \xi_n)\| \leq {2^n \over n!} \sup_{\xi \in nB}\|f_n^\psi(\xi)\| = {(2n)^n \over n!} \sup_{\xi \in B}\|f_n^\psi(\xi)\|.
		$$
		Choose some $t > 2e$. Since $\sum_{n=0}^\infty \sup_{\xi \in B}\|f_n^\psi(\xi)\| < \infty$ for every compact $B$, it follows (by considering $tB$) that there exists some $C > 0$ s.t.\ $\sup_{\xi \in B} \|f_n^\psi(\xi)\| \leq C t^{-n}$ for every $n \in \N_{\geq 0}$. Then 
		$$\sum_{n=0}^\infty \sup_{\xi_i \in B}\|\beta_n^\psi(\xi_1, \ldots, \xi_n)\| \leq C \sum_{n=0}^\infty {1\over n!}\bigg({2n \over t}\bigg)^n  < \infty.$$
		The implication $(4) \implies (2)$ is trivial.
	\end{proof}
	
	\begin{remark}
		The characterization $(4)$ of entire vectors in \Fref{prop: entire_vectors_seminorm_finite} makes the difference between entire and strongly-entire vectors clear, namely whether one considers the symmetric $n$-linear maps $\beta_n^\psi$ or their non-symmetric analogues $(\xi_1, \ldots, \xi_n)\mapsto {1\over n!}d\rho(\xi_1\cdots \xi_n)\psi$. Analogous to \citep[Rem.\ 3.7]{Neeb_analytic_vectors}, it is in general not known whether or not any entire vector is in fact strongly-entire. In the case where $\g$ is finite-dimensional, this follows immediately from \citep[Thm. I.3, Rem.\ I.7]{Penney_holomorphic_extensions}.
	\end{remark}

	\begin{corollary}\label{cor: entire_vectors_seminorm_finite}
		$\H_\rho^{\mcO} \subseteq \H_\rho^\omega \subseteq \H_\rho^\infty$.
	\end{corollary}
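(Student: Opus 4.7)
The plan is to handle each of the two inclusions separately, both of which can be obtained directly from earlier results in the section.

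For the second inclusion $\H_\rho^\omega \subseteq \H_\rho^\infty$: by definition, $\psi \in \H_\rho^\omega$ means that the orbit map $f^\psi : G \to \H_\rho$ is real-analytic. By \Fref{prop: analytic_implies_smooth}, every real-analytic map between locally convex spaces (and hence between analytic manifolds modelled on them) is smooth. Therefore $f^\psi$ is smooth, i.e., $\psi \in \H_\rho^\infty$.

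For the first inclusion $\H_\rho^{\mcO} \subseteq \H_\rho^\omega$: let $\psi \in \H_\rho^{\mcO}$ and let $B \subseteq \g_\C$ be compact. Taking $\xi_1 = \cdots = \xi_n = \xi$ in the definition of $p_B^n$ gives the elementary estimate
$$\sup_{\xi \in B}\|d\rho(\xi^n)\psi\| \;\leq\; p_B^n(\psi).$$
Summing against $1/n!$ yields $\sum_{n=0}^\infty {1\over n!}\sup_{\xi \in B}\|d\rho(\xi^n)\psi\| \leq q_B(\psi) < \infty$, so $\psi$ satisfies condition $(2)$ of \Fref{prop: entire_vectors_seminorm_finite}, i.e., $\psi$ is an entire vector. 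In particular, singletons being compact, the series $\sum_{n=0}^\infty {1\over n!}d\rho(\xi^n)\psi$ converges for every $\xi \in \g_\C$, hence in particular on a $0$-neighborhood of $\g$. Applying the implication $(3) \implies (1)$ of \Fref{prop: analytic_vectors_equivalent_definitions} (valid because $\g$ is Fr\'echet), we conclude $\psi \in \H_\rho^\omega$.

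No step here is delicate; the only point worth stressing is the asymmetry in the definitions — $p_B^n$ is a supremum over $n$ independent arguments in $B$, which is a priori larger than $\sup_{\xi \in B}\|d\rho(\xi^n)\psi\|$ — and this is precisely what makes $\H_\rho^{\mcO}$ a subspace of the entire vectors, so that the implication to analyticity is immediate from the results already established.
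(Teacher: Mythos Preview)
Your proof is correct and follows essentially the same approach as the paper: the paper's proof simply states that any strongly-entire vector is entire and then invokes \Fref{prop: entire_vectors_seminorm_finite} together with \Fref{prop: analytic_vectors_equivalent_definitions} for the first inclusion, and \Fref{prop: analytic_implies_smooth} for the second. You have merely spelled out the intermediate estimate $\sup_{\xi\in B}\|d\rho(\xi^n)\psi\|\le p_B^n(\psi)$ and identified which implications of those propositions are used.
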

	\begin{proof}
		Any strongly-entire vector is entire. Consequently, the first inclusion follows by combining \Fref{prop: entire_vectors_seminorm_finite} and \Fref{prop: analytic_vectors_equivalent_definitions}. The second one follows from the fact that if the orbit map $f^\psi : G \to \H_\rho$ is real-analytic, then it is smooth by \Fref{prop: analytic_implies_smooth}.
	\end{proof}
	
	\noindent
	The space $\H_\rho^{\mcO}$ of strongly-entire vectors will be considered in more detail in \Fref{sec: entire_vectors} below.
	
	\subsection{Positive energy and ground state representations.}
	Let $G$ be a locally convex Lie group with Lie algebra $\g$. If $\H$ is a Hilbert space and $S\subseteq \H$ is a subset, we write $\llbracket S \rrbracket\subseteq \H$ for the closed linear span of $S$.
	
	\begin{theorem}[Borchers--Arveson {\citep[Thm.\ 3.2.46]{bratelli_robinson_1}, \citep[Lem.\ 4.17]{beltita_neeb_covariant_reps}}]\label{thm: borchers_arveson}~\\
		Let $\M \subseteq \B(\H)$ be a von Neumann algebra on the Hilbert space $\H$. Let $(U_t)_{t \in \R}$ be a strongly continuous unitary one-parameter group satisfying $U_t\M U_t^{-1} \subseteq \M$ for all $t \in \R$. Assume that $U_t = e^{itH}$ with $H \geq 0$. Define $\alpha: \R \to \Aut(\M)$ by $\alpha_t(x) := \Ad_{U_t}(x) := U_t x U_t^{-1}$ for $t \in \R$ and $x \in \M$. Denote by $\M^\alpha(S)\subseteq \M$ the Arveson spectral subspace for $S\subseteq \R$. Then
		\begin{enumerate}
			\item There exists a strongly continuous unitary one-parameter group $V_t = e^{it H_0}$ in $\M$ with $H_0 \geq 0$ and $\Ad_{V_t} = \alpha_t$ for every $t \in \R$. 
			\item $\bigcap_{t > 0}\llbracket \M^\alpha[t, \infty)\H\rrbracket = \{0\}$. 
			\item $V_t$ is uniquely determined by the additional requirement that for any other such $V_t^\prime = e^{itH_0^\prime}$, we have $H_0^\prime \geq H_0$. In this case, the spectral projection $P$ corresponding to $V_t$ is determined uniquely by
			$$ P[t, \infty)\H = \bigcap_{s < t}\llbracket \M^\alpha[s, \infty)\H \rrbracket.$$
		\end{enumerate}
	\end{theorem}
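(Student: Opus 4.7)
The plan is to build $H_0$ directly as the generator of a spectral resolution constructed from the Arveson spectral subspaces $\M^\alpha[t, \infty)$. For each $t \in \R$, let $E(t) \in \B(\H)$ denote the orthogonal projection onto $\llbracket \M^\alpha[t, \infty)\H\rrbracket$. Since $\M^\alpha[t, \infty) \subseteq \M$, this closed subspace is $\M'$-invariant, so $E(t) \in \M'' = \M$. The inclusion $[t', \infty) \subseteq [t, \infty)$ for $t' \geq t$ gives the monotonicity $E(t') \leq E(t)$, and because $1 \in \M^\alpha(\{0\}) \subseteq \M^\alpha[t, \infty)$ for $t \leq 0$ we obtain $E(t) = I$ for all such $t$. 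Setting $P[t, \infty) := \bigwedge_{s < t} E(s)$ produces a decreasing family of projections in $\M$, equal to $I$ for $t \leq 0$.

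The positivity of $H$ enters via the standard Arveson spectral-shift estimate: if $x \in \M^\alpha[s, \infty)$ and $\psi$ lies in the spectral subspace $Q(F)\H$ of $H$, then $x\psi \in Q(\overline{F + [s, \infty)})\H$, where $Q$ denotes the spectral measure of $H$. Taking $F = [0, \infty)$, which is the full spectrum of $H$ by $H \geq 0$, yields $\llbracket \M^\alpha[s, \infty)\H\rrbracket \subseteq Q[s, \infty)\H$, and intersection gives
\[
\bigcap_{t > 0}\llbracket \M^\alpha[t, \infty)\H\rrbracket \subseteq \bigcap_{t > 0} Q[t, \infty)\H = \{0\},
\]
which establishes assertion (2) and shows $\bigwedge_{t > 0} P[t, \infty) = 0$. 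Hence $(P[t, \infty))_{t \in \R}$ is the spectral resolution of a self-adjoint operator $H_0 \geq 0$ affiliated to $\M$, and $V_t := e^{itH_0}$ lies in $\M$.

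To verify that $\Ad_{V_t} = \alpha_t$, it suffices, by norm-density in $\M$ of elements with compactly supported Arveson spectrum, to check this for $x \in \M^\alpha([a, b])$. The key input is the spectral-covariance identity $x\,P[s, \infty) = P[s + a, \infty)\,x\,P[s, \infty)$, derived from the multiplication rule $\M^\alpha(F_1)\M^\alpha(F_2) \subseteq \M^\alpha(\overline{F_1 + F_2})$ applied to translates of $[s, \infty)$; integrating against $e^{itr}\,dP(r)$ then produces $V_t\,x = \alpha_t(x)\,V_t$. For assertion (3), if $V'_t = e^{itH'_0} \in \M$ with $H'_0 \geq 0$ also implements $\alpha$, then $V'_t V_t^{-1}$ commutes with every element of $\M$, so lies in $Z(\M)$, and by strong continuity and the one-parameter-group property equals $e^{itZ}$ for some self-adjoint $Z \in Z(\M)$; hence $H'_0 = H_0 + Z$, and the minimality requirement $H'_0 \geq H_0$ is equivalent to $Z \geq 0$. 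The intersection formula for the spectral projections of the minimal $H_0$ is built into the construction.

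The hardest step is the spectral-covariance identity in the implementation argument: extracting it rigorously from the defining $L^1$-Fourier characterization of $\M^\alpha(F)$ requires the most delicate part of the Arveson calculus, and this is precisely the kind of structural input that the machinery developed in \Fref{sec: Arveson_spectral_theory} is designed to supply. The positivity input for (2), by contrast, is essentially a one-line consequence of the spectral-shift estimate once that calculus is in place.
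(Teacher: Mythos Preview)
The paper does not give its own proof of this theorem: it is quoted as a classical result, with attribution to \citep[Thm.\ 3.2.46]{bratelli_robinson_1} and \citep[Lem.\ 4.17]{beltita_neeb_covariant_reps}, and used as a black box. So there is nothing in the paper to compare your argument against; what you have written is essentially a sketch of the standard Borchers--Arveson proof as in those references.

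Your outline follows the classical route (build the spectral family from the closures $\llbracket \M^\alpha[t,\infty)\H\rrbracket$, use $H\geq 0$ for the vanishing intersection, check inner implementation, then uniqueness via the centre). Two small inaccuracies: first, elements of $\M$ with compactly supported Arveson spectrum are not norm-dense in $\M$ in general, only $\sigma$-weakly (equivalently strong-$*$) dense, so the reduction to such $x$ in the implementation step must use a topology in which $\Ad_{V_t}$ and $\alpha_t$ are continuous on $\M$, not the norm topology. Second, your appeal to \Fref{sec: Arveson_spectral_theory} is misplaced: that section develops Arveson spectral theory for $\R$-actions on locally convex spaces with polynomial growth, aimed at the smooth-vector spaces $\H_\rho^\infty$ and the jet spaces $P(\n_-,V_\sigma)$, and does not supply the von~Neumann-algebraic spectral-shift and covariance identities you invoke. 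Those are the ingredients of the classical proof in the cited sources, and the paper deliberately imports the theorem rather than redeveloping them.
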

	
	\begin{definition}
		Consider the setting of \Fref{thm: borchers_arveson}. A unitary one-parameter group $V_t = e^{it H_0}$ satisfying the conditions of \Fref{thm: borchers_arveson}$(1)$ is called a \textit{positive inner implementation} of $\alpha : \R \to \Aut(\M)$ on $\H$. If $V_t$ additionally satisfies the condition in \Fref{thm: borchers_arveson}$(3)$ then it is said to be the \textit{minimal positive inner implementation} of $\alpha$ on $\H$.
	\end{definition}
	
	\begin{definition}~\\
		A smooth unitary representation $(\rho, \H_\rho)$ of $G$ is of \textit{positive energy} (p.e.) at $\xi \in \g$ if $-i\Spec( \overline{d\rho(\xi)}) \geq 0$. If additionally $\H_\rho(0) := \ker \overline{d\rho(\xi)}$ is cyclic for $G$, then $(\rho, \H_\rho)$ is said to be \textit{ground state} at $\xi \in \g$.
	\end{definition}
	
	\begin{definition}\label{def: extended_pe_reps}
		Let $\alpha : \R \to \Aut(G)$ be a homomorphism for which the corresponding action $\R \times G \to G$ is smooth. Define $G^\sharp := G \rtimes_\alpha \R$ and $\g^\sharp := \bm{\mrm{L}}(G^\sharp) = \g \rtimes_D \R \bm{d}$, where $\bm{d} := 1 \in \R$. Let $(\rho, \H_\rho)$ be a smooth unitary representation of $G$. We write $\M := \rho(G)^{\prime \prime}$ for the von Neumann algebra generated by $\rho(G)$.
		\begin{enumerate}
			\item An \textit{extension} of $\rho$ to $G^\sharp$ is a smooth unitary representation $\widetilde{\rho}$ of $G^\sharp$ on $\H_\rho$ such that $\restr{\widetilde{\rho}}{G} = \rho$.
			\item We say that $\rho$ is of positive energy w.r.t.\ $\alpha$ if there exists an extension $\widetilde{\rho}$ of $\rho$ to $G^\sharp$ which is of p.e.\ at $\bm{d} \in \g^\sharp$. In this case $\widetilde{\rho}$ is called a \textit{positive extension} of $\rho$.
			\item Assume that $\rho$ is of positive energy w.r.t.\ $\alpha$. A \textit{minimal positive extension} $\widetilde{\rho}$ of $\rho$ is a positive extension $\widetilde{\rho}$ of $\rho$ to $G^\sharp$ such that $V_t := \widetilde{\rho}(e,t)$ is the minimal positive inner implementation of the automorphism group $\R \to \Aut(\M), \, t\mapsto \Ad_{V_t}$. Then in particular $V_t \in \M$ for every $t \in \R$.
			\item A unitary representation $\rho$ of $G$ that is of p.e.\ w.r.t.\ $\alpha$ is said to be \textit{ground state} if it has a minimal positive extension that is ground state at $\bm{d} \in \g^\sharp$.
		\end{enumerate}
	\end{definition}
	
	\begin{definition}
		Let $\alpha : \R \to \Aut(G)$ be an $\R$-action on $G$ for which the corresponding map $\R \times G \to G$ is smooth. Let $\widehat{G}$ denote the set of equivalence classes of irreducible unitary representations of $G$ that are smooth. Define $$\widehat{G}_{\pos(\alpha)} := \set{ \rho \in \widehat{G} \st \rho \text{ is of p.e.\ w.r.t.\ } \alpha}.$$
	\end{definition}
	
	\begin{proposition}\label{prop: positive_extensions_of_repr}~\\
		Consider the setting of \Fref{def: extended_pe_reps}. Assume that $\rho$ is of positive energy w.r.t.\ $\alpha$. 
		\begin{enumerate}
			\item There exists a unique minimal positive extension $\widetilde{\rho}_0$ of $\rho$ to $G^\sharp$.
			\item If $\widetilde{\rho}$ is any other positive extension of $\rho$ to $G^\sharp$, there exists a strongly continuous unitary $1$-parameter group $(U_t)$ in $\M^\prime$ such that $\widetilde{\rho}(t) = \widetilde{\rho}_0(t)U_t$. In this case $\widetilde{\rho}_0(G^\sharp)^{\prime \prime} = \rho(G)^{\prime \prime}$. In particular, $\rho$ is irreducible if and only if $\widetilde{\rho}_0$ is.
			\item Assume that $\alpha_T = \id_G$ for some $T > 0$. Then $\widetilde{\rho}_0(T) = \id_{\H_\rho}$ and $\rho$ is ground state w.r.t $\alpha$.
			\item Let $P$ denote the spectral measure associated to $t \mapsto \widetilde{\rho}_0(t)$. Let $\epsilon > 0$. Then the projection $P[0, \epsilon)$ has central support $1_{\M} = \id_{\H_\rho} \in \mc{Z}(\M)$. In particular $P[0, \epsilon)\H_\rho$ is cyclic for $\M$. 
		\end{enumerate}
	\end{proposition}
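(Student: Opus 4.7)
The strategy is to derive all four statements from the Borchers--Arveson theorem (\Fref{thm: borchers_arveson}) applied to the conjugation action of $t \mapsto \widetilde{\rho}(1_G, t)$ on $\M := \rho(G)''$, whose positivity is inherited from the assumed positive extension of $\rho$. For (1) I would let $V_t = e^{itH_0} \in \M$ be the resulting minimal positive inner implementation and set $\widetilde{\rho}_0(g, t) := \rho(g) V_t$. The group homomorphism property reduces to the identity $V_t \rho(g') V_t^{-1} = \rho(\alpha_t(g'))$, which holds because $V_t$ implements $\alpha_t$ on $\M$; strong continuity is then clear. Uniqueness on the $G$-component is tautological, and uniqueness of $V_t$ is provided by \Fref{thm: borchers_arveson}(3). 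The smoothness of $\widetilde{\rho}_0$ would be established via the decomposition $\widetilde{\rho}(g, t) = \widetilde{\rho}_0(g, t) U_t$ with $U_t \in \M'$ obtained in (2) below: a regularization of $\widetilde{\rho}$-smooth vectors against spectral projections of $H_0$ (equivalently, convolution in $t$ with compactly supported smooth functions) should produce a dense set of jointly $\widetilde{\rho}_0$-smooth vectors.

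For (2), I would first observe that $\alpha_s(V_t) := \widetilde{\rho}(1, s) V_t \widetilde{\rho}(1, s)^{-1}$ is again a positive inner implementation of $\alpha_t$ in $\M$ with generator $\widetilde{\rho}(1, s) H_0 \widetilde{\rho}(1, s)^{-1} \geq 0$. Applying the minimality of $H_0$ to both $s$ and $-s$ forces $\widetilde{\rho}(1, s) H_0 \widetilde{\rho}(1, s)^{-1} = H_0$, so $V_t$ commutes with $\widetilde{\rho}(1, s)$ for all $s, t$. Hence $U_t := \widetilde{\rho}(1, t) V_t^{-1}$ is a strongly continuous unitary one-parameter group, and since $V_t$ and $\widetilde{\rho}(1, t)$ both implement $\alpha_t$ on $\rho(G)$, $U_t$ commutes with $\rho(G)$, i.e.\ $U_t \in \M'$. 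The identity $\widetilde{\rho}_0(G^\sharp)'' = \M$ then follows from $V_t \in \M$ (one inclusion) together with $\rho(G) \subseteq \widetilde{\rho}_0(G^\sharp)$ (the other), so irreducibility of either representation is equivalent to $\M = \B(\H_\rho)$.

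I would prove (4) before (3). For (4), suppose the central support $z := z(P[0, \epsilon)) \in \mcZ(\M)$ is not $1$. Then $E := 1 - z \neq 0$ is a central projection with $E P[0, \epsilon) = 0$, so $\Spec(H_0|_{E \H_\rho}) \subseteq [\epsilon, \infty)$. The perturbation $\widetilde{V}_t := e^{-it \epsilon E} V_t$ lies in $\M$ because $E \in \mcZ(\M)$, still implements $\alpha$ (the central correction commutes with everything in $\M$), and has generator $H_0 - \epsilon E \geq 0$ that is strictly smaller than $H_0$ on $E \H_\rho$, contradicting the minimality of $V_t$. Hence $z = 1$, which is exactly the cyclicity of $P[0, \epsilon) \H_\rho$ for $\M$. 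For (3), the hypothesis $\alpha_T = \id_G$ yields $V_T \in \mcZ(\M)$, and the conclusion of (4) entails that $\inf \Spec(H_0|_{E \H_\rho}) = 0$ for every non-zero central projection $E$. If $V_T \neq \id_{\H_\rho}$, pick $\theta_0 \in (0, 2\pi)$ in the support of the spectral measure of $V_T$ and $\delta \in (0, \min(\theta_0, 2\pi - \theta_0))$; the spectral projection $E$ of $V_T$ on the arc $\{e^{i\theta} : |\theta - \theta_0| < \delta\}$ is non-zero, central, and satisfies $\inf \Spec(H_0|_{E \H_\rho}) \geq (\theta_0 - \delta)/T > 0$, a contradiction. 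Hence $V_T = \id_{\H_\rho}$ and $\Spec(H_0) \subseteq (2\pi/T) \Z_{\geq 0}$, so $\ker H_0 = P[0, 2\pi/T) \H_\rho$. By (4) this subspace is cyclic for $\M$, and therefore also cyclic for $\rho(G)$ (any $\rho(G)$-invariant closed subspace is automatically $\rho(G)'' = \M$-invariant); since $V_t$ preserves $\ker H_0$, cyclicity under $\widetilde{\rho}_0(G^\sharp)$ reduces to cyclicity under $\rho(G)$, yielding the ground state property.

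The main obstacle, and the step I expect to be the most technical, is the smoothness of $\widetilde{\rho}_0$ in (1): the operator $V_t$ is only strongly continuous, and joint smoothness of $(g, t) \mapsto \rho(g) V_t \psi$ is not immediate from smoothness of $\psi$ for $\widetilde{\rho}$. A careful regularization argument exploiting the centrality of $U_t$ and the invariance of the $\widetilde{\rho}$-smooth vectors under a suitable functional calculus in $H_0$ should produce enough smooth vectors for $\widetilde{\rho}_0$.
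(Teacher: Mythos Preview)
Your argument is essentially correct and self-contained, whereas the paper does not prove this proposition at all: it simply cites \citep[Cor.~3.9]{BasNeeb_PE_reps_I} for (1)--(3) and \citep[Lem.~4.17]{beltita_neeb_covariant_reps} for (4). What you have written is, in outline, a reconstruction of the standard arguments behind those references, all ultimately resting on the Borchers--Arveson theorem exactly as you set it up.

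A few comments on the details. Your treatment of (2) and (4) is clean and complete. For (3), your reduction via $V_T \in \mcZ(\M)$ and the spectral-arc argument is correct; note only that the passage ``cyclic for $\M$ $\Rightarrow$ cyclic for $\rho(G)$'' uses that a closed $\rho(G)$-invariant subspace is automatically $\rho(G)''=\M$-invariant, which you state. The one place where your sketch is genuinely incomplete is exactly the point you flag: smoothness of $\widetilde{\rho}_0$. Convolution against $V_t$ alone will not obviously work, because you need joint smoothness in $(g,t)$ and $V_t$ need not preserve $\widetilde{\rho}$-smooth vectors. The cleaner route is to mollify in the $U$-direction: for $\psi$ a $\widetilde{\rho}$-smooth vector and $f \in C^\infty_c(\R)$, set $\psi_f := \int_\R f(s)\,U_s\psi\,ds$. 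Since $U_s$ commutes with all of $\widetilde{\rho}(G^\sharp)$ (you verified $[U_s,\widetilde{\rho}(1,t)]=0$ via minimality, and $U_s \in \M'$ gives $[U_s,\rho(g)]=0$), the vector $\psi_f$ remains $\widetilde{\rho}$-smooth; moreover $t \mapsto U_t^{-1}\psi_f = \psi_{f(\cdot+t)}$ is smooth in $\H_\rho$. One then shows that $(g,t)\mapsto \widetilde{\rho}_0(g,t)\psi_f = \widetilde{\rho}(g,t)\,U_t^{-1}\psi_f$ is smooth by combining these two facts with a parametrized-smoothness argument (this is where the cited reference does the real work, using results on smooth vectors for semidirect products). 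Your instinct that this is the only nontrivial step is correct.
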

	\begin{proof}
		The first three assertions follow by \citep[Cor.\ 3.9]{BasNeeb_PE_reps_I} and the last by \citep[Lem.\ 4.17]{beltita_neeb_covariant_reps}.
	\end{proof}

	\section{The space $\H_\rho^{\mcO}$ of strongly-entire vectors}\label{sec: entire_vectors}
	\noindent
	Let $G$ be a BCH Fr\'echet--Lie group with Lie algebra $\g$. Let $(\rho, \H_\rho)$ be a smooth unitary representation of $G$. In this section, we extend some results of \citep{Goodman_analytic_entire_vectors} concerning the space of strongly-entire vectors $\H_\rho^{\mcO}$ from the case where $G$ is finite-dimensional to the present setting.
	
	\subsection{Necessary conditions for the existence of strongly-entire representations}
	
	\noindent
	We first show that when $\dim(\g) < \infty$, the definition for $\H_\rho^{\mcO}$ (\Fref{def: smooth_analytic_entire_vectors}) agrees with the one used in \citep[p.61]{Goodman_analytic_entire_vectors}. The existence of a dense set of strongly-entire vectors is well-understood for continuous unitary representations of finite-dimensional Lie groups, yielding immediate necessary conditions for the existence of strongly-entire representations in the infinite-dimensional setting. This will turn out to be quite restrictive.\\
	
	\noindent
	Assume that $\dim(\g) < \infty$. Let us recall the definition used in \citep[p.61]{Goodman_analytic_entire_vectors}. Let $\{e_\mu\}_{\mu=1}^d$ be a basis of $\g$. For $v \in \H_\rho^\infty$, we define
	$$ E_s(v) := \sum_{n=0}^\infty {s^n \over n!}\sup_{1 \leq \mu_k \leq d} \| d\rho(e_{\mu_1}\cdots e_{\mu_n})v\| \in [0, \infty].$$
	Set $\H_\rho^{\omega_t} := \set{v \in \H_\rho^\infty \st E_s(v) < \infty \text{ for all } 0 < s < t}$ for $t > 0$. We now define $\H_\rho^{{\mcO}^\prime} := \bigcap_{t > 0} \H_\rho^{\omega_t}$. Equip $\H_\rho^{{\mcO}^{\prime}}$ with the locally convex topology defined by the seminorms $E_s$ for $s > 0$.
	
	\begin{lemma}
		$\H_\rho^{\mcO_b} = \H_\rho^{\mcO} = \H_\rho^{{\mcO}^\prime}$ as an equality of locally convex vector spaces.
	\end{lemma}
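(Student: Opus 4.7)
The plan is to establish the two equalities separately, both hinging on the fact that when $\dim(\g) < \infty$, compact subsets of $\g_\C$ are precisely the closed bounded ones, so a finite-dimensional basis can be used to dominate all relevant seminorms.

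First I would dispatch $\H_\rho^{\mcO_b} = \H_\rho^{\mcO}$. Since compact sets are bounded, clearly $\H_\rho^{\mcO_b} \subseteq \H_\rho^{\mcO}$. For the reverse, any bounded $B \subseteq \g_\C$ sits inside its closure $\overline{B}$, which is compact by finite-dimensionality; monotonicity of $q_B$ in $B$ yields $q_B(\psi) \leq q_{\overline{B}}(\psi)$, so if $\psi \in \H_\rho^{\mcO}$ then $q_B(\psi) < \infty$ for every bounded $B$. The same monotonicity shows that the two defining families of seminorms (bounded vs.\ compact sets) mutually dominate each other, giving equality of topologies.

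Next I would tackle $\H_\rho^{\mcO} = \H_\rho^{{\mcO}^\prime}$ using the basis $\{e_\mu\}_{\mu=1}^d$ of $\g$, which is also a $\C$-basis of $\g_\C$. For the inclusion $\H_\rho^{\mcO} \subseteq \H_\rho^{{\mcO}^\prime}$, observe that for each $s > 0$ the set $B_s := \{se_\mu : 1 \leq \mu \leq d\}$ is compact, and directly from the definitions
\[
q_{B_s}(\psi) \;=\; \sum_{n=0}^\infty {s^n \over n!}\sup_{1 \leq \mu_k \leq d}\|d\rho(e_{\mu_1}\cdots e_{\mu_n})\psi\| \;=\; E_s(\psi).
\]
Thus each $E_s$ appears among the defining seminorms of $\H_\rho^{\mcO}$, which yields both the set inclusion and continuity of $\H_\rho^{\mcO} \hookrightarrow \H_\rho^{{\mcO}^\prime}$. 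For the reverse inclusion $\H_\rho^{{\mcO}^\prime} \subseteq \H_\rho^{\mcO}$, given a compact $B \subseteq \g_\C$, pick the linear coordinate functionals $c_\mu$ dual to the basis; boundedness of $B$ together with continuity of the $c_\mu$ furnishes a constant $C > 0$ such that $|c_\mu(\xi)| \leq C$ for every $\xi \in B$ and every $\mu$. Expanding each $\xi_i = \sum_\mu c_\mu(\xi_i)e_\mu$ multilinearly inside $d\rho(\xi_1\cdots \xi_n)$ and applying the triangle inequality gives
\[
\|d\rho(\xi_1\cdots \xi_n)\psi\| \;\leq\; \sum_{\mu_1, \ldots, \mu_n} C^n \|d\rho(e_{\mu_1}\cdots e_{\mu_n})\psi\| \;\leq\; (Cd)^n \sup_{1 \leq \mu_k \leq d}\|d\rho(e_{\mu_1}\cdots e_{\mu_n})\psi\|,
\]
and summing with weight $1/n!$ yields $q_B(\psi) \leq E_{Cd}(\psi)$. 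This shows $\H_\rho^{{\mcO}^\prime} \subseteq \H_\rho^{\mcO}$ and continuity of the identity in the reverse direction.

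There is no real obstacle here; the proof is a direct bookkeeping argument, and the finite-dimensionality of $\g$ is what allows both (i) compactness to coincide with closed-boundedness (handling the first equality) and (ii) a fixed basis to dominate every bounded set uniformly (handling the second). The only small care needed is in the second estimate, where one must distinguish the constant $C$ coming from coordinate bounds on $B$ from the combinatorial factor $d^n$ arising from summing over multi-indices; absorbing both into $E_{Cd}$ cleanly closes the argument.
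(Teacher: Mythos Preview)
Your proof is correct and follows essentially the same approach as the paper. The only cosmetic difference is that the paper takes $B_s$ to be the coordinate box $\{\sum_\mu c_\mu e_\mu : |c_\mu|\leq s\}$ and derives the two-sided estimate $E_s \leq q_{B_s} \leq E_{sd}$, whereas you take $B_s$ to be the finite set $\{se_\mu\}$ and get the cleaner identity $q_{B_s}=E_s$ together with $q_B \leq E_{Cd}$ for arbitrary compact $B$; both arguments rest on the same multilinear basis expansion and the $d^n$ combinatorial factor.
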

	\begin{proof}
		Define for $s>0$ the compact subsets 
		$$B_s := \set{\sum_{\mu=1}^d c_\mu e_\mu \st c_\mu \in \C, \; |c_\mu| \leq s \quad \forall \mu\in\{1, \ldots, d\}} \subseteq \g_\C.$$
		Let $s > 0$. As $se_\mu \in B_s$ for any $\mu \in \{1, \ldots, d\}$, it is immediate that $E_s(v) \leq q_{B_s}(v)$. Conversely, take $\xi_j \in B_s$ for $j \in \{1, \ldots, n\}$. Then $\xi_j = \sum_{\mu_j = 1}^d c_{\mu_j} e_{\mu_j} \in B_s$ for some $c_{\mu_j} \in \C$ with $|c_{\mu_j}| \leq s$. So
		\begin{align*}
			d\rho(\xi_{j_1}\cdots \xi_{j_n})v
			&= \sum_{\mu_{1}, \ldots \mu_{n}=1}^d c_{\mu_{1}}\cdots c_{\mu_{n}}d\rho(e_{\mu_1}\cdots e_{\mu_n})v.
		\end{align*}
		Consequently 
		\begin{align*}
			\|d\rho(\xi_{j_1}\cdots \xi_{j_n})v\| \leq s^n \sum_{\mu_{1}, \ldots \mu_{n}=1}^d \|d\rho(e_{\mu_1}\cdots e_{\mu_n})v\| \leq s^n d^n \sup_{1\leq \mu_k \leq d} \|d\rho(e_{\mu_1}\cdots e_{\mu_n})v\|. 
		\end{align*}
		Hence $E_s(v) \leq q_{B_s}(v) \leq E_{sd}(v)$ for any $s > 0$. This shows that $\H_\rho^{\mcO} = \H_\rho^{{\mcO}^\prime}$ as locally convex vector spaces. Since $\dim(\g_\C) < \infty$, it is moreover clear that $\H_\rho^{\mcO_b} = \H_\rho^{\mcO}$, because the closure of any bounded set in $\g_\C$ is compact.
	\end{proof}
	
	\noindent
	Following \citep[p.\ 128]{Auslander_Moore_unireps_solvable}, \citep[p.\ 115]{Jenkins_growth_rate} and \citep{Penney_holomorphic_extensions}, we define:
	\begin{definition}~
		\begin{itemize}
			\item A finite-dimensional Lie group $G$ is said to be of \textit{type $R$} if $\Spec(\Ad_g) \subseteq \T$ for every $g\in G$, where $\T\subseteq \C$ is the unit-circle.
			\item A finite-dimensional Lie algebra $\g$ is said to be of \textit{type $R$} if $\Spec(\ad_\xi) \subseteq i\R$ for every $\xi \in \g$.
		\end{itemize}
	\end{definition}
	
	\begin{remark}
		Lie algebras of type $R$ are by some authors also called \textit{weakly elliptic} \citep[Def.\ II.1]{Neeb_open_problems}.
	\end{remark}
	
	\begin{proposition}[{\citep[Prop.\ 1.3]{Jenkins_growth_rate}}]\label{prop: type_R_gp_lie_alg}~\\
		Let $G$ be a finite-dimensional connected Lie group with Lie algebra $\g$. Then $G$ is of type $R$ if and only if $\g$ is of type $R$.
	\end{proposition}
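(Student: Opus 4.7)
The plan is to use the naturality relation $\Ad_{\exp(\xi)} = e^{\ad_\xi}$ together with the elementary spectral mapping fact that $\Spec(e^A) = \set{e^\lambda \st \lambda \in \Spec(A)}$ (counted with multiplicities) for every endomorphism $A$ of a finite-dimensional space, combined with the equivalence $e^\lambda \in \T \iff \lambda \in i\R$.

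For the direction ``$G$ of type $R$ $\Longrightarrow$ $\g$ of type $R$'', I would fix $\xi \in \g$ and apply the hypothesis to the one-parameter family $\Ad_{\exp(t\xi)} = e^{t\,\ad_\xi}$; spectral mapping yields $\set{e^{t\lambda} \st \lambda \in \Spec(\ad_\xi)} \subseteq \T$ for every $t \in \R$, which forces each $\lambda \in \Spec(\ad_\xi)$ to be purely imaginary.

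The reverse direction is the substantive one. A naive attempt, writing a general $g \in G$ as a product $\exp(\xi_1) \cdots \exp(\xi_n)$ by connectedness and applying the easy direction factor-by-factor, fails because spectra behave badly under products. Instead the plan is to work inside $\GL(\g)$ with the connected Lie subgroup $\Ad(G)$ and pass to its real Zariski closure $H$. The Lie algebra $\h$ of $H$ is algebraic and contains $\ad(\g)$, and since the condition $\Spec(X) \subseteq i\R$ can be phrased in terms of the coefficients of the characteristic polynomial of $X$, it cuts out a real-algebraic subvariety of $\gl(\g)$; hence the type-$R$ property propagates from $\ad(\g)$ to all of $\h$, and $H$ is a connected real algebraic group with type-$R$ Lie algebra.

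The main obstacle, which is the real content of the proposition, is the concluding structural step. I expect to invoke the real Jordan decomposition inside $H$: every $h \in H$ factors as a commuting product $h_s h_u$ of a semisimple and a unipotent element of $H$. The unipotent factor has spectrum $\{1\}$; for the semisimple factor one uses that the type-$R$ condition on $\h$ forces the semisimple Levi part of $H$ to be compact (negative-definite Killing form) and its reductive center to act on $\g$ with purely imaginary Lie-algebra eigenvalues, whence $\Spec(h_s) \subseteq \T$. Combining yields $\Spec(h) \subseteq \T$ for every $h \in H \supseteq \Ad(G)$. This algebraic-group step is exactly what \citep{Jenkins_growth_rate} carries out in detail and does not admit a purely Lie-theoretic shortcut.
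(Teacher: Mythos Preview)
The paper does not give its own proof of this proposition; it is stated with a bare citation to \citep[Prop.\ 1.3]{Jenkins_growth_rate} and used as a black box thereafter. So there is nothing in the paper to compare your argument against.

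On the merits of your sketch: the forward direction is correct and standard, and you rightly identify the reverse implication as the nontrivial one. One step deserves a flag, however. Your claim that the condition $\Spec(X) \subseteq i\R$ ``cuts out a real-algebraic subvariety of $\gl(\g)$'' is not correct as stated: already for real $2 \times 2$ matrices the condition reads $\tr X = 0$ together with $\det X \geq 0$, which is semi-algebraic but not Zariski-closed. Hence Zariski-density alone does not propagate the type-$R$ property from $\ad(\g)$ to the Lie algebra $\h$ of the algebraic hull. The propagation is rather through the additive Jordan decomposition and the replica construction that generate $\h$ from $\ad(\g)$, and one still needs a structural argument to pass from generators to all of $\h$. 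Your final sentence, acknowledging that this algebraic-group step is the genuine content carried out in \citep{Jenkins_growth_rate}, is accurate and is really all the paper itself is claiming.
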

	
	\begin{proposition}[{\citep[Lem.\ on p.\ 120]{Penney_holomorphic_extensions}}]\label{prop: classification_type_R_LA}~\\
		A finite-dimensional Lie algebra $\g$ is of type $R$ if and only if it is the semi-direct product $\s \rtimes \k$ of a compact semisimple Lie algebra $\k$ and a solvable Lie algebra $\s$ that is of type $R$.
	\end{proposition}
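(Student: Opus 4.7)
The plan is to take the Levi decomposition $\g = \s \rtimes \l$ with $\s = \rad(\g)$ and $\l$ a semisimple Levi complement, and show that each factor has the required property. Since $\s$ is an $\ad_\xi^\g$-invariant subspace for any $\xi \in \s$, the spectrum of $\ad_\xi^\s$ is contained in that of $\ad_\xi^\g$, so $\s$ inherits type $R$. Similarly, for $\xi \in \l$, the induced map on the quotient $\g/\s \cong \l$ coincides with $\ad_\xi^\l$, and its spectrum embeds into that of $\ad_\xi^\g$; hence $\l$ is a semisimple Lie algebra of type $R$. To finish this direction I would show that semisimple type $R$ forces compactness: take a Cartan decomposition $\l = \k \oplus \p$ with Cartan involution $\theta$, and observe that for $\xi \in \p$ the operator $\ad_\xi$ is self-adjoint with respect to the positive-definite form $B_\theta(X,Y) := -B(X,\theta Y)$, so it has real spectrum. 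Combined with the type $R$ assumption, $\Spec(\ad_\xi) \subseteq \R \cap i\R = \{0\}$, and diagonalizability forces $\ad_\xi = 0$, which by semisimplicity of $\l$ gives $\xi = 0$. Hence $\p = 0$ and $\l = \k$ is compact.

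\textbf{Reverse direction, setup.} Assume $\g = \s \rtimes \k$ with $\s$ solvable of type $R$ and $\k$ compact semisimple. Given $\xi = \xi_\s + \xi_\k \in \g$, I would use again that $\s$ is $\ad_\xi$-invariant to decompose
\[
\Spec(\ad_\xi^\g) = \Spec(\ad_\xi|_\s) \cup \Spec\bigl(\overline{\ad_\xi}\restriction \g/\s\bigr).
\]
The induced operator on $\g/\s \cong \k$ is $\ad_{\xi_\k}^\k$, which has purely imaginary spectrum because $\k$ is compact semisimple. On $\s$ we may write $\ad_\xi|_\s = A + B$ with $A := \ad_{\xi_\s}^\s$ (whose spectrum is purely imaginary by type $R$ of $\s$) and $B := \ad_{\xi_\k}|_\s$ the derivation of $\s$ coming from the $\k$-action. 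Since $B$ is skew-adjoint with respect to any $K$-invariant inner product on $\s$ (such an inner product exists because $K$ is compact), it too has purely imaginary spectrum.

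\textbf{Reverse direction, main obstacle and strategy.} The core difficulty is that $A$ and $B$ need not commute: a direct computation using the Jacobi identity gives $[A,B] = -\ad_{B(\xi_\s)}|_\s$, which is typically nonzero, so one cannot simply add the imaginary spectra. My plan is to reduce to a commuting situation using the compact torus $T := \overline{\exp(\R \xi_\k)} \subseteq K$. Decompose $\s$ into $T$-isotypic components under $\Ad_T$; averaging over $T$ splits $\xi_\s = \xi_\s^T + \xi_\s^\perp$ with $\xi_\s^T \in \s^T$ (so $[\xi_\k, \xi_\s^T] = 0$) and $\xi_\s^\perp$ in the sum of nontrivial isotypic components. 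For the averaged element $\xi_\s^T + \xi_\k$, the corresponding operators $\ad_{\xi_\s^T}$ and $\ad_{\xi_\k}$ commute on $\s$, and commuting operators with purely imaginary spectra have a sum with purely imaginary spectrum (via simultaneous triangularization). One then shows that $\xi$ is conjugate under the action of $S$ (or differs by a flow in the unipotent radical part) to this averaged element, which does not enlarge the spectrum thanks to the solvable-type-$R$ structure. The hard part is controlling this conjugation argument; an alternative, favored in Penney's original proof, is to pass to the group level and realize $\Ad(G)|_\s$ inside a Lie subgroup of $GL(\s)$ that is itself manifestly of type $R$, using the solvable type $R$ structure of $S$ together with the compactness of the $K$-orbit of $\xi_\s$. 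Either route, the main technical burden is exactly the mixed spectral analysis of $A + B$.
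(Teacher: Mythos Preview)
The paper does not supply its own proof of this proposition; it merely cites Penney's lemma. So there is no ``paper's proof'' to compare against, and your proposal stands or falls on its own merits.

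Your forward direction is sound. The Levi decomposition argument is correct: the radical $\s$ is an ideal, so $\ad_\xi^\g$ restricts to it and the spectrum of the restriction lies inside that of the full operator; likewise the quotient $\g/\s\cong\l$ inherits type~$R$. Your Cartan-decomposition argument that a semisimple Lie algebra of type~$R$ is compact is the standard one and is complete.

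The reverse direction, however, has a genuine gap precisely where you flag it. You correctly reduce to showing that $\ad_\xi|_\s = A+B$ has purely imaginary spectrum, with $A=\ad_{\xi_\s}^{\s}$ and $B=\ad_{\xi_\k}|_\s$, and you correctly observe that $A$ and $B$ need not commute. But neither of your two proposed resolutions is carried through. The averaging-and-conjugation idea is not justified: there is no reason to expect $\xi_\s+\xi_\k$ to be $\Ad$-conjugate to $\xi_\s^T+\xi_\k$, and ``differs by a flow in the unipotent radical part'' does not name an actual mechanism that preserves the spectrum. The alternative ``pass to the group level and realize $\Ad(G)|_\s$ inside a type~$R$ subgroup of $\GL(\s)$'' is a sketch of a strategy, not an argument; you have not said what that subgroup is or why it is of type~$R$. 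Since you yourself write that ``the main technical burden is exactly the mixed spectral analysis of $A+B$,'' the proposal as it stands is an outline with the key step missing. To complete it you would need either Penney's original argument or an equivalent structural input (for instance, the equivalence of type~$R$ with polynomial growth for the associated simply connected group, which is stable under compact extensions).
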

	
	\begin{theorem}[{\citep[Cor.\ II.5]{Penney_holomorphic_extensions}}]\label{thm: hol_extensions_of_reps}~\\
		Let $G$ be a finite-dimensional Lie group and $\rho$ a continuous unitary representation of $G$. Then $\H_\rho^{\mcO}$ is dense if and only if $\rho$ factors through a Lie group of type $R$.
	\end{theorem}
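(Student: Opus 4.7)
My approach would split into the two implications.

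For the \emph{if} direction, reduce by the classification \Fref{prop: classification_type_R_LA} to the case $\g_0 := \g/\ker(d\rho) = \s \rtimes \k$ with $\k$ compact semisimple and $\s$ solvable of type $R$. On such a group, $\|\Ad_g\|$ grows at most polynomially in $g$. The plan is a G\aa{}rding-type construction: for $\psi \in \H_\rho$ and $\phi \in C_c^\infty(G_0)$, the smoothed vector $\psi_\phi := \int \phi(g)\rho(g)\psi\, dg$ satisfies $d\rho(\xi_1\cdots \xi_n)\psi_\phi = \int(\widetilde{L}_{\xi_1\cdots \xi_n}\phi)(g)\rho(g)\psi\, dg$ where $\widetilde{L}$ denotes iterated left-invariant derivatives. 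Choosing $\phi$ from a class of analytic mollifiers (e.g., a Gaussian-like function on $\s$ combined with a $K$-finite smooth function on $\k$), the polynomial growth of $\Ad$ yields uniform bounds $\sup_{\xi_i \in B}\|\widetilde{L}_{\xi_1\cdots \xi_n}\phi\|_{L^1} \leq C_B R_B^n$, so $q_B(\psi_\phi) < \infty$ and $\psi_\phi \in \H_\rho^{\mcO}$. Density then follows by letting $\phi$ approximate the Dirac distribution at $1 \in G_0$.

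For the \emph{only if} direction, assume $\H_\rho^{\mcO}$ is dense; after replacing $G$ by $G/\ker(\rho)$ we may assume $\rho$ faithful and must show $\g$ is of type $R$, by \Fref{prop: type_R_gp_lie_alg}. Suppose for contradiction that some $\xi_0 \in \g$ has an eigenvalue $\lambda = a + ib$ of $\ad_{\xi_0}$ with $a \neq 0$, and pick an eigenvector $\xi \in \g_\C$. For any $\psi \in \H_\rho^{\mcO}$, \Fref{prop: entire_vectors_seminorm_finite} shows $F_\psi(z) := \sum_{n\geq 0} (z^n/n!)\,d\rho(\xi)^n \psi$ is an entire $\H_\rho$-valued map on $\C$. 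Using $[\xi_0,\xi] = \lambda \xi$ and unitarity of $\rho(\exp(t\xi_0))$, the covariance
$$\rho(\exp(t\xi_0))F_{\psi_t}(z) = F_\psi(z e^{t\lambda}), \qquad \psi_t := \rho(\exp(-t\xi_0))\psi,$$
yields $\|F_\psi(z e^{t\lambda})\| = \|F_{\psi_t}(z)\|$ for all $t \in \R$, $z\in \C$. Taking matrix coefficients $g_\eta(z) := \langle \eta, F_\psi(z)\rangle$ against $\eta \in \H_\rho^{\mcO}$ gives scalar entire functions whose growth is controlled on the family of rays $\{z e^{t\lambda}\}_{t\in\R}$. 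A Phragm\'en--Lindel\"of argument on a suitable half-plane would then force $g_\eta$ to be constant, hence $d\rho(\xi)\psi = 0$ for a dense set of $\psi$; letting $\xi$ vary over eigenvectors of $\ad_{\xi_0}$ contradicts faithfulness. This is essentially Penney's argument in \citep{Penney_holomorphic_extensions}.

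The main obstacle is the \emph{only if} direction. Controlling $F_\psi$ uniformly along the one-parameter family $z \mapsto z e^{t\lambda}$ of complex dilations requires both that $\H_\rho^{\mcO}$ be $G$-invariant (so that $\psi_t \in \H_\rho^{\mcO}$ for every $t$) and uniform estimates on $q_B(\psi_t)$ in $t$, neither of which is automatic; the combination with Phragm\'en--Lindel\"of is what ultimately forces $\Spec(\ad_{\xi_0}) \subseteq i\R$. The classification \Fref{prop: classification_type_R_LA} is then what reduces the general problem to the solvable situation, where this one-parameter analysis becomes tractable.
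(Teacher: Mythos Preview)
The paper does not prove this theorem; it is stated as a citation of \citep[Cor.~II.5]{Penney_holomorphic_extensions} with no accompanying argument. There is therefore nothing in the paper to compare your proposal against.

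That said, a brief comment on your sketch. The \emph{if} direction is broadly in the right spirit, though the construction of mollifiers $\phi$ with $\sup_{\xi_i\in B}\|\widetilde{L}_{\xi_1\cdots\xi_n}\phi\|_{L^1}\le C_B R_B^n$ for \emph{every} compact $B$ is precisely where the type $R$ hypothesis enters, and this is not a one-line consequence of polynomial growth of $\Ad$; Penney's actual argument uses heat-kernel type mollifiers and the structure theory more carefully. For the \emph{only if} direction you have correctly identified the covariance relation and the main difficulty, but the step you flag as ``the main obstacle'' is genuinely the heart of the matter: without a uniform-in-$t$ bound on $\|F_{\psi_t}(z)\|$ for fixed $z$, the Phragm\'en--Lindel\"of argument does not close. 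Penney circumvents this not by bounding $q_B(\psi_t)$ directly but by working with matrix coefficients and exploiting that the entire extension is a group representation on $\H_\rho^{\mcO}$ (in the sense of the paper's \Fref{cor: hol_repr_on_strentire_vectors}), which gives the needed multiplicativity. Your outline is a reasonable roadmap, but as written it is a sketch with an acknowledged gap rather than a proof.
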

	
	\noindent
	In the setting where $G$ is a possibly infinite-dimensional BCH Fr\'echet--Lie group, this yields:
	
	\begin{corollary}
		Let $G$ be a possibly infinite-dimensional BCH Fr\'echet--Lie group. Suppose that $(\rho, \H_\rho)$ is a strongly-entire unitary representation of $G$. If $\rho$ is injective, then any finite-dimensional Lie subgroup of $G$ is of type $R$.
	\end{corollary}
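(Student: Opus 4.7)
The plan is to restrict $\rho$ to a finite-dimensional Lie subgroup $H$, verify the restriction remains strongly-entire, invoke \Fref{thm: hol_extensions_of_reps} to conclude that $\rho|_H$ factors through a Lie group of type $R$, and finally exploit injectivity of $\rho$ to transfer the type $R$ property back to $H$ itself.

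Let $H \leq G$ be a finite-dimensional Lie subgroup, and let $\iota : \h \hookrightarrow \g$ denote the continuous inclusion of Lie algebras. First I would check that $\rho|_H$ is a smooth unitary representation of $H$ with a dense set of strongly-entire vectors (for $H$). The orbit map $H \to \H_\rho$ associated to $v \in \H_\rho^\infty$ is the restriction of the corresponding $G$-orbit map, hence smooth. For strong-entireness, the key observation is that any $v \in \H_\rho^{\mcO}$ is strongly-entire for $\rho|_H$: for every compact $B \subseteq \h_\C$ the image $\iota(B) \subseteq \g_\C$ is compact, and since $d(\rho|_H)(\xi_1 \cdots \xi_n)v = d\rho(\iota(\xi_1) \cdots \iota(\xi_n))v$, the $H$-seminorm satisfies $q_B^{\rho|_H}(v) \leq q_{\iota(B)}^{\rho}(v) < \infty$. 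As $\H_\rho^{\mcO}$ is dense in $\H_\rho$ by hypothesis, so is the space of strongly-entire vectors for $\rho|_H$.

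Applying \Fref{thm: hol_extensions_of_reps} to the continuous unitary representation $\rho|_H$ of the finite-dimensional Lie group $H$ then yields a Lie group $L$ of type $R$, a Lie group homomorphism $\phi : H \to L$, and a continuous unitary representation $\pi$ of $L$ such that $\rho|_H = \pi \circ \phi$. Since $\rho$ is injective by assumption, so is $\rho|_H$, which forces $\phi$ to be injective.

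It remains to show that an injective Lie group homomorphism into a type $R$ Lie group forces the source to be of type $R$. The differential $d\phi : \h \to \l$ is an injective Lie algebra homomorphism, and its image $d\phi(\h) \subseteq \l$ is an $\Ad_{\phi(h)}^L$-invariant subspace for every $h \in H$, with $\Ad_h^H$ corresponding (under the identification $\h \cong d\phi(\h)$) to the restriction of $\Ad_{\phi(h)}^L$ to this subspace. The spectrum of the restriction of an endomorphism to an invariant subspace is contained in the spectrum of the endomorphism, so $\Spec(\Ad_h^H) \subseteq \Spec(\Ad_{\phi(h)}^L) \subseteq \T$, since $L$ is of type $R$. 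Hence $H$ is of type $R$, as desired. I do not anticipate a serious obstacle; the substantive input is packaged in \Fref{thm: hol_extensions_of_reps}, and the remaining steps are formal.
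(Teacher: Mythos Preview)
Your proof is correct and follows essentially the same route as the paper: restrict to $H$, note that $\H_\rho^{\mcO} \subseteq \H_{\rho|_H}^{\mcO}$ so the restriction is strongly-entire, apply \Fref{thm: hol_extensions_of_reps}, and use injectivity of $\rho$ to conclude. The paper's proof is terser and leaves the final implication (``$\rho|_H$ injective and factoring through a type $R$ group $\Rightarrow$ $H$ of type $R$'') implicit, whereas you spell it out via the adjoint-spectrum argument; both are fine.
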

	\begin{proof}
		Let $H$ be a finite-dimensional Lie subgroup of $G$. Then $\pi := \restr{\rho}{H}$ is a continuous unitary $H$-representation on $\H_\pi := \H_\rho =: \H$. Since $\H_\rho^{\mcO} \subseteq \H_\pi^{\mcO}$, $\H_\pi^{\mcO}$ is dense in $\H$. As $\rho$ is injective, it follows by \Fref{thm: hol_extensions_of_reps} that $H$ is of type $R$.
	\end{proof}
	
	\noindent
	As an illustration: If $\rho$ is injective and $\H_\rho^{\mcO}$ is dense, then $G$ can not contain a single copy of the $ax + b$ group. On the other hand, \Fref{thm: hol_extensions_of_reps} provides ample examples of continuous representations that admit a dense set of strongly-entire vectors. Indeed, simply take any continuous unitary representation of a finite-dimensional Lie group of type $R$. The following examples show that also infinite-dimensional Lie groups may admit a dense set of strongly-entire vectors. 
	
	\begin{example}[Norm-continuous representations]\label{ex: norm-continuous}~\\
		Let $G$ be a BCH Fr\'echet--Lie group and let $\rho : G \to \U(\H_\rho)$ a unitary representation of $G$ which is continuous w.r.t.\ the norm-topology on $\U(\H_\rho)$. Equipped with the norm topology, $\U(\H_\rho)$ is a Banach--Lie group with Lie algebra 
		$$\mf{u}(\H_\rho) := \set{T \in \B(\H_\rho) \st T^\ast = -T},$$
		and the continuous homomorphism $\rho : G \to \U(\H_\rho)$ is automatically analytic by \citep[Thm.\ IV.1.18]{neeb_towards_lie}. This implies that $\H_\rho^\omega = \H_\rho$. Let us show that we even have $\H_\rho^{\mcO_b} = \H_\rho$. As the representation $d\rho : \g \to \mf{u}(\H_\rho)$ is continuous, there exist a continuous seminorm $p$ on $\g$ s.t.\ $\|d\rho(\xi)\| \leq p(\xi)$ for all $\xi \in \g$ \citep[Ch.\ I.7, Prop.\ 7.7]{Treves_tvs}. So $\|d\rho(\xi_1)\cdots d\rho(\xi_n)\psi\| \leq p(\xi_1)\cdots p(\xi_n) \|\psi\|$, where $\xi_j \in \g$ for $j \in \N$ and $\psi \in \H_\rho$. So if $B \subseteq \g$ is bounded, then with $M := \sup p(B)<\infty$ we get that
		$$ q_B(\psi) := \sum_{n=0}^\infty {1\over n!}\sup_{\xi_i \in B}\|d\rho(\xi_1)\cdots d\rho(\xi_n)\psi\| \leq \sum_{n=0}^\infty {M^n\over n!}\|\psi\| = e^{M} \|\psi\| < \infty. $$
		Using \Fref{lem: entire_vectors_complexified_seminorms}, this proves that $\H_\rho^{\mcO_b} = \H_\rho$.
	\end{example}
	
	\begin{example}[Positive energy representations of Heisenberg groups]\label{ex: pe_reprs_of_Heisenberg_groups}~\\
		We recall the construction of positive energy representations of Heisenberg groups, and show that they admit a dense set of b-strongly-entire vectors. Let $V$ be a real Fr\'echet space and $\omega$ a non-degenerate continuous skew bilinear form $V \times V \to \R$. Let $G := \Heis(V, \omega)$ be the corresponding Heisenberg group, so its underlying set is $\T \times V$ and it has multiplication $(z_1, v_1)\cdot (z_2, v_2) := (z_1z_2e^{-i\omega(v_1, v_2)}, v_1 + v_2)$. As $V$ is a Fr\`echet space, it is Mackey complete by \citep[Thm.\ I.4.11]{michor_convenient}. Using \citep[Thm.\ V.1.8]{neeb_towards_lie}, this implies that $G$ is regular. Let $G_\C := \Heis(V_\C, \omega)$ be the corresponding complexification. Let $\mc{J}$ be a\ compatible positive complex structure on $V$, meaning that $\mc{J}^\ast \omega = \omega$ and $\omega(v, \mc{J}v)> 0$ for any non-zero $v \in V$. The positive-definite sesquilinear form $\langle v, w\rangle := \omega(v, \mc{J}w) + i \omega(v,w)$ makes $V$ into a complex pre-Hilbert space, whose completion we denote by $V_\mc{J}$. Notice that the inclusion $V \to V_{\mc{J}}$ is continuous. Equip the symmetric algebra $\mrm{S}^\bullet(V_{\mc{J}})$ with the inner product satisfying
		\begin{equation}\label{eq: ip_fock_space}
			\langle v_1\cdots v_n, w_1\cdots w_n\rangle = \sum_{\sigma \in S_n} \prod_{j=1}^n \langle v_j, w_{\sigma_j}\rangle, \qquad \text{ for } v_j, w_j \in V_{\mc{J}}.	
		\end{equation}
		Let $\H_\rho$ be the corresponding Hilbert space completion of $\mrm{S}^\bullet(V_{\mc{J}})$. Then $\H_\rho$ contains and is generated by the \dquotes{coherent states} $e^v := \sum_{n=0}^\infty {1\over n!}v^n \in \H_\rho$ for $v \in V_{\mc{J}}$, and there is a unitary representation $\rho$ of $\Heis(V, \omega)$ on $\H_\rho$ satisfying (\citep[Sec.\ 9.5]{Segal_Loop_Groups})
		$$\rho(z,v)e^w = ze^{-{1\over 2}\|v\|^2 - \langle v, w\rangle}e^{v + w}, \qquad \text{ for } v,w \in V \text{ and }z \in \T.$$
		A direct computation verifies the equation $\rho(v_1)\rho(v_2) = e^{-i \omega(v_1,v_2)}\rho(v_1 + v_2)$ for $v_1, v_2 \in V$. Let $\Omega\in \H_\rho$ be the vacuum vector. The map 
		$$G \to \C, \qquad \; (z,v) \mapsto \langle \Omega, \rho(z,v)\Omega \rangle = ze^{-{1\over 2}\|v\|^2}$$
		is smooth, so it follows from \citep[Thm.\ 7.2]{Neeb_diffvectors} that $\H_\rho^\infty$ contains the cyclic vector $\Omega$ and is therefore dense in $\H_\rho$. So $\rho$ is smooth. The infinitesimal $\g$-action $d\rho$ satisfies $d\rho(v)\psi = (\mf{c}(v) - \a(v))\psi$ for any $v,w \in V$ and $\psi \in \mrm{S}^\bullet(V_{\mc{J}})$, where $\mf{c}(v)\psi = v\psi$ is the creation operator with core $\mrm{S}^\bullet(V_{\mc{J}})$ and $\a(v) := \mf{c}(v)^\ast$ is its adjoint, the annihilation operator. From $\mf{c}(\mc{J}v) = i \mf{c}(v)$ and $\a(\mc{J}v) = -i \a(v)$ we obtain that the $\C$-linear extension of $d\rho$ to $\g_\C$ satisfies $d\rho(v + iw) = \mf{c}(v + \mc{J}w) - \a(v - \mc{J}w)$ for $v,w \in V$. \\
		
		\noindent
		To see that $\H_\rho^{\mcO_b}$ is dense in $\H_\rho$, it suffices to show that it contains the cyclic vector $\Omega$, because $\H_\rho^{\mcO_b}$ is $G$-invariant (cf.\ \Fref{lem: strongly-entire-vectors_invariant} below). Let $B$ be the open unit-ball in $V_{\mc{J}}$. Let $K \subseteq V$ be a bounded subset of the real Fr\'echet space $V$. Then $K$ is also bounded as subspace of $V_\mc{J}$, and is thus contained in $sB$ for some $s > 0$. If $v \in B$, then (\citep[p.\ 9]{bratelli_robinson_2})
		$$\|\restr{\mf{c}(v)}{\mrm{S}^n(V_{\mc{J}})} \| = \|\restr{\a(v)}{\mrm{S}^{n+1}(V_{\mc{J}})}\| < \sqrt{n+1}.$$
		So if $(v_j)_{j \in \N}$ is a sequence in $B$, then $\sup_{v_1, \ldots, v_n\in B}\|d\rho(v_n)\cdots d\rho(v_1)\Omega\| < 2^{n}\sqrt{n!}$ for any $n \in \N$. Consequently,
		$$ q_K(\Omega) \leq q_{sB}(\Omega) = \sum_{n=0}^\infty {s^n\over n!} \sup_{v_j\in B}\|d\rho(v_n)\cdots d\rho(v_1)\Omega\| < \sum_{n=0}^\infty {(2s)^n\over \sqrt{n!}} < \infty.$$
		It follows using \Fref{lem: entire_vectors_complexified_seminorms} that $\Omega \in \H_\rho^{\mcO_b}$. Hence $\H_\rho^{\mcO_b}$ is dense in $\H_\rho$ and $\rho$ is b-strongly-entire. 
	\end{example}

	\subsection{Properties of $\H_\rho^{\mcO}$ and holomorphic extensions}

	\noindent
	Let $(\rho, \H_\rho)$ be a smooth unitary representation of $G$. Throughout this section, we assume in addition that $G$ is a \textit{regular} Lie group. Various properties of the locally convex space $\H_\rho^{\mcO}$ are summarized below:

	\begin{restatable}{theorem}{propertiesofentirevectors}\label{thm: properties_of_space_of_entire_vectors}
		The locally convex space $\H_\rho^{\mcO}$ has the following properties:
		\begin{enumerate}
			\item The inclusion $\H_\rho^{\mcO} \hookrightarrow \H_\rho^\infty$ is continuous w.r.t.\ the weak topology on $\H_\rho^\infty$.
			\item $\H_\rho^{\mcO}$ is Hausdorff and complete.
			\item $\H_\rho^{\mcO}$ is both $G$- and $\g$-invariant.
			\item The series $\sum_{m=0}^\infty {1\over m!}d\rho(\eta^m)\psi $ converges in $\H_\rho^{\mcO}$ for every $\psi \in \H_\rho^\mcO$ and $\eta \in \g_\C$. The corresponding map
			\begin{equation}\label{eq: orbit_map_locally}
				\g_\C \times \H_\rho^{\mcO} \to \H_\rho^{\mcO}, \qquad (\eta, \psi) \mapsto \sum_{m=0}^\infty {1\over m!}d\rho(\eta^m)\psi =: \widetilde{\rho}_\C(\eta)\psi
			\end{equation}
			is separately continuous and extends the map $\g \times \H_\rho^{\mcO} \to \H_\rho^{\mcO}, (\eta, \psi)\mapsto \rho(e^\eta)\psi$. In particular, the function $\g_\C \to \H_\rho^\mcO, \; \eta \mapsto \widetilde{\rho}_\C(\eta)\psi$ is entire for every $\psi \in \H_\rho^\mcO$.
			\item For any $\psi \in \H_\rho^\mcO$, the orbit map $G \to \H_\rho^{\mcO}, \; g \mapsto \rho(g)\psi$ is real-analytic.\\
		\end{enumerate}
	\end{restatable}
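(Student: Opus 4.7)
The plan is to address the five assertions in order, leveraging one key estimate: for compact sets $B \subseteq B' \subseteq \g_\C$ and $\psi \in \H_\rho^{\mcO}$, if $R_k := \sup_{\zeta_i \in B'}\|d\rho(\zeta_1\cdots\zeta_k)\psi\|$, then every supremum of the form $\sup_{\xi_i \in B,\,\eta_j \in B'}\|d\rho(\xi_1\cdots \xi_n \eta_1\cdots \eta_m)\psi\|$ is bounded by $R_{n+m}$. For (1), given $\xi = (\xi_1,\ldots,\xi_n) \in \g^n$, the finite set $B_0 := \{\xi_1,\ldots,\xi_n\}$ is compact in $\g$ and $p_\xi(\psi) \leq n!\, q_{B_0}(\psi)$, which proves continuity of $\H_\rho^\mcO \hookrightarrow \H_\rho^\infty$ in the weak topology. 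The Hausdorff part of (2) follows from $\|\psi\| \leq q_B(\psi)$ (the $n=0$ term). For completeness, I would use that $\H_\rho^\infty$ is complete in its weak topology thanks to the regularity of $G$: given a Cauchy net $(\psi_\alpha)$ in $\H_\rho^\mcO$, (1) gives weak convergence to some $\psi \in \H_\rho^\infty$. Each summand $\sup_{\xi_i \in B}\|d\rho(\xi_1\cdots\xi_n)\psi\|$ is a supremum of weakly continuous seminorms $p_{(\xi_1,\ldots,\xi_n)}$, so $q_B$ is weakly lower semicontinuous; then $q_B(\psi_\alpha - \psi) \leq \liminf_\beta q_B(\psi_\alpha - \psi_\beta)$ forces $\psi \in \H_\rho^\mcO$ and $\psi_\alpha \to \psi$ in $\H_\rho^\mcO$.

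For (3), $G$-invariance follows from $q_B(\rho(g)\psi) = q_{\Ad_{g^{-1}}(B)}(\psi)$ and continuity of $\Ad_{g^{-1}}$. For $\g$-invariance, I fix $\eta \in \g$ and set $B' := B \cup \{\eta\}$; the key estimate gives $q_B(d\rho(\eta)\psi) \leq \sum_n R_{n+1}/n!$, and since $q_{2B'}(\psi) = \sum_m 2^m R_m/m! < \infty$, the sequence $2^m R_m/m!$ is bounded, so $R_{n+1}/n! = O((n+1)2^{-n})$ is summable. For (4), the same estimate yields
\begin{equation*}
  \sum_{m=0}^\infty \frac{1}{m!}q_B(d\rho(\eta^m)\psi) \leq \sum_{n,m \geq 0}\frac{R_{n+m}}{n!\,m!} = \sum_{k \geq 0} \frac{2^k R_k}{k!} = q_{2B'}(\psi),
\end{equation*}
with $B' := B \cup \{\eta\}$; by completeness from (2) the series defining $\widetilde{\rho}_\C(\eta)\psi$ converges in $\H_\rho^\mcO$, and the bound $q_B(\widetilde{\rho}_\C(\eta)\psi) \leq q_{2B'}(\psi)$ gives continuity of $\widetilde{\rho}_\C(\eta)$ in $\psi$. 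Replacing $B'$ by $B \cup C$ for any compact $C \subseteq \g_\C$, the estimate becomes uniform in $\eta \in C$, so each $\eta \mapsto \frac{1}{m!}d\rho(\eta^m)\psi$ is a continuous homogeneous polynomial $\g_\C \to \H_\rho^\mcO$, and \Fref{lem: uniform_absol_conv_on_bdd_for_entire_fns} yields entireness (hence continuity in $\eta$) of $\widetilde{\rho}_\C(\cdot)\psi$. To identify $\widetilde{\rho}_\C(\eta)\psi$ with $\rho(e^\eta)\psi$ for $\eta \in \g$, both define real-analytic maps $\g \to \H_\rho^\mcO$ (the latter by $G$-invariance and analyticity of $\exp$) that agree on some $0$-neighborhood by the Taylor expansion in \Fref{prop: analytic_vectors_equivalent_definitions}; postcomposing with the injection to $\H_\rho$ and applying \Fref{prop: identity_theorem} on the connected space $\g$ promotes this to equality everywhere.

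Assertion (5) follows from (3) and (4): the factorization $\rho(g_0 e^\xi)\psi = \rho(g_0)\widetilde{\rho}_\C(\xi)\psi$ combines the continuous linear operator $\rho(g_0) : \H_\rho^\mcO \to \H_\rho^\mcO$ (continuous because $q_B \circ \rho(g_0) = q_{\Ad_{g_0^{-1}}(B)}$) with the entire map $\widetilde{\rho}_\C(\cdot)\psi$, giving real-analyticity of the orbit map near $g_0$; the BCH structure of $G$ then transports this to global real-analyticity. The hard part is the entireness claim in (4) as a map into the locally convex space $\H_\rho^\mcO$ rather than into $\H_\rho$; the essential device is absorbing every auxiliary vector $\eta$ into the doubled compact set $2B'$, which reduces every estimate to a single evaluation of the defining seminorms $q_C$ on $\psi$. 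The lower semicontinuity argument in the completeness part of (2) is also delicate but is otherwise standard.
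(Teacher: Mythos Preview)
Your overall architecture matches the paper's: the same sequence of lemmas (continuity of the inclusion, completeness, invariance, the exponential series in $\H_\rho^\mcO$, and real-analyticity of orbit maps), and the same logical dependencies. Two points deserve comment.

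\textbf{Your key estimate is cleaner than the paper's.} Where the paper proves a decay estimate $\frac{1}{m!}q_B(d\rho(\eta^m)\psi)\leq Ct^{-m}q_{B'}(\psi)$ through a fairly involved computation with auxiliary parameters $t>1$ and $s>2$ (\Fref{lem: invariance_of_entire_vectors_by_la_action}), you obtain the sharper and more transparent bound $\sum_{m}\frac{1}{m!}q_B(d\rho(\eta^m)\psi)\leq q_{2(B\cup\{\eta\})}(\psi)$ directly from the binomial identity $\sum_{n+m=k}\binom{k}{n}=2^k$. Your completeness argument via lower semicontinuity of $q_B$ is also slicker than the paper's explicit $\epsilon_0$--$t$ estimate; both are correct.

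\textbf{There is a small gap in your treatment of (4).} You assert that the uniform bound over $\eta\in C$ makes each $\eta\mapsto\frac{1}{m!}d\rho(\eta^m)\psi$ a \emph{continuous} homogeneous polynomial $\g_\C\to\H_\rho^\mcO$, so that \Fref{lem: uniform_absol_conv_on_bdd_for_entire_fns} applies. But boundedness on compacta does not by itself give continuity. The paper fills this as follows (\Fref{lem: series_entire}): the multilinear map $\g_\C^{n+m}\to\H_\rho$, $(\xi,\eta)\mapsto d\rho(\xi_1\cdots\xi_n\eta_1\cdots\eta_m)\psi$, is continuous because $\psi\in\H_\rho^\infty$ (indeed continuous into $\H_\rho^\infty$ with the strong topology, by \citep[Lem.~3.22]{BasNeeb_ProjReps}); hence each partial sum $\sum_{k\leq N}\frac{1}{k!}p_B^k(d\rho(\eta^m-\eta_0^m)\psi)$ is continuous in $\eta$, and your uniform bound $\frac{1}{n!}p_B^n(d\rho(\eta^m-\eta_0^m)\psi)\leq \frac{2}{n!}R_{n+m}$ with $\sum_n\frac{R_{n+m}}{n!}<\infty$ then gives uniform convergence (or dominated convergence), forcing $q_B(d\rho(\eta^m-\eta_0^m)\psi)\to 0$. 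Once you insert this step, your argument is complete. A related wobble: in identifying $\widetilde{\rho}_\C(\eta)\psi$ with $\rho(e^\eta)\psi$, you first claim the latter is real-analytic into $\H_\rho^\mcO$---which would anticipate (5)---but then correctly repair this by postcomposing with the inclusion into $\H_\rho$; only analyticity into $\H_\rho$ is needed there, exactly as in the paper's \Fref{lem: series_function_extends_local_repr}.
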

	
	\noindent
	Before proceeding with the proof of \Fref{thm: properties_of_space_of_entire_vectors}, we first mention some important corollaries and related remarks.
	
	\begin{corollary}\label{cor: local_representation_of_complexified_group}
		Assume that $\H_\rho^\mcO$ is dense in $\H_\rho$. Define the map
		\begin{equation}\label{eq: local_repr_cplx}
			\widetilde{\rho}_\C : \g_\C \to \B(\H_\rho^{\mcO}), \qquad \widetilde{\rho}_\C(\eta)v := \sum_{n=0}^\infty {1\over n!}d\rho(\eta^n)v.
		\end{equation}
		Let $U \subseteq \g_\C$ be open and convex. Assume that $U \cap \g$ is non-empty and open in $\g$. Suppose that the BCH series defines a complex-analytic map $\ast \st U \times U \to \g_\C$. Then $\widetilde{\rho}_\C(\xi \ast \eta) = \widetilde{\rho}_\C(\xi)\widetilde{\rho}_\C(\eta)$ for any $(\xi, \eta) \in U \times U$.
	\end{corollary}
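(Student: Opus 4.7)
My plan is to prove the identity $\widetilde{\rho}_\C(\xi \ast \eta)v = \widetilde{\rho}_\C(\xi)\widetilde{\rho}_\C(\eta)v$ for each $v \in \H_\rho^{\mcO}$ by reducing it to a scalar identity-theorem argument. Fix $v \in \H_\rho^{\mcO}$ and $w \in \H_\rho$, and define
$$\Phi(\xi, \eta) := \big\langle w,\ \widetilde{\rho}_\C(\xi \ast \eta)v - \widetilde{\rho}_\C(\xi)\widetilde{\rho}_\C(\eta)v\big\rangle, \qquad (\xi, \eta) \in U \times U.$$
The idea is to show $\Phi$ is separately complex-analytic on $U \times U$ and vanishes on the open non-empty real slice $(U \cap \g) \times (U \cap \g)$; two successive applications of the identity theorem (\Fref{prop: identity_theorem}(2)) in each variable then force $\Phi \equiv 0$ on $U \times U$, and the result follows since $w$ and $v$ are arbitrary.

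First I would verify the real case. For $(\xi, \eta) \in (U \cap \g)^2$, the BCH series has real coefficients and $\g$ is closed in $\g_\C$, so $\xi \ast \eta \in \g$. By \Fref{thm: properties_of_space_of_entire_vectors}(4), $\widetilde{\rho}_\C$ restricted to $\g$ coincides with $\xi \mapsto \rho(e^\xi)$. The BCH hypothesis on $G$ guarantees that $e^\xi e^\eta = e^{\xi \ast \eta}$ holds in $G$ whenever the BCH series converges in $\g$, which is the case on $(U \cap \g)^2$. Hence $\widetilde{\rho}_\C(\xi \ast \eta)v = \rho(e^{\xi \ast \eta})v = \rho(e^\xi)\rho(e^\eta)v = \widetilde{\rho}_\C(\xi)\widetilde{\rho}_\C(\eta)v$, so $\Phi$ vanishes there.

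Next I would establish separate complex-analyticity. Fix $\eta_0 \in U \cap \g$. Since $\widetilde{\rho}_\C(\eta_0)v \in \H_\rho^{\mcO}$ by \Fref{thm: properties_of_space_of_entire_vectors}(3)--(4), the same part (4) gives that $\xi \mapsto \widetilde{\rho}_\C(\xi)\widetilde{\rho}_\C(\eta_0)v$ is an entire $\H_\rho^{\mcO}$-valued, and hence $\H_\rho$-valued, function on $\g_\C$. The map $\xi \mapsto \widetilde{\rho}_\C(\xi \ast \eta_0)v$ is complex-analytic on $U$ as the composition of the complex-analytic $\xi \mapsto \xi \ast \eta_0$ with the entire map $\widetilde{\rho}_\C(\cdot)v$, by the chain rule for complex-analytic maps. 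Pairing with $w$, the scalar function $\xi \mapsto \Phi(\xi, \eta_0)$ is complex-analytic on $U$ and vanishes on the non-empty open set $U \cap \g$. Since $U$ is convex, hence connected, \Fref{prop: identity_theorem}(2) forces $\Phi(\cdot, \eta_0) \equiv 0$ on $U$.

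Finally I would extend in the second variable. For an arbitrary $\xi \in U$, the analogous reasoning shows $\eta \mapsto \Phi(\xi, \eta)$ is complex-analytic on $U$; the previous step makes it vanish on $U \cap \g$, so a second application of \Fref{prop: identity_theorem}(2) gives $\Phi(\xi, \cdot) \equiv 0$ on $U$. Varying $\xi$, $w$, and $v$, the corollary follows. The main delicate point is the BCH identity on the real slice: it hinges on the standing hypothesis that $G$ is BCH, which ensures that the product formula in exponential coordinates matches the BCH series on its convergence domain. Apart from this, the argument reduces to scalar analytic continuation combined with the analyticity statements in \Fref{thm: properties_of_space_of_entire_vectors}(4).
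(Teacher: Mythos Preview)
Your proof is correct and follows the same overall strategy as the paper---reduce to a scalar identity and apply the identity theorem---but the execution differs in a notable way. The paper restricts \emph{both} test vectors to $\H_\rho^\mcO$ and uses the adjoint relation $d\rho(x)^\dagger = d\rho(x^\ast)$ (\Fref{rem: involution_rep}) to rewrite $\langle v, \widetilde{\rho}_\C(\xi)\widetilde{\rho}_\C(\eta)w\rangle$ as $\langle \widetilde{\rho}_\C(\xi^\ast)v, \widetilde{\rho}_\C(\eta)w\rangle$, which is jointly complex-analytic on $U\times U$; a single application of \Fref{prop: identity_theorem} then suffices, and the density hypothesis on $\H_\rho^\mcO$ is invoked at the end to pass from the scalar identity to equality of vectors. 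You instead take $w\in\H_\rho$ arbitrary and rely on the continuity of $\widetilde{\rho}_\C(\xi)$ as an operator on $\H_\rho^\mcO$ (guaranteed by the separate continuity in \Fref{thm: properties_of_space_of_entire_vectors}(4)) to obtain separate analyticity in each variable, then extend by two successive applications of \Fref{prop: identity_theorem}(2). Your route avoids the adjoint trick and, as a pleasant side effect, never uses the density hypothesis on $\H_\rho^\mcO$, so it actually establishes the operator identity on $\H_\rho^\mcO$ under weaker assumptions; the paper's route is slightly shorter once joint analyticity is in hand. Both arguments rely equally on the BCH identity $e^\xi e^\eta = e^{\xi\ast\eta}$ on the real slice, which you correctly flag as the one point requiring the standing BCH hypothesis on $G$.
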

	\begin{proof}
		Define $U_\R := U \cap \g$. Let $v,w \in \H_\rho^\mcO \subseteq \H_\rho^\infty$. Recall from \Fref{rem: involution_rep} that $d\rho(x)^\dagger = d\rho(x^\ast)$ in $\L^\dagger(\H_\rho^\infty)$ for any $x \in \mc{U}(\g_\C)$. Using \Fref{thm: properties_of_space_of_entire_vectors}$(4)$ and the fact that compositions of analytic maps are again analytic \citep[Thm.\ 6.4]{Bochnak_Siciak_2}, it follows that the two maps $U^2 \to \C$ given by
		\begin{align*}
			(\xi, \eta) &\mapsto \langle v, \widetilde{\rho}_\C(\xi)\widetilde{\rho}_\C(\eta)w\rangle = \langle \widetilde{\rho}_\C(\xi^\ast) v, \widetilde{\rho}_\C(\eta)w\rangle,\\
			(\xi, \eta) &\mapsto \langle v, \widetilde{\rho}_\C(\xi\ast \eta)w \rangle
		\end{align*}
		are both complex-analytic. They agree on the real subspace $U_\R^2$, on which they both equal $(\xi, \eta) \mapsto \langle v, \rho(e^\xi)\rho(e^\eta)w\rangle = \langle v,\rho(e^\xi e^\eta)w\rangle$. It follows using \Fref{prop: identity_theorem} that they must be equal everywhere. Since $\H_\rho^\mcO$ is dense in $\H_\rho$, we find with $\xi, \eta \in U$ that $\widetilde{\rho}_\C(\xi)\widetilde{\rho}_\C(\eta)w = \widetilde{\rho}_\C(\xi\ast \eta)w$ for every $w\in \H_\rho^\mcO$, and therefore that $\widetilde{\rho}_\C(\xi)\widetilde{\rho}_\C(\eta) = \widetilde{\rho}_\C(\xi\ast \eta)$.
	\end{proof}
	
	\begin{corollary}\label{cor: hol_repr_on_strentire_vectors}
		Let $(\rho, \H_\rho)$ be a strongly-entire unitary $G$-representation and define $\widetilde{\rho}_\C : \g_\C \to \B(\H_\rho^{\mcO})$ by \fref{eq: local_repr_cplx}. Let $G_\C$ be a regular $1$-connected complex BCH Fr\'echet--Lie group with $\bm{\mrm{L}}(G_\C) = \g_\C$. Then there is a representation 
		$$\rho_\C : G_\C \to \B(\H_\rho^{\mcO})^\times$$
		of $G_\C$ on $\H_\rho^\mcO$ satisfying $\rho_\C(e^\xi) = \widetilde{\rho}_\C(\xi)$ for all $\xi \in \g_\C$, and for which the orbit map $G_\C \to \H_\rho^\mcO, \; g \mapsto \rho_\C(g)\psi$ is complex-analytic for every $\psi \in \H_\rho^\mcO$. 
	\end{corollary}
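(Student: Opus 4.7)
\noindent
The strategy is to globalize the local map $\widetilde{\rho}_\C$ to a representation of $G_\C$ by a monodromy argument, working orbit-wise. For each $\psi \in \H_\rho^\mcO$, I will construct a holomorphic map $F_\psi : G_\C \to \H_\rho^\mcO$ and then define $\rho_\C(g)\psi := F_\psi(g)$.

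\noindent
First, I would choose a balanced open $0$-neighborhood $U \subseteq \g_\C$ on which the Baker--Campbell--Hausdorff product $\ast$ defines a complex-analytic map $U \times U \to \g_\C$, whose image lies in a slightly larger neighborhood on which \Fref{cor: local_representation_of_complexified_group} applies, and on which $\exp|_U : U \to \exp(U) \subseteq G_\C$ is a biholomorphism onto an open $1$-neighborhood. For any $\psi \in \H_\rho^\mcO$, \Fref{thm: properties_of_space_of_entire_vectors}(4) asserts that $\xi \mapsto \widetilde{\rho}_\C(\xi)\psi$ is entire, so $F_\psi^\loc : \exp(U) \to \H_\rho^\mcO$, $e^\xi \mapsto \widetilde{\rho}_\C(\xi)\psi$, is holomorphic. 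Moreover, \Fref{cor: local_representation_of_complexified_group} yields the local identity $\widetilde{\rho}_\C(\xi)\widetilde{\rho}_\C(\eta) = \widetilde{\rho}_\C(\xi \ast \eta)$ for $\xi, \eta \in U$, whence $\widetilde{\rho}_\C(\xi)$ is invertible with inverse $\widetilde{\rho}_\C(-\xi)$.

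\noindent
Next, I would extend $F_\psi^\loc$ to a holomorphic map $F_\psi : G_\C \to \H_\rho^\mcO$ by analytic continuation along paths, using that $G_\C$ is $1$-connected, regular, and BCH, hence generated by $\exp(U)$ via smooth paths. For $g \in G_\C$, take a smooth path $\gamma : [0,1] \to G_\C$ from $1$ to $g$ and a subdivision $1 = \gamma(t_0), \gamma(t_1), \ldots, \gamma(t_n) = g$ with $\gamma(t_{j-1})^{-1}\gamma(t_j) = e^{\xi_j} \in \exp(U)$, and set $F_\psi(g) := \widetilde{\rho}_\C(\xi_1)\cdots\widetilde{\rho}_\C(\xi_n)\psi$. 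Independence of the subdivision reduces to the local cocycle identity above combined with \Fref{prop: identity_theorem}, which allows one to refine any two subdivisions to a common one; path-independence then follows from the standard monodromy theorem for $1$-connected $G_\C$ (compare the globalization results for BCH Lie groups in \citep{neeb_towards_lie}). Holomorphy of $F_\psi$ near any $g_0 \in G_\C$ follows by writing $F_\psi(g_0 e^\xi) = \widetilde{\rho}_\C(\xi_1)\cdots\widetilde{\rho}_\C(\xi_n)\widetilde{\rho}_\C(\xi)\psi$ for a fixed BCH chain from $1$ to $g_0$, and invoking \Fref{thm: properties_of_space_of_entire_vectors}(4) once more.

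\noindent
Finally, setting $\rho_\C(g)\psi := F_\psi(g)$, linearity in $\psi$ and continuity $\H_\rho^\mcO \to \H_\rho^\mcO$ follow from the corresponding properties of $\widetilde{\rho}_\C(\xi)$ in \Fref{thm: properties_of_space_of_entire_vectors}(4) applied finitely many times, while invertibility of $\rho_\C(g)$ follows from $\rho_\C(g)\rho_\C(g^{-1}) = \id$, valid locally and thus globally by uniqueness of analytic continuation. The homomorphism property $\rho_\C(gh) = \rho_\C(g)\rho_\C(h)$ holds locally by \Fref{cor: local_representation_of_complexified_group} and extends to $G_\C \times G_\C$ via \Fref{prop: identity_theorem} applied separately in $g$ and $h$. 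I expect the main obstacle to be the monodromy step, i.e., verifying that the path-subdivision definition of $F_\psi$ is globally consistent; this rests on the local cocycle identity in \Fref{cor: local_representation_of_complexified_group}, on the $1$-connectedness of $G_\C$, and—crucially, so that smooth paths can always be refined into chains lying inside BCH-charts—on the regularity of $G_\C$.
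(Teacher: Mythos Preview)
Your proposal is correct and takes essentially the same route as the paper: both exploit the local multiplicativity from \Fref{cor: local_representation_of_complexified_group} together with the $1$-connectedness of $G_\C$ to globalize, and then use \Fref{thm: properties_of_space_of_entire_vectors}(4) for holomorphy of orbit maps and the identity theorem to get $\rho_\C(e^\xi) = \widetilde{\rho}_\C(\xi)$ on all of $\g_\C$. The only difference is packaging: the paper works at the operator level, defining a local group homomorphism $\exp(U') \to \B(\H_\rho^{\mcO})^\times$ and citing a standard extension principle for local homomorphisms of $1$-connected topological groups (\citep[Prop.~C.2.1]{Neeb_Glockner_Book}), whereas you carry out the equivalent monodromy argument orbit-wise by hand via path subdivisions. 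One minor correction: regularity of $G_\C$ is not what makes the path-subdivision step go through---connectedness of $G_\C$ and compactness of $[0,1]$ already suffice to refine any continuous path into a chain with increments in $\exp(U)$.
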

	\begin{proof}
		As $G_\C$ is a complex BCH Lie group, there are open symmetric convex $0$-neighborhoods $U, U^\prime \subseteq \g_\C$ such that $U \subseteq U^\prime$, $U \cap \g$ is open in $\g$ and the BCH series $\ast$ defines a complex-analytic map $\ast \st U \times U \to U^\prime \subseteq \g_\C$. Shrinking $U$ and $U^\prime$ if necessary, we may further assume that the restriction of $\exp_{G_\C}$ to $U^\prime$ is biholomorphic onto some open $1$-neighborhood $V$ of $G_\C$. Define the function $f : V \to \B(\H_\rho^{\mcO})$ by $f(e^\xi) := \widetilde{\rho}_\C(\xi)$. In view of \Fref{cor: local_representation_of_complexified_group}, $f$ satisfies 
		\begin{equation}\label{eq: local_gp_hom}
			f(e^\xi e^\eta) = f(e^{\xi \ast \eta}) = \widetilde{\rho}_\C(\xi \ast \eta) = \widetilde{\rho}_\C(\xi)\widetilde{\rho}_\C(\eta) = f(e^\xi)f(e^\eta), \qquad \forall \xi, \eta \in U,
		\end{equation}
		where the first equality follows from \citep[Thm.\ IV.2.8]{neeb_towards_lie} and \Fref{prop: jet_vanishes_then_function_trivial}. In particular $f(e^\xi) \in \B(\H_\rho^{\mcO})^\times$ and $f(e^\xi)^{-1} = f(e^{-\xi})$ for any $\xi \in U$. As $G_\C$ is a 1-connected topological group, \eqref{eq: local_gp_hom} further implies that there is a group homomorphism $\rho_\C : G_\C \to \B(\H_\rho^{\mcO})^\times$ extending $f$ (cf.\ \citep[Proposition C.2.1]{Neeb_Glockner_Book}). Let $\psi \in \H_\rho^\mcO$. As $\exp_{G_\C}$ restricts to a biholomorphic map $U^\prime \to V$, it follows using \Fref{thm: properties_of_space_of_entire_vectors}$(4)$ that the map 
		$$ V\to \H_\rho^{\mcO}, \qquad e^\xi \mapsto \rho_\C(e^\xi)\psi = f(e^\xi)\psi = \widetilde{\rho}_\C(\xi)\psi, \qquad \xi \in U' $$
		is complex-analytic. As $G_\C$ is a complex-analytic Lie group and $V \subseteq G_\C$ is an open $1$-neighborhood, this implies that orbit map $g\mapsto\rho_\C(g)\psi$ is complex-analytic $G_\C \to \H_\rho^\mcO$. The map $\g_\C \to \H_\rho^\mcO, \; \xi \mapsto \rho_\C(e^\xi)\psi$ is therefore complex-analytic, and it agrees by construction with the entire map $\xi \mapsto \widetilde{\rho}(\xi)\psi$ on the open $0$-neighborhood $U'$ of $\g_\C$. It follows by \Fref{prop: identity_theorem} that they are equal everywhere, so $\rho_\C(e^\xi)\psi = \widetilde{\rho}(\xi)\psi$ for every $\xi \in \g_\C$. Since this holds for all $\psi \in \H_\rho^\mcO$, we conclude that $\rho_\C(e^\xi) = \widetilde{\rho}(\xi)$ for all $\xi \in \g_\C$.
	\end{proof}
	
	\noindent
	One might wonder whether or not the map in \fref{eq: orbit_map_locally} is also jointly continuous. The following example shows that this is generally false:

	\begin{example}\label{ex: not_jointly_cts}
		Consider the Fr\`echet-Lie group $G = \R^\N$, equipped with the product topology. Let $\g = \R^\N$ be its Lie algebra. Notice that $G$ is also regular and BCH. Consider the unitary representation of $G$ on $\H_\rho := \ell^2(\N, \C)$ defined by $(\rho(g)\psi)(k) := e^{ig(k)}\psi(k)$ for $g \in G$ and $\psi \in \ell^2(\N, \C)$. Letting $\C^{(\N)}$ denote the space of sequences in $\C$ with only finitely many non-zero components, the space $\H_\rho^\infty$ of smooth vectors is $\H_\rho^\infty = \C^{(\N)}$ \citep[Ex.\ 4.8]{Neeb_diffvectors}. Notice that $(d\rho(\xi)\psi)(k) = i \xi(k)\psi(k)$ for $\xi \in \g_\C$ and $\psi \in \C^{(\N)}$. The weak and strong topologies on $\H_\rho^\infty$ agree, and they both coincide with the locally convex inductive limit topology on $\C^{(\N)}$ \citep[Ex.\ 11]{BasNeeb_ProjReps}. This is the strongest locally convex topology on $\C^{(\N)}$ for which the inclusion $\C^N \hookrightarrow \C^{(\N)}$ is continuous for every $N \in \N$. We claim in addition that $\H_\rho^{\mc{O}_b} = \H_\rho^{\mc{O}} = \C^{(\N)}$ as locally convex spaces.\\
		
		\noindent
		Let $\pi_k : \C^\N \to \C, \; \psi \mapsto \psi(k)$ be the projection onto the k-th component for $k \in \N$. Let $\psi \in \C^{(\N)}$. Then there exists $N \in \N$ s.t.\ $\psi(k) = 0$ for all $k > N$. Let $B \subseteq \g_\C$ be a compact subset, and notice that $B$ is contained in the compact set $B^\prime := \prod_{k=1}^\infty B_k$, where $B_k := \pi_k(B)$. Set $M~:=~\sup \{|\xi(k)| \st \xi \in B^\prime, \; 1\leq k\leq N\} < \infty$. Then 
		$$ \| d\rho(\xi_1 \cdots \xi_n) \psi\|^2_{\ell^2} = \sum_{k=1}^N |\xi_1(k) \cdots \xi_n(k) \psi(k)|^2 \leq M^{2n} \|\psi\|^2_{\ell^2}, \qquad \forall \xi_1, \cdots, \xi_n \in B^\prime. $$
		We thus obtain that
		\begin{equation}\label{eq: estimate_fin_sequences}
			q_B(\psi) \leq q_{B^\prime}(\xi) = \sum_{n=0}^\infty {1\over n!} \sup_{\xi_j \in B^\prime} \| d\rho(\xi_1 \cdots \xi_n) \psi\|_{\ell^2} \leq e^M \|\psi\|_{\ell^2} < \infty.
		\end{equation}
		Hence $\psi \in \H_\rho^{\mc{O}}$. So $\C^{(\N)} \subseteq \H_\rho^{\mc{O}}$. We also have $\H_\rho^{\mc{O}} \subseteq \H_\rho^\infty = \C^{(\N)}$, so $\H_\rho^{\mc{O}} = \C^{(\N)}$. Noticing that the constant $M$ in \eqref{eq: estimate_fin_sequences} depends only on $N$ and $B$, the estimate \eqref{eq: estimate_fin_sequences} moreover shows that the inclusion $\C^N \hookrightarrow \H_\rho^{\mc{O}}$ is continuous for every $N \in \N$. The locally convex inductive limit topology on $\C^{(\N)}$ is therefore finer than that of $\H_\rho^{\mc{O}}$. As the inclusion $\H_\rho^{\mc{O}} \hookrightarrow \H_\rho^\infty = \C^{(\N)}$ is moreover continuous w.r.t.\ the weak topology on $\H_\rho^\infty$, by \Fref{thm: properties_of_space_of_entire_vectors}(1), it follows that $\H_\rho^{\mc{O}} = \H_\rho^\infty = \C^{(\N)}$ as locally convex vector spaces. Since $\prod_{k=1}^\infty B_k \subseteq \R^\N$ is bounded whenever every $B_k \subseteq \R$ is so, it is similarly shown that $\H_\rho^{\mcO_b} = \C^{(\N)}$ as locally convex vector spaces.\\
		
		\noindent
		Now, it is shown in \citep[Ex.\ 4.8]{Neeb_diffvectors} that the action 
		$$\g \times \C^{(\N)} \to \ell^2(\N, \C), \qquad (\xi, \psi) \mapsto d\rho(\xi)\psi$$
		is not jointly continuous (w.r.t.\ \textit{any} locally convex topology on $\C^{(\N)}$). This also implies that the map
		$$ \g_\C \times \C^{(\N)} \to \C^{(\N)}, \qquad (\xi, \psi) \mapsto \sum_{k=0}^\infty {1\over k!} d\rho(\xi^k)\psi$$
		can not be jointly continuous.\\
	\end{example}
	
	\begin{remark}\label{rem: b-strongly-entire}
		\noindent
		Replacing `compact' by `bounded' and $\H_\rho^\mcO$ by $\H_\rho^{\mcO_b}$ in the proof of \Fref{thm: properties_of_space_of_entire_vectors} (given shortly), one verifies that all statements in \Fref{thm: properties_of_space_of_entire_vectors} remain true if $\H_\rho^\mcO$ is replaced by the locally convex space $\H_\rho^{\mcO_b}$. The inclusion $\H_\rho^{\mcO_b} \hookrightarrow \H_\rho^\infty$ is moreover evidently continuous w.r.t.\ the strong topology on $\H_\rho^\infty$. \\
	\end{remark}
	\begin{remark}\label{rem: b-strongly-entire_2}
		It may in certain situations be desirable to consider $\H_\rho^{\mcO_b}$ instead of $\H_\rho^\mcO$. For example, if $G$ is a Banach--Lie group, then the locally convex topology on $\H_\rho^{\mcO_b}$ is Fr{\'e}chet. Since the map 
		\begin{equation}\label{eq: extension_b_strongly_entire_inrem}
			\g_\C \times \H_\rho^{\mcO_b} \to \H_\rho^{\mcO_b}, \qquad (\xi, \psi) \mapsto \sum_{m=0}^\infty {1\over m!}d\rho(\eta^m)\psi = \widetilde{\rho}(\xi)\psi
		\end{equation}
		is separately continuous and linear in the $\H_\rho^{\mcO_b}$-variable, this implies using \citep[Prop.\ 5.1]{Neeb_diffvectors} that the function in \eqref{eq: extension_b_strongly_entire_inrem} is \textit{jointly} continuous, and is therefore entire. \\
		
		\noindent
		In this case we consequently obtain a stronger analogue of \Fref{cor: hol_repr_on_strentire_vectors}. Indeed, take $U \subseteq \g_\C$ as in \Fref{cor: local_representation_of_complexified_group}, and suppose that the BCH series defines a complex-analytic map $\ast : U \times U \to \g_\C$. The functions $(\xi, \eta) \mapsto \widetilde{\rho}_\C(\xi)\widetilde{\rho}_\C(\eta)v$ and $(\xi, \eta)\mapsto \widetilde{\rho}_\C(\xi\ast \eta)v$ are then both complex-analytic $U \times U \to \H_\rho^{\mcO_b}$ for every $v \in \H_\rho^{\mcO_b}$. As they agree on $U_\R \times U_\R$, the must be equal on $U \times U$. Consequently, with $G_\C$ as in \Fref{cor: hol_repr_on_strentire_vectors}, we obtain a representation $\rho_\C : G_\C \to \B(\H_\rho^{\mcO_b})^\times$ that satisfies $\rho_\C(e^\xi) = \widetilde{\rho}_\C(\xi)$ for all $\xi$ in some $0$-neighborhood of $\g_\C$, and for which the corresponding action $G_\C \times \H_\rho^{\mcO_b} \to \H_\rho^{\mcO_b}$ is complex-analytic. We will come back to this point in \Fref{sec: geometric_hol_induction} below. Notice also that \Fref{ex: not_jointly_cts} above shows that \eqref{eq: extension_b_strongly_entire_inrem} need not be jointly continuous if $G$ is only assumed to be Fr\'echet.	
	\end{remark}

	\subsubsection*{The proof of \Fref{thm: properties_of_space_of_entire_vectors}}
	
	
	\begin{lemma}\label{lem: cty_of_embedding_strongly_entire_smooth}
		Let $B \subseteq \g_\C$ be compact and let $\psi \in \H_\rho^{\mcO}$. Then ${1\over n!}p_B^n(\psi) \leq q_B(\psi)$ for any $n \in \N$. In particular, the inclusion $\H_\rho^{\mcO} \hookrightarrow \H_\rho^\infty$ is continuous w.r.t.\ the weak topology on $\H_\rho^\infty$.
	\end{lemma}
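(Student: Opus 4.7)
The first inequality is essentially immediate from the definitions. Since every term in the series defining $q_B(\psi)$ is non-negative, the $n$-th term by itself is bounded by the whole sum, so
\[
    {1\over n!}p_B^n(\psi) \;\leq\; \sum_{m=0}^\infty {1\over m!}p_B^m(\psi) \;=\; q_B(\psi).
\]
This uses only that $\psi \in \H_\rho^\mcO$, which guarantees the series converges and has finite value on each compact $B$, so no regularity issue arises.

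For the second statement, I would reduce the continuity of the inclusion $\H_\rho^{\mcO} \hookrightarrow \H_\rho^\infty$ to showing that each generating seminorm $p_\xi$ of the weak topology is dominated by a continuous seminorm on $\H_\rho^\mcO$. Given $\xi = (\xi_1, \ldots, \xi_n) \in \g^n$, the plan is to take the finite subset $B := \{\xi_1, \ldots, \xi_n\} \subseteq \g \subseteq \g_\C$, which is trivially compact. Then by definition of $p_B^n$,
\[
    p_\xi(\psi) = \|d\rho(\xi_1 \cdots \xi_n)\psi\| \leq p_B^n(\psi) \leq n!\, q_B(\psi),
\]
where the last step is exactly the inequality already proved. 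Since $q_B$ is by definition one of the continuous seminorms defining the topology of $\H_\rho^\mcO$, this shows $p_\xi$ is continuous on $\H_\rho^{\mcO}$.

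There is no real obstacle here — the entire argument is bookkeeping with the definitions from \Fref{def: smooth_analytic_entire_vectors}. The only thing worth being careful about is that the seminorms of the weak topology are indexed by tuples in $\g$ (not $\g_\C$), but that is harmless since any finite subset of $\g$ is also a compact subset of $\g_\C$ via the inclusion $\g \hookrightarrow \g_\C$.
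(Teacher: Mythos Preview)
Your proof is correct and follows essentially the same approach as the paper: both observe that the first inequality is immediate from the definition of $q_B$ as a non-negative series, and both establish continuity by taking the finite (hence compact) set $B = \{\xi_1,\ldots,\xi_n\}$ to dominate $p_\xi$ by $n!\,q_B$.
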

	\begin{proof}
		Let $\psi \in \H_\rho^{\mcO}$. It is trivial that ${1\over n!}p_B^n(\psi) \leq q_B(\psi)$. For the final statement, consider the continuous seminorm $p_{\bm{\xi}}(\psi) := \|d\rho(\xi_1\cdots \xi_n)\psi\|$ on $\H_\rho^\infty$ for some $\bm{\xi} = (\xi_1, \ldots, \xi_n) \in \g^n$. Taking for $B$ the finite set $B := \{\xi_1, \ldots, \xi_n\} \subseteq \g_\C$, we obtain that ${1\over n!}p_{\bm{\xi}}(\psi) \leq {1\over n!}p_{B}^n(\psi) \leq q_{B}(\psi)$.
	\end{proof}
	
	\begin{lemma}
		$\H_\rho^{\mcO}$ is both Hausdorff and complete.
	\end{lemma}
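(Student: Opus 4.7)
The plan is straightforward once one notices that $q_B(\psi) \geq p_B^0(\psi) = \|\psi\|$ for every compact $B \subseteq \g_\C$, which immediately handles the Hausdorff property, since $q_B(\psi) = 0$ for all $B$ then forces $\psi = 0$. So the real content is completeness, for which I would work with an arbitrary Cauchy net $(\psi_\alpha)$ in $\H_\rho^{\mcO}$ and proceed in three steps: (i) identify a candidate limit in a larger complete space, (ii) verify the candidate lies in $\H_\rho^{\mcO}$, (iii) upgrade the weaker convergence to convergence in $\H_\rho^{\mcO}$.

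For step (i), I would appeal to \Fref{lem: cty_of_embedding_strongly_entire_smooth}: the inclusion $\H_\rho^{\mcO} \hookrightarrow \H_\rho^\infty$ is continuous with respect to the weak topology on $\H_\rho^\infty$, so $(\psi_\alpha)$ is also Cauchy there. Since $G$ is assumed regular and Fr\'echet, $\H_\rho^\infty$ is complete in the weak topology by \citep[Prop.\ 3.19]{BasNeeb_ProjReps}, so $\psi_\alpha \to \psi$ weakly in $\H_\rho^\infty$ for some $\psi \in \H_\rho^\infty$. In particular, $\|d\rho(\xi_1\cdots \xi_n)\psi_\alpha - d\rho(\xi_1\cdots \xi_n)\psi\| \to 0$ for each fixed $n$ and each tuple $(\xi_1,\ldots,\xi_n) \in \g^n$, and by $\C$-linearity the same holds for tuples in $\g_\C^n$.

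For step (ii), fix a compact $B \subseteq \g_\C$ and choose $\alpha_0$ with $q_B(\psi_\alpha - \psi_\beta) < 1$ for $\alpha,\beta \geq \alpha_0$; then $q_B(\psi_\beta) \leq q_B(\psi_{\alpha_0}) + 1 =: M_B < \infty$ for all $\beta \geq \alpha_0$. Pointwise convergence from step (i) yields, for any fixed $N$ and any $(\xi_1,\ldots,\xi_n) \in B^n$,
\[
	\|d\rho(\xi_1\cdots\xi_n)\psi\| = \lim_\beta \|d\rho(\xi_1\cdots\xi_n)\psi_\beta\| \leq \liminf_\beta p_B^n(\psi_\beta),
\]
so $p_B^n(\psi) \leq \liminf_\beta p_B^n(\psi_\beta)$. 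Summing and applying Fatou's lemma for nets (or just using that finite partial sums are dominated and monotone in $N$) gives $\sum_{n=0}^N {1\over n!}p_B^n(\psi) \leq M_B$ for every $N$, whence $q_B(\psi) \leq M_B < \infty$. Thus $\psi \in \H_\rho^{\mcO}$. Step (iii) is then the same lower-semicontinuity trick applied to $\psi - \psi_\alpha$ instead of $\psi$: given $\epsilon > 0$ and $\alpha_0$ with $q_B(\psi_\alpha - \psi_\beta) < \epsilon$ for $\alpha,\beta \geq \alpha_0$, the argument above gives $q_B(\psi - \psi_\alpha) \leq \liminf_\beta q_B(\psi_\beta - \psi_\alpha) \leq \epsilon$ for all $\alpha \geq \alpha_0$, so $\psi_\alpha \to \psi$ in $\H_\rho^{\mcO}$.

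The only delicate point is the interchange of the net limit in $\beta$ with the supremum over $\xi_i \in B$ and the infinite sum over $n$ appearing in $q_B$. Both are handled uniformly by the lower-semicontinuity-of-seminorms trick (passing to $\liminf$ inside norms and then inside finite sums, and finally letting $N \to \infty$), which is why the proof goes through cleanly despite the sums and suprema involved.
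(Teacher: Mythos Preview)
Your proof is correct and follows the same overall strategy as the paper: identify the candidate limit in the complete space $\H_\rho^\infty$ (via \citep[Prop.\ 3.19]{BasNeeb_ProjReps}), then use termwise convergence $d\rho(\xi_1\cdots\xi_n)\psi_\alpha \to d\rho(\xi_1\cdots\xi_n)\psi$ to control $q_B(\psi)$ and $q_B(\psi-\psi_\alpha)$. The only difference is in the bookkeeping for the infinite sum: you use the lower-semicontinuity/Fatou trick ($\sum_n \liminf_\beta \leq \liminf_\beta \sum_n$ for finite sums, then let $N\to\infty$), whereas the paper instead exploits that one has Cauchy control for \emph{every} compact set, in particular for the dilate $tB$ with $t>1$, so that $q_{tB}(\psi_\alpha-\psi_\beta)<\epsilon_0$ yields the termwise geometric bound $\tfrac{1}{k!}p_B^k(\psi_\alpha-\psi_\beta)<\epsilon_0 t^{-k}$, which then sums explicitly to $\epsilon_0\,t/(t-1)$. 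Your route is arguably cleaner conceptually; the paper's dilation trick gives an explicit rate and sidesteps any liminf manipulations. Both are standard devices for this kind of graded-seminorm completeness.
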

	\begin{proof}
		It is clear that $\H_\rho^{\mcO}$ is Hausdorff, because $\H_\rho^\infty$ is so. Let us show that it is complete. Let $(\psi_\alpha)_{\alpha \in I}$ be a Cauchy net in $\H_\rho^{\mcO}$. Then it is also a Cauchy net in $\H_\rho^\infty$. The latter is complete \citep[Prop.\ 3.19]{BasNeeb_ProjReps}, where we use that $G$ is a regular Fr\'echet--Lie group. Thus $\psi_\alpha \to \psi$ in $\H_\rho^\infty$ for some $\psi \in \H_\rho^\infty$. We must show that $\psi \in \H_\rho^{\mcO}$ and $\psi_\alpha \to \psi$ in $\H_\rho^{\mcO}$. Fix a compact set $B \subseteq \g$. Let $\epsilon > 0$. Choose $\epsilon_0 > 0$ such that $\epsilon_0(1 + \epsilon_0) < \epsilon$. Let $t > 1$ be such that ${t \over t -1} < 1 + \epsilon_0$. As $(\psi_\alpha)_{\alpha \in I}$ is a Cauchy net in $\H_\rho^{\mcO}$, there exists $\gamma \in I$ such that $q_{tB}(\psi_\alpha - \psi_\beta) < \epsilon_0$ whenever $\alpha, \beta \geq \gamma$. In particular ${1\over k!}p_{B}^k(\psi_\alpha - \psi_\beta) < \epsilon_0 t^{-k}$ for any $\alpha, \beta \geq \gamma$ and $k \in \N_{\geq 0}$. Consequently, for any $\xi_i \in B$ with $i \in \{1, \ldots, k\}$ we have (using that $\psi_\alpha \to \psi$ in $\H_\rho^\infty$):
		$$ {1\over k!}\|d\rho(\xi_1\cdots \xi_k)(\psi - \psi_\beta)\| =  {1\over k!}\lim_{\alpha}\|d\rho(\xi_1\cdots \xi_k)(\psi_\alpha - \psi_\beta)\| \leq \epsilon_0 t^{-k} \qquad \text{ for }\beta \geq \gamma.$$
		Thus ${1\over k!}p_B^k(\psi - \psi_\beta) \leq \epsilon_0 t^{-k}$ for any $\beta \geq \gamma$. Hence
		$$ q_B(\psi - \psi_\beta) = \sum_{k=0}^\infty{1\over k!}p_B^k(\psi - \psi_\beta) \leq \epsilon_0\sum_{k=0}^\infty t^{-k} = {t \over t -1}\epsilon_0 \leq \epsilon_0(1 + \epsilon_0) < \epsilon, \qquad \forall \beta \geq \gamma$$
		This shows that $q_B(\psi) \leq q_B(\psi - \psi_\beta) + q_B(\psi_\beta) < \infty$ and that $q_B(\psi - \psi_\beta) < \epsilon$ for all $\beta \geq \gamma$. As $B$ and $\epsilon$ were arbitrary, we conclude (using the proof of \Fref{lem: entire_vectors_complexified_seminorms}) that $\psi \in \H_\rho^{\mcO}$ and $\psi_\alpha \to \psi$ in $\H_\rho^{\mcO}$.
	\end{proof}
	
	\begin{lemma}\label{lem: invariance_of_entire_vectors_by_la_action}
		Let $B, B_0 \subseteq \g_\C$ be compact subsets and let $t > 1$. Then there exists a compact subset $B^\prime \subseteq \g_\C$ and some $C > 0$, both depending on $B, B_0$ and $t$, such that $B \subseteq B^\prime$ and
		\begin{equation}\label{eq: inv_of_entire_vectors_main_eq}
			{1\over m!}\sum_{n=0}^\infty {1\over n!}\sup_{\eta_j \in B_0}p^n_{B}(d\rho(\eta_1\cdots \eta_m)\psi) \leq C t^{-m}q_{B^\prime}(\psi), \qquad \forall m\in \N_{\geq 0}, \; \forall \psi \in \H_\rho^{\mcO}.
		\end{equation}
		In particular, we have 
		$${1\over m!}q_B(d\rho(\eta^m)\psi) \leq C t^{-m}q_{B^\prime}(\psi)$$
		for any $\psi \in \H_\rho^{\mcO}$, $\eta \in B_0$ and $m \in \N_{\geq 0}$.
	\end{lemma}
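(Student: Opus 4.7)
The plan is to bound the left-hand side of \eqref{eq: inv_of_entire_vectors_main_eq} by (a constant multiple of) a single tail of the series defining $q_{B^\prime}(\psi)$, using two complementary tricks: a rescaling that manufactures the decay factor $t^{-m}$, and a combinatorial reindexing that collapses the double sum into a single geometric sum.

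First I would combine the two suprema using the fact that $d\rho$ is linear in each slot. Fixing $\xi_i \in B$ and $\eta_j \in B_0$, the identity $\eta_j = t^{-1}(t\eta_j)$ gives
$$ d\rho(\xi_1\cdots \xi_n\eta_1\cdots \eta_m)\psi = t^{-m}\,d\rho\bigl(\xi_1\cdots \xi_n(t\eta_1)\cdots (t\eta_m)\bigr)\psi, $$
so every argument on the right now lies in the compact set $B_1 := B \cup tB_0$. Taking suprema and summing over $n$ yields
$$ \text{LHS} \leq \frac{t^{-m}}{m!}\sum_{n=0}^\infty \frac{p_{B_1}^{n+m}(\psi)}{n!}. $$

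For the reindexing step, I would substitute $k = n+m$ and use the identity $\frac{1}{m!\,n!} = \frac{1}{k!}\binom{k}{m}$ to rewrite the above sum as $\sum_{k\geq m}\binom{k}{m}p_{B_1}^k(\psi)/k!$. The crude bound $\binom{k}{m}\leq 2^k$, combined with the homogeneity identity $2^k p_{B_1}^k(\psi) = p_{2B_1}^k(\psi)$ (again by multilinearity of $d\rho$ in each slot), then gives
$$ \text{LHS} \leq t^{-m}\sum_{k=0}^\infty \frac{p_{2B_1}^k(\psi)}{k!} = t^{-m} q_{2B_1}(\psi). $$
Taking $B^\prime := B \cup 2B_1 = B \cup 2(B \cup tB_0)$, which is compact and contains $B$, the monotonicity $q_{2B_1}(\psi) \leq q_{B^\prime}(\psi)$ yields the claim with $C := 1$. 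The ``in particular'' assertion follows by specializing $\eta_1 = \cdots = \eta_m = \eta$ inside the supremum, whereupon the inner sum becomes $q_B(d\rho(\eta^m)\psi)$.

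The main obstacle is the combinatorial step: what makes the argument succeed is the identity $\frac{1}{m!\,n!} = \binom{k}{m}/k!$ together with the geometric bound $\binom{k}{m} \leq 2^k$, which jointly allow one to trade the missing factorial $(n+m)!$ against $m!\,n!$ at a cost that can be absorbed into merely doubling the compact set. Everything else (compactness of $B^\prime$, applicability of each $d\rho(\eta_1\cdots\eta_m)$ on $\H_\rho^\infty$, monotonicity of $q$ in its set argument) is routine.
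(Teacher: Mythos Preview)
Your proof is correct. The overall strategy matches the paper's --- combine $B$ and $B_0$ into a single compact set, rescale to extract the decay factor $t^{-m}$, and then absorb the combinatorial discrepancy between $1/(m!\,n!)$ and $1/(n+m)!$ into a further dilation of the compact set --- but your execution is tighter. The paper first rescales by $t$ to get $tB''$ with $B'' = B\cup B_0$, then bounds $\sum_n t^{-n}a_n \le \bigl(\sum_n t^{-n}\bigr)\bigl(\sum_n a_n\bigr)$, and finally introduces an auxiliary parameter $s>2$ and the constants $C_m = \sum_{n}\binom{n+m}{m}s^{-(n+m)}$, showing $\sum_m C_m = \sum_N (2/s)^N < \infty$ to conclude that $(C_m)$ is bounded; the resulting $B'$ is $stB''$ and the constant $C$ depends on $t$ and $s$. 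You instead rescale only the $\eta_j$'s by $t$ upfront, then handle the binomial in one stroke via $\binom{k}{m}\le 2^k$ and the homogeneity $2^k p_{B_1}^k = p_{2B_1}^k$, obtaining the explicit $B' = B\cup 2(B\cup tB_0)$ and the sharper constant $C=1$. Both arguments are sound; yours is shorter and avoids the extra parameter.
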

	\begin{proof}
		We may assume that $B_0$ and $B$ are both balanced. Define $B^{\prime \prime} := B \cup B_0$, which is again compact and balanced in $\g_\C$. For any $\eta_1, \ldots, \eta_m \in B_0$ and $\psi \in \H_\rho^{\mcO}$ we have
		$$p^n_{B}(d\rho(\eta_1\cdots \eta_m)\psi) \leq p_{B^{\prime \prime}}^{n+m}(\psi) = t^{-(n+m)}p_{tB^{\prime \prime}}^{n+m}(\psi).$$
		Hence $\sup_{\eta_j \in B_0}p^n_{B}(d\rho(\eta_1\cdots \eta_m)\psi) \leq t^{-(n+m)}p_{tB^{\prime \prime}}^{n+m}(\psi)$.
		It follows that
		\begin{align*}
			\sum_{n=0}^\infty {1\over n!}\sup_{\eta_j \in B_0}p^n_{B}(d\rho(\eta_1\cdots \eta_m)\psi) 
			&\leq t^{-m}\sum_{n=0}^\infty {t^{-n}\over n!}p_{tB^{\prime \prime}}^{n+m}(\psi) \\
			&\leq t^{-m}\bigg(\sum_{n=0}^\infty t^{-n}\bigg) \bigg( \sum_{n=0}^\infty {1\over n!}p_{tB^{\prime \prime}}^{n+m}(\psi)\bigg)  \\
			&= {t^{-m} \over 1 - t^{-1}} \sum_{n=0}^\infty {1\over n!}p_{tB^{\prime \prime}}^{n+m}(\psi)
		\end{align*}
		Let $s > 2$. Notice that $\sum_{n=0}^\infty{(n+m)!\over n!} s^{-(n+m)} < \infty$, and that
		\begin{align*}
			\sum_{n=0}^\infty {1\over n!}p_{tB^{\prime \prime}}^{n+m}(\psi) 
			&= \sum_{n=0}^\infty \bigg({(n+m)!\over n!} s^{-(n+m)} \cdot {1\over (n+m)!}p_{stB^{\prime \prime}}^{n+m}(\psi)\bigg)\\
			&\leq \bigg(\sum_{n=0}^\infty{(n+m)!\over n!} s^{-(n+m)}\bigg) \cdot q_{stB^{\prime \prime}}(\psi).
		\end{align*}
		Consequently, with $C_m := \sum_{n=0}^\infty{(n+m)!\over m!n!} s^{-(n+m)} > 0$ and $B^\prime := stB^{\prime\prime}$, we have:
		\begin{equation}\label{eq: inv_of_entire_vectors_1}
			{1\over m!}\sum_{n=0}^\infty {1\over n!}\sup_{\eta_j \in B_0}p^n_{B}(d\rho(\eta_1\cdots \eta_m)\psi) \leq C_m {t^{-m}\over 1 - t^{-1}} q_{B^{\prime}}(\psi).
		\end{equation}
		Using $\sum_{k=0}^N {N \choose k} = 2^N$, notice that $\sum_{m=0}^\infty C_m = \sum_{N=0}^\infty \left(2 \over s\right)^{N} < \infty$, and hence the sequence $(C_m)_{m \in \N_{\geq 0}}$ is bounded. So there exists $C> 0$ with $C_m \leq (1 - t^{-1})C$ for all $m \in \N_{\geq 0}$. Now simply observe using \eqref{eq: inv_of_entire_vectors_1} that \eqref{eq: inv_of_entire_vectors_main_eq} holds for this $C$ and $B^\prime$. Notice also that $B \subseteq B^\prime$.
	\end{proof}

	\begin{lemma}\label{lem: strongly-entire-vectors_invariant}
		$\H_\rho^{\mcO}$ is both $G$- and $\g$-invariant.
	\end{lemma}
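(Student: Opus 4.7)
The plan is to handle the two invariance claims by separate arguments, both of which turn out to be short once the preceding lemma is in hand. For $\g$-invariance, I would fix $\psi \in \H_\rho^{\mcO}$ and $\eta \in \g$, and apply \Fref{lem: invariance_of_entire_vectors_by_la_action} with $m = 1$ and $B_0 := \{\eta\}$, which is a compact singleton in $\g_\C$. For any compact $B \subseteq \g_\C$ the lemma supplies a compact $B' \subseteq \g_\C$ and a constant $C > 0$ with $q_B(d\rho(\eta)\psi) \leq C t^{-1} q_{B'}(\psi) < \infty$, so $d\rho(\eta)\psi \in \H_\rho^{\mcO}$. The same reasoning applies to any $\eta \in \g_\C$, giving invariance under $d\rho(\g_\C)$ and in particular under $d\rho(\g)$.

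For $G$-invariance, I would bypass any series argument and instead exploit the unitarity of $\rho(g)$. Differentiating $\rho(g)^{-1}\rho(e^{t\xi})\rho(g) = \rho(e^{t\Ad_{g^{-1}}\xi})$ in $t$ and iterating gives, for $\xi_1, \ldots, \xi_n \in \g_\C$,
$$d\rho(\xi_1 \cdots \xi_n)\rho(g)\psi = \rho(g)\, d\rho(\Ad_{g^{-1}}\xi_1 \cdots \Ad_{g^{-1}}\xi_n)\psi.$$
Taking norms (using that $\rho(g)$ is an isometry) and supremizing over $\xi_i \in B$ for a compact $B \subseteq \g_\C$ yields $p_B^n(\rho(g)\psi) = p_{\Ad_{g^{-1}}(B)}^n(\psi)$, and summing in $n$ gives $q_B(\rho(g)\psi) = q_{\Ad_{g^{-1}}(B)}(\psi)$. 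Since $\Ad_{g^{-1}}$ is continuous on $\g$ and extends complex-linearly to a continuous automorphism of $\g_\C$, the image $\Ad_{g^{-1}}(B)$ is again compact in $\g_\C$, and therefore $q_B(\rho(g)\psi) < \infty$. Thus $\rho(g)\psi \in \H_\rho^{\mcO}$.

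No step presents a real obstacle: $\g$-invariance is a direct specialization of \Fref{lem: invariance_of_entire_vectors_by_la_action}, and $G$-invariance reduces to a change of variables in the defining seminorms. An alternative route to $G$-invariance, perhaps more in line with the design of the lemma, would be to sum the estimate ${1\over m!} q_B(d\rho(\eta^m)\psi) \leq C t^{-m} q_{B'}(\psi)$ over $m$ to establish absolute convergence of $\sum_m {1\over m!} d\rho(\eta^m)\psi$ in the complete space $\H_\rho^{\mcO}$; its limit must then coincide with $\rho(e^\eta)\psi$ via the continuous inclusion $\H_\rho^{\mcO} \hookrightarrow \H_\rho^\infty$ combined with $\psi \in \H_\rho^\omega$ coming from \Fref{cor: entire_vectors_seminorm_finite}, after which connectedness of $G$ and the fact that it is generated by $\exp_G(\g)$ near the identity extend the conclusion from one-parameter subgroups to all of $G$. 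The adjoint-action approach has the advantage of avoiding both the completeness of $\H_\rho^{\mcO}$ and any appeal to connectedness.
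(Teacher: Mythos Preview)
Your proposal is correct and follows essentially the same approach as the paper: $G$-invariance via the identity $q_B(\rho(g)\psi) = q_{\Ad_{g^{-1}}(B)}(\psi)$ together with continuity of $\Ad_{g^{-1}}$ on $\g_\C$, and $\g$-invariance as an immediate consequence of \Fref{lem: invariance_of_entire_vectors_by_la_action}. The paper's proof is terser but contains no additional ideas; your alternative series-based route to $G$-invariance is not used there.
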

	\begin{proof}
		Let $\psi \in \H_\rho^{\mcO}$ and let $B \subseteq \g_\C$ be compact. As the adjoint action of $G$ on $\g_\C$ is continuous, $\Ad_g(B)$ is again compact in $\g_\C$ for every $g \in G$. Since $\rho(g)$ is unitary, we find that $q_B(\rho(g)\psi) = q_{\Ad_{g^{-1}}(B)}(\psi) < \infty$. Thus $\rho(g)\psi \in \H_\rho^{\mcO}$ and so $\H_\rho^{\mcO}$ is $G$-invariant. The $\g$-invariance of $\H_\rho^{\mcO}$ is immediate from \Fref{lem: invariance_of_entire_vectors_by_la_action}.
	\end{proof}
	
	
	\begin{lemma}\label{lem: series_entire}
		Define for every $m \in \N_{\geq 0}$ the function
		$$ f_m : \g_\C \times \H_\rho^\mcO \to \H_\rho^\mcO, \qquad f_m(\xi, \psi) := {1\over m!}d\rho(\xi^m)\psi.$$
		The series $\sum_{m=0}^\infty f_m(\xi, \psi)$ converges in $\H_\rho^{\mcO}$ for every $\xi \in \g_\C$ and $\psi \in \H_\rho^{\mcO}$. It defines a separately continuous map 
		\begin{equation}\label{eq: orbit_map_local_charts_inlem}
			f : \g_\C \times \H_\rho^{\mcO} \to \H_\rho^{\mcO}, \qquad f(\xi, \psi) := \sum_{m=0}^\infty f_m(\xi, \psi).
		\end{equation}
		In particular, the map $\g_\C \to \H_\rho^\mcO, \; \xi \mapsto f(\xi, \psi)$ is entire for every $\psi \in \H_\rho^\mcO$.
	\end{lemma}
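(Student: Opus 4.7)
The plan is to reduce all three assertions to the quantitative estimate of \Fref{lem: invariance_of_entire_vectors_by_la_action}, which controls the defining seminorms $q_B$ of $\H_\rho^{\mcO}$ with a geometric decay $t^{-m}$, uniformly as $\xi$ varies over a compact subset of $\g_\C$. Both the absolute convergence of the series (in view of the completeness of $\H_\rho^{\mcO}$ already established in this section) and the separate continuity assertions will be extracted from this single input.

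First, I would fix $\psi \in \H_\rho^{\mcO}$ and verify that $f_m(\,\cdot\,, \psi) \in P^m(\g_\C, \H_\rho^{\mcO})$ for every $m$. Via \Fref{prop: isom_polys_mult_maps} this reduces to continuity of the corresponding symmetric $m$-linear map into $\H_\rho^{\mcO}$, and after polarisation one need only check boundedness of $(\xi_1, \ldots, \xi_m) \mapsto \tfrac{1}{m!}\, d\rho(\xi_1 \cdots \xi_m)\psi$ on products of compact sets with respect to each seminorm $q_B$. Swapping the supremum and the sum in the displayed inequality of \Fref{lem: invariance_of_entire_vectors_by_la_action} (the statement proper, \emph{not} its ``in particular'' clause) delivers exactly this: $\sup_{\xi_j \in B_0} q_B\bigl(\tfrac{1}{m!}\, d\rho(\xi_1 \cdots \xi_m)\psi\bigr) \leq C\, t^{-m}\, q_{B'}(\psi)$.

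Next, with each $f_m(\,\cdot\,, \psi)$ now recognised as a continuous homogeneous polynomial, I would apply the diagonal form of \Fref{lem: invariance_of_entire_vectors_by_la_action} and sum over $m$: for any compact $B_0, B \subseteq \g_\C$ and any $t > 1$,
\[
\sum_{m=0}^\infty \sup_{\xi \in B_0} q_B\bigl(f_m(\xi, \psi)\bigr) \;\leq\; \Bigl(\sum_{m=0}^\infty C\, t^{-m}\Bigr) q_{B'}(\psi) \;<\; \infty.
\]
Because $\g_\C$ is Fr\'echet and $\H_\rho^{\mcO}$ is complete and Hausdorff, \Fref{lem: uniform_absol_conv_on_bdd_for_entire_fns} then yields that $\sum_m f_m(\,\cdot\,, \psi)$ converges in $\H_\rho^{\mcO}$ and defines an entire function $\g_\C \to \H_\rho^{\mcO}$. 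This settles in one stroke the convergence of the series, the separate continuity of $f$ in its first variable, and the final entireness clause.

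Separate continuity in the second variable at fixed $\xi \in \g_\C$ will then be a direct corollary: specialising $B_0 := \{\xi\}$ in the diagonal estimate and summing over $m$ produces a bound $q_B\bigl(f(\xi, \psi)\bigr) \leq C'\, q_{B'}(\psi)$ valid for all $\psi \in \H_\rho^{\mcO}$, which combined with the linearity of $f(\xi, \,\cdot\,)$ gives the continuity. The substantive work is therefore concentrated in \Fref{lem: invariance_of_entire_vectors_by_la_action}, which is already in hand; the only real pitfall I anticipate is the bookkeeping needed to distinguish between the diagonal and non-diagonal forms of that estimate, together with the discipline \emph{not} to try to upgrade the conclusion to \emph{joint} continuity of $f$, which generally fails as \Fref{ex: not_jointly_cts} already illustrates.
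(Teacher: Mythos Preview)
Your plan is sound and runs on the same engine as the paper---the geometric decay from \Fref{lem: invariance_of_entire_vectors_by_la_action}---but it diverges from the paper in two places, one of which deserves more care.

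The first divergence is a genuine streamlining: once each $f_m(\,\cdot\,,\psi)$ is known to lie in $P^m(\g_\C,\H_\rho^\mcO)$, you invoke \Fref{lem: uniform_absol_conv_on_bdd_for_entire_fns} to get entireness of $\xi \mapsto f(\xi,\psi)$ in one shot. The paper instead works by hand: it shows each $h_m := f_m(\,\cdot\,,\psi)|_{B_0}$ is continuous, that the partial sums converge uniformly on $B_0$, and finally that $\g_\C$ (being Fr\'echet) is compactly generated so that continuity on every compact yields global continuity. Your packaging is cleaner.

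The second divergence is your verification that $f_m(\,\cdot\,,\psi) \in P^m(\g_\C,\H_\rho^\mcO)$. You assert that continuity of the associated $m$-linear map into $\H_\rho^\mcO$ reduces to boundedness on products of compact sets. That implication is correct for a Fr\'echet domain---a null sequence in a metrizable space is Mackey-null, and $m$-homogeneity then turns the uniform bound on the resulting compact set into convergence to zero---but this is not the content of \Fref{prop: isom_polys_mult_maps} (which describes the \emph{topology} on $P^m$, not a membership criterion), and you do not supply the argument. The paper handles exactly this point differently: it uses that the multilinear map $\g_\C^m \to \H_\rho^\infty$ is already continuous for the strong topology \citep[Lem.~3.22]{BasNeeb_ProjReps}, so each truncation $\sum_{k\le N}\tfrac{1}{k!}\,p_B^k\bigl(d\rho(\xi^m-\eta^m)\psi\bigr)$ is continuous in $\xi$, and then the non-diagonal form of \Fref{lem: invariance_of_entire_vectors_by_la_action} provides uniform convergence of these truncations, giving continuity of $h_m$ into $\H_\rho^\mcO$. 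Either route closes the argument; you should supply the Mackey-null step explicitly or borrow the paper's detour through $\H_\rho^\infty$.
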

	\begin{proof}
		\noindent
		Let $\xi \in \g_\C$, $t>1$ and let $B \subseteq \g_\C$ be a compact subset. According to \Fref{lem: invariance_of_entire_vectors_by_la_action}, there is a constant $C>0$ and a compact subset $B^\prime \subseteq \g_\C$ s.t.\ ${1\over m!}q_B(d\rho(\xi^m)\psi) \leq C t^{-m} q_{B^\prime}(\psi)$ for every $m \in \N_{\geq 0}$ and $\psi \in \H_\rho^\mcO$. So for $\psi \in \H_\rho^\mcO$ we have
		$$ \sum_{m=0}^\infty {1\over m!} q_B(d\rho(\xi^m)\psi) \leq C \bigg(\sum_{m=0}^\infty t^{-m}\bigg) q_{B^\prime}(\psi) = {C \over 1 - t^{-1}} q_{B^\prime}(\psi) < \infty. $$
		We thus find that the series $\sum_{m=0}^\infty f_m(\xi, \psi)$ converges in $\H_\rho^\mcO$, and that 
		$$q_B(f(\xi, \psi)) \leq  {C \over 1 - t^{-1}} q_{B^\prime}(\psi), \qquad \forall \psi \in \H_\rho^\mcO.$$
		In particular, the linear map $\H_\rho^\mcO \to \H_\rho^\mcO, \; \psi \mapsto f(\xi, \psi)$ is continuous. \\
		
		\noindent
		We now show that $f$ is also separately continuous in the $\xi$-variable. Take $\psi \in \H_\rho^{\mcO}$ and define $f^\psi(\xi) := f(\xi, \psi)$ for $\xi \in \g_\C$. Let $B_0 \subseteq \g_\C$ be a compact subset. Consider the functions $h_m : B_0 \to \H_\rho^\mcO$ defined by $h_m(\xi) := {1\over m!} d\rho(\xi^m)\psi$ for $m \in \N_{\geq 0}$. Define also $h := \restr{f^\psi}{B_0}$. So $h(\xi) = \sum_{m=0}^\infty h_m(\xi) = f^\psi(\xi)$ for $\xi \in B_0$. We show that $h_m$ is continuous for every $m \in \N_{\geq 0}$ and that $\sum_{m=0}^N h_m \to h$ uniformly on $B_0$ as $N \to \infty$. This will imply that $h$ is continuous. \\
		
		\noindent
		Let $m \in \N_{\geq 0}$. Assume that $(\eta_n)_{n \in \N}$ is a sequence in $B_0$ with $\eta_n \to \eta$ for some $\eta \in B_0$. Let $B \subseteq \g_\C$ be a compact subset and let $t > 1$. Using \Fref{lem: invariance_of_entire_vectors_by_la_action}, there exists a constant $C > 0$ and a compact subset $B^\prime \subseteq \g_\C$ containing $B$ such that \eqref{eq: inv_of_entire_vectors_main_eq} holds true. Let $m \in \N_{\geq 0}$. Notice that
		\begin{equation}\label{eq: continuity_of_terms_inlem}
			q_B(h_m(\eta_n) - h_m(\eta)) = {1\over m!}q_B(d\rho(\eta_n^m - \eta^m)\psi) = {1\over m!} \sum_{k=0}^\infty {1\over k!} p_B^k(d\rho(\eta_n^m - \eta^m)\psi).
		\end{equation}
		Since the multilinear map
		$$ \g_\C^m \to \H_\rho^\infty, \qquad (\xi_1, \cdots, \xi_m) \mapsto d\rho(\xi_1, \cdots, \xi_m)\psi $$
		is continuous w.r.t.\ the strong topology on $\H_\rho^\infty$ \citep[Lem.\ 3.22]{BasNeeb_ProjReps} and $p_B^k$ is a continuous seminorm on $\H_\rho^\infty$, it follows that the function
		$$ \tau_N : B_0 \to [0, \infty), \qquad \tau_N(\xi) := \sum_{k=0}^N {1\over k!} p_B^k(d\rho(\xi^m - \eta^m)\psi) $$
		is continuous for every $N \in \N$. Moreover, in view of \eqref{eq: inv_of_entire_vectors_main_eq} we have that
		$$\sum_{k=0}^\infty {1\over k!}\sup_{\xi \in B_0} p_B^k(d\rho(\xi^m - \eta^m)\psi) \leq 2 \sum_{k=0}^\infty {1\over k!} \sup_{\zeta\in B_0}p_B^k(d\rho(\zeta^m)\psi) \leq 2C{m! \over t^m}q_{B^\prime}(\psi)< \infty. $$
		It follows that the continuous functions $\tau_N$ converge uniformly to 
		$$ \tau : B_0 \to [0, \infty), \qquad \tau(\xi) := \sum_{k=0}^\infty {1\over k!} p_B^k(d\rho(\xi^m - \eta^m)\psi).$$
		It follows that $\tau$ is continuous. In particular we obtain that $\tau(\eta_n) \to \tau(\eta) = 0$ as $n \to \infty$, which in view of \fref{eq: continuity_of_terms_inlem} implies that
		$$ q_B(h_m(\eta_n) - h_m(\eta)) \to 0 \text{ as } n \to \infty.$$
		We can thus conclude that $h_m$ is continuous for every $m \in \N_{\geq 0}$.\\
		
		\noindent
		With $t>1$, $B, B^\prime$ and $C > 0$ as above, notice using \Fref{lem: invariance_of_entire_vectors_by_la_action} that
		$$ \sum_{m=0}^\infty \sup_{\xi \in B_0} q_B(h_m(\xi)) = \sum_{m=0}^\infty {1\over m!}\sup_{\xi \in B_0} q_B(d\rho(\xi^m)\psi) \leq {C \over 1 - t^{-1}}q_{B^\prime}(\psi) < \infty. $$
		Hence $\sum_{m=0}^N h_m \to h$ uniformly on $B_0$ as $N \to \infty$. Thus $h = \restr{f^\psi}{B_0}$ is continuous. \\
		
		\noindent
		We have shown that $\restr{f^\psi}{B_0} : B_0 \to \H_\rho^\mcO$ is continuous for every compact subset $B_0 \subseteq \g_\C$. Since $\g_\C$ is Fr\'echet and thus first-countable, it is compactly generated in the sense that a set $S \subseteq \g_\C$ is open if and only if $S \cap B_0$ is open in $B_0$ for every compact $B_0 \subseteq \g_\C$. From the fact that $\restr{f^\psi}{B_0}$ is continuous for every compact subset $B_0 \subseteq \g_\C$, it therefore follows that $f^\psi$ is continuous. So $f$ is indeed separately continuous.
	\end{proof}
	
	\begin{lemma}\label{lem: series_function_extends_local_repr}
		Consider the function $f$ in \eqref{eq: orbit_map_local_charts_inlem}. For any $\xi \in \g$ and $\psi \in \H_\rho^{\mcO}$, we have $f(\xi, \psi) = \rho(e^\xi)\psi$.
	\end{lemma}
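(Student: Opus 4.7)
The plan is to fix $\xi \in \g$ and $\psi \in \H_\rho^{\mcO}$, and to emulate the ODE argument used in the proof of $(2) \Rightarrow (1)$ in \Fref{prop: analytic_vectors_equivalent_definitions}. Define two smooth curves $c, \tilde c : \R \to \H_\rho$ by $c(t) := f(t\xi, \psi)$ and $\tilde c(t) := \rho(e^{t\xi})\psi$. Both satisfy $c(0) = \tilde c(0) = \psi$, so it suffices to prove $c(t) = \tilde c(t)$ for all $t$, and then specialize to $t = 1$.

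First I would establish that $c'(t) = d\rho(\xi)c(t)$ as a curve in $\H_\rho$. By \Fref{lem: series_entire}, the map $\g_\C \to \H_\rho^{\mcO}$, $\eta \mapsto f(\eta, \psi)$, is entire, so $z \mapsto f(z\xi, \psi)$ is an entire $\H_\rho^{\mcO}$-valued function on $\C$. Term-by-term differentiation of the power series, which is legitimate as the series converges absolutely in $\H_\rho^{\mcO}$, produces $\sum_{m \geq 1} \frac{t^{m-1}}{(m-1)!} d\rho(\xi^m)\psi$. Continuity of the inclusion $\H_\rho^{\mcO} \hookrightarrow \H_\rho^\infty$ with respect to the weak topology (\Fref{lem: cty_of_embedding_strongly_entire_smooth}), combined with the continuity of $d\rho(\xi) : \H_\rho^\infty \to \H_\rho^\infty$ in the weak topology, permits factoring $d\rho(\xi)$ out of the sum, yielding $c'(t) = d\rho(\xi)c(t)$. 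The curve $\tilde c$ trivially satisfies $\tilde c'(t) = d\rho(\xi)\tilde c(t)$, since $d\rho(\xi)$ is by definition the generator of the one-parameter group $t \mapsto \rho(e^{t\xi})$ on smooth vectors, and $\tilde c(t) \in \H_\rho^\infty$.

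To conclude, fix an arbitrary $\eta \in \H_\rho^\infty$ and set $\phi(t) := \langle \rho(e^{t\xi})\eta, c(t) - \tilde c(t)\rangle$. Using $d\rho(\xi)^\dagger = -d\rho(\xi)$ on $\H_\rho^\infty$ (cf.\ \Fref{rem: involution_rep}), a direct calculation gives
\[
\phi'(t) = \langle d\rho(\xi)\rho(e^{t\xi})\eta, c(t) - \tilde c(t)\rangle + \langle \rho(e^{t\xi})\eta, d\rho(\xi)(c(t) - \tilde c(t))\rangle = 0.
\]
Since $\phi(0) = 0$, we have $\phi \equiv 0$, and since $\eta \in \H_\rho^\infty$ was arbitrary and $\H_\rho^\infty \subseteq \H_\rho$ is dense, unitarity of $\rho(e^{t\xi})$ implies $c(t) = \tilde c(t)$ for every $t \in \R$; setting $t = 1$ yields the claim. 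The only genuine technical point is the justification of termwise differentiation of $c(t)$, and this is immediate once one observes that $z \mapsto f(z\xi, \psi)$ is entire as an $\H_\rho^{\mcO}$-valued function and that both $\H_\rho^{\mcO} \hookrightarrow \H_\rho^\infty$ and $d\rho(\xi)$ are continuous in the relevant topologies.
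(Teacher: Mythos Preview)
Your proof is correct but takes a genuinely different route from the paper's. The paper argues as follows: since $\psi \in \H_\rho^{\mcO} \subseteq \H_\rho^\omega$ (by \Fref{cor: entire_vectors_seminorm_finite}), the orbit map $\xi \mapsto \rho(e^\xi)\psi$ is real-analytic $\g \to \H_\rho$; by \Fref{lem: series_entire}, so is $\xi \mapsto f(\xi,\psi)$. Both maps have the same Taylor expansion at $0 \in \g$, hence the same image under $j^\infty_0$, and \Fref{prop: jet_vanishes_then_function_trivial} then forces them to agree on all of $\g$. Your approach instead restricts to the one-parameter line $t \mapsto t\xi$ and runs the ODE/skew-adjointness argument already used in the proof of $(2)\Rightarrow(1)$ in \Fref{prop: analytic_vectors_equivalent_definitions}. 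The paper's argument is shorter and leverages the analytic identity principle that is set up earlier, while yours is more hands-on and requires only the one-variable entire function $z \mapsto f(z\xi,\psi)$ together with continuity of $d\rho(\xi)$ on $\H_\rho^\infty$; in particular you never need to invoke analyticity of $\xi \mapsto f(\xi,\psi)$ as a function on all of $\g$.
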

	\begin{proof}
		Let $\psi \in \H_\rho^{\mcO}$. Then $\psi$ is an analytic vector for $\rho$, by \Fref{cor: entire_vectors_seminorm_finite}. In view of \Fref{lem: series_entire}, we find that the two maps $\xi \mapsto \rho(e^\xi)\psi$ and $\xi \mapsto f(\xi, \psi)$ are both real analytic as maps $\g \to \H_\rho$. They moreover have the same image under the jet-projection $j^\infty_0 : C^\omega(\g, \H_\rho) \to P(\g, \H_\rho)$. Using \Fref{prop: jet_vanishes_then_function_trivial}, we conclude that $\rho(e^\xi)\psi = f(\xi, \psi)$ for all $\xi \in \g$.\qedhere
	\end{proof}
	
	\begin{lemma}
		The orbit map $G \to \H_\rho^{\mcO}, \; g \mapsto \rho(g)v$ is analytic for any $\psi \in \H_\rho^\mcO$.
	\end{lemma}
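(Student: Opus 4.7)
The plan is to reduce the statement to analyticity near the identity, then propagate it to all of $G$ using left-translation together with the fact that each $\rho(g)$ acts as a continuous linear operator on $\H_\rho^\mcO$. Write $F^\psi : G \to \H_\rho^\mcO$ for the orbit map $F^\psi(g) := \rho(g)\psi$; note that $F^\psi$ lands in $\H_\rho^\mcO$ by \Fref{lem: strongly-entire-vectors_invariant}.

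For the local step near $1_G$, recall from \Fref{lem: series_entire} that the map $\xi \mapsto f(\xi,\psi) = \sum_{m=0}^\infty \tfrac{1}{m!} d\rho(\xi^m)\psi$ is entire as a map $\g_\C \to \H_\rho^\mcO$. Its restriction to $\g$ is therefore real-analytic as a map $\g \to \H_\rho^\mcO$ (in the sense of \Fref{def: analytic}, with complex-analytic extension given by the entire map itself; equivalently, in the power-series sense of \Fref{rem: different_notions_of_real_analytic}, which is applicable because $\g$ is Fr\'echet and $\H_\rho^\mcO$ is complete Hausdorff). Since $G$ is BCH, there is an open $0$-neighborhood $V \subseteq \g$ and an open $1$-neighborhood $U \subseteq G$ such that $\exp_G : V \to U$ is a real-analytic diffeomorphism. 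By \Fref{lem: series_function_extends_local_repr}, $F^\psi \circ \exp_G = f(\,\cdot\,,\psi)$ on $V$. Composing with the real-analytic map $\exp_G^{-1} : U \to V$ shows that $F^\psi$ is real-analytic on $U$.

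To propagate to an arbitrary $g_0 \in G$, observe that $F^\psi(g_0 h) = \rho(g_0) F^\psi(h)$ for all $h \in G$. In the proof of \Fref{lem: strongly-entire-vectors_invariant} we saw that $q_B(\rho(g_0)\eta) = q_{\Ad_{g_0^{-1}}(B)}(\eta)$ for every compact $B \subseteq \g_\C$ and every $\eta \in \H_\rho^\mcO$; since $\Ad_{g_0^{-1}}(B)$ is again compact in $\g_\C$, this shows that $\rho(g_0)$ restricts to a continuous $\C$-linear, hence real-analytic, map $\H_\rho^\mcO \to \H_\rho^\mcO$. Writing $\lambda_{g_0} : G \to G$ for left-translation by $g_0$ (which is real-analytic because $G$ is a real-analytic Lie group), the identity $F^\psi \circ \lambda_{g_0} = \rho(g_0) \circ F^\psi$ combined with the chain rule for real-analytic maps shows that $F^\psi$ is real-analytic on the neighborhood $g_0 U$ of $g_0$. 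Since $g_0 \in G$ was arbitrary, $F^\psi$ is real-analytic on all of $G$. No step is expected to be particularly delicate: the only point requiring attention is ensuring that $\rho(g_0)$ is continuous with respect to the $\H_\rho^\mcO$-topology and not merely the $\H_\rho$-topology, which is settled by the explicit identity $q_B(\rho(g_0)\eta) = q_{\Ad_{g_0^{-1}}(B)}(\eta)$.
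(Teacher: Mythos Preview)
Your proof is correct and follows essentially the same approach as the paper: use \Fref{lem: series_entire} and \Fref{lem: series_function_extends_local_repr} together with the BCH property to get analyticity on a $1$-neighborhood, then propagate globally. The paper's proof is a one-liner that leaves the propagation step implicit (\dquotes{which implies the assertion}), whereas you have carefully spelled out why $\rho(g_0)$ is continuous on $\H_\rho^{\mcO}$ via the seminorm identity $q_B(\rho(g_0)\eta) = q_{\Ad_{g_0^{-1}}(B)}(\eta)$ and used the analyticity of left-translation; this is exactly the standard argument the paper is tacitly invoking.
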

	\begin{proof}
		Let $\psi \in \H_\rho^\mcO$. As the Lie group $G$ is BCH, \Fref{lem: series_entire} and \Fref{lem: series_function_extends_local_repr} together imply that the map $G \to \H_\rho^{\mcO}, \; g \mapsto \rho(g)\psi$ is analytic on $U$ for some $1$-neighborhood $U \subseteq G$, which implies the assertion.
	\end{proof}
	
	\section{A general approach to holomorphic induction}\label{sec: general_hol_induction}
	
	\noindent
	In this section, we define and study a notion of holomorphic induction for unitary representations of Lie groups. The presented definition and results extend that of \citep{Neeb_hol_reps}, by removing the requirement of norm-continuity of the representation being induced. The precise setting we consider is as described below.\\
	
	\noindent
	Let $G$ be a connected BCH Fr\'echet--Lie group with Lie algebra $\g$. Let $H \subseteq G$ be a connected and closed subgroup of $G$, and assume that $H$ is a locally exponential Lie subgroup of $G$ (cf.\ \citep[Def.\ IV.3.2]{neeb_towards_lie}). Let $\h := \bm{\mrm{L}}(H)$ be its Lie algebra, which we identify as Lie subalgebra of $\g$ using the pushforward of the inclusion $H \hookrightarrow G$. We then have $\exp_G(\xi) = \exp_H(\xi)$ for all $\xi \in \h \subseteq \g$. We furthermore have:
	\begin{lemma}\label{lem: subgroup_embedded_bch}
		There exists an open neighborhood $U$ of $\g$ s.t.\ $\restr{\exp_G}{U}$ is an analytic diffeomorphism onto an open neighborhood in $G$ and such that 
		$$\exp_G(U \cap \h)~=~\exp_G(U) \cap H.$$
		In particular, $H$ is BCH and an analytic embedded Lie subgroup of $G$.
	\end{lemma}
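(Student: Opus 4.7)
The plan is to combine the BCH structure on $G$ with the local exponentiality and closedness of $H$ to produce compatible exponential charts. First, because $G$ is BCH, I fix an open balanced $0$-neighborhood $U_0 \subseteq \g$ on which $\exp_G$ restricts to an analytic diffeomorphism onto an open $1$-neighborhood $\Omega_0 = \exp_G(U_0) \subseteq G$. Since $H$ is a locally exponential Lie subgroup and the pushforward of $H \hookrightarrow G$ identifies $\h$ with a subalgebra of $\g$ via a continuous linear injection, I then fix an open $0$-neighborhood $V_0 \subseteq \h$ (in the intrinsic topology of $\h$) with $V_0 \subseteq U_0$ such that $\exp_H|_{V_0} = \exp_G|_{V_0}$ is an analytic diffeomorphism onto an open $1$-neighborhood $W_H = \exp_H(V_0)$ of $H$.

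The core step is to show that, after shrinking $U_0$ to some open $0$-neighborhood $U \subseteq U_0$, one has the inclusion
\[
\exp_G(U) \cap H \;\subseteq\; \exp_G(U \cap \h),
\]
the reverse inclusion being immediate from $\exp_G(\xi) = \exp_H(\xi) \in H$ for $\xi \in U \cap \h$. I would argue by contradiction: if no such $U$ worked, then there would be a sequence $\xi_n \to 0$ in $\g$ with $g_n := \exp_G(\xi_n) \in H$ but $\xi_n \notin \h$. By continuity of $\exp_G$, $g_n \to 1$ in $G$. The critical input is that, because $H$ is a closed locally exponential subgroup of the BCH Lie group $G$, the general structure theory of \citep{neeb_towards_lie} ensures that the Lie group topology of $H$ agrees near $1$ with the subspace topology inherited from $G$; hence $g_n \to 1$ also in $H$, so eventually $g_n \in W_H$. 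Writing $g_n = \exp_H(\eta_n) = \exp_G(\eta_n)$ for the unique $\eta_n \in V_0 \subseteq U_0$, injectivity of $\exp_G|_{U_0}$ forces $\xi_n = \eta_n \in \h$, contradicting $\xi_n \notin \h$.

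Given the displayed equality, the chart $\exp_G|_U$ of $G$ restricts to the analytic diffeomorphism $\exp_G|_{U \cap \h} : U \cap \h \to \exp_G(U) \cap H$ of an open subset of $\h$ onto a $1$-neighborhood in $H$, exhibiting $H$ as an analytic embedded submanifold of $G$ near $1$; left translation by arbitrary elements of $H$ propagates this globally, so $H$ is an analytic embedded Lie subgroup of $G$. To see that $H$ is BCH, observe that for $\xi, \eta \in U \cap \h$ sufficiently small, $\exp_G(\xi)\exp_G(\eta) = \exp_G(\xi \ast \eta)$ lies in $H$, so by the displayed equality $\xi \ast \eta \in U \cap \h$. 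The BCH series of $\g$ thus restricts to a local analytic multiplication on $\h$ representing the multiplication in $H$ through $\exp_H$, and $\exp_H$ is an analytic local diffeomorphism at $0$.

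The main obstacle is the nontrivial inclusion in the displayed equation, which is equivalent to $H$ being embedded near $1$. Without closedness, a locally exponential Lie subgroup of $G$ may carry an intrinsic topology strictly finer than the subspace topology, and then the key step, $g_n \to 1$ in $G$ implying $g_n \in W_H$, can fail; the combination of closedness of $H$ with local exponentiality and the BCH property of $G$ is precisely what rules this out via the embedding theory in \citep{neeb_towards_lie}.
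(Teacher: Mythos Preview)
Your argument is correct in outline but essentially circular at the decisive step, and the paper's proof is much shorter for that reason. The paper simply invokes \citep[Thm.\ IV.3.3]{neeb_towards_lie} directly: that result asserts, for a locally exponential Lie subgroup $H$ of a locally exponential group $G$, the existence of a $0$-neighborhood $U \subseteq \g$ on which $\exp_G$ is a diffeomorphism and $\exp_G(U \cap \h) = \exp_G(U) \cap H$. From there the paper reads off that $H$ is an analytic embedded Lie subgroup with analytic slice charts furnished by $\exp_G$, and that $H$ inherits the BCH property from $G$.

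Your sequence argument attempts to derive the slice inclusion $\exp_G(U) \cap H \subseteq \exp_G(U \cap \h)$ directly, but the step ``$g_n \to 1$ in $G$ implies $g_n \to 1$ in the intrinsic topology of $H$'' is precisely the assertion that $H$ is topologically embedded near $1$, which is equivalent to (and in Neeb's framework is proved via) the slice property you are trying to establish. You acknowledge this by appealing to ``the general structure theory of \citep{neeb_towards_lie}'' and ``the embedding theory in \citep{neeb_towards_lie}'' at exactly this point, but that structure theory \emph{is} Theorem IV.3.3. So the contradiction argument does not give an independent route; it reduces the slice property to embeddedness and then cites the same theorem for embeddedness. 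Once you are invoking that theorem anyway, the sequence argument and the subsequent hand-verification that the BCH series restricts to $\h$ are redundant: the theorem delivers the neighborhood $U$ and the displayed equality in one stroke, exactly as the paper records.
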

	\begin{proof}
		Since $H$ is a locally exponential Lie subgroup of $G$ by assumption, it follows using \citep[Thm.\ IV.3.3]{neeb_towards_lie} that there exists an open neighborhood $U \subseteq \g$ such that $\restr{\exp_G}{U}$ is a diffeomorphism onto an open $1$-neighborhood in $G$ and $\exp_G(U \cap \h)= \exp_G(U) \cap H$. Consequently, $H$ is an analytic embedded Lie subgroup of $G$, and the exponential map $\exp_G$ can be used to obtain analytic slice charts for $H$. Since $G$ is BCH, it follows that also $H$ is BCH.
	\end{proof}
	
	\noindent
	Let $\theta : \g_\C \to \g_\C$ be the conjugation defined by $\theta(\xi + i\eta) = \xi - i\eta$ for $\xi, \eta \in \g$. We assume given a triangular decomposition 
	$$\g_\C = \n_- \oplus \h_\C \oplus \n_+,$$
	where $\n_\pm$ and $\h_\C$ are closed Lie subalgebras of $\g_\C$ satisfying $\theta(\n_\pm) \subseteq \n_\mp$, $\theta(\h_\C) \subseteq \h_\C$ and $[\h_\C, \n_\pm] \subseteq \n_\pm$. Set $\b_\pm := \n_\pm \rtimes \h_\C$. \\
	
	\noindent
	The structure of this chapter is as follows. In \Fref{sec: cplx_bundles} we establish some notation and preliminary definitions, in particular specifying a certain space of functions on $G$ that takes the role usually taken by the holomorphic sections of a complex homogeneous vector bundle over $G/H$. In \Fref{sec: hol_induced} we present the definition of holomorphically induced representations and establish an equivalent characterization. We then proceed in \Fref{sec: uniqueness}, \Fref{sec: commutants} and \Fref{sec: holomorphic_induction_in_stages} to study the most important properties enjoyed by holomorphically induced representations.\\
	
	\noindent
	As the theory of this section no longer has a clear interpretation in terms of holomorphic maps, we present in \Fref{sec: geometric_hol_induction} a stronger notion that involves complex geometry. The approach presented there depends crucially on the availability of a dense set of b-strongly-entire vectors in the representation being induced.

	\subsection{A substitute for holomorphic sections}\label{sec: cplx_bundles}
	
	\noindent
	Let $(\sigma, V_\sigma)$ be an analytic unitary representation of $H$. Let us establish some notation and preliminary definitions.
	
	\begin{definition}\label{def: lie_derivatives}
		For $\xi \in \g$, define the differential operators $\L_{\bm{v}(\xi)}$ and $\L_{\bm{v}(\xi)^r}$ on $C^\infty(G, V_\sigma)$ by
		\begin{align*}
			(\L_{\bm{v}(\xi)}f)(g) &:= \restr{d\over dt}{t=0}f(ge^{t\xi}),\\
			(\L_{\bm{v}(\xi)^r}f)(g) &:= \restr{d\over dt}{t=0}f(e^{-t\xi}g), \qquad \forall g \in G, \; f \in C^\infty(G, V_\sigma).
		\end{align*}
		Extend both $\xi \mapsto \L_{\bm{v}(\xi)}$ and $\xi \mapsto \L_{\bm{v}(\xi)^r}$ $\C$-linearly to $\g_\C$ and further to algebra homomorphisms on $\mc{U}(\g_\C)$, so we have e.g.\ $\L_{\bm{v}(\xi_1\cdots \xi_n)^r} = \L_{\bm{v}(\xi_1)^r}\cdots \L_{\bm{v}(\xi_n)^r}$ for all $\xi_k \in \g_\C$ and $k\in \{1,\ldots, n\}$. 
	\end{definition}
	
	\begin{remark}
		We thus adopt the convention that for $\xi \in \g$, $\bm{v}(\xi)$ denotes the \textit{left}-invariant vector field on $G$ associated to $\xi \in \g$ whereas $\bm{v}(\xi)^r$ denotes the \textit{right}-invariant one.
	\end{remark}
	
	\begin{definition}\label{def: extension}
		Let $\mcD \subseteq V_\sigma^\omega$ be a subspace that is dense in $V_\sigma$.
		\begin{itemize}
			\item An \textit{extension} of $d\sigma$ to $\b_\pm$ with domain $\mcD$ is a Lie algebra homomorphism $\chi : \b_\pm \to \L(\mcD)$ such that $\chi(\xi) = \restr{d \sigma(\xi)}{\mcD}$ for all $\xi \in \h_\C$. We call $(\sigma, \chi)$ an \textit{$(H, \b_-)$-extension pair with domain $\mcD$}.
			\item The \textit{trivial extension} of $d\sigma$ to $\b_\pm$ with domain $\mcD$ is defined by letting $\n_\pm$ act trivially on $\mcD$.
		\end{itemize}
	\end{definition}

	\begin{definition}
		For $k\in \{1,2\}$, let $(\sigma_k, \chi_k)$ be an $(H, \b_-)$-extension pair with domain $\mcD_k$. We say that $(\sigma_1, \chi)$ and $(\sigma_2, \chi_2)$ are \textit{unitarily equivalent} if there is a unitary isomorphism $U : V_{\sigma_1} \to V_{\sigma_2}$ of $H$-representation such that $U \mcD_1 = \mcD_2$ and $U\chi_1(\xi)v = \chi_2(\xi)Uv$ for all $\xi \in \b_-$ and $v \in \mcD_1$. In this case we write $(\sigma_1, \chi_1) \cong (\sigma_2, \chi_2)$.
	\end{definition}

	\begin{definition}
		For $k\in \{1,2\}$, let $(\sigma_k, \chi_k)$ be an $(H, \b_-)$-extension pair with domain $\mcD_k$. Define the direct sum $(\sigma_1, \chi_1) \oplus (\sigma_2, \chi_2) := (\sigma_1 \oplus \sigma_2, \chi_1 \oplus \chi_2)$, where $\chi_1 \oplus \chi_2$ is defined by
		\begin{align*}
			\chi_1\oplus \chi_2& : \b_- \to \L(\mcD_1 \oplus \mcD_2), \qquad (\chi_1 \oplus \chi_2)(\xi)(v_1, v_2) = (\chi_1(\xi)v_1, \chi_2(\xi)v_2).
		\end{align*}
	\end{definition}
	\begin{definition}
		Let $(\sigma, \chi)$ be an $(H, \b_-)$-extension pair with domain $\mcD$. We say that $(\sigma, \chi)$ is \textit{decomposable} if $(\sigma, \chi) \cong (\sigma_1, \chi_1) \oplus (\sigma_2, \chi_2)$ for some non-trivial $(H, \b_-)$-extension pairs $(\sigma_1, \chi_1)$ and $(\sigma_2, \chi_2)$. We say that $(\sigma, \chi)$ is \textit{indecomposable} if it is not decomposable.\\
	\end{definition}
	
	\noindent
	Recall the definition of the involutions $\tau, \theta$ and $(\--)^\ast$ on $\mc{U}(\g_\C)$, specified in \Fref{def: involutions}. Recalling that $\theta(\xi + i\eta) = \theta(\xi - i\eta)$ for $\xi, \eta \in \g$, the involutions $\tau$ and $(\--)^\ast$ satisfy $\tau(\xi) = -\xi$ and $\xi^\ast = -\theta(\xi)$ for $\xi \in \g_\C$. Extensions are used to specify a suitable $G$-subrepresentation of $C^\omega(G, V_\sigma)^H$:
	
	\begin{definition}\label{def: hol_by_extension}
		Let $(\sigma, \chi)$ be an $(H, \b_-)$-extension pair with domain $\mcD$. Define
		\begin{align*}
			C^\omega(G, V_\sigma)^{H} &:= \set{ f \in C^\omega(G, V_\sigma) \st f(gh) = \sigma(h)^{-1}f(g), \qquad \forall \; g \in G, h \in H }\\
			C^\omega(G, V_\sigma)^{H, \chi} &:= \set{ f \in C^\omega(G, V_\sigma)^H \st \langle v, \L_{\bm{v}(\xi)}f\rangle = -\langle \chi(\xi^\ast)v, f\rangle, \qquad \forall \xi \in \b_+, v \in \mcD}.\\
		\end{align*}
	\end{definition}
	
	\noindent
	Let us next record an important property regarding $C^\omega(G, V_\sigma)^{H, \chi}$:
	
	\begin{proposition}\label{prop: equivariance_property_qhol_fns}
		Let $(\sigma, \chi)$ be an $(H, \b_-)$-extension pair with domain $\mcD$. Let $f \in C^\omega(G, V_\sigma)^{H, \chi}$. Then 
		$$f(g) \in \dom(\chi(x^\ast)^\ast) \quad\text{ and }\quad (\L_{\bm{v}(\tau(x))} f)(g) = \chi(x^\ast)^\ast f(g), \qquad \forall x \in \mc{U}(\b_+), \; \forall  g \in G.$$
	\end{proposition}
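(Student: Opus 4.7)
The plan is to reformulate the conclusion as the functional identity
\[
\L_{\bm{v}(\tau(x))} \phi_v = \phi_{\chi(x^\ast) v} \quad \text{on } G,
\]
for every $x \in \mc{U}(\b_+)$ and $v \in \mcD$, where $\phi_v(g) := \langle v, f(g)\rangle$. Once established, evaluating this identity at a point $g \in G$ yields $\langle v, (\L_{\bm{v}(\tau(x))}f)(g)\rangle = \langle \chi(x^\ast)v, f(g)\rangle$; the left-hand side is a continuous linear functional of $v \in \mcD \subseteq V_\sigma$, being an inner product with the fixed vector $(\L_{\bm{v}(\tau(x))}f)(g)$, so by definition of the Hilbert-space adjoint this means $f(g) \in \dom(\chi(x^\ast)^\ast)$ with $\chi(x^\ast)^\ast f(g) = (\L_{\bm{v}(\tau(x))}f)(g)$, which is precisely the claim.

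Since $\mc{U}(\b_+)$ is spanned as a vector space by monomials $\xi_1\cdots\xi_n$ with $\xi_i \in \b_+$, it suffices by linearity to treat such monomials. The case $n=0$ is trivial and $n=1$ is the defining condition of $C^\omega(G, V_\sigma)^{H,\chi}$ combined with $\tau(\xi) = -\xi$. For general $n$, the key observation is that $\chi : \b_- \to \L(\mcD)$ takes values in $\L(\mcD)$, so that $\chi(\xi^\ast)v \in \mcD$ whenever $v \in \mcD$ and $\xi \in \b_+$. This invariance makes it possible to iterate the defining relation $\L_{\bm{v}(\xi)}\phi_v = -\phi_{\chi(\xi^\ast)v}$ against the fresh test vector $\chi(\xi_k^\ast)\cdots \chi(\xi_1^\ast)v \in \mcD$ at each step. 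Using the algebra homomorphism property $\L_{\bm{v}(xy)} = \L_{\bm{v}(x)}\L_{\bm{v}(y)}$ on $\mc{U}(\g_\C)$ together with the antihomomorphism property of $(-)^\ast$, this yields
\[
\L_{\bm{v}(\xi_n \cdots \xi_1)}\phi_v = \L_{\bm{v}(\xi_n)} \cdots \L_{\bm{v}(\xi_1)}\phi_v = (-1)^n \phi_{\chi(\xi_n^\ast \cdots \xi_1^\ast)v} = (-1)^n \phi_{\chi(x^\ast)v}.
\]
Combining this with $\tau(x) = (-1)^n \xi_n \cdots \xi_1$, the two factors of $(-1)^n$ cancel and deliver $\L_{\bm{v}(\tau(x))}\phi_v = \phi_{\chi(x^\ast)v}$, completing the reformulated identity.

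The main care required is in tracking the three involutions $\tau$, $\theta$ and $(-)^\ast$; in particular, one must recognise that the sign $(-1)^n$ arising from $\tau(x) = (-1)^n \xi_n\cdots \xi_1$ is precisely cancelled by the $(-1)^n$ collected from iterating the minus sign in the defining relation. Beyond this bookkeeping, the only non-formal input is the $\chi$-invariance of $\mcD$, which is built into the definition of an extension.
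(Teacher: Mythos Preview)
Your proof is correct and follows essentially the same approach as the paper: both reduce to monomials $x = \xi_1\cdots\xi_n$ with $\xi_i \in \b_+$, iterate the defining relation $\langle v, \L_{\bm{v}(\xi)}f\rangle = -\langle \chi(\xi^\ast)v, f\rangle$ using the $\chi$-invariance of $\mcD$, and then conclude via the definition of the Hilbert-space adjoint. Your write-up makes the sign bookkeeping between $\tau$ and $(\--)^\ast$ more explicit than the paper does, but the argument is the same.
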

	\begin{proof}
		Let $v \in \mcD$. Suppose that $x = \xi_1\cdots \xi_n$ for $n\in \N$ and $\xi_i \in \b_+$. Observe that $f(g) \in \dom(\chi(\eta^\ast)^\ast)$ and $(\L_{\bm{v}(\eta)}f)(g) = -\chi(\eta^\ast)^\ast f(g)$ for any $g \in G$ and $\eta \in \b_+$, as a consequence of \Fref{def: hol_by_extension}. It follows by induction on $n \in \N$ that $\langle v, \L_{\bm{v}(\xi_1\cdots \xi_n)}f\rangle = (-1)^n\langle \chi(\xi_1^\ast \cdots \xi_n^\ast)v, f\rangle$. This implies $\langle v, \L_{\bm{v}(\tau(x))}f\rangle = \langle \chi(x^\ast)v, f\rangle$ for any $x \in \mc{U}(\b_+)$ and $v \in \mcD$. The assertion follows.
	\end{proof}
	
	\subsection{Holomorphically induced representations}\label{sec: hol_induced}
	
	\noindent
	We now define holomorphically induced representations. Fix throughout the section an $(H, \b_-)$-extension pair $(\sigma, \chi)$ with a domain $\mcD_\chi \subseteq V_\sigma^\omega$ that is dense in $V_\sigma$. Let $(\rho, \H_\rho)$ be a unitary representation of $G$.\\

	\begin{remark}
		The theory of holomorphic induction presented in the upcoming section makes use of reproducing kernel Hilbert spaces. For more details thereon, one may refer e.g. to \citep[Ch.\ I-II]{Neeb_book_hol_conv}. The most relevant properties are recalled in \Fref{sec: reproducing_hspaces} below.\\
	\end{remark}


	\begin{definition}\label{def: hol_induced}
		We say that $(\rho, \H_\rho)$ is \textit{holomorphically induced} from $(\sigma, \chi)$ if there exists a $G$-equivariant injective linear map $\Phi : \H_\rho \hookrightarrow \Map(G, V_\sigma)^H$ satisfying the following conditions:
		\begin{enumerate}
			\item The point evaluation $\mc{E}_x : \H_\rho \to V_\sigma, \; \mc{E}_x(\psi) := \Phi_\psi(x)$ is continuous for every $x \in G$.
			\item $\mc{E}_x\mc{E}_x^\ast = \id_{V_\sigma}$ for every $x \in G$.
			\item $\mcD_\chi = \set{v \in V_\sigma \st \Phi(\mc{E}_{e}^\ast v) \in C^\omega(G, V_\sigma)^{H, \chi}}$.
		\end{enumerate}
	\end{definition}
	
	\begin{remark}
		The first condition entails that $(\rho, \H_\rho)$ is unitarily equivalent to the natural $G$-representation on the reproducing kernel Hilbert space $\H_Q$, where $Q \in C(G \times G, \B(V_\sigma))^{H \times H}$ is the positive definite and $G$-invariant kernel defined by $Q(x,y) := \mc{E}_x\mc{E}_y^\ast$, cf.\ \Fref{thm: properties_repr_kernel} and \Fref{prop: homog_repr_h_space_unitary_repr} below.\\
	\end{remark}


	\noindent
	We have the following equivalent characterization, whose proof comprises the remainder of the section:
	
	\begin{restatable}{theorem}{holinduced}\label{thm: hol_induced_equiv_char}
		The following assertions are equivalent. 
		\begin{enumerate}
			\item The $G$-representation $(\rho, \H_\rho)$ is holomorphically induced from the $(H, \b_-)$-extension pair $(\sigma, \chi)$.
			\item There is a closed $H$-invariant subspace $V \subseteq \H_\rho$ with the following properties:
			\begin{enumerate}
				\item $V$ is cyclic for the $G$-representation $\H_\rho$.
				\item $\mcD_{\widetilde{\chi}} := V \cap \H_\rho^\omega$ satisfies $d\rho(\n_-)\mcD_{\widetilde{\chi}} \subseteq \mcD_{\widetilde{\chi}}$. 
				\item $(\sigma, \chi)$ is unitarily equivalent to $(\widetilde{\sigma}, \widetilde{\chi})$, where $(\widetilde{\sigma}, \widetilde{\chi})$ is the $(H, \b_-)$-extension pair defined by
				\begin{alignat*}{2}
					\widetilde{\sigma} : H &\to \U(V),& \qquad \widetilde{\chi} : \b_- &\to \L(\mcD_{\widetilde{\chi}}),\\
					\widetilde{\sigma}(h) &:= \restr{\rho(h)}{V}, &													\widetilde{\chi}(\xi) &:= \restr{d\rho(\xi)}{\mcD_{\widetilde{\chi}}}.
				\end{alignat*} 
				In particular, $\mcD_{\widetilde{\chi}}$ is dense in $V$.
			\end{enumerate}
		\end{enumerate}
		If these equivalent assertions are satisfied, then $\rho$ is an analytic $G$-representation.\\
	\end{restatable}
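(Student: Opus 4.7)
The plan is to establish the two directions of the equivalence separately, in each case translating between the geometric data (a closed cyclic subspace $V \subseteq \H_\rho$) and the analytic data (the embedding $\Phi$ into $\Map(G, V_\sigma)^H$). The bridge between the two pictures is the isometric embedding $\mc{E}_e^\ast : V_\sigma \hookrightarrow \H_\rho$ (resp.\ its candidate inverse $U^\ast P_V$ built from an abstract $V$).

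For the implication $(1) \Longrightarrow (2)$, I would take $V := \mc{E}_e^\ast(V_\sigma)$ and $U := \mc{E}_e^\ast$. Condition $\mc{E}_e \mc{E}_e^\ast = \id$ makes $U$ an isometry; the $H$-equivariance of the point-evaluation $\mc{E}_{gh} = \sigma(h)^{-1}\mc{E}_g$ makes $U$ an $H$-intertwiner onto the $H$-invariant $V$; and cyclicity of $V$ is equivalent to the injectivity of $\Phi$, since $\psi \perp \rho(G)V$ is equivalent to $\mc{E}_g\psi = \Phi_\psi(g) = 0$ for every $g$. This handles $(2)(a)$ and the $H$-representation part of $(2)(c)$.

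The crux is then matching the domains and the extensions. For $v \in \mcD_\chi$ the function $\Phi_{\mc{E}_e^\ast v} \in C^\omega(G, V_\sigma)^{H, \chi}$ is analytic by definition, and via the factorization $\Phi_{\mc{E}_e^\ast v}(g) = \mc{E}_e \rho(g^{-1})\mc{E}_e^\ast v$ together with \Fref{prop: analytic_vectors_equivalent_definitions}(6) applied to the matrix coefficient $g \mapsto \langle \mc{E}_e^\ast v, \rho(g)\mc{E}_e^\ast v\rangle = \langle v, \Phi_{\mc{E}_e^\ast v}(g^{-1})\rangle$ one deduces $\mc{E}_e^\ast v \in \H_\rho^\omega$. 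Differentiating $\Phi_{\mc{E}_e^\ast v}$ at $g = e$ along $\xi \in \b_+$ and pairing with $v' \in \mcD_\chi$ reduces the identity defining $C^\omega(G, V_\sigma)^{H,\chi}$ to the equation $U^\ast P_V d\rho(\xi) U v = \chi(\xi)v$ on $\mcD_\chi$; splitting $\xi = \xi_h + \xi_n \in \h_\C \oplus \n_+$ the $\h_\C$-part follows from $H$-equivariance of $U$, while the $\n_+$-part is handled via the adjoint relation $d\rho(\xi)^\dagger = d\rho(\xi^\ast)$ on $\H_\rho^\infty$ together with the $d\rho(\n_-)$-stability of $V \cap \H_\rho^\omega$ granted by hypothesis $(2)(b)$. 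The opposite inclusion $\{v \st \Phi(\mc{E}_e^\ast v) \in C^\omega(G, V_\sigma)^{H, \chi}\} \subseteq \mcD_\chi$ is the same argument read in reverse.

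For the converse $(2) \Longrightarrow (1)$, I would set $\Phi_\psi(g) := U^\ast P_V \rho(g^{-1})\psi$, where $P_V$ is the orthogonal projection onto $V$ and $U : V_\sigma \to V$ is the $H$-equivariant unitary from $(c)$. Each item of \Fref{def: hol_induced} now translates back into the material of $(2)$: $G$-equivariance is built in, the $H$-equivariance $\Phi_\psi(gh) = \sigma(h)^{-1}\Phi_\psi(g)$ uses $H$-invariance of $V$ together with $H$-equivariance of $U$, $\mc{E}_g \mc{E}_g^\ast = U^\ast P_V P_V U = \id$ is immediate, injectivity of $\Phi$ is exactly cyclicity of $V$, and the characterization of $\mcD_\chi$ is the same analyticity and derivative computation as above. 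The main obstacle in both directions is the analyticity transfer from $V_\sigma$-valued functions on $G$ back to vectors in $\H_\rho$; this is what makes \Fref{prop: analytic_vectors_equivalent_definitions}(6) essential, since it lets one reduce joint analyticity to a single scalar matrix coefficient that is visible through $\Phi$. Once the equivalence is settled, analyticity of $\rho$ follows at once: $\mcD_{\widetilde\chi} \cong \mcD_\chi$ is dense in $V \cong V_\sigma$ by the standing assumption on extensions, so $\rho(G)\mcD_{\widetilde\chi}$ is dense in $\H_\rho$ by $(2)(a)$ and is contained in $\H_\rho^\omega$ by $G$-invariance of the space of analytic vectors.
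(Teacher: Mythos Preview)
Your outline for $(2)\Longrightarrow(1)$ matches the paper's \Fref{lem: recognize_hol_induced} and is fine. The problem is in $(1)\Longrightarrow(2)$, specifically in the paragraph beginning ``Differentiating $\Phi_{\mc{E}_e^\ast v}$ at $g=e$\ldots''. Two issues intertwine there.

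First, a type error: you write ``the equation $U^\ast P_V d\rho(\xi)Uv=\chi(\xi)v$'' for $\xi\in\b_+$, but $\chi$ is defined on $\b_-$, not $\b_+$. Presumably you mean to pass to $\eta:=\xi^\ast\in\b_-$ via the adjoint relation. Second, and more seriously, evaluating the defining identity of $C^\omega(G,V_\sigma)^{H,\chi}$ only at $g=e$ yields, after the adjoint flip,
\[
\langle d\rho(\eta)\mc{E}_e^\ast v',\ \mc{E}_e^\ast v\rangle=\langle \chi(\eta)v',\ v\rangle\qquad(\eta\in\b_-,\ v,v'\in\mcD_\chi).
\]
Since $\mc{E}_e^\ast\mcD_\chi$ is dense only in $V$, not in $\H_\rho$, this gives merely $P_V d\rho(\eta)\mc{E}_e^\ast v'=\mc{E}_e^\ast\chi(\eta)v'$, i.e.\ the projected intertwining. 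It does \emph{not} show $d\rho(\eta)\mc{E}_e^\ast v'\in V$, which is exactly the content of $(2)(b)$. You then attempt to close this gap by writing ``the $\n_+$-part is handled via \ldots\ the $d\rho(\n_-)$-stability of $V\cap\H_\rho^\omega$ granted by hypothesis $(2)(b)$'' --- but in this direction $(2)(b)$ is what you are trying to prove, so the argument is circular.

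The paper's remedy (its \Fref{lem: second_properties_hol_induced}) is to use the $C^\omega(G,V_\sigma)^{H,\chi}$ condition at \emph{every} $g\in G$, not just at $e$: from $(\L_{\bm{v}(\xi)}f_v)(g)=-\chi(\xi^\ast)^\ast f_v(g)$ one reads off $\mc{E}_e d\rho(x)\rho(g)\mc{E}_e^\ast v=\chi(x^\ast)^\ast\mc{E}_e\rho(g)\mc{E}_e^\ast v$ for all $x\in\mc{U}(\b_+)$. Pairing $d\rho(\eta)\mc{E}_e^\ast v'-\mc{E}_e^\ast\chi(\eta)v'$ against the set $\rho(G)\mc{E}_e^\ast\mcD_\chi$, which \emph{is} total in $\H_\rho$ by cyclicity, then yields the full intertwining $d\rho(\eta)\mc{E}_e^\ast v'=\mc{E}_e^\ast\chi(\eta)v'$ and in particular $(2)(b)$. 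Once that is in place, the domain equality $\mcD_{\widetilde\chi}=V\cap\H_\rho^\omega$ (your ``opposite inclusion'') follows as in \Fref{lem: an_vectors}; your ``same argument read in reverse'' is not sufficient without first having the intertwining on $\mcD_\chi$.
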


	\noindent
	We proceed with the proof of \Fref{thm: hol_induced_equiv_char}. We have the following simple but important observation:
	
	\newpage
	\begin{lemma}\label{lem: hol_induced_1}
		Let $\Phi : \H_\rho \hookrightarrow \Map(G, V_\sigma)^H$ be a $G$-equivariant injective linear map. Assume that the point evaluation $\mc{E}_x(\psi) := \Phi_\psi(x)$ is continuous for every $x \in G$. Define $f_v := \Phi(\mc{E}_e^\ast v) \in \Map(G, V_\sigma)^H$ for $v \in V_\sigma$. Then:
		\begin{enumerate}
			\item $\mc{E}_g = \mc{E}_e \rho(g)^{-1}$ for any $g \in G$.
			\item $\Phi_\psi(g) = \mc{E}_e \rho(g)^{-1}\psi$ for any $\psi \in \H_\rho$. In particular, $f_v(g) = \mc{E}_e \rho(g)^{-1}\mc{E}_e^\ast v$ for $v \in V_\sigma$.
			\item Let $v \in V_\sigma$. Then 
			$$\mc{E}_e^\ast v \in \H_\rho^\omega \iff f_v \in C^\omega(G, V_\sigma)^H \iff \langle v, f_v\rangle \in C^\omega(G, \C).$$
		\end{enumerate}
	\end{lemma}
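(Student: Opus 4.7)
Parts (1) and (2) are direct unwindings of $G$-equivariance. The $G$-action on $\Map(G,V_\sigma)^H$ is the left regular action $(g\cdot f)(x) = f(g^{-1}x)$, so $G$-equivariance of $\Phi$ reads $\Phi_{\rho(g)\psi}(x) = \Phi_\psi(g^{-1}x)$. Evaluating at $x = e$ gives $\mc{E}_e\rho(g) = \mc{E}_{g^{-1}}$, and (1) follows after replacing $g$ by $g^{-1}$. Then (2) is immediate: $\Phi_\psi(g) = \mc{E}_g(\psi) = \mc{E}_e\rho(g)^{-1}\psi$, and specializing to $\psi = \mc{E}_e^\ast v$ gives the stated formula for $f_v$.

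For (3), I set $\psi := \mc{E}_e^\ast v \in \H_\rho$ so that by (2), $f_v(g) = \mc{E}_e\rho(g)^{-1}\psi$. I would prove the two implications cyclically. For $\mc{E}_e^\ast v \in \H_\rho^\omega \Rightarrow f_v \in C^\omega(G,V_\sigma)^H$: the orbit map $g \mapsto \rho(g)\psi$ is real-analytic by definition, inversion on $G$ is real-analytic since $G$ is a real-analytic Lie group, and composition with the continuous (hence analytic) linear map $\mc{E}_e : \H_\rho \to V_\sigma$ preserves analyticity. Thus $f_v = \mc{E}_e \circ \rho(\cdot)^{-1}\psi$ is analytic; it already lies in $\Map(G,V_\sigma)^H$ by construction.

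The implication $f_v \in C^\omega(G,V_\sigma)^H \Rightarrow \langle v, f_v\rangle \in C^\omega(G,\C)$ is again composition with a continuous linear functional on $V_\sigma$. For the remaining implication $\langle v, f_v\rangle \in C^\omega(G,\C) \Rightarrow \mc{E}_e^\ast v \in \H_\rho^\omega$, I compute
\[
\langle v, f_v(g)\rangle = \langle v, \mc{E}_e \rho(g)^{-1}\psi\rangle = \langle \mc{E}_e^\ast v, \rho(g)^{-1}\psi\rangle = \langle \psi, \rho(g)^{-1}\psi\rangle.
\]
Post-composing with the analytic map $g \mapsto g^{-1}$ shows that $g \mapsto \langle \psi, \rho(g)\psi\rangle$ is real-analytic on $G$, in particular on a neighborhood of $1 \in G$, so the criterion $(6)\Rightarrow(1)$ of \Fref{prop: analytic_vectors_equivalent_definitions} delivers $\psi \in \H_\rho^\omega$.

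No step looks like a genuine obstacle here; the main subtlety is simply invoking the right direction of \Fref{prop: analytic_vectors_equivalent_definitions} to upgrade analyticity of the single matrix coefficient $g \mapsto \langle \psi, \rho(g)\psi\rangle$ to analyticity of the full orbit map, which is exactly what that proposition is designed for.
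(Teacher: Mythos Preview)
Your proof is correct and follows essentially the same route as the paper's own argument: parts (1) and (2) via $G$-equivariance of $\Phi$, and part (3) cyclically, with the final implication reducing to analyticity of the single matrix coefficient $g \mapsto \langle \psi, \rho(g)\psi\rangle$. One small technical point: \Fref{prop: analytic_vectors_equivalent_definitions} is stated under the standing hypothesis $\psi \in \H_\rho^\infty$, which you do not know a priori for $\psi = \mc{E}_e^\ast v$; the paper sidesteps this by citing \citep[Thm.\ 5.2]{Neeb_analytic_vectors} directly, which does not require smoothness of $\psi$ as input.
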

	\begin{proof}~
		\begin{enumerate}
			\item As $\Phi$ is $G$-equivariant, we have $\mc{E}_g \psi = \Phi_\psi(g) = \Phi_{\rho(g)^{-1}\psi}(e) = \mc{E}_e \rho(g)^{-1}\psi$ for any $\psi \in \H_\rho$.
			\item This is immediate from the first assertion.
			\item Let $v \in V_\sigma$. If $\mc{E}_e^\ast v \in \H_\rho^\omega$, then the orbit map $g \mapsto \rho(g)\mc{E}_e^\ast v$ is analytic $G \to \H_\rho$. It follows that $f_v(g) = \mc{E}_e \rho(g)^{-1}\mc{E}_e^\ast v$ is analytic $G \to V_\sigma$, which in turn implies that $\langle v, f_v\rangle \in C^\omega(G, \C)$. Assume that $\langle v, f_v\rangle \in C^\omega(G, \C)$. Then $g \mapsto \langle \mc{E}_e^\ast v , \rho(g)\mc{E}_e^\ast v\rangle_{\H_\rho} = \langle v, f_v(g^{-1})\rangle_V$ is analytic. As $G$ is a BCH Fr\'echet--Lie group, this implies using \citep[Thm.\ 5.2]{Neeb_analytic_vectors} that $\mc{E}_e^\ast v \in \H_\rho^\omega$.\qedhere
		\end{enumerate}
	\end{proof}
	
	\noindent
	We first prove that $\textrm{(1)} \implies \textrm{(2)}$ in \Fref{thm: hol_induced_equiv_char}. Assume that $\rho$ is holomorphically induced from $(\sigma, \chi)$. Let the map $\Phi : \H_\rho \to \Map(G, V_\sigma)^H$ satisfy the conditions in \Fref{def: hol_induced}. Let $\mc{E}_x := \ev_x \circ \Phi$ be the point evaluation at $x \in G$. We write $f_v := \Phi(\mc{E}_e^\ast v) \in C^\omega(G, V_\sigma)^{H, \chi}$ for $v \in \mcD_\chi$. \\
	
	\noindent
	We show that the $H$-invariant subspace $W := \mc{E}_e^\ast V_\sigma \subseteq \H_\rho$ satisfies the conditions in \Fref{thm: hol_induced_equiv_char}. Define $\mc{D}_{\widetilde{\chi}} := \mc{E}_e^\ast \mc{D}_\chi \subseteq W$. By \Fref{thm: properties_repr_kernel} we know that $\rho(G) W = \bigcup_{g \in G}\mc{E}_g^\ast V_\sigma$ is total in $\H_\rho$, so that $W$ is cyclic for $\rho$. It is moreover immediate from \Fref{lem: hol_induced_1} that $\mcD_{\widetilde{\chi}} \subseteq \H_\rho^\omega$. Because $\mcD_{\widetilde{\chi}}$ is dense in the cyclic subspace $W$ and $\H_\rho^\omega$ is $G$-invariant, we obtain that $\H_\rho^\omega$ is dense in $\H_\rho$. Hence $\rho$ is analytic.

	\begin{lemma}\label{lem: second_properties_hol_induced}
		Let $v \in \mcD_\chi$. The following assertions hold true: 
		\begin{enumerate}
			\item $\mc{E}_e \rho(g) \mc{E}_e^\ast v \in \dom(\chi(x^\ast)^\ast)$ and $\mc{E}_e d\rho(x)\rho(g)\mc{E}_e^\ast v = \chi(x^\ast)^\ast \mc{E}_e \rho(g) \mc{E}_e^\ast v$ for any $x\in \mc{U}(\b_+)$ and $g \in G$.
			\item $d\rho(\b_-)\mcD_{\widetilde{\chi}} \subseteq \mcD_{\widetilde{\chi}}$ and $d\rho(x)\mc{E}_e^\ast v  = \mc{E}_e^\ast \chi(x) v$ for any $x \in \mc{U}(\b_-)$.
		\end{enumerate}
	\end{lemma}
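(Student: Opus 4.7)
The plan is to reduce both assertions to \Fref{prop: equivariance_property_qhol_fns} applied to the function $f_v := \Phi(\mc{E}_e^\ast v)$, which by \Fref{def: hol_induced}(3) lies in $C^\omega(G, V_\sigma)^{H, \chi}$ for every $v \in \mcD_\chi$. The crucial analytic input is that \Fref{lem: hol_induced_1}(3) gives $\mc{E}_e^\ast v \in \H_\rho^\omega$, so the orbit map $g \mapsto \rho(g) \mc{E}_e^\ast v$ is real-analytic; combined with \Fref{lem: hol_induced_1}(2), which identifies $f_v(g') = \mc{E}_e \rho(g'^{-1}) \mc{E}_e^\ast v$, this lets iterated differentiation pass freely through $\mc{E}_e$.

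For part (1), I would first establish by direct computation the identity
\[ (\L_{\bm{v}(\tau(x))} f_v)(g') = \mc{E}_e \, d\rho(x) \, \rho(g'^{-1}) \, \mc{E}_e^\ast v, \qquad x \in \mc{U}(\g_\C), \; g' \in G. \]
Writing $x = \xi_1 \cdots \xi_n$, the left-hand side is the iterated mixed derivative at the origin of $(t_1, \ldots, t_n) \mapsto f_v(g' e^{t_1 \xi_1} \cdots e^{t_n \xi_n}) = \mc{E}_e \rho(e^{-t_n \xi_n} \cdots e^{-t_1 \xi_1} g'^{-1}) \mc{E}_e^\ast v$. Differentiating each factor in turn, using that the orbit map is real-analytic and that each $d\rho(\xi)$ is continuous on $\H_\rho^\infty$, produces $(-1)^n \mc{E}_e \, d\rho(\xi_n \cdots \xi_1) \rho(g'^{-1}) \mc{E}_e^\ast v$; the identity $\tau(x) = (-1)^n \xi_n \cdots \xi_1$ then closes the computation. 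Specializing $g' = g^{-1}$ and invoking \Fref{prop: equivariance_property_qhol_fns} yields at once both $\mc{E}_e \rho(g) \mc{E}_e^\ast v \in \dom(\chi(x^\ast)^\ast)$ and the identity of part (1).

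For part (2), fix $\xi \in \b_-$, so that $\xi^\ast \in \b_+$. For $u, v \in \mcD_\chi$ and $g \in G$, the adjoint relation $d\rho(\xi)^\dagger = d\rho(\xi^\ast)$ together with part (1) applied to $\xi^\ast$ give
\begin{align*}
\langle \rho(g) \mc{E}_e^\ast u, \, d\rho(\xi) \mc{E}_e^\ast v\rangle
&= \langle \mc{E}_e \, d\rho(\xi^\ast) \rho(g) \mc{E}_e^\ast u, \, v\rangle \\
&= \langle \chi(\xi)^\ast \mc{E}_e \rho(g) \mc{E}_e^\ast u, \, v\rangle
= \langle \rho(g) \mc{E}_e^\ast u, \, \mc{E}_e^\ast \chi(\xi) v\rangle.
\end{align*}
Hence $\phi := d\rho(\xi) \mc{E}_e^\ast v - \mc{E}_e^\ast \chi(\xi) v$ is orthogonal to every vector in $\rho(g) \mc{E}_e^\ast \mcD_\chi$. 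Density of $\mcD_\chi$ in $V_\sigma$ together with boundedness of $\mc{E}_e^\ast$ gives density of $\mc{E}_e^\ast \mcD_\chi$ in $W := \overline{\mc{E}_e^\ast V_\sigma}$, and cyclicity of $W$ under $\rho$ then makes $\bigcup_{g \in G} \rho(g) \mc{E}_e^\ast \mcD_\chi$ total in $\H_\rho$, forcing $\phi = 0$. Since $\chi(\xi) v \in \mcD_\chi$, this places $d\rho(\xi) \mc{E}_e^\ast v = \mc{E}_e^\ast \chi(\xi) v \in \mc{E}_e^\ast \mcD_\chi = \mcD_{\widetilde{\chi}}$, and the extension to arbitrary $x \in \mc{U}(\b_-)$ is then a straightforward induction on word length, using that $d\rho$ and $\chi$ are algebra homomorphisms. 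The main technical care lies in the sign-and-order bookkeeping of part (1); once that identification is secured, part (2) reduces to the above orthogonality-plus-cyclicity argument.
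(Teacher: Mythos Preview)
Your proposal is correct and follows essentially the same route as the paper: for part (1) you combine \Fref{prop: equivariance_property_qhol_fns} with the identity $(\L_{\bm{v}(\tau(x))}f_v)(g) = \mc{E}_e\, d\rho(x)\rho(g)^{-1}\mc{E}_e^\ast v$ (which the paper also establishes by induction on the degree of $x$), and for part (2) you run the same orthogonality-plus-cyclicity argument against $\rho(G)\mcD_{\widetilde{\chi}}$. The only difference is cosmetic: the paper handles general $x \in \mc{U}(\b_-)$ in one stroke rather than reducing to $\xi \in \b_-$ first, and your explicit sign-and-order description in part (1) is slightly imprecise (the exponentials appear in the reverse order and there is an extra $(-1)^n$ from $\tau$), but as you yourself note this is pure bookkeeping and your stated final identity is correct.
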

	\begin{proof}~
		\begin{enumerate}
			\item Let $x\in \mc{U}(\b_+)$. Since $f_v \in C^\omega(G, V_\sigma)^{H, \chi}$, we obtain from \Fref{prop: equivariance_property_qhol_fns} that $f_v(e) \subseteq \dom(\chi(x^\ast)^\ast)$ and that $\L_{\bm{v}(\tau(x))}f_v = \chi(x^\ast)^\ast f_v$. On the other hand, notice using the formula $f_v(g) = \mc{E}_e \rho(g)^{-1} \mc{E}_e^\ast v$ that $(\L_{\bm{v}(\tau(x))}f_v)(g) = \mc{E}_e d\rho(x)\rho(g)^{-1}\mc{E}_e^\ast v$ holds true for any $g \in G$, say by induction on the degree of $x$. We thus obtain that $\mc{E}_e d\rho(x)\rho(g)^{-1}\mc{E}_e^\ast v = \chi(x^\ast)^\ast f_v(g) = \chi(x^\ast)^\ast \mc{E}_e \rho(g)^{-1} \mc{E}_e^\ast v$ for any $g \in G$.
			\item Let $x \in \mc{U}(\b_-)$. Recall from \Fref{lem: hol_induced_1} that $\mcD_{\widetilde{\chi}} \subseteq \H_\rho^\omega$. Let $\psi \in \rho(G)\mcD_{\widetilde{\chi}}$. Using the first assertion, observe that $$\langle \mc{E}_e^\ast \chi(x)v, \psi\rangle = \langle v, \chi(x)^\ast\mc{E}_e\psi\rangle = \langle v, \mc{E}_e d\rho(x^\ast)\psi\rangle = \langle d\rho(x)\mc{E}_e^\ast v, \psi\rangle.$$
			As $\mcD_{\widetilde{\chi}}$ is cyclic for $G$ in $\H_\rho$, it follows that $\mc{E}_e^\ast \chi(x)v = d\rho(x)\mc{E}_e^\ast v$. In particular $d\rho(\b_-)\mcD_{\widetilde{\chi}} \subseteq \mcD_{\widetilde{\chi}}$.\qedhere
		\end{enumerate}
	\end{proof}
	
	\noindent
	Define the unitary $H$-action $\widetilde{\sigma}$ on $W$ by $\widetilde{\sigma}(h) = \restr{\rho(h)}{W}$. Consider the extension $\widetilde{\chi}(\xi) := \restr{d\rho(\xi)}{\mcD_{\widetilde{\chi}}}$ of $d\widetilde{\sigma}$ to $\b_-$, whose domain is $\mcD_{\widetilde{\chi}}$. By \Fref{lem: second_properties_hol_induced}, $\mc{E}_e^\ast$ defines a unitary equivalence between $(\sigma, \chi)$ and $(\widetilde{\sigma}, \widetilde{\chi})$. In particular, it follows that $\mcD_{\widetilde{\chi}}$ is dense in $W$, because $\mcD_\chi$ is dense in $V_\sigma$ by assumption.

	\begin{lemma}\label{lem: an_vectors}
		$\mcD_{\widetilde{\chi}} = W\cap \H_\rho^\omega$.
	\end{lemma}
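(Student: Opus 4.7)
The inclusion $\mcD_{\widetilde{\chi}}\subseteq W\cap\H_\rho^\omega$ is immediate: by definition $\mcD_{\widetilde{\chi}}=\mc{E}_e^\ast\mcD_\chi\subseteq W$, and for $v\in\mcD_\chi$ we have $f_v\in C^\omega(G,V_\sigma)^{H,\chi}\subseteq C^\omega(G,V_\sigma)^H$, so \Fref{lem: hol_induced_1}(3) already gives $\mc{E}_e^\ast v\in\H_\rho^\omega$. The content of the lemma is therefore the reverse inclusion, and this is what I will focus on.

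Let $\psi\in W\cap\H_\rho^\omega$. Condition (2) of \Fref{def: hol_induced} says $\mc{E}_e\mc{E}_e^\ast=\id_{V_\sigma}$, so $\mc{E}_e^\ast$ is injective and there is a unique $v\in V_\sigma$ with $\psi=\mc{E}_e^\ast v$. To conclude $\psi\in\mcD_{\widetilde{\chi}}$, I will show $v\in\mcD_\chi$, which by condition (3) of \Fref{def: hol_induced} amounts to verifying that $f_v:=\Phi(\mc{E}_e^\ast v)$ lies in $C^\omega(G,V_\sigma)^{H,\chi}$. The $H$-equivariance and analyticity of $f_v$ already follow from \Fref{lem: hol_induced_1}(3), since $\psi\in\H_\rho^\omega$. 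What remains is the defining extension identity
\[
\langle v',(\L_{\bm{v}(\xi)}f_v)(g)\rangle=-\langle\chi(\xi^\ast)v',f_v(g)\rangle
\]
for all $v'\in\mcD_\chi$, $\xi\in\b_+$ and $g\in G$ (note $\xi^\ast=-\theta(\xi)\in\b_-$ since $\theta(\b_+)=\b_-$, so $\chi(\xi^\ast)$ makes sense on $\mcD_\chi$).

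To check this, I use the formula $f_v(g)=\mc{E}_e\rho(g)^{-1}\psi$ supplied by \Fref{lem: hol_induced_1}(2). Because $\H_\rho^\omega\subseteq\H_\rho^\infty$ is $G$-invariant, $\rho(g)^{-1}\psi\in\H_\rho^\infty$, so differentiating gives $(\L_{\bm{v}(\xi)}f_v)(g)=-\mc{E}_e d\rho(\xi)\rho(g)^{-1}\psi$. Substituting into the left-hand side yields $-\langle\mc{E}_e^\ast v',d\rho(\xi)\rho(g)^{-1}\psi\rangle$, while the right-hand side can be rewritten using \Fref{lem: second_properties_hol_induced}(2) (applied with $x=\xi^\ast\in\b_-$) as $-\langle\mc{E}_e^\ast\chi(\xi^\ast)v',\rho(g)^{-1}\psi\rangle=-\langle d\rho(\xi^\ast)\mc{E}_e^\ast v',\rho(g)^{-1}\psi\rangle$. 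These two expressions agree by the adjoint relation $d\rho(\xi)^\dagger=d\rho(\xi^\ast)$ on $\H_\rho^\infty$ recorded in \Fref{rem: involution_rep}, applied to the smooth vectors $\mc{E}_e^\ast v'\in\mcD_{\widetilde{\chi}}$ and $\rho(g)^{-1}\psi$.

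There is no serious obstacle here: once the pointwise formula $f_v(g)=\mc{E}_e\rho(g)^{-1}\psi$ is in hand, the argument is essentially a manipulation between the two representations $\chi$ on $\mcD_\chi$ and $d\rho$ on $\H_\rho^\infty$, mediated by the already-established compatibility $\mc{E}_e^\ast\chi(x)=d\rho(x)\mc{E}_e^\ast$ on $\mc{U}(\b_-)$ from \Fref{lem: second_properties_hol_induced}(2) and the symmetry of $d\rho$ under $(\--)^\ast$. The only mildly delicate point is keeping track of which side of the identity requires an element of $\mcD_\chi$ versus an element of $\H_\rho^\omega$, and ensuring that the $g$-translate $\rho(g)^{-1}\psi$ remains smooth so that $d\rho(\xi)$ may be applied to it; both are immediate from the $G$-invariance of $\H_\rho^\omega$.
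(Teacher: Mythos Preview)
Your proof is correct and follows essentially the same route as the paper's: both arguments reduce the reverse inclusion to verifying the defining identity of $C^\omega(G,V_\sigma)^{H,\chi}$ for $f_v$, using the formula $f_v(g)=\mc{E}_e\rho(g)^{-1}\psi$ from \Fref{lem: hol_induced_1}, the intertwining relation $\mc{E}_e^\ast\chi(\xi^\ast)=d\rho(\xi^\ast)\mc{E}_e^\ast$ from \Fref{lem: second_properties_hol_induced}(2), and the adjoint relation $d\rho(\xi)^\dagger=d\rho(\xi^\ast)$ on smooth vectors.
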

	\begin{proof}
		The inclusion $\mcD_{\widetilde{\chi}} \subseteq W\cap \H_\rho^\omega$ follows from \Fref{lem: hol_induced_1}. Let $w \in W \cap \H_\rho^\omega$. Then $w = \mc{E}_e^\ast v$ for some $v \in V_\sigma$. We must show that $v \in \mcD_\chi$. \Fref{lem: hol_induced_1} implies that $f_v \in C^\omega(G, V_\sigma)^H$. Let $v_2 \in \mcD_\chi$ and $\xi \in \b_+$. Using \Fref{lem: second_properties_hol_induced} and the formula $f_v(g) = \mc{E}_e \rho(g)^{-1}\mc{E}_e^\ast v$ we obtain:
		\begin{align*}
			\langle v_2, (\L_{\bm{v}(\xi)}f_v)(g)\rangle 
			&= -\langle d\rho(\xi^\ast)\mc{E}_e^\ast v_2,\rho(g)^{-1}\mc{E}_e^\ast v \rangle =  -\langle \chi(\xi^\ast)v_2, \mc{E}_e \rho(g)^{-1}\mc{E}_e^\ast v \rangle
			= -\langle \chi(\xi^\ast)v_2, f_v(g) \rangle.
		\end{align*}
		It follows that $f_v \in C^\omega(G, V_\sigma)^{H, \chi}$. By the third property in \Fref{def: hol_induced}, this means that $v \in \mcD_\chi$.
	\end{proof}
	
	\noindent
	This completes the proof of $(1) \implies (2)$ in \Fref{thm: hol_induced_equiv_char}. The converse is \Fref{lem: recognize_hol_induced} below:

	%
	
	\noindent

	\begin{lemma}\label{lem: recognize_hol_induced}
		Let $(\rho, \H_\rho)$ be a unitary representation of $G$. Let $V \subseteq \H_\rho$ be a closed $H$-invariant subspace. Define an $H$-representation $\sigma$ on $V$ by $\sigma(h):= \restr{\rho(h)}{V}$. Set $\mcD_{\chi}  = V \cap \H_\rho^\omega$. Assume that $\rho(G)V$ is total in $\H_\rho$, that $\mcD_{\chi}$ is dense in $V_\sigma$ and that $d\rho(\n_-)\mcD_{\chi} \subseteq \mcD_{\chi}$. Define the extension $\chi(\xi)v := d\rho(\xi) v$ of $d\sigma$ to $\b_-$ with domain $\mcD_{\chi}$, where $\xi \in \b_-$ and $v \in \mcD_\chi$. Then $\rho$ is holomorphically induced from $(\sigma, \chi)$.
	\end{lemma}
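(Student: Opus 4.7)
The natural candidate for the embedding is $\Phi_\psi(g) := p_V\rho(g)^{-1}\psi$, where $p_V : \H_\rho \to V$ denotes the orthogonal projection. The plan is to verify in turn the three conditions of \Fref{def: hol_induced} for this $\Phi$ under the identification $V \cong V_\sigma$. First I would check that $\Phi$ lands in $\Map(G, V_\sigma)^H$ (using that $V$ is $H$-invariant, so $p_V \rho(h)^{-1} = \sigma(h)^{-1}p_V$), that it is $G$-equivariant with respect to left translation on functions, and that it is injective: if $\Phi_\psi = 0$ then $\psi$ is orthogonal to $\rho(G)V$, which is total by hypothesis. The point evaluations $\mc{E}_x = p_V \rho(x)^{-1}$ are clearly bounded, and $\mc{E}_x^\ast$ is the composition of $\rho(x)$ with the isometric inclusion $V \hookrightarrow \H_\rho$, so $\mc{E}_x\mc{E}_x^\ast = p_V|_V = \id_V$.

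The content of the lemma is therefore concentrated in the third condition, namely that $\mcD_\chi$ is precisely the set of $v \in V_\sigma$ with $f_v := \Phi(\mc{E}_e^\ast v) \in C^\omega(G, V_\sigma)^{H, \chi}$. Note that $\mc{E}_e^\ast v = v$ and $f_v(g) = p_V\rho(g)^{-1}v$. Since we have already arranged the hypotheses of \Fref{lem: hol_induced_1}, I may use it to conclude $f_v \in C^\omega(G, V_\sigma)^H \iff v \in \H_\rho^\omega \iff v \in V \cap \H_\rho^\omega = \mcD_\chi$. This immediately gives the reverse containment $\{v : f_v \in C^\omega(G, V_\sigma)^{H, \chi}\} \subseteq \mcD_\chi$.

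For the forward containment, I fix $v \in \mcD_\chi$ and compute, for $\xi \in \b_+$ and $w \in \mcD_\chi$, the pairing
\[
\langle w, (\L_{\bm{v}(\xi)}f_v)(g)\rangle = \restr{d\over dt}{t=0}\langle w, p_V\rho(e^{-t\xi})\rho(g)^{-1}v\rangle.
\]
Since $w \in V$, the projection $p_V$ drops out of the inner product; using $\C$-linearity and \Fref{rem: involution_rep}, the derivative becomes $-\langle d\rho(\xi^\ast)w, \rho(g)^{-1}v\rangle$. By hypothesis $\xi^\ast \in \b_-$ acts via $\chi$ on $\mcD_\chi$, so $d\rho(\xi^\ast)w = \chi(\xi^\ast)w \in V$; reinserting $p_V$ yields $-\langle \chi(\xi^\ast)w, f_v(g)\rangle$, which is exactly the defining condition for $C^\omega(G, V_\sigma)^{H, \chi}$.

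The only subtle point is making sure that all the objects I differentiate genuinely lie in the relevant analytic/smooth domains. This is not really an obstacle because $\mcD_\chi \subseteq \H_\rho^\omega$ and $\H_\rho^\omega$ is $G$-invariant, so $\rho(g)^{-1}v$ is a smooth (in fact analytic) vector on which $d\rho(\xi)$ is defined; analyticity of $f_v$ likewise follows from analyticity of the orbit map of $v$. Once the three conditions of \Fref{def: hol_induced} are verified, we conclude that $\rho$ is holomorphically induced from $(\sigma, \chi)$.
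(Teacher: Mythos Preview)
Your proposal is correct and follows essentially the same route as the paper: define $\Phi_\psi(g) = p_V\rho(g)^{-1}\psi$, verify conditions (1) and (2) of \Fref{def: hol_induced} directly, and for condition (3) use \Fref{lem: hol_induced_1} for one inclusion and the adjoint identity $d\rho(\xi)^\dagger = d\rho(\xi^\ast)$ together with $\chi(\xi^\ast)w = d\rho(\xi^\ast)w$ for the other. The only cosmetic slip is writing the derivative as $\restr{d\over dt}{t=0}\langle w, p_V\rho(e^{-t\xi})\rho(g)^{-1}v\rangle$ for $\xi \in \b_+$, which strictly speaking only makes sense for $\xi \in \g$; since you immediately invoke $\C$-linearity, this is harmless, but it would be cleaner to write $(\L_{\bm{v}(\xi)}f_v)(g) = -p_V d\rho(\xi)\rho(g)^{-1}v$ directly, as the paper does.
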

	\begin{proof}
		Let $p_V : \H_\rho \to V_\sigma$ denote the orthogonal projection onto $V_\sigma$. For $\psi \in \H_\rho$, define $\Phi_\psi(g) := p_V \rho(g)^{-1}\psi$. Consider the linear map $\Phi : \H_\rho \to C(G, V_\sigma)^H$ defined by $\psi \mapsto \Phi_\psi$. It is clear that $\Phi$ is $G$-equivariant and that the point-evaluation $\mc{E}_g = p_V \rho(g)^{-1}$ is continuous for any $g\in G$. The map $\Phi$ is injective because $\Phi_\psi = 0$ is equivalent to $\psi \perp \rho(G)V$, and $\rho(G)V$ is total in $\H_\rho$ by assumption. Notice next that $\mc{E}_g^\ast v = \rho(g)v$ for any $v \in V$ and so $\mc{E}_g \mc{E}_g^\ast = \id_{V}$. Define $V^0 := \{ v \in V \st \Phi_v \in C^\omega(G, V_\sigma)^{H, \chi}\}$. It remains to show that $\mcD_{\chi} = V^0$. It is immediate from the third assertion in \Fref{lem: hol_induced_1} that $V^0 \subseteq \mcD_{\chi}$. Suppose conversely that $v\in \mcD_{\chi}$. By \Fref{lem: hol_induced_1}, we have $\Phi_v \in C^\omega(G, V_\sigma)^H$. Let $\xi \in \b_+$ and $w \in \mcD_{\chi}$. Using $\L_{\bm{v}(\xi)}\Phi_v(g) = -p_Vd\rho(\xi)\rho(g)^{-1}v$, we find that
		$$ \langle w, \L_{\bm{v}(\xi)}\Phi_v(g)\rangle = -\langle d\rho(\xi^\ast)w, \rho(g)^{-1}v\rangle = -\langle \chi(\xi^\ast)w, \rho(g)^{-1}v\rangle = -\langle \chi(\xi^\ast)w, \Phi_v(g)\rangle.$$
		Thus $\Phi_v \in C^\omega(G, V_\sigma)^{H, \chi}$, which means that $v \in V^0$. Therefore $V^0 = \mcD_{\chi}$.
	\end{proof}
	
	\subsection{Uniqueness}\label{sec: uniqueness}
	
	\noindent
	In the following, we determine that there is up to unitary equivalence at most one unitary $G$-representation that is holomorphically induced from a given $(H, \b_-)$-extension pair. Let $(\sigma, \chi)$ be such an $(H, \b_-)$-extension pair, whose domain $\mcD_\chi~\subseteq~V_\sigma^\omega$ is dense in $V_\sigma$. Let $(\rho, \H_\rho)$ be a unitary representation of $G$.
	
	\begin{definition}
		We say that $(\sigma, \chi)$ is \textit{holomorphically inducible} to $G$ if there is a unitary $G$-representation which is holomorphically induced from $(\sigma, \chi)$. 
	\end{definition}
	
	\begin{proposition}\label{prop: reproducing_function}
		Assume that $\rho$ is holomorphically induced from $(\sigma, \chi)$. Let the map $\Phi : \H_\rho \hookrightarrow \Map(G, V_\sigma)^H$ satisfy the conditions in \Fref{def: hol_induced} and let $\mc{E}_x~:=~\ev_x \circ \Phi$ be the point evaluation at $x \in G$. Define:
		$$F : G \to \B(V_\sigma), \quad F(g) := \mc{E}_e \rho(g)\mc{E}_e^\ast.$$
		Then $F$ satisfies the following properties:
		\begin{enumerate}
			\item $F(e) = \id_{V_\sigma}$.
			\item The function $Q : G \times G \to \B(V_\sigma), \; Q(x,y) := F(x^{-1}y)$ is positive definite (cf.\ \Fref{def: pos_kernel}).
			\item $\mcD_\chi = \set{v \in V_\sigma \st g \mapsto \langle v, F(g) v\rangle \text{ is real-analytic } G\to \C}$.
			\item For all $v,w \in \mcD_\chi$ and $g\in G$, $\xi \in \b_+$ we have:
			\begin{align}\label{eq: defining_reproducing_function}
				\big[\L_{\bm{v}(\xi)^r}\langle w, F v\rangle\big](g) &= -\langle \chi(\xi^\ast) w, F(g)v\rangle.
			\end{align}
		\end{enumerate}
		Finally, $\rho$ is unitarily equivalent to the $G$-representation on the reproducing kernel Hilbert space $\H_Q$.
	\end{proposition}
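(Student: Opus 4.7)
The plan is to verify each of (1)–(4) and the final unitary equivalence in turn, leaning on the characterization of $\mc{E}_g$ in \Fref{lem: hol_induced_1}, the intertwining identities of \Fref{lem: second_properties_hol_induced}, and the key equality $\mc{E}_e^\ast \mcD_\chi = \mc{E}_e^\ast V_\sigma \cap \H_\rho^\omega$ established in \Fref{lem: an_vectors}. Properties (1) and (2) would come first and essentially for free: (1) is nothing more than the condition $\mc{E}_e\mc{E}_e^\ast = \id_{V_\sigma}$ built into \Fref{def: hol_induced}. For (2), I would use \Fref{lem: hol_induced_1}(1) to rewrite $F(x^{-1}y) = \mc{E}_e\rho(x)^{-1}\rho(y)\mc{E}_e^\ast = \mc{E}_x\mc{E}_y^\ast$, after which positive definiteness of $Q$ is the standard Gram-matrix identity $\sum_{i,j}\langle v_i, \mc{E}_{x_i}\mc{E}_{x_j}^\ast v_j\rangle = \|\sum_j \mc{E}_{x_j}^\ast v_j\|^2 \geq 0$.

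For (3), I would record the identity $\langle v, F(g)v\rangle = \langle \mc{E}_e^\ast v, \rho(g)\mc{E}_e^\ast v\rangle$, which exhibits the diagonal of $F$ as a diagonal matrix coefficient of $\rho$. Real-analyticity of this matrix coefficient on $G$ is then equivalent to $\mc{E}_e^\ast v \in \H_\rho^\omega$ by \citep[Thm.\ 5.2]{Neeb_analytic_vectors}, applicable precisely because $G$ is BCH Fr\'echet. Combining this with the equality $\mc{E}_e^\ast \mcD_\chi = \mc{E}_e^\ast V_\sigma \cap \H_\rho^\omega$ from \Fref{lem: an_vectors}, together with the isometry of $\mc{E}_e^\ast$, yields (3). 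For (4), I would differentiate under the inner product. Since $\mc{E}_e^\ast v, \mc{E}_e^\ast w \in \H_\rho^\omega \subseteq \H_\rho^\infty$, the orbit maps are smooth, and using $d\rho(\xi)^\dagger = d\rho(\xi^\ast)$ on $\H_\rho^\infty$ from \Fref{rem: involution_rep} (with $\xi$ extended $\C$-linearly) one obtains
\begin{equation*}
[\L_{\bm{v}(\xi)^r}\langle w, F v\rangle](g) = -\langle d\rho(\xi^\ast)\mc{E}_e^\ast w, \rho(g)\mc{E}_e^\ast v\rangle.
\end{equation*}
Noting that $\xi^\ast = -\theta(\xi) \in \b_-$ since $\theta(\b_+) \subseteq \b_-$, I would then apply \Fref{lem: second_properties_hol_induced}(2) to rewrite $d\rho(\xi^\ast)\mc{E}_e^\ast w = \mc{E}_e^\ast \chi(\xi^\ast)w$ and pull the inner product back through $\mc{E}_e$, which produces the claimed identity.

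For the final statement, I would invoke the reproducing kernel formalism: $Q(x,y) = \mc{E}_x\mc{E}_y^\ast$ is a continuous positive definite $G$-invariant kernel, so by the standard theory recalled in \Fref{sec: reproducing_hspaces} (via \Fref{thm: properties_repr_kernel} and \Fref{prop: homog_repr_h_space_unitary_repr}), the $G$-equivariant injective map $\Phi$ with continuous point evaluations realizes $\H_\rho$ as precisely the reproducing kernel Hilbert space $\H_Q$. The only genuine care point in the argument is the equivalence in (3) between analyticity of a diagonal matrix coefficient and analyticity of the underlying vector; this is the unique place where the BCH Fr\'echet hypothesis on $G$ enters essentially. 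Everything else reduces to unwinding the definitions and invoking the preparatory lemmas already established in the section.
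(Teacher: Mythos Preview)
Your proposal is correct and follows essentially the same route as the paper's proof: both arguments identify $Q(x,y)$ with $\mc{E}_x\mc{E}_y^\ast$ via \Fref{lem: hol_induced_1}, deduce (3) from \Fref{lem: an_vectors} together with the analyticity criterion \citep[Thm.\ 5.2]{Neeb_analytic_vectors}, and obtain (4) by differentiating $\langle w, F(g)v\rangle = \langle \mc{E}_e^\ast w, \rho(g)\mc{E}_e^\ast v\rangle$ and invoking \Fref{lem: second_properties_hol_induced}(2). The final unitary equivalence is likewise read off from \Fref{prop: homog_repr_h_space_unitary_repr} in both cases.
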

	\begin{proof}
		Define the $\widetilde{Q} : G \times G \to \B(V_\sigma)$ by $\widetilde{Q}(x,y) := \mc{E}_x\mc{E}_y^\ast$, which is positive-definite by \Fref{thm: properties_repr_kernel}. In view of the first assertion in \Fref{lem: hol_induced_1}, we have $\widetilde{Q}(x,y)v = \mc{E}_e \rho(x^{-1}y)\mc{E}_e^\ast v =F(x^{-1}y)v = Q(x,y)v$ for any $v \in V_\sigma$. Thus $\widetilde{Q} = Q$. In particular, $Q$ is positive definite and $F(e) = Q(e,e) = \id_{V_\sigma}$. Let $v \in V_\sigma$. Writing $f_v := \Phi(\mc{E}_e^\ast v)$, notice that $f_v(g) = F(g^{-1})v$ for $g \in G$. We find that $\mc{E}_e^\ast v\in \H_\rho^\omega$ if and only if $\langle v, F v\rangle \in C^\omega(G, \C)$, using \Fref{lem: hol_induced_1}. Then 
		$$D_\chi = \set{v \in V_\sigma \st \mc{E}_e^\ast v \in \H_\rho^\omega} = \set{v \in V_\sigma \st \langle v, Fv\rangle \in C^\omega(G, \C)},$$
		where we used \Fref{lem: an_vectors} in the first equality. Finally, notice that $\langle w, F(g)v\rangle = \langle \mc{E}_e^\ast w, \rho(g)\mc{E}_e^\ast v\rangle$ for $v,w \in \mcD_\chi$ and $g \in G$. It thus follows from \Fref{lem: second_properties_hol_induced} that $F$ satisfies \eqref{eq: defining_reproducing_function} for all $g \in G$ and $\xi \in \b_+$. The final statement is immediate from \Fref{prop: homog_repr_h_space_unitary_repr}.
	\end{proof}

	\noindent
	The next result, \Fref{thm: inducibility}, gives a characterization of $(\sigma, \chi)$ being holomorphically inducible in terms $\B(V_\sigma)$-valued positive-definite functions on $G$. 
	\newpage
	\begin{theorem}\label{thm: inducibility}
		The following assertions are equivalent:
		\begin{enumerate}
			\item $(\sigma, \chi)$ is holomorphically inducible.		
			\item There is a function $F : G \to \B(V_\sigma)$ satisfying the properties in \Fref{prop: reproducing_function}.
		\end{enumerate}
		Assume that these assertions are valid. Let $F : G \to \B(V_\sigma)$ satisfy the conditions in \Fref{prop: reproducing_function}. Then $F(g)^\ast = F(g^{-1})$ for all $g \in G$. Moreover, for $v \in \mcD_\chi$ and $w \in V_\sigma$ we have:
		\begin{align}
			\big[\L_{\bm{v}(x_+)^r}\L_{\bm{v}(x_-)}\langle w, F v\rangle\big](g) &= \langle w, \chi(\tau(x_+)^\ast)^\ast F(g)\chi(x_-)v\rangle \label{eq: pde_repr_function_1},\\
			\big[\L_{\bm{v}(x_+x_-)}\langle w, F v\rangle\big](e) &= \langle w, \chi(x_+^\ast)^\ast\chi(x_-)v\rangle, \label{eq: pde_repr_function_2}
		\end{align}
		for all $g \in G$ and $x_\pm \in \mc{U}(\b_\pm)$. Finally, the function $F : G \to \B(V_\sigma)$ is unique.
	\end{theorem}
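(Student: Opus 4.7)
The implication $(1) \Rightarrow (2)$ is precisely \Fref{prop: reproducing_function}. For the converse, I would form the $G$-invariant operator-valued kernel $Q(x,y) := F(x^{-1}y)$, which is positive-definite by hypothesis, and build the reproducing kernel Hilbert space $\H_Q$ of $V_\sigma$-valued functions on $G$. The natural left translation $(\rho(g)f)(x) := f(g^{-1}x)$ is a unitary $G$-representation on $\H_Q$ by the $G$-invariance of $Q$, and I would verify that it is holomorphically induced from $(\sigma, \chi)$ by taking $\Phi$ to be the inclusion $\H_Q \hookrightarrow \Map(G, V_\sigma)^H$, invoking \Fref{lem: recognize_hol_induced} if convenient.

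Verifying the three conditions of \Fref{def: hol_induced} then amounts to the following. Continuity of point evaluations is built into the reproducing kernel formalism, and $\mc{E}_x \mc{E}_x^\ast = Q(x,x) = F(e) = \id_{V_\sigma}$ is immediate. For the $H$-equivariance needed to place elements of $\H_Q$ inside $\Map(G, V_\sigma)^H$, the infinitesimal identity $\L_{\bm{v}(\eta)^r}\langle w, Fv\rangle(g) = \langle d\sigma(\eta)w, F(g)v\rangle$ for $\eta \in \h$ (the restriction of property $(4)$ to $\h \subseteq \b_+$, combined with $\chi|_{\h_\C} = d\sigma$) integrates along $\exp_H$ and gives the left-equivariance $F(hg) = \sigma(h)F(g)$ on the connected BCH group $H$. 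The right-equivariance $F(gh) = F(g)\sigma(h)$ then follows from $F(g)^\ast = F(g^{-1})$, itself the Hermitian symmetry $Q(y,x) = Q(x,y)^\ast$ of a positive-definite kernel. For the domain condition, \Fref{lem: hol_induced_1}$(3)$ shows that $\mc{E}_e^\ast v \in \H_\rho^\omega$ iff the diagonal matrix coefficient $\langle v, F(\cdot)v\rangle$ is real-analytic, which by property $(3)$ holds iff $v \in \mcD_\chi$; the remaining $\chi$-equivariance clause of $C^\omega(G, V_\sigma)^{H, \chi}$ then follows from property $(4)$ together with the identity $\L_{\bm{v}(\xi)}f_{v'}(g) = (\L_{\bm{v}(\xi)^r}F)(g^{-1})v'$, where $f_{v'} := \Phi(\mc{E}_e^\ast v')$.

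The identity $F(g)^\ast = F(g^{-1})$ has already been noted. For the PDEs, \Fref{lem: second_properties_hol_induced}$(2)$ gives $d\rho(x_-)\mc{E}_e^\ast v = \mc{E}_e^\ast \chi(x_-)v$ for $x_- \in \mc{U}(\b_-)$ and $v \in \mcD_\chi$, whence $\L_{\bm{v}(x_-)}\langle w, Fv\rangle(g) = \langle w, F(g)\chi(x_-)v\rangle$; applying $\L_{\bm{v}(x_+)^r}$ and invoking the iterated form of $(4)$ given by \Fref{prop: equivariance_property_qhol_fns} then yields \eqref{eq: pde_repr_function_1}, and \eqref{eq: pde_repr_function_2} follows by setting $g = e$ together with the standard swap of left- and right-invariant derivatives at the identity (mediated by the involution $\tau$). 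For uniqueness of $F$: the triangular decomposition and PBW give $\mc{U}(\g_\C) = \mc{U}(\b_+)\mc{U}(\b_-)$, so \eqref{eq: pde_repr_function_2} determines every jet $(\L_{\bm{v}(x)}\langle w, Fv\rangle)(e)$ for $v, w \in \mcD_\chi$ and $x \in \mc{U}(\g_\C)$; by real-analyticity of $\langle w, Fv\rangle$ (obtained from property $(3)$ by polarization), connectedness of $G$, and \Fref{prop: jet_vanishes_then_function_trivial}, these jets determine $\langle w, F(\cdot)v\rangle$ globally on $G$, after which density of $\mcD_\chi$ in $V_\sigma$ combined with boundedness of each $F(g)$ pins down $F$ itself. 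The main subtlety I anticipate is the careful passage from the infinitesimal $\h$-equivariance encoded in $(4)$ to the global two-sided $H$-equivariance of $F$.
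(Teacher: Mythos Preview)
Your proposal is correct and follows essentially the same route as the paper: form the reproducing kernel Hilbert space from $Q(x,y)=F(x^{-1}y)$, verify the three clauses of \Fref{def: hol_induced} directly, then derive the PDEs and uniqueness from the representation-theoretic identity $F(g)=\mc{E}_e\rho(g)\mc{E}_e^\ast$ together with \Fref{lem: second_properties_hol_induced} and PBW. Two small remarks: the paper obtains the $H$-equivariance of $\Phi$ by simply citing \Fref{prop: homog_repr_h_space_unitary_repr} rather than integrating property~(4) on $\h$ as you do (your treatment is arguably more self-contained here); and for \eqref{eq: pde_repr_function_1} with general $w\in V_\sigma$ (not just $w\in\mcD_\chi$), iterating property~(4) alone is not quite enough---you need the domain statement $F(g)\chi(x_-)v\in\dom(\chi(\tau(x_+)^\ast)^\ast)$, which is exactly what \Fref{lem: second_properties_hol_induced}(1) (or equivalently \Fref{prop: equivariance_property_qhol_fns} applied to $f_{\chi(x_-)v}$) supplies.
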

	\begin{proof}
		\noindent
		The implication $(1) \implies (2)$ is immediate from \Fref{prop: reproducing_function}. Conversely, let $F : G \to \B(V_\sigma)$ be a function satisfying the conditions in \Fref{prop: reproducing_function}. Define $Q(x,y) := F(x^{-1}y)$ for $x,y \in G$. Let $\H_\rho$ be the corresponding reproducing kernel Hilbert space. Using \Fref{prop: homog_repr_h_space_unitary_repr} we obtain a unitary representation $\rho$ of $G$ on $\H_\rho$ and a $G$-equivariant injective linear map $\Phi : \H_\rho \to \Map(G, V_\sigma)^H$ for which the point evaluation $\mc{E}_x := \ev_x \circ \Phi$ is continuous and satisfies $\mc{E}_x = \mc{E}_e\rho(x)^{-1}$ for every $x \in G$. From $F(e) = \id_{V_\sigma}$ it follows that $Q(x,x) = \id_{V_\sigma}$ for every $x \in G$. Define $f_v := \Phi(\mc{E}_e^\ast v)$ for $v \in V_\sigma$. \\
		
		\noindent
		To see that $(1)$ holds true, it remains only to show that $\mcD_\chi = \set{v \in V_\sigma \st f_v \in C^\omega(G, V_\sigma)^{H, \chi}}$. Let $x \in G$ and $v \in V_\sigma$. From the equations $f_v(x) = \mc{E}_{x} \mc{E}_e^\ast v = Q(x,e)v = F(x^{-1})v$ and $\mc{E}_{x} \mc{E}_e^\ast v = \mc{E}_e \rho(x) \mc{E}_e^\ast v$, we conclude that $F(x)v = \mc{E}_e \rho(x) \mc{E}_e^\ast v = f_v(x^{-1})$. It follows that 
		$$\mcD_\chi = \set{v \in V_\sigma \st  \langle v, F v\rangle \in C^\omega(G, \C)} = \set{v \in V_\sigma \st f_v \in C^\omega(G, V_\sigma)^H},$$
		where \Fref{lem: hol_induced_1} was used in the second equality. Assume that $f_v \in C^\omega(G, V_\sigma)^H$. Let $w \in \mcD_\chi$ and $\xi \in \b_+$. From the equation $F(g)v = f_v(g^{-1})$ we obtain that $\L_{\bm{v}(\xi)}f_v(g) = \big[\L_{\bm{v}(\xi)^r}Fv\big](g^{-1})$ for any $g \in G$. Using \Fref{eq: defining_reproducing_function} we find:
		\begin{align*}
			\big\langle w, \L_{\bm{v}(\xi)} f_v(g)\big\rangle 
			&= \big[\L_{\bm{v}(\xi)^r}\langle w, F v\rangle\big](g^{-1}) = -\langle \chi(\xi^\ast)w, F(g^{-1})v\rangle = -\langle \chi(\xi^\ast)w, f_v(g)\rangle, \qquad \forall g \in G. 
		\end{align*}
		Hence $f_v \in C^\omega(G, V_\sigma)^{H, \chi}$. Thus $\mcD_\chi = \set{ v \in V_\sigma \st f_v \in C^\omega(G, V_\sigma)^{H, \chi} }$. We conclude that $(\rho, \H_\rho)$ is holomorphically induced from $(\sigma, \chi)$. So $(1) \iff (2)$. \\
		
		\noindent
		Assume these equivalent assertions are satisfied. From $F(g) = \mc{E}_e \rho(g)\mc{E}_e^\ast$ it is immediate that $F(g^{-1}) = F(g)^\ast$ for all $g \in G$. We next show \eqref{eq: pde_repr_function_1} and \eqref{eq: pde_repr_function_2}. Let $v \in \mcD_\chi$. Notice using $F(g) = \mc{E}_e \rho(g)\mc{E}_e^\ast$ that for any $x,y \in \mc{U}(\g_\C)$ we have
		\begin{equation}\label{eq: equiv_inducible_1}
			\big[\L_{\bm{v}(y)^r}\L_{\bm{v}(x)}F v\big](g) = \mc{E}_ed\rho(\tau(y))\rho(g)d\rho(x)\mc{E}_e^\ast v, \qquad \forall g \in G.	
		\end{equation}
		Thus, for $x_\pm \in \mc{U}(\b_\pm)$ we obtain using \eqref{eq: equiv_inducible_1} and \Fref{lem: second_properties_hol_induced} that
		\begin{align}
				\big[\L_{\bm{v}(x_+)^r}\L_{\bm{v}(x_-)}F v\big](g) &= \mc{E}_ed\rho(\tau(x_+))\rho(g)d\rho(x_-)\mc{E}_e^\ast v = \chi(\tau(x_+)^\ast)^\ast\mc{E}_e\rho(g)\mc{E}_e^\ast \chi(x_-) v,\label{eq: equiv_inducible_2b}\\
				\big[\L_{\bm{v}(x_+x_-)}F v\big](e) &= \mc{E}_ed\rho(x_+x_-)\mc{E}_e^\ast v = \chi(x_+^\ast)^\ast\chi(x_-)v.\label{eq: equiv_inducible_2a}.
		\end{align}
		From \eqref{eq: equiv_inducible_2b} we conclude that $\big[\L_{\bm{v}(x_+)^r}\L_{\bm{v}(x_-)}F v\big](g) = \chi(\tau(x_+)^\ast)^\ast F(g) \chi(x_-) v$ for all $g \in G$, which implies \eqref{eq: pde_repr_function_1}. On the other hand, \eqref{eq: pde_repr_function_2} is implied by \eqref{eq: equiv_inducible_2a}. Finally, assume that $F_1$ and $F_2$ are two functions satisfying the conditions in \Fref{prop: reproducing_function}. Let $v \in \mcD_{\chi}$. The functions $g \mapsto F_1(g)v$ and $g\mapsto F_2(g)v$ are both analytic and satisfy \eqref{eq: equiv_inducible_2a}. As $\mc{U}(\g_\C)$ is spanned by $\mc{U}(\n_+)\mc{U}(\b_-)$ by the PBW Theorem, it follows that $j^\infty_e(F_1v) = j^\infty_e(F_2v)$. As $G$ is connected, it follows from \Fref{prop: jet_vanishes_then_function_trivial} that $F_1(g)v = F_2(g)v$ for all $g \in G$ and $v \in \mcD_\chi$. For any fixed $g \in G$, the map $v \mapsto (F_1(g) - F_2(g))v$ is continuous and vanishes on the dense subset $\mcD_\chi \subseteq V_\sigma$. Hence $F_1 = F_2$.
	\end{proof}
	
	\noindent
	Combining \Fref{prop: reproducing_function} with the uniqueness of $F : G \to \B(V_\sigma)$ in \Fref{thm: inducibility}, we obtain the desired uniqueness of the holomorphically induced representation up to unitary equivalence:
	
	\begin{theorem}\label{thm: uniqueness}
		Let $\rho_1$ and $\rho_2$ be unitary $G$-representations which are both holomorphically induced from $(\sigma, \chi)$. Then $\rho_1 \cong \rho_2$ as unitary $G$-representations.\\
	\end{theorem}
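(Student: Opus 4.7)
The plan is to reduce uniqueness to the already-proved uniqueness of the $\B(V_\sigma)$-valued reproducing function on $G$. For each $k \in \{1,2\}$, \Fref{def: hol_induced} provides an injective $G$-equivariant map $\Phi^{(k)} : \H_{\rho_k} \hookrightarrow \Map(G, V_\sigma)^H$ with continuous point evaluations $\mc{E}_x^{(k)}$ satisfying $\mc{E}_x^{(k)}(\mc{E}_x^{(k)})^\ast = \id_{V_\sigma}$. Applying \Fref{prop: reproducing_function} to $\rho_k$ produces a function
\[ F_k : G \to \B(V_\sigma), \qquad F_k(g) := \mc{E}_e^{(k)}\,\rho_k(g)\,(\mc{E}_e^{(k)})^\ast, \]
that satisfies the four properties (1)--(4) of that proposition. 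Crucially, those conditions are formulated purely in terms of the data $(\sigma, \chi)$ and make no reference to the particular $\rho_k$.

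Next I would invoke the uniqueness statement at the end of \Fref{thm: inducibility}, which asserts that there is at most one function $F : G \to \B(V_\sigma)$ satisfying those four conditions. Hence $F_1 = F_2$, and consequently the positive definite $G$-invariant $\B(V_\sigma)$-valued kernels $Q_k(x,y) := F_k(x^{-1}y)$ coincide, yielding a single reproducing kernel Hilbert space $\H_Q$ carrying a single $G$-representation.

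Finally, the concluding assertion of \Fref{prop: reproducing_function} gives a unitary equivalence between $\rho_k$ and the natural $G$-representation on $\H_{Q_k}=\H_Q$ for each $k\in\{1,2\}$. Composing these two equivalences then produces the desired unitary equivalence $\rho_1 \cong \rho_2$.

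The argument itself presents no substantive obstacle: the real content has already been absorbed into \Fref{thm: inducibility}, where uniqueness of $F$ was obtained by using the PBW decomposition $\mc{U}(\g_\C) = \mc{U}(\n_+)\mc{U}(\b_-)$ together with \eqref{eq: pde_repr_function_2} to pin down the jet $j^\infty_e(F v)$ for each $v\in\mcD_\chi$, followed by the identity theorem \Fref{prop: jet_vanishes_then_function_trivial} on the connected group $G$ and density of $\mcD_\chi$ in $V_\sigma$. Given that input, the present theorem is a formal corollary.
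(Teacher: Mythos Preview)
Your proposal is correct and matches the paper's own argument essentially verbatim: the paper states the theorem as an immediate consequence of combining \Fref{prop: reproducing_function} with the uniqueness of $F$ established in \Fref{thm: inducibility}, exactly as you outline.
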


	\noindent
	Finally, we focus our attention on the important special case where $\chi$ is a trivial extension. Using the PBW Theorem, notice that we have the vector space decomposition $\mc{U}(\g_\C) = \mc{U}(\h_\C) \oplus (\n_+\mc{U}(\g_\C) + \mc{U}(\g_\C)\n_-)$.
	
	\begin{definition}\label{def: cond_exp_univ_env_alg}
		Let $E_0 : \mc{U}(\g_\C) \to \mc{U}(\h_\C) \cong \mc{U}(\g_\C)/(\n_+\mc{U}(\g_\C) + \mc{U}(\g_\C)\n_-)$ be the quotient map.\\
	\end{definition}
	
	\begin{lemma}\label{lem: f_v_factors_through_h}
		Assume that $\rho$ is holomorphically induced from $(\sigma, \chi)$, where $\chi$ is the trivial extension of $d\sigma$ to $\b_-$ with domain $\mcD\subseteq V_\sigma$. Let $v \in \mcD$ and $x \in \mc{U}(\g_\C)$. Then $d\sigma(E_0(x^\ast))v = d\sigma(E_0(x))^\ast v$. Moreover for all $w \in V_\sigma^\infty$ we have 
		\[
			\langle w, d\rho(x)v\rangle = \langle w, d\sigma(E_0(x))v\rangle \quad \text{ and } \quad \langle d\rho(x^\ast)v, w\rangle = \langle v, d\sigma(E_0(x))w\rangle.
		\]
	\end{lemma}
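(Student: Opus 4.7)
The plan is to establish the middle identity $\langle w, d\rho(x)v\rangle = \langle w, d\sigma(E_0(x))v\rangle$ first by a direct PBW expansion, then deduce the identity $d\sigma(E_0(x^\ast))v = d\sigma(E_0(x))^\ast v$ from the observation that $E_0$ commutes with the involution $(-)^\ast$, and finally derive the remaining identity by taking complex conjugates. Throughout, I would tacitly identify $v \in \mcD$ with $\mc{E}_e^\ast v \in \H_\rho^\omega$ as in \Fref{lem: an_vectors}.

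First I would use the PBW isomorphism $\mc{U}(\g_\C) \cong \mc{U}(\n_+)\otimes \mc{U}(\h_\C)\otimes \mc{U}(\n_-)$ induced by multiplication to write $x = \sum_j a_j b_j c_j$ with $a_j \in \mc{U}(\n_+)$, $b_j \in \mc{U}(\h_\C)$, $c_j \in \mc{U}(\n_-)$, and observe that $E_0(x)$ is exactly the sum of those summands in which both $a_j$ and $c_j$ are scalars (the kernel $\n_+\mc{U}(\g_\C) + \mc{U}(\g_\C)\n_-$ of $E_0$ being the complement of $\mc{U}(\h_\C)$ in the PBW decomposition). It then suffices to show that the remaining summands pair trivially with $w \in V_\sigma^\infty$. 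When $c_j \in \mc{U}(\n_-)\n_-$, triviality of $\chi$ on $\n_-$ gives $\chi(c_j)v = 0$, and \Fref{lem: second_properties_hol_induced}(2) transfers this to $d\rho(b_j c_j)v = \chi(b_j c_j)v = 0$, which kills the term. When instead $c_j \in \C$ while $a_j \in \n_+\mc{U}(\n_+)$, the vector $d\sigma(b_j)v = \chi(b_j)v$ still lies in $\mcD$ (since $\chi$ preserves $\mcD$ by definition), so \Fref{lem: second_properties_hol_induced}(1) applied at $g=e$ with $d\sigma(b_j)v$ in place of $v$ yields $\mc{E}_e d\rho(a_j)d\sigma(b_j)v = \chi(a_j^\ast)^\ast d\sigma(b_j)v$. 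Since $a_j^\ast \in \mc{U}(\n_-)\n_-$, one has $\chi(a_j^\ast) \equiv 0$ on $\mcD$; by density of $\mcD$ in $V_\sigma$ this forces $\chi(a_j^\ast)^\ast$ to vanish on its entire domain, and since $w \in V_\sigma$ the equality $\langle w, d\rho(a_j)d\sigma(b_j)v\rangle = \langle w, \mc{E}_e d\rho(a_j) d\sigma(b_j)v\rangle$ shows this summand also contributes zero.

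For the first identity I would show that $(-)^\ast$ commutes with $E_0$. The hypotheses $\theta(\n_\pm) \subseteq \n_\mp$ and $\theta(\h_\C) \subseteq \h_\C$, together with $\theta^2 = \id$, upgrade to the equalities $\theta(\n_\pm) = \n_\mp$ and $\theta(\h_\C) = \h_\C$, whence $\n_\pm^\ast = \n_\mp$ and $\mc{U}(\h_\C)^\ast = \mc{U}(\h_\C)$. Since $(-)^\ast$ is an anti-automorphism of $\mc{U}(\g_\C)$, it exchanges $\n_+\mc{U}(\g_\C)$ with $\mc{U}(\g_\C)\n_-$ and preserves both summands in the decomposition $\mc{U}(\g_\C) = \mc{U}(\h_\C) \oplus (\n_+\mc{U}(\g_\C) + \mc{U}(\g_\C)\n_-)$ defining $E_0$; thus $E_0(x^\ast) = E_0(x)^\ast$. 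Combined with the identity $d\sigma(y^\ast) = d\sigma(y)^\dagger$ on $V_\sigma^\infty \supseteq \mcD$ from \Fref{rem: involution_rep}, this yields $d\sigma(E_0(x^\ast))v = d\sigma(E_0(x)^\ast)v = d\sigma(E_0(x))^\ast v$. The final identity then follows formally by applying the second identity with $x$ replaced by $x^\ast$, taking complex conjugates to obtain $\langle d\rho(x^\ast)v, w\rangle = \langle d\sigma(E_0(x^\ast))v, w\rangle$, and using the first identity to rewrite this as $\langle d\sigma(E_0(x))^\ast v, w\rangle = \langle v, d\sigma(E_0(x))w\rangle$ (here $d\sigma(E_0(x))w$ is defined since $w \in V_\sigma^\infty$).

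The main technical obstacle is the bookkeeping of domains: verifying that each auxiliary vector $d\sigma(b_j)v$ still lies in $\mcD$ (so that \Fref{lem: second_properties_hol_induced}(1) applies with that vector in place of $v$) and that the adjoints $\chi(a_j^\ast)^\ast$ are defined on these vectors. Both are controlled by the defining property that an extension $\chi$ acts on $\mcD$, together with the density of $\mcD$ in $V_\sigma$.
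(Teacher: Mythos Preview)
Your proof is correct and follows essentially the same route as the paper's. The paper streamlines your two PBW cases into a single line by working with the orthogonal projection $p_V$: from \Fref{lem: second_properties_hol_induced} one gets $p_V d\rho(x\xi_-)v = 0$ and $p_V d\rho(\xi_+ x)v = 0$ for any $\xi_\pm \in \n_\pm$ and $x \in \mc{U}(\g_\C)$, which immediately gives $p_V d\rho(x)v = d\sigma(E_0(x))v$ and hence the middle identity for all $w \in V_\sigma$. The only substantive difference is in the first identity: you prove the purely algebraic fact $E_0(x^\ast) = E_0(x)^\ast$ and then invoke \Fref{rem: involution_rep}, whereas the paper avoids this and instead deduces $d\sigma(E_0(x^\ast))v = d\sigma(E_0(x))^\ast v$ analytically by writing $\langle d\sigma(E_0(x^\ast))v, w\rangle = \langle d\rho(x^\ast)v, w\rangle = \langle v, d\rho(x)w\rangle = \langle v, d\sigma(E_0(x))w\rangle$ for $w \in \mcD_\chi$ and using density. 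Both derivations are short; yours has the advantage of isolating a reusable algebraic lemma about $E_0$.
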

	\begin{proof}
		By \Fref{thm: hol_induced_equiv_char} we may assume that $V_\sigma \subseteq \H_\rho$ is a closed subspace, that $\mcD_\chi = V \cap \H_\rho^\omega$, $\sigma(h) = \restr{\rho(h)}{V_\sigma}$ and $\chi(\xi) = \restr{d\rho(\xi)}{\mcD_\chi}$ for every $h\in H$ and $\xi \in \b_-$. Let $p_V : \H_\rho \to V_\sigma$ be the orthogonal projection onto $V_\sigma$. Take $v\in\mcD_\chi$, $\xi_+\in\n_+$, $x\in\mc{U}(\g_\C)$ and $\xi_-\in\n_-$. From \Fref{lem: second_properties_hol_induced} we obtain $p_Vd\rho(x\xi_-)v = p_Vd\rho(x)\chi(\xi_-)v = 0$ and $p_V d\rho(\xi_+x)v = \chi(\xi_+^\ast)^\ast p_Vd\rho(x)v = 0$. Thus 
		$$p_Vd\rho(x) v = p_Vd\rho(E_0(x)) v = d\sigma(E_0(x))v, \qquad \forall x \in \mc{U}(\g_\C).$$
		Let $w \in \mcD_\chi$. Recall from \Fref{lem: an_vectors} that $\mcD_\chi \subseteq \H_\rho^\omega$. We have:
		\[
			\langle d\sigma(E_0(x^\ast))v, w\rangle 
			= \langle d\rho(x^\ast)v, w\rangle = \langle v, d\rho(x)w\rangle = \langle v, d\sigma(E_0(x))w\rangle
			= \langle d\sigma(E_0(x))^\ast v, w\rangle.
		\]
		As $\mcD_\chi$ is dense in $V_\sigma$ we conclude $d\sigma(E_0(x^\ast))v = d\sigma(E_0(x))^\ast v$. Consequently, if $w \in V_\rho^\infty$ then 
		$$ \langle d\rho(x^\ast)v, w\rangle = \langle d\sigma(E_0(x^\ast))v, w\rangle =  \langle d\sigma(E_0(x))^\ast v, w\rangle = \langle v, d\sigma(E_0(x))w\rangle. \qedhere$$
	\end{proof}
	
	\vspace{.2cm}
	
	\noindent
	We complement \Fref{thm: inducibility} with the following result, regarding the uniqueness of the domain:
	\begin{proposition}\label{prop: unique_domain_triv_ext}
		Let $\sigma$ be an analytic unitary representation of $H$. Assume that there exists a subspace $\mcD_\chi \subseteq V_\sigma^\omega$ dense in $V_\sigma$ for which $(\sigma, \chi)$ is holomorphically inducible, where $\chi$ $\b_- \to \L(\mcD_\chi)$ is the trivial extension of $d\sigma$ to $\b_-$ with domain $\mcD_\chi$. Then $\mcD_\chi$ is unique with this property. 
	\end{proposition}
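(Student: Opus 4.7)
The plan is to show that the unique functions $F_1, F_2 : G \to \B(V_\sigma)$ associated by \Fref{prop: reproducing_function} and \Fref{thm: inducibility} to the pairs $(\sigma, \chi_1)$ and $(\sigma, \chi_2)$---where $\chi_i$ denotes the trivial extension of $d\sigma$ to $\b_-$ with domain $\mcD_i$---must coincide. Property~(3) of \Fref{prop: reproducing_function} then immediately yields $\mcD_1 = \mcD_2$. Let $\rho_i$ be the $G$-representation holomorphically induced from $(\sigma, \chi_i)$, and via \Fref{thm: hol_induced_equiv_char} identify $V_\sigma$ with a closed $H$-invariant cyclic subspace of $\H_{\rho_i}$, so that $\mcD_i = V_\sigma \cap \H_{\rho_i}^\omega$ and $F_i(g) = p_{V_\sigma}\rho_i(g)|_{V_\sigma}$.

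The central ingredient is \Fref{lem: f_v_factors_through_h}, specialized to the trivial extensions at hand: for each $v \in \mcD_i$ and $x \in \mc{U}(\g_\C)$ one has $p_{V_\sigma} d\rho_i(x) v = d\sigma(E_0(x)) v$. Since $v$ is $\rho_i$-analytic, this gives the local expansion $F_i(e^\xi)v = \sum_{n=0}^\infty \frac{1}{n!}d\sigma(E_0(\xi^n))v$ on a suitable $0$-neighborhood in $\g$, and the right-hand side depends only on $\sigma$ and $v$, not on $i$. Consequently, for any $v \in \mcD_1 \cap \mcD_2$, the two real-analytic maps $g \mapsto F_1(g)v$ and $g \mapsto F_2(g)v$ (real-analytic by \Fref{lem: hol_induced_1}) share the same Taylor series at $e$; the identity theorem \Fref{prop: identity_theorem} applied on the connected analytic manifold $G$ forces them to agree on all of $G$.

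To upgrade this pointwise equality to the operator identity $F_1(g) = F_2(g)$ on all of $V_\sigma$, it suffices to show that $\mcD_1 \cap \mcD_2$ is dense in $V_\sigma$, as then the boundedness of the $F_i(g)$ finishes the job. This density is the main obstacle. The approach I would take is to pass to the direct sum $\pi := \rho_1 \oplus \rho_2$ on $\H_{\rho_1} \oplus \H_{\rho_2}$ and consider the $H$-invariant diagonal $V_\Delta := \{(v,v)/\sqrt{2} : v \in V_\sigma\}$, which satisfies $V_\Delta \cap \H_\pi^\omega = \{(v,v)/\sqrt{2} : v \in \mcD_1 \cap \mcD_2\}$ and is annihilated by $d\pi(\n_-)$; applying \Fref{lem: recognize_hol_induced} to the closed $G$-invariant subspace $\overline{\pi(G)V_\Delta}$ and invoking the uniqueness assertion of \Fref{thm: uniqueness} to identify this subrepresentation with each of $\rho_1$ and $\rho_2$ would establish both the density of $\mcD_1 \cap \mcD_2$ and the desired equality $F_1 = F_2$.
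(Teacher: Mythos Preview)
Your plan is sound up to the point where you need $\mcD_1 \cap \mcD_2$ to be dense in $V_\sigma$, and there the argument becomes circular. To apply \Fref{lem: recognize_hol_induced} to the closed $G$-invariant subspace generated by the diagonal $V_\Delta$, you must already know that $V_\Delta \cap \H_\pi^\omega$ is dense in $V_\Delta$; but that set equals $\{(v,v)/\sqrt{2} : v \in \mcD_1 \cap \mcD_2\}$, so its density is precisely the statement you are trying to prove. Likewise, \Fref{thm: uniqueness} cannot be invoked directly: it requires the two representations to be holomorphically induced from the \emph{same} $(H,\b_-)$-extension pair, whereas here $(\sigma,\chi_1)$ and $(\sigma,\chi_2)$ differ exactly in their domains. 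So the diagonal construction, as stated, does not produce the needed density.

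The paper sidesteps this issue entirely by never working inside $\mcD_1\cap\mcD_2$. Instead of comparing the $V_\sigma$-valued maps $g\mapsto F_i(g)v$ for a common $v$, it fixes $v_1\in\mcD_1$ and $v_2\in\mcD_2$ and compares the scalar functions
\[
a(g)=\langle v_1, F_1(g)v_2\rangle \quad\text{and}\quad b(g)=\langle v_1, F_2(g)v_2\rangle.
\]
The point is that $a$ is analytic because $a(g)=\langle F_1(g^{-1})v_1,v_2\rangle$ and $v_1\in\mcD_1$, while $b$ is analytic because $v_2\in\mcD_2$; no vector needs to lie in both domains. The jets at $e$ of $a$ and $b$ are then computed via \eqref{eq: pde_repr_function_2} and \Fref{lem: f_v_factors_through_h} (applied separately to $\rho_1$ for $a$ and to $\rho_2$ for $b$) and shown to coincide, giving $a=b$ on the connected group $G$. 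Since $\mcD_1$ and $\mcD_2$ are each dense, this yields $F_1=F_2$, and property~(3) of \Fref{prop: reproducing_function} finishes the proof. The key trick you are missing is this asymmetric pairing $v_1\in\mcD_1$, $v_2\in\mcD_2$; once you see it, the density of the intersection is simply not needed.
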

	\begin{proof}
		Suppose that $\mcD_1$ and $\mcD_2$ are two such domains. For $k\in \{1,2\}$, let $\chi_k$ denote the trivial extension of $d\sigma$ to $\b_-$ with domain $\mcD_k$. By assumption $(\sigma, \chi_k)$ is holomorphically inducible. Let $F_k : G \to \B(V_\sigma)$ satisfy the conditions in \Fref{prop: reproducing_function} for $(\sigma, \chi_k)$. Let $v_k \in \mcD_k$. Observe using \Fref{lem: f_v_factors_through_h} that for any $k \in \{1,2\}$, $x\in \mc{U}(\b_-)$ and $v \in \mcD_k$ we have $\chi_k(x)v = d\sigma(E_0(x))v$ and $\chi_k(x)^\ast v = d\sigma(E_0(x))^\ast v = d\sigma(E_0(x^\ast))v$. Consider the functions $a,b : G \to \C$ defined by $a(g) := \langle v_1, F_1(g)v_2\rangle$ and $b(g) := \langle v_1, F_2(g)v_2\rangle$. Notice that both $a$ and $b$ are analytic, where we remark that $a(g) = \langle F_1(g^{-1})v_1, v_2\rangle$. Let $x_\pm \in \mc{U}(\b_\pm)$. Using \eqref{eq: pde_repr_function_2} we obtain:
		\[
			(\L_{\bm{v}(x_+x_-)}b)(e) 
			= \langle v_1, \chi_2(x_+^\ast)^\ast \chi_2(x_-)v_2\rangle = \langle v_1, d\sigma(E_0(x_+))d\sigma(E_0(x_-))v_2\rangle
			= \langle d\sigma(E_0(x_+^\ast))v_1, d\sigma(E_0(x_-))v_2\rangle.
		\]
		We next compute $(\L_{\bm{v}(x_+x_-)}a)(e)$. Let $\iota: G \to G, g \mapsto g^{-1}$ denote the inversion on $G$ and $\Sigma : \C \to \C, z \mapsto \overline{z}$ the conjugation on $\C$. Define 
		$$h : G \to \C, \qquad h(g) = \langle v_2, F_1(g)v_1\rangle,$$
		so that $a = \Sigma \circ h \circ \iota$. For any $x \in \mc{U}(\g_\C)$ and $f \in C^\infty(G, \C)$, we have 
		\begin{align*}
			\big[\L_{\bm{v}(x)}(f \circ \iota)\big](e) &= (\L_{\bm{v}(\tau(x))}f)(e), \\
			\big[\L_{\bm{v}(x)}(\Sigma \circ f)\big](e) &= \Sigma \left[\L_{\bm{v}(\theta(x))}f\right](e).
		\end{align*}
		Using these equations we obtain that $(\L_{\bm{v}(x)}a)(e) = \Sigma \left[\L_{\bm{v}(x^\ast)}h\right](e)$ for any $x\in\mc{U}(\g_\C)$. By \fref{eq: pde_repr_function_2} we have
		\begin{align*}
			\left[\L_{\bm{v}(x_-^\ast x_+^\ast)}h\right](e) 
			&= \langle v_2, \chi(x_-)^\ast \chi(x_+^\ast)v_1\rangle = \langle v_2, d\sigma(E_0(x_-))^\ast d\sigma(E_0(x_+^\ast))v_1\rangle \\
			&= \langle d\sigma(E_0(x_-)) v_2, d\sigma(E_0(x_+^\ast))v_1\rangle.
		\end{align*}
		Thus
		\[
			(\L_{\bm{v}(x_+x_-)}a)(e) 
			= \Sigma \left[\L_{\bm{v}(x_-^\ast x_+^\ast)}h\right](e) = \langle d\sigma(E_0(x_+^\ast))v_1, d\sigma(E_0(x_-))v_2\rangle 
			= (\L_{\bm{v}(x_+x_-)}b)(e). 
		\]
		As $\mc{U}(\g_\C)$ is spanned by elements in $\mc{U}(\n_+)\mc{U}(\b_-)$ by the PBW Theorem, it follows that $j^\infty_e(a) = j^\infty_e(b)$. Since $G$ is connected, it follows from \Fref{prop: jet_vanishes_then_function_trivial} that $a = b$. Thus $\langle v_1, F_1(g) v_2\rangle = \langle v_1, F_2(g) v_2\rangle$ for all $g \in G$, $v_1 \in \mcD_1$ and $v_2 \in \mcD_2$. As both $\mcD_1$ and $\mcD_2$ are dense, it follows that $F_1 = F_2 =: F$. From the third property in \Fref{prop: reproducing_function}, we conclude that 
		$$\mcD_1 = \mcD_2 = \set{v \in V_\sigma \st g \mapsto \langle v, F(g) v\in C^\omega(G, \C)}.$$
	\end{proof}

	\noindent
	\Fref{thm: uniqueness} and \Fref{prop: unique_domain_triv_ext} justify the following notation:
	
	\begin{definition}\label{def: notation_hol_induced}
		We write $\rho = \HolInd_H^G(\sigma, \chi)$ if $\rho$ is holomorphically induced from $(\sigma, \chi)$. If additionally $\chi$ is the trivial extension of $d\sigma$ to $\b_-$ on some necessarily unique domain $\mcD_\chi\subseteq V_\sigma^\omega$, we simply write $\rho = \HolInd_H^G(\sigma)$.\\
	\end{definition}

	\begin{remark}
		For $k \in \{1,2\}$, let $(\sigma_k, \chi_k)$ be an $(H, \b_-)$-extension pair and let $\rho_k$ be a unitary $G$-representation with $\rho_k = \HolInd_H(\sigma_k, \chi_k)$. In view of \Fref{thm: uniqueness}, one might wonder whether or not $\rho_1 \cong \rho_2$ implies $(\sigma_1, \chi_1) \cong (\sigma_2, \chi_2)$. This turns out to be false. For an explicit and simple counterexample, consider $G = \SU(3)$. Let $H \subseteq G$ be the subgroup consisting of diagonal matrices and let $\b_- \subseteq \sl(3,\C)$ consist of upper-triangular matrices. The defining representation $\rho$ of $G$ on $\C^3$ is holomorphically induced from the two $(H, \b_-)$-extension pairs obtained by restricting $\restr{\rho}{H}$ and $\restr{d\rho}{\b_-}$ to either $V_{\sigma_1} := \C e_1$ or $V_{\sigma_2} := \C e_1 \oplus \C e_2$, as is quickly verified using \Fref{thm: hol_induced_equiv_char}. These are not unitary equivalent.
	\end{remark}

	\subsection{Commutants}\label{sec: commutants}
	
	\noindent
	Suppose that $\rho = \HolInd_H^G(\sigma, \chi)$.
	
	\begin{definition}
		Let $T \in \B(V_\sigma)$. We say that $T$ \textit{commutes with} $(\sigma, \chi)$ if $T~\in~\B(V_\sigma)^H$, $T \mcD_\chi \subseteq \mcD_\chi$ and $T\chi(\xi)v = \chi(\xi)Tv$ for every $\xi \in \b_-$ and $v \in \mcD_\chi$. Define the \textit{$\ast$-closed commutant $\B(V_\sigma)^{H, \chi}$ of} $(\sigma, \chi)$ by 
		$$\B(V_\sigma)^{H, \chi} := \set{T \in \B(V_\sigma)^H \st \text{ both $T$ and $T^\ast$ commute with } (\sigma, \chi) }.$$
	\end{definition}
	
	\begin{remark}\label{rem: projections_in_commutant}
		Orthogonal projections in $\B(V_\sigma)^{H, \chi}$ correspond to direct sum decompositions of $(\sigma, \chi)$. To see this, suppose $p_1 \in \B(V_\sigma)^{H, \chi}$ is an orthogonal projection. Let $p_2 := 1-p_1$. For $k \in \{1,2\}$, define $V_k := p_k V_\sigma$ and $\mcD_k := p_k \mcD_{\chi}\subseteq \mcD_\chi$. Define the $(H, \b_-)$-extension pair $(\sigma_k, \chi_k)$ by $\sigma_k(h) := \restr{\sigma(h)}{V_k}$ and $\chi_k(\xi) := \restr{\chi(\xi)}{\mcD_k}$, where $h \in H$ and $\xi \in \b_-$. Then $(\sigma, \chi) \cong (\sigma_1, \chi_1) \oplus (\sigma_2, \chi_2)$.\\
	\end{remark}
	
	\noindent
	The main results of this section are \Fref{thm: hol_induced_then_commutants_iso} and \Fref{thm: commutant_trivial_extension} below:
	
	\begin{restatable}{theorem}{commutants}\label{thm: hol_induced_then_commutants_iso}
		Suppose that $\rho = \HolInd_H^G(\sigma, \chi)$. Let $V_\sigma$ be a closed subspace of $\H_\rho$ satisfying the conditions in \Fref{thm: hol_induced_equiv_char}$\textit{.2}$. Let $q_V \in \B(\H_\rho)$ be the orthogonal projection onto $V_\sigma$. Then
		\begin{enumerate}
			\item $\B(V_\sigma)^{H, \chi}$ is a von Neumann algebra.
			\item Assume that $q_V \in \rho(G)^{\prime \prime}$. Then 
			$$r : \B(\H_\rho)^G \to \B(V_\sigma)^{H, \chi}, \;r(T) := \restr{T}{V_\sigma}$$
			defines a $\ast$-isomorphism of von Neumann algebras. In particular, $\rho$ is irreducible if and only if $(\sigma, \chi)$ is indecomposable.
		\end{enumerate}
	\end{restatable}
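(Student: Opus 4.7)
The plan is to identify $\B(V_\sigma)^{H,\chi}$ with the commutant $\{F(g) : g \in G\}'$ of the reproducing function from \Fref{prop: reproducing_function}, and to deduce both parts from this identification. Using \Fref{thm: hol_induced_equiv_char} I realize $V_\sigma$ as a closed subspace of $\H_\rho$ on which $\sigma(h) = \restr{\rho(h)}{V_\sigma}$, $\mcD_\chi = V_\sigma \cap \H_\rho^\omega$, and $\chi(\xi) = \restr{d\rho(\xi)}{\mcD_\chi}$ for $\xi \in \b_-$; then $F(g) = \restr{q_V \rho(g)}{V_\sigma}$. Since $F(g)^\ast = F(g^{-1})$ by \Fref{thm: inducibility}, the set $\{F(g) : g \in G\}$ is $\ast$-invariant, so $\{F(g) : g \in G\}'$ is automatically a von Neumann algebra.

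For the inclusion $\B(V_\sigma)^{H,\chi} \supseteq \{F(g) : g \in G\}'$, let $S$ commute with every $F(g)$. The relation $\restr{F}{H} = \sigma$ gives $S \in \B(V_\sigma)^H$. The analyticity of $g \mapsto \langle w, F(g) v\rangle = \langle w, \rho(g) v\rangle_{\H_\rho}$ for $w \in V_\sigma$ and $v \in \mcD_\chi$ (which follows from $\mcD_\chi \subseteq \H_\rho^\omega$), combined with the identity $\langle Sv, F(g) Sv\rangle = \langle S^\ast Sv, F(g) v\rangle$ and \Fref{prop: reproducing_function}(3), yields $S\mcD_\chi \subseteq \mcD_\chi$. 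Differentiating the identity $S F(e^{t\xi}) v = F(e^{t\xi}) Sv$ at $t = 0$, using that $\restr{\tfrac{d}{dt}}{t=0} F(e^{t\xi}) v = \chi(\xi) v$ by \Fref{lem: second_properties_hol_induced}, gives $S \chi(\xi) = \chi(\xi) S$ on $\mcD_\chi$ for $\xi \in \b_-$. The same applies to $S^\ast$ by $\ast$-closedness.

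For the reverse inclusion, let $S \in \B(V_\sigma)^{H,\chi}$ and compare, for $v, w \in \mcD_\chi$, the analytic functions $f(g) := \langle w, F(g) Sv\rangle$ and $h(g) := \langle S^\ast w, F(g) v\rangle$. Applying \eqref{eq: pde_repr_function_2} together with the inductive commutation $S \chi(x_-) = \chi(x_-) S$ on $\mcD_\chi$ and the adjoint relation $S \chi(x_+^\ast)^\ast \subseteq \chi(x_+^\ast)^\ast S$ obtained by adjoining $S^\ast \chi(x_+^\ast) = \chi(x_+^\ast) S^\ast$, I obtain $[\L_{\bm{v}(x_+ x_-)} f](e) = [\L_{\bm{v}(x_+ x_-)} h](e)$ for all $x_\pm \in \mc{U}(\b_\pm)$. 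The PBW decomposition $\mc{U}(\g_\C) = \mc{U}(\b_+) \cdot \mc{U}(\b_-)$ then forces $j^\infty_e(f - h) = 0$, and \Fref{prop: jet_vanishes_then_function_trivial} together with the connectedness of $G$ yields $f = h$ on $G$. Density of $\mcD_\chi$ in $V_\sigma$ extends this to $F(g) S = S F(g)$ for all $g$, completing the identification and establishing (1).

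For (2), the hypothesis $q_V \in \rho(G)''$ combined with the Kaplansky density theorem gives $\{F(g) : g \in G\}'' = q_V \rho(G)'' q_V$ as von Neumann algebras on $V_\sigma$, so the identification becomes $\B(V_\sigma)^{H,\chi} = (q_V \rho(G)'' q_V)'$. Cyclicity of $V_\sigma$ for $\rho(G)$ ensures $q_V$ has central support $1$ in $\rho(G)''$, and the classical reduction theorem for von Neumann algebras then yields that $T \mapsto \restr{T}{V_\sigma}$ is a $\ast$-isomorphism $\rho(G)' \cong (q_V \rho(G)'' q_V)'$; this is precisely $r$. The final equivalence of irreducibility of $\rho$ with indecomposability of $(\sigma, \chi)$ follows from \Fref{rem: projections_in_commutant} together with the standard fact that a von Neumann algebra equals $\C I$ iff it contains no non-trivial projections. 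The main technical hurdle I anticipate is the careful domain management for the closed unbounded adjoints $\chi(x_+^\ast)^\ast$ in the jet computation of the core claim.
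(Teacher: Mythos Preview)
Your proof is correct and proceeds along a genuinely different line from the paper. The paper never identifies $\B(V_\sigma)^{H,\chi}$ with the commutant $\{F(g):g\in G\}'$. For part~(1), it instead shows closure under strong limits directly: given a net $T_\lambda\to T$ with each $T_\lambda\in\B(V_\sigma)^{H,\chi}$, it passes the relation $\langle v,T_\lambda d\rho(x)w\rangle=\langle d\rho(x^\ast)v,T_\lambda w\rangle$ (valid for all $x\in\mc{U}(\g_\C)$ via PBW and the commutation hypothesis) to the limit, exponentiates using analyticity of the orbit maps to obtain $\langle w,T\rho(e^\xi)v\rangle=\langle w,\rho(e^\xi)Tv\rangle$ for $\xi\in\g$ near $0$, and invokes a separate domain-invariance lemma to conclude $T\mcD_\chi\subseteq\mcD_\chi$ and $T\chi(\xi)=\chi(\xi)T$. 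For part~(2), the paper verifies that $r$ is a well-defined injective normal $\ast$-homomorphism and proves surjectivity by hand on projections: given $p\in\B(V_\sigma)^{H,\chi}$ it decomposes $(\sigma,\chi)$ accordingly and uses the identity $\langle w,p\rho(g)v\rangle=\langle w,\rho(g)pv\rangle$ (established along the way) to show that the $G$-cyclic subspaces generated by $pV_\sigma$ and $(1-p)V_\sigma$ are orthogonal, whence $p$ lifts to a projection in $\B(\H_\rho)^G$. Your route through $\{F(g)\}'$ and the corner-reduction theorem $(q_VMq_V)'=M'q_V$ is more structural and, once the identification is in hand, shorter; the paper's approach is more self-contained and avoids invoking reduction theory. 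Two minor points: Kaplansky density is more than you need---weak continuity of the compression $x\mapsto q_Vxq_V$ already yields $\{F(g)\}''=q_V\rho(G)''q_V$---and the expression $F(e^{t\xi})$ is only literal for $\xi\in\g$, so you should differentiate for real $\xi$, extend $\C$-linearly to $\g_\C$, and then specialize to $\b_-$ (where $p_Vd\rho(\xi)v=\chi(\xi)v$ for $v\in\mcD_\chi$).
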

	
	\begin{restatable}{theorem}{commutanttrivialextension}\label{thm: commutant_trivial_extension}
		Consider the setting of \Fref{thm: hol_induced_then_commutants_iso}. Let $\chi : \b_- \to \L(\mcD_\chi)$ denote the trivial extension of $d\sigma$ to $\b_-$ with domain $\mcD_\chi$. The following assertions are valid:
		\begin{enumerate}
			\item $\B(V_\sigma)^{H, \chi} = \B(V_\sigma)^H$.
			\item Assume that $q_V \in \rho(G)^{\prime \prime}$. Then $\B(\H_\rho)^G \cong \B(V_\sigma)^{H}$. In particular, $\rho$ is irreducible if and only if $\sigma$ is.
		\end{enumerate}
	\end{restatable}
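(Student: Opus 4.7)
The plan is to prove Part (1) first and then derive Part (2) immediately from it via \Fref{thm: hol_induced_then_commutants_iso}. For Part (1), the inclusion $\B(V_\sigma)^{H,\chi} \subseteq \B(V_\sigma)^H$ is immediate from the definitions, so the content lies in the reverse inclusion. Given $T \in \B(V_\sigma)^H$ (so $T^\ast$ also lies in $\B(V_\sigma)^H$), the triviality of $\chi$ reduces the extra requirements for $T \in \B(V_\sigma)^{H,\chi}$ to the domain-invariance $T\mcD_\chi \subseteq \mcD_\chi$ (and similarly for $T^\ast$): indeed, $\chi(\xi) = 0$ for $\xi \in \n_-$ makes commutation vacuous there, while on $\h_\C$ one has $\chi(\xi) = d\sigma(\xi)|_{\mcD_\chi}$ and commutation of $T$ with $d\sigma$ follows from commutation with $\sigma(H)$ by connectedness of $H$ and $\C$-linearity.

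For the invariance, I invoke the identification $\mcD_\chi = V_\sigma \cap \H_\rho^\omega$ from \Fref{thm: hol_induced_equiv_char}(2)(b) and \Fref{lem: an_vectors}. Since $TV_\sigma \subseteq V_\sigma$, the task becomes: if $v \in V_\sigma \cap \H_\rho^\omega$, then $Tv \in \H_\rho^\omega$. The strategy is to construct a bounded extension $\widetilde{T} \in \B(\H_\rho)^G$ of $T$, meaning $\widetilde T$ restricts to $T$ on $V_\sigma$; well-definedness of the restriction is ensured by the assumption $q_V \in \rho(G)^{\prime\prime}$, which forces any $\widetilde T \in \rho(G)'$ to commute with $q_V$ and thus preserve $V_\sigma$. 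Given such a $\widetilde T$, it commutes with $\rho(G)$ and therefore preserves $\H_\rho^\omega$ (composing a bounded linear map with an analytic orbit map again yields an analytic orbit map), so $Tv = \widetilde T v \in \H_\rho^\omega$, completing the invariance argument.

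To construct $\widetilde T$, I define it on the dense subspace $\Span\,\rho(G) V_\sigma \subseteq \H_\rho$ by $\widetilde T \big(\sum_i \rho(g_i)v_i\big) := \sum_i \rho(g_i)Tv_i$ and extend by continuity. Well-definedness together with the bound $\|\widetilde T\| \leq \|T\|$ reduces, via the reproducing-kernel identity $\langle \rho(g_i)v_i, \rho(g_j)v_j\rangle = \langle v_i, F(g_i^{-1}g_j)v_j\rangle$ from \Fref{prop: reproducing_function}, to the Schur-type inequality
$$\sum_{i,j}\langle Tv_i, F(g_i^{-1}g_j)Tv_j\rangle \;\leq\; \|T\|^2 \sum_{i,j}\langle v_i, F(g_i^{-1}g_j)v_j\rangle.$$
Since the block kernel $K := [F(g_i^{-1}g_j)]_{i,j}$ is positive semi-definite by \Fref{thm: inducibility}, this in turn follows once one shows that $T$ commutes with $F(g)$ for every $g \in G$: then the block-diagonal operator $\hat T := T \otimes \id$ commutes with $K$, whence $\hat T^\ast K \hat T = K \hat T^\ast \hat T \leq \|T\|^2 K$ by functional calculus applied to $\hat T^\ast \hat T \leq \|T\|^2 \id$.

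The pointwise commutation $[T, F(g)] = 0$ is the main technical step. For $v, w \in \mcD_\chi$ the scalar function $g \mapsto \langle w, TF(g)v - F(g)Tv\rangle$ is real-analytic on $G$ (the $F(\cdot)v$ piece is analytic as a $V_\sigma$-valued map for $v \in \mcD_\chi$; the $F(\cdot)Tv$ piece is weakly analytic against the dense $\mcD_\chi$ and so strongly analytic in the Hilbert space $V_\sigma$), so by \Fref{prop: jet_vanishes_then_function_trivial} and connectedness of $G$ it suffices to prove that its Taylor jet at $e$ vanishes. The jets are computed via \Fref{lem: f_v_factors_through_h}, which identifies $p_{V_\sigma} d\rho(y)|_{\mcD_\chi} = d\sigma(E_0(y))|_{\mcD_\chi}$ through the PBW-projection $E_0 : \mc{U}(\g_\C) \to \mc{U}(\h_\C)$ of \Fref{def: cond_exp_univ_alg}; combining this with the observation that the involution $x \mapsto x^\ast$ preserves the PBW-complement $\n_+\mc{U}(\g_\C) + \mc{U}(\g_\C)\n_-$ (so $E_0(x^\ast) = E_0(x)^\ast$) and with the commutation of $T$ and $T^\ast$ with $d\sigma(\mc{U}(\h_\C))$, both sides of the difference collapse to the same expression and every Taylor coefficient vanishes. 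Part (2) is then immediate from Part (1) together with \Fref{thm: hol_induced_then_commutants_iso}: the restriction map yields $\B(\H_\rho)^G \cong \B(V_\sigma)^{H,\chi} = \B(V_\sigma)^H$, and the irreducibility equivalence follows at once since $\rho$ is irreducible iff $\B(\H_\rho)^G = \C\,\id$ iff $\B(V_\sigma)^H = \C\,\id$ iff $\sigma$ is irreducible.
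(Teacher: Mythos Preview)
Your argument is correct in substance and shares the paper's key technical step: the identity $\langle w, TF(g)v\rangle = \langle w, F(g)Tv\rangle$ for $v,w \in \mcD_\chi$, established via analyticity together with \Fref{lem: f_v_factors_through_h} and $H$-equivariance of $T$. Where you diverge is in how you extract the domain-invariance $T\mcD_\chi \subseteq \mcD_\chi$ from this. The paper feeds the identity directly into \Fref{lem: suff_cond_to_leave_D_inv}, which concludes $Tv \in \H_\rho^\omega$ by checking that $g \mapsto \langle Tv, \rho(g)Tv\rangle = \langle Tv, T\rho(g)v\rangle$ is analytic and invoking the analytic-vector criterion of \citep[Thm.~5.2]{Neeb_analytic_vectors}. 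You instead lift $T$ to a global intertwiner $\widetilde T \in \B(\H_\rho)^G$ via the reproducing-kernel Schur inequality and then use that bounded $G$-intertwiners preserve $\H_\rho^\omega$. Both routes work; the paper's is shorter, while yours makes the structural reason ($T$ extends to the commutant of $\rho$) explicit and in fact anticipates the surjectivity argument inside \Fref{lem: iso_of_vna}.

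Two small points to clean up. First, your invocation of $q_V \in \rho(G)^{\prime\prime}$ in the proof of item (1) is misplaced: item (1) carries no such hypothesis, and your construction does not need it either---once $\widetilde T$ is well-defined on $\Span\,\rho(G)V_\sigma$, the equality $\widetilde T|_{V_\sigma} = T$ follows immediately by taking $g=e$ in the defining formula, regardless of whether $\widetilde T$ preserves $V_\sigma$. Second, the justification ``weakly analytic against the dense $\mcD_\chi$ implies strongly analytic'' for $g \mapsto F(g)Tv$ is shaky as stated; what you actually need is only the scalar analyticity of $g \mapsto \langle w, F(g)Tv\rangle$, which follows cleanly from $\langle w, F(g)Tv\rangle = \langle F(g^{-1})w, Tv\rangle$ and $w \in \mcD_\chi \subseteq \H_\rho^{\omega}$.
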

	
	\begin{remark}
		In the context of positive energy representations, the case where $\chi$ is a trivial extension is of central importance. In that setting we can typically guarantee that $q_V \in \rho(G)^{\prime \prime}$. The relation between positive energy representations and holomorphic induction is considered \Fref{sec: pe_and_hol_ind}.
	\end{remark}

	\subsubsection*{Proof of \Fref{thm: hol_induced_then_commutants_iso} and \Fref{thm: commutant_trivial_extension}}

	\noindent
	Assume throughout the following that $\rho$ is holomorphically induced from $(\sigma, \chi)$. In view of \Fref{thm: hol_induced_equiv_char}, we may and do assume that $V_\sigma \subseteq \H_\rho$ is a closed subspace, $\sigma(h) = \restr{\rho(h)}{V_\sigma}$ for all $h \in H$, that $\mcD_\chi = V_\sigma \cap \H_\rho^\omega$, $d\rho(\b_-)\mcD_\chi \subseteq \mcD_\chi$ and that $\chi(\xi)v = d\rho(\xi)v$ for all $\xi \in \b_-$ and $v \in \mcD_\chi$. We may further assume that the map $\Phi : \H_\rho \to \Map(G, V_\sigma)^H$ satisfying the conditions in \Fref{def: hol_induced} is given by $\Phi_\psi(g) = p_V\rho(g)^{-1}\psi$. In particular $\mc{E}_e = p_V$ is the orthogonal projection $p_V : \H_\rho \to V_\sigma$ and $\mc{E}_e^\ast = \iota_V$ is the inclusion $\iota_V : V_\sigma \hookrightarrow \H_\rho$. We also have $q_V = \iota_V p_V$.
	
	\begin{lemma}\label{lem: commute_w_bil_form}
		Let $T \in \B(\H_\rho)^{H, \chi}$, $x \in \mc{U}(\g_\C)$ and $v,w \in \mcD_\chi$. Then 
		$$\langle v, Td\rho(x)w\rangle = \langle v, d\rho(x)T w\rangle.$$
	\end{lemma}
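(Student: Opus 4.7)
The plan is to reduce the identity to a form where the $(H,\chi)$-commutation hypothesis on $T$ can be invoked directly on vectors in $\mcD_\chi$. Since we are in the setting of \Fref{thm: hol_induced_equiv_char}, we have $V_\sigma \subseteq \H_\rho$ closed, $\sigma(h) = \restr{\rho(h)}{V_\sigma}$, $\mcD_\chi = V_\sigma \cap \H_\rho^\omega$, $d\rho(\b_-)\mcD_\chi \subseteq \mcD_\chi$, and $\chi(\xi) = \restr{d\rho(\xi)}{\mcD_\chi}$ for $\xi \in \b_-$. By the universal property, the Lie algebra morphism $\restr{d\rho}{\mcD_\chi} : \b_- \to \L(\mcD_\chi)$ extends to an algebra homomorphism $\mc{U}(\b_-) \to \L(\mcD_\chi)$, necessarily coinciding with the extension of $\chi$; thus $\chi(y)u = d\rho(y)u$ for all $y \in \mc{U}(\b_-)$ and $u \in \mcD_\chi$. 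I interpret $T \in \B(\H_\rho)^{H,\chi}$ as a bounded operator with $T V_\sigma \subseteq V_\sigma$, $T^\ast V_\sigma \subseteq V_\sigma$ (so that $\restr{T}{V_\sigma}$ belongs to $\B(V_\sigma)^{H,\chi}$ of the ambient definition), and in particular $T\mcD_\chi \subseteq \mcD_\chi$, $T^\ast \mcD_\chi \subseteq \mcD_\chi$, with $T$ and $T^\ast$ commuting with $\chi(\xi)$ on $\mcD_\chi$ for every $\xi \in \b_-$; equivalently, $T$ and $T^\ast$ commute with $d\rho(\xi)$ on $\mcD_\chi$ for $\xi \in \b_-$.

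By the PBW Theorem applied to $\g_\C = \n_- \oplus \h_\C \oplus \n_+$, we have $\mc{U}(\g_\C) = \mc{U}(\b_+)\,\mc{U}(\n_-)$, so by linearity it suffices to establish the identity for $x = x_+ y$ with $x_+ \in \mc{U}(\b_+)$ and $y \in \mc{U}(\n_-) \subseteq \mc{U}(\b_-)$. Since $\theta(\b_+) \subseteq \b_-$ and $\xi^\ast = -\theta(\xi)$, we have $x_+^\ast \in \mc{U}(\b_-)$. For $v, w \in \mcD_\chi$, all vectors $w, Tw, T^\ast v, d\rho(y)w, d\rho(x_+^\ast)v, d\rho(x_+^\ast) T^\ast v \in \mcD_\chi \subseteq \H_\rho^\omega \subseteq \H_\rho^\infty$ lie in the appropriate domains, legitimising the adjoint identity $d\rho(z)^\dagger = d\rho(z^\ast)$ (cf.\ \Fref{rem: involution_rep}) at each step of the following manipulation:
\begin{align*}
\langle v, T d\rho(x) w\rangle
&= \langle T^\ast v,\, d\rho(x_+)\, d\rho(y) w\rangle \\
&= \langle d\rho(x_+^\ast)\, T^\ast v,\, d\rho(y) w\rangle \\
&= \langle T^\ast \chi(x_+^\ast) v,\, \chi(y) w\rangle \\
&= \langle \chi(x_+^\ast) v,\, T \chi(y) w\rangle \\
&= \langle d\rho(x_+^\ast) v,\, d\rho(y)\, T w\rangle \\
&= \langle v,\, d\rho(x_+)\, d\rho(y) T w\rangle \;=\; \langle v,\, d\rho(x)\, T w\rangle.
\end{align*}
Here the second line moves $d\rho(x_+)$ across using the $\ast$-identity in $\L^\dagger(\H_\rho^\infty)$; the third line uses that $T^\ast$ commutes with the extended action $\chi : \mc{U}(\b_-) \to \L(\mcD_\chi)$; the fourth line is the definition of the adjoint of the bounded operator $T$, keeping in mind that $\chi(x_+^\ast) v \in \mcD_\chi \subseteq V_\sigma$ so $T^\ast$ may equivalently be read as acting on $V_\sigma$; the fifth line uses that $T$ commutes with $\chi(y)$ on $\mcD_\chi$ and rewrites $\chi$ as $d\rho$.

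The main obstacle is bookkeeping: one must verify at each step that the vectors being produced actually lie in $\mcD_\chi$, so that both the adjoint rules for $d\rho$ and the commutation rules for $T, T^\ast$ with $\chi$ apply. This is where it is essential that $\mcD_\chi$ is $d\rho(\b_-)$- and $T,T^\ast$-invariant, and that the involution $\ast$ maps $\mc{U}(\b_+)$ into $\mc{U}(\b_-)$, so that the single factor $d\rho(x_+)$ obstructing direct commutation can always be shuttled across the inner product into $\mc{U}(\b_-)$, where $\chi$ is available.
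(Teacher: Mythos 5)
Your proof is correct and follows essentially the same route as the paper's: reduce to PBW monomials so that the positive factor can be conjugated by the $\ast$-involution into $\mc{U}(\b_-)$, then use the commutation of $T$ and $T^\ast$ with the extended $\chi$ on $\mcD_\chi$ together with the adjoint rule $d\rho(z)^\dagger = d\rho(z^\ast)$ in $\L^\dagger(\H_\rho^\infty)$. The only cosmetic difference is the PBW grouping: you use $\mc{U}(\g_\C) = \mc{U}(\b_+)\mc{U}(\n_-)$ and send the full $\b_+$-part across the inner product, whereas the paper uses $\mc{U}(\g_\C) = \mc{U}(\n_+)\mc{U}(\b_-)$ and sends only the $\n_+$-part; since $\theta(\h_\C)\subseteq\h_\C$, both place $x_+^\ast$ in $\mc{U}(\b_-)$ and the argument goes through identically.
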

	\begin{proof}
		Using the PBW Theorem, it suffices to consider the case where $x = x_+x_-$ for some $x_+ \in \mc{U}(\n_+)$ and $x_- \in \mc{U}(\b_-)$. In that case we obtain using \Fref{lem: second_properties_hol_induced} and the fact that $T \in \B(\H_\rho)^{H, \chi}$:
		\begin{align*}
			\langle v, Td\rho(x)w\rangle 
			&= \langle \chi(\xi_+^\ast)  T^\ast v, \chi(x_-)w\rangle = \langle \chi(\xi_+^\ast)  v, T \chi(x_-)w\rangle \\
			&= \langle \chi(\xi_+^\ast)  v, \chi(x_-)T w\rangle = \langle v, d\rho(x)T w\rangle. \qedhere
		\end{align*}
	\end{proof}
	
	\begin{lemma}\label{lem: suff_cond_to_leave_D_inv}
		Let $T \in \B(V_\sigma)$. Assume that $\langle v, T\rho(e^\xi)w \rangle = \langle v, \rho(e^\xi)Tw \rangle$ for all $v,w \in \mcD_\chi$ and all $\xi$ in some $0$-neighborhood in $\g$. Then $T \mcD_\chi \subseteq \mcD_\chi$ and 
		\begin{equation}\label{eq: PTC}
			\langle w, T\rho(g)v\rangle = \langle w, \rho(g)Tv\rangle, \qquad \forall g \in G, \; \forall v,w \in V_\sigma.	
		\end{equation}
	\end{lemma}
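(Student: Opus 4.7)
The plan is to first establish \eqref{eq: PTC} on all of $G$ by real-analytic continuation, and then deduce $T\mcD_\chi \subseteq \mcD_\chi$ as a short corollary of \eqref{eq: PTC}. Throughout, the bilinear expressions involving $T$ are interpreted via the bounded extension $\iota_V T p_V \in \B(\H_\rho)$, so that $\langle v, T\rho(g)w\rangle$ means $\langle T^\ast v, \rho(g)w\rangle$ for $v \in V_\sigma$ and $w \in \H_\rho$.

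First I would fix $v, w \in \mcD_\chi \subseteq V_\sigma \cap \H_\rho^\omega$ and consider the scalar function
\[
  h(g) := \langle T^\ast v, \rho(g)w\rangle - \langle \rho(g^{-1})v, Tw\rangle, \qquad g \in G,
\]
which agrees with $\langle v, T\rho(g)w\rangle - \langle v, \rho(g)Tw\rangle$. Since $v, w \in \H_\rho^\omega$ and inversion on the real-analytic BCH Fr\'echet--Lie group $G$ is analytic, both orbit maps $g \mapsto \rho(g)w$ and $g \mapsto \rho(g^{-1})v$ are real-analytic into $\H_\rho$; pairing with the fixed vectors $T^\ast v, Tw \in V_\sigma$ then shows $h$ is real-analytic on $G$. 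The hypothesis says $h = 0$ on $\exp(V_0)$, an open neighborhood of $1 \in G$, so by connectedness of $G$ together with the identity theorem for real-analytic maps (a consequence of \Fref{prop: jet_vanishes_then_function_trivial} in a chart, combined with propagation across chart overlaps, or equivalently via \Fref{prop: identity_theorem} after a local complexification of the orbit maps), $h \equiv 0$. Continuity of both sides of \eqref{eq: PTC} in $(v, w)$ (as $T$ and $\rho(g)$ are bounded) and density of $\mcD_\chi$ in $V_\sigma$ then upgrade the identity to all of $V_\sigma \times V_\sigma$, yielding \eqref{eq: PTC}.

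For the invariance $T\mcD_\chi \subseteq \mcD_\chi$, let $w \in \mcD_\chi$; since $Tw \in V_\sigma$ is automatic, it suffices to show $Tw \in \H_\rho^\omega$. Substituting $v := Tw$ into the just-established \eqref{eq: PTC} gives
\[
  \langle Tw, \rho(g)Tw\rangle = \langle Tw, T\rho(g)w\rangle = \langle T^\ast Tw, \rho(g)w\rangle,
\]
which is real-analytic in $g$ because $w \in \H_\rho^\omega$; the equivalence (1) $\iff$ (6) in \Fref{prop: analytic_vectors_equivalent_definitions} then delivers $Tw \in \H_\rho^\omega$. The main obstacle I foresee is not conceptual but notational: making precise what $T\rho(g)w$ and $\rho(g)Tw$ mean when $\rho(g)w$ lies outside $V_\sigma$ while $Tw \in V_\sigma \subseteq \H_\rho$. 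The extension $\iota_V T p_V$ and the systematic use of $T^\ast$ on $V_\sigma$ resolve this cleanly, after which the argument is a textbook application of analytic continuation on $G$ together with the Nelson-type characterization of analytic vectors recorded in \Fref{prop: analytic_vectors_equivalent_definitions}.
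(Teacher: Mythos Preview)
Your proposal is correct and follows essentially the same argument as the paper: both establish \eqref{eq: PTC} by observing that each side is real-analytic (rewriting $\langle v,\rho(g)Tw\rangle$ via $\langle\rho(g^{-1})v,Tw\rangle$ so analyticity comes from the orbit map of $v\in\H_\rho^\omega$), invoking the identity theorem on the connected BCH group $G$, and then extending by density; and both deduce $T\mcD_\chi\subseteq\mcD_\chi$ by plugging $Tw$ into \eqref{eq: PTC} to see that $g\mapsto\langle Tw,\rho(g)Tw\rangle$ is real-analytic and applying the analytic-vector criterion (the paper cites \cite[Thm.~5.2]{Neeb_analytic_vectors} directly, which is exactly the equivalence $(1)\Leftrightarrow(6)$ you invoke from \Fref{prop: analytic_vectors_equivalent_definitions}). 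Your added care about interpreting $T\rho(g)w$ via $\iota_V T p_V$ is a helpful clarification but not a different idea.
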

	\begin{proof}
		Let $v,w \in \mcD_\chi$. Both $g\mapsto \langle w, T\rho(g)v\rangle$ and $g\mapsto \langle w, \rho(g)Tv\rangle$ are real-analytic $G \to \C$. As $G$ is BCH, so in particular locally exponential, these functions agree on some $1$-neighborhood in $G$ by assumption. As $G$ is connected, it follows from \Fref{prop: identity_theorem} that they are equal everywhere. We thus obtain that $\langle w, T\rho(g)v\rangle = \langle w, \rho(g)Tv\rangle$ for all $g \in G$. As $\mcD_\chi$ is dense, \fref{eq: PTC} follows. Let $v\in \mcD_\chi$. Then using \eqref{eq: PTC} we find that $\langle T v, \rho(g) Tv \rangle = \langle T v, T\rho(g) v \rangle$ for all $g \in G$. The right-hand side defines a real-analytic function $G \to \C$ because $v \in \H_\rho^\omega$. Thus also $g \mapsto \langle T v, \rho(g) Tv \rangle$ is real-analytic. Recalling that $G$ is a BCH Fr\'echet--Lie group, we conclude using \citep[Thm.\ 5.2]{Neeb_analytic_vectors} that $Tv \in \H_\rho^\omega$. Thus $Tv \in \H_\rho^\omega \cap V_\sigma = \mcD_\chi$.
	\end{proof}

	\begin{lemma}\label{lem: commutant_vna}
		$\B(\H_\rho)^{H, \chi}$ is a von Neumann algebra. Moreover we have
		\begin{equation}\label{eq: commutant_vna}
			\langle w, T\rho(g)v\rangle = \langle w, \rho(g)Tv\rangle, \qquad \forall T \in \B(\H_\rho)^{H, \chi},\; \forall g \in G, \;  \forall v,w \in V_\sigma.	
		\end{equation}
	\end{lemma}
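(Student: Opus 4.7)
The plan is to prove (2) first and then use it to establish (1).

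For (2), fix $T \in \B(\H_\rho)^{H,\chi}$ and $v, w \in \mcD_\chi$. \Fref{lem: commute_w_bil_form} gives $\langle v, Td\rho(\xi^n)w\rangle = \langle v, d\rho(\xi^n)Tw\rangle$ for every $\xi \in \g$ and $n \in \N_{\geq 0}$. Since $w, Tw \in \mcD_\chi \subseteq \H_\rho^\omega$, the Taylor series $\rho(e^\xi)w = \sum_n \tfrac{1}{n!} d\rho(\xi^n)w$ and $\rho(e^\xi)Tw = \sum_n \tfrac{1}{n!} d\rho(\xi^n)Tw$ converge in norm on some balanced $0$-neighborhood $V \subseteq \g$ by \Fref{prop: analytic_vectors_equivalent_definitions}. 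Boundedness of $T$ allows one to interchange $T$ with the first sum, and combined with the bilinear identity this yields
\[
\langle v, T\rho(e^\xi)w\rangle = \langle v, \rho(e^\xi)Tw\rangle, \qquad v, w \in \mcD_\chi,\ \xi \in V.
\]
Since $T\mcD_\chi \subseteq \mcD_\chi$ and $T$ is continuous, $TV_\sigma \subseteq V_\sigma$, so $T|_{V_\sigma} \in \B(V_\sigma)$ satisfies the hypothesis of \Fref{lem: suff_cond_to_leave_D_inv}, whose conclusion is precisely \eqref{eq: commutant_vna}.

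For (1), that $\B(\H_\rho)^{H,\chi}$ is a unital $*$-subalgebra is clear: the defining conditions are symmetric under $T \mapsto T^*$, and closure under composition follows from the Leibniz rule $[T_1T_2,\chi(\xi)] = T_1[T_2,\chi(\xi)] + [T_1,\chi(\xi)]T_2$ on $\mcD_\chi$. The content lies in weak closure. Suppose $T_\alpha \to T$ weakly with $T_\alpha \in \B(\H_\rho)^{H,\chi}$; then also $T_\alpha^* \to T^*$ weakly, and commutation with the bounded operators $\rho(h)$, $h \in H$, passes to the limit. Applying (2) to each $T_\alpha$, the identity
\[
\langle w, T_\alpha\rho(g)v\rangle = \langle w, \rho(g)T_\alpha v\rangle, \qquad v, w \in V_\sigma,\ g \in G,
\]
passes to the weak limit (both sides are linear functionals of $T_\alpha$ tested against bounded vectors), yielding the same identity for $T$. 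Closedness of $V_\sigma$ combined with $T_\alpha V_\sigma \subseteq V_\sigma$ gives $TV_\sigma \subseteq V_\sigma$, and \Fref{lem: suff_cond_to_leave_D_inv} applied to $T|_{V_\sigma}$ (with $g = e^\xi$ in a small $0$-neighborhood) yields $T\mcD_\chi \subseteq \mcD_\chi$. For $\xi \in \b_-$, differentiating the identity $\langle w, T\rho(e^{t\xi})v\rangle = \langle w, \rho(e^{t\xi})Tv\rangle$ at $t=0$ with $v, w \in \mcD_\chi$ (both sides are smooth since $v, Tv \in \H_\rho^\omega$) produces $\langle w, T d\rho(\xi)v\rangle = \langle w, d\rho(\xi)Tv\rangle$ for all $w \in V_\sigma$; since both $Td\rho(\xi)v$ and $d\rho(\xi)Tv$ lie in $V_\sigma$, weak equality on the dense subspace $V_\sigma$ forces $T\chi(\xi)v = \chi(\xi)Tv$. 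The same argument applied to $T^*$ finishes the verification.

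The main obstacle is that $\chi(\b_-)$ consists of unbounded operators, so weak (or strong) closure of a $*$-subalgebra does not automatically preserve commutation with them. Statement (2) is the crux that circumvents this by packaging the $\chi$-commutation into a bounded group-level identity that is transparently preserved by weak limits; \Fref{lem: suff_cond_to_leave_D_inv} then bridges back to the infinitesimal level and ensures the requisite invariance of $\mcD_\chi$.
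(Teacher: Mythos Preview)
Your proof is correct and follows essentially the same strategy as the paper: both combine \Fref{lem: commute_w_bil_form} with the analytic Taylor expansion to obtain the local group-level identity, invoke \Fref{lem: suff_cond_to_leave_D_inv}, and then differentiate to recover the $\chi$-commutation. The only difference is organizational---you first establish \eqref{eq: commutant_vna} for all $T \in \B(V_\sigma)^{H,\chi}$ and then pass that bounded identity to weak limits, whereas the paper passes the infinitesimal identity from \Fref{lem: commute_w_bil_form} to the (strong) limit first and then exponentiates; either order works and the ingredients are identical.
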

	\begin{proof}
		Let $\mc{N} \subseteq \B(V_\sigma)^H$ denote the von Neumann algebra in $\B(V_\sigma)$ generated by $\B(\H_\rho)^{H, \chi}$. We show $\mc{N} = \B(\H_\rho)^{H, \chi}$. It only remains to show $\mc{N} \subseteq \B(\H_\rho)^{H, \chi}$. As $\mc{N}$ is $\ast$-closed, it suffices to show that $T \mcD_\chi \subseteq \mcD_\chi$ and that $T\chi(\xi)v = \chi(\xi)Tv$ for all $T \in \mc{N}$, $\xi\in \b_-$ and $v \in \mcD_\chi$. Let $T \in \mc{N}$. Let $(T_\lambda)$ be a net in $\B(\H_\rho)^{H, \chi}$ such that $T_\lambda \to T$ strongly. Let $v,w \in \mcD_\chi$ and $x \in \mc{U}(\g_\C)$. Using \Fref{lem: commute_w_bil_form} we have:
		\begin{align}\label{eq: commutant_vna_1}
			\begin{split}
				\langle v, Td\rho(x) w\rangle 
				&= \lim_{\lambda} \; \langle v, T_\lambda d\rho(x) w\rangle
				= \lim_{\lambda} \; \langle v, d\rho(x) T_\lambda w\rangle
				= \lim_{\lambda} \; \langle d\rho(x^\ast)v, T_\lambda w\rangle\\
				&= \langle d\rho(x^\ast)v, T w\rangle
			\end{split}
		\end{align}
		As $v,w \in \mcD_\chi \subseteq H_\rho^\omega$, the orbit maps $g \mapsto \rho(g)v$ and $g \mapsto \rho(g)w$ are both real-analytic $G \to \H_\rho$. We obtain using \eqref{eq: commutant_vna_1} for all $\xi \in \g$ in a small-enough $0$-neighborhood in $\g$ that:
		\begin{align}\label{eq: commutant_vna_1b}
			\begin{split}
				\langle w, T\rho(e^\xi) v\rangle
				&= \sum_{n=0}^\infty{1\over n!}\langle w, Td\rho(\xi^n) v\rangle 
				= \sum_{n=0}^\infty{(-1)^n\over n!}\langle d\rho(\xi^n)w, Tv\rangle 
				= \langle \rho(e^{-\xi})w, T v\rangle \\
				&= \langle w, \rho(e^{\xi}) T v\rangle.
			\end{split}
		\end{align}
		It follows from \Fref{lem: suff_cond_to_leave_D_inv} that $T \mcD_\chi \subseteq \mcD_\chi$ and that \fref{eq: PTC} is valid for $T$. Thus $\mc{N}\mc{D}_\chi \subseteq \mcD_\chi$. Differentiating \eqref{eq: PTC} at the identity $e \in G$ we find that $\langle w, Td\rho(\xi)v\rangle = \langle w, d\rho(\xi)Tv\rangle$ for all $\xi \in \g_\C$ and $w \in \mc{D}_\chi$. Suppose $\xi \in \b_-$. Using that $T \mcD_\chi \subseteq \mcD_\chi$, we obtain
		$$ \langle w, T\chi(\xi)v\rangle = \langle w, Td\rho(\xi)v\rangle=  \langle w, d\rho(\xi)Tv\rangle = \langle w, \chi(\xi)Tv\rangle, \qquad \forall w \in \mcD_\chi,$$
		where \Fref{lem: second_properties_hol_induced} was used in the first and last equality. As $\mcD_\chi$ is dense in $V_\sigma$, it follows for every $\xi \in \b_-$ and $v\in \mcD_\chi$ that $T \chi(\xi)v = \chi(\xi)T v$. Thus $T \in \B(\H_\rho)^{H, \chi}$. Hence $\mc{N} = \B(\H_\rho)^{H, \chi}$.
	\end{proof}
	
	\noindent
	Combined with \Fref{lem: commutant_vna}, \Fref{lem: iso_of_vna} below completes the proof of \Fref{thm: hol_induced_then_commutants_iso}.

	\begin{lemma}\label{lem: iso_of_vna}
		Assume that $q_V\in \rho(G)^{\prime \prime}$. Then the map 
		$$r : \B(\H_\rho)^G \to \B(V_\sigma)^{H, \chi}, \quad r(T) := \restr{T}{V_\sigma}$$
		defines an isomorphism of von Neumann algebras.
	\end{lemma}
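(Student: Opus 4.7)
The plan is to realize both $\B(\H_\rho)^G$ and $\B(V_\sigma)^{H,\chi}$ as the commutant in $\B(V_\sigma)$ of the compressed von Neumann algebra
$$ N := \restr{q_V\,\rho(G)''\,q_V}{V_\sigma} \; \subseteq \; \B(V_\sigma), $$
via the Murray--von Neumann reduction theorem on the one hand, and \Fref{lem: commutant_vna} on the other. First I would dispense with well-definedness and injectivity of $r$. Since $q_V \in \rho(G)''$, every $T \in \rho(G)' = \B(\H_\rho)^G$ commutes with $q_V$ and hence preserves $V_\sigma$. It also preserves $\H_\rho^\omega$, because the orbit map of $T\psi$ equals $g\mapsto T\rho(g)\psi$ and is therefore analytic whenever that of $\psi$ is. Thus $T$ preserves $\mcD_\chi = V_\sigma \cap \H_\rho^\omega$, commutes with $\restr{\rho(h)}{V_\sigma} = \sigma(h)$ for every $h \in H$, and commutes with $\restr{d\rho(\xi)}{\mcD_\chi} = \chi(\xi)$ for every $\xi \in \b_-$. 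Applying this also to $T^\ast$ shows $r(T) \in \B(V_\sigma)^{H, \chi}$, and $r$ is obviously a unital $\ast$-homomorphism. Injectivity then follows from the cyclicity of $V_\sigma$ for $\rho$: if $r(T) = 0$, then $T\rho(g)v = \rho(g)Tv = 0$ for every $g \in G$ and $v \in V_\sigma$, forcing $T = 0$.

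For surjectivity, set $F(g) := \mc{E}_e\rho(g)\mc{E}_e^\ast = p_V\rho(g)\iota_V \in \B(V_\sigma)$. By SOT-continuity of compression, $N$ is the von Neumann algebra generated in $\B(V_\sigma)$ by $\{F(g) : g \in G\}$. Because $V_\sigma$ is cyclic for $\rho(G)$, the projection $q_V$ has central support $1$ in $\rho(G)''$, and the classical Murray--von Neumann reduction theorem furnishes a $\ast$-isomorphism
$$ \rho(G)' \xrightarrow{\;\sim\;} N', \qquad T \mapsto \restr{T}{V_\sigma}, $$
where $N' \subseteq \B(V_\sigma)$ denotes the commutant. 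The image of $r$ therefore coincides with $N'$, and surjectivity reduces to the identification $N' = \B(V_\sigma)^{H,\chi}$.

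One inclusion is immediate from \Fref{lem: commutant_vna}: the formula \eqref{eq: commutant_vna} with $T \in \B(V_\sigma)^{H,\chi}$ and $v, w \in V_\sigma$ unpacks to $\langle w, TF(g)v\rangle = \langle w, F(g)Tv\rangle$, so $[T, F(g)] = 0$ and $T \in N'$. The reverse inclusion is where the main difficulty lies. Let $S \in N'$. From $F(h) = \sigma(h)$ for $h \in H$, both $S$ and $S^\ast$ lie in $\B(V_\sigma)^H$. To show $S\mcD_\chi \subseteq \mcD_\chi$, I would exploit that for $v \in \mcD_\chi$ the function $g \mapsto F(g)v = f_v(g^{-1})$ is real-analytic by condition (3) of \Fref{def: hol_induced}; combined with $[S, F(g)] = 0$, this gives
$$ \langle Sv, \rho(g)Sv\rangle_{\H_\rho} \; = \; \langle Sv, F(g)Sv\rangle_{V_\sigma} \; = \; \langle S^\ast Sv, F(g)v\rangle_{V_\sigma}, $$
which is real-analytic in $g$, whence $Sv \in \H_\rho^\omega \cap V_\sigma = \mcD_\chi$ by \citep[Thm.\ 5.2]{Neeb_analytic_vectors}. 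Differentiating $SF(g)v = F(g)Sv$ at $g = e$ in the direction $\xi \in \b_-$, and using $\L_{\bm{v}(\xi)}F(\cdot)w|_e = p_V d\rho(\xi)w = \chi(\xi)w$ for $w \in \mcD_\chi$, then yields $S\chi(\xi)v = \chi(\xi)Sv$. The same argument applied to $S^\ast$ gives $S \in \B(V_\sigma)^{H,\chi}$, completing the proof.
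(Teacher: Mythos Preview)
Your proof is correct and takes a genuinely different route from the paper. The paper argues surjectivity by first noting that $r$ is weakly continuous, so its image is a von Neumann algebra, and then showing the image contains every projection $p_1 \in \B(V_\sigma)^{H,\chi}$ by hand: using \eqref{eq: commutant_vna} it proves that the $G$-cyclic subspaces generated by $p_1V_\sigma$ and $(1-p_1)V_\sigma$ are orthogonal, so the projection onto the former lies in $\rho(G)'$ and restricts to $p_1$. You instead invoke the reduction theorem to identify the image of $r$ with $N'$ directly, and then compare $N'$ with $\B(V_\sigma)^{H,\chi}$ via the generators $F(g)$. Both arguments ultimately pivot on the relation $[T,F(g)]=0$ supplied by \Fref{lem: commutant_vna}; your packaging is more conceptual, the paper's more constructive.

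Two remarks on presentation. First, your justification ``by SOT-continuity of compression'' for $N=\{F(g)\}''$ hides the essential point: since $\rho(G)$ is a group, the unital $\ast$-algebra it generates is just $\mathrm{span}_\C\rho(G)$, so Kaplansky density gives that every element of $\rho(G)''$ is a bounded SOT-limit of elements of $\mathrm{span}_\C\rho(G)$, whose compressions land in $\mathrm{span}_\C\{F(g)\}\subseteq\{F(g)\}''$. Without this observation the argument would fail, because compression is not multiplicative and $q_V\rho(g_1)\rho(g_2)q_V\neq F(g_1)F(g_2)$ in general. Second, your ``reverse inclusion'' $N'\subseteq\B(V_\sigma)^{H,\chi}$, which you flag as the main difficulty, is in fact already contained in your well-definedness paragraph: you showed there that $r(T)\in\B(V_\sigma)^{H,\chi}$ for every $T\in\rho(G)'$, and the reduction theorem gives $r(\rho(G)')=N'$. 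The analyticity argument you give for it is correct and pleasant (essentially a variant of \Fref{lem: suff_cond_to_leave_D_inv}), but it is not needed; the genuine content for surjectivity is the inclusion $\B(V_\sigma)^{H,\chi}\subseteq N'$, which you dispatch in one line once $N=\{F(g)\}''$ is established.
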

	\begin{proof}
		We know using \Fref{lem: commutant_vna} that $\B(V_\sigma)^{H, \chi}$ is a von Neumann algebra. Notice that the assumption $q_V \in \rho(G)^{\prime \prime}$ is equivalent with $T V_\sigma \subseteq V_\sigma$ for every $T \in \B(\H_\rho)^G$. Let $T \in \B(\H_\rho)^G$. Then $T \H_\rho^\omega \subseteq \H_\rho^\omega$ and $TV_\sigma \subseteq V_\sigma$. Recalling that $\mcD_\chi = V_\sigma \cap \H_\rho^\omega$, it follows that $T \mcD_\chi \subseteq \mcD_\chi$. Since both $T$ and $T^\ast$ are in $\B(\H_\rho)^G$, it follows that $r(T) \in \B(V_\sigma)^{H, \chi}$, where we recall that $\restr{\rho(h)}{V_\sigma} = \sigma(h)$ and $\restr{d\rho(\xi)}{\mcD_\chi} = \chi(\xi)$ for $h \in H$ and $\xi \in \b_-$. It is clear that $r$ is a weakly-continuous unital $\ast$-preserving homomorphism. It is also injective, because $r(T) =~0$ implies $ T\rho(G)V_\sigma = \rho(G)TV_\sigma = \{0\}$, which in turn implies that $T = 0$ because $V_\sigma$ is cyclic for the $G$-representation $\H_\rho$. As $r$ is weakly continuous, its image is weakly closed \citep[Thm.\ 4.3.4]{Murphy_operator_algs}. Thus, to see that $r$ is surjective, it suffices to show that its image contains all orthogonal projections in $\B(V_\sigma)^{H, \chi}$. Let $p_1 \in \B(V_\sigma)^{H, \chi}$ be an orthogonal projection and let $p_2 := 1 - p_1$. For $k \in \{1,2\}$, define $V_k, \mcD_k, \sigma_k$ and $\chi_k$ as in \Fref{rem: projections_in_commutant}, so that $(\sigma, \chi) \cong (\sigma_1, \chi_1) \oplus (\sigma_2, \chi_2)$. Let $\H_1$ and $\H_2$ be the closed $G$-invariant subspaces of $\H_\rho$ generated by $V_1$ and $V_2$, respectively. It suffices to show that $\H_1 \perp \H_2$. Let $v_1 \in V_1$ and $v_2 \in V_2$. As $p_1 \in \B(V_\sigma)^{H, \chi}$, it follows from \fref{eq: commutant_vna} that 
		$$\langle v_1, \rho(g)v_2\rangle = \langle v_1, p_1\rho(g)v_2\rangle = \langle v_1, \rho(g)p_1v_2\rangle = 0 , \qquad \forall g \in G.$$
		It follows that $V_1 \perp \rho(G)V_2$, which in turn implies $\H_1 \perp \H_2$. 
	\end{proof}
	
	\noindent
	Finally, it remains to prove \Fref{thm: commutant_trivial_extension}:
	
	\begin{proof}[Proof of \Fref{thm: commutant_trivial_extension}:]
		It remains only to prove the first item. The second will follow using \Fref{thm: hol_induced_then_commutants_iso}. It is clear that $\B(V_\sigma)^{H, \chi} \subseteq \B(V_\sigma)^H$. Conversely, take $T \in \B(V_\sigma)^H$. Let $v,w \in \mcD_\chi$. Then in particular the orbit maps $G \to \H_\rho, g \mapsto \rho(g)v$ and $g \mapsto \rho(g)w$ are real-analytic. Notice that $T \mcD_\chi \subseteq V_\sigma^\infty$ and similarly $T^\ast \mcD_\chi \subseteq V_\sigma^\infty$. Using \Fref{lem: f_v_factors_through_h}, we obtain for all $\xi$ in a small-enough $0$-neighborhood that
		\begin{align*}
			\langle w, T\rho(e^\xi)v\rangle 
			&= \sum_{n=0}^\infty{1\over n!}\langle w, Td\rho(\xi^n)v\rangle
			= \sum_{n=0}^\infty{1\over n!}\langle w, Td\sigma(E_0(\xi^n))v\rangle\\
			&= \sum_{n=0}^\infty{1\over n!}\langle w, d\sigma(E_0(\xi^n))Tv\rangle
			= \sum_{n=0}^\infty{(-1^n)\over n!}\langle d\rho(\xi^n) w, Tv\rangle\\
			&= \langle \rho(e^{-\xi})w, Tv\rangle 
			= \langle w, \rho(e^{\xi})Tv\rangle.
		\end{align*}
		Using \Fref{lem: suff_cond_to_leave_D_inv}, it follows that $T\mcD_\chi\subseteq \mcD_\chi$. Hence $\B(V_\sigma)^H\mcD_\chi \subseteq \mcD_\chi$ and in particular $T^\ast \mcD_\chi\subseteq \mcD_\chi$. Suppose that $v \in \mcD_\chi, \xi_0 \in \h_\C$ and $\xi_- \in \n_-$. Then 
		$$T\chi(\xi_0 + \xi_-)v = Td\sigma(\xi_0)v = d\sigma(\xi_0)Tv = \chi(\xi_0 + \xi_-)Tv.$$
		Therefore, $T \chi(\xi)v = \chi(\xi)Tv$ for all $\xi\in \b_-$ and $v \in \mcD_\chi$. Thus $T \in \B(V_\sigma)^{H, \chi}$. We conclude that $\B(V_\sigma)^{H, \chi} = \B(V_\sigma)^H$.
	\end{proof}	
	
	\subsection{Holomorphic induction in stages}\label{sec: holomorphic_induction_in_stages}
	
	\noindent
	Let us next consider holomorphic induction in stages. We specialize to the context of trivial extensions. Recall from \Fref{sec: holomorphic_induction_in_stages} that $\g_\C = \n_- \oplus \h_\C \oplus \n_+$ and that $H \subseteq G$ is a connected Lie subgroup with $\bm{\mrm{L}}(H) = \h$. Assume similarly that $\h_\C = \a_- \oplus \t_\C \oplus \a_+$, where $\a_\pm$ and $\t_\C$ are closed subalgebras with $\theta(\t_\C) \subseteq \t_\C$, $\theta(\a_\pm) \subseteq \a_\mp$ and $[\t_\C, \a_\pm] \subseteq \a_\pm$. Let $T$ be a connected and closed locally exponential Lie subgroup of $H$ integrating $\t \subseteq \h$. Using the notation of \Fref{def: notation_hol_induced}, we have:
	
	\begin{proposition}[Induction In Stages]\label{prop: induction_in_stages}
		Let $(\rho, \H_\rho)$, $(\sigma, \H_\sigma)$ and $(\nu, \H_\nu)$ be analytic unitary representations of $G$, $H$ and $T$, respectively. Then
		\begin{enumerate}
			\item $\rho = \HolInd_T^G(\nu)$ and $\sigma = \HolInd_T^H(\nu) \;\implies \; \rho = \HolInd_H^G(\sigma)$.
			\item Suppose that $\sigma = \HolInd_T^H(\nu)$ and $\rho = \HolInd_H^G(\sigma)$. Assume w.l.o.g. that $\H_\nu \subseteq\H_\sigma \subseteq \H_\rho$ using \Fref{thm: hol_induced_equiv_char}, the inclusions being $T$- and $H$-equivariant, respectively. If $\H_\nu \cap \H_\rho^\omega$ is dense in $\H_\nu$, then $\rho = \HolInd_T^G(\nu)$.
		\end{enumerate}
	\end{proposition}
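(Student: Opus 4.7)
My plan is to verify both implications using the equivalent characterization of holomorphic induction in \Fref{thm: hol_induced_equiv_char}, noting that the triangular data is compatible in the sense $\b_-^G \cap \h_\C = \b_-^H = \a_- \oplus \t_\C$ and $\Ad(H)\n_\pm \subseteq \n_\pm$ (the latter from connectedness of $H$ together with $[\h_\C, \n_\pm] \subseteq \n_\pm$). From $\rho = \HolInd_T^G(\nu)$ I would arrange $V_\nu \subseteq \H_\rho$ closed, $T$-invariant, cyclic for $G$, with $\mcD_\nu := V_\nu \cap \H_\rho^\omega$ dense in $V_\nu$ and $d\rho(\n_- \oplus \a_-)\mcD_\nu = 0$; the vanishing comes from triviality of the extension of $d\nu$ to $\b_-^G = \n_- \oplus \a_- \oplus \t_\C$. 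Similar realizations set up the chained inclusions $V_\nu \subseteq V_\sigma \subseteq \H_\rho$ needed in Part 2.

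For Part 2, I apply \Fref{thm: hol_induced_equiv_char} with $V := V_\nu \subseteq \H_\rho$. Cyclicity of $V_\nu$ for $G$ in $\H_\rho$ is immediate from the chain of cyclicities; density of $V_\nu \cap \H_\rho^\omega$ in $V_\nu$ is the standing hypothesis; and the key vanishing $d\rho(\n_- \oplus \a_-)(V_\nu \cap \H_\rho^\omega) = 0$ splits into two cases. For $\xi \in \n_-$, I use $V_\nu \cap \H_\rho^\omega \subseteq V_\sigma \cap \H_\rho^\omega$ and the vanishing $d\rho(\n_-)(V_\sigma \cap \H_\rho^\omega) = 0$ coming from $\rho = \HolInd_H^G(\sigma)$. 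For $\xi \in \a_- \subseteq \h$, I note $d\rho(\xi) = d\sigma(\xi)$ on smooth vectors and that $V_\nu \cap \H_\rho^\omega \subseteq V_\sigma^\omega$ (since restricting a $G$-analytic orbit map to $H$ yields a $V_\sigma$-valued $H$-analytic map, $V_\sigma$ being closed and $H$-invariant), so $d\sigma(\a_-)(V_\nu \cap V_\sigma^\omega) = 0$ from $\sigma = \HolInd_T^H(\nu)$ gives the required vanishing.

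For Part 1, set $W := \overline{\Span}\,\rho(H) V_\nu \subseteq \H_\rho$ and let $\sigma'$ denote the $H$-representation on $W$ obtained from $\rho$. The crucial step is to show $d\rho(\n_-)(W \cap \H_\rho^\omega) = 0$, which I would prove by duality: for $u \in \H_\rho^\infty$, $v_0 \in \mcD_\nu$, $h \in H$ and $\xi \in \n_-$, a direct computation using $d\rho(\eta)^\dagger = d\rho(-\theta(\eta))$, the identity $\theta \circ \Ad_{h^{-1}} \circ \theta = \Ad_{h^{-1}}$ (since $\Ad_{h^{-1}}$ is real), and $\Ad(H)\n_- \subseteq \n_-$ gives
\[
\langle d\rho(-\theta(\xi))\, u,\, \rho(h) v_0\rangle = \langle \rho(h^{-1})u,\, d\rho(\Ad_{h^{-1}}\xi)\, v_0\rangle = 0,
\]
because $\Ad_{h^{-1}}\xi \in \n_- \subseteq \n_- \oplus \a_-$ annihilates $\mcD_\nu$. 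Density of $\rho(H)\mcD_\nu$ in $W$ then yields $d\rho(-\theta(\xi))\H_\rho^\infty \perp W$, which is equivalent to $d\rho(\xi)(W \cap \H_\rho^\omega) = 0$. Applying \Fref{thm: hol_induced_equiv_char} with $V = W$ I conclude that $\rho$ is holomorphically induced from the trivial extension of $d\sigma'$ to $\b_-^G$ on $W \cap \H_\rho^\omega$. To identify $\sigma'$ with $\sigma$ I invoke the reproducing-function uniqueness of \Fref{thm: inducibility}: both $\restr{F_\rho}{H}$ and $F_\sigma$ satisfy the defining conditions of \Fref{prop: reproducing_function} for $\HolInd_T^H(\nu)$, since the PDEs \eqref{eq: pde_repr_function_1}--\eqref{eq: pde_repr_function_2} for $F_\rho$ restrict compatibly to $\b_\pm^H \subseteq \b_\pm^G$ by triviality of both extensions together with $\b_\pm^G \cap \h_\C = \b_\pm^H$.

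The main obstacle I anticipate is the bookkeeping in this reproducing-function comparison: one must verify that $\restr{F_\rho}{H}$ satisfies \emph{exactly} the defining conditions of the reproducing function for $\HolInd_T^H(\nu)$, including the analyticity-domain characterization from condition (3) of \Fref{prop: reproducing_function}, so that uniqueness in \Fref{thm: inducibility} yields $\restr{F_\rho}{H} = F_\sigma$, hence $\sigma' \cong \sigma$ as unitary $H$-representations and finally $\rho = \HolInd_H^G(\sigma)$.
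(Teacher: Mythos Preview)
Your Part 2 is correct and matches the paper's argument essentially line for line.

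For Part 1, your duality computation showing $d\rho(\n_-)(W\cap\H_\rho^\omega)=0$ is correct and is just the adjoint reformulation of the paper's direct computation $d\rho(\xi)\rho(h)v_0=\rho(h)d\rho(\Ad_{h^{-1}}\xi)v_0=0$ followed by a closure step. The divergence is in the identification $\sigma'\cong\sigma$, and here the obstacle you flagged is genuine: condition~(3) of \Fref{prop: reproducing_function} for $\restr{F_\rho}{H}$ asks that the domain equals $\{v\in V_\nu:\langle v,F_\rho(\cdot)v\rangle\text{ analytic on }H\}=V_\nu\cap W^\omega$, whereas for $F_\sigma$ the domain is $V_\nu\cap\H_\sigma^\omega$. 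Knowing these coincide is equivalent to already knowing $\sigma'\cong\sigma$, so the reproducing-function comparison is circular as stated.

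The paper avoids this by not comparing $F$'s at all. It verifies directly that $\sigma'=\HolInd_T^H(\nu)$ via \Fref{thm: hol_induced_equiv_char}: $V_\nu$ is cyclic for $H$ in $W$ by construction, $V_\nu\cap W^\omega\supseteq V_\nu\cap\H_\rho^\omega$ is dense, and $d\sigma'(\a_-)(V_\nu\cap W^\omega)=0$. This last vanishing is exactly your duality trick with $(\n_-,H)$ replaced by $(\a_-,T)$: for $u\in W^\infty$ and $v_0\in V_\nu\cap\H_\rho^\omega$ one has $\langle d\sigma'(-\theta(\xi))u,v_0\rangle=\langle u,d\sigma'(\xi)v_0\rangle=0$ for $\xi\in\a_-$, so $d\sigma'(\a_+)W^\infty\perp V_\nu$, whence $d\sigma'(\a_-)(V_\nu\cap W^\omega)=0$. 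Then \Fref{prop: unique_domain_triv_ext} forces the two domains to agree and \Fref{thm: uniqueness} gives $\sigma'\cong\sigma$. Replacing your reproducing-function step by this short argument closes the gap.
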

	\begin{proof}
		These observations follow from a repeated application of \Fref{thm: hol_induced_equiv_char}. 
		\begin{enumerate}
			\item In view of \Fref{thm: hol_induced_equiv_char}, we may assume that $\H_\nu \subseteq \H_\rho$ as $T$-representations and that $\H_\nu \cap \H_\rho^\omega$ is dense in $\H_\nu$ and killed by $d\rho(\n_- \oplus \a_-)$. Let $(\pi, \H_\pi)$ denote the unitary $H$-representation in $\H_\rho$ generated by $\H_\nu \cap \H_\rho^\omega \subseteq \H_\rho$. Using \Fref{thm: hol_induced_equiv_char} it follows that $\pi = \HolInd_T^H(\nu)$. By \Fref{thm: uniqueness}, it follows that $\pi \cong \sigma$ as unitary $H$-representations. Thus we may assume $\H_\sigma = \H_\pi \subseteq \H_\rho$, the last inclusion being $H$-equivariant. The $H$-orbit of $\H_\nu \cap \H_\rho^\omega$ under $\restr{\rho}{H}$ in $\H_\sigma$ is contained in $\H_\sigma \cap \H_\rho^\omega$ and is trivially total for $\H_\sigma$. Thus $\H_\sigma \cap \H_\rho^\omega$ is dense in $\H_\sigma$. As $\H_\nu \cap \H_\rho^\omega$ is already cyclic for $(\rho, \H_\rho)$, so is the larger space $\H_\sigma \cap \H_\rho^\omega$. To see that $\rho = \HolInd_H^G(\sigma)$, it just remains to show that $\H_\sigma \cap \H_\rho^\omega$ is killed by $d\rho(\n_-)$. As $\H_\nu \cap \H_\rho^\omega$ is killed by $d\rho(\n_-)$ and $\Ad_H(\n_-) \subseteq \n_-$, it follows that $d\rho(\n_-)\rho(H)\psi \subseteq \rho(H)d\rho(\n_-)\psi = \{0\}$ for any $\psi \in \H_\nu \cap \H_\rho^\omega$. Thus $d\rho(\n_-)$ kills $\rho(H)(\H_\nu \cap \H_\rho^\omega)$. As $\rho(H)(\H_\nu \cap \H_\rho^\omega)$ is total in $\H_\sigma$, it follows that $d\rho(\n_-)$ kills $\H_\sigma \cap \H_\rho^\omega$. Having shown all conditions of \Fref{thm: hol_induced_equiv_char}, we conclude that $\rho = \HolInd_H^G(\sigma)$.			
			\item As $\sigma = \HolInd_T^H(\nu)$ we may assume that $\H_\nu \subseteq \H_\sigma$ as $T$-representations and that $\H_\sigma^\omega \cap \H_\nu$ is dense in $\H_\nu$, cyclic for the $H$-representation $\H_\sigma$, and killed by $d\sigma(\a_-)$. Similarly, as $\rho = \HolInd_H^G(\sigma)$ we may assume that $\H_\sigma \subseteq \H_\rho$ as $H$ representations and moreover that $\H_\rho^\omega \cap \H_\sigma$ is dense in $\H_\sigma$, cyclic for the $G$-representation $\H_\rho$ and killed by $d\rho(\n_-)$. Then $\H_\nu \subseteq \H_\sigma \subseteq \H_\rho$ the inclusions being $T$- and $H$ equivariant, respectively. By assumption $\H_\nu \cap \H_\rho^\omega$ is dense in $\H_\nu$. Since $\H_\nu$ is cyclic for $(\sigma, \H_\sigma)$ and $\H_\sigma$ for $(\rho, \H_\rho)$, it follows that $\H_\nu \cap \H_\rho^\omega$ is cyclic for $(\rho, \H_\rho)$. For any $\psi \in \H_\nu \cap \H_\rho^\omega \subseteq \H_\sigma \cap \H_\rho^\omega$ we have $d\rho(\a_- \oplus \n_-)\psi \subseteq d\sigma(\a_-)\psi + d\rho(\n_-)\psi = \{0\}$. Thus $\H_\nu \cap \H_\rho^\omega$ is killed by $d\rho(\a_- \oplus \n_-)$. By \Fref{thm: hol_induced_equiv_char} it follows that $\rho = \HolInd_T^G(\nu)$.\qedhere
		\end{enumerate}
	\end{proof}
	
	\section{A geometric approach to holomorphic induction}\label{sec: geometric_hol_induction}

	\setcounter{subsection}{0}
	\setcounter{theorem}{0}
	
	\noindent
	In this section, a definition of holomorphically induced representations is presented which ensures that $\H_\rho^\infty$ embeds in a space of holomorphic mappings. Contrary to \Fref{sec: general_hol_induction}, this approach involves complex geometry. It is not as generally applicable, and in particular requires access to a dense set of b-strongly-entire vectors in the representation being induced, a condition that is well-understood for finite-dimensional Lie groups but barely studied for infinite-dimensional ones. We first clarify the precise setting, after which the homogeneous vector bundle $G \times_{H}V_\sigma^{\mcO_b}$ is equipped with a suitable complex-analytic bundle structure in \Fref{sec: complex_bdl_structure}. Here, we closely follow the construction of \citep[Thm.\ 2.6]{Neeb_hol_reps}. We then proceed in \Fref{sec: geom_hol_ind} to define geometric holomorphic induction and compare the notion with the one studied in \Fref{sec: general_hol_induction}.\\

	\noindent
	We consider the setting of \Fref{sec: general_hol_induction}, so $G$ denotes a connected BCH Fr\'echet--Lie group with Lie algebra $\g$ and $H \subseteq G$ is a closed and connected locally exponential Lie subgroup of $G$, in the sense of \citep[Def.\ IV.3.2]{neeb_towards_lie}. In this case, $H$ is necessarily BCH and an analytic embedded Lie subgroup of $G$ (cf.\ \Fref{lem: subgroup_embedded_bch}). Let $\h := \bm{\mrm{L}}(H)$ be the Lie algebra of $H$, which we identify as Lie subalgebra of $\g$ using the pushforward of the inclusion $H \hookrightarrow G$. We moreover assume given a decomposition $\g_\C = \n_- \oplus \h_\C \oplus \n_+$, where $\n_\pm$ and $\h_\C$ are closed Lie subalgebras of $\g_\C$ satisfying $\theta(\n_\pm) \subseteq \n_\mp$, $\theta(\h_\C) \subseteq \h_\C$ and $[\h_\C, \n_\pm] \subseteq \n_\pm$. We assume further that $G_\C$ is a connected complex BCH Fr\'echet--Lie group with Lie algebra $\bm{\mrm{L}}(G_\C) = \g_\C$, and the existence of an embedding $\eta : G \hookrightarrow G_\C$ with $\bm{\mrm{L}}(\eta) : \g \hookrightarrow \g_\C$ being the inclusion. Observe in this setting that $\Ad_H(\b_\pm) \subseteq \b_\pm$. Define the closed complement $\p := (\n_- \oplus \n_+) \cap \g$ of $\h$ in $\g$, so that $\p \cong \g/\h$. Let $M$ denote the homogeneous space $M := G/H$. \\

	\noindent
	Following \citep[Appendix C]{Neeb_semibounded_hilbert_loop}, we assume in addition that there exist open symmetric convex $0$-neighborhoods
	$$ U_\p \subseteq \p\cap U_\C, \quad U_\h \subseteq \h \cap U_\C, \quad U_{\n_\pm} \subseteq \n_\pm \cap U_\C \quad \text{ and } U_{\b_-} \subseteq {\b_-} \cap U_\C$$
	such that the following maps are analytic diffeomorphisms onto an open subset of their codomain, where $x \ast y$ is defined by the BCH series:
	\begin{alignat}{2}
		U_\p \times U_\h &\to \g,\quad &(x,y) &\mapsto x \ast y, \label{eq: condition_A1}\tag{A1}\\
		U_\p \times U_{\b_-} &\to \g_\C, &(x,y) &\mapsto x \ast y, \label{eq: condition_A2}\tag{A2}\\
		U_{\n_+} \times U_{\b_-} &\to \g_\C, &(x,y) &\mapsto x \ast y\label{eq: condition_A3}\tag{A3}.
	\end{alignat}
	\begin{remark}\label{rem: complex_str_hom_space}~
		\begin{enumerate}
			\item As mentioned in \citep[Appendix C]{Neeb_semibounded_hilbert_loop}, \eqref{eq: condition_A1} ensures that $M$ carries the structure of a real-analytic manifold turning $G \xrightarrow{q} G/H$ into a real-analytic principal $H$-bundle, and for which the left $G$-action is real-analytic $G \times M \to M$. The conditions \eqref{eq: condition_A2} and \eqref{eq: condition_A3} furthermore ensure that $M$ carries a compatible complex manifold structure for which the map $l_g : M \to M, xH \mapsto gxH$ is holomorphic for any $g \in G$, and for which the $\C$-linear extension of the map $\g \to T_{eH}(M), \; \xi \mapsto \restr{d\over dt}{t=0}q(e^{t\xi})$ descends to a $\C$-linear isomorphism $\g_\C/\b_- \cong T_{eH}(M)$. Condition \eqref{eq: condition_A3} is also needed to equip $G \times_H V_\sigma^{\O_b}$ with the structure of a complex vector bundle over $M$, as we shall see shortly.
			\item In \citep[Example C.4]{Neeb_semibounded_hilbert_loop}, sufficient conditions are discussed that guarantee that \eqref{eq: condition_A1}, \eqref{eq: condition_A2} and \eqref{eq: condition_A3} are satisfied. Using the Inverse Function Theorem, this is in particular the case if $G$ is a simply connected Banach--Lie group and $G/H$ is a Banach homogeneous space. In \citep[Sec.\ 5.2]{Neeb_semibounded_hilbert_loop}, these conditions are also shown to be satisfied in the context where $G$ is (a central $\T$-extensions of) a (twisted) loop group, and where $H\subseteq G$ is the subgroup of fixed points under a particular $\R$-action on $G$.
		\end{enumerate}		
	\end{remark}

	\noindent
	Henceforth, we endow $M = G/H$ with the $G$-invariant complex structure mentioned in \Fref{rem: complex_str_hom_space}, so that $T_{eH}(M) \cong \g_\C/\b_-$ as complex vector spaces.
	
	\begin{lemma}\label{lem: hol_fns_M}
		Let $f \in C^\infty(M; \C)$ and let $\widetilde{f} : G \to \C$ be its lift to $G$. Then $f \in \O(M) \iff \L_{\bm{v}(\b_-)}\widetilde{f} = \{0\}$. 
	\end{lemma}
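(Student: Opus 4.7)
The plan is to use $G$-equivariance of the complex structure on $M$ to reduce the statement to a pointwise computation at $eH$, and then translate this computation into the Lie-algebraic data via the identification $T_{eH}(M) \cong \g_\C/\b_-$.

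First, I would reduce to the point $g = e$. For any $g \in G$, the biholomorphism $l_g : M \to M$ satisfies $l_g(eH) = gH$, so $f$ is holomorphic at $gH$ iff $f \circ l_g$ is holomorphic at $eH$. The lift of $f \circ l_g$ is $\widetilde{f} \circ L_g$, and a direct calculation shows $\L_{\bm{v}(\xi)}(\widetilde{f} \circ L_g)(e) = (\L_{\bm{v}(\xi)}\widetilde{f})(g)$ for any $\xi \in \g_\C$. Thus it suffices to prove that $f$ is holomorphic at $eH$ iff $(\L_{\bm{v}(\xi)}\widetilde{f})(e) = 0$ for all $\xi \in \b_-$.

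Next, I would identify the antiholomorphic tangent space. Write $\phi := dq_e : \g \to T_{eH}(M)$, with kernel $\h$, and let $\phi^\C : \g_\C \to T_{eH}(M) \otimes_\R \C$ denote its complex-linear extension. By hypothesis, the map $\xi \mapsto \phi(\xi)$ extended $\C$-linearly to $\g_\C \to (T_{eH}(M), J)$ descends to a $\C$-linear isomorphism $\g_\C/\b_- \xrightarrow{\sim} T_{eH}(M)$; in particular $\phi^\C(\h_\C) = 0$, and the induced map $\n_+ \xrightarrow{\sim} T_{eH}(M)$ is $\C$-linear. The key computation is then: for $X \in \n_+$, decompose $X = P + iQ$ with $P = \tfrac{1}{2}(X + \theta(X)) \in \p$ and $Q = -\tfrac{i}{2}(X - \theta(X)) \in \p$; from the description of $J$ on $\p$ deduced from the isomorphism $\g_\C/\b_- \cong T_{eH}(M)$, one gets $JP = -Q$, hence
\[
  \phi^\C(X) \;=\; \phi(P)\otimes 1 + \phi(Q)\otimes i \;=\; \phi(P)\otimes 1 - J\phi(P)\otimes i \;\in\; T^{1,0}_{eH}(M).
\]
Applying $\theta$ (complex conjugation on $\g_\C$ and on $T_{eH}(M) \otimes_\R \C$) yields $\phi^\C(\n_-) \subseteq T^{0,1}_{eH}(M)$. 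Dimension counting together with $\phi^\C(\h_\C) = 0$ and $\g_\C = \n_- \oplus \h_\C \oplus \n_+$ forces these inclusions to be equalities. In particular $\phi^\C(\b_-) = T^{0,1}_{eH}(M)$.

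Finally, holomorphicity of $f$ at $eH$ is equivalent to the vanishing of the $\C$-linear extension $(df_{eH})_\C$ on $T^{0,1}_{eH}(M)$. Composing with $\phi^\C$, and noting that $(df_{eH})_\C \circ \phi^\C(\xi) = (\L_{\bm{v}(\xi)}\widetilde{f})(e)$ for $\xi \in \g_\C$, this condition reads exactly $(\L_{\bm{v}(\xi)}\widetilde{f})(e) = 0$ for all $\xi \in \b_-$, completing the proof in view of the first paragraph. The main technical point is the bookkeeping of Step 2: verifying that the complex structure $J$ on $T_{eH}(M)$ coming from the quotient $\g_\C/\b_-$ aligns with the decomposition $T_{eH}(M) \otimes_\R \C = \phi^\C(\n_+) \oplus \phi^\C(\n_-)$; everything else is a routine application of $G$-equivariance and the chain rule.
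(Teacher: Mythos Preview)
Your argument is correct and rests on the same idea as the paper's proof: translate holomorphicity of $f$ into a condition on its differential, then use the identification $T_{gH}M \cong \g_\C/\b_-$. The paper does this in one stroke, invoking \Fref{prop: complex_analytic_and_complex_linear_differential} to say that $f$ is holomorphic iff $T(f)$ is fiberwise $\C$-linear, and then observing that under the left-invariant identification $T_{gH}M \cong \g_\C/\b_-$ (available at every point, so no reduction to $eH$ is needed) this $\C$-linearity is exactly $\L_{\bm{v}(\b_-)}\widetilde{f} = 0$. Your detour through the complexified tangent bundle and the $T^{1,0}/T^{0,1}$ splitting is an equivalent but more elaborate packaging of the same computation; the paper's route is shorter because it never leaves the real tangent space with its almost complex structure.

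One small point: the phrase ``dimension counting'' in your Step~2 is not available here, since $M$ may be infinite-dimensional. The conclusion you want still holds, but for the right reason: $\phi^\C$ restricts to an isomorphism $\p_\C = \n_+ \oplus \n_- \to T_{eH}(M)_\C = T^{1,0} \oplus T^{0,1}$, and since you have shown it respects the two direct-sum decompositions, it must carry each summand onto the corresponding one.
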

	\begin{proof}
		Identify $(T_g G)_\C \cong \g_\C$ and $T_{gH}M \cong \g_\C/\b_-$ using the left $G$-action on $M$. By \Fref{prop: complex_analytic_and_complex_linear_differential}, $f$ is holomorphic if and only if $T(f)$ is fiberwise $\C$-linear, which in turn is equivalent to $\L_{\bm{v}(\b_-)}\widetilde{f} = \{0\}$.
	\end{proof}

	\subsection{Complex structures on $\mbb{E}_\sigma = G \times_H V_\sigma^{\mc{O}_b}$}\label{sec: complex_bdl_structure}
	
	\noindent
	Fix a unitary $H$-representation $(\sigma, V_\sigma)$. Recall from \Fref{def: smooth_analytic_entire_vectors} that $V_\sigma^{\mcO_b}$ denotes the space of b-strongly-entire vectors for the $H$-representation $\sigma$ on $V_\sigma$. Assume that the $H$-action $\sigma$ on $V_\sigma^{\mc{O}_b}$ is real-analytic $H \times V_\sigma^{\mc{O}_b} \to V_\sigma^{\mc{O}_b}$. Since $G \to G/H$ is a real-analytic principal $H$-bundle, we have in this case that the $G$-homogeneous vector bundle $\mbb{E}_\sigma := G \times_H V_\sigma^{\mcO_b}$ over $G/H$ with typical fiber is $V_\sigma^{\mcO_b}$ carries a natural real-analytic bundle structure.\\
	
	\begin{remark}
		If $H$ is actually a Banach--Lie group, so that $\h$ is a Banach space w.r.t.\ the subspace topology inherited from $\g$, then the $H$-action on $V_\sigma^{\mc{O}_b}$ is always real-analytic $H \times V_\sigma^{\mc{O}_b} \to V_\sigma^{\mc{O}_b}$, cf.\ \Fref{rem: b-strongly-entire_2}\\
	\end{remark}

	\noindent
	In the following, we adapt the proof of \citep[Thm.\ 2.6]{Neeb_hol_reps} to endow $\mbb{E}_\sigma$ with a complex-analytic bundle structure, using the notion of entire extensions $\chi : \b_- \to \B(V_\sigma^{\mcO_b})$ of $d\sigma$ to $\b_-$, see \Fref{def: extension_geometric} below. We write $L_g : \mbb{E}_\sigma \to \mbb{E}_\sigma$ for the left $G$-action on $\mbb{E}_\sigma$.
	
	\begin{definition}\label{def: complex_analytic_maps_into_operators}
		Let $W$ be a complete Hausdorff complex (resp.\ real) locally convex vector space and let $F : W \to \B(V_\sigma^{\mcO_b})$ be a function. We say that $F$ is complex-analytic (resp.\ real-analytic, smooth) if the corresponding map 
		$$F^\vee : W~\times~V_\sigma^{\mcO_b}~\to~V_\sigma^{\mcO_b}$$
		is complex-analytic (resp.\ real-analytic, smooth).
	\end{definition}
	\begin{lemma}\label{lem: map_into_operators_hol}
		Consider the setting of \Fref{def: complex_analytic_maps_into_operators}. If $F$ is smooth then $F$ is complex-analytic if and only if the map $T_x(F) : W \to \B(V_\sigma^{\mcO_b})$ is $\C$-linear for every $x \in W$, where $T_x(F)(w)v := \restr{d\over dt}{t=0}F(x + tw)v$ 
	\end{lemma}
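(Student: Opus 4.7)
The plan is to unwind the definitions on both sides and apply \Fref{prop: complex_analytic_and_complex_linear_differential} to the associated map $F^\vee : W \times V_\sigma^{\mcO_b} \to V_\sigma^{\mcO_b}$. By \Fref{def: complex_analytic_maps_into_operators}, $F$ is complex-analytic precisely when $F^\vee$ is complex-analytic, and since $F^\vee$ is smooth by hypothesis, \Fref{prop: complex_analytic_and_complex_linear_differential} reduces the problem to showing that $d(F^\vee)\bigl((x,v); \,\cdot\,\bigr)$ is $\C$-linear for every $(x,v) \in W \times V_\sigma^{\mcO_b}$ if and only if $T_x(F)$ is $\C$-linear for every $x \in W$.

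The key computation is to split the differential of $F^\vee$. Writing $F^\vee(x,v) = F(x)v$ and using smoothness together with the fact that $v \mapsto F(x)v$ is linear, a standard application of the product rule yields
\[
d(F^\vee)\bigl((x,v); (w,u)\bigr) \;=\; T_x(F)(w)\,v \;+\; F(x)\,u, \qquad (w,u) \in W \times V_\sigma^{\mcO_b}.
\]
Since $F(x) \in \B(V_\sigma^{\mcO_b})$ is $\C$-linear by definition of $\B(V_\sigma^{\mcO_b})$, the second term is automatically $\C$-linear in $u$. Therefore, the $\C$-linearity of the full differential $(w,u) \mapsto d(F^\vee)\bigl((x,v);(w,u)\bigr)$ is equivalent to the $\C$-linearity of $w \mapsto T_x(F)(w)v$ for every $v \in V_\sigma^{\mcO_b}$, which is in turn exactly the statement that $T_x(F) : W \to \B(V_\sigma^{\mcO_b})$ is $\C$-linear.

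Before concluding, one should also verify that $T_x(F)(w)$ genuinely lies in $\B(V_\sigma^{\mcO_b})$: continuity and $\R$-linearity in $v$ follow from smoothness of $F^\vee$, while $\C$-linearity in $v$ is inherited from the $\C$-linearity of each $F(x+tw)$ upon differentiating the identity $F(x+tw)(\lambda v) = \lambda F(x+tw)v$ at $t=0$. The main subtlety in the argument is the clean product-rule splitting of $d(F^\vee)$ together with the observation that it automatically separates the $\C$-linearity question into a trivial component (in $u$) and the nontrivial one (in $w$); after that, the equivalence is a direct consequence of \Fref{prop: complex_analytic_and_complex_linear_differential}.
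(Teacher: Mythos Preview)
Your proof is correct and follows essentially the same approach as the paper: both reduce the question to \Fref{prop: complex_analytic_and_complex_linear_differential} applied to $F^\vee$, observe that the partial derivative in the $V_\sigma^{\mcO_b}$-direction is automatically $\C$-linear (since $F^\vee$ is $\C$-linear in that variable), and conclude that complex-analyticity is equivalent to $\C$-linearity of $w \mapsto T_x(F)(w)$. Your version is slightly more detailed in writing out the product-rule splitting and in checking that $T_x(F)(w) \in \B(V_\sigma^{\mcO_b})$, but the argument is the same.
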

	\begin{proof}
		By \Fref{prop: complex_analytic_and_complex_linear_differential}, the map $F^\vee : W \times V_\sigma^{\mcO_b} \to V_\sigma^{\mcO_b}$ is complex-analytic if and only if it is smooth and $T(F^\vee)$ is fiber-wise $\C$-linear. $F^\vee$ is smooth by assumption and $v \mapsto T_x(F^\vee)(0, v)$ is trivially $\C$-linear. Thus $F$ is complex-analytic if and only if $w \mapsto T_x(F^\vee)(w, 0)$ is $\C$-linear for any $x,w \in W$, which is the statement.\qedhere \\
	\end{proof}

	\begin{definition}\label{def: extension_geometric}
		An \textit{entire extension} $\chi$ of $d\sigma : \h_\C \to \B(V_\sigma^{\mcO_b})$ to $\b_-$ is a homomorphism $\chi : \b_- \to \B(V_\sigma^{\mcO_b})$ of Lie algebras such that:
		\begin{enumerate}
			\item $\restr{\chi}{\h_\C} = d\sigma$.
			\item $\chi(\Ad_h(\xi)) = \sigma(h)\chi(\xi)\sigma(h)^{-1}$ for all $h \in H$ and $\xi \in \b_-$.
			\item The series $\sum_{n=0}^\infty {1\over n!}\chi(\xi)^nv$ converges in $V_\sigma^{\mcO_b}$ for all $\xi \in \b_-$ and $v \in V_\sigma^{\mcO_b}$, and defines an entire map 
			$$\b_- \times V_\sigma^{\mcO_b} \to V_\sigma^{\mcO_b}, \qquad (\xi, v) \mapsto \sum_{n=0}^\infty {1\over n!}\chi(\xi)^nv.$$
		\end{enumerate} 
		In this case, we write $e^{\chi(\xi)}\psi := \sum_{n=0}^\infty{1\over n!}\chi(\xi)^n\psi$. 
	\end{definition}	
	
	\noindent
	Notice that an entire extension is in particular an extension in the sense of \Fref{def: extension}. \\
	
	\noindent
	The following example and the discussion in \Fref{rem: b-strongly-entire_2} reflect the main reason for working with the space $V_\sigma^{\mcO_b}$ of $b$-strongly-entire vectors, instead of $V_\sigma^\mcO$:
	\begin{example}\label{ex: banach_trivial_extension_entire}
		Assume that $H$ is a Banach--Lie group. In view of \Fref{rem: b-strongly-entire_2}, we then know that the following map is entire:
		\begin{equation}\label{eq: b-entire-extension_bundles}
			\h_\C \times V_\sigma^{\mcO_b} \to V_\sigma^{\mcO_b}, \qquad (\eta, v) \mapsto \sum_{n=0}^\infty {1\over n!}d\sigma(\eta^n)v
		\end{equation}
		Consequently, the trivial extension $\chi : \b_- \to \B(V_\sigma^{\mcO_b})$ of $d\sigma$ to $\b_-$ with domain $V_\sigma^{\mcO_b}$ is an entire extension.\\
	\end{example}

	\noindent
	Using the notion of entire extensions, \citep[Thm.\ 2.6]{Neeb_hol_reps} adapts straightforwardly to the present setting:
	
	\begin{theorem}\label{thm: complex_bundle_structure}
		Let $\chi : \b_- \to \B(V_\sigma^{\mcO_b})$ be an entire extension of $d\sigma$ to $\b_-$. Then $\mbb{E}_\sigma = G \times_H V_\sigma^{\mcO_b}$ carries a unique complex-analytic bundle structure satisfying the following properties:
		\begin{enumerate}
			\item The left $G$-action $L_g$ is complex-analytic for any $g \in G$. 
			\item The quotient map $G \times V_\sigma \to \mbb{E}_\sigma$ is real-analytic.
			\item Let $U \subseteq G$ be a neighborhood of $g \in G$. A smooth function $f \in C^\infty(UH, V_\sigma^{\mcO_b})^H$ corresponds to a local holomorphic section of $\mbb{E}_\sigma$ if and only if 
			$$\L_{\bm{v}(\xi)}f = -\chi(\xi)f, \qquad \forall \xi \in \n_-.$$
		\end{enumerate}
		If the two entire extensions $\chi_1$ and $\chi_2$ of $d\sigma$ to $\b_-$ define the same complex-bundle structure, then $\chi_1 = \chi_2$.
	\end{theorem}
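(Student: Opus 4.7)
The plan is to adapt the proof of \citep[Thm.\ 2.6]{Neeb_hol_reps} to the present Fr\'echet setting, where the role of the Banach-space exponential series $e^{\chi(\xi)}$ is played by the entire map in \Fref{def: extension_geometric}(3). The overall strategy is to construct an explicit complex-analytic atlas on $\mbb{E}_\sigma$ via local trivializations, verify compatibility on overlaps, check the three asserted properties, and finally address uniqueness of $\chi$.

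\textbf{Local trivializations.} For each $g \in G$, I would define
\[
	\phi_g : U_{\n_+} \times V_\sigma^{\mcO_b} \to \mbb{E}_\sigma, \qquad \phi_g(\zeta, v) := [\,g \exp_G(x(\zeta)),\, e^{\chi(y(\zeta))} v\,],
\]
where, after possibly shrinking $U_{\n_+}$ to sit inside the domain of \eqref{eq: condition_A2}, the decomposition $\zeta = x(\zeta) \ast y(\zeta)$ with $x(\zeta) \in U_\p$ and $y(\zeta) \in U_{\b_-}$ is unique and depends complex-analytically on $\zeta$. The map $\zeta \mapsto \exp_G(x(\zeta))H$ furnishes the complex chart on $M = G/H$ of \Fref{rem: complex_str_hom_space}; I declare each $\phi_g$ to be a biholomorphism, thereby specifying the candidate complex structure on $\mbb{E}_\sigma$.

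\textbf{Compatibility, $G$-equivariance and the PDE characterization.} If $\phi_{g_1}(\zeta_1, v_1) = \phi_{g_2}(\zeta_2, v_2)$, then there is a unique $h \in H$ (close to $1$) with $g_1 \exp_G(x(\zeta_1)) h = g_2 \exp_G(x(\zeta_2))$, and the fiber identification reads $v_2 = e^{-\chi(y(\zeta_2))} \sigma(h) e^{\chi(y(\zeta_1))} v_1$. Using the intertwining $\chi(\Ad_h(\xi)) = \sigma(h)\chi(\xi)\sigma(h)^{-1}$ of \Fref{def: extension_geometric}(2) together with the BCH factorizations inside $\g_\C$ afforded by \eqref{eq: condition_A2} and \eqref{eq: condition_A3}, both $\zeta_2$ and $v_2$ become compositions of complex-analytic maps in $(\zeta_1, v_1)$; this establishes that $\mbb{E}_\sigma$ genuinely inherits a complex-analytic bundle structure. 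Property (1) follows at once from $L_{g'} \circ \phi_g = \phi_{g' g}$, and property (2) because $\phi_g^{-1}$ composed with the quotient map is built from $\exp_G$, BCH, $\sigma$ and the entire map $(y,v) \mapsto e^{\chi(y)} v$, each of which is real-analytic. For property (3), a section $s(gH) = [g, f(g)]$ read in the chart $\phi_e$ is
\[
	\zeta \mapsto e^{-\chi(y(\zeta))} f(\exp_G(x(\zeta))),
\]
which, by \Fref{lem: hol_fns_M} combined with direct differentiation of $e^{\chi(y(\zeta))}$ and the $H$-equivariance of $f$, is holomorphic precisely when $\L_{\bm{v}(\xi)} f = -\chi(\xi) f$ for $\xi \in \n_-$ (the $\h_\C$-component of $\b_-$ is handled automatically by $H$-equivariance and $\restr{\chi}{\h_\C} = d\sigma$).

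\textbf{Uniqueness and main obstacle.} If two entire extensions $\chi_1, \chi_2$ yield the same bundle structure, they produce the same sheaf of local holomorphic sections by (3); a frame argument that constructs, for each $v \in V_\sigma^{\mcO_b}$, a local holomorphic section through $(e, v)$ using the entire extension then forces $\chi_1 = \chi_2$ on $\n_-$ (and on $\h_\C$ by construction). The main obstacle lies in the overlap analysis: one must verify that the element $h \in H$ arising from matching two different BCH factorizations of the same group element can be produced inside $\exp_G(U_\h)$, and that all the power series $e^{\chi(\xi)}$ appearing in the transition maps stay within the convergence domain prescribed by the entirety axiom of \Fref{def: extension_geometric}(3). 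This requires a delicate compatible shrinking of the open sets $U_\p, U_\h, U_{\n_\pm}, U_{\b_-}$, ultimately enabled by the diffeomorphism assumptions \eqref{eq: condition_A1}--\eqref{eq: condition_A3} together with the BCH structure of both $G$ and $G_\C$.
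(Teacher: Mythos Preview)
Your proposal is correct and follows essentially the same strategy as the paper---both adapt \citep[Thm.\ 2.6]{Neeb_hol_reps}. The main presentational difference lies in how holomorphicity of the transition functions is verified. Where you propose to track the BCH factorizations explicitly and exhibit the overlap map as a composition of complex-analytic pieces (and rightly flag the domain-shrinking issues this entails), the paper instead packages the local data into functions $F_x : \widetilde{U}_x \to \B(V_\sigma^{\mcO_b})^\times$ characterized abstractly by the three properties $F_x(gh) = \sigma(h)^{-1}F_x(g)$, $\L_{\bm{v}(\xi)}F_x = -\chi(\xi)F_x$ for $\xi \in \b_-$, and $F_x(x) = \id$. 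It then sets $\phi_x(gH,v) = [g, F_x(g)v]$ and checks that the lifted transition function $\widetilde{\phi}_{xy} = F_x^{-1}F_y$ satisfies $\L_{\bm{v}(\xi)}\widetilde{\phi}_{xy} = 0$ via a two-line product-rule computation, invoking \Fref{lem: map_into_operators_hol}. This largely sidesteps your ``main obstacle'': the required shrinking is absorbed once into the construction of the $F_x$ (following steps 2--4 of the cited theorem), and the holomorphicity check itself needs no further BCH bookkeeping. Your explicit formula $e^{\chi(y(\zeta))}$ is, up to parametrization, precisely what the $F_x$ encode, so the underlying content is the same; the paper's PDE formulation simply yields a cleaner verification and a more transparent route to property~(3).
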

	
	\begin{definition}
		Let $\chi : \b_- \to \B(V_\sigma^{\mcO_b})$ be an entire extension of $d\sigma$ to $\b_-$. We denote by $\mbb{E}_{(\sigma, \chi)} \to M$ the vector bundle $\mbb{E}_\sigma \to M$ equipped with the unique complex-analytic bundle structure satisfying the conditions in \Fref{thm: complex_bundle_structure}.
	\end{definition}
	
	\begin{proof}[Proof of \Fref{thm: complex_bundle_structure}:]
		This proof essentially follows from trivial adaptations of \citep[Thm.\ 2.6]{Neeb_hol_reps}. Let us indicate the required changes and recall the construction of the local charts, for later use. \\

		\noindent
		Let $q_M : G \to G/H$ denote the quotient map. Let $U_\g \subseteq \g$ and $U_G \subseteq G$ be neighborhoods of $0 \in \g$ and $e \in G$, respectively, s.t.\ $\restr{\exp_G}{U_\g} : U_\g \to U_G$ is an analytic diffeomorphism. Shrinking $U_\g$ if necessary, there exists by \eqref{eq: condition_A1} some $0$-neighborhoods $U_\p \subseteq \p$ and $U_\h \subseteq \h$ s.t.\ the BCH series defines an analytic diffeomorphism $U_\p \times U_\h \to U_\g$. Define $U_P := \exp_G(U_\p)$ and $U_H := \exp_G(U_\h)$, so $U_G = U_P U_H$. (Comparing with the proof of \citep[Thm.\ 2.6]{Neeb_hol_reps}, $U_P$ takes the role of $U_Z$.) Define for any $x \in G$ the open subsets 
		\begin{equation}\label{eq: complex_bdl_str_open_sets}
			U_x := xq_M(U_P)\subseteq M \qquad \text{ and } \qquad \widetilde{U}_x := xU_PH \subseteq G.	
		\end{equation}
		Using \eqref{eq: condition_A3}, and replacing $\beta : \b_- \to \B(V_\sigma)$ in steps $2-4$ of the proof of \citep[Thm.\ 2.6]{Neeb_hol_reps} by the entire extension $\chi : \b_- \to \B(V_\sigma^{\mcO_b})$, we obtain after shrinking $U_\g$ if necessary for each $x \in G$ a smooth function $F_x :  \widetilde{U}_x \to \B(V_\sigma^{\mcO_b})^\times$ satisfying the following properties:
		\begin{enumerate}
			\item $F_x(gh) = \sigma(h)^{-1}F_x(g)$ for all $g \in \widetilde{U}_x$ and $h \in H$.
			\item $\L_{\bm{v}(\xi)}F_x = -\chi(\xi)F_x$ for all $\xi \in \b_-$.
			\item $F_x(x) = \id_{V_\sigma^{\mcO_b}}$.
		\end{enumerate}
		Moreover, using \eqref{eq: condition_A1}, \eqref{eq: condition_A3}, that $e^\chi : \b_- \to \B(V_\sigma^{\mcO_b})$ is complex-analytic and that the action $H \times V_\sigma^{\mcO_b} \to V_\sigma^{\mcO_b}$ is real-analytic, in view of \Fref{rem: b-strongly-entire_2}, observe from its construction that both $F_x$ and $F_x^{-1}$ are actually real-analytic. These functions moreover satisfy $F_{yx}(yg) = F_x(g)$ for any $x,y \in G$ and $g \in \widetilde{U}_x$, as is immediate from their construction. As in \citep[Thm.\ 2.6]{Neeb_hol_reps}, we now define for each $x \in G$ the trivialization
		\begin{equation}\label{eq: chart_cplx_hom_bundle}
			\phi_{x} : U_x \times V_\sigma^{\mcO_b} \to \restr{\mbb{E}_\sigma}{U_x}, \qquad (gH, v) \mapsto [g, F_x(g)v],
		\end{equation}
		so that the transition function $\phi_x^{-1} \circ \phi_y : U_x \cap U_y \times V_\sigma^{\mcO_b} \to U_x \cap U_y \times V_\sigma^{\mcO_b}$ is given by $(gH, v) \mapsto (gH, \phi_{xy}(gH)v)$, where 
		$$ \phi_{xy} : U_x \cap U_y \to \B(V_\sigma^{\mcO_b})^\times, \quad \phi_{xy}(gH) = F_x(g)^{-1}F_y(g).  $$
		Let us check as in \citep[Thm.\ 2.6]{Neeb_hol_reps} that these transition functions are complex-analytic. It suffices by \Fref{lem: map_into_operators_hol} to show that the lift $\widetilde{\phi}_{xy} : \widetilde{U}_x\cap \widetilde{U}_y \to \B(V_\sigma^{\mcO_b})$ of $\phi_{xy}$ satisfies $\L_{\bm{v}(\xi)}\widetilde{\phi}_{xy} = 0$ for all $\xi \in \b_-$. This follows from the three properties of the functions $F_x$ mentioned above:
		\begin{align*}
			\L_{\bm{v}(\xi)}\widetilde{\phi}_{xy} 
			&= \big(\L_{\bm{v}(\xi)}F_x^{-1}\big)F_y + F_x^{-1}\big(\L_{\bm{v}(\xi)}F_y\big)\\
			&= -F_x^{-1}\big(\L_{\bm{v}(\xi)}F_x)F_x^{-1}F_y + F_x^{-1}\big(\L_{\bm{v}(\xi)}F_y\big)\\
			&= F_x^{-1}\chi(\xi)F_y - F_x^{-1}\chi(\xi)F_y \\
			&= 0.
		\end{align*}
		Thus the trivializations $\{\phi_x\}_{x\in G}$ define a complex-analytic bundle structure on $\mbb{E}_\sigma$. Let $\mbb{E}_{(\sigma, \chi)}$ denote the corresponding complex-analytic bundle. We show that the properties $1-3$ in \Fref{thm: complex_bundle_structure} are satisfied:
		\begin{enumerate}
			\item Let $x,g \in G$. In the local charts defined by $\phi_x$ and $\phi_{gx}$, $L_g$ is represented by $l_g \times \id_{V_\sigma^{\mcO_b}}$, which is complex-analytic from the corresponding property of $l_g : M \to M$.
			\item Let $x \in G$. Consider the local coordinates of $\mbb{E}_{(\sigma, \chi)}$ defined by $\phi_x$. In these local coordinates, the quotient map $G \times V_\sigma^{\mcO_b} \to \mbb{E}_{(\sigma, \chi)}$ is represented by the real-analytic function
			$$ \widetilde{U}_x \times V_\sigma^{\mcO_b} \to U_x \times V_\sigma^{\mcO_b}, \qquad (g,v) \mapsto (gH, F_x(g)^{-1}v).$$
			\item Take $f \in C^\infty(UH, V_\sigma^{\mcO_b})^H$. The corresponding local section of $\mbb{E}_{(\sigma, \chi)}$ is obtained by descending the function $\widetilde{f} : UH \to \mbb{E}_\sigma, \widetilde{f}(g) := [g, f(g)]$ to the quotient $q_M(U)$. Let $x \in U$ and define $W_x:= U_x \cap U$ and $\widetilde{W}_x := \widetilde{U}_x \cap UH$. Using the local chart $\phi_x$, the map $\restr{\widetilde{f}}{\widetilde{W}_x}$ is represented by the smooth function 
			$$\overline{f} : \widetilde{W}_x \to U_x \times V_\sigma^{\mcO_b}, \qquad \overline{f}(g) = (gH, F_x(g)^{-1}f(g)),$$
			which is complex-analytic if and only if $\L_{\bm{v}(\xi)}h = 0$ for any $\xi \in \b_-$, where $h$ is given by
			$$h : \widetilde{W}_x \to V_\sigma^{\mcO_b}, \qquad h(g):= F_x(g)^{-1}f(g).$$
			We compute that
			\begin{align*}
				\L_{\bm{v}(\xi)}h
				&= \big(\L_{\bm{v}(\xi)}F_x^{-1}\big)f + F_x^{-1}\big(\L_{\bm{v}(\xi)}f\big)\\
				&= -F_x^{-1}\big(\L_{\bm{v}(\xi)}F_x)F_x^{-1}f + F_x^{-1}\big(\L_{\bm{v}(\xi)}f\big)\\
				&= F_x^{-1}\chi(\xi)f + F_x^{-1}\big(\L_{\bm{v}(\xi)}f\big).
			\end{align*}
			Thus $\L_{\bm{v}(\xi)}h = 0$ if and only if $\L_{\bm{v}(\xi)}f = -\chi(\xi)f$ for any $\xi \in \b_-$. Consequently $f$ corresponds to a holomorphic local section of $\mbb{E}_\sigma \to M$ if and only if $\L_{\bm{v}(\xi)}f = -\chi(\xi)f$ for every $\xi \in \b_-$. The equation is automatically satisfied for any $\xi \in \h_\C$ by the $H$-equivariance of $f$. The conclusion follows. 
		\end{enumerate}
		
		\noindent
		Step 5 in \citep[Thm.\ 2.6]{Neeb_hol_reps} shows that if the two entire extensions $\chi_1$ and $\chi_2$ define the same complex bundle structure, then $\chi_1 = \chi_2$. To see that the complex-bundle structure is unique, we simply remark that if $\mbb{E}_\sigma^1$ and $\mbb{E}_\sigma^2$ denote the vector bundle $\mbb{E}_\sigma$ equipped \`a priori with possibly different complex-analytic bundle structures satisfying the properties $1-3$ in \Fref{thm: complex_bundle_structure}, then by the third property they have the same holomorphic local sections. This implies $\mbb{E}_\sigma^1 = \mbb{E}_\sigma^2$ as complex-analytic vector bundles over $M$.
	\end{proof}
	
	\subsection{Geometric holomorphic induction}\label{sec: geom_hol_ind}
	
	\noindent
	Having the complex-analytic $G$-homogeneous vector bundles $\mbb{E}_{(\sigma, \chi)}$ at hand, we are now in a position to define a stronger notion of holomorphic induction, which guarantees that $\H_\rho^\infty$ actually embeds into a space of holomorphic mappings. We continue under the assumptions of \Fref{sec: complex_bdl_structure}. In particular, $\sigma$ is a unitary representation of $H$ on $V_\sigma$ for which the $H$-action on $V_\sigma^{\mc{O}_b}$ is jointly real-analytic $H \times V_\sigma^{\mc{O}_b} \to V_\sigma^{\mc{O}_b}$. Let $\chi : \b_- \to \B(V_\sigma^{\mcO_b})$ be an entire extension of $d\sigma : \h_\C \to \B(V_\sigma^{\mcO_b})$ to $\b_-$, and let $(\rho, \H_\rho)$ be a unitary $G$-representation.
	
	\begin{definition}\label{def: geometric_hol_induced}
		We say that $(\rho, \H_\rho)$ is \textit{geometrically holomorphically induced} from $(\sigma, \chi)$ if $\sigma$ is b-strongly-entire and there exists a $G$-equivariant injective linear map $\Phi : \H_\rho \hookrightarrow \Map(G, V_\sigma)^H$ satisfying:
		\begin{enumerate}
			\item The point evaluation $\mc{E}_x : \H_\rho \to V_\sigma, \; \mc{E}_x(\psi) := \Phi_\psi(x)$ is continuous for every $x \in G$.
			\item $\mc{E}_x\mc{E}_x^\ast = \id_{V_\sigma}$ for every $x \in G$.
			\item For every $w \in V_\sigma^{\mcO_b}$, the following function is holomorphic:
			$$f_w : \mathbb{E}_{(\sigma, \chi)} \to \C, \qquad f_w([g, v]) := \langle \mc{E}_e^\ast w, \rho(g) \mc{E}_e^\ast v\rangle.$$
		\end{enumerate}
	\end{definition}
	
	\begin{remark}
		The first condition in \Fref{def: geometric_hol_induced} entails that $(\rho, \H_\rho)$ is unitarily equivalent to the natural $G$-representation on the reproducing kernel Hilbert space $\H_Q$, where $Q \in C(G \times G, \B(V_\sigma))^{H \times H}$ is the positive definite and $G$-invariant kernel defined by $Q(x,y) := \mc{E}_x\mc{E}_y^\ast$. Combined with the second property, we additionally have that $\mc{E}_e^\ast$ is an $H$-equivariant isometry and that the subspace $\mc{E}_e^\ast V_\sigma \subseteq H_\rho$ is cyclic for $G$, cf.\ \Fref{thm: properties_repr_kernel} and \Fref{prop: homog_repr_h_space_unitary_repr} below.\\
	\end{remark}
	
	\noindent
	We start with a lemma:
	
	\begin{lemma}\label{lem: functions_on_bundle_holomorphic}
		Assume that $V_\sigma \subseteq \H_\rho$ as unitary $H$-representations and that $V_\sigma$ is cyclic for $G$ in $\H_\rho$. Assume further that $\sigma$ is b-strongly-entire. Then the following assertions are equivalent:
		\begin{enumerate}
			\item $V_\sigma^{\mcO_b} \subseteq \H_\rho^\infty$ and $d\rho(\xi)v = \chi(\xi)v$ for all $\xi \in \b_-$ and $v \in V_\sigma^{\mcO_b}$.
			\item $f_w \in \mcO(\mathbb{E}_{(\sigma, \chi)})$ for every $w \in V_\sigma^{\mcO_b}$, where $f_w([g,v]) := \langle w, \rho(g)v\rangle$.
		\end{enumerate}
		If these assertions are satisfied, then we even have $V_\sigma^{\mcO_b} \subseteq \H_\rho^\omega$. Moreover, we have $f_\psi \in \mcO(\mathbb{E}_{(\sigma, \chi)})$ for any $\psi \in \H_\rho^\infty$, where $f_\psi([g, v]) := \langle \psi, \rho(g)v\rangle$.
	\end{lemma}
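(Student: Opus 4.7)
The strategy is to localize in the trivializations $\phi_x : U_x \times V_\sigma^{\mcO_b} \to \restr{\mbb{E}_\sigma}{U_x}$ built in the proof of \Fref{thm: complex_bundle_structure}, in which $f_w \circ \phi_x$ is represented by the function $(gH, v') \mapsto \langle w, \rho(g)F_x(g)v'\rangle$; by $H$-equivariance of $F_x$ together with $\restr{\rho}{H} = \sigma$ on $V_\sigma$, this extends unambiguously to the lift $\widetilde{f}_w(g, v') := \langle w, \rho(g)F_x(g)v'\rangle$ on $\widetilde{U}_x \times V_\sigma^{\mcO_b}$. A Leibniz differentiation of $t \mapsto \rho(ge^{t\xi})F_x(ge^{t\xi})v'$, combined with the defining property $\L_{\bm{v}(\xi)}F_x = -\chi(\xi)F_x$ from the proof of \Fref{thm: complex_bundle_structure}, yields the key identity
\begin{equation*}
\L_{\bm{v}(\xi)}\widetilde{f}_w(g, v') \;=\; \langle w,\, \rho(g)\,[d\rho(\xi) - \chi(\xi)]\,F_x(g)v' \rangle, \qquad \xi \in \b_-,
\end{equation*}
valid as soon as $F_x(g)v' \in \H_\rho^\infty$, so that $d\rho(\xi)F_x(g)v'$ is well-defined.

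For $(1) \Rightarrow (2)$, assumption $(1)$ places $F_x(g)v' \in V_\sigma^{\mcO_b}$ into $\H_\rho^\infty$ with $d\rho(\xi) = \chi(\xi)$ there for $\xi \in \b_-$, so the right-hand side of the key identity vanishes identically. Combined with the manifest $\C$-linearity of $v' \mapsto \widetilde{f}_w(g, v')$ and the smoothness of $f_w \circ \phi_x$ (which reduces, after rewriting $\widetilde{f}_w(g, v') = \langle \rho(g^{-1})w,\, F_x(g)v'\rangle$, to the smoothness of the orbit map for the smooth vector $w \in \H_\rho^\infty$ and the real-analyticity of $F_x$), \Fref{prop: complex_analytic_and_complex_linear_differential} together with the identification $T_{gH}M \cong \g_\C/\b_-$ give $f_w \circ \phi_x \in \mcO(U_x \times V_\sigma^{\mcO_b})$. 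The final addendum $f_\psi \in \mcO(\mbb{E}_{(\sigma, \chi)})$ for any $\psi \in \H_\rho^\infty$ follows by exactly the same argument: only the fact that $F_x(g)v' \in V_\sigma^{\mcO_b}$ is used to kill the right-hand side of the key identity, and $\psi \in \H_\rho^\infty$ supplies the required smoothness.

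For $(2) \Rightarrow (1)$, I would begin by showing $V_\sigma^{\mcO_b} \subseteq \H_\rho^\omega$: for $v \in V_\sigma^{\mcO_b}$ the map $g \mapsto [g, v] \in \mbb{E}_\sigma$ is real-analytic by \Fref{thm: complex_bundle_structure}$(2)$, and composing with the holomorphic $f_v$ makes $g \mapsto \langle v, \rho(g)v\rangle$ real-analytic on $G$, whence \Fref{prop: analytic_vectors_equivalent_definitions}$(6)$ applies. The Leibniz computation is now justified, and the holomorphy of $f_w$ forces the right-hand side of the key identity to vanish at every point of $\widetilde{U}_x \times V_\sigma^{\mcO_b}$. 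The decisive step is to evaluate at $g = x$, where $F_x(x) = \id_{V_\sigma^{\mcO_b}}$: this yields $\langle w,\, \rho(x)(d\rho(\xi) - \chi(\xi))v'\rangle = 0$ for all $x \in G$, all $w, v' \in V_\sigma^{\mcO_b}$, and all $\xi \in \b_-$. Equivalently, $(d\rho(\xi) - \chi(\xi))v' \perp \rho(x)^{-1}V_\sigma^{\mcO_b}$ for every $x \in G$; since $V_\sigma^{\mcO_b}$ is dense in $V_\sigma$ and $V_\sigma$ is cyclic for $G$, the union $\bigcup_{x \in G}\rho(x)^{-1}V_\sigma^{\mcO_b}$ is total in $\H_\rho$, and we conclude $d\rho(\xi)v' = \chi(\xi)v'$. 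The main technical subtlety throughout, which I expect to be the principal obstacle, is the joint smoothness of $\widetilde{f}_w$ on $\widetilde{U}_x \times V_\sigma^{\mcO_b}$; this is manageable by always pairing with $w$ first, so as to reduce all vector-valued regularity questions to scalar-valued matrix coefficients of smooth or analytic vectors.
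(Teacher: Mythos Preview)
Your proof is correct and follows essentially the same route as the paper. The paper writes the local representative as $\widetilde{h}_{\psi,x}(g,v) = \langle \rho(g)^{-1}\psi, F_x(g)v\rangle$ and places the derivative on the $\psi$-side, obtaining $(\L_{\bm{v}(\xi)}\widetilde{h}_{\psi,x})(g,v) = \langle d\rho(\xi^\ast)\rho(g)^{-1}\psi, F_x(g)v\rangle - \langle \rho(g)^{-1}\psi, \chi(\xi)F_x(g)v\rangle$, which only needs $\psi \in \H_\rho^\infty$; your version moves the derivative to the $v'$-side and therefore needs $F_x(g)v' \in \H_\rho^\infty$, but since you first establish $V_\sigma^{\mcO_b}\subseteq\H_\rho^\omega$ in the $(2)\Rightarrow(1)$ direction this causes no trouble. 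For $(2)\Rightarrow(1)$ the paper packages the cyclicity argument via the $G$-invariant dense set $\mcD := \{\psi \in \H_\rho^\infty : f_\psi \in \mcO(\mbb{E}_{(\sigma,\chi)})\}$ and evaluates at $(e,v)$, whereas you vary the chart center $x$ and evaluate at $g=x$; these are two phrasings of the same orthogonality against $\rho(G)V_\sigma^{\mcO_b}$.
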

	\begin{proof}
		Let $\psi \in \H_\rho^\infty$ and consider the function 
		$$f_\psi : \mathbb{E}_{(\sigma, \chi)} \to \C, \qquad f_\psi([g,v]) = \langle \psi, \rho(g)v\rangle.$$
		Consider its lift to $G \times V_\sigma^{\mcO_b}$, defined by $\widetilde{f}_\psi : G \times V_\sigma^{\mcO_b} \to \C, \;\widetilde{f}_\psi(g,v) = f_\psi([g,v])$. Let $x \in G$. Define the open sets $\widetilde{U}_x \subseteq G$ and $U_x\subseteq M$ as in \eqref{eq: complex_bdl_str_open_sets}, so $U_x$ is an open neighborhood of $xH \in M$. Let $F_x : \widetilde{U}_x \to \B(V_\sigma^{\mcO_b})^\times$ be defined as in the proof of \Fref{thm: complex_bundle_structure}. In particular, $F_x$ satisfies $\L_{\bm{v}(\xi)}F_x = -\chi(\xi)F_x$ for any $\xi \in \b_-$. Let $\phi_x : U_x \times V_\sigma^{\mcO_b} \to \restr{\mathbb{E}_{(\sigma, \chi)}}{U_x}$ be the corresponding chart of the holomorphic vector bundle $\mathbb{E}_{(\sigma, \chi)}$, defined in \eqref{eq: chart_cplx_hom_bundle}. In these local coordinates, $f_\psi$ and $\widetilde{f}_\psi$ are represented by $h_{\psi, x}$ and $\widetilde{h}_{\psi, x}$, respectively, where
		\begin{alignat*}{2}
			h_{\psi, x} : U_x \times V_\sigma^{\mcO_b} &\to \C, &\qquad h_{\psi, x}(gH, v) &= \langle \rho(g)^{-1}\psi, F_x(g)v\rangle,\\
			\widetilde{h}_{\psi, x} : \widetilde{U}_x \times V_\sigma^{\mcO_b} &\to \C, &\qquad \widetilde{h}_{\psi, x}(g,v) &= \langle \rho(g)^{-1}\psi, F_x(g)v\rangle.	
		\end{alignat*}
		As $F_x$ is smooth, and because $\psi \in \H_\rho^\infty$, this shows in particular that $f_\psi$ is smooth for the underlying real manifold structure. Then $h_{\psi, x}$ is complex-analytic if and only if $\L_{\bm{v}(\xi)}\widetilde{h}_{\psi, x} = 0$ for any $\xi \in \b_-$. Let $\xi \in \b_-$. Using $\L_{\bm{v}(\xi)}F_x = -\chi(\xi)F_x$, we compute for any $(g,v) \in \widetilde{U}_x \times V_\sigma^{\mcO_b}$ that
		\begin{align}\label{eq: computation_local_hol_condition}
			(\L_{\bm{v}(\xi)}\widetilde{h}_{\psi, x})(g,v) &= \langle d\rho(\xi^\ast)\rho(g)^{-1}\psi, F_x(g)v\rangle - \langle \rho(g)^{-1}\psi, \chi(\xi)F_x(g)v\rangle.
		\end{align}
		Thus if $(1)$ holds true, then \eqref{eq: computation_local_hol_condition} shows that $\L_{\bm{v}(\xi)}\widetilde{h}_{\psi, x} = 0$ for any $\xi \in \b_-$, so that $h_{\psi, x}$ is complex-analytic for any $x \in G$. We then conclude that $f_\psi \in \mcO(\mathbb{E}_{(\sigma, \chi)})$ for any $\psi \in \H_\rho^\infty$. Since $V_\sigma^{\mcO_b} \subseteq \H_\rho^\infty$ by assumption, we in particular notice that $(2)$ holds true. \\
		
		\noindent
		Assume conversely that $f_w \in \mcO(\mathbb{E}_{(\sigma, \chi)})$ for any $w \in V_\sigma^{\mcO_b}$. Let $v \in V_\sigma^{\mcO_b}$. Then the function $g \mapsto \langle v, \rho(g)v\rangle = f_v([g, v])$ is real-analytic $G \to \C$, where we have used that the quotient map $G \times V_\sigma^{\mcO_b} \to \mathbb{E}_{(\sigma, \chi)}$ is real-analytic. As $G$ is a BCH Fr\'echet--Lie group, this implies by \citep[Thm.\ 5.2]{Neeb_analytic_vectors} that $v \in \H_\rho^\omega$. Hence $V_\sigma^{\mcO_b} \subseteq \H_\rho^\omega$. We know from \Fref{cor: entire_vectors_seminorm_finite} that $\H_\rho^\omega \subseteq \H_\rho^\infty$, so we also obtain that $V_\sigma^{\mcO_b} \subseteq \H_\rho^\infty$. To see that $(1)$ holds true, it remains to show that $d\rho(\xi)v = \chi(\xi)v$ for all $\xi \in \b_-$ and $v \in V_\sigma^{\mcO_b}$. Consider the set $\mcD := \set{\psi \in \H_\rho^\infty \st f_\psi \in \mcO(\mathbb{E}_{(\sigma, \chi)})}$. The preceding shows that $V_\sigma^{\mcO_b} \subseteq \mcD$. The set $\mcD$ is moreover $G$-invariant. Indeed, if $\psi \in \mcD$ and $g \in G$, then $f_{\rho(g)\psi} = f_\psi \circ L_{g^{-1}}$ defines a holomorphic map on $\mathbb{E}_{(\sigma, \chi)}$, because $L_{g^{-1}} : \mathbb{E}_{(\sigma, \chi)} \to \mathbb{E}_{(\sigma, \chi)}$ is holomorphic. As $V_\sigma^{\mcO_b}$ is dense in $V_\sigma$ and $V_\sigma$ is cyclic for $G$, it follows that $\mcD$ is dense in $\H_\rho$. Let $\xi \in \b_-$, $\psi \in \mcD$ and $v \in V_\sigma^{\mcO_b}$. Recall that $F_e(e) = \id_{V_\sigma^{\mcO_b}}$. As $f_\psi \in \mcO(\mathbb{E}_{(\sigma, \chi)})$, we know that $(\L_{\bm{v}(\xi)}\widetilde{h}_{\psi, e})(e) = 0$. Using that $v \in \H_\rho^\infty$, it follows by evaluating \eqref{eq: computation_local_hol_condition} at $(e,v) \in \widetilde{U}_e \times V_\sigma^{\mcO_b}$ that 
		$$ \langle \psi, d\rho(\xi)v\rangle = \langle d\rho(\xi^\ast) \psi, v\rangle = \langle \psi, \chi(\xi)v\rangle.$$
		As $\mcD$ is dense, it follows that $d\rho(\xi)v = \chi(\xi)v$ for all $\xi \in \b_-$ and $v \in V_\sigma^{\mcO_b}$, so that $(1)$ holds true. We have also shown that if these equivalent conditions are satisfied, then $V_\sigma^{\mcO_b} \subseteq \H_\rho^\omega$ and $f_\psi \in \mcO(\mathbb{E}_{(\sigma, \chi)})$ for any $\psi \in \H_\rho^\infty$.
	\end{proof}
	
	\noindent
	The following entails that $\H_\rho^\infty$ can be seen as a space of of holomorphic functions on the complex-analytic bundle $\overline{\mathbb{E}}_{(\sigma, \chi)} \to \overline{M}$ conjugate to $\mathbb{E}_{(\sigma, \chi)} \to M$:
	\begin{proposition}\label{prop: smooth_vectors_embed_into_hol_mappings}
		Assume that $\rho$ is geometrically holomorphically induced from $(\sigma, \chi)$. Then there is an injective $G$-equivariant $\C$-linear map $\H_\rho^\infty \hookrightarrow \mcO(\overline{\mathbb{E}}_{(\sigma, \chi)})$ for which all point evaluations are continuous.
	\end{proposition}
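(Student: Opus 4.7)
The plan is to leverage \Fref{lem: functions_on_bundle_holomorphic} after identifying $V_\sigma$ with its image $\mc{E}_e^\ast V_\sigma \subseteq \H_\rho$ under the $H$-equivariant isometry $\mc{E}_e^\ast$. Under this identification, condition $(2)$ in \Fref{def: geometric_hol_induced} together with the reproducing-kernel description of $\H_\rho$ implies that $\mc{E}_e^\ast V_\sigma$ is cyclic for $G$ in $\H_\rho$, while condition $(3)$ asserts precisely that $f_w([g,v]) := \langle w, \rho(g) v\rangle$ lies in $\mcO(\mbb{E}_{(\sigma, \chi)})$ for every $w \in V_\sigma^{\mcO_b}$. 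I may then invoke \Fref{lem: functions_on_bundle_holomorphic} to obtain the inclusion $V_\sigma^{\mcO_b} \subseteq \H_\rho^\omega \subseteq \H_\rho^\infty$, the identity $\restr{d\rho(\xi)}{V_\sigma^{\mcO_b}} = \chi(\xi)$ for $\xi \in \b_-$, and, crucially, the fact that $f_\psi([g,v]) := \langle \psi, \rho(g)v\rangle$ defines an element of $\mcO(\mbb{E}_{(\sigma, \chi)})$ for \emph{every} $\psi \in \H_\rho^\infty$.

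Since $\psi \mapsto f_\psi$ is conjugate-linear in $\psi$, I would pass to the conjugate bundle to obtain a $\C$-linear map. Concretely, define
\[
	\Psi : \H_\rho^\infty \to \mcO(\overline{\mbb{E}}_{(\sigma, \chi)}), \qquad \Psi(\psi)([g,v]) := \overline{f_\psi([g,v])} = \langle \rho(g) v, \psi\rangle.
\]
Any smooth function on $\mbb{E}_{(\sigma, \chi)}$ whose fibrewise complex-differential is $\C$-linear becomes one whose fibrewise differential is $\C$-antilinear after conjugation, and so lies in $\mcO(\overline{\mbb{E}}_{(\sigma, \chi)})$; thus $\Psi(\psi)$ is well-defined and $\Psi$ is $\C$-linear.

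It remains to verify the three required properties. For injectivity: if $\Psi(\psi) = 0$, then $\langle \rho(g) v, \psi\rangle = 0$ for all $g \in G$ and $v \in V_\sigma^{\mcO_b}$. As $\sigma$ is b-strongly-entire, $V_\sigma^{\mcO_b}$ is dense in $V_\sigma$, so $\psi \perp \rho(G)\mc{E}_e^\ast V_\sigma$, and cyclicity of $\mc{E}_e^\ast V_\sigma$ yields $\psi = 0$. For $G$-equivariance: the left $G$-action $L_g$ on $\mbb{E}_{(\sigma, \chi)}$ is complex-analytic by \Fref{thm: complex_bundle_structure}, hence also holomorphic for the conjugated structure, yielding a pullback $G$-action on $\mcO(\overline{\mbb{E}}_{(\sigma, \chi)})$, and a direct computation shows $\Psi(\rho(g)\psi) = g \cdot \Psi(\psi)$. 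For continuous point evaluations: at $[g,v] \in \overline{\mbb{E}}_{(\sigma, \chi)}$ the map $\psi \mapsto \langle \rho(g) v, \psi\rangle$ is continuous already with respect to the Hilbert-space norm on $\H_\rho$, hence \emph{a fortiori} continuous with respect to the (weak or strong) topology on $\H_\rho^\infty$. No step presents a serious obstacle; the essential content is already packaged in \Fref{lem: functions_on_bundle_holomorphic}, and the present proposition is essentially a bookkeeping exercise to phrase its conclusion as a linear embedding into holomorphic sections on the conjugate bundle.
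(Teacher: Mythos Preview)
Your proposal is correct and follows essentially the same approach as the paper: identify $V_\sigma$ with $\mc{E}_e^\ast V_\sigma \subseteq \H_\rho$, use cyclicity from the reproducing-kernel description, invoke \Fref{lem: functions_on_bundle_holomorphic} to get $f_\psi \in \mcO(\mbb{E}_{(\sigma,\chi)})$ for all $\psi \in \H_\rho^\infty$, and then pass to the conjugate bundle to make $\psi \mapsto f_\psi$ into a $\C$-linear map. The paper phrases the last step as identifying the conjugate vector space $\overline{\mcO(\mbb{E}_{(\sigma,\chi)})}$ with $\mcO(\overline{\mbb{E}}_{(\sigma,\chi)})$, while you explicitly conjugate the function values; these are the same thing.
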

	\begin{proof}
		Assume that $\rho$ is geometrically holomorphically induced from $(\sigma, \chi)$. In particular, this implies that $\sigma$ is b-strongly-entire. Let $\Phi : \H_\rho \hookrightarrow \Map(G, V_\sigma)^H$ satisfy the conditions in \Fref{def: geometric_hol_induced}. We may consider $V_\sigma$ as a subspace of $\H_\rho$ using the $H$-equivariant isometry $\mc{E}_e^\ast$. We know by \Fref{thm: properties_repr_kernel} that $V_\sigma \subseteq \H_\rho$ is cyclic. From \Fref{lem: functions_on_bundle_holomorphic} we obtain that $f_\psi \in \mcO(\mathbb{E}_{(\sigma, \chi)})$ for any $\psi \in \H_\rho^\infty$. The map $\psi \mapsto f_\psi$ defines a $G$-equivariant $\C$-linear map $\H_\rho^\infty \to \overline{\mcO(\mathbb{E}_{(\sigma, \chi)})}$ that has continuous point evaluations, where $\overline{\mcO(\mathbb{E}_{(\sigma, \chi)})}$ denotes the vector space complex conjugate to $\mcO(\mathbb{E}_{(\sigma, \chi)})$, which may be identified with $\mcO(\overline{\mathbb{E}}_{(\sigma, \chi)})$. This map is injective because $f_\psi = 0$ implies that $\psi \perp \rho(G)V_\sigma^{\mcO_b}$, which in turn implies $\psi = 0$ because $V_\sigma^{\mcO_b}$ is cyclic for $\H_\rho$.
	\end{proof}

	\noindent
	Let us next compare the notion of geometric holomorphic induction with \Fref{def: hol_induced}. Recall that $\chi$ is an entire extension of $d\sigma : \h_\C \to \B(V_\sigma^{\mcO_b})$ to $\b_-$.
	
	\begin{theorem}\label{thm: comparing_two_notions_hol_ind}
		Assume that $\sigma$ is b-strongly-entire. The following assertions are equivalent:
		\begin{enumerate}
			\item $(\rho, \H_\rho)$ is geometrically holomorphically induced from $(\sigma, \chi)$. 
			\item There is a subspace $\mcD_{\widetilde{\chi}} \subseteq V_\sigma^\omega$ containing $V_\sigma^{\mcO_b}$ and an extension $\widetilde{\chi}~:~\b_-~\to~\L(\mcD_{\widetilde{\chi}})$ of $d\sigma$ to $\b_-$ such that $\chi(\xi) = \restr{\widetilde{\chi}(\xi)}{V_\sigma^{\mcO_b}}$ for every $\xi \in \b_-$, and such that $\rho = \HolInd_H^G(\sigma, \widetilde{\chi})$.
		\end{enumerate}
		Suppose that $\chi$ is the trivial extension of $d\sigma$ to $\b_-$ with domain $V_\sigma^{\mcO_b}$. Then these assertions are equivalent to:
		\begin{enumerate}\setcounter{enumi}{2}
			\item $\rho = \HolInd_H^G(\sigma)$ and $V_\sigma^{\mcO_b} \subseteq \H_\rho^\infty$, where we considered $V_\sigma$ as a subspace of $\H_\rho$ using \Fref{thm: hol_induced_equiv_char}.
		\end{enumerate}
	\end{theorem}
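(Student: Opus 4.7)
The plan is to go back-and-forth between $(1)$ and $(2)$ via the bridge \Fref{lem: functions_on_bundle_holomorphic} (which relates the geometric holomorphicity of the $f_w$'s on $\mbb{E}_{(\sigma,\chi)}$ to the infinitesimal compatibility $d\rho(\xi)v = \chi(\xi)v$ on $V_\sigma^{\mcO_b}$), combined with the equivalent characterization \Fref{thm: hol_induced_equiv_char} and its recognition criterion \Fref{lem: recognize_hol_induced}. The specialization to $(3)$ under the trivial-extension hypothesis is then extracted from the first two parts with the aid of \Fref{prop: unique_domain_triv_ext}.

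For $(2) \Rightarrow (1)$: starting from $\rho = \HolInd_H^G(\sigma,\widetilde\chi)$, \Fref{thm: hol_induced_equiv_char} identifies $V_\sigma$ with a closed $H$-invariant cyclic subspace of $\H_\rho$ such that $\mcD_{\widetilde\chi} = V_\sigma \cap \H_\rho^\omega$ and $\widetilde\chi = \restr{d\rho}{\mcD_{\widetilde\chi}}$. Since $V_\sigma^{\mcO_b} \subseteq \mcD_{\widetilde\chi} \subseteq \H_\rho^\infty$ and $\restr{\widetilde\chi}{V_\sigma^{\mcO_b}} = \chi$, condition $(1)$ of \Fref{lem: functions_on_bundle_holomorphic} is satisfied and its equivalent condition $(2)$ yields $f_w \in \mcO(\mbb{E}_{(\sigma,\chi)})$ for every $w \in V_\sigma^{\mcO_b}$. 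The reproducing-kernel map $\Phi_\psi(g) := p_V \rho(g)^{-1}\psi$ then verifies all three requirements of \Fref{def: geometric_hol_induced}, using that $\sigma$ is b-strongly-entire by hypothesis.

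For $(1) \Rightarrow (2)$: identifying $V_\sigma$ with $\mc{E}_e^\ast V_\sigma \subseteq \H_\rho$, condition $(3)$ of \Fref{def: geometric_hol_induced} is precisely condition $(2)$ of \Fref{lem: functions_on_bundle_holomorphic}, which yields $V_\sigma^{\mcO_b} \subseteq \H_\rho^\omega$ and $d\rho(\xi)v = \chi(\xi)v$ for $\xi \in \b_-$, $v \in V_\sigma^{\mcO_b}$. Set $\mcD_{\widetilde\chi} := V_\sigma \cap \H_\rho^\omega$ and $\widetilde\chi(\xi) := \restr{d\rho(\xi)}{\mcD_{\widetilde\chi}}$; this lies in $V_\sigma^\omega$, contains $V_\sigma^{\mcO_b}$, and extends $\chi$. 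The crucial step is the invariance $d\rho(\n_-)\mcD_{\widetilde\chi} \subseteq \mcD_{\widetilde\chi}$; once it holds, \Fref{lem: recognize_hol_induced} gives $\rho = \HolInd_H^G(\sigma,\widetilde\chi)$. To prove the invariance I would leverage the \emph{moreover} part of \Fref{lem: functions_on_bundle_holomorphic}, namely $f_\psi \in \mcO(\mbb{E}_{(\sigma,\chi)})$ for \emph{every} $\psi \in \H_\rho^\infty$: evaluating the Cauchy-Riemann equation $(\L_{\bm{v}(\xi)}\widetilde h_{\psi,e})(e,v) = 0$ at $v \in V_\sigma^{\mcO_b}$ and varying $\psi \in V_\sigma^\perp \cap \H_\rho^\infty$ forces $d\rho(\b_+)(V_\sigma^\perp \cap \H_\rho^\infty) \subseteq V_\sigma^\perp$; then for $w \in \mcD_{\widetilde\chi}$ and $\xi \in \n_-$ the adjoint relation $\langle u, d\rho(\xi)w\rangle = \langle d\rho(\xi^\ast)u, w\rangle$ kills the inner product against every $u \in V_\sigma^\perp \cap \H_\rho^\infty$, whence $d\rho(\xi)w \in V_\sigma$.

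For the equivalence with $(3)$: under the trivial-extension hypothesis the implication $(1),(2) \Rightarrow (3)$ follows by combining the identification above with the observation that $d\rho(\n_-)\mcD_{\widetilde\chi} \subseteq V_\sigma^\perp$ (a consequence of $d\rho(\n_+)\H_\rho^\infty \subseteq V_\sigma^\perp$, itself a direct output of the Cauchy-Riemann computation when $\chi$ annihilates $\n_-$), which together with the already-established $d\rho(\n_-)\mcD_{\widetilde\chi} \subseteq V_\sigma$ forces $\widetilde\chi$ to be trivial on its full domain, so $\rho = \HolInd_H^G(\sigma)$ by \Fref{prop: unique_domain_triv_ext}. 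Conversely, $(3) \Rightarrow (2)$ reduces via \Fref{thm: hol_induced_equiv_char} and \Fref{prop: unique_domain_triv_ext} to showing $V_\sigma^{\mcO_b} \subseteq \H_\rho^\omega$: the PBW decomposition $\mc{U}(\g_\C) = \mc{U}(\n_+)\mc{U}(\b_-)$ together with the vanishing $d\rho(\n_-)v = 0$ on $V_\sigma^{\mcO_b}$ collapses every term in $\sum \frac{1}{n!}\|d\rho(\xi^n)v\|$ for $\xi \in \g$ to one involving only $\h_\C$-actions on $v$, where the b-strong-entirety of $\sigma$ supplies the required summability. The principal obstacle is the $\n_-$-invariance step in $(1) \Rightarrow (2)$: since $p_V$ does not generally preserve $\H_\rho^\infty$, the density of $V_\sigma^\perp \cap \H_\rho^\infty$ in $V_\sigma^\perp$ cannot be taken for granted, and the argument genuinely exploits the \emph{global} holomorphic structure on $\mbb{E}_{(\sigma,\chi)}$ through the moreover of \Fref{lem: functions_on_bundle_holomorphic}, rather than only its infinitesimal incarnation at a single point.
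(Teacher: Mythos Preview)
Your overall architecture---using \Fref{lem: functions_on_bundle_holomorphic} as the bridge and \Fref{thm: hol_induced_equiv_char} / \Fref{lem: recognize_hol_induced} for the algebraic side---matches the paper's proof, and your $(2)\Rightarrow(1)$ is essentially identical to it.

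For $(1)\Rightarrow(2)$, the paper is much briefer than you at the invariance step: it simply says that $d\rho(\xi)v=\chi(\xi)v$ on the dense subspace $V_\sigma^{\mcO_b}$, together with density of $V_\sigma^{\mcO_b}$ in $V_\sigma$, already forces $d\rho(\b_-)\mcD_{\widetilde\chi}\subseteq V_\sigma$. Your Cauchy--Riemann detour through the ``moreover'' part of \Fref{lem: functions_on_bundle_holomorphic} is not wrong, but it leads you straight into the density question you flag at the end (whether $V_\sigma^\perp\cap\H_\rho^\infty$ is dense in $V_\sigma^\perp$), and you never resolve it. The paper does not attempt to prove this density; its argument is of a different flavour, relying only on the information available inside $V_\sigma$ rather than on smooth vectors in the complement.

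Your $(3)\Rightarrow(2)$ has a genuine gap. You correctly identify that the crux is $V_\sigma^{\mcO_b}\subseteq\H_\rho^\omega$, but your proposed proof via PBW is not viable: writing $\xi^n$ in the basis $\mc U(\n_+)\mc U(\b_-)$ and killing the $\n_-$-tail does \emph{not} reduce $d\rho(\xi^n)v$ to a vector expressible through $\h_\C$-actions alone---the $\mc U(\n_+)$-factors remain and take you out of $V_\sigma$, so $\|d\rho(\xi^n)v\|$ cannot be controlled by the b-strong-entirety of $\sigma$. The paper avoids this entirely: once one checks $d\rho(\n_-)V_\sigma^{\mcO_b}=\{0\}$ (which follows from the assumption $\rho=\HolInd_H^G(\sigma)$ by the closability of $d\rho(\xi)$ and the density of $\mcD_{\widetilde\chi}$ in $V_\sigma$), condition $(1)$ of \Fref{lem: functions_on_bundle_holomorphic} is verified, and that lemma \emph{itself} supplies $V_\sigma^{\mcO_b}\subseteq\H_\rho^\omega$ as part of its conclusion. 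You already have this lemma available as your main tool---use it here rather than attempting a direct analyticity estimate.
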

	\begin{proof}
		Assume that $(\rho, \H_\rho)$ is geometrically holomorphically induced from $(\sigma, \chi)$. Let $\Phi : \H_\rho \hookrightarrow \Map(G, V_\sigma)^H$ satisfy the conditions in \Fref{def: geometric_hol_induced}. Identify $V_\sigma$ with a cyclic subspace of $\H_\rho$ using $\mc{E}_e^\ast$. Define $\mcD_{\widetilde{\chi}} := V_\sigma \cap \H_\rho^\omega$. From \Fref{lem: functions_on_bundle_holomorphic} we obtain that $V_\sigma^{\mcO_b} \subseteq \mcD_{\widetilde{\chi}}$ and that $d\rho(\xi)v = \chi(\xi)v$ for all $\xi \in \b_-$ and $v \in V_\sigma^{\mcO_b}$. As $V_\sigma^{\mcO_b}$ is dense in $V_\sigma$, the latter in particular implies that $d\rho(\b_-)\mcD_{\widetilde{\chi}} \subseteq V_\sigma$, which in turn implies $d\rho(\b_-)\mcD_{\widetilde{\chi}} \subseteq \mcD_{\widetilde{\chi}}$. From \Fref{thm: hol_induced_equiv_char} it follows that $\rho = \HolInd_H^G(\sigma, \widetilde{\chi})$, where $\widetilde{\chi} : \b_- \to \L(\mcD_{\widetilde{\chi}})$ is the extension of $d\sigma$ to $\b_-$ with domain $\mcD_{\widetilde{\chi}}$, defined by $\widetilde{\chi}(\xi)v = d\rho(\xi)v$. This extension satisfies $\restr{\widetilde{\chi}(\xi)}{V_\sigma^{\mcO_b}} = \chi(\xi)$ for any $\xi \in \b_-$, as required. \\
		
		\noindent
		Conversely, let $\widetilde{\chi} : \b_- \to \L(\mcD_{\widetilde{\chi}})$ satisfy the conditions in $(2)$, so in particular $\rho = \HolInd_H^G(\sigma, \widetilde{\chi})$. By \Fref{thm: hol_induced_equiv_char} we may assume that $V_\sigma \subseteq \H_\rho$ as unitary $H$-representations, that $\mcD_{\widetilde{\chi}} = V_\sigma \cap \H_\rho^\omega$ and that $\widetilde{\chi}(\xi)v = d\rho(\xi)v$ for all $\xi \in \b_-$ and $v \in \mcD_{\widetilde{\chi}}$. As $\mcD_{\widetilde{\chi}}$ contains $V_\sigma^{\mcO_b}$ by assumption, it follows in particular that $V_\sigma^{\mcO_b} \subseteq \H_\rho^\omega$. From \Fref{lem: functions_on_bundle_holomorphic}, we obtain that $f_w \in \mcO(\mbb{E}_{(\sigma, \chi)})$ for any $w \in V_\sigma^{\mcO_b}$, where $f_w([g,v]) = \langle w, \rho(g)v\rangle$. So the map
		$$\Phi : \H_\rho \to \Map(G, V_\sigma)^H, \quad \Phi_\psi(g) := p_V \rho(g)^{-1}\psi$$
		satisfies the conditions in \Fref{def: geometric_hol_induced}, where $p_V : \H_\rho \to V_\sigma$ is the orthogonal projection. \\
		
		\noindent
		Assume that $\chi$ is the trivial extension of $d\sigma$ to $\b_-$ with domain $V_\sigma^{\mcO_b}$. Assume that $(2)$ holds true. Let the subspace $\mcD_{\widetilde{\chi}} \subseteq V_\sigma$ and the extension $\widetilde{\chi} : \b_- \to \L(\mcD_{\widetilde{\chi}})$ satisfy the conditions in $(2)$. We may consider $V_\sigma$ as a closed $H$-invariant linear subspace of $\H_\rho$ satisfying the conditions in \Fref{thm: hol_induced_equiv_char}. In particular, we have $V_\sigma^{\mcO_b} \subseteq \mcD_{\widetilde{\chi}} = V_\sigma \cap \H_\rho^\omega$, so certainly $V_\sigma^{\mcO_b} \subseteq V_\sigma^\infty$. As $\chi$ is the trivial extension on $V_\sigma^{\mcO_b}$ and $\chi(\xi) = \restr{\widetilde{\chi}(\xi)}{V_\sigma^{\mcO_b}}$ for every $\xi \in \b_-$, we also have $d\rho(\n_-)V_\sigma^{\mcO_b} = \{0\}$. As $V_\sigma^{\mcO_b}$ is dense in $V_\sigma$, this further implies that $d\rho(\n_-)\mcD_{\widetilde{\chi}} = \{0\}$, so $\widetilde{\chi}$ is the trivial extension on $\mcD_{\widetilde{\chi}}$. Hence $(3)$ holds true. Assume conversely that $(3)$ is valid. Let $\widetilde{\chi}$ denote the trivial extension of $d\sigma$ to $\b_-$ on the domain $\mcD_{\widetilde{\chi}} := V_\sigma \cap \H_\rho^\omega$. By assumption $\rho = \HolInd_H^G(\sigma, \widetilde{\chi})$ and $V_\sigma^{\mcO_b} \subseteq \H_\rho^\infty$. As $\mcD_{\widetilde{\chi}}$ is killed by $d\rho(\n_-)$ and dense in $V_\sigma$, it follows that $d\rho(\n_-)V_\sigma^{\mcO_b} = \{0\}$. Thus $(1)$ in \Fref{lem: functions_on_bundle_holomorphic} is satisfied, from which we obtain that $V_\sigma^{\mcO_b} \subseteq \H_\rho^\omega$. This means that $V_\sigma^{\mcO_b} \subseteq \mcD_{\widetilde{\chi}}$. So $(2)$ is satisfied using the trivial extension $\widetilde{\chi}$ on the subspace $\mcD_{\widetilde{\chi}} \subseteq V_\sigma$.
	\end{proof}

	\section{Arveson spectral theory}\label{sec: Arveson_spectral_theory}
	
	\noindent
	In \Fref{sec: pe_and_hol_ind} below, we shall have need for a suitably general notion of Arveson spectral subspaces. As such, we extend the already existing notion to a more general setting. Let $V$ be a complete locally convex vector space over $\C$ that is Hausdorff. We define Arveson spectral subspaces of $V$ associated to a strongly continuous $\R$-representation $\alpha$ on $V$ that satisfies a suitable condition, using the convolution algebra $\S(\R)$ of $\C$-valued Schwartz functions on $\R$. The results are adaptations of those in \citep[Sec.\ 2]{Arveson_groups_of_aut_of_oa}, \citep[Sec.\ A.3]{KH_Zellner_inv_smooth_vectors} and \citep[Sec.\ A.2]{Neeb_hol_reps}. 
	
	\subsection{Certain classes of $\R$-representations}
	Throughout the section, let $\alpha : \R \to \B(V)^\times$ be a strongly continuous representation of $\R$ on $V$. In \citep[Sec.\ A.3]{KH_Zellner_inv_smooth_vectors}, the $\R$-action $\alpha$ is required to be polynomially bounded (see \Fref{def: polynomial_bdd_action} below). It will however be convenient to define both a stronger and a weaker notion, that in turn are both still weaker than equicontinuity, which is used in \citep[Sec.\ A.2]{Neeb_hol_reps}. 
	
	\newpage
	\begin{definition}\label{def: polynomial_bdd_action}
		Let $\alpha : \R \to \B(V)^\times$ be a strongly continuous representation of $\R$ on $V$.
		\begin{itemize}
			\item $\alpha$ is said to be \textit{equicontinuous} if there is a basis of absolutely convex $\alpha$-invariant $0$-neighborhoods in $V$. Equivalently, if the topology of $V$ is defined by a family of $\alpha$-invariant continuous seminorms.
			\item $\alpha$ is said to have \textit{polynomial growth} if there is a basis $\mc{B}$ of absolutely convex $0$-neighborhoods in $V$ such that for every $U \in \mc{B}$ there is a monic polynomial $r \in \R[t]$ such that $\alpha_t(U) \subseteq r(|t|)U$ for all $t \in \R$. Equivalently, if there is a family $\mc{P}$ of defining seminorms on $V$ such that for every $p \in \mc{P}$ there exists a monic polynomial $r \in \R[t]$ such that $p(\alpha_t(v)) \leq r(|t|)p(v)$ for all $t \in \R$ and $v \in V$. 
			\item $\alpha : \R \to \B(V)^\times$ is called \textit{polynomially bounded} if for every continuous seminorm $p$ on $V$, there is a $0$-neighborhood $U \subseteq V$ and some $N \in \N$ such that 
			$$\sup_{v \in U}\sup_{t \in \R} {p(\alpha_t(v)) \over 1+ |t|^N} < \infty.$$
			\item $\alpha : \R \to \B(V)^\times$ is said to be \textit{pointwise polynomially bounded} if for every $v \in V$ and continuous seminorm $p$ on $V$, there exists $N \in \N$ such that 
			$$\sup_{t \in \R} {p(\alpha_t(v)) \over 1+ |t|^N} < \infty.$$
		\end{itemize} 
	\end{definition}	
	
	\begin{remark}
		Notice that we have the following implications:
		\begin{align*}
			\text{ $\alpha$ is equicontinuous } 
			\implies \text{ $\alpha$ has polynomial growth }
			\implies \text{ $\alpha$ is polynomially bounded}.
		\end{align*}
		If $V$ is a Banach space, then $\alpha$ has polynomial growth if and only if it is polynomially bounded.
	\end{remark}
	
	\begin{example}~
		\begin{enumerate}
			\item The $\R$-representations on both $L^2(\R)$ and $C^\infty(\T)$ by translation are equicontinuous.
			\item The $\R$-action $\alpha$ on $\S(\R)$ by translation is not equicontinuous but does have polynomial growth. Indeed, one checks that the open set 
			$$U := \set{f \in \S(\R) \st \sup_{x \in \R}|xf(x)| < 1} \subseteq \S(\R)$$
			satisfies $\bigcap_{t \in \R} \alpha_t(U) = \{0\}$. By \citep[Prop.\ A.1]{Neeb_hol_reps}, this implies that $\alpha$ is not equicontinuous. It does have polynomial growth, because the topology on $\S(\R)$ is generated by the seminorms 
			$$p_{n,m}(f):= \sup_{x \in \R} (1+|x|)^n |(\partial^m f)(x)|, \qquad \text{ for } n,m \in \N_{\geq 0},$$
			which satisfy $p_{n,m}(\alpha_t f) \leq \bigg[\sum_{k=0}^n {n \choose k}|t|^{n-k}\bigg] p_{n,m}(f)$ for $t \in \R$ and $f \in \S(\R)$. 
			\item The action of $\R$ on $C^\infty(\R)$ by translations is not pointwise polynomially bounded. For example, the smooth function $f(x) = e^x$ satisfies $\|\alpha_t(f)\|_{C([0, 1])} = \|f\|_{C[t, t+t]}\geq e^t$ for all $t \in \R$.
		\end{enumerate}
	\end{example}
	
	\noindent
	Let $\mc{P}$ denote the set of continuous seminorms on $V$. For $p \in \mc{P}$, let 
	$$\mc{N}_p := \set{v \in V \st p(v) = 0}$$
	denote its kernel. Let $V_p := \overline{V/\mc{N}_p}$ be the corresponding Banach space. If $p,q \in \mc{P}$ and $p \leq q$, then $\mc{N}_q \subseteq \mc{N}_p$ and hence there is a canonical contraction $\eta_{p,q} : V_q \to V_p$. 
	
	\noindent
	\begin{lemma}\label{lem: pol_growth_proj_lim}
		Assume that $\alpha$ is strongly continuous and has polynomial growth. Then $\alpha$ descends for each $p \in \mc{P}$ to a representation of $\R$ on $V_p$ with polynomial growth. Moreover $V = \varprojlim V_p$ as $\R$-representations.
	\end{lemma}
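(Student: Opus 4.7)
The plan is to work with a defining family $\mc{P}_0 \subseteq \mc{P}$ of continuous seminorms on $V$ witnessing the polynomial growth condition: for every $p \in \mc{P}_0$ there is a monic polynomial $r_p \in \R[t]$ with $p(\alpha_t v) \leq r_p(|t|) p(v)$ for all $t \in \R$ and $v \in V$. Such a family is cofinal in $\mc{P}$, and the projective limit description only requires a cofinal family, so I will from now on write $p$ for a generic element of $\mc{P}_0$.

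First (descent and polynomial growth on $V_p$): the inequality $p(\alpha_t v) \leq r_p(|t|) p(v)$ forces $\alpha_t(\mc{N}_p) \subseteq \mc{N}_p$, so $\alpha$ descends to a one-parameter group of bounded operators on $V/\mc{N}_p$ with the estimate $\|\alpha_t[v]\|_p \leq r_p(|t|)\,\|[v]\|_p$, and these extend uniquely by uniform continuity to bounded operators $\alpha_p(t) \in \B(V_p)$ on the Banach completion, satisfying the same operator-norm bound $\|\alpha_p(t)\|_{\B(V_p)} \leq r_p(|t|)$. This exhibits polynomial growth of $\alpha_p$ (in the equivalent Banach sense).

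Second (strong continuity on $V_p$): for $v \in V$ the map $t \mapsto \alpha_p(t)\pi_p(v) = \pi_p(\alpha_t v)$ is continuous because $\alpha$ is strongly continuous on $V$ and $\pi_p \colon V \to V_p$ is continuous. The local uniform bound $\|\alpha_p(t)\| \leq r_p(|t|)$ on compact $t$-intervals, combined with density of $\pi_p(V)$ in $V_p$, yields strong continuity of $\alpha_p$ on all of $V_p$ by a standard $3\epsilon$-argument.

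Third (projective limit): Hausdorffness of $V$ gives $\bigcap_{p \in \mc{P}_0} \mc{N}_p = \{0\}$, so the canonical map $\Phi \colon V \to \varprojlim_{p \in \mc{P}_0} V_p$ is injective, and since the topology of $V$ is the initial topology for $\{\pi_p\}_{p \in \mc{P}_0}$, it is a topological embedding onto its image. The image is dense because it is dense in each $V_p$; since $V$ is complete and carries the induced uniform structure from the projective limit, completeness of $V$ paired with density forces $\Phi$ to be surjective, hence an isomorphism. The $\R$-actions intertwine by construction: $\alpha_p(t) \circ \pi_p = \pi_p \circ \alpha_t$, which gives the identification of $\R$-representations.

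The only delicate point is the third step, specifically the surjectivity of $\Phi$: one must use completeness of $V$, observing that a coherent family $(w_p) \in \varprojlim V_p$ lifts to a Cauchy net in $V$ via any compatible choice of preimages in $V$ modulo the kernels, which then converges in $V$ by completeness and produces the required preimage. Everything else is routine.
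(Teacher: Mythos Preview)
Your proof is correct and follows essentially the same approach as the paper's, which simply asserts that $\alpha_t(\mc{N}_p) \subseteq \mc{N}_p$, that the induced action on $V_p$ is strongly continuous with polynomial growth, and then invokes the standard isomorphism $V \cong \varprojlim V_p$. You are in fact more careful than the paper: the inclusion $\alpha_t(\mc{N}_p) \subseteq \mc{N}_p$ need not hold for an \emph{arbitrary} continuous seminorm $p$, only for those in a defining family witnessing polynomial growth, and you correctly restrict to a cofinal subfamily $\mc{P}_0$ (which suffices since the projective limit is unchanged under cofinal restriction).
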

	\begin{proof}
		Let $p \in \mc{P}$. Since $\alpha$ has polynomial growth, we have $\alpha_t(\mc{N}_p) \subseteq \mc{N}_p$ for every $t \in \R$. Consequently, $\alpha$ descends to a strongly continuous $\R$-representation $\alpha^{(p)}$ on $V_p$ that again has polynomial growth. If $p,q \in \mc{P}$ and $t \in \R$, then $\eta_{p,q} \circ \alpha_t^{(q)} = \alpha_t^{(p)}$. We thus obtain an $\R$-action on the projective limit $\varprojlim V_p$ for which the canonical isomorphism $V \cong \varprojlim V_p$ is $\R$-equivariant.
	\end{proof}

	\begin{proposition}\label{prop: strongly_cts_and_pol_growth_implies_cts}
		Assume that $\alpha$ is strongly continuous and has polynomial growth. Then the action $\alpha : \R \times V \to V$ is continuous.
	\end{proposition}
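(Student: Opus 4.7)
The plan is to establish joint continuity at an arbitrary point $(t_0, v_0) \in \R \times V$ by decomposing the increment into a part controlled by polynomial growth and a part controlled by strong continuity, in direct analogy with the standard Banach-space argument.

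\textbf{Setting up.} Fix $(t_0, v_0) \in \R \times V$ and a continuous seminorm $p$ on $V$. Given a net (or sequence) $(t_\lambda, v_\lambda) \to (t_0, v_0)$ in $\R \times V$, I would write
\begin{equation*}
\alpha_{t_\lambda}(v_\lambda) - \alpha_{t_0}(v_0) = \alpha_{t_\lambda}(v_\lambda - v_0) + \bigl(\alpha_{t_\lambda}(v_0) - \alpha_{t_0}(v_0)\bigr),
\end{equation*}
and estimate each summand separately under $p$.

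\textbf{The two estimates.} By strong continuity of $\alpha$, the orbit map $t \mapsto \alpha_t(v_0)$ is continuous, so $p\bigl(\alpha_{t_\lambda}(v_0) - \alpha_{t_0}(v_0)\bigr) \to 0$. For the remaining term I invoke polynomial growth: choose (from the generating family of seminorms given in \Fref{def: polynomial_bdd_action}) a continuous seminorm $q \geq p$ and a monic polynomial $r \in \R[t]$ with $q(\alpha_t(w)) \leq r(|t|) q(w)$ for all $t \in \R$ and $w \in V$. Since $t_\lambda \to t_0$, eventually $|t_\lambda| \leq |t_0| + 1$, so
\begin{equation*}
p\bigl(\alpha_{t_\lambda}(v_\lambda - v_0)\bigr) \leq q\bigl(\alpha_{t_\lambda}(v_\lambda - v_0)\bigr) \leq r(|t_0|+1)\, q(v_\lambda - v_0),
\end{equation*}
which tends to $0$ because $v_\lambda \to v_0$ in $V$ (hence $q(v_\lambda - v_0) \to 0$). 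Combining the two estimates yields $p\bigl(\alpha_{t_\lambda}(v_\lambda) - \alpha_{t_0}(v_0)\bigr) \to 0$ for every continuous seminorm $p$, which is precisely joint continuity at $(t_0, v_0)$.

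\textbf{Remarks on obstacles.} There is essentially no hard step; the argument only uses that polynomial growth produces a \emph{locally uniform} bound on $\alpha_t$ in each seminorm (the boundedness of $r(|t|)$ for $t$ near $t_0$), which is exactly what upgrades pointwise (strong) continuity to joint continuity. An alternative, essentially equivalent route would be to use \Fref{lem: pol_growth_proj_lim} to reduce to the Banach-space case $V_p$, where polynomial growth gives $\|\alpha_t\|_{\B(V_p)} \leq C(1+|t|)^N$ on each bounded $t$-interval, apply the classical joint-continuity argument there, and then conclude via the universal property of the projective limit $V = \varprojlim V_p$. Both routes are short; I would prefer the direct seminorm argument above, as it avoids invoking the projective-limit description.
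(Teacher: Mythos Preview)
Your direct seminorm argument is correct. The only step you gloss over is the existence of the dominating seminorm $q$ with the polynomial-growth bound: since the family $\mc{P}$ in \Fref{def: polynomial_bdd_action} merely generates the topology, a given continuous $p$ is dominated by $C\max(p_1,\dots,p_n)$ with $p_i\in\mc{P}$, and one checks that this finite maximum again satisfies a polynomial-growth estimate. This is routine, so the sketch is fine.

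The paper takes precisely the alternative route you mention in your closing remarks: it invokes \Fref{lem: pol_growth_proj_lim} to write $V=\varprojlim V_p$ as $\R$-representations, then cites \citep[Prop.~5.1]{Neeb_diffvectors} for joint continuity on each Banach quotient $V_p$, and pulls the result back through the projective limit. Your primary argument is more self-contained and elementary, avoiding both the projective-limit machinery and the external citation; the paper's version is shorter to state but relies on having \Fref{lem: pol_growth_proj_lim} and the Banach-space result already in hand. Either route is perfectly adequate here.
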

	\begin{proof}
		By \Fref{lem: pol_growth_proj_lim} it follows that $V = \varprojlim V_p$ as $\R$-representation on locally convex space. If $p \in \mc{P}$, then since $V_p$ is a Banach space and the $\R$-representation on $V_p$ is strongly continuous, it follows from \citep[Prop.\ 5.1]{Neeb_diffvectors} that the action map $\R \times V_p \to V_p$ is jointly continuous. Using that $V \cong \varprojlim V_p$ as topological representations of $\R$, it follows that the action $\alpha : \R \times V \to V$ is jointly continuous.
	\end{proof}
	
	\subsection{Arveson spectral subspaces}
	\noindent
	Let $V$ be a complete locally convex vector space over $\C$ that is Hausdorff. Let $\alpha : \R \to \B(V)^\times$ be a strongly continuous representation of $\R$ on $V$. Assume that $\alpha$ is pointwise polynomially bounded. In the following, we define the Arveson spectral subspaces of $V$ associated to subsets $E$ of $\R$. We extend the results in \citep[A.3]{KH_Zellner_inv_smooth_vectors} to the case where $\alpha$ is only required to be pointwise polynomial bounded. We will use the convention that the Fourier transform $f\mapsto \widehat{f}$ on $\S(\R)$ is given by 
	\begin{equation}\label{eq: Fourier_transform}
		\widehat{f}(p) := \int_\R f(t)e^{itp}dt.	
	\end{equation}
	
	\begin{definition}\label{def: ideals_and_hulls}~
		\begin{itemize}
			\item If $I \subseteq \S(\R)$ is an ideal, define its \textit{hull} $h(I)\subseteq \R$ by 
			$$h(I) := \set{p \in \R \st \widehat{f}(p) = 0 \text{ for all } f \in I}.$$
			\item If $E \subseteq \R$ is a closed subset, define the ideal $I_0(E)$ of $\S(\R)$ by 
			$$I_0(E) := \set{f \in \S(\R) \st \supp(\widehat{f}) \cap E = \emptyset}.$$
		\end{itemize}
	\end{definition}
	
	\begin{lemma}[{\citep[Prop.\ A.8]{KH_Zellner_inv_smooth_vectors}}]\label{lem: ideals_and_hulls}~
		\begin{enumerate}
			\item If $E \subseteq \R$ is a closed subset, then $h(I_0(E)) = E$.
			\item If $I \subseteq \S(\R)$ is a closed ideal, then $I_0(h(I)) \subseteq I$.\vspace{.2cm}
		\end{enumerate}
	\end{lemma}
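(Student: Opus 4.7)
The lemma is cited from \citep[Prop.~A.8]{KH_Zellner_inv_smooth_vectors}, so the sketch below merely indicates how I would reconstruct it. Both assertions express the duality, via the Fourier transform $\F : \S(\R) \to \S(\R)$, between closed subsets of $\R$ and certain closed ideals of the Schwartz algebra; recall that $\F$ is a topological automorphism of $\S(\R)$ interchanging convolution with pointwise multiplication. It is convenient to work on the Fourier side, where $\widehat{I_0(E)}$ consists of the Schwartz functions whose support is disjoint from $E$, and the hull of a closed ideal $\widehat{I} \subseteq \S(\R)$ under pointwise multiplication is exactly its common zero set.

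\textbf{Part (1).} The inclusion $E \subseteq h(I_0(E))$ is immediate from the definitions: if $p \in E$ and $f \in I_0(E)$, then $p \notin \supp(\widehat{f})$, so $\widehat{f}(p) = 0$. For the reverse inclusion, fix $p \notin E$. Since $E$ is closed, choose an open neighborhood $U$ of $p$ with $U \cap E = \emptyset$ and a smooth compactly supported function $\varphi \in C_c^\infty(\R)$ with $\supp(\varphi) \subseteq U$ and $\varphi(p) = 1$. Then $f := \F^{-1}(\varphi)$ belongs to $\S(\R)$ with $\widehat{f} = \varphi$, so $f \in I_0(E)$ while $\widehat{f}(p) = 1 \neq 0$, establishing $p \notin h(I_0(E))$.

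\textbf{Part (2).} Set $E := h(I)$ and take $f \in I_0(E)$, so that $\widehat{f}$ has support disjoint from the closed set $E$. Passing to the Fourier picture, $\widehat{I}$ is a closed ideal in $\S(\R)$ under pointwise multiplication with common zero set $E$, and the claim reduces to showing that every Schwartz function vanishing on an open neighborhood of $E$ lies in $\widehat{I}$. For this, for each $p \in \supp(\widehat{f}) \subseteq \R \setminus E$ pick $g_p \in I$ with $\widehat{g_p}(p) \neq 0$; by continuity $\widehat{g_p}$ is nonvanishing on an open neighborhood $U_p$ of $p$, on which one may locally divide $\widehat{f}$ by $\widehat{g_p}$ to obtain a Schwartz function whose product with $\widehat{g_p}$ reproduces $\widehat{f}$ locally. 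Combining the local quotients by a Schwartz partition of unity subordinate to a suitable locally finite refinement of $\{U_p\}$, one approximates $\widehat{f}$ in $\S(\R)$ by finite sums of elements of $\widehat{I}$, and closedness of $\widehat{I}$ yields $\widehat{f} \in \widehat{I}$, i.e.\ $f \in I$.

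\textbf{Main obstacle.} The delicate point is the spectral synthesis step in Part (2): all cutoffs and divisions must preserve membership in $\S(\R)$, not merely $C^\infty$, and $\supp(\widehat{f})$ need not be compact. Constructing a locally finite cover together with Schwartz bump functions whose derivatives satisfy uniform decay estimates, so that convergence takes place in the Schwartz topology, is where the real work lies; this relies on standard Whitney-type smooth division and extension results adapted to the Schwartz setting, as executed in the cited reference.
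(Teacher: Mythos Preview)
The paper does not supply its own proof of this lemma; it is stated with a bare citation to \citep[Prop.~A.8]{KH_Zellner_inv_smooth_vectors} and used as a black box. There is therefore nothing to compare your sketch against within the paper itself.

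That said, your reconstruction is sound in outline. Part~(1) is complete and correct. For Part~(2) you have the right strategy---pass to the Fourier side, cover $\supp(\widehat{f})\subseteq\R\setminus h(I)$ by neighborhoods on which some $\widehat{g_p}$ with $g_p\in I$ is invertible, divide locally, and patch---and you correctly flag the genuine obstacle: $\supp(\widehat{f})$ need not be compact, so the partition-of-unity and division steps must be done with enough uniform control that the resulting sum converges in the Schwartz topology. One standard way to organize this (and likely what the cited reference does) is to exhaust $\supp(\widehat{f})$ by the compacts $K_n:=\supp(\widehat{f})\cap[-n,n]$, on each of which finitely many $\widehat{g_{p_i}}$ suffice; one then shows that a suitable Schwartz cutoff of $\widehat{f}$ supported near $K_n$ lies in $\widehat{I}$, and lets $n\to\infty$ using closedness of $I$. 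Your sketch would benefit from making this exhaustion explicit rather than invoking a global locally finite refinement directly.
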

	
	\begin{corollary}\label{cor: empty_hull_dense}
		Let $I \subseteq \S(\R)$ be a closed ideal with $h(I) = \emptyset$. Then $I = \S(\R)$. 
	\end{corollary}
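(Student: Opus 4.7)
The plan is to obtain this as an immediate consequence of the preceding \Fref{lem: ideals_and_hulls}, in particular its second item, which asserts that any closed ideal $I \subseteq \S(\R)$ satisfies $I_0(h(I)) \subseteq I$. Applied to our hypothesis $h(I) = \emptyset$, this gives $I_0(\emptyset) \subseteq I$, so the only thing to verify is that $I_0(\emptyset) = \S(\R)$.

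For this, I would simply unwind \Fref{def: ideals_and_hulls}: by definition,
\[
I_0(\emptyset) = \set{ f \in \S(\R) \st \supp(\widehat{f}) \cap \emptyset = \emptyset } = \S(\R),
\]
since the intersection of any set with $\emptyset$ is $\emptyset$. Combining with $I_0(\emptyset) \subseteq I$ yields $\S(\R) \subseteq I$, and thus $I = \S(\R)$.

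There is no real obstacle here; the statement is a formal one-line deduction from \Fref{lem: ideals_and_hulls}(2), and the entire substance of the claim is already contained in that lemma. The role of this corollary is bookkeeping: it packages the particular case $h(I) = \emptyset$ in the form most convenient for the Arveson spectral theory developed in the subsequent \Fref{sec: Arveson_spectral_theory}, where one typically wants to conclude that if the spectrum of a vector is empty then the vector lies in the intersection of all relevant annihilator ideals.
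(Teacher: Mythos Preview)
Your proof is correct and matches the paper's argument exactly: apply \Fref{lem: ideals_and_hulls}(2) with $h(I)=\emptyset$ and observe that $I_0(\emptyset)=\S(\R)$.
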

	\begin{proof}
		Since $I_0(\emptyset) = \S(\R)$ it follows using \Fref{lem: ideals_and_hulls} that $\S(\R) = I_0(\emptyset) = I_0(h(I)) \subseteq I$.
	\end{proof}

	\noindent
	We proceed by defining a representation of the convolution algebra $(\S(\R), \ast)$ on $V$.

	\begin{lemma}
		Let $f \in \S(\R)$ and $v \in V$. Then the weak integral $\int_\R f(t)\alpha_t(v)dt$ exists in $V$. 
	\end{lemma}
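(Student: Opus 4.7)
The plan is to show that the assignment $t \mapsto f(t)\alpha_t(v)$ is continuous and that the improper Riemann integrals over expanding intervals form a Cauchy net in $V$, which then converges by completeness to an element that serves as the weak integral.

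First I would observe that the map $g : \R \to V$, $g(t) := f(t)\alpha_t(v)$, is continuous. Indeed, $f$ is smooth, and by strong continuity of $\alpha$ the orbit map $t \mapsto \alpha_t(v)$ is continuous $\R \to V$, so the product is continuous. Since $V$ is complete (hence quasi-complete), the Riemann integrals $I_R := \int_{-R}^R g(t)\,dt$ exist in $V$ for every $R > 0$ by the standard construction of the Riemann integral for continuous functions with values in a quasi-complete locally convex space.

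Next I would use the pointwise polynomial bound to show that the net $(I_R)_{R > 0}$ is Cauchy. Let $p$ be any continuous seminorm on $V$. By assumption there exist $C = C_{p,v} > 0$ and $N = N_{p,v} \in \N$ such that $p(\alpha_t(v)) \leq C(1 + |t|^N)$ for all $t \in \R$. Therefore
\[
 p(g(t)) \leq C|f(t)|(1 + |t|^N), \qquad \forall t \in \R,
\]
and the right-hand side is integrable over $\R$ because $f \in \S(\R)$. Consequently, for $R_2 > R_1 > 0$,
\[
 p(I_{R_2} - I_{R_1}) \leq \int_{R_1 \leq |t| \leq R_2} p(g(t))\,dt \leq C \int_{R_1 \leq |t| \leq R_2} |f(t)|(1 + |t|^N)\,dt \xrightarrow{R_1, R_2 \to \infty} 0.
\]
Since this holds for every continuous seminorm $p$, the net $(I_R)$ is Cauchy in $V$, and by completeness it converges to a limit $w \in V$.

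Finally I would verify that $w$ is the weak integral: for every continuous linear functional $\ell \in V'$, we have $\ell(I_R) = \int_{-R}^R f(t)\ell(\alpha_t(v))\,dt$ by the defining property of the Riemann integral, and the scalar integrand is in $L^1(\R)$ because $|\ell(\alpha_t(v))| \leq p_\ell(\alpha_t(v))$ for the continuous seminorm $p_\ell := |\ell|$ on $V$, which satisfies the polynomial bound above. Passing to the limit $R \to \infty$ and using continuity of $\ell$ yields $\ell(w) = \int_\R f(t)\ell(\alpha_t(v))\,dt$, so $w$ is the desired weak integral. The main (minor) subtlety is making sure the Cauchy estimate holds uniformly in the chosen seminorm, which is provided by the pointwise polynomial bound together with the rapid decay of $f$; this is what makes the pointwise (rather than uniform) polynomial growth hypothesis sufficient here.
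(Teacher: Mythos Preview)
Your proof is correct and follows essentially the same approach as the paper: establish existence of the finite integrals $\int_{-R}^R f(t)\alpha_t(v)\,dt$ by continuity and completeness, then use the pointwise polynomial bound together with the Schwartz decay of $f$ to show these converge as $R\to\infty$. You simply spell out the Cauchy estimate and the verification of the weak-integral property in more detail than the paper does.
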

	\begin{proof}
		For any $a>0$, the weak integral $\int_{-a}^a f(t)\alpha_t(v)dt$ exists in $V$ because $\R \to V, \; t \mapsto f(t)\alpha_t(v)$ is continuous and $V$ is complete (cf.\ \citep[p.\ 1021]{milnor_inf_lie} or \citep[Prop.\ 1.1.15]{Neeb_Glockner_Book}). As $\alpha$ is pointwise polynomially bounded and $f \in \S(\R)$ is a Schwartz function, the limit $v_\ast := \lim_{a \to \infty} \int_{-a}^a f(t)\alpha_t(v)dt$ exists in $V$, and $v_\ast = \int_\R f(t)\alpha_t(v)dt \in V$.
	\end{proof}

	\begin{definition}\label{def: repr_of_schartz}
		For any Schwartz function $f \in \S(\R)$, define the linear operator $\alpha_f \in \L(V)$ by 
		$$\alpha_f(v) := \int_\R f(t)\alpha_t(v)dt.$$
		Then $f \mapsto \alpha_f$ defines a strongly continuous representation of the convolution algebra $(\S(\R), \ast)$ on $V$.	
	\end{definition}

	\begin{remark}\label{rem: pol_bdd_cts}
		If $\alpha$ is polynomially bounded, then $\alpha_f \in \B(V)$ is a continuous operator for every $f \in \S(\R)$.
	\end{remark}

	\begin{definition}\label{def: arv_spec}~
		\begin{itemize}
			\item Let $\Spec_\alpha(V) := h(\ker \alpha) \subseteq \R$ be the hull of the closed ideal $\ker \alpha$ in $\S(\R)$. 
			\item For $v \in V$, let $\S(\R)_v := \set{f \in \S(\R) \st \alpha_f(v) = 0}$ denote the annihilator of $v$ in $\S(\R)$, which is a closed ideal in $\S(\R)$, and let $\Spec_\alpha(v) := h(\S(\R)_v) \subseteq \R$ be its hull.
			\item If $E \subseteq \R$ is a subset, define 
			$$V_\alpha(E)_0 := \set{v \in V \st \Spec_\alpha(v) \subseteq \overline{E}}$$
			and let $V_\alpha(E) := \overline{V_\alpha(E)_0}$ be its closure in $V$. Define moreover 
			$$V_\alpha^+(E) := \bigcap_N V_\alpha(E + N),$$
			where $N$ runs over all $0$-neighborhoods in $\R$.
		\end{itemize}
		If the action $\alpha$ is clear from the context, we drop $\alpha$ from the notation and simply write $V(E)_0, V(E)$ and $V^+(E)$ instead of $V_\alpha(E)_0, V_\alpha(E)$ and $V_\alpha^+(E)$.\\
	\end{definition}
	
	\begin{example}\label{ex: spectra_Hilbert_space_case}
		Let $U : \R \to \U(\H)$ be a strongly continuous unitary representation of $\R$. Then $U_t = e^{tH}$ for some self-adjoint operator $H$ on $\H$. Suppose that $U_t = \int_\R e^{itp}dP(p)$ is the corresponding spectral decomposition of $U$, for some projection-valued measure $P$ on $\R$. With the convention \eqref{eq: Fourier_transform} we have $U_f := \int_\R f(t)U_t dt = \int_\R \widehat{f}(p)dP(p)$ for $f \in \S(\R)$. The Arveson spectrum $\Spec_U(\H)$ coincides with $\Spec(H)$, the spectrum of the self-adjoint operator $H$. Moreover, for a closed subset $E \subseteq \R$, the corresponding spectral subspace is given by $\H_U(E) = P(E)\H$.\\
	\end{example}
	
	\begin{remark}\label{rem: intersections}
		Let $\{E_i\}_{i \in \mc{I}}$ be a family of closed subsets of $\R$. Observe that $\bigcap_{i \in \mc{I}}V(E_i)_0 = V\big(\bigcap_{i \in \mc{I}}E_i\big)_0$.\\
	\end{remark}
	
	\begin{remark}\label{rem: spec}
		Notice for any $v \in V$ that $\ker \alpha \subseteq \S(\R)_v$, so $\Spec_\alpha(v) \subseteq \Spec_\alpha(V)$. Thus
		$$V = V(\Spec_\alpha(V))_0 = V\big(\Spec_\alpha(V)\big) = V^+\big(\Spec_\alpha(V)\big).$$
		Combining this with \Fref{rem: intersections}, we obtain for any closed subset $E \subseteq \R$ that $V(E)_0 = V(E \cap \Spec_\alpha(V))_0$.\\
	\end{remark}

	\begin{lemma}\label{lem: specfv_in_supp_f}
		Let $f \in \S(\R)$ and $v \in V$. Then $\Spec_{\alpha}(\alpha_f(v)) \subseteq \supp(\widehat{f})$.
	\end{lemma}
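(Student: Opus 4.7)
The plan is to show the contrapositive on the level of individual points: if $p \in \R$ satisfies $p \notin \supp(\widehat{f})$, then $p \notin \Spec_\alpha(\alpha_f(v)) = h(\S(\R)_{\alpha_f(v)})$. By the definition of the hull in \Fref{def: ideals_and_hulls}, this amounts to exhibiting a Schwartz function $g \in \S(\R)_{\alpha_f(v)}$ for which $\widehat{g}(p) \neq 0$.

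To construct such a $g$, I would first use the assumption $p \notin \supp(\widehat{f})$ to pick an open neighborhood $N$ of $p$ disjoint from $\supp(\widehat{f})$. Since the Fourier transform is a bijection of $\S(\R)$, one may choose $g \in \S(\R)$ such that $\widehat{g}$ is supported in $N$ and $\widehat{g}(p) \neq 0$ (e.g.\ take $\widehat{g}$ to be a bump function inside $N$ which is nonzero at $p$, and set $g$ to be its inverse Fourier transform). By construction $\widehat{g} \cdot \widehat{f} = 0$ pointwise on $\R$, and since $\widehat{g \ast f} = \widehat{g}\cdot\widehat{f}$ (up to the normalization constant fixed by the convention \eqref{eq: Fourier_transform}), injectivity of the Fourier transform on $\S(\R)$ yields $g \ast f = 0$.

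The final step is to translate the identity $g \ast f = 0$ into a statement about $\alpha_f(v)$. By \Fref{def: repr_of_schartz}, the map $h \mapsto \alpha_h$ is a representation of the convolution algebra $(\S(\R), \ast)$ on $V$, so
\[
\alpha_g\bigl(\alpha_f(v)\bigr) = \alpha_{g \ast f}(v) = \alpha_0(v) = 0,
\]
which means $g \in \S(\R)_{\alpha_f(v)}$. Combined with $\widehat{g}(p) \neq 0$, this gives $p \notin h(\S(\R)_{\alpha_f(v)}) = \Spec_\alpha(\alpha_f(v))$, as required.

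There is no real obstacle here: the argument is the standard spectral/ideal duality from Arveson's theory, and the only subtle point is that $h \mapsto \alpha_h$ is indeed an $\S(\R)$-module structure on $V$ (so that $\alpha_g \circ \alpha_f = \alpha_{g \ast f}$), which has been established in \Fref{def: repr_of_schartz}. Note that no continuity or polynomial boundedness of $\alpha_g$ is needed for this argument; pointwise polynomial boundedness is only used to ensure that the operators $\alpha_h$ are well-defined on all of $V$.
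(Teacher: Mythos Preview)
Your proof is correct and follows essentially the same approach as the paper's: pick $p \notin \supp(\widehat{f})$, choose $g \in \S(\R)$ with $\widehat{g}(p) \neq 0$ and $\widehat{g}$ vanishing on $\supp(\widehat{f})$, deduce $g \ast f = 0$ from $\widehat{g}\widehat{f} = 0$, and conclude via $\alpha_g\alpha_f v = \alpha_{g\ast f}v = 0$. The only difference is that you spell out the choice of the neighborhood $N$ and the bump-function construction of $\widehat{g}$ in slightly more detail.
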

	\begin{proof}
		Let $p \in \R\setminus \supp(\widehat{f})$ and choose $g \in \S(\R)$ s.t.\ $\widehat{g}(p) \neq 0$ and $\restr{\widehat{g}}{\supp(\widehat{f})} = 0$. Then $g\ast f = 0$, because $\widehat{g}\widehat{f} = 0$. It follows that $\alpha_g\alpha_f v = \alpha_{f \ast g}v =0$. Since we also have $\widehat{g}(p) \neq 0$ it follows that $p \notin \Spec_{\alpha}(\alpha_f v)$.
	\end{proof}

	\begin{proposition}\label{prop: non-empty-spectrum}
		Let $v\in V$. Then $\S(\R)_v = \S(\R)$ implies $v = 0$. Moreover $v\neq 0$ implies $\Spec_\alpha(v) \neq \emptyset$. 
	\end{proposition}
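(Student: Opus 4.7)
The proof naturally splits into two parts: establishing the first implication, then deducing the second from it plus \Fref{cor: empty_hull_dense}.

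For the first implication, my plan is to construct an approximate identity in $\mathcal{S}(\R)$ for the representation $f \mapsto \alpha_f$. Concretely, pick a non-negative $\phi \in C_c^\infty(\R) \subseteq \mathcal{S}(\R)$ with $\int_\R \phi(t)\,dt = 1$, and set $\phi_n(t) := n\phi(nt)$ so that $\supp(\phi_n) \subseteq [-M/n, M/n]$ for some $M > 0$ and $\int \phi_n = 1$. I claim $\alpha_{\phi_n}(v) \to v$ in $V$ for every $v \in V$. Indeed, for any continuous seminorm $p$ on $V$,
\[
p\bigl(\alpha_{\phi_n}(v) - v\bigr) = p\!\left(\int_\R \phi_n(t)\bigl(\alpha_t(v) - v\bigr)\,dt\right) \leq \sup_{|t| \leq M/n} p\bigl(\alpha_t(v) - v\bigr),
\]
which tends to $0$ by the strong continuity of $\alpha$ (so $t \mapsto p(\alpha_t(v) - v)$ is continuous at $0$ with value $0$). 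Hence if $v \in V$ satisfies $\mathcal{S}(\R)_v = \mathcal{S}(\R)$, then $\alpha_{\phi_n}(v) = 0$ for every $n$, and passing to the limit yields $v = 0$.

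For the moreover, I need to know that $\mathcal{S}(\R)_v$ is a \emph{closed} ideal in $\mathcal{S}(\R)$. The ideal property has already been noted in \Fref{def: arv_spec}, so only closedness requires justification. This follows from the continuity of $f \mapsto \alpha_f(v)$ as a map $\mathcal{S}(\R) \to V$: for any continuous seminorm $p$ on $V$, pointwise polynomial boundedness yields some $N \in \N$ and $C > 0$ with $p(\alpha_t(v)) \leq C(1+|t|^N)$ for all $t \in \R$, hence
\[
p(\alpha_f(v)) \leq C\int_\R |f(t)|(1+|t|^N)\,dt,
\]
and the right-hand side is a continuous seminorm in $f$ on $\mathcal{S}(\R)$. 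Thus $\mathcal{S}(\R)_v$ is the kernel of a continuous linear map, hence closed.

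Now suppose $v \neq 0$ and assume for contradiction that $\Spec_\alpha(v) = h(\mathcal{S}(\R)_v) = \emptyset$. Since $\mathcal{S}(\R)_v$ is a closed ideal with empty hull, \Fref{cor: empty_hull_dense} forces $\mathcal{S}(\R)_v = \mathcal{S}(\R)$, and by the first part $v = 0$, contradicting $v \neq 0$. Therefore $\Spec_\alpha(v) \neq \emptyset$. The only genuinely delicate point is confirming that the approximate-identity argument works directly in the locally convex setting under mere pointwise polynomial boundedness, but since the $\phi_n$ can be chosen with compact support, no integrability issue at infinity arises and only local uniformity of $t \mapsto \alpha_t(v)$ near $0$ (which is automatic) is used.
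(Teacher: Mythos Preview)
Your proof is correct. For the second implication you proceed exactly as the paper does (the closedness of $\S(\R)_v$ is already asserted in \Fref{def: arv_spec}, but your justification is welcome). For the first implication, however, you take a genuinely different route. The paper argues by duality: for any $\lambda \in V'$, the hypothesis gives $\int_\R f(t)\lambda(\alpha_t v)\,dt = 0$ for all $f \in \S(\R)$, and since $t \mapsto \lambda(\alpha_t v)$ is continuous this forces $\lambda(\alpha_t v) \equiv 0$, so in particular $\lambda(v) = 0$; Hahn--Banach then yields $v = 0$. Your approximate-identity argument is more direct and avoids the explicit appeal to Hahn--Banach as well as the (standard but unstated) fact that Schwartz functions separate continuous polynomially bounded functions via integration. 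The paper's approach, on the other hand, is the template reused later (e.g.\ in the proofs of \Fref{lem: arv_spec_towards_pf_bil} and \Fref{prop: arv_spec_tensor_prod_cont_maps}), so it fits the internal economy of the section slightly better.
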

	\begin{proof}
		Assume that $\S(\R)_v = \S(\R)$. If $\lambda \in V^\prime$ is a continuous functional, it follows that $\int_\R f(t) \lambda(\alpha_t v) dt = 0$ for any $f \in \S(\R)$. As $t \mapsto \lambda(\alpha_t v)$ is continuous, this implies that $\lambda(\alpha_t v) = 0$ for all $t \in \R$. In particular $\lambda(v) = 0$. As $V^\prime$ separates the points of $V$ by the Hahn-Banach Theorem \citep[Thm.\ I.3.4]{Rudin_FA}, it follow that $v = 0$. Finally, if $\Spec_\alpha(v) = \emptyset$ then by \Fref{cor: empty_hull_dense} it follows that $\S(\R)_v = \S(\R)$ and hence $v =0$.
	\end{proof}
	
	\begin{corollary}
		If $E_1, E_2 \subseteq \R$ are two disjoint closed subsets, then 
		$$V(E_1)_0 \cap V(E_2)_0 = \{0\}.$$ 
	\end{corollary}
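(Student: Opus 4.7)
The plan is to combine the preceding \Fref{rem: intersections} with \Fref{prop: non-empty-spectrum}, reducing the statement to the observation that a vector with empty Arveson spectrum must be zero. First I would apply \Fref{rem: intersections} (with the two-element family $\{E_1, E_2\}$) to rewrite
\[
V(E_1)_0 \cap V(E_2)_0 = V(E_1 \cap E_2)_0 = V(\emptyset)_0.
\]
Here I am using that $E_1$ and $E_2$ are disjoint, so their intersection is empty.

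Next I would unfold the definition in \Fref{def: arv_spec}: a vector $v$ lies in $V(\emptyset)_0$ precisely when $\Spec_\alpha(v) \subseteq \overline{\emptyset} = \emptyset$, that is, when $\Spec_\alpha(v) = \emptyset$. By \Fref{prop: non-empty-spectrum}, any nonzero $v \in V$ has $\Spec_\alpha(v) \neq \emptyset$. Contrapositively, $\Spec_\alpha(v) = \emptyset$ forces $v = 0$. Hence $V(\emptyset)_0 = \{0\}$, which combined with the display above yields $V(E_1)_0 \cap V(E_2)_0 = \{0\}$, as desired.

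There is essentially no obstacle here: the statement is a direct corollary of the two already-established facts, and the only subtlety is the notational one of remembering that $V(E)_0$ is defined via $\Spec_\alpha(v) \subseteq \overline{E}$ rather than $\subseteq E$, so that closedness of $E_1, E_2$ is used implicitly in invoking \Fref{rem: intersections} (whose proof is just the observation that $\Spec_\alpha(v) \subseteq \bigcap_i \overline{E_i}$ iff $\Spec_\alpha(v) \subseteq \overline{E_i}$ for all $i$).
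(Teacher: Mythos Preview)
Your proof is correct and follows exactly the same approach as the paper's: apply \Fref{rem: intersections} to get $V(E_1)_0 \cap V(E_2)_0 = V(\emptyset)_0$, then invoke \Fref{prop: non-empty-spectrum} to conclude $V(\emptyset)_0 = \{0\}$. Your version is in fact slightly more careful with the $_0$ subscripts and the role of closedness than the paper's one-line argument.
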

	\begin{proof}
		We have $V(E_1)_0 \cap V(E_2)_0 = V(E_1 \cap E_2) = V(\emptyset) = \{0\}$ by \Fref{rem: intersections} and \Fref{prop: non-empty-spectrum}. \qedhere\\
	\end{proof}
	
	\noindent
	If $E \subseteq \R$ is a subset, recall from \Fref{def: ideals_and_hulls} that $I_0(\olE) \subseteq \S(\R)$ denotes the ideal of functions $f \in \S(\R)$ whose Fourier transform $\widehat{f}$ vanishes on a neighborhood of $\olE \subseteq \R$. \Fref{prop: spectral_subspace_char} below provides a convenient characterization of $V(E)_0$ in terms of $I_0(\olE)$, which will be used repeatedly.
	
	\begin{proposition}[{\citep[Prop.\ A.8]{KH_Zellner_inv_smooth_vectors}}]\label{prop: spectral_subspace_char}~\\
		For any subset $E \subseteq \R$ we have 
		\begin{align*}
			V(E)_0 
			&= \set{v \in V \st I_0(\olE) \subseteq \S(\R)_v} \\
			&= \set{ v \in V \st \forall f \in \S(\R) \st \supp(\widehat{f}) \cap \olE = \emptyset \implies \alpha_f(v) = 0}.	
		\end{align*}
		In particular $V(E)_0, V(E)$ and $V^+(E)$ are linear subspaces of $V$.		
	\end{proposition}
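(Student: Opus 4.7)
The plan is to derive the characterization directly from the antitonicity properties of the hull–kernel correspondence established in \Fref{lem: ideals_and_hulls}. The second displayed equality in the statement is immediate from unpacking the definitions: $f \in I_0(\overline{E})$ means $\supp(\widehat f) \cap \overline{E} = \emptyset$, and $f \in \S(\R)_v$ means $\alpha_f(v) = 0$. So the substance of the proof is the first equality, i.e.\ that for $v \in V$,
\[
\Spec_\alpha(v) \subseteq \overline{E} \iff I_0(\overline{E}) \subseteq \S(\R)_v.
\]

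For the $(\Leftarrow)$ direction, I would apply $h$ to both sides of $I_0(\overline{E}) \subseteq \S(\R)_v$. Since $h$ is inclusion-reversing, this gives $\Spec_\alpha(v) = h(\S(\R)_v) \subseteq h(I_0(\overline{E}))$, and by \Fref{lem: ideals_and_hulls}(1) the right-hand side equals $\overline{E}$. For the $(\Rightarrow)$ direction, note that if $E_1 \subseteq E_2$ are subsets of $\R$ then $I_0(E_2) \subseteq I_0(E_1)$, directly from the definition of $I_0$. Applying this with $E_1 := \Spec_\alpha(v) = h(\S(\R)_v)$ and $E_2 := \overline{E}$ gives $I_0(\overline{E}) \subseteq I_0(h(\S(\R)_v))$, and \Fref{lem: ideals_and_hulls}(2) (using that $\S(\R)_v$ is a closed ideal) yields $I_0(h(\S(\R)_v)) \subseteq \S(\R)_v$. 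Chaining the inclusions closes the argument.

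For the final sentence, the characterization just obtained exhibits $V(E)_0$ as
\[
V(E)_0 = \bigcap_{f \in I_0(\overline{E})} \ker \alpha_f,
\]
an intersection of kernels of linear operators on $V$, hence a linear subspace. Its closure $V(E)$ is therefore also a linear subspace, and $V^+(E) = \bigcap_N V(E+N)$ is linear as an intersection of linear subspaces. I do not anticipate any real obstacle here: the only mild subtlety is being careful that $\S(\R)_v$ is a closed ideal (which was already noted in \Fref{def: arv_spec}) so that \Fref{lem: ideals_and_hulls}(2) applies, and that the operators $\alpha_f$ need not be continuous in general, but linearity alone suffices for the final assertion.
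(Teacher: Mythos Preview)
Your argument is correct and is the standard hull--kernel antitonicity proof; the paper itself does not spell out the details but simply notes that the proof of \citep[Prop.\ A.8]{KH_Zellner_inv_smooth_vectors} carries over verbatim to the pointwise polynomially bounded setting. What you have written is precisely that argument, so there is nothing to add.
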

	\begin{proof}
		The proof of \citep[Prop.\ A.8]{KH_Zellner_inv_smooth_vectors} continues to hold when $\alpha$ is only pointwise polynomially bounded.
	\end{proof}
	
	\begin{corollary}\label{cor: pol_bdd_arv_subspaces}
		Assume that $\alpha$ is polynomially bounded. Then for any $E \subseteq \R$ we have 
		$$V(E)_0 = V(E) = V^+(E).$$
	\end{corollary}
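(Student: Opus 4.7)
The plan is to prove, under polynomial boundedness of $\alpha$, that $V(E)_0$ is closed in $V$ (so $V(E)_0 = V(E)$) and that $V^+(E) \subseteq V(E)_0$; the reverse inclusion $V(E) \subseteq V^+(E)$ is trivial, since $0 \in N$ gives $\olE \subseteq \overline{E+N}$ and hence $V(E)_0 \subseteq V(E+N)_0$ by \Fref{def: arv_spec}, so $V(E) \subseteq V(E+N)$ for every $0$-neighborhood $N$ and thus $V(E) \subseteq V^+(E)$. Closedness of $V(E)_0$ is immediate from \Fref{prop: spectral_subspace_char}, which identifies $V(E)_0$ with the common kernel of the operators $\set{\alpha_f \st f \in I_0(\olE)}$: by \Fref{rem: pol_bdd_cts} each $\alpha_f$ is a continuous endomorphism of $V$ under polynomial boundedness, so this common kernel is an intersection of closed subspaces and hence closed.

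The substantive task is to show $\alpha_f(v) = 0$ whenever $v \in V^+(E)$ and $f \in I_0(\olE)$. The idea is to approximate $f$ in $\S(\R)$ by Schwartz functions whose Fourier transforms have compact support contained in $\supp(\widehat{f})$. I would fix a bump $\chi \in C_c^\infty(\R)$ with $\chi \equiv 1$ on $[-1,1]$ and $\supp \chi \subseteq [-2,2]$, set $\chi_n(x) := \chi(x/n)$, and define $f_n \in \S(\R)$ by $\widehat{f_n} := \chi_n \widehat{f}$. Exploiting that $\partial^k \chi_n = n^{-k}(\partial^k \chi)(\cdot/n)$ has supremum norm $O(n^{-k})$, and that $(1-\chi_n)\widehat{f}$ is supported in $\{|x| \geq n\}$ where $\widehat{f}$ and all its derivatives decay faster than any polynomial, one shows $f_n \to f$ in the Schwartz topology. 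Now $K_n := \supp(\widehat{f_n}) \subseteq \supp(\widehat{f})$ is compact and disjoint from $\olE$, so there is $\epsilon_n > 0$ with $K_n \cap \overline{E + N_n} = \emptyset$ for $N_n := (-\epsilon_n, \epsilon_n)$; that is, $f_n \in I_0(\overline{E+N_n})$. Since $v \in V^+(E) \subseteq V(E+N_n) = V(E+N_n)_0$ (by the closedness just established), \Fref{prop: spectral_subspace_char} yields $\alpha_{f_n}(v) = 0$.

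To pass to the limit, fix a continuous seminorm $p$ on $V$. Polynomial boundedness provides $C > 0$ and $N \in \N$ (depending on $p$ and $v$) with $p(\alpha_t v) \leq C(1+|t|)^N$ for all $t \in \R$, so that
\[
p\bigl(\alpha_{f_n}(v) - \alpha_f(v)\bigr) \leq C \int_\R |f_n(t) - f(t)|\,(1+|t|)^N\,dt,
\]
and this tends to $0$ because Schwartz-convergence $f_n \to f$ dominates any such weighted $L^1$ norm. Hence $\alpha_f(v) = 0$, as desired.

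The principal technical obstacle is the approximation step: the Fourier support of $f$ may be unbounded, so no single $0$-neighborhood $N$ makes $\supp(\widehat{f})$ disjoint from $\overline{E+N}$, forcing one to truncate and then recover $f$ in the limit. Once the cutoff sequence $(\chi_n)$ is chosen with derivatives uniformly controlled in $n$, both the convergence $f_n \to f$ in $\S(\R)$ and the weighted $L^1$ bound above go through routinely, and the rest is an application of \Fref{prop: spectral_subspace_char} together with the continuity of the $\alpha_f$ granted by polynomial boundedness of $\alpha$.
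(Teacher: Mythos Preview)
Your proof is correct, but takes a considerably longer route than the paper's. For the inclusion $V^+(E) \subseteq V(E)_0$ you construct an explicit Schwartz approximation $f_n \to f$ with compactly supported Fourier transforms, use compactness of $\supp(\widehat{f_n})$ to produce a gap $N_n$ separating it from $\overline{E}$, and then pass to the limit via the weighted $L^1$ bound. The paper instead dispatches this step in one line by working directly with the definition $V(E)_0 = \{v : \Spec_\alpha(v) \subseteq \overline{E}\}$ and invoking \Fref{rem: intersections}: once $V(E+N) = V(E+N)_0$ for each $N$ (by the closedness you also established), one has
\[
V^+(E) = \bigcap_N V(E+N)_0 = V\Big(\bigcap_N (E+N)\Big)_0 = V(\overline{E})_0 = V(E)_0.
\]
In other words, the statement that $\Spec_\alpha(v)$ lies in every $\overline{E+N}$ is exactly the statement that it lies in their intersection $\overline{E}$, so no approximation argument is needed. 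Your approach has the mild virtue of making the role of the $\S(\R)$-module structure and the strong continuity of $f \mapsto \alpha_f v$ fully explicit, but the paper's argument is the more natural one here.
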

	\begin{proof}
		Let $E \subseteq \R$ be a subset. By \Fref{rem: pol_bdd_cts} we know that $\alpha_f$ is a continuous linear operator for every $f \in \S(\R)$. It then follows from \Fref{prop: spectral_subspace_char} that $V(E)_0$ is closed, so $V(E)_0 = V(E)$. Using \Fref{rem: intersections}, we further obtain that 
		$$V^+(E) = \bigcap_N V(E+N) = \bigcap_N V(E+N)_0 = V\bigg(\bigcap_N E+N \bigg)_0 = V(\overline{E})_0 = V(E)_0.\qedhere$$
	\end{proof}
	
	\noindent
	The following will also be used frequently:
	
	\begin{corollary}\label{cor: pointwise_char_of_full_spec}
		Let $E \subseteq \R$ be a subset. The following assertions are equivalent:
		\begin{enumerate}
			\item $\Spec_\alpha(V) \subseteq \overline{E}\quad$.
			\item $V \subseteq V(E)_0$.
			\item $I_0(\overline{E}) \subseteq \ker \alpha$.
		\end{enumerate}
	\end{corollary}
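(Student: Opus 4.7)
The plan is to derive all three equivalences directly from the preceding lemmas on the Galois correspondence between closed ideals of $\S(\R)$ and closed subsets of $\R$, together with the characterization of $V(E)_0$ given in \Fref{prop: spectral_subspace_char}. No analysis beyond what is already in place is required; the content is essentially bookkeeping with the $h$ and $I_0$ operations.

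For $(2) \iff (3)$: by \Fref{prop: spectral_subspace_char}, $v \in V(E)_0$ if and only if $I_0(\overline{E}) \subseteq \S(\R)_v$. Hence $V \subseteq V(E)_0$ if and only if $I_0(\overline{E}) \subseteq \S(\R)_v$ for every $v \in V$, which in turn is equivalent to $I_0(\overline{E}) \subseteq \bigcap_{v \in V}\S(\R)_v = \ker \alpha$.

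For $(1) \iff (3)$: I will use that $E \mapsto I_0(E)$ is inclusion-reversing (if $A \subseteq B$, then any $f \in \S(\R)$ with $\supp(\widehat{f}) \cap B = \emptyset$ satisfies $\supp(\widehat{f}) \cap A = \emptyset$, so $I_0(B) \subseteq I_0(A)$), and likewise $I \mapsto h(I)$ is inclusion-reversing. Assume $(3)$, i.e.\ $I_0(\overline{E}) \subseteq \ker \alpha$. Applying $h$ and using \Fref{lem: ideals_and_hulls}$(1)$, we obtain $\Spec_\alpha(V) = h(\ker \alpha) \subseteq h(I_0(\overline{E})) = \overline{E}$, giving $(1)$. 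Conversely, assume $(1)$, i.e.\ $h(\ker \alpha) \subseteq \overline{E}$. Applying $I_0$ and using monotonicity gives $I_0(\overline{E}) \subseteq I_0(h(\ker \alpha))$. Since $\ker \alpha$ is a closed ideal in $\S(\R)$ (being the intersection of the kernels of the continuous functionals $f \mapsto \lambda(\alpha_f v)$ for $v \in V$ and $\lambda \in V'$, or more directly by strong continuity of $f \mapsto \alpha_f$), \Fref{lem: ideals_and_hulls}$(2)$ yields $I_0(h(\ker \alpha)) \subseteq \ker \alpha$, whence $I_0(\overline{E}) \subseteq \ker \alpha$, which is $(3)$.

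The only subtle point is confirming that $\ker \alpha$ is indeed a closed ideal so that \Fref{lem: ideals_and_hulls}$(2)$ applies; this is where the pointwise polynomial boundedness of $\alpha$ enters, ensuring that $f \mapsto \alpha_f$ is well-defined and the evaluation maps $f \mapsto \alpha_f(v)$ are continuous in $f \in \S(\R)$ for fixed $v$, so that each $\S(\R)_v$ is closed and hence so is their intersection $\ker \alpha$. Apart from this, the argument is a purely formal application of the Galois correspondence, and I expect no genuine obstacle.
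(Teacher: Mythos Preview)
Your proof is correct and uses the same ingredients as the paper: \Fref{prop: spectral_subspace_char} for $(2)\iff(3)$ and \Fref{lem: ideals_and_hulls} for the passage between $(1)$ and $(3)$. The only minor difference is that the paper closes the cycle via $(1)\Rightarrow(2)\Rightarrow(3)\Rightarrow(1)$, handling $(1)\Rightarrow(2)$ directly from \Fref{rem: spec} (every $\Spec_\alpha(v)$ sits inside $\Spec_\alpha(V)$), whereas you go $(1)\Rightarrow(3)$ by invoking \Fref{lem: ideals_and_hulls}(2) on the closed ideal $\ker\alpha$; both routes are equally short.
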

	\begin{proof}
		Assume that $\Spec_\alpha(V) \subseteq \olE$. Then for any $v \in V$ we have 
		$$\Spec_\alpha(v) \subseteq \Spec_\alpha(V) \subseteq \olE,$$
		by \Fref{rem: spec}. This means that $V \subseteq V(E)_0$. Assume next that $V \subseteq V(E)_0$. By \Fref{prop: spectral_subspace_char}, this means that $I_0(\overline{E}) \subseteq \S(\R)_v$ for all $v \in V$. So elements of $I_0(\overline{E})$ annihilate every $v \in V$. Thus $I_0(\olE) \subseteq \ker \alpha$. If $I_0(\olE) \subseteq \ker \alpha$, then $\Spec_\alpha(V) = h(\ker \alpha) \subseteq h(I_0(\olE)) = \olE$, where the last equality uses \Fref{lem: ideals_and_hulls}.
	\end{proof}
	
	\begin{corollary}\label{cor: full_spec_as_closed_union_of_pts}
		$\Spec_\alpha(V) = \overline{\bigcup_{v \in V} \Spec_\alpha(v)}$.
	\end{corollary}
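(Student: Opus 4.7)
The plan is to prove both inclusions separately, exploiting the inclusion-reversing nature of the hull operation $h$ together with the pointwise characterization of the total Arveson spectrum given in \Fref{cor: pointwise_char_of_full_spec}.

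For the inclusion $\overline{\bigcup_{v \in V} \Spec_\alpha(v)} \subseteq \Spec_\alpha(V)$, I would observe first that $\ker \alpha = \bigcap_{w \in V} \S(\R)_w$, so for each fixed $v \in V$ we have $\ker \alpha \subseteq \S(\R)_v$. Since the hull operation $I \mapsto h(I)$ is manifestly inclusion-reversing (from the definition in \Fref{def: ideals_and_hulls}), this yields
\[
\Spec_\alpha(v) = h(\S(\R)_v) \subseteq h(\ker \alpha) = \Spec_\alpha(V).
\]
Taking the union over $v \in V$ gives $\bigcup_{v \in V} \Spec_\alpha(v) \subseteq \Spec_\alpha(V)$, and since $\Spec_\alpha(V)$ is automatically a closed subset of $\R$ (being an intersection of zero-sets of the continuous functions $\widehat{f}$ for $f \in \ker \alpha$), I can pass to the closure on the left.

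For the reverse inclusion, I would set $E := \overline{\bigcup_{v \in V} \Spec_\alpha(v)}$ and apply \Fref{cor: pointwise_char_of_full_spec}. According to that corollary, $\Spec_\alpha(V) \subseteq \overline{E} = E$ is equivalent to $V \subseteq V(E)_0$, which by the definition of $V(E)_0$ amounts to saying $\Spec_\alpha(v) \subseteq \overline{E} = E$ for every $v \in V$. This last condition is immediate from the construction of $E$, since each $\Spec_\alpha(v)$ is a member of the union whose closure is $E$.

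There is essentially no obstacle here: the statement is a formal consequence of the prior material. The only points requiring care are the two mild facts used silently above, namely that $h$ reverses inclusions and that $\Spec_\alpha(V) = h(\ker \alpha)$ is closed in $\R$; both follow at once from \Fref{def: ideals_and_hulls}. Consequently the proof will consist of just a few lines.
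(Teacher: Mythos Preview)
Your proposal is correct and follows essentially the same approach as the paper: the paper also uses the inclusion $\Spec_\alpha(v) \subseteq \Spec_\alpha(V)$ (which it cites from \Fref{rem: spec}, whose content you unpack explicitly via $\ker\alpha \subseteq \S(\R)_v$ and the inclusion-reversing property of $h$) together with closedness of $\Spec_\alpha(V)$ for one direction, and \Fref{cor: pointwise_char_of_full_spec} via the tautological containment $V \subseteq V(E)_0$ for the other.
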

	\begin{proof}
		Define $E := \bigcup_{v \in V} \Spec_\alpha(v)$. By \Fref{rem: spec} we have $\Spec_\alpha(v) \subseteq \Spec_\alpha(V)$ for any $v \in V$. As $\Spec_\alpha(V)$ is closed, it follows that $\overline{E} \subseteq \Spec_\alpha(V)$. Conversely, recall that $V(E)_0 = \set{v \in V \st \Spec_\alpha(v) \in \overline{E}}$. So from our definition of $E$, we trivially have $V \subseteq V(E)_0$. Then $\Spec_\alpha(V) \subseteq \olE$ follows by \Fref{cor: pointwise_char_of_full_spec}.
	\end{proof}
	
	\noindent
	Let us next record the behavior of spectral subspaces under continuous linear and multi-linear maps:
	\begin{proposition}\label{prop: spectra_linear_map}
		For $j \in \{1,2\}$, let $\alpha_j : \R \to \B(V_j)^\times$ be a strongly continuous representation of $\R$ on the complete and Hausdorff complex locally convex vector space $V_j$. Assume that $\alpha_j$ is pointwise polynomially bounded. Let $T : V_1 \to V_2$ be a continuous $\R$-equivariant linear map. Then for every subset $E \subseteq \R$ we have $T(V_1(E)) \subseteq V_2(E)$. If $T$ is injective, then $\Spec_{\alpha_1}(V_1) \subseteq \Spec_{\alpha_2}(V_2)$. 
	\end{proposition}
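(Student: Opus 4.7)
The plan is to reduce both claims to the intertwining identity $T \circ \alpha_{1,f} = \alpha_{2,f} \circ T$ for all $f \in \S(\R)$, and then use the pointwise characterization of spectral subspaces from \Fref{prop: spectral_subspace_char} together with \Fref{cor: full_spec_as_closed_union_of_pts}. First I would verify this intertwining identity. Fix $f \in \S(\R)$ and $v \in V_1$. For any continuous linear functional $\lambda \in V_2'$, the composition $\lambda \circ T : V_1 \to \C$ is continuous linear, so by the defining property of the weak integral, $\R$-equivariance of $T$, and the definition of $\alpha_{2,f}$:
\begin{align*}
\lambda\bigl(T(\alpha_{1,f}(v))\bigr)
&= \int_\R f(t)\, \lambda\bigl(T(\alpha_{1,t}(v))\bigr)\, dt
= \int_\R f(t)\, \lambda\bigl(\alpha_{2,t}(T(v))\bigr)\, dt
= \lambda\bigl(\alpha_{2,f}(T(v))\bigr).
\end{align*}
Since $V_2'$ separates points of $V_2$ by Hahn--Banach, we conclude $T(\alpha_{1,f}(v)) = \alpha_{2,f}(T(v))$.

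Now for the first assertion, let $E \subseteq \R$ be arbitrary and let $v \in V_1(E)_0$. For any $f \in \S(\R)$ with $\supp(\widehat{f}) \cap \overline{E} = \emptyset$, the characterization in \Fref{prop: spectral_subspace_char} gives $\alpha_{1,f}(v) = 0$, and hence by the intertwining identity $\alpha_{2,f}(T(v)) = T(\alpha_{1,f}(v)) = 0$. Applying \Fref{prop: spectral_subspace_char} again in the reverse direction yields $T(v) \in V_2(E)_0$. So $T(V_1(E)_0) \subseteq V_2(E)_0$. Taking closures and using continuity of $T$, we obtain
$$T(V_1(E)) = T\bigl(\overline{V_1(E)_0}\bigr) \subseteq \overline{T(V_1(E)_0)} \subseteq \overline{V_2(E)_0} = V_2(E).$$

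For the second assertion, suppose $T$ is injective. Then for each $v \in V_1$, injectivity of $T$ together with the intertwining identity yields
$$\S(\R)_v = \set{ f \in \S(\R) \st \alpha_{1,f}(v) = 0 } = \set{ f \in \S(\R) \st \alpha_{2,f}(T(v)) = 0 } = \S(\R)_{T(v)},$$
and consequently $\Spec_{\alpha_1}(v) = h(\S(\R)_v) = h(\S(\R)_{T(v)}) = \Spec_{\alpha_2}(T(v))$. Applying \Fref{cor: full_spec_as_closed_union_of_pts} to each side,
$$\Spec_{\alpha_1}(V_1) = \overline{\bigcup_{v \in V_1} \Spec_{\alpha_1}(v)} = \overline{\bigcup_{v \in V_1} \Spec_{\alpha_2}(T(v))} \subseteq \overline{\bigcup_{w \in V_2} \Spec_{\alpha_2}(w)} = \Spec_{\alpha_2}(V_2).$$

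I do not anticipate a significant obstacle here: the only mildly delicate point is the passage of $T$ through the weak integral defining $\alpha_f$, which is handled by Hahn--Banach as above. Completeness and pointwise polynomial boundedness are only used implicitly, via the already established \Fref{def: repr_of_schartz}, \Fref{prop: spectral_subspace_char} and \Fref{cor: full_spec_as_closed_union_of_pts}.
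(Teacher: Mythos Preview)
Your proof is correct and follows essentially the same approach as the paper: both establish the intertwining $T\alpha_{1,f} = \alpha_{2,f}T$ (which the paper absorbs into the phrase ``as $T$ is equivariant''), deduce $\S(\R)_v \subseteq \S(\R)_{Tv}$ (with equality when $T$ is injective), and then pass to closures via continuity and invoke \Fref{cor: full_spec_as_closed_union_of_pts}. Your explicit justification of the intertwining via Hahn--Banach is a welcome detail the paper leaves implicit.
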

	\begin{proof}
		Let $v \in V$. As $T$ is equivariant, we have that $\S(\R)_{v} \subseteq \S(\R)_{Tv}$. Hence $h(\S(\R)_{Tv}) \subseteq h(\S(\R)_v)$, which is to say that $\Spec_{\alpha_2}(Tv)\subseteq \Spec_{\alpha_1}(v)$. Thus if $E \subseteq \R$ is a subset then $TV_1(E)_0 \subseteq V_2(E)_0$. As $T$ is continuous, it also follows that $TV_1(E) \subseteq V_2(E)$. If $T$ is injective, we have $\S(\R)_{v} = \S(\R)_{Tv}$ for any $v \in V_1$. Consequently, $\Spec_{\alpha_1}(v) = \Spec_{\alpha_2}(Tv) \subseteq \Spec_{\alpha_2}(V_2)$. As $\Spec_{\alpha_2}(V_2)$ is closed, it follows using \Fref{cor: full_spec_as_closed_union_of_pts} that $\Spec_{\alpha_1}(V_1) = \overline{\bigcup_{v \in V_1} \Spec_{\alpha_1}(v)} \subseteq \Spec_{\alpha_2}(V_2)$.\qedhere\\
	\end{proof}

	\noindent
	In the multi-linear context, we have the following analogue of \citep[A.10]{KH_Zellner_inv_smooth_vectors}:
	
	\begin{restatable}{proposition}{arvspecbil}\label{prop: arv_spec_subs_additive}
		For $j \in \{1,2,3\}$, let $\alpha_j : \R \to \B(V_j)^\times$ be a strongly continuous representation of $\R$ on the complete and Hausdorff complex locally convex vector space $V_j$. Assume that $\alpha_j$ is pointwise polynomially bounded. Let $\beta : V_1 \times V_2 \to V_3$ be a continuous $\R$-equivariant bilinear map. Let $E_1, E_2 \subseteq \R$ be closed subsets. Then 
		$$ \beta(V_1(E) \times V_2(E)) \subseteq V_3^+(E_1 + E_2). $$
		In particular, if $\alpha_3$ is polynomially bounded then $\beta(V_1(E) \times V_2(E)) \subseteq V_3(E_1 + E_2)$. \\
	\end{restatable}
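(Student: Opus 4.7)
The plan is to reduce the inclusion to a spectral-subspace characterization via \Fref{prop: spectral_subspace_char}, and then to apply a two-dimensional Fourier-analytic argument that treats the diagonal $\R$-action as the pullback along $u \mapsto (u,u)$ of a product $\R^2$-action on $V_1 \times V_2$. By continuity of $\beta$, the closedness of each $V_3(E_1+E_2+N)$ and of the intersection $V_3^+(E_1+E_2)$, and the density $V_j(E_j) = \overline{V_j(E_j)_0}$, it suffices to show $\beta(v_1, v_2) \in V_3(E_1+E_2+N)_0$ for every $v_j \in V_j(E_j)_0$ and every open $0$-neighborhood $N \subseteq \R$. By \Fref{prop: spectral_subspace_char} this reduces to verifying $\alpha^{(3)}_f(\beta(v_1, v_2)) = 0$ whenever $f \in \S(\R)$ satisfies $\supp(\widehat{f}) \cap \overline{E_1+E_2+N} = \emptyset$.

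The core step is to introduce the auxiliary bilinear operator
\[
\beta_\phi(v_1, v_2) := \iint_{\R^2} \phi(s, t)\,\beta(\alpha^{(1)}_s v_1,\, \alpha^{(2)}_t v_2)\,ds\,dt, \qquad \phi \in \S(\R^2),
\]
which is well-defined as a weak $V_3$-valued integral by pointwise polynomial boundedness of $\alpha_1, \alpha_2$, joint continuity of $\beta$, and completeness of $V_3$, and to prove $\beta_\phi(v_1, v_2) = 0$ whenever $\widehat{\phi}$ vanishes on an open neighborhood of $E_1 \times E_2$. First I would handle the case where $\widehat{\phi}$ vanishes on all of $E_1 \times \R$: the partial Fourier identity $\widehat{\phi(\cdot, t)}(p) = \tfrac{1}{2\pi}\int \widehat{\phi}(p, q)e^{-itq}\,dq$ yields $\supp(\widehat{\phi(\cdot, t)}) \cap E_1 = \emptyset$ for every $t$, whence $\alpha^{(1)}_{\phi(\cdot, t)} v_1 = 0$ by \Fref{prop: spectral_subspace_char}, and Fubini forces $\beta_\phi(v_1, v_2) = 0$; the symmetric case with $\widehat{\phi}$ vanishing on $\R \times E_2$ is analogous. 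The general case then follows from a smooth partition of unity $\widehat{\phi} = \rho_1 \widehat{\phi} + \rho_2 \widehat{\phi}$ subordinate to the open cover $\R^2 = W \cup ((\R \setminus E_1) \times \R) \cup (\R \times (\R \setminus E_2))$, where $W$ is the neighborhood on which $\widehat{\phi}$ vanishes and $\rho_1, \rho_2$ are chosen with polynomially bounded partial derivatives so that each $\rho_j \widehat{\phi}$ remains in $\S(\R^2)$.

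Finally I would realize $\alpha^{(3)}_f(\beta(v_1, v_2))$ as a limit of $\beta_{\phi_n}(v_1, v_2)$ by fixing an even Schwartz bump $\psi$ with $\int \psi = 1$ and setting $\phi_n(s, t) := f(\tfrac{s+t}{2})\,n\psi(n(s-t))$. A direct computation gives $\widehat{\phi_n}(p, q) = \widehat{f}(p+q)\,\widehat{\psi}(\tfrac{p-q}{2n})$, which vanishes on the open neighborhood $\{(p,q) : p+q \in E_1+E_2+N\}$ of $E_1 \times E_2$ (since $\widehat{f}$ vanishes there by hypothesis), so $\beta_{\phi_n}(v_1, v_2) = 0$ by the previous step. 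Changing variables $u = (s+t)/2$, $w = n(s-t)$, and applying dominated convergence against each continuous seminorm on $V_3$ — using joint continuity of $\beta$, pointwise polynomial boundedness of $\alpha_1, \alpha_2$ to obtain a bound of the form $(1+|u|+|w|)^{N_1+N_2}$, and Schwartz decay of $f$ and $\psi$ — yields $\beta_{\phi_n}(v_1, v_2) \to \alpha^{(3)}_f(\beta(v_1, v_2))$, which is therefore zero. The ``in particular'' assertion is then immediate from \Fref{cor: pol_bdd_arv_subspaces}. The main technical obstacle is the partition-of-unity step: since the cover elements are unbounded one must carefully arrange $\rho_j \in C^\infty(\R^2)$ with all partial derivatives polynomially bounded, which is exactly the growth control that makes pointwise polynomial boundedness the natural hypothesis here rather than a weaker growth condition.
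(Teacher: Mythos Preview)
Your route is genuinely different from the paper's and is essentially workable, but there is a real gap in the partition-of-unity reduction. In your Case~1 you assume only that $\widehat{\phi}$ vanishes on $E_1\times\R$ and conclude that $\supp(\widehat{\phi(\cdot,t)})\cap E_1=\emptyset$. This does not follow: vanishing \emph{on} $E_1$ is not the same as having support \emph{disjoint from} $E_1$; for \Fref{prop: spectral_subspace_char} you need $\widehat{\phi}$ to vanish on a cylindrical set $U_1\times\R$ with $U_1\supseteq E_1$ open. Correspondingly, your partition of unity must produce pieces $\rho_j\widehat{\phi}$ vanishing on cylindrical sets $U_1\times\R$ and $\R\times U_2$, and this forces the original neighbourhood $W$ to contain a \emph{product} neighbourhood $U_1\times U_2$ of $E_1\times E_2$. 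Not every open $W\supseteq E_1\times E_2$ has this property (fail for non-compact $E_j$), so your general lemma on $\beta_\phi$ is stated too strongly. The fix is easy: for your specific $\widehat{\phi_n}(p,q)=\widehat{f}(p+q)\widehat{\psi}((p-q)/2n)$, the vanishing set $\{p+q\in\overline{E_1+E_2+N}\}$ \emph{does} contain the product $(E_1+N_1)\times(E_2+N_2)$ whenever $N_1+N_2\subseteq N$, and then one can take $\rho_2(p,q)=1-\chi(q)$ with $\chi\in C^\infty(\R)$ equal to $1$ on $E_2+\tfrac{1}{3}N_2$ and supported in $E_2+\tfrac{2}{3}N_2$ (derivatives bounded by mollification), which yields Schwartz pieces with the required cylindrical vanishing.

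For comparison, the paper avoids the two-variable Fourier analysis altogether: it first invokes \Fref{lem: arv_spec_towards_pf_bil} to pass from $V_j(E_j)$ to the dense subspaces $R_{\alpha_j}(E_j+N_j)$, whose elements are already of the form $\alpha_{f_j}w_j$ with $\supp(\widehat{f_j})\subseteq E_j+N_j$ compact. One then computes $\alpha_{f_3}\beta(\alpha_{f_1}w_1,\alpha_{f_2}w_2)$ directly as a three-fold integral and finds that its Fourier kernel $\widehat{f_1}(p_1)\widehat{f_2}(p_2)\widehat{f_3}(p_1+p_2)$ vanishes identically by the support conditions. This replaces your partition of unity and approximation $\phi_n\to f\otimes\delta_{\text{diag}}$ by a single smoothing on each factor, which is both shorter and sidesteps the cylindrical-neighbourhood issue. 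Your approach, once patched, has the advantage of treating the bilinear map more intrinsically via the diagonal $\R\hookrightarrow\R^2$ picture, but it costs more bookkeeping.
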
	
	
	\noindent
	Before proceeding to to proof of \Fref{prop: arv_spec_subs_additive}, let us mention the following immediate consequence:
	\begin{corollary}\label{cor: arv_spec_additive}
		Consider the setting of \Fref{prop: arv_spec_subs_additive}. Assume additionally that $\beta$ has dense span and that $\alpha_3$ is polynomially bounded. Then 
		$$\Spec_{\alpha_3}(V_3) \subseteq \overline{\Spec_{\alpha_1}(V_1) + \Spec_{\alpha_2}(V_2)}.$$
	\end{corollary}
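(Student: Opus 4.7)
The plan is to deduce the corollary as a clean consequence of Proposition~\ref{prop: arv_spec_subs_additive} combined with Corollary~\ref{cor: pol_bdd_arv_subspaces} and Corollary~\ref{cor: pointwise_char_of_full_spec}. Abbreviate $E_j := \Spec_{\alpha_j}(V_j)$ for $j \in \{1,2\}$. First I would use Remark~\ref{rem: spec} to note that $V_j = V_j(E_j)_0 \subseteq V_j(E_j)$, so that Proposition~\ref{prop: arv_spec_subs_additive} applies with $E_1, E_2$ and yields
\[
\beta(V_1 \times V_2) \;\subseteq\; \beta\bigl(V_1(E_1) \times V_2(E_2)\bigr) \;\subseteq\; V_3^+(E_1 + E_2).
\]

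Next I would invoke the polynomial-boundedness of $\alpha_3$ to collapse the notions of spectral subspace for $V_3$: by Corollary~\ref{cor: pol_bdd_arv_subspaces} we have $V_3^+(E_1+E_2) = V_3(E_1+E_2) = V_3(E_1+E_2)_0$, and this is a \emph{closed} linear subspace of $V_3$ (recall that $V_3^+(E)$ is closed by definition as it is an intersection of closures, and polynomial-boundedness forces the extra equality $V_3(E)_0 = V_3(E)$). Since $\beta$ has dense span, the linear span of $\beta(V_1 \times V_2)$ is dense in $V_3$; but this span is contained in the closed subspace $V_3(E_1+E_2)_0$, so taking closures gives $V_3 \subseteq V_3(E_1+E_2)_0$.

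Finally, the characterization in Corollary~\ref{cor: pointwise_char_of_full_spec} turns the inclusion $V_3 \subseteq V_3(E_1+E_2)_0$ directly into the desired spectral bound $\Spec_{\alpha_3}(V_3) \subseteq \overline{E_1+E_2} = \overline{\Spec_{\alpha_1}(V_1) + \Spec_{\alpha_2}(V_2)}$.

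There is really no substantive obstacle here: Proposition~\ref{prop: arv_spec_subs_additive} does all the analytic work, and the corollary is a bookkeeping exercise. The only point that needs care is using the polynomial-boundedness of $\alpha_3$ precisely where needed, namely to guarantee that $V_3^+(E_1+E_2)$ agrees with the closed subspace $V_3(E_1+E_2)_0$, so that the density assumption on $\beta$ can be promoted to an inclusion of the whole $V_3$.
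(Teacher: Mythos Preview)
Your proof is correct and follows essentially the same route as the paper's own argument: apply Proposition~\ref{prop: arv_spec_subs_additive} with $E_j = \Spec_{\alpha_j}(V_j)$, use polynomial boundedness of $\alpha_3$ via Corollary~\ref{cor: pol_bdd_arv_subspaces} to identify $V_3^+(E_1+E_2)$ with the closed subspace $V_3(E_1+E_2)_0$, invoke density of the span of $\beta$, and conclude with Corollary~\ref{cor: pointwise_char_of_full_spec}. The only cosmetic difference is that you make the use of Remark~\ref{rem: spec} explicit, whereas the paper absorbs it into a direct appeal to the proposition.
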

	\begin{proof}
		We know by \Fref{prop: arv_spec_subs_additive} that $\beta(V_1, V_2) \subseteq V_3^+\big(\Spec_{\alpha_1}(V_1) + \Spec_{\alpha_2}(V_2)\big)$. In view of \Fref{prop: spectral_subspace_char} and \Fref{cor: pol_bdd_arv_subspaces}, we further know that 
		$$V_3^+\big(\Spec_{\alpha_1}(V_1) + \Spec_{\alpha_2}(V_2)\big) = V_3\big(\Spec_{\alpha_1}(V_1) + \Spec_{\alpha_2}(V_2)\big)_0,$$
		and this is a closed linear subspace of $V_3$. As $\beta(V_1, V_2)$ has dense linear span in $V_3$, it follows that 
		$$V_3 \subseteq V_3\big(\Spec_{\alpha_1}(V_1) + \Spec_{\alpha_2}(V_2)\big)_0.$$
		According to \Fref{cor: pointwise_char_of_full_spec}, this equivalent with 
		$$\Spec_{\alpha_3}(V_3) \subseteq \overline{\Spec_{\alpha_1}(V_1) + \Spec_{\alpha_2}(V_2)}.\qedhere$$
	\end{proof}
	
	\noindent
	The proof of \Fref{prop: arv_spec_subs_additive} requires some preparation. It closely follows that of \citep[Prop.\ 2.2]{Arveson_groups_of_aut_of_oa} and \citep[Prop.\ A.14]{Neeb_hol_reps}. We first introduce some additional notation:
	
	\begin{definition}
		For a subset $E \subseteq \R$, define the ideal $J(E) \subseteq \S(\R)$ and the subspace $R_\alpha(E)_0 \subseteq V$ by 
		\begin{align*}
			J(E) &:= \set{ f \in \S(\R) \st \widehat{f} \in C^\infty_c(\R) \text{ and } \supp{\widehat{f}} \subseteq E },\\
			R_\alpha(E)_0 &:= \set{\alpha_f v \st f \in J(E), \; v \in V}\subseteq V
		\end{align*}
		Let $R_\alpha(E) := \overline{R_\alpha(E)_0}$ be its closure. If $\alpha$ is clear from the context, we write simply $R(E)_0$ and $R(E)$ instead of $R_\alpha(E)_0$ and $R_\alpha(E)$, respectively.
	\end{definition}

	\noindent
	If $E \subseteq \R$ is a subset, recall from \Fref{def: ideals_and_hulls} that $I_0(\overline{E})$ consists of all Schwartz functions $f$ whose Fourier transform $\widehat{f}$ vanishes on a neighborhood of $\overline{E} \subseteq \R$. On the other hand, $J(E)$ is the ideal in $\S(\R)$ generated by those $f \in \S(\R)$ for which $\widehat{f}$ has compact support contained in $E$.
	
	\begin{lemma}\label{lem: complementary_ideals_schwartz}
		Let $E \subseteq \R$ be a closed subset and let $N \subseteq \R$ be a $0$-neighborhood. Then 
		$$\overline{I_0(E) + J(E+N)}=\S(\R).$$
	\end{lemma}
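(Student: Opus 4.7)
My plan is to transfer the problem to the Fourier side and build the approximation by a partition-of-unity argument on $\R$. Since $\mathcal{F} : \S(\R) \to \S(\R)$ is a topological isomorphism, it is equivalent to show that the subspaces
\[
\widetilde{I} := \mathcal{F}(I_0(E)) = \set{g \in \S(\R) \st \supp(g) \cap E = \emptyset}
\quad \text{and} \quad
\widetilde{J} := \mathcal{F}(J(E+N)) = C_c^\infty(E+N)
\]
sum to a dense subspace of $\S(\R)$. So fix $g \in \S(\R)$ and aim to produce a sequence in $\widetilde{I} + \widetilde{J}$ converging to $g$ in the Schwartz topology.

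The first step is a standard truncation: choose $\psi \in C_c^\infty(\R)$ with $\psi \equiv 1$ on $[-1,1]$, and set $\psi_R(x) := \psi(x/R)$. A direct computation with the Leibniz rule, using that $\partial^k\psi_R$ is bounded by $C_k R^{-k}$ and vanishes inside $[-R, R]$, shows $\psi_R g \to g$ in $\S(\R)$ as $R \to \infty$. Hence it suffices to approximate compactly supported smooth functions, and in fact I will even show such a function lies exactly in $\widetilde{I} + \widetilde{J}$.

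The second step is to construct a smooth cutoff adapted to $E$. Choose symmetric open $0$-neighborhoods $N''\subseteq N'\subseteq N$ with $N''+N''\subseteq N'$ and $N'+N'\subseteq N$. Pick $\phi \in C_c^\infty(\R)$ nonnegative with $\supp\phi \subseteq N''$ and $\int\phi = 1$, and set
\[
\chi := \phi \ast \mathbbm{1}_{E+N'}.
\]
Then $\chi \in C^\infty(\R)$ is bounded with bounded derivatives; $\supp(\chi) \subseteq N'' + (E + N') \subseteq E+N$; and $\chi \equiv 1$ on $E + N''$, which is an open neighborhood of $E$. Given any $h \in C_c^\infty(\R)$, write
\[
h = (1-\chi)h + \chi h.
\]
The summand $(1-\chi)h$ is smooth, compactly supported, and vanishes on the neighborhood $E+N''$ of $E$, so it lies in $\widetilde{I}$; the summand $\chi h$ is in $C_c^\infty(E+N) = \widetilde{J}$. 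Applying this to $h = \psi_R g$ gives $\psi_R g \in \widetilde{I} + \widetilde{J}$ for every $R$, and letting $R \to \infty$ yields $g \in \overline{\widetilde{I} + \widetilde{J}}$.

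The only genuine care needed is the verification in step one that $\psi_R g \to g$ in $\S(\R)$, which is the standard calculation with the seminorms $p_{n,m}$; otherwise the argument is essentially a bookkeeping of open sets, with no real obstacle. Pulling back along $\mathcal{F}^{-1}$ gives $\overline{I_0(E) + J(E+N)} = \S(\R)$.
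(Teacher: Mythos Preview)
Your argument is correct, but it follows a genuinely different route from the paper's. The paper proves the lemma abstractly via the hull machinery already set up: it observes that $h(J(E+N)) \subseteq \R\setminus E$ and $h(I_0(E)) = E$, so the closed ideal $\overline{I_0(E)+J(E+N)}$ has empty hull, whence it equals $\S(\R)$ by \Fref{cor: empty_hull_dense}. Your proof instead works directly on the Fourier side, approximating by compactly supported functions and splitting them with an explicit smooth cutoff $\chi$ adapted to $E$. The paper's approach is shorter because it leverages the Tauberian-type result (\Fref{lem: ideals_and_hulls} and \Fref{cor: empty_hull_dense}) already in place; your approach is more elementary and self-contained, avoiding that machinery entirely, and in fact shows the slightly stronger statement that every $h\in C_c^\infty(\R)$ lies \emph{exactly} in $\widetilde I+\widetilde J$, not merely in its closure.

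One small point of care: when you assert $\supp(\chi h)\subseteq E+N$, you are taking a closure, and $E+N'+N''$ need not be closed. This is harmless in $\R$: choose $N',N''$ so that $\overline{N'+N''}$ is compact and contained in the interior of $N$; then since $E$ is closed, $\overline{E+N'+N''}=E+\overline{N'+N''}\subseteq E+N$. You might make this explicit.
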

	\begin{proof}
		Let $J_2 := \overline{J(E+N)_0 + I_0(E)}$ be the closed ideal of $\S(\R)$ generated by $J(E+N)$ and $I_0(E)$. Observe that $h(J(E+N)) \subseteq \R\setminus E$. On the other hand, $h(I_0(E)) \subseteq E$ by \Fref{lem: ideals_and_hulls}. We thus find that
		$$h(J_2) \subseteq h(I_0(E)) \cap h(J(E+N)) \subseteq \emptyset$$
		and hence $h(J_2) = \emptyset$. It follows using \Fref{cor: empty_hull_dense} that $J_2 = \S(\R)$.
	\end{proof}

	\begin{lemma}\label{lem: annihilator_large_implies_v_zero}
		Let $v \in V$ and $N\subseteq \R$ be a $0$-neighborhood. If $J(\Spec_\alpha(V)+N) \subseteq \S(\R)_v$, then $v = 0$.
	\end{lemma}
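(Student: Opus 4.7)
The plan is to reduce to \Fref{prop: non-empty-spectrum} by showing that the hypothesis forces $\S(\R)_v = \S(\R)$. Set $E := \Spec_\alpha(V)$. The strategy is to combine two ideals inside $\S(\R)_v$ that together have dense span in $\S(\R)$: the ideal $I_0(E)$, which automatically annihilates every vector in $V$, and the ideal $J(E+N)$, which annihilates $v$ by hypothesis. Density of the sum is exactly the content of \Fref{lem: complementary_ideals_schwartz}, so the missing ingredient is closedness of $\S(\R)_v$.

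First I would establish that $\S(\R)_v \subseteq \S(\R)$ is closed. This amounts to showing that $f \mapsto \alpha_f(v)$ is continuous $\S(\R) \to V$. Here one uses pointwise polynomial boundedness: given a continuous seminorm $p$ on $V$, pick $N \in \N$ with $M := \sup_{t \in \R} p(\alpha_t v)/(1+|t|^N) < \infty$. Then the weak integral defining $\alpha_f(v)$ satisfies
\[
p(\alpha_f(v)) \;\leq\; M\int_{\R} |f(t)|\,(1+|t|^N)\,dt,
\]
and the right-hand side is a continuous seminorm on $\S(\R)$. Hence $f \mapsto \alpha_f(v)$ is continuous and its kernel $\S(\R)_v$ is closed.

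Next I would collect the two ideal containments. By definition of $\Spec_\alpha(V) = h(\ker \alpha)$ and \Fref{cor: pointwise_char_of_full_spec} applied with the subset $E$ itself, we have $I_0(E) \subseteq \ker \alpha$; since $\ker \alpha$ annihilates every $v \in V$, this gives $I_0(E) \subseteq \S(\R)_v$. The hypothesis is $J(E + N) \subseteq \S(\R)_v$. Adding the two inclusions and invoking closedness yields $\overline{I_0(E) + J(E+N)} \subseteq \S(\R)_v$.

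Finally, \Fref{lem: complementary_ideals_schwartz} (applied to the closed set $E = \Spec_\alpha(V)$ and the $0$-neighborhood $N$) identifies the left-hand side as all of $\S(\R)$, so $\S(\R)_v = \S(\R)$ and \Fref{prop: non-empty-spectrum} gives $v = 0$. There is no real obstacle here; the only subtle point is the continuity estimate above, which is precisely why pointwise polynomial boundedness is assumed throughout the section.
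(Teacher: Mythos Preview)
Your proof is correct and follows essentially the same route as the paper: establish $I_0(E) \subseteq \S(\R)_v$ (the paper does this via \Fref{rem: spec} and \Fref{prop: spectral_subspace_char} rather than \Fref{cor: pointwise_char_of_full_spec}, but the content is identical), combine with the hypothesis $J(E+N) \subseteq \S(\R)_v$, invoke \Fref{lem: complementary_ideals_schwartz} and closedness of $\S(\R)_v$, then finish with \Fref{prop: non-empty-spectrum}. Your explicit continuity estimate justifying closedness of $\S(\R)_v$ is a detail the paper simply asserts in \Fref{def: arv_spec}.
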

	\begin{proof}
		Let $E := \Spec_\alpha(V)$. Assume that $J(E+N) \subseteq \S(\R)_v$. Recall from \Fref{rem: spec} that $V = V(E)_0$. By \Fref{prop: spectral_subspace_char}, this means that $I_0(E) \subseteq \S(\R)_v$. On the other hand, $J(E+N) \subseteq \S(\R)_v$, by assumption. Since $\S(\R)_v$ is closed we obtain using \Fref{lem: complementary_ideals_schwartz} that $\S(\R) = \overline{I_0(E) + J(E+N)} \subseteq \S(\R)_v$. By \Fref{prop: non-empty-spectrum}, this implies that $v = 0$.
	\end{proof}

	\begin{lemma}\label{lem: arv_spec_towards_pf_bil}
		Let $E \subseteq \R$ be closed. Then $V(E) \subseteq \bigcap_N R(E + N) \subseteq V^+(E)$, where $N$ runs over all open $0$-neighborhoods in $\R$. 
	\end{lemma}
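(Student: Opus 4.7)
The plan is to establish the two inclusions separately. The inclusion $\bigcap_N R(E+N) \subseteq V^+(E)$ is essentially immediate: for any open $0$-neighborhood $N$ and any generating element $\alpha_f w \in R(E+N)_0$ with $f \in J(E+N)$, \Fref{lem: specfv_in_supp_f} gives $\Spec_\alpha(\alpha_f w) \subseteq \supp(\widehat{f}) \subseteq \overline{E+N}$, hence $R(E+N)_0 \subseteq V(E+N)_0 \subseteq V(E+N)$ and, taking closures, $R(E+N) \subseteq V(E+N)$; intersecting over $N$ gives the claim.

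For the substantive inclusion $V(E) \subseteq \bigcap_N R(E+N)$, fix an open $0$-neighborhood $N$; since $R(E+N)$ is closed it suffices to show $V(E)_0 \subseteq R(E+N)$. My strategy is to approximate a given $v \in V(E)_0$ by elements $\alpha_g v$ with $g \in J(E+N)$, combining three ingredients: (i) continuity of $\S(\R) \to V$, $f \mapsto \alpha_f v$, which follows by observing that the pointwise polynomial boundedness of $\alpha$ yields, for each continuous seminorm $p$ on $V$, constants $C, M$ with $p(\alpha_t v) \leq C(1+|t|^M)$, so $p(\alpha_f v) \leq C\int |f(t)|(1+|t|^M)\,dt$; (ii) an approximate identity $u_\lambda(t) := \lambda\phi(\lambda t)$ in $\S(\R)$ (with $\phi \in \S(\R)$, $\int \phi = 1$) satisfying $\alpha_{u_\lambda} v \to v$ as $\lambda \to \infty$, established after the substitution $s = \lambda t$ by dominated convergence via the same polynomial bound; and (iii) the density statement $\overline{I_0(E) + J(E+N)} = \S(\R)$ from \Fref{lem: complementary_ideals_schwartz}.

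Given these ingredients, the argument proceeds as follows: for any continuous seminorm $p$ and any $\epsilon > 0$, pick $\lambda$ with $p(\alpha_{u_\lambda} v - v) < \epsilon/2$, then use (iii) to approximate $u_\lambda$ in $\S(\R)$ by a sum $f+g$ with $f \in I_0(E)$, $g \in J(E+N)$ tightly enough that (i) yields $p(\alpha_{f+g} v - \alpha_{u_\lambda} v) < \epsilon/2$. Since $v \in V(E)_0$ gives $I_0(E) \subseteq \S(\R)_v$ by \Fref{prop: spectral_subspace_char}, we have $\alpha_f v = 0$, whence $\alpha_g v \in R(E+N)_0$ lies within $\epsilon$ of $v$ in $p$; hence $v \in R(E+N)$. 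The substantive algebraic content is already packaged in \Fref{lem: complementary_ideals_schwartz} and the characterization $V(E)_0 = \set{v : I_0(E) \subseteq \S(\R)_v}$; the only genuinely technical point is upgrading the pointwise polynomial boundedness to the uniform integrable bounds needed in (i) and (ii), which is why it is essential here to work in the Schwartz convolution algebra rather than, say, in $L^1(\R)$.
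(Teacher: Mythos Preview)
Your proof is correct and rests on the same algebraic core as the paper's: the second inclusion via \Fref{lem: specfv_in_supp_f}, and for the first inclusion the combination of \Fref{lem: complementary_ideals_schwartz} with the characterization $V(E)_0 = \{v : I_0(E) \subseteq \S(\R)_v\}$ from \Fref{prop: spectral_subspace_char}. The continuity of $f \mapsto \alpha_f v$ that you derive is already recorded in the paper (\Fref{def: repr_of_schartz} states that $f \mapsto \alpha_f$ is strongly continuous), so ingredient (i) does not need to be re-established.

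The point of departure is in how you pass from ``$\alpha_f v = 0$ for $f \in I_0(E)$ and $\alpha_g v \in R(E+N)_0$ for $g \in J(E+N)$'' to ``$v \in R(E+N)$''. You build an approximate identity $(u_\lambda)$, split $u_\lambda \approx f+g$ via the density lemma, and exhibit $\alpha_g v$ as an explicit approximant of $v$; this is constructive but costs you the dominated-convergence verification of $\alpha_{u_\lambda} v \to v$. The paper instead argues by duality: for any $\lambda \in V'$ vanishing on $R(E+N)$, one has $\lambda(\alpha_f v) = 0$ for all $f \in I_0(E) + J(E+N)$, hence for all $f \in \S(\R)$ by density and continuity, whence $\lambda(\alpha_t v) = 0$ for all $t$ (since $t \mapsto \lambda(\alpha_t v)$ is continuous and orthogonal to every Schwartz function), so in particular $\lambda(v) = 0$; Hahn--Banach then gives $v \in R(E+N)$. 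The paper's route trades your approximate-identity step for the (softer) observation that a continuous function annihilated by all of $\S(\R)$ must vanish. Both arguments are short; yours is slightly more hands-on, the paper's slightly slicker.
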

	\begin{proof}
		This proof follows that of \citep[Prop.\ 2.2]{Arveson_groups_of_aut_of_oa}. \Fref{lem: specfv_in_supp_f} entails that $\Spec_{\alpha}(\alpha_fv) \subseteq \supp(\widehat{f})$ for any $f \in \S(\R)$ and $v \in V$. If $N\subseteq \R$ is a $0$-neighborhood and $f \in J(E+N)$, then by definition $\supp \widehat{f} \subseteq E + N$ and hence $\Spec_\alpha(\alpha_f v) \subseteq E + N$ for any $v \in V$. Recalling that $R(E + N)_0$ is the subspace of $V$ generated by $J(E+N)$, we obtain that $R(E + N)_0\subseteq V(E + N)_0$. Consequently $\bigcap_N R(E + N) \subseteq \bigcap_N V(E + N) = V^+(E)$. Next, take $v \in V(E)_0$. We show that $v \in \bigcap_N R(E + N)$. Let $N$ be a $0$-neighborhood in $\R$. Let $\lambda \in V^\prime$ be a continuous functional with $\lambda(R(E+N)) = \{0\}$. Trivially, $\alpha_f(v) \in R(E+N)_0$ for any $f \in J(E+N)$, and hence $\lambda(\alpha_fv) = 0$. We further have $I_0(E) \subseteq \S(\R)_v$, by \Fref{prop: spectral_subspace_char}, and consequently $\lambda(\alpha_g v) = 0$ for any $g \in I_0(E)$. Thus $\lambda(\alpha_fv) = 0$ for any $f$ in the closed ideal $J_2 := \overline{I_0(E) + J(E+N)}$ of $\S(\R)$ spanned by $I_0(E)$ and $J(E+N)$. By \Fref{lem: complementary_ideals_schwartz} this ideal equals $\S(\R)$, so $\int_\R f(t)\lambda(\alpha_t v) dt = \lambda(\alpha_fv) = 0$ for any $f \in \S(\R)$. As $t\mapsto \lambda(\alpha_t v)$ is continuous, it follows that $\lambda(\alpha_t v) = 0$ for all $t \in \R$. In particular $\lambda(v) = 0$. Using the Hahn-Banach Theorem \citep[Thm.\ I.3.5]{Rudin_FA}, it follows that $v \in \bigcap_N R(E + N)$. Thus $V(E)_0 \subseteq \bigcap_N R(E + N)$ and consequently also $V(E) \subseteq \bigcap_N R(E + N)$.
	\end{proof}

	\begin{proof}[Proof of \Fref{prop: arv_spec_subs_additive}:]
		\noindent
		Having \Fref{lem: arv_spec_towards_pf_bil} at hand, we proceed as in \citep[Prop.\ A.14]{Neeb_hol_reps}. Let $N \subseteq \R$ be an open $0$-neighborhood. Let $N_1, N_2 \subseteq \R$ be open $0$-neighborhoods s.t.\ $N_1 + N_2 \subseteq N$. We show that 
		\begin{equation}\label{eq: bil_map_spectra}
			\beta\bigg(R_{\alpha_1}(E_1 + N_1)_0 \times R_{\alpha_2}(E_2 + N_2)_0\bigg) \subseteq V\big(E_1 + E_2 + N\big)_0.	
		\end{equation}
		As such, for $k \in \{1,2\}$, take $v_k \in V$ and $f_k \in J(E_k + N_k)$, meaning that $\supp(\widehat{f_k}) \subseteq E_k + N_k$. We show that $\beta(\alpha_1(f_1)v_1, \alpha_2(f_2)v_2) \in V\big(E_1 + E_2 + N\big)_0$. In view of \Fref{prop: spectral_subspace_char}, we must show that it is annihilated by $I_0(\overline{E_1 + E_2 + N})$. Let $f_3 \in I_0(\overline{E_1 + E_2 + N})$, so $\supp(\widehat{f_3}) \cap \overline{E_1 + E_2 + N} = \emptyset$. Then
		\begin{align}\label{eq: eq_bil_map_spectra_01}
			\begin{split}
				\alpha_{f_3}\beta(\alpha_{f_1}(v_1), \alpha_{f_2}(v_2)) 
				&= \int_\R \int_\R f_1(t_1)f_2(t_2)f_3(t_3)\beta(\alpha_1(t_1+t_3)v_1, \alpha_2(t_2+t_3)v_2)dt_1dt_2dt_3,	\\
				&= \int_\R \int_\R F(t_1, t_2) \beta(\alpha_1(t_1)v_1, \alpha_2(t_2)v_2)dt_1dt_2,	
			\end{split}
		\end{align}
		where $F \in \S(\R^2)$ is defined by 
		$$ F(t_1, t_2) := \int_\R f_3(t_3)f_1(t_1 - t_3)f_2(t_2 - t_3)dt_3. $$
		The Fourier transform $\widehat{F} \in \S(\R^2)$ of $F$ is given by 
		$$\widehat{F}(p_1, p_2) = \widehat{f_1}(p_1)\widehat{f_2}(p_2)\widehat{f_3}(p_1 + p_2).$$
		Observe that $\supp(\widehat{f_1}) + \supp(\widehat{f_2}) \subseteq (E_1 + N_1) + (E_2 + N_2) \subseteq E_1 + E_2 + N$. Since $\widehat{f_3}$ vanishes on $E_1 + E_2 + N$, we find that $\widehat{F} = 0$. Hence $F= 0$. From \Fref{eq: eq_bil_map_spectra_01} we obtain that $\alpha_3(f_3)\beta(\alpha_1(f_1)v_1, \alpha_2(f_2)v_2) = 0$. By \Fref{prop: spectral_subspace_char} we conclude that $\beta(\alpha_1(f_1)v_1, \alpha_2(f_2)v_2) \in V\big(E_1 + E_2 + N\big)_0$. Thus \eqref{eq: bil_map_spectra} is valid. As $\beta$ is continuous, it follows that
		$$ \beta(V_1(E) \times V_2(E)) \subseteq \beta\bigg(R_{\alpha_1}(E_1 + N_1) \times R_{\alpha_2}(E_2 + N_2)\bigg) \subseteq V\big(E_1 + E_2 + N\big),	 $$
		where the first inclusion uses \Fref{lem: arv_spec_towards_pf_bil}. Thus 
		$$\beta(V_1(E) \times V_2(E)) \subseteq \bigcap_N V\big(E_1 + E_2 + N\big) = V^+(E_1 + E_2).$$
		Assume next that $\alpha_3$ is polynomially bounded. Then $\alpha_f$ is continuous for every $f \in \S(\R)$, by \Fref{rem: pol_bdd_cts}. By \Fref{cor: pol_bdd_arv_subspaces} it follows that 
		$$V^+(E_1 + E_2) = V(E_1 + E_2).\qedhere$$
	\end{proof}

	\noindent
	Let us next consider the behavior of spectra under tensor products and spaces of continuous linear maps:
	
	\begin{proposition}\label{prop: arv_spec_tensor_prod_cont_maps}
		Let $\alpha$ and $\sigma$ be $\R$-representation on the complete and Hausdorff locally convex vector spaces $V$ and $W$ over $\C$, respectively. Assume that $\alpha$ and $\sigma$ are strongly continuous and have polynomial growth. Let $n \in \N$. 
		\begin{enumerate}
			\item The $\R$-representation $\alpha \widehat{\otimes} \sigma$ on the completed projective tensor product $V \widehat{\otimes}W$ has a continuous action $\R \times V \widehat{\otimes}W \to V \widehat{\otimes}W$, polynomial growth and satisfies
			\begin{equation}\label{eq: spec_of_tensor_prod}
				\Spec_{\alpha \widehat{\otimes} \sigma}(V \widehat{\otimes}W) \subseteq \overline{\Spec_\alpha(V) + \Spec_\alpha(W)}.	
			\end{equation}
			\item Equip $\B(V, W)$ either with the strong topology or that of uniform convergence on compact sets. The $\R$-representation $\gamma$ on $\B(V, W)$ defined by $\gamma_t T = \sigma_t \circ T\circ \alpha_{-t}$ is strongly continuous, pointwise polynomially bounded and satisfies 
			\begin{equation}\label{eq: spec_of_lin_maps}
				\Spec_\gamma(\B(V, W)) \subseteq \overline{\Spec_\sigma(W) - \Spec_{\alpha}(V)}.	
			\end{equation}
		\end{enumerate}
	\end{proposition}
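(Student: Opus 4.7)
My plan is to handle the two assertions separately, with both spectral inclusions ultimately derived from the bilinear spectrum theorem \Fref{prop: arv_spec_subs_additive}.

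For assertion (1), I would first verify that the tensor-product $\R$-action, initially defined on $V\otimes W$ via $\alpha_t\otimes\sigma_t$, extends by continuity to $V\widehat\otimes W$ and has polynomial growth. The key point is that if $p$ on $V$ and $q$ on $W$ are defining seminorms satisfying $p(\alpha_tv)\le r_p(|t|)p(v)$ and $q(\sigma_tw)\le r_q(|t|)q(w)$ for monic polynomials $r_p,r_q$, then the projective tensor seminorm $p\otimes q$ from \eqref{eq: seminorms_on_proj_tensors} inherits the bound $(p\otimes q)((\alpha_t\otimes\sigma_t)\xi)\le r_p(|t|)r_q(|t|)(p\otimes q)(\xi)$ (first on simple tensors, then on $V\otimes W$ via the defining infimum, then on $V\widehat\otimes W$ by density). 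Strong continuity of the extension follows from density of $V\otimes W$ plus a $3\varepsilon$-argument using the above uniform bound for $|t|$ bounded, and joint continuity is then supplied by \Fref{prop: strongly_cts_and_pol_growth_implies_cts}. The inclusion \eqref{eq: spec_of_tensor_prod} is then immediate from \Fref{cor: arv_spec_additive} applied to the canonical continuous $\R$-equivariant bilinear map $V\times W\to V\widehat\otimes W$, whose image has dense span.

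For assertion (2), I would begin with strong continuity of $\gamma$: writing $\gamma_t(T)v - Tv = \sigma_t T(\alpha_{-t}v - v) + (\sigma_tTv - Tv)$, the second term tends to zero by strong continuity of $\sigma$, and the first is dominated, using polynomial growth of $\sigma$ near $t=0$ and continuity of $T$, by a bounded multiple of $p(\alpha_{-t}v - v)$ for some seminorm $p$ on $V$, which tends to zero by strong continuity of $\alpha$; for the compact-open topology the same estimate yields uniformity over compact $v$-sets once joint continuity of $\alpha$ and $\sigma$ is invoked. Pointwise polynomial boundedness is obtained by chaining the polynomial-growth estimate for $\sigma$, continuity of $T$, and the polynomial-growth estimate for $\alpha$, to obtain a bound of the form $q((\gamma_tT)v)\le r(|t|)s(|t|)p'(v)$ on any defining seminorm.

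The substantive part is the spectral inclusion \eqref{eq: spec_of_lin_maps}. Here the evaluation $\ev:\B(V,W)\times V\to W$, $\ev(T,v):=Tv$, is continuous, bilinear and $\R$-equivariant since $\ev(\gamma_tT,\alpha_tv) = \sigma_tT\alpha_{-t}\alpha_tv = \sigma_t\ev(T,v)$, so \Fref{prop: arv_spec_subs_additive} applies. Setting $E := \overline{\Spec_\sigma(W)-\Spec_\alpha(V)}$, by \Fref{cor: pointwise_char_of_full_spec} and \Fref{prop: spectral_subspace_char} it suffices to show $\gamma_fT=0$ for every $T\in\B(V,W)$ and every $f\in I_0(E)$; using that $\ker\gamma$ is closed in $\S(\R)$ and that the subclass of $f\in I_0(E)$ with $\widehat f$ compactly supported is dense in $I_0(E)$ (by cutting off $\widehat f$ with a smooth bump), it is enough to treat this latter subclass. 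For such $f$, \Fref{lem: specfv_in_supp_f} gives $\Spec_\gamma(\gamma_fT)\subseteq\supp\widehat f$, and \Fref{prop: arv_spec_subs_additive} combined with \Fref{rem: spec} yields $(\gamma_fT)v\in W^+(\supp\widehat f+\Spec_\alpha(V))$ for every $v\in V$; polynomial boundedness of $\sigma$ then converts this into $\Spec_\sigma((\gamma_fT)v)\subseteq\overline{\supp\widehat f+\Spec_\alpha(V)}$. The main obstacle is to combine this with the a priori inclusion $\Spec_\sigma((\gamma_fT)v)\subseteq\Spec_\sigma(W)$ to force the spectrum to be empty: this is where compactness of $\supp\widehat f$ becomes essential, since any $p$ in the intersection can be written as $\lim(q_n+r_n)$ with $q_n\in\supp\widehat f$ and $r_n\in\Spec_\alpha(V)$, and compactness of $\supp\widehat f$ then extracts a convergent subsequence $q_n\to q\in\supp\widehat f$ with $p-q\in\Spec_\alpha(V)$, whence $q = p-(p-q)\in\Spec_\sigma(W)-\Spec_\alpha(V)\subseteq E$, contradicting $\supp\widehat f\cap E=\emptyset$. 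Emptiness of the spectrum then forces $(\gamma_fT)v=0$ by \Fref{prop: non-empty-spectrum}, and $\gamma_fT=0$ follows since $v$ was arbitrary, completing the argument.
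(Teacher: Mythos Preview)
Your treatment of part (1) and of the strong continuity and pointwise polynomial boundedness in part (2) is correct and essentially matches the paper's argument. The issue lies in the spectral inclusion \eqref{eq: spec_of_lin_maps}.

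You claim that the evaluation map $\ev:\B(V,W)\times V\to W$ is continuous and then invoke \Fref{prop: arv_spec_subs_additive}. But $\ev$ is \emph{not} jointly continuous for either the strong topology or the topology of uniform convergence on compact sets on $\B(V,W)$. Concretely, continuity of a bilinear map at $(0,0)$ would require an estimate $q(Tv)\le r(T)\,p(v)$ with $r$ a seminorm of the given type and $p$ a continuous seminorm on $V$; but seminorms of the form $q_K(T)=\sup_{w\in K}q(Tw)$ with $K$ compact (or finite) only control $q(Tv)$ for $v\in K$, not for $v$ ranging in a whole neighbourhood. For instance, in $\B(\ell^2)$ the rank-one projections $T_u(w)=\langle u,w\rangle u$ satisfy $q_K(T_{e_n})\to 0$ for any compact $K$, while $\|T_{e_n}e_n\|=1$. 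Since \Fref{prop: arv_spec_subs_additive} is stated for (jointly) continuous bilinear maps---and its proof uses this both in the Fubini-type manipulation and in passing to closures via \Fref{lem: arv_spec_towards_pf_bil}---your citation of it is not justified as written.

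The paper avoids this obstacle by arguing directly, in the spirit of the proof of \Fref{prop: arv_spec_subs_additive} but exploiting the explicit structure of $\ev$: it fixes $T$, $v$ and $f_3\in I_0(\overline{E_W-E_V+N})$, introduces auxiliary $f_1\in J(E_V+N_1)$ and $f_2\in J(E_W+N_2)$, and computes $\sigma_{f_2}\gamma_{f_3}(T)\alpha_{f_1}v$ as an iterated integral whose associated kernel $F$ has vanishing Fourier transform; it then uses \Fref{lem: annihilator_large_implies_v_zero} to strip off $\sigma_{f_2}$, and a Hahn--Banach argument to strip off $\alpha_{f_1}$. Your approach can be salvaged along similar lines, because the specific orbit map $(s,t)\mapsto\ev(\gamma_sT,\alpha_tv)=\sigma_sT\alpha_{t-s}v$ \emph{is} jointly continuous and polynomially bounded (by chaining the continuity of $T$ with \Fref{prop: strongly_cts_and_pol_growth_implies_cts}), so the integral manipulations go through for this particular $\beta$; but you would then be re-proving the relevant part of \Fref{prop: arv_spec_subs_additive} rather than citing it. Your compactness trick at the end is a nice alternative to the paper's use of the auxiliary neighbourhood $N$ and would work once the intermediate inclusion $(\gamma_fT)v\in W(\supp\widehat f+\Spec_\alpha(V))$ is properly established.
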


	\begin{proof}
		Notice by \Fref{prop: strongly_cts_and_pol_growth_implies_cts} that the actions $\alpha : \R \times V \to V$ and $\sigma : \R \times W \to W$ are continuous.
		\begin{enumerate}
			\item We write $\gamma_t := \alpha_t \widehat{\otimes} \sigma_t$ for $t \in \R$. We first show that the $\R$-representation $\gamma$ on $V \widehat{\otimes}W$ has polynomial growth. Let $p$ and $q$ be continuous seminorms on $V$ and $W$ respectively. Assume that $p(\alpha_tv) \leq r_\alpha(|t|)p(v)$ and $q(\alpha_tw) \leq r_\sigma(|t|)q(w)$ for all $t \in \R$, $v \in V$ and $w \in W$, where $r_\alpha, r_\sigma \in \R[t]$ are monic polynomials. Using this inequality, it follows from the definition of the seminorm $p \otimes q$ on $V \widehat{\otimes}W$ (see \Fref{eq: seminorms_on_proj_tensors}) that $(p \otimes q)(\gamma_t\psi) \leq r_\alpha(|t|)r_\sigma(|t|) (p \otimes q)(\psi)$ for all $t \in \R$ and $\psi \in V \widehat{\otimes}W$. Thus $\alpha \widehat{\otimes} \sigma$ has polynomial growth. \\
			
			\noindent
			To see that $\alpha \widehat{\otimes} \sigma$ has a continuous action, it suffices by \Fref{prop: strongly_cts_and_pol_growth_implies_cts} to show it is strongly continuous. Let $\psi \in V \widehat{\otimes} W$. It suffices to show that $t \mapsto \gamma_t \psi$ is continuous at $t=0$. Assume first that $\psi \in V \otimes W$, so that $\psi = \sum_{k=1}^n v_k \otimes w_k$ for some $v_k \in V$ and $w_k \in W$. Let $p$ and $q$ be continuous seminorms on $V$ and $W$, respectively. Let $r_\alpha, r_\sigma \in \R[t]$ be as above. Let $\epsilon > 0$. As $\alpha$ and $\sigma$ are strongly continuous, we can find $\delta > 0$ s.t.\ $p(\alpha_tv_k - v_k)q(\sigma_tw_k) < \epsilon$ and $p(v_k)q(\sigma_tw_k - w_k) < \epsilon$ for all $t \in (-\delta, \delta)$ and $k \in \{1, \ldots, n\}$. Writing $\alpha_t v_k \otimes \sigma_tw_k - v_k \otimes w_k = (\alpha_t v_k - v_k)\otimes \sigma_tw_k + v_k\otimes(\sigma_t w_k - w_k)$, we obtain for any $t \in (-\delta, \delta)$ that
			$$ (p\otimes q)(\gamma_t\psi - \psi) \leq \sum_{k=1}^n p(\alpha_tv_k - v_k)q(\sigma_t w_k) + p(v_k)q(\sigma_t w_k - w_k) < 2k\epsilon. $$
			This proves that $\gamma_t \psi \to \psi$ as $t\to 0$, for any $\psi$ in the dense subspace $V \otimes W$. Let us next consider general $\psi \in V \widehat{\otimes} W$. Let $\eta \in V \otimes W$ be such that $(p \otimes q)(\psi - \eta) < \epsilon$. For small enough $\delta> 0$ we have $r_\alpha(|t|)r_\sigma(|t|) \leq 2$ and $(p\otimes q)(\gamma_t \eta - \eta) < \epsilon$ for all $t \in (-\delta, \delta)$. Using 
			$$(p \otimes q)(\gamma_t(\psi - \eta)) \leq  r_\alpha(|t|)r_\sigma(|t|)(p\otimes q)(\psi -\eta) < 2 \epsilon,$$
			we find for all $t \in (-\delta, \delta)$ that
			\begin{align*}
				(p\otimes q)(\gamma_t \psi -\psi) 
				&\leq (p \otimes q)(\gamma_t(\psi - \eta)) + (p \otimes q)(\psi - \eta)	+ (p\otimes q)(\gamma_t \eta - \eta) <4\epsilon
			\end{align*}
			Thus $\R \to V \widehat{\otimes}W, \; t \mapsto \gamma_t\psi$ is continuous.\\

			\noindent
			As the canonical bilinear map $\widehat{\otimes} :V \times W\to V \widehat{\otimes}W$ is continuous, $\R$-equivariant and has dense span in $V \widehat{\otimes} W$, the remaining assertion is immediate from \Fref{cor: arv_spec_additive}.
			
			\item It suffices to consider only the topology of uniform convergence on compact sets. Let $T \in \B(V, W)$. Let $q$ be a continuous seminorm on $W$ and let $K \subseteq V$ be compact. Consider the continuous seminorm on $\B(V, W)$ defined by $q_K(T) := \sup_{v \in K}q(Tv)$. As $T$ is bounded, there is a continuous seminorm $p$ on $V$ s.t.\ $q(Tv) \leq p(v)$ for all $v \in v$. Let $r_\sigma, r_\alpha \in \R[t]$ be monic polynomials s.t.\ $q(\sigma_t w) \leq r_\sigma(|t|)q(w)$ and $p(\alpha_{t}v) \leq r_\alpha(|t|)p(v)$ for all $t\in \R$, $v \in V$ and $w \in W$. Then 
			$$q_K(\gamma_t(T)) = \sup_{v \in K} q(\sigma_t T \alpha_{-t}v) \leq r_\sigma(|t|)r_\alpha(|t|) \sup p(K).$$
			This implies that $\gamma$ is pointwise polynomially bounded. \\
			
			\noindent
			We next show that $\gamma$ is strongly continuous. Let $T \in \B(V, W)$, $\epsilon > 0$ and ${\mcO} := q^{-1}([0, \epsilon)) \subseteq W$. The map 
			$$\Phi : \R \times V \to W, \quad \Phi(t,v) := \gamma_t(T) v - Tv = \sigma_t T \alpha_{-t}v - Tv$$
			is continuous, because the map $T : V \to W$ and the actions $\alpha : \R \times V \to V$ and $\sigma : \R \times W \to W$ are all continuous. Since $\{0\} \times K \subseteq \Phi^{-1}({\mcO})$ and $K$ is compact, it follows from the Tube Lemma (cf.\ \citep[Lem.\ 26.8]{Munkres_topology}) that there is an interval $I \subseteq \R$ containing $0$ s.t.\ $\Phi(I \times K) \subseteq {\mcO}$. This means that $q_K(\gamma_t(T) -T) < \epsilon$ for all $t \in I$, so $\gamma$ is strongly continuous. \\
			
			\noindent
			It remains to show \eqref{eq: spec_of_lin_maps}. Define $E_V := \Spec_\alpha(V)$ and $E_W := \Spec_\alpha(W)$. Let $N \subseteq \R$ be a $0$-neighborhood. Let $T \in \B(V, W)$ and $f_3 \in I_0(\overline{E_W - E_V + N})$, so $f_3 \in \S(\R)$ satisfies $\supp(\widehat{f_3}) \cap \overline{E_W - E_V + N} = \emptyset$. We show $\gamma_{f_3}(T) = 0$. Let $N_1, N_2 \subseteq \R$ be symmetric $0$-neighborhoods such that $N_1 + N_2 \subseteq N$. Let $v \in V$, $f_1 \in J(E_V + N_1)$ and $f_2 \in J(E_W + N_2)$. So $\widehat{f_1}$ and $\widehat{f_2}$ have compact support contained in $E_V + N_1$ and $E_W + N_2$, respectively. One verifies that
			\begin{align}\label{eq: spec_BVW_comp}
				\begin{split}
					\sigma_{f_2} \gamma_{f_3}(T) \alpha_{f_1}v 
					&= \int_\R \int_\R \int_\R f_1(t_1)f_2(t_2)f_3(t_3) \sigma_{t_2 + t_3}T \alpha_{t_1 -t_3}v \; dt_1dt_2dt_3\\
					&= \int_\R \int_\R F(t_1, t_2) \sigma_{t_2}T\alpha_{t_1} dt_1dt_2,
				\end{split}
			\end{align}
			where $F \in \S(\R^2)$ is given by
			$$ F(t_1, t_2) = \int_\R f_1(t_1+t_3)f_2(t_2 -t_3)f_3(t_3)dt_3. $$
			The Fourier transform $\widehat{F} \in \S(\R^2)$ of $F$ is given by 
			$$\widehat{F}(p_1, p_2) = \widehat{f_1}(p_1)\widehat{f_2}(p_2)\widehat{f_3}(p_2 - p_1).$$
			Recalling that $N_1$ is symmetric, notice that 
			\begin{align*}
				\supp(\widehat{f_2}) - \supp(\widehat{f_1}) 
				&\subseteq (E_W + N_2) - (E_V + N_V) \subseteq E_W - E_V + (N_1 + N_2) \\
				&\subseteq  E_W - E_V + N.
			\end{align*}
			As $\widehat{f_3}$ vanishes on $E_W -E_V + N$, it follows that $\widehat{F} = 0$ and hence $F = 0$. From \eqref{eq: spec_BVW_comp} we conclude that $\sigma_{f_2} \gamma_{f_3}(T) \alpha_{f_1}v = 0$ for all $f_2 \in J(E_W + N_2)$. This implies $\gamma_{f_3}(T) \alpha_{f_1}v = 0$, by \Fref{lem: annihilator_large_implies_v_zero}. Consequently, if $\lambda \in W^\prime$ is any continuous functional, then $\int_\R  f_1(t_1) \langle \lambda, \gamma_{f_3}(T) \alpha_{t_1} v\rangle dt = 0$. As the map $t \mapsto \langle \lambda, \gamma_{f_3}(T) \alpha_{t_1} v\rangle$ is continuous it follows that $\langle \lambda, \gamma_{f_3}(T) \alpha_{t_1} v\rangle = 0$ for all $t \in \R$. In particular $\langle \lambda, \gamma_{f_3}(T) v\rangle = 0$. As $W^\prime$ separates the points of $W$ by the Hahn-Banach Theorem \citep[Thm.\ I.3.4]{Rudin_FA}, it follows that $\gamma_{f_3}(T) v = 0$. As $v \in V$ was arbitrary we find that $\gamma_{f_3}(T) = 0$. We have thus shown that $I_0(\overline{E_W - E_V + N}) \subseteq \ker \gamma$. By \Fref{cor: pointwise_char_of_full_spec}, this is equivalent to $\Spec_\gamma(\B(V, W)) \subseteq \overline{E_W - E_V + N}$. Hence 
			$$\Spec_\gamma(\B(V, W)) \subseteq \bigcap_N \overline{E_W - E_V + N} = \overline{E_W - E_V}.\qedhere$$
		\end{enumerate}
	\end{proof}

	\noindent
	Recall from \Fref{sec: analytic_fns} that $P(V, W) = \prod_{k=0}^\infty P^k(V, W)$ is equipped with the product topology, where each $P^k(E,F)$ carries the topology of uniform convergence on compact sets. We will have need for the following result in \Fref{sec: pe_and_hol_ind} below:
	
	\begin{corollary}\label{cor: R_action_on_jets}
		Consider the setting of \Fref{prop: arv_spec_tensor_prod_cont_maps}. Assume that $V$ is Fr\'echet. Define the representation $\gamma$ of $\R$ on $P(V, W)$ by $\gamma_t(f)(v) := \sigma_t(f(\alpha_{-t}(v)))$. Then $\gamma$ is strongly continuous and pointwise polynomially bounded. Moreover, if $\Spec_\alpha(V) \subseteq (-\infty, 0]$, then 
		$$\inf \, \Spec_\gamma(P(V, W)) = \inf \, \Spec_{\sigma}(W) \in \{-\infty\} \cup \R.$$
	\end{corollary}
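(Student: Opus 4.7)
The plan is to reduce the statement to \Fref{prop: arv_spec_tensor_prod_cont_maps} applied componentwise to the decomposition $P(V, W) = \prod_{k=0}^\infty P^k(V, W)$, which is preserved by $\gamma$. Using the homeomorphism $P^k(V, W) \cong \Sym^k(V, W)$ from \Fref{prop: isom_polys_mult_maps} together with the identification $\Mult(V^k, W) \cong \B(V^{\widehat{\otimes}k}, W)$ from \Fref{prop: univ_property_proj_tensors} (which is topological because $V$ is Fr\'echet), the action $\gamma$ on $P^k(V, W)$ becomes $\R$-equivariantly isomorphic to the restriction of the operator action $T \mapsto \sigma_t \circ T \circ \alpha^{\widehat{\otimes}k}_{-t}$ on $\B(V^{\widehat{\otimes}k}, W)$ to its closed $\R$-invariant subspace corresponding to symmetric multilinear maps. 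By \Fref{prop: arv_spec_tensor_prod_cont_maps}$(1)$, the tensor-product action $\alpha^{\widehat{\otimes}k}$ is strongly continuous, has polynomial growth, and satisfies $\Spec_{\alpha^{\widehat{\otimes}k}}(V^{\widehat{\otimes}k}) \subseteq \overline{k\, \Spec_\alpha(V)}$. Hence \Fref{prop: arv_spec_tensor_prod_cont_maps}$(2)$ applies and, combined with \Fref{prop: spectra_linear_map} for the passage to the symmetric subspace, yields that each $\gamma|_{P^k(V, W)}$ is strongly continuous, pointwise polynomially bounded, with $\Spec_\gamma(P^k(V, W)) \subseteq \overline{\Spec_\sigma(W) - \Spec_{\alpha^{\widehat{\otimes}k}}(V^{\widehat{\otimes}k})}$.

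Strong continuity and pointwise polynomial boundedness of $\gamma$ on the whole $P(V, W)$ are then immediate from the product topology: a net converges in a product iff it does componentwise, and every continuous seminorm on the product depends on only finitely many factors, so its growth under $\gamma_t$ is dominated by the maximum of finitely many polynomial bounds.

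For the spectral identity, the $\R$-equivariant injective continuous inclusion $W = P^0(V, W) \hookrightarrow P(V, W)$ gives $\Spec_\sigma(W) \subseteq \Spec_\gamma(P(V, W))$ by \Fref{prop: spectra_linear_map}, whence $\inf \Spec_\gamma(P(V, W)) \leq \inf \Spec_\sigma(W)$. For the reverse inequality, assume $\Spec_\alpha(V) \subseteq (-\infty, 0]$. Then $\Spec_{\alpha^{\widehat{\otimes}k}}(V^{\widehat{\otimes}k}) \subseteq (-\infty, 0]$ for every $k$, so by the preceding paragraph $\Spec_\gamma(P^k(V, W)) \subseteq F := \overline{\Spec_\sigma(W) + [0, \infty)}$ for all $k$. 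To upgrade this to $\Spec_\gamma(P(V, W)) \subseteq F$, I would observe that the continuous $\R$-equivariant projections $\pi_k : P(V, W) \to P^k(V, W)$ commute with the weak integrals defining $\gamma_h$ for $h \in \S(\R)$, so an element $f = (f_k) \in P(V, W)$ is annihilated by $h$ iff each $f_k$ is, giving $\S(\R)_f = \bigcap_k \S(\R)_{f_k}$. By \Fref{prop: spectral_subspace_char} each $\S(\R)_{f_k}$ contains $I_0(F)$, hence so does $\S(\R)_f$, and the desired inclusion follows via \Fref{lem: ideals_and_hulls} and \Fref{cor: full_spec_as_closed_union_of_pts}. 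Since trivially $\inf F = \inf \Spec_\sigma(W)$, combining the two bounds concludes the proof. The main technical point is verifying the identity $\S(\R)_f = \bigcap_k \S(\R)_{f_k}$, which is routine but hinges on the continuity of $\pi_k$ together with the weak-integral definition of the $\S(\R)$-action on $P(V, W)$.
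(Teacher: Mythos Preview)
Your proposal is correct and follows essentially the same approach as the paper: reduce to the components $P^k(V,W)$ via \Fref{prop: isom_polys_mult_maps} and \Fref{prop: univ_property_proj_tensors}, apply \Fref{prop: arv_spec_tensor_prod_cont_maps} to $\B(V^{\widehat{\otimes}k},W)$, and then pass back to the product. Your annihilator identity $\S(\R)_f=\bigcap_k\S(\R)_{f_k}$ is exactly the detail underlying the paper's terse ``Consequently, $\gamma_f\psi=0$ for any $f\in I_0(E)$ and $\psi\in P(V,W)$''; the paper then concludes via \Fref{cor: pointwise_char_of_full_spec} rather than \Fref{cor: full_spec_as_closed_union_of_pts}, but these are equivalent routes to the same inclusion.
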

	\begin{proof}
		Let $n \in \N_{\geq 0}$. Notice that $\gamma$ leaves the homogeneous component $P^n(V, W) \subseteq P(V, W)$ invariant. Recall from \Fref{prop: isom_polys_mult_maps} and \Fref{prop: univ_property_proj_tensors} that 
		$$P^n(V, W) \cong \Sym^n(V, W) \subseteq \Mult(V^n, W) \cong \B(V^{\widehat{\otimes}n}, W)$$
		as locally convex vector spaces. The thus-obtained continuous linear embedding $\Phi_n : P^n(V, W) \hookrightarrow\B(V^{\widehat{\otimes}n}, W)$ is $\R$-equivariant when $\B(V^{\widehat{\otimes}n}, W)$ is equipped with the $\R$-action defined by $\widetilde{\gamma}_t(T) := \sigma_t T \alpha_{-t}$. By \Fref{prop: arv_spec_tensor_prod_cont_maps}, this action is strongly continuous and pointwise polynomially bounded. Consequently, also $\gamma$ is strongly continuous and pointwise polynomially bounded on $P^n(V, W)$. As $P(V, W)$ carries the product topology, the same holds for the $\R$-action $\gamma$ on $P(V, W)$.\\
		
		\noindent
		For the final statement, notice that $W = P^0(V, W) \subseteq P(V, W)$. By \Fref{prop: spectra_linear_map} it follows that $\Spec_\sigma(W) \subseteq\Spec_\gamma(P(V, W))$, thereby showing 
		$$\inf \, \Spec_\gamma(P(V, W)) \leq \inf \,\Spec_\sigma(W) $$
		Conversely, let $n \in \N$. As $\Phi_n$ is continuous, injective and $\R$-equivariant, we know that $\Spec_\gamma(P^n(V, W)) \subseteq \Spec_{\widetilde{\gamma}}\big(\B(V^{\widehat{\otimes}n}, W)\big)$, by \Fref{prop: spectra_linear_map}. Furthermore, using \Fref{prop: arv_spec_tensor_prod_cont_maps} we notice that $\Spec_{\alpha^{\otimes n }}(V^{\widehat{\otimes}n}) \subseteq (-\infty, 0]$ and therefore also that $\Spec_{\widehat{\gamma}}\big(\B(V^{\widehat{\otimes}n}, W)\big) \subseteq \Spec_{\sigma}(W) + [0, \infty) =: E$. Thus $\Spec_\gamma(P^n(V, W)) \subseteq E$ for any $n \in \N_{\geq 0}$. By \Fref{cor: pointwise_char_of_full_spec} this means that $\gamma_f\psi_n = 0$ for any $f \in I_0(E)$, $\psi_n \in P^n(V, W)$ and $n \in \N$. Consequently, $\gamma_f \psi = 0$ for any $f \in I_0(E)$ and $\psi \in P(V, W)$. So $I_0(E) \subseteq \ker\gamma$. By \Fref{cor: pointwise_char_of_full_spec}, this is equivalent with $\Spec_\gamma(P(V, W)) \subseteq E$. Hence $\inf \,\Spec_\sigma(W) = \inf E \leq \inf \, \Spec_\gamma(P(V, W))$.
	\end{proof}

	\noindent
	Finally, we record some useful facts regarding the space of smooth vectors of a unitary $G$-representation:
	
	\begin{proposition}\label{prop: properties_arveson_spec}
		Let $G$ be a regular locally convex Fr\'echet--Lie group. Let $\bm{d} \in \g$ and assume that the $\R$-action $\dot{\alpha}: \R \to \Aut(\g)$ defined by $\dot{\alpha}_t := \Ad(\exp(t\bm{d}))$ is polynomially bounded. Let $(\rho, \H_\rho)$ be a smooth unitary representation of $G$. Let $E \subseteq \R$ be a closed subset. Then the following assertions are valid:
		\begin{enumerate}
			\item The $\R$-representation $t \mapsto \restr{\rho(\exp(t\bm{d}))}{\H_\rho^\infty}$ on $\H_\rho^\infty$ is strongly continuous and pointwise polynomially bounded, where $\H_\rho^\infty$ is equipped with the strong topology.
			\item The operator $\pi(f) :=\int_\R f(t)\rho(\exp(t\bm{d})) dt$ on $\H_\rho$ leaves $\H_\rho^\infty$ invariant for any $f \in \S(\R)$.
			\item $\H_\rho^\infty(E) = \H_\rho(E) \cap \H_\rho^\infty$ 
			\item For any open subset $U \subseteq \R$, $\H_\rho^\infty(U)$ is dense in $\H_\rho(U)$.
			\item If $E_1, E_2 \subseteq \R$ are closed subsets then $d\rho(\g_\C(E_1))\H^\infty(E_2) \subseteq \H_\rho^\infty(E_1 + E_2)$.
		\end{enumerate}
	\end{proposition}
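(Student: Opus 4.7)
The five assertions are infinite-dimensional generalizations of standard results in Arveson spectral theory for smooth vectors (cf.\ \citep[Sec.\ A.3]{KH_Zellner_inv_smooth_vectors}, \citep[Sec.\ A.2]{Neeb_hol_reps}). The central tool throughout is the covariance identity $\rho(\exp(t\bm{d}))\, d\rho(x) = d\rho(\dot{\alpha}_t(x))\, \rho(\exp(t\bm{d}))$ for $x \in \mc{U}(\g_\C)$, combined with the polynomial boundedness of $\dot{\alpha}$ on $\g$ and the completeness of $\H_\rho^\infty$ in the strong topology from \citep[Prop.\ 3.19]{BasNeeb_ProjReps}.

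\textbf{Parts (1)--(3).} For (1), the strong topology seminorms $p_B(\psi) := \sup_{\xi \in B} \|d\rho(\xi_1 \cdots \xi_n)\psi\|$ on $\H_\rho^\infty$ transform as $p_B(\rho(\exp(t\bm{d}))\psi) = p_{(\dot{\alpha}_{-t})^{\times n}(B)}(\psi)$, by unitarity of $\rho$ and the covariance identity. The polynomial boundedness of the diagonal action of $\dot{\alpha}$ on $\g^n$, inherited from that on $\g$, yields $p_B(\rho(\exp(t\bm{d}))\psi) \leq r(|t|)\, p_{B'}(\psi)$ for a bounded $B' \subseteq \g^n$ and a monic polynomial $r$, giving pointwise polynomial boundedness; strong continuity follows from smoothness of the orbit maps. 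For (2), the preceding bound shows that the integrand $t \mapsto f(t)\, \rho(\exp(t\bm{d}))\psi$ has strong-topology seminorms majorized by $|f(t)|\,r(|t|)\,p_{B'}(\psi)$, integrable for $f \in \S(\R)$; completeness of $\H_\rho^\infty$ yields $\pi(f)\psi \in \H_\rho^\infty$. For (3), the continuous $\R$-equivariant inclusion $\H_\rho^\infty \hookrightarrow \H_\rho$ gives $\H_\rho^\infty(E) \subseteq \H_\rho(E)$ via \Fref{prop: spectra_linear_map}; conversely, for $\psi \in \H_\rho^\infty$ the element $\pi(f)\psi$ is the same whether the weak integral is taken in $\H_\rho$ or in $\H_\rho^\infty$ (by a Hahn--Banach argument), so the annihilators coincide and $\Spec_{\H_\rho^\infty}(\psi) = \Spec_{\H_\rho}(\psi)$. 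Since the unitary action on $\H_\rho$ is equicontinuous, \Fref{cor: pol_bdd_arv_subspaces} ensures $\H_\rho(E)_0 = \H_\rho(E)$ for closed $E$, yielding $\H_\rho(E) \cap \H_\rho^\infty \subseteq \H_\rho^\infty(E)_0 \subseteq \H_\rho^\infty(E)$.

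\textbf{Part (5).} I would prove (5) before (4), as it is independent and cleaner. Consider the $\R$-equivariant bilinear map $\beta: \g_\C \times \H_\rho^\infty \to \H_\rho$ given by $(\xi,\psi) \mapsto d\rho(\xi)\psi$; on Fr\'echet factors a separately continuous bilinear map is jointly continuous, the source actions are pointwise polynomially bounded (by hypothesis and (1)), and the target action on $\H_\rho$ is equicontinuous. \Fref{prop: arv_spec_subs_additive} then yields $d\rho(\g_\C(E_1))\,\H_\rho^\infty(E_2) \subseteq \H_\rho(E_1 + E_2)$. Since $d\rho$ preserves smoothness, the image also lies in $\H_\rho^\infty$, and part (3) applied to $\overline{E_1+E_2}$ concludes, using $\H_\rho^\infty(E_1+E_2) = \H_\rho^\infty(\overline{E_1+E_2})$.

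\textbf{Part (4) and the main obstacle.} For open $U \subseteq \R$ and $\psi \in \H_\rho(U)$, the plan is to combine smooth approximants $\phi_n \to \psi$ in $\H_\rho$ with mollifiers $\pi(f_n)$ for $f_n \in \S(\R)$ with $\widehat{f_n}$ smooth, compactly supported in $U$, and tending to $1$ on an exhaustion of $U$ by compact subsets. By (2), $\pi(f_n)\phi_n \in \H_\rho^\infty$, and \Fref{lem: specfv_in_supp_f} gives $\Spec(\pi(f_n)\phi_n) \subseteq \supp\widehat{f_n} \subseteq U \subseteq \overline U$, so $\pi(f_n)\phi_n \in \H_\rho^\infty(U)$. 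The main obstacle is the norm convergence $\pi(f_n)\phi_n \to \psi$: if $\psi$ carries spectral mass on the boundary $\partial U$, then $\widehat{f_n}$ supported strictly inside $U$ cannot capture this mass. To circumvent this, I would invoke \Fref{lem: arv_spec_towards_pf_bil}, which yields $\H_\rho(U) \subseteq \bigcap_N R_\alpha(U + N)$ and characterizes elements of $\H_\rho(U)$ as norm-limits of vectors $\pi(f)v$ with $\widehat{f}$ compactly supported in arbitrarily small neighborhoods of $U$; a diagonal argument together with (2) to smoothen the approximants, and the observation that strong-topology limits of $\H_\rho^\infty(U)_0$ already sit in the norm closure, then produce a sequence in $\H_\rho^\infty(U)$ converging to $\psi$ in norm.
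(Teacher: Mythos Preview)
Your treatment of parts (1)--(3) is essentially the paper's own argument: it too uses the covariance $p_B(\rho(e^{t\bm d})\psi)=p_{\dot\alpha_{-t}^{\times n}(B)}(\psi)$, the continuity of the multilinear maps $(\xi_1,\dots,\xi_n)\mapsto d\rho(\xi_1\cdots\xi_n)\psi$ from \citep[Lem.~3.22]{BasNeeb_ProjReps}, and completeness of $\H_\rho^\infty$ from \citep[Prop.~3.19]{BasNeeb_ProjReps} to get (1) and (2); for (3) it observes that $\pi^\infty(f)=\restr{\pi(f)}{\H_\rho^\infty}$ so that the Arveson annihilators, and hence the $(\cdot)_0$-subspaces, coincide. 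The paper does not argue (4) and (5) at all but simply cites \citep[Prop.~3.2, Thm.~3.1]{KH_Zellner_inv_smooth_vectors}.

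There is, however, a genuine gap in your argument for (5). You invoke \Fref{prop: arv_spec_subs_additive} for the bilinear map $\beta:\g_\C\times\H_\rho^\infty\to\H_\rho$, $(\xi,\psi)\mapsto d\rho(\xi)\psi$, and justify the required joint continuity by ``on Fr\'echet factors a separately continuous bilinear map is jointly continuous''. But $\H_\rho^\infty$ with the strong topology need not be Fr\'echet when $G$ is only Fr\'echet--Lie, and in fact this very map can fail to be jointly continuous: the paper itself exhibits this in \Fref{ex: not_jointly_cts} (with $G=\R^{\N}$, $\H_\rho=\ell^2(\N)$, and $\H_\rho^\infty=\C^{(\N)}$ carrying the LF topology), citing \citep[Ex.~4.8]{Neeb_diffvectors}. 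So \Fref{prop: arv_spec_subs_additive} as stated is not available to you. A repair is possible: $\beta$ is \emph{separately} continuous (for fixed $\xi$ one has $\|d\rho(\xi)\psi\|\le p_{\{\xi\}}(\psi)$, and for fixed $\psi$ continuity in $\xi$ is the differentiability of the orbit map), and one can rerun the proof of \Fref{prop: arv_spec_subs_additive} taking the two closures in $R_{\alpha_k}(E_k+N_k)$ one variable at a time; combined with $\g_\C(E_1)=\g_\C(E_1)_0$ (polynomial boundedness of $\dot\alpha$) and your part (3), this yields $d\rho(\g_\C(E_1))\H_\rho^\infty(E_2)\subseteq\H_\rho(E_1+E_2)\cap\H_\rho^\infty=\H_\rho^\infty(E_1+E_2)$. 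But as written, the joint-continuity step is false.

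For (4) your sketch correctly isolates the boundary-mass obstacle (since $\H_\rho(U)=\H_\rho(\bar U)=P(\bar U)\H_\rho$, an element of $\H_\rho(U)$ may carry spectral mass on $\partial U$, which mollifiers with $\supp\widehat{f}\Subset U$ cannot recover), but the proposed workaround via \Fref{lem: arv_spec_towards_pf_bil} and a diagonal argument only produces approximants in $\H_\rho^\infty(U+N)$ for shrinking $N$, not in $\H_\rho^\infty(U)$ itself; you would still need to explain why these can be taken to lie in $\H_\rho^\infty(\bar U)$. The paper sidesteps this by delegating to \citep[Prop.~3.2]{KH_Zellner_inv_smooth_vectors}.
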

	\begin{proof}
		The second item follows from \citep[Thm.\ 2.3]{KH_Zellner_inv_smooth_vectors}, the fourth from \citep[Prop.\ 3.2]{KH_Zellner_inv_smooth_vectors} and the fifth from \citep[Thm.\ 3.1]{KH_Zellner_inv_smooth_vectors}. We provide an alternative proof of the second assertion and prove the first and third.\\

		\noindent
		By \citep[Prop.\ 3.19]{BasNeeb_ProjReps}, the locally convex space $\H_\rho^\infty$ is complete. Let $n \in \N$ and $B \subseteq \g^n$ be a bounded subset. Consider continuous seminorm 
		$$p_B(\psi) := \sup_{\xi \in B} \|d\rho(\xi_1\cdots \xi_n)\psi\|$$
		on $\H_\rho^\infty$. Let $\psi \in \H_\rho^\infty$. By \citep[Lem.\ 3.24]{BasNeeb_ProjReps}, the orbit map $\rho^\phi : G \to \H_\rho^\infty, g \mapsto \rho(g)\psi$ is smooth. It follows in particular that the $\R$-representation $t\mapsto \rho(\exp(t\bm{d}))$ on $\H_\rho^\infty$ is strongly continuous. It follows moreover that the multi-linear map $\g^n \to \H_\rho^\infty, (\xi_1, \ldots, \xi_n) \mapsto d\rho(\xi_1\cdots \xi_n)\psi$ is continuous. Using \Fref{prop: univ_property_proj_tensors}, we find that there exists a continuous seminorm $p$ on $\g$ such that $\|d\rho(\xi_1\cdots\xi_n)\psi\| \leq \prod_{k=1}^n p(\xi_k)$ for every $\xi \in \g^n$. Let $N \in \N$ and the $0$-neighborhood $U \subseteq \g$ be s.t.\ $C := \sup_{\xi \in U}\sup_{t \in \R}{1\over 1 + |t|^N}p(\dot{\alpha}_t(\xi)) < \infty$. As $B \subseteq \g^n$ is bounded, so is its projection $B_k \subseteq \g$ onto the $k^{\mathrm{th}}$ factor for every $k \in \{1, \ldots, n\}$. Thus there exists $s > 0$ such that $B_k \subseteq sU$ for all $1\leq k \leq n$. We obtain that 
		$$\sup_{\xi_k \in B_k} \sup_{t \in \R} {1\over 1 + |t|^N} p(\dot{\alpha}_t(\xi_k)) \leq s C$$
		for every $1\leq k\leq n$. Using that $\rho$ is unitary we find for all $t \in \R$ that
		$$ p_B(\rho(e^{-t\bm{d}})\psi) = \sup_{\xi \in B} \|d\rho(\dot{\alpha}_t(\xi_1)\cdots \dot{\alpha}_t(\xi_n)\psi\| \leq \sup_{\xi \in B}\prod_{k=1}^n p(\dot{\alpha}_t(\xi_k)) \leq C^n s^n (1 + |t|^N)^n.$$
		This implies that the $\R$-action $t \mapsto \rho(\exp(t\bm{d}))$ on $\H_\rho^\infty$ is pointwise polynomially bounded. As in \Fref{def: repr_of_schartz}, we conclude that $\pi^\infty(f) \psi := \int_\R f(t) \rho(\exp(t\bm{d}))\psi dt$ defines a representation $\pi^{\infty} : \S(\R) \to \L(\H_\rho^\infty)$ of $\S(\R)$ on $\H_\rho^\infty$ by linear operators. It is clear that $\pi^\infty(f) := \restr{\pi(f)}{\H_\rho^\infty}$, so this proves that $\pi(f)$ leaves $\H_\rho^\infty$ invariant for every $f \in \S(\R)$. It is further immediate from \Fref{def: arv_spec} that $\H_\rho^\infty(E) = \H_\rho(E) \cap \H_\rho^\infty$.
	\end{proof}

	\section{Positive energy representations and holomorphic induction}\label{sec: pe_and_hol_ind}
	
	\setcounter{subsection}{1}
	\setcounter{theorem}{0}
	
	\noindent
	In this section we explore the connection between positive energy representations and holomorphic induction. It is shown in \Fref{thm: pe_hol_ind} and \Fref{thm: ground_state_hol_ind} that these two are intimately related, as is to be expected from similar known results in more restrictive settings, such as \citep[Thm.\ 11.1.1]{Segal_Loop_Groups}, \citep[Sec.\ 4]{Neeb_hol_reps} and \citep[Thm.\ 6.1]{Neeb_semibounded_hilbert_loop}. This is used to transfer various results from holomorphic induction to the context of positive energy representations, under suitable assumptions. Before proceeding to the main results, let us clarify the setting and make some preliminary observations.
	
	\subsubsection{Notation and preliminary observations}

	\noindent
	Let $G$ be a connected regular BCH Fr\'echet--Lie group with Lie algebra $\g$. Let $\alpha : \R \to \Aut(G)$ be a homomorphism having a smooth action $\R \times G \to G$ and let $\dot{\alpha}$ be the corresponding $\R$-action on $\g_\C$, defined by $\dot{\alpha}_s(\xi) := \bm{\mrm{L}}(\alpha_s)\xi := \restr{d\over dt}{t=0}\alpha_s(e^{t \xi})$ for $s \in \R$. Let $D \xi := \restr{d\over ds}{s=0}\dot{\alpha}_s(\xi)$ be the corresponding derivation on $\g_\C$. Assume that $\dot{\alpha}$ has polynomial growth, in the sense of \Fref{def: polynomial_bdd_action}. Define the Lie group $G^\sharp := G \rtimes_\alpha \R$, which has Lie algebra $\g^\sharp := \g\rtimes_D \R \bm{d}$, where we have written $\bm{d} := 1\in \R \subseteq \g^\sharp$ for the standard basis element. Then $G^\sharp$ is again a connected regular Fr\'echet--Lie group, using \citep[Thm.\ V.I.8]{neeb_towards_lie}, but not necessarily BCH.\\
	
	\noindent
	As $\dot{\alpha}$ is assumed to have polynomial growth, we can define the Arveson spectral subspaces of $\g_\C$ as in \Fref{def: arv_spec}. If $E \subseteq \R$ is any subset, we write $\g_\C(E)$ for the spectral subspace of $\g_\C$ associated to $E$. Define $\h_\C := \ker D\subseteq \g_\C(\{0\})$, $\h := \h_\C \cap \g$ and
	$$ \n_- := \overline{\bigcup_{\delta > 0} \g_\C((-\infty, -\delta])}, \qquad \n_+ := \overline{\bigcup_{\delta > 0} \g_\C([\delta, \infty))}. $$
	\noindent
	We assume that $(\g_\C, \alpha)$ satisfies the so-called \textit{splitting condition}, meaning that 
	$$\g_\C = \n_- \oplus \h_\C \oplus \n_+.$$
	Define $\b_{\pm} := \h_\C \oplus \n_\pm \subseteq \g_\C$. Let $H := (G^\alpha)_0 \subseteq G$ be the connected subgroup of $\alpha$-fixed points in $G$. Let us first establish that the assumptions on $H$, $\n_\pm$ and $\h_\C$ made in \Fref{sec: hol_induced} are presently satisfied.
	
	\begin{lemma}
		$H$ is a locally exponential Lie subgroup of $G$ with Lie algebra $\h$. 
	\end{lemma}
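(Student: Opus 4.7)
The plan is to verify the definition of a locally exponential Lie subgroup directly. First I would observe that $\h$ is a closed Lie subalgebra of $\g$. The map $D : \g \to \g$ is continuous, because the strongly continuous $\R$-action $\dot{\alpha}$ has polynomial growth and is therefore jointly continuous by \Fref{prop: strongly_cts_and_pol_growth_implies_cts}, so $D = \restr{d\over dt}{t=0}\dot{\alpha}_t$ is a continuous linear operator. Hence $\h = \g \cap \ker D$ is closed. The Leibniz identity $D[\xi,\eta] = [D\xi,\eta] + [\xi,D\eta]$ (obtained from differentiating the fact that $\dot{\alpha}_t$ is an automorphism of $\g$) ensures $\h$ is closed under the bracket.

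Next I would use the BCH property of $G$ to realize $G^\alpha$ locally via $\exp_G$. Choose a symmetric convex $0$-neighborhood $V_0\subseteq\g$ on which $\exp_G$ restricts to an analytic diffeomorphism onto an open $1$-neighborhood $U_0 \subseteq G$. Using joint continuity of $\dot{\alpha}:\R\times\g\to\g$, I would shrink $V_0$ to a smaller convex $0$-neighborhood $V\subseteq V_0$ and pick $\epsilon > 0$ so that $\dot{\alpha}_t(V)\subseteq V_0$ for all $t\in(-\epsilon,\epsilon)$. The goal is then
\[
\exp_G(V)\cap G^\alpha = \exp_G(V\cap\h).
\]
Combined with left-translation, this slice identity is exactly what is needed to exhibit $G^\alpha$ (and hence its identity component) as a locally exponential analytic Lie subgroup in the sense of \citep[Def.\ IV.3.2, Thm.\ IV.3.3]{neeb_towards_lie}.

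The inclusion $\supseteq$ is the easy one: if $D\xi = 0$ then $f(t) := \dot{\alpha}_t(\xi)$ satisfies $f'(t) = \dot{\alpha}_t(D\xi) = 0$, so $f \equiv \xi$, and therefore $\alpha_t(\exp_G\xi) = \exp_G(\dot{\alpha}_t\xi) = \exp_G\xi$ for all $t\in\R$. For $\subseteq$, suppose $\xi\in V$ with $\exp_G(\xi)\in G^\alpha$. Then for every $t\in(-\epsilon,\epsilon)$ both $\xi$ and $\dot{\alpha}_t(\xi)$ lie in $V_0$, and $\exp_G(\dot{\alpha}_t\xi) = \alpha_t(\exp_G\xi) = \exp_G(\xi)$; injectivity of $\exp_G$ on $V_0$ forces $\dot{\alpha}_t(\xi) = \xi$ for $t\in(-\epsilon,\epsilon)$, and differentiating at $0$ yields $D\xi = 0$, i.e.\ $\xi \in \h$.

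With the slice identity in hand, $G^\alpha$ is a closed analytic embedded Lie subgroup of $G$ with Lie algebra $\h$, and $\exp_G|_{V\cap\h}$ is a diffeomorphism onto an open $1$-neighborhood of $G^\alpha$; this is precisely the local exponentiality. The connected component $H = (G^\alpha)_0$ then inherits the same analytic structure and the same Lie algebra $\h$. The only place where anything subtle happens is the uniform shrinking argument: it requires joint continuity of $\dot{\alpha}$, which is why the polynomial-growth hypothesis (invoked via \Fref{prop: strongly_cts_and_pol_growth_implies_cts}) is essential here, rather than merely strong continuity.
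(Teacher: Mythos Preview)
Your proof is correct and follows essentially the same route as the paper: establish the slice identity $\exp_G(V\cap\h)=\exp_G(V)\cap G^\alpha$ using the naturality $\alpha_t\circ\exp_G=\exp_G\circ\dot{\alpha}_t$ together with local injectivity of $\exp_G$, then invoke \citep[Thm.~IV.3.3]{neeb_towards_lie}. The only remark is that your claim that the polynomial-growth hypothesis is \emph{essential} for the joint continuity of $\dot{\alpha}$ overstates things: since the action $\R\times G\to G$ is assumed smooth, the induced action $\R\times\g\to\g$ is smooth (hence jointly continuous) without any appeal to polynomial growth or \Fref{prop: strongly_cts_and_pol_growth_implies_cts}; the paper simply uses this and is correspondingly terser about the shrinking step.
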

	\begin{proof}
		Since $G$ is locally exponential, we can find a $0$-neighborhood $U_\g \subseteq \g$ s.t.\ $\exp_G$ restricts to a diffeomorphism on $U_\g$. Let $\xi \in U_\g$ arbitrary. Using the fact that $\alpha_t(\exp_G(\xi)) = \exp_G(\dot{\alpha}_t(\xi))$ for all $t \in \R$, observe that $\xi \in \ker D \iff \exp_G(\xi)\in G^\alpha$. This implies that $\exp_G(U_\g \cap \h) = \exp_G(U_\g) \cap H$. We also obtain that $\h = \set{\xi \in \g \st \exp_G(\R \xi) \subseteq H}$. It follows that $H$ is a locally exponential Lie subgroup with Lie algebra $\h$, by \citep[Thm.\ IV.3.3]{neeb_towards_lie}.
	\end{proof}
	
	\begin{lemma}\label{cor: triangular_decomp_subalgebras}
		The subspaces $\n_\pm$, $\h_\C$ and $\b_\pm$ are Lie subalgebras of $\g_\C$ and we have $[\h_\C, \n_\pm] \subseteq \n_\pm$. Moreover, $\Ad_H(\n_\pm) \subseteq \n_\pm$. Finally, $\theta(\n_\pm) \subseteq \n_\mp$ and $\theta(\h_\C) \subseteq \h_\C$.
	\end{lemma}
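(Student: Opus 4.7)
The approach will be to repeatedly invoke the Arveson spectral theory developed in the preceding section. The Lie bracket on $\g_\C$ is a continuous, $\R$-equivariant bilinear map $\g_\C \times \g_\C \to \g_\C$—continuity because $\g$ is Fr\'echet-Lie, and $\R$-equivariance because each $\dot\alpha_t$ is an automorphism. Since $\dot\alpha$ has polynomial growth, hence is polynomially bounded, \Fref{prop: arv_spec_subs_additive} combined with \Fref{cor: pol_bdd_arv_subspaces} gives $[\g_\C(E_1), \g_\C(E_2)] \subseteq \g_\C(E_1+E_2)$ for all closed $E_1, E_2 \subseteq \R$. Specializing to $E_j = [\delta_j, \infty)$ with $\delta_j > 0$ and taking the closed union yields $[\n_+, \n_+] \subseteq \n_+$, and symmetrically for $\n_-$. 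That $\h_\C$ is a Lie subalgebra is immediate from $D$ being a derivation. For $[\h_\C, \n_\pm] \subseteq \n_\pm$, my first step is to observe that for $\xi \in \ker D$ one has $\dot\alpha_f(\xi) = \widehat f(0)\xi$, so $\Spec_{\dot\alpha}(\xi) \subseteq \{0\}$, i.e.\ $\h_\C \subseteq \g_\C(\{0\})$; the bracket inclusion with $E_1 = \{0\}$ then finishes the claim. Combining these gives that $\b_\pm = \h_\C \oplus \n_\pm$ are Lie subalgebras.

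The invariance $\Ad_H(\n_\pm) \subseteq \n_\pm$ will follow from \Fref{prop: spectra_linear_map}: differentiating the identity $\alpha_t(hgh^{-1}) = h\alpha_t(g)h^{-1}$ (valid because $h \in H$ is $\alpha$-fixed) at $g = e$ gives $\Ad_h \circ \dot\alpha_t = \dot\alpha_t \circ \Ad_h$, so $\Ad_h : \g_\C \to \g_\C$ is continuous, $\C$-linear and $\R$-equivariant, hence preserves every spectral subspace $\g_\C(E)$.

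The main obstacle will be the behavior of the conjugation $\theta$, because $\theta$ is only conjugate-linear and \Fref{prop: spectra_linear_map} does not apply directly. My plan is to establish $\theta(\g_\C(E)_0) \subseteq \g_\C(-E)_0$ by hand, for every closed $E \subseteq \R$. Pulling $\theta$ through the weak integral defining $\dot\alpha_g$ and using $\theta(\lambda w) = \bar\lambda\,\theta(w)$ together with $\theta \dot\alpha_t = \dot\alpha_t \theta$ (which holds since $\dot\alpha_t$ is the complexification of a real action on $\g$), one obtains the identity $\dot\alpha_g(\theta v) = \theta(\dot\alpha_{\bar g} v)$ for every $g \in \S(\R)$ and $v \in \g_\C$. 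The Fourier identity $\widehat{\bar g}(p) = \overline{\widehat g(-p)}$ shows that $g \in I_0(\overline{-E})$ if and only if $\bar g \in I_0(\overline{E})$, and this together with the annihilation characterization of \Fref{prop: spectral_subspace_char} converts the defining condition of $\g_\C(E)_0$ for $v$ into that of $\g_\C(-E)_0$ for $\theta v$. Continuity of $\theta$ extends the inclusion to closures, so applying it with $E = [\delta,\infty)$ for $\delta > 0$ gives $\theta(\n_+) \subseteq \n_-$, and symmetrically for $\n_-$. Finally, $\theta(\h_\C) \subseteq \h_\C$ is immediate from $\h_\C = \ker D$ and the commutation $\theta D = D\theta$, which holds because $D$ preserves the real form $\g \subseteq \g_\C$.
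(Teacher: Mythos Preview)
Your proof is correct and follows essentially the same approach as the paper's: both use \Fref{prop: arv_spec_subs_additive} for the bracket inclusions, \Fref{prop: spectra_linear_map} for the $\Ad_H$-invariance, and the identity $\dot\alpha_g \circ \theta = \theta \circ \dot\alpha_{\bar g}$ together with $\widehat{\bar g}(p) = \overline{\widehat g(-p)}$ for the behavior under $\theta$. The only cosmetic difference is that you phrase the $\theta$-step via the $I_0$-characterization of \Fref{prop: spectral_subspace_char}, whereas the paper works directly with the annihilator ideals $\S(\R)_\xi$ and their hulls to obtain $\Spec_{\dot\alpha}(\theta(\xi)) = -\Spec_{\dot\alpha}(\xi)$; the two arguments are equivalent.
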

	\begin{proof}
		The Lie bracket $[\--, \--] : \g_\C \times \g_\C \to \g_\C$ is bilinear, continuous and $\R$-equivariant, meaning that $\dot{\alpha}_s([\xi, \eta]) = [\dot{\alpha}_s(\xi), \dot{\alpha}_s(\eta)]$ for all $s \in \R$ and $\xi, \eta \in \g_\C$. Using \Fref{prop: arv_spec_subs_additive} we obtain for any two closed subsets $E_1, E_2 \subseteq \R$ that $[\g_\C(E_1), \g_\C(E_2)] \subseteq \g_\C(E_1 + E_2)$. This implies that $\n_\pm$, $\h_\C$ and $\b_\pm$ are Lie subalgebras of $\g_\C$ and that $[\h_\C, \n_\pm] \subseteq \n_\pm$. We next show that $\Ad_H(\n_\pm) \subseteq \n_\pm$. Let $h \in H$. Then $\dot{\alpha}_s$ and $\Ad_h$ commute for any $s \in \R$, so $\Ad_h : \g_\C \to \g_\C$ is a continuous equivariant linear map. It follows using \Fref{prop: spectra_linear_map} that $\Ad_h(\g_\C(E)) \subseteq \g_\C(E)$ for any closed subset $E \subseteq \R$. Hence $\Ad_H(\n_\pm) \subseteq \n_\pm$. Let us next consider the conjugation $\theta$. Using that $\dot{\alpha}_t$ commutes with $\theta$ for any $t \in \R$, observe that $\theta \dot{\alpha}_{f}\theta = \dot{\alpha}_{\overline{f}}$ for any $f \in \S(\R)$. Consequently, $\S(\R)_{\theta(\xi)} = \set{\overline{f} \st f \in \S(\R)_\xi}$. Using that $\F(\overline{f})(p) = \overline{\F{f}}(-p)$ for $p \in \R$, we obtain for any $\xi \in \g_\C$ that 
		$$\Spec_{\dot{\alpha}}(\theta(\xi)) = h(\S(\R)_{\theta(\xi)}) = - h(\S(\R)_\xi) = - \Spec_{\dot{\alpha}}(\xi).$$
		We therefore have $\theta(\g_\C(E)) = \g_\C(-E)$ for any closed $E \subseteq \R$. This implies that $\theta(\n_\pm) \subseteq \n_\mp$. Since $\h_\C = \ker D$ and $\theta$ commutes with $D$, we also have $\theta(\h_\C) \subseteq \h_\C$.
	\end{proof}

	\noindent
	As the Lie group $G^\sharp = G \rtimes_\alpha \R$ need not be analytic, we only have access to the analytic structure of $G$:
	
	\begin{definition}\label{def: subspaces}
		If $(\rho, \H_\rho)$ is a unitary representation of $G^\sharp$, we write $\H_\rho^{\omega_G}$ for the space of $G$-analytic vectors in $\H_\rho$. We further define 
		$$\H_\rho^{\infty, \n_-} := \set{\psi \in \H_\rho^\infty \st d\rho(\n_-)\psi = \{0\}}$$
		and we write $V(\rho) := \overline{\H_\rho^{\infty, \n_-}}$ for its closure.
	\end{definition}
	
	\noindent
	Let us first clarify that $V(\rho)$ can equivalently be defined as the closure of the set of $G$-smooth vectors in $\H_\rho$ that are killed by $\n_-$, as opposed to the $G^\sharp$-smooth ones:
	
	\begin{lemma}\label{lem: Vrho_G_smooth}
		Let $\rho$ be a unitary $G^\sharp$-representation. Let $W(\rho)\subseteq \H_\rho$ be the closed linear subspace generated by the set of $G$-smooth vectors in $\H_\rho$ that are killed by $d\rho(\n_-)$. Then $W(\rho) = V(\rho)$.
	\end{lemma}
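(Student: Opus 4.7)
The inclusion $V(\rho) \subseteq W(\rho)$ is immediate from the definitions, since every $G^\sharp$-smooth vector is in particular $G$-smooth. The content of the lemma is the reverse inclusion.

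To prove $W(\rho) \subseteq V(\rho)$, I would show that every $G$-smooth vector $\psi$ with $d\rho(\n_-)\psi = 0$ can be approximated by vectors in $\H_\rho^{\infty, \n_-}$ via mollification along the $\R$-direction. Concretely, for $f \in C_c^\infty(\R)$ define the weak integral
\[
\psi_f := \int_\R f(t)\,\rho(1_G, t)\psi\, dt \in \H_\rho.
\]
Using the semidirect-product relation $\rho(g,0)\rho(1_G, t) = \rho(1_G, t)\rho(\alpha_{-t}(g), 0)$, one obtains
\[
\rho(g, s)\psi_f = \int_\R f(t-s)\,\rho(1_G, t)\,\rho(\alpha_{-t}(g), 0)\psi\, dt.
\]
Since $\supp(f)$ is compact, $\psi$ is $G$-smooth, and $\alpha$ is a smooth $\R$-action on $G$, the integrand is jointly smooth in $(g, s, t)$, and all derivatives in $(g,s)$ can be pulled under the integral. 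Thus $\psi_f$ is $G^\sharp$-smooth.

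Next, I would verify that $\psi_f$ is killed by $d\rho(\n_-)$. The crucial point is that $\n_-$ is $\dot{\alpha}$-invariant: as a union of Arveson spectral subspaces $\g_\C((-\infty, -\delta])$, it is preserved by $\dot{\alpha}_s$ for every $s \in \R$, which I would verify using \Fref{prop: spectra_linear_map}. Given this, differentiating at $g = 1_G$, $s = 0$ along $\xi \in \n_-$ yields
\[
d\rho(\xi)\psi_f = \int_\R f(t)\,\rho(1_G, t)\, d\rho(\dot{\alpha}_{-t}(\xi))\psi\, dt = 0,
\]
since $\dot{\alpha}_{-t}(\xi) \in \n_-$ and $d\rho(\n_-)\psi = 0$. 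Therefore $\psi_f \in \H_\rho^{\infty, \n_-}$ for every $f \in C_c^\infty(\R)$.

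Finally, taking an approximate identity $f_n \in C_c^\infty(\R)$ with $\int f_n = 1$ and $\supp(f_n) \to \{0\}$, strong continuity of $t \mapsto \rho(1_G, t)\psi$ gives $\psi_{f_n} \to \psi$ in $\H_\rho$, showing $\psi \in V(\rho)$ and hence $W(\rho) \subseteq V(\rho)$. The main obstacle is the first step: verifying joint $G^\sharp$-smoothness (rather than merely separate smoothness in the $G$- and $\R$-directions) of $\psi_f$. Once the commutation relation is exploited to write $\rho(g,s)\psi_f$ as an integral whose integrand is jointly smooth in $(g,s,t)$ with compactly supported $t$, the argument reduces to differentiation under the integral, but care is needed because the integrand involves the orbit map of $\psi$ composed with the time-dependent automorphism $\alpha_{-t}$.
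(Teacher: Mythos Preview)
Your proof is correct and follows essentially the same approach as the paper: mollify a $G$-smooth vector $\psi$ along the $\R$-direction by $\psi_f = \int_\R f(t)\rho(1_G,t)\psi\,dt$, use the $\dot{\alpha}$-invariance of $\n_-$ to show $d\rho(\n_-)\psi_f = 0$, and approximate $\psi$ by $\psi_{f_n}$ via an approximate identity. The paper handles the $G^\sharp$-smoothness of $\psi_f$ and differentiation under the integral by citing \cite[Lem.~A.4]{KH_Zellner_inv_smooth_vectors}, which packages exactly the argument you sketch.
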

	\begin{proof}
		It is trivial that $V(\rho) \subseteq W(\rho)$. Let $\psi \in \H_\rho$ be a $G$-smooth vector s.t.\ $d\rho(\n_-)\psi = \{0\}$. Let $f \in C^\infty_c(\R)$ and define $\pi_f \psi := \int_\R f(t)\rho(t)\psi dt \in \H_\rho$. Then $\pi_f \psi$ is a smooth vector for $G^\sharp$, e.g.\ using \citep[Lem.\ A.4]{KH_Zellner_inv_smooth_vectors}. Let $\xi \in \n_-$. Then $\dot{\alpha}_{-t}(\xi) \in \n_-$ and hence $d\rho(\dot{\alpha}_{-t}(\xi)) \psi = 0$ for every $t \in \R$. Using \citep[Lem.\ A.4]{KH_Zellner_inv_smooth_vectors} to differentiate under the integral, we obtain:
		$$d\rho(\xi)\pi_f \psi = \int_\R f(t)d\rho(\xi)\rho(t)\psi dt = \int_\R f(t) \rho(t)d\rho(\dot{\alpha}_{-t}(\xi)) \psi dt = 0.$$
		So $\pi_f \psi \in \H_\rho^{\infty, \n_-}$ for any $f \in C^\infty_c(\R)$. Approximating $\psi$ by vectors of the form $\pi_f \psi$, we conclude that $\psi \in V(\rho)$. So $V(\rho) = W(\rho)$.
	\end{proof}
	
	\noindent
	To keep a uniform notation for $G$- and $G^\sharp$-representations, we complement \Fref{def: subspaces} with:
	
	\begin{definition}\label{def: subspaces_2}
		If $(\rho, \H_\rho)$ is a smooth unitary representation of $G$, we write $\H_\rho^{\omega_G} := \H_\rho^\omega$ for the space of $G$-analytic vectors in $\H_\rho$. Define 
		$$\H_\rho^{\infty, \n_-} := \set{\psi \in \H_\rho^\infty \st d\rho(\n_-)\psi = \{0\}}$$
		and let $V(\rho) := \overline{\H_\rho^{\infty, \n_-}}$ denote its closure.\\
	\end{definition}

	\noindent
	Let us proceed with the task of relating the positive energy condition with holomorphic induction. Notice that $V(\rho)\subseteq \H_\rho$ is $H \times \R$-invariant for any smooth unitary $G$-representation $\rho$, because $\n_-$ is invariant under $\dot{\alpha}_t$ and $\Ad_h$ for any $t \in \R$ and $h \in H$. The following makes use of the notation specified in \Fref{def: notation_hol_induced}:
	
	\newpage
	\begin{theorem}\label{thm: pe_hol_ind}
		Let $(\rho, \H_\rho)$ be a smooth unitary representation of $G^\sharp$ and let $\sigma$ be the unitary representation of $H \times \R$ on $V_\sigma := V(\rho)$ defined by $\sigma(h,t) := \restr{\rho(h,t)}{V(\rho)}$. The following assertions are equivalent:
		\begin{enumerate}
			\item $\rho$ is of positive energy at $\bm{d}$, $V(\rho)$ is cyclic for $\rho$ and $V(\rho) \cap \H_\rho^{\omega_G}$ is dense in $V(\rho)$.
			\item $\sigma$ is of positive energy at $\bm{d}$ and $\restr{\rho}{G} = \HolInd_H^G(\restr{\sigma}{H})$.
		\end{enumerate}
		If these conditions are satisfied, then $\inf\,\Spec(-i \overline{d\rho(\bm{d})}) = \inf\, \Spec(-i \overline{d\sigma(\bm{d})}) \geq 0$.
	\end{theorem}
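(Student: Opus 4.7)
The plan hinges on one algebraic observation about $V(\rho)=\overline{\H_\rho^{\infty,\n_-}}$ that needs no positive energy hypothesis: $d\rho(\n_+)\H_\rho^\infty$ lies in $V(\rho)^\perp$. For $\xi'\in\n_+$, $\eta\in\H_\rho^\infty$ and $\phi\in\H_\rho^{\infty,\n_-}$, since $\xi'^\ast=-\theta(\xi')\in\n_-$ kills $\phi$, we have $\langle d\rho(\xi')\eta,\phi\rangle=\langle \eta,d\rho(\xi'^\ast)\phi\rangle=0$; continuity in $\phi$ extends this to $V(\rho)$. It follows that for any $G$-smooth $\psi\in V(\rho)$ and any $\xi\in\n_-$, the identity $\langle \eta,d\rho(\xi)\psi\rangle=\langle d\rho(\xi^\ast)\eta,\psi\rangle=0$ for all $\eta\in\H_\rho^\infty$ yields $d\rho(\xi)\psi=0$. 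Call this the Key Lemma.

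Given the Key Lemma, $(1)\Rightarrow(2)$ is immediate. Positive energy of $\sigma$ follows from $V(\rho)\subseteq\H_\rho$ being $\R$-invariant with $\sigma|_\R$ the restriction, so the spectra are nested. For $\restr{\rho}{G}=\HolInd_H^G(\restr{\sigma}{H})$, I would apply \Fref{thm: hol_induced_equiv_char} to $V:=V(\rho)$: closedness and $H$-invariance are built in; cyclicity for $\restr{\rho}{G}$ comes from cyclicity for $\rho$ combined with $\R$-invariance of $V(\rho)$ (the $G^\sharp$- and $G$-orbits agree); density of $V(\rho)\cap\H_\rho^{\omega_G}$ is assumed; the $d\rho(\n_-)$-annihilation is the Key Lemma, identifying $\widetilde{\chi}$ with the trivial extension.

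For $(2)\Rightarrow(1)$, \Fref{thm: hol_induced_equiv_char} furnishes a closed $H$-invariant $\widetilde{V}\subseteq\H_\rho$, cyclic for $\restr{\rho}{G}$, $H$-isomorphic to $V_\sigma=V(\rho)$, with $\widetilde{V}\cap\H_\rho^{\omega_G}$ dense in $\widetilde{V}$ and killed by $d\rho(\n_-)$. The killing condition places these vectors inside $G$-smooth vectors killed by $\n_-$, which by \Fref{lem: Vrho_G_smooth} lie in $V(\rho)$; taking closures gives $\widetilde{V}\subseteq V(\rho)$ and cyclicity of $V(\rho)$ for $\rho$. For the remaining conclusions---density of $V(\rho)\cap\H_\rho^{\omega_G}$ in $V(\rho)$ (equivalent to $\widetilde{V}=V(\rho)$), positive energy of $\rho$, and the final equality of spectral infima---I would pass to the reproducing-kernel realization $\H_\rho\cong\H_Q$ with $F(g)=\mc{E}_e\rho(g)\mc{E}_e^\ast$, under which the $\R$-action takes the form $(U_t\Phi)(g)=U_t^\sigma\Phi(\alpha_{-t}(g))$. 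Positive energy of $\sigma$ analytically extends $t\mapsto U_t^\sigma v$ to $\{\Im t>0\}$ on a dense domain of $V_\sigma$, while holomorphic induction analytically extends $F$ in the $\b_-$-direction; combining them yields analytic extension of $t\mapsto U_t\Phi$ to $\{\Im t>0\}$ on a dense domain, enforcing $\Spec(-i\overline{d\rho(\bm{d})})\subseteq[0,\infty)$ and $\widetilde{V}=V(\rho)$. The trivial inequality $\inf\Spec(-i\overline{d\rho(\bm{d})})\leq\inf\Spec(-i\overline{d\sigma(\bm{d})})$ comes from $V(\rho)\subseteq\H_\rho$ being $\R$-invariant; the reverse inequality follows from applying \Fref{cor: R_action_on_jets} to the $\R$-equivariant injective jet map $\H_\rho^{\omega_G}\to P(\g_\C,V_\sigma)$, whose $\b_-$-factorization coming from holomorphic induction forces the effective $\dot{\alpha}$-spectrum to be non-positive.

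The hardest step is the analytic-continuation argument in $(2)\Rightarrow(1)$: the $\dot{\alpha}$-spectrum on $\g_\C$ occupies all of $\R$, so the naive spectral estimate for $(U_t\Phi)(g)=U_t^\sigma\Phi(\alpha_{-t}(g))$ gives nothing useful, and non-negativity of $\Spec_U(\H_\rho)$ only emerges after the $\b_-$-directional analyticity of $F$ from holomorphic induction is used to restrict which $\alpha_t$-translations effectively contribute. The spectral machinery of \Fref{sec: Arveson_spectral_theory}, and especially \Fref{cor: R_action_on_jets}, is designed precisely to mediate this combination.
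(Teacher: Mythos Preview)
Your $(1)\Rightarrow(2)$ direction matches the paper's argument, and your Key Lemma is a correct and explicit justification of the step the paper states briefly as ``by definition of $V(\rho)$, $\mcD_\chi$ is killed by $d\rho(\n_-)$''.

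For $(2)\Rightarrow(1)$ there is a genuine gap. Your proposed analytic-continuation argument---extend $t\mapsto U_t^\sigma v$ to $\{\Im t>0\}$ by positive energy of $\sigma$, extend $F$ in the $\b_-$-direction by holomorphic induction, then ``combine''---is not made precise, and it is unclear how it could be. In the formula $(U_t\Phi)(g)=U_t^\sigma\Phi(\alpha_{-t}(g))$ the obstruction is $\alpha_{-t}$, which is a real automorphism of $G$ with no evident complex extension; the $\b_-$-analyticity of $F$ is a statement about the $g$-variable, not the $t$-variable, and you do not explain how the two interact. The paper avoids this entirely. It first proves $\widetilde V=V(\rho)$ by a short separate argument (\Fref{lem: subspace_equals_Vrho}): for $v\in V(\rho)\ominus\widetilde V$ and $w\in\widetilde V\cap\H_\rho^{\omega_G}$, the analytic function $g\mapsto\langle v,\rho(g)w\rangle$ has vanishing $\infty$-jet at $e$ (all derivatives factor through $d\sigma(E_0(x))$ and land in $\widetilde V$), hence vanishes identically, forcing $v\perp\rho(G)\widetilde V=\H_\rho$.

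For positive energy the paper then constructs a single $\R$-equivariant continuous linear map
\[
A:\H_\rho^\infty\;\xrightarrow{\ \Phi\ }\;C^\infty(G,V_\sigma)^H\;\xrightarrow{\ j_e^\infty\ }\;P(\g_\C,V_\sigma)\;\xrightarrow{\ \mathrm{restr}\ }\;P(\n_-,V_\sigma).
\]
The target is $P(\n_-,V_\sigma)$, not $P(\g_\C,V_\sigma)$ as you wrote; this is the whole point, since $\dot\alpha$ has non-positive spectrum on $\n_-$, so \Fref{cor: R_action_on_jets} gives $\inf\Spec_\nu(P(\n_-,V_\sigma))=\inf\Spec(-i\overline{d\sigma(\bm d)})\geq 0$. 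Injectivity of $A$ is proved via PBW and \Fref{lem: inf_subs_G_inv}: if the $\n_-$-jet of $\Phi_\psi$ vanishes then $\psi\perp d\rho(\mc U(\n_+))\mcD_\chi=d\rho(\mc U(\g_\C))\mcD_\chi$ (using that $\mcD_\chi$ is $\b_-$-invariant), hence $\psi=0$. Then \Fref{prop: spectra_linear_map} yields positive energy of $\rho$ and the spectral-infimum inequality in one stroke. Your final paragraph already names the right tool; the fix is to aim the jet map at $P(\n_-,V_\sigma)$ from the start, establish its injectivity, and discard the analytic-continuation detour.
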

	
	\noindent
	We start the proof of \Fref{thm: pe_hol_ind} with two lemmas:
	
	\begin{lemma}\label{lem: subspace_equals_Vrho}
		Let $W \subseteq V(\rho)$ be a $H$-invariant closed linear subspace that is cyclic for $G$ and contains a dense set of $G$-analytic vectors. Then $W = V(\rho)$.
	\end{lemma}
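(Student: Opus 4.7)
The inclusion $W \subseteq V(\rho)$ is given, so the task is to prove $V(\rho) \subseteq W$, equivalently that any $\psi \in V(\rho)$ with $\psi \perp W$ vanishes. Since $W$ is cyclic for $G$, it suffices to show that $\langle \psi, \rho(g)w\rangle = 0$ for all $g \in G$ and all $w$ in the dense subset $W \cap \H_\rho^{\omega_G}$. For such a $w$ the function $f_w : G \to \C,\; g \mapsto \langle \psi, \rho(g)w\rangle$ is real-analytic on the connected group $G$, so by \Fref{prop: jet_vanishes_then_function_trivial} the problem reduces to showing $j_e^\infty(f_w) = 0$, i.e.\ that $\langle \psi, d\rho(x)w\rangle = 0$ for every $x \in \mc{U}(\g_\C)$.

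\textbf{First step.} I first plan to establish the auxiliary inclusion $V(\rho) \cap \H_\rho^{\omega_G} \subseteq \H_\rho^{\infty, \n_-}$: any $G$-analytic vector of $V(\rho)$ is automatically annihilated by $\n_-$. Given $w \in V(\rho) \cap \H_\rho^{\omega_G}$, write $w = \lim_n w_n$ with $w_n \in \H_\rho^{\infty, \n_-}$. For $\xi \in \n_-$ and $\eta \in \H_\rho^\infty$, the formal-adjoint identity on smooth vectors together with continuity of the inner product yield
\[
	\langle \eta, d\rho(\xi)w\rangle = \langle d\rho(\xi^*)\eta, w\rangle = \lim_n \langle d\rho(\xi^*)\eta, w_n\rangle = \lim_n \langle \eta, d\rho(\xi)w_n\rangle = 0,
\]
and density of $\H_\rho^\infty$ gives $d\rho(\xi)w = 0$. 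Applied to $W \subseteq V(\rho)$, this yields $W \cap \H_\rho^{\omega_G} \subseteq \H_\rho^{\infty, \n_-}$.

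\textbf{Second step.} Fix $\psi \in V(\rho)$ with $\psi \perp W$ and approximate $\psi = \lim_n \psi_n$ with $\psi_n \in \H_\rho^{\infty, \n_-}$. For $x \in \mc{U}(\g_\C)$ and $w \in W \cap \H_\rho^{\omega_G}$ one has $\langle \psi, d\rho(x)w\rangle = \lim_n \langle d\rho(x^*)\psi_n, w\rangle$, valid because both $\psi_n$ and $d\rho(x)w$ are smooth. I then plan to expand $x^*$ in the PBW basis $\mc{U}(\n_+)\mc{U}(\h_\C)\mc{U}(\n_-)$ and observe that the rightmost $\mc{U}(\n_-)$-factor applied to $\psi_n$ collapses to its augmentation (since $\psi_n \in \H_\rho^{\infty, \n_-}$), while the leftmost $\mc{U}(\n_+)$-factor, transferred to $w$ via the formal adjoint, becomes a $\mc{U}(\n_-)$-element which collapses to its augmentation by the first step applied to $w$. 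What remains is a finite linear combination of terms $\langle \psi_n, d\rho(y)w\rangle$ with $y \in \mc{U}(\h_\C)$. Since $W$ is closed and $H$-invariant, differentiating the $H$-action inside $W$ shows $d\rho(y)w \in W$, so each limit equals $\langle \psi, d\rho(y)w\rangle = 0$ by $\psi \perp W$. Thus $\langle \psi, d\rho(x)w\rangle = 0$, hence $j_e^\infty(f_w) = 0$, hence $f_w \equiv 0$, and consequently $\psi = 0$.

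\textbf{Main difficulty.} The crux is the simultaneous collapse of both the $\n_-$- and $\n_+$-factors of $x^*$ under the pairing: it forces both sides of the inner product to be annihilated by $\n_-$. The $\psi_n$-side is so by construction, but the $w$-side holds only by virtue of the first step, whose proof depends crucially on the hypothesis that $W$ contains a dense set of $G$-analytic vectors. Without that density hypothesis the jet-vanishing computation breaks down.
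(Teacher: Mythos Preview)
Your proof is correct and follows essentially the same route as the paper's: reduce to showing $\langle \psi, d\rho(x)w\rangle = 0$ for all $x\in\mc{U}(\g_\C)$ via the PBW decomposition, collapse the $\n_-$- and $\n_+$-factors using that both sides of the pairing lie in $\H_\rho^{\infty,\n_-}$, and conclude with analyticity and connectedness of $G$. The paper packages the PBW collapse through the projection $E_0:\mc{U}(\g_\C)\to\mc{U}(\h_\C)$ and states the inclusion $W^{\omega_G}\subseteq\H_\rho^{\infty,\n_-}$ as a fact, whereas you spell out both the approximation $\psi=\lim\psi_n$ and the auxiliary inclusion explicitly; your version is simply more detailed. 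One small remark: your ``first step'' actually works for any smooth vector in $V(\rho)$, not just analytic ones, and its proof does not rely on the density of $G$-analytic vectors in $W$ --- that hypothesis is needed only so that the orbit functions $f_w$ are real-analytic.
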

	\begin{proof}
		Let $W^\perp$ be the orthogonal complement of $W$ in $V(\rho)$, so $V(\rho) = W \oplus W^\perp$ as unitary $H$-representations. It suffices to show that $W^\perp \perp \rho(G) W$. Define $W^{\omega_G} := W \cap \H_\rho^{\omega_G}$. Let $w \in W^{\omega_G}$ and $v \in W^\perp\subseteq V(\rho)$. Consider the analytic function $f : G \to \C,\; f(g) := \langle v, \rho(g)w\rangle$. Let $E_0 : \mc{U}(\g_\C) \to \mc{U}(\h_\C)$ be defined as in \Fref{def: cond_exp_univ_env_alg}. As $d\rho(\n_-)$ kills both $\H_\rho^{\infty, \n_-}$ and $W^{\omega_G}$, observe that $\langle v, d\rho(x) w\rangle = \langle v, d\sigma(E_0(x))w\rangle = 0$ for any $x \in \mc{U}(\g_\C)$. It follows that $j^\infty_e(f) = 0$. As $G$ is connected and $f$ is analytic, we conclude using \Fref{prop: identity_theorem} that $f = 0$. Because $W^{\omega_G}$ is dense in $W$, it follows that $W^\perp \perp \rho(G) W$.\qedhere
	\end{proof}
	
	\begin{lemma}\label{lem: inf_subs_G_inv}
		Let $\mcD \subseteq \H_\rho^\omega$ be a linear subspace. Then $\overline{d\rho(\mc{U}(\g_\C))\mcD}$ is the closed $G$-invariant linear subspace of $\H_\rho$ generated by $\mcD$.
	\end{lemma}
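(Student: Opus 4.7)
The plan is to prove two containments. Let $K := \overline{d\rho(\mc{U}(\g_\C))\mcD}$, and let $W$ denote the closed $G$-invariant linear subspace of $\H_\rho$ generated by $\mcD$. Taking $x = 1 \in \mc{U}(\g_\C)$ shows $\mcD \subseteq K$, so by minimality of $W$ it suffices to prove (i) $K \subseteq W$ and (ii) $K$ is $G$-invariant, which together force $K = W$.

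For (i): $W$ is closed and $G$-invariant, so for any $v \in \mcD \subseteq \H_\rho^\omega$ and $\xi \in \g$, the derivative $d\rho(\xi)v = \lim_{t \to 0} t^{-1}(\rho(e^{t\xi})v - v)$ lies in $W$. The same argument (together with the fact that, for a BCH Fr\'echet--Lie group acting by a smooth unitary representation, $\H_\rho^\omega$ is $d\rho(\mc{U}(\g_\C))$-invariant, cf.\ \Fref{prop: analytic_vectors_equivalent_definitions}) yields, by induction on the order of $x$, that $d\rho(x)v \in W$ for every $x \in \mc{U}(\g_\C)$. Taking closure gives $K \subseteq W$.

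For (ii): since $G$ is connected and BCH, it is generated by $\exp_G(U)$ for any $0$-neighborhood $U \subseteq \g$, so by continuity of $\rho$ and closedness of $K$ it is enough to show that $\rho(e^\xi)\psi \in K$ for every $\xi$ in a sufficiently small $0$-neighborhood of $\g$ and every $\psi$ in the dense subspace $d\rho(\mc{U}(\g_\C))\mcD$. Fix $v \in \mcD$ and $x \in \mc{U}(\g_\C)$ and set $\psi := d\rho(x)v$. The orbit map $g \mapsto \rho(g)v$ is analytic, and writing $x$ as a sum of products $\xi_1 \cdots \xi_n$ with $\xi_i \in \g_\C$, we can express
\begin{equation*}
\rho(e^\xi)\,d\rho(\xi_1\cdots\xi_n)\,v \;=\; \partial_{t_1}\cdots\partial_{t_n}\big|_{t=0}\,\rho\big(e^{\xi}\,e^{t_1\xi_1}\cdots e^{t_n\xi_n}\big)v,
\end{equation*}
which is a real-analytic function of $\xi$ in a $0$-neighborhood, since group multiplication and the exponential are analytic and $v \in \H_\rho^\omega$. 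Hence the orbit map $g \mapsto \rho(g)\psi$ is analytic on a neighborhood of $e$, so by \Fref{prop: analytic_vectors_equivalent_definitions} $\psi \in \H_\rho^\omega$, and admits the Taylor expansion
\begin{equation*}
\rho(e^\xi)\psi \;=\; \sum_{n=0}^\infty \tfrac{1}{n!}\, d\rho(\xi^n)\psi \;=\; \sum_{n=0}^\infty \tfrac{1}{n!}\, d\rho(\xi^n x)v
\end{equation*}
converging in $\H_\rho$ for $\xi$ small enough. Every partial sum lies in $d\rho(\mc{U}(\g_\C))\mcD \subseteq K$, and $K$ is closed, so $\rho(e^\xi)\psi \in K$.

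The main obstacle is the bootstrapping in (ii) that upgrades $\psi = d\rho(x)v$ from merely being in $\H_\rho$ to being an analytic vector: this is what justifies applying \Fref{prop: analytic_vectors_equivalent_definitions} to obtain a convergent Taylor series whose partial sums already live in $K$. The BCH-analytic structure of $G$ and analyticity of $v$ are precisely what makes this bootstrap go through; without it the closure operation alone does not force $G$-invariance of $K$.
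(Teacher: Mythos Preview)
Your argument for (ii) has a gap. You correctly show that for each fixed $\psi = d\rho(x)v \in d\rho(\mc{U}(\g_\C))\mcD$ there is a $0$-neighborhood $U_\psi \subseteq \g$ on which the Taylor series of the orbit map converges, giving $\rho(e^\xi)\psi \in K$ for $\xi \in U_\psi$. But the reduction you invoke (``it is enough to show $\rho(e^\xi)\psi \in K$ for $\xi$ in a sufficiently small $0$-neighborhood and every $\psi$ in the dense subspace'') requires a \emph{single} neighborhood $U$ that works for all $\psi$ simultaneously: only then does density and continuity give $\rho(e^\xi)K \subseteq K$ for $\xi \in U$, after which you may iterate using that $G$ is generated by $\exp_G(U)$. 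For analytic (rather than entire) vectors the radius of convergence of the Taylor expansion along a one-parameter subgroup is finite and depends on the vector, so your argument does not produce such a uniform $U$. After one application of $\rho(e^\xi)$ you leave the subspace $d\rho(\mc{U}(\g_\C))\mcD$, and the iteration breaks down.

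The natural fix already leads back to the paper's method. For $\psi \in \mcD$ and $v \in K^\perp$, the scalar function $g \mapsto \langle v, \rho(g)\psi\rangle$ is real-analytic on $G$ (since $\psi \in \H_\rho^\omega$) and all its derivatives at $e$ vanish because $d\rho(x)\psi \in K$ for every $x \in \mc{U}(\g_\C)$; by the identity theorem (\Fref{prop: jet_vanishes_then_function_trivial}) and connectedness of $G$ it is identically zero, whence $v \perp \rho(G)\mcD$ and $K^\perp \subseteq W^\perp$. This is exactly the paper's proof, and it shows that the Taylor-series convergence you arrange is not actually needed: the vanishing of the full jet at $e$ is what carries the argument, and the identity theorem replaces the iteration you were attempting.
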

	\begin{proof}
		Define $\mc{F} := \overline{d\rho(\mc{U}(\g_\C))\mcD}$ and let $\mc{F}^\prime$ denote the closed $G$-invariant linear subspace generated by $\mcD$. The inclusion $\mc{F} \subseteq \mc{F}^\prime$ is clear. Let $v \in \mc{F}^\perp \subseteq \H_\rho$ and take $\psi \in \mcD$. Consider the analytic function $f : G \to \C$ defined by $f(g) := \langle v, \rho(g)\psi\rangle$. Notice for any $x \in \mc{U}(\g_\C)$ that $\langle v, d\rho(x)\psi\rangle = 0$, because $d\rho(x)\psi \in \mc{F}$. It follows that $j^\infty_e(f) = 0$. As $G$ is connected and $f$ is analytic, we conclude using \Fref{prop: identity_theorem} that $f = 0$. We therefore find that $v \perp \rho(G)\mc{D}$, so in fact $v \perp \mc{F}'$. Hence $\mc{F}^\perp \subseteq (\mc{F}')^\perp$, which is equivalent to $\mc{F}' \subseteq \mc{F}$.
	\end{proof}

	\begin{proof}[Proof of \Fref{thm: pe_hol_ind}:]
		Define $\mcD_\chi := V(\rho) \cap \H_\rho^{\omega_G}$. Assume that $(1)$ holds true. Then in particular, $\sigma$ is of positive energy at $\bm{d}$. Let $\chi : \b_- \to \L(\mcD_\chi)$ be the trivial extension of $d\sigma$ to $\b_-$ with domain $\mcD_\chi$. By definition of $V(\rho)$, $\mcD_\chi$ is killed by $d\rho(\n_-)$. The conditions for $V_\sigma$ in \Fref{thm: hol_induced_equiv_char} are satisfied for the $(H, \b_-)$-extension pair $(\restr{\sigma}{H}, \chi)$, so $(2)$ follows from \Fref{thm: hol_induced_equiv_char}.\\
		
		\noindent
		Conversely, assume that $\restr{\rho}{G} = \HolInd_H^G(\restr{\sigma}{H})$ and that $\sigma$ is of p.e.\ at $\bm{d}$. It follows from \Fref{thm: hol_induced_equiv_char} that there is a $H$-invariant closed linear subspace $W \subseteq \H_\rho$ s.t. $W$ is cyclic for $\rho$ and $W \cap \H_\rho^{\omega_G}$ is both dense in $W$ and killed by $d\rho(\n_-)$. The last condition implies using \Fref{lem: Vrho_G_smooth} that $W \subseteq V(\rho)$. By \Fref{lem: subspace_equals_Vrho} we obtain that $W = V(\rho)$. To see that $(1)$ holds true, it only remains to show that $\rho$ is of positive energy at $\bm{d}$. Define 
		$$\Phi : \H_\rho^\infty \to C^\infty(G, V_\sigma)^H, \qquad \Phi_\psi(g) := p_V \rho(g)^{-1}\psi \qquad (\psi \in \H_\rho^\infty),$$
		where $p_V : \H_\rho \to V(\rho)$ is the orthogonal projection. Using the exponential map as a local chart, identify $J^\infty_e C^\infty(G, V_\sigma) \cong P(\g_\C, V_\sigma)$ $G$-equivariantly. Let $A$ denote the composition 
		$$A : \H_\rho^\infty \xrightarrow{\Phi} C^\infty(G, V_\sigma)^H \xrightarrow{j^\infty_e} P(\g_\C, V_\sigma)\xrightarrow{restr} P(\n_-, V_\sigma).$$
		Observe that
		\begin{align*}
			\Phi_{\rho(t)\psi}(g) &= p_V \rho(g)^{-1}\rho(t)\psi = p_V \rho(t)\rho(\alpha_{-t}(g))^{-1}\psi = \sigma(t) p_V \rho(\alpha_{-t}(g))^{-1}\psi \\
			&= \sigma(t)\Phi_\psi(\alpha_{-t}(g)).
		\end{align*}
		Consequently, $A$ is $\R$-equivariant if we equip $P(\n_-, V_\sigma)$ with the $\R$-action defined by $(\nu_t f)(\xi) := \sigma(t)f(\dot{\alpha}_{-t}(\xi))$ for $t \in \R$ and $f \in P^n(\n_-, V_\sigma)$. Equip $\H_\rho^\infty$ with the strong topology (cf.\ \Fref{def: smooth_analytic_vectors}), with respect to which it is complete because $G$ is a regular Fr\'echet--Lie group \citep[Prop.\ 3.19]{BasNeeb_ProjReps}. Recall that $P(\n_-, V_\sigma) = \prod_{n=0}^\infty P^n(\n_-, V_\sigma)$ carries the product topology and each $P^n(\n_-, V_\sigma)$ carries the topology of uniform convergence on compact sets. We show that $A$ is continuous with respect to these topologies. For any $\psi \in \H_\rho^\infty$, let $f_\psi \in C^\infty(G, \H_\rho), \; f_\psi(g) := \rho(g)\psi$ denote the orbit map. Using that $\rho$ is unitary, observe that the linear map $\H_\rho^\infty \to C^\infty(G, \H_\rho), \psi \mapsto f_\psi$ is continuous w.r.t.\ the smooth compact-open topology on $C^\infty(G, \H_\rho)$. This implies that $\Phi$ is continuous. As $j^\infty_e$ is continuous by \Fref{prop: jet_vanishes_then_function_trivial}, the continuity of $A$ follows. We remark further that the $\R$-representation $t \mapsto \rho(t)$ on $\H_\rho^\infty$ is strongly continuous and pointwise polynomially bounded by \Fref{prop: properties_arveson_spec}, so that its Arveson spectrum can be defined according to \Fref{def: arv_spec}. Similarly, because the $\R$-actions on $\n_-$ and $V_\sigma$ both have polynomially growth and are strongly continuous, it follows from \Fref{cor: R_action_on_jets} that the $\R$-action $\nu$ on $P(\n_-, V_\sigma)$ is strongly continuous and pointwise polynomially bounded. Since $\n_-$ and $V_\sigma$ have non-positive and non-negative spectrum, respectively (relative to the $\R$-actions $\dot{\alpha}_t$ and $\sigma(t)$, respectively), we further obtain from \Fref{cor: R_action_on_jets} and \Fref{ex: spectra_Hilbert_space_case} that 
		$$\inf\, \Spec_\nu\big(P\big(\n_-, V_\sigma\big)\big) = \inf\, \Spec(V_\sigma) = \inf \Spec(-i \overline{d\sigma(\bm{d})}) \geq 0$$
		We show next that $A$ is injective. Let $\psi \in \H_\rho^\infty$ and suppose that $A(\psi) = 0$. Then $p_V d\rho(\mc{U}(\n_-))\psi = \{0\}$, which implies $\psi \perp \overline{d\rho(\mc{U}(\n_+))\mcD_\chi}$. Since $\mcD_\chi$ is $d\rho(\b_-)$-invariant, notice that $\overline{d\rho(\mc{U}(\n_+))\mcD_\chi} = \overline{d\rho(\mc{U}(\g_\C))\mcD_\chi}$ by the PBW Theorem. By \Fref{lem: inf_subs_G_inv}, this is the closed $G$-invariant subspace of $\H_\rho$ generated by $\mcD_\chi$, which equals all of $\H_\rho$ because $\mcD_\chi$ is dense in $V(\rho)$ and $V(\rho)$ is cyclic for $\rho$. Thus $\psi \perp \H_\rho$ and so $\psi = 0$. Hence $A$ is injective, continuous and $\R$-equivariant. It follows by \Fref{prop: spectra_linear_map} that $\Spec(\H_\rho^\infty) \subseteq \Spec_\nu\big(P\big(\n_-, V_\sigma\big)\big)$, where we consider the $\R$-action $t \mapsto \rho(t)$ on $\H_\rho^\infty$. Thus
		$$ \inf \Spec(\H_\rho^\infty) \geq \inf\, \Spec_\nu\big(P\big(\n_-, V_\sigma\big)\big) = \inf \Spec(-i \overline{d\sigma(\bm{d})}), $$
		Notice that $\H_\rho$ and $\H_\rho^\infty$ have the same spectrum, because $\H_\rho^\infty$ is dense in $\H_\rho$. So 
		$$\inf\,\Spec(-i \overline{d\rho(\bm{d})}) = \inf\, \Spec(\H_\rho) = \inf\, \Spec(\H_\rho^\infty) \geq \inf\, \Spec(-i \overline{d\sigma(\bm{d})}) \geq 0.$$
		Thus, $\rho$ is of positive energy at $\bm{d}$. So $(1)$ holds true. Finally, the inclusion $V(\rho)~\subseteq~\H_\rho$ is $\R$-equivariant, so by \Fref{prop: spectra_linear_map} we also have the reverse inequality $\inf\,\Spec(-i \overline{d\rho(\bm{d})}) \leq \inf\, \Spec(-i \overline{d\sigma(\bm{d})})$.\\
	\end{proof}
	
	\noindent
	Let us state some important immediate consequences of \Fref{thm: pe_hol_ind}.
	
	\begin{lemma}\label{lem: pe_orth_proj_inner}
		Let $(\rho, \H_\rho)$ be a smooth unitary representation of $G$. Let $q_V\in\B(\H_\rho)$ denote the orthogonal projection onto $V(\rho)$. Then $q_V\in\rho(G)^{\prime \prime}$.
	\end{lemma}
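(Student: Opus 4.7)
The plan is to show that every operator in the commutant $\rho(G)'$ leaves $V(\rho)$ invariant; since $\rho(G)'$ is a $\ast$-algebra, applying the same conclusion to $T^\ast$ gives that $V(\rho)^\perp$ is also $\rho(G)'$-invariant, so $q_V$ commutes with every $T \in \rho(G)'$ and hence lies in the double commutant $\rho(G)''$ by the bicommutant theorem. Since $V(\rho)$ is by definition the closure of $\H_\rho^{\infty, \n_-}$ and every $T \in \rho(G)'$ is continuous, it suffices to show the stronger statement that $T\,\H_\rho^{\infty, \n_-} \subseteq \H_\rho^{\infty, \n_-}$ for every such $T$.

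Fix $T \in \rho(G)'$ and $\psi \in \H_\rho^{\infty, \n_-}$. First, $T\psi$ is again a smooth vector for $\rho$: the orbit map $g \mapsto \rho(g)T\psi = T\rho(g)\psi$ is the composition of the smooth orbit map $G \to \H_\rho,\ g \mapsto \rho(g)\psi$ with the continuous linear operator $T$, so it is smooth (one may invoke \Fref{prop: analytic_implies_smooth}-style reasoning or simply differentiate under $T$). Second, for $\xi \in \g$ one has
\[
	d\rho(\xi)T\psi = \restr{d\over dt}{t=0}\rho(e^{t\xi})T\psi = T\restr{d\over dt}{t=0}\rho(e^{t\xi})\psi = T\,d\rho(\xi)\psi,
\]
so $T$ commutes with $d\rho(\xi)$ on $\H_\rho^\infty$ for every $\xi \in \g$. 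Extending $\C$-linearly, the same commutation relation holds on $\H_\rho^\infty$ for every $\xi \in \g_\C$. In particular, for $\xi \in \n_-$ we obtain $d\rho(\xi)T\psi = T\,d\rho(\xi)\psi = 0$, showing $T\psi \in \H_\rho^{\infty, \n_-}$.

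Combining the two steps, $T$ preserves $\H_\rho^{\infty, \n_-}$, and therefore by continuity $T\,V(\rho) \subseteq V(\rho)$, for every $T \in \rho(G)'$. Applying this to $T^\ast \in \rho(G)'$ gives $V(\rho)^\perp$ also $\rho(G)'$-invariant, so $q_V$ commutes with $\rho(G)'$ and lies in $\rho(G)''$, as required.

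Honestly there is no significant obstacle here: the argument is essentially bookkeeping, and the only minor point to be careful about is the passage from $\g$ to $\g_\C$, which is immediate by $\C$-linearity of the derived representation on smooth vectors. The lemma is precisely the manifestation of the intuition that the commutant of a representation automatically preserves any closed subspace defined by kernel conditions on the infinitesimal action.
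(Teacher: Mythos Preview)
Your proof is correct and follows essentially the same approach as the paper's: both show that any $T \in \rho(G)'$ preserves $\H_\rho^{\infty,\n_-}$ (hence $V(\rho)$) by using that $T$ preserves smooth vectors and commutes with the derived representation, then invoke $\ast$-closure of the commutant to conclude $q_V T = T q_V$. The paper's version is more terse, simply asserting $T\H_\rho^\infty \subseteq \H_\rho^\infty$ and $d\rho(\n_-)T\H_\rho^{\infty,\n_-} = Td\rho(\n_-)\H_\rho^{\infty,\n_-}$, whereas you spell out these steps; but the substance is the same.
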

	\begin{proof}
		Let $T \in \rho(G)^\prime = \B(\H_\rho)^G$. Then $T \H_\rho^\infty \subseteq \H_\rho^\infty$ and 
		$$d\rho(\n_-)T \H_\rho^{\infty, \n_-} = T d\rho(\n_-)\H_\rho^{\infty, \n_-} \subseteq \{0\}.$$
		Thus $T \H_\rho^{\infty, \n_-} \subseteq \H_\rho^{\infty, \n_-}$. It follows that $T V(\rho) \subseteq V(\rho)$. As $\B(\H_\rho)^G$ is $\ast$-closed, we have also shown that $T^\ast V(\rho) \subseteq V(\rho)$. Thus $q_V T = Tq_V$, and so $q_V\in\rho(G)^{\prime \prime}$. 
	\end{proof}
	
	\begin{corollary}\label{cor: pe_hol_ind_iso_commutants}
		Suppose that the unitary $G^\sharp$-representation $\rho$ satisfies the equivalent conditions of \Fref{thm: pe_hol_ind}. Then $T \mapsto \restr{T}{V(\rho)}$ defines isomorphisms of von Neumann algebras 
		$$\B(\H_\rho)^G \cong \B(V(\rho))^H \qquad\text{ and } \qquad \B(\H_\rho)^{G^\sharp} \cong \B(V(\rho))^{H \times \R}.$$
	\end{corollary}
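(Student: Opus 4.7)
The plan is to deduce both isomorphisms from the general commutant theorem \Fref{thm: commutant_trivial_extension}, using \Fref{thm: pe_hol_ind} to put ourselves in that framework and \Fref{lem: pe_orth_proj_inner} to verify its hypothesis on the projection onto $V(\rho)$.

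For the first isomorphism, I would first observe that by \Fref{thm: pe_hol_ind} we have $\restr{\rho}{G} = \HolInd_H^G(\restr{\sigma}{H})$, and that the extension of $d\sigma$ to $\b_-$ furnished by \Fref{thm: hol_induced_equiv_char} is the \emph{trivial} one on the domain $\mcD := V(\rho) \cap \H_\rho^{\omega_G}$, simply because $\mcD \subseteq \H_\rho^{\infty, \n_-}$ by definition of $V(\rho)$ and hence $d\rho(\n_-)\mcD = \{0\}$. \Fref{lem: pe_orth_proj_inner} gives $q_V \in \rho(G)^{\prime\prime}$, so \Fref{thm: commutant_trivial_extension}$(2)$ applies directly and produces the isomorphism $r: \B(\H_\rho)^G \to \B(V(\rho))^H$, $T \mapsto \restr{T}{V(\rho)}$.

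For the second isomorphism, the key point is that $V(\rho)$ is invariant under the full group $H \times \R$ (since $\n_-$ is $\dot{\alpha}$-invariant), so $q_V$ commutes with $U_t := \rho(1_G, t)$ for every $t \in \R$. Writing $\B(\H_\rho)^{G^\sharp} = \B(\H_\rho)^G \cap \{U_t\}_{t \in \R}'$ and $\B(V(\rho))^{H \times \R} = \B(V(\rho))^H \cap \{\restr{U_t}{V(\rho)}\}_{t \in \R}'$, I would note that any $T \in \B(\H_\rho)^{G^\sharp}$ preserves $V(\rho)$ (since $T \in \B(\H_\rho)^G$), and then $TU_t = U_tT$ on $\H_\rho$ immediately implies $\restr{T}{V(\rho)}\restr{U_t}{V(\rho)} = \restr{U_t}{V(\rho)}\restr{T}{V(\rho)}$. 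Thus $r$ restricts to an injective $\ast$-homomorphism $\B(\H_\rho)^{G^\sharp} \to \B(V(\rho))^{H \times \R}$.

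For surjectivity, it suffices to check that every orthogonal projection $p \in \B(V(\rho))^{H \times \R}$ lies in the image, since a von Neumann algebra is generated by its projections and $r^{-1}$ is weakly continuous (being the inverse of a $\ast$-isomorphism of von Neumann algebras). Given such a $p$, set $V_p := pV(\rho)$, which is $H \times \R$-invariant. Inspecting the surjectivity argument in the proof of \Fref{lem: iso_of_vna}, the preimage $r^{-1}(p) \in \B(\H_\rho)^G$ is precisely the orthogonal projection onto the closed $G$-invariant subspace $\H_p$ generated by $V_p$. It thus remains to verify that $\H_p$ is also $\R$-invariant. This is straightforward: for $v \in V_p$, $g \in G$ and $t \in \R$, the defining relation of $G^\sharp$ gives
\[
U_t \rho(g) v = \rho(\alpha_t(g))\, U_t v \in \rho(G) V_p \subseteq \H_p,
\]
since $U_t v \in V_p$ by the $\R$-invariance of $V_p$. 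Hence $\H_p$ is $G^\sharp$-invariant, the corresponding projection lies in $\B(\H_\rho)^{G^\sharp}$, and surjectivity follows. No step here is a serious obstacle; the only subtlety is recognizing that the trivial extension context of \Fref{thm: commutant_trivial_extension} is exactly what applies.
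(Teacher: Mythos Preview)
Your argument is correct. The first isomorphism is handled exactly as in the paper, via \Fref{lem: pe_orth_proj_inner} and \Fref{thm: commutant_trivial_extension}.

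For the second isomorphism, you take a slightly more roundabout path than the paper. You restrict $r$ to $\B(\H_\rho)^{G^\sharp}$, observe it lands in $\B(V(\rho))^{H\times\R}$, and then prove surjectivity by reducing to projections and identifying $r^{-1}(p)$ explicitly as the projection onto the $G$-cyclic span $\H_p$ of $V_p$, finally checking $\H_p$ is $\R$-invariant via the semidirect product relation. The paper instead shows directly that any $T\in\B(\H_\rho)^G$ with $\restr{T}{V(\rho)}\in\B(V(\rho))^{H\times\R}$ already commutes with every $\rho(t)$: for $g\in G$, $v\in V(\rho)$ and $t\in\R$,
\[
\rho(t)T\rho(g)v=\rho(\alpha_t(g))T\rho(t)v=T\rho(t)\rho(g)v,
\]
and cyclicity of $V(\rho)$ for $G$ finishes it. This is precisely your computation for $\H_p$, but applied once to an arbitrary $T$ rather than separately to each projection; it avoids invoking the explicit description of $r^{-1}(p)$ from \Fref{lem: iso_of_vna} and the weak-continuity reduction. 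Both arguments rest on the same key identity, so the difference is organizational rather than conceptual.
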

	\begin{proof}
		That $T \mapsto \restr{T}{V(\rho)}$ defines an isomorphism $\B(\H_\rho)^G \to \B(V(\rho))^H$ is immediate from \Fref{lem: pe_orth_proj_inner} and \Fref{thm: commutant_trivial_extension}. Consequently, it suffices to show that any $T \in \B(\H_\rho)^{G}$ with $\restr{T}{V(\rho)} \in \B(V(\rho))^{H \times \R}$ automatically commutes with the $\R$-action $t \mapsto \rho(t)$ on $\H_\rho$. Consider such $T$ and let $t \in \R$. Then
		\begin{align}
			\begin{split}
				\rho(t)T\rho(g)v &= \rho(t)\rho(g)Tv = \rho(\alpha_t(g))\rho(t)Tv = \rho(\alpha_t(g))T\rho(t)v \\
				&= T\rho(\alpha_t(g))\rho(t)v = T \rho(t)\rho(g)v	
			\end{split}
		\end{align}
		for any $g \in G$ and $v \in V(\rho)$. As $V(\rho)$ is cyclic for $G$, it follows that $T\rho(t) = \rho(t)T$ for all $t \in \R$.
	\end{proof}
	
	\begin{corollary}\label{cor: pe_uniqueness}
		Suppose that the unitary $G^\sharp$-representations $\rho_1$ and $\rho_2$ satisfy the equivalent conditions of \Fref{thm: pe_hol_ind}. Then the following assertions are valid:
		\begin{enumerate}
			\item If $V(\rho_1) \cong V(\rho_2)$ as unitary $H$-representations, then $\restr{\rho_1}{G}\cong \restr{\rho_2}{G}$.
			\item If $V(\rho_1) \cong V(\rho_2)$ as unitary $H \times \R$-representations, then $\rho_1 \cong \rho_2$.
		\end{enumerate} 
	\end{corollary}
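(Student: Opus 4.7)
The plan is to reduce both statements to the uniqueness of holomorphic induction (\Fref{thm: uniqueness}) combined with a short $\R$-equivariance check. By \Fref{thm: pe_hol_ind} applied to each $\rho_k$, we have $\restr{\rho_k}{G} = \HolInd_H^G(\restr{\sigma_k}{H})$, where $\sigma_k$ denotes the $H \times \R$-representation on $V(\rho_k)$, and where the $(H, \b_-)$-extension pair uses the trivial extension of $d(\sigma_k|_H)$ to $\b_-$ with domain $V(\rho_k) \cap \H_{\rho_k}^{\omega_G}$.

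For part (1), I would observe that any unitary $H$-equivariant isomorphism $U_0 : V(\rho_1) \to V(\rho_2)$ automatically restricts to a bijection $V(\rho_1) \cap \H_{\rho_1}^{\omega_G} \to V(\rho_2) \cap \H_{\rho_2}^{\omega_G}$ (since these are the $H$-analytic vectors of the respective representations) and intertwines the trivial extensions, because on these domains $\b_-$ acts through $\n_-$ as zero and through $\h_\C$ via the $H$-action already matched by $U_0$. The two $(H, \b_-)$-extension pairs are therefore unitarily equivalent and \Fref{thm: uniqueness} delivers $\restr{\rho_1}{G} \cong \restr{\rho_2}{G}$.

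For part (2), I would upgrade the $G$-intertwiner from part (1) to a $G^\sharp$-intertwiner. The key point is that one obtains such a unitary $U : \H_{\rho_1} \to \H_{\rho_2}$ explicitly by the formula $U\rho_1(g)v = \rho_2(g)U_0 v$ for $g \in G$ and $v \in V(\rho_1)$: this defines an isometry on the $G$-cyclic dense subspace $\rho_1(G)V(\rho_1)$ because the reproducing kernels $F_k : G \to \B(V(\rho_k))$, $F_k(g) := p_{V(\rho_k)} \rho_k(g) \iota_{V(\rho_k)}$, transform as $U_0 F_1(g) U_0^{-1} = F_2(g)$ thanks to the uniqueness in \Fref{thm: inducibility} together with $H$-equivariance of $U_0$. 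With this explicit description in hand, $\R$-equivariance of $U$ reduces to a direct calculation: for $t \in \R$, $g \in G$ and $v \in V(\rho_1)$, using $\R$-invariance of $V(\rho_1)$ and $\R$-equivariance of $U_0$,
\begin{align*}
U\rho_1(t)\rho_1(g)v
&= U\rho_1(\alpha_t(g))\rho_1(t)v = \rho_2(\alpha_t(g))U_0\rho_1(t)v \\
&= \rho_2(\alpha_t(g))\rho_2(t)U_0 v = \rho_2(t)\rho_2(g)U_0 v = \rho_2(t)U\rho_1(g)v.
\end{align*}
Cyclicity of $V(\rho_1)$ for $G$ then upgrades this to $U\rho_1(t) = \rho_2(t)U$, so $U$ is $G^\sharp$-equivariant.

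The main (and only minor) obstacle is identifying the explicit formula $U\rho_1(g)v = \rho_2(g)U_0 v$ for the $G$-intertwiner produced by part (1); this is implicit in the reproducing-kernel constructions underlying \Fref{thm: inducibility} and \Fref{thm: uniqueness} but should be stated precisely so that the $\R$-equivariance check can be carried out on the dense cyclic subspace $\rho_1(G)V(\rho_1)$.
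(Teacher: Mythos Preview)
Your argument for part~(1) contains a genuine gap. You claim that $U_0$ carries $\mcD_1 := V(\rho_1) \cap \H_{\rho_1}^{\omega_G}$ onto $\mcD_2 := V(\rho_2) \cap \H_{\rho_2}^{\omega_G}$ ``since these are the $H$-analytic vectors of the respective representations''. This is not correct: $\mcD_k$ consists of those vectors in $V(\rho_k)$ whose orbit map under the \emph{full $G$-action} $\rho_k$ is analytic, which is a strictly stronger condition than $H$-analyticity of the orbit map under $\sigma_k$. An $H$-equivariant unitary $U_0$ has no a priori reason to respect this $G$-analyticity condition. The conclusion $U_0\mcD_1 = \mcD_2$ is nonetheless true, but the correct tool is \Fref{prop: unique_domain_triv_ext}: since both $(\restr{\sigma_1}{H},\chi_1)$ and its transport along $U_0$ are holomorphically inducible with trivial extensions, the domains must coincide. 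This is precisely what the paper encodes in the notation $\HolInd_H^G(\sigma)$ of \Fref{def: notation_hol_induced}, making its one-line proof of part~(1) via \Fref{thm: uniqueness} legitimate. Your invocation of uniqueness in \Fref{thm: inducibility} for the identity $U_0 F_1 U_0^{-1} = F_2$ suffers from the same issue: that uniqueness statement is for a fixed extension pair, so you again need $U_0\mcD_1 = \mcD_2$ first (or you could adapt the jet computation in the proof of \Fref{prop: unique_domain_triv_ext} directly).

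For part~(2), once this gap is repaired, your approach is correct and genuinely different from the paper's. The paper forms the direct sum $\rho = \rho_1 \oplus \rho_2$, observes $V(\rho) = V(\rho_1) \oplus V(\rho_2)$, and uses the commutant isomorphism $\B(\H_\rho)^{G^\sharp} \cong \B(V(\rho))^{H\times\R}$ from \Fref{cor: pe_hol_ind_iso_commutants} to lift the off-diagonal operator $(v_1,v_2)\mapsto(0,U_0 v_1)$ to a $G^\sharp$-intertwiner, then reads off unitarity from $S^\ast S$ and $SS^\ast$. Your route instead writes down the $G$-intertwiner explicitly as $U\rho_1(g)v := \rho_2(g)U_0 v$ and checks $\R$-equivariance by hand on the cyclic subspace. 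Your method is more elementary and self-contained, avoiding the commutant machinery; the paper's method is slicker and illustrates the power of \Fref{cor: pe_hol_ind_iso_commutants}, but relies on that deeper input.
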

	\begin{proof}
		The first assertion is immediate from \Fref{thm: uniqueness}. Assume that the unitary $u : V(\rho_1) \to V(\rho_2)$ intertwines the $H \times \R$-actions. Consider the unitary $G^\sharp$-representation $\rho = \rho_1 \oplus \rho_2$ on $\H_{\rho_1} \oplus \H_{\rho_2}$. Notice that 
		$$V(\rho_1 \oplus \rho_2) = V(\rho_1) \oplus V(\rho_2) =: W$$
		and that $\rho$ satisfies the conditions in \Fref{thm: pe_hol_ind}. Define $S \in \B(W)^{H \times \R}$ by $S(v_1, v_2) := (0, uv_1)$. By \Fref{cor: pe_hol_ind_iso_commutants}, there is some $T \in  \B(\H_{\rho_1} \oplus \H_{\rho_2})^{G^\sharp}$ s.t.\ $\restr{T}{W} = S$. As $V(\rho_1)$ and $V(\rho_2)$ are cyclic for $G$ in $\H_{\rho_1}$ and $\H_{\rho_2}$, respectively, $T$ is of the form $T(\psi_1, \psi_2) = (0, U\psi_1)$ for some $U : \H_{\rho_1} \to \H_{\rho_2}$ intertwining the $G^\sharp$-actions. Notice that $S^\ast S$ and $SS^\ast$ are the orthogonal projections onto $V(\rho_1)$ and $V(\rho_2)$, respectively. By \Fref{cor: pe_hol_ind_iso_commutants} it follows that $T^\ast T$ and $TT^\ast$ are the orthogonal projections onto $\H_{\rho_1}$ and $\H_{\rho_2}$, respectively. This implies that $U$ is unitary.
	\end{proof}
	
	\subsubsection{The spectral gap condition}
	
	\noindent
	We will next assume that the so-called spectral gap condition is satisfied, a stronger variant of the splitting condition. We show that in this case, $V(\rho)$ is always cyclic for positive energy representations.
	
	\begin{definition}
		We say that \textit{the spectral gap} (SG) condition is satisfied if there is some $\delta > 0$ such that
		\begin{equation}\label{eq: SG_condition}
			\g_\C = \g_\C((-\infty, -\delta]) \oplus \h_\C \oplus \g_\C([\delta, \infty)).	
		\end{equation}
	\end{definition}
	
	\noindent
	If $\rho$ is a smooth unitary representation of $G^\sharp$ and $E \subseteq \R$ is a subset, we write $\H_\rho(E)$ and $\H_\rho^\infty(E)$ for the closed spectral subspaces associated to the $\R$-representation $t \mapsto \rho(t)$ on $\H_\rho$ and $\H_\rho^\infty$, respectively, where we recall that the $\R$-action on $\H_\rho^\infty$ is pointwise polynomially bounded by \Fref{prop: properties_arveson_spec}. Recall also from \Fref{prop: properties_arveson_spec} that $H_\rho^\infty(E) = \H_\rho(E)\cap \H_\rho^\infty$. 
	
	\begin{lemma}\label{lem: V_nonzero}
		Assume that $\textrm{(SG)}$ is satisfied. Let $\rho$ be a smooth unitary representation of $G^\sharp$ which is of positive energy at $\bm{d} \in \g^\sharp$. If $\H_\rho \neq \{0\}$, then $V(\rho) \neq \{0\}$.
	\end{lemma}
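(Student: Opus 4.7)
The plan is to produce a non-zero smooth vector killed by $d\rho(\n_-)$ by truncating spectrally near the bottom of the energy spectrum, exploiting the spectral gap. First observe that (SG) combined with the splitting condition forces $\n_- = \g_\C((-\infty, -\delta])$: by (SG) every $\xi \in \g_\C$ has Arveson spectrum contained in $(-\infty, -\delta]\cup\{0\}\cup[\delta, \infty)$, so for $\delta' \in (0, \delta]$ one has $\g_\C((-\infty, -\delta']) = \g_\C((-\infty, -\delta])$, and the union in the definition of $\n_-$ stabilizes at this already-closed subspace.

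Let $H := -i\overline{d\rho(\bm{d})}$, which is non-negative by positive energy, and let $P$ be its spectral measure. Set $E_0 := \inf\Spec(H) \geq 0$, which is well-defined since $\H_\rho \neq \{0\}$. By \Fref{ex: spectra_Hilbert_space_case}, the Arveson spectral subspaces $\H_\rho(E)$ of the one-parameter group $t \mapsto \rho(\exp(t\bm{d}))$ coincide with the spectral subspaces $P(E)\H_\rho$ of $H$. Consider the open set $U := (-\infty, E_0 + \delta/2)$. Since $E_0 \in \Spec(H) = \supp P$, the projection $P([E_0, E_0 + \delta/4])$ is non-zero, and its range lies in $\H_\rho(U)$, so $\H_\rho(U) \neq \{0\}$. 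By \Fref{prop: properties_arveson_spec}(4), $\H_\rho^\infty(U)$ is dense in $\H_\rho(U)$ and therefore non-zero; pick a non-zero $\psi \in \H_\rho^\infty(U)$. Since $\H_\rho(U) = \H_\rho(\overline{U})$ by \Fref{def: arv_spec}, item (3) of \Fref{prop: properties_arveson_spec} gives $\psi \in \H_\rho^\infty(\overline{U}) = \H_\rho^\infty((-\infty, E_0 + \delta/2])$.

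Finally, applying \Fref{prop: properties_arveson_spec}(5) with the closed sets $E_1 := (-\infty, -\delta]$ and $E_2 := (-\infty, E_0 + \delta/2]$, we obtain
$$d\rho(\n_-)\psi \subseteq \H_\rho^\infty((-\infty, E_0 - \delta/2]),$$
and this subspace is trivial by \Fref{prop: properties_arveson_spec}(3) and \Fref{ex: spectra_Hilbert_space_case}, since $E_0 - \delta/2 < \inf\Spec(H)$ forces $P((-\infty, E_0 - \delta/2]) = 0$. Hence $\psi \in \H_\rho^{\infty, \n_-}$ is a non-zero vector, so $V(\rho) \neq \{0\}$. The only mild subtlety in the argument is the open/closed bookkeeping needed to combine items (4) and (5) of \Fref{prop: properties_arveson_spec}, which is handled cleanly by the identity $\H_\rho(U) = \H_\rho(\overline{U})$ inherent to \Fref{def: arv_spec}.
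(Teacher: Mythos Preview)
Your proof is correct and follows essentially the same approach as the paper: pick a smooth vector in a spectral window of width less than $\delta$ near the bottom of the energy spectrum, and use \Fref{prop: properties_arveson_spec}(5) to show that $d\rho(\n_-)$ annihilates it. The only differences are cosmetic---you take $U=(-\infty,E_0+\delta/2)$ while the paper uses $[E_0,E_0+\epsilon)$ for some $\epsilon<\delta$, and you spell out the identification $\n_-=\g_\C((-\infty,-\delta])$ and the open/closed bookkeeping a bit more explicitly than the paper does.
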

	\begin{proof}
		Let $\delta > 0$ be such that \eqref{eq: SG_condition} is satisfied. Let $E_0 := -i\inf \Spec(d\rho(\bm{d}))$. Let $0 < \epsilon < \delta$ and define $U := [E_0, E_0 + \epsilon)$. By definition of $E_0$, the spectral subspace $\H_\rho(U)$ is nonzero. By \Fref{prop: properties_arveson_spec}$(4)$, $\H_\rho^\infty(U)$ is dense in $\H_\rho(U) = \H_\rho((E_0 -\epsilon, E_0 + \epsilon))$. Since $\H_\rho(U)$ is nonzero, so is $\H_\rho^\infty(U)$. By the last point in \Fref{prop: properties_arveson_spec}, we obtain that $d\rho(\n_-)\H^\infty(U) \subseteq \H_\rho^\infty((-\infty, E_0 +\epsilon - \delta]) = \{0\}$. Hence $\H^\infty(U) \subseteq \H_\rho^{\infty, \n_-} \subseteq V(\rho)$. It follows that $V(\rho) \neq \{0\}$.
	\end{proof}
	
	\begin{proposition}\label{prop: Vrho_cyclic}
		Assume that $\textrm{(SG)}$ is satisfied. Let $\rho$ be a smooth unitary representation of $G^\sharp$ which is of positive energy at $\bm{d} \in \g^\sharp$. Then $V(\rho)$ is cyclic for $\rho$.
	\end{proposition}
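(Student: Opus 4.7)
The plan is to argue by contradiction using the preceding \Fref{lem: V_nonzero}. Set $W := \overline{\rho(G)V(\rho)}$, the closed $G$-invariant subspace of $\H_\rho$ generated by $V(\rho)$. The strategy is to show $W = \H_\rho$ by showing that its orthogonal complement $W^\perp$ must be zero; applying \Fref{lem: V_nonzero} to the subrepresentation on $W^\perp$ will force a contradiction.

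First I would verify that $V(\rho)$ is $H \times \R$-invariant, so that $W$ is in fact $G^\sharp$-invariant and hence $W^\perp$ carries a well-defined subrepresentation of $G^\sharp$. The $H$-invariance follows from $\Ad_H(\n_-) \subseteq \n_-$ (\Fref{cor: triangular_decomp_subalgebras}), and the $\R$-invariance from the fact that $\dot{\alpha}_t(\n_-) \subseteq \n_-$ together with the invariance of $\H_\rho^\infty$ under $\rho(\R)$. For the $\R$-invariance of $W$ itself, I would use the covariance relation $\rho(t)\rho(g) = \rho(\alpha_t(g))\rho(t)$.

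Next I would identify smooth vectors of the subrepresentation. For a closed $G^\sharp$-invariant subspace $U \subseteq \H_\rho$, a standard argument (the orbit map lands in $U$ by invariance, so smoothness in $\H_\rho$ and in $U$ coincide) gives $U^\infty = U \cap \H_\rho^\infty$. Applying this with $U = W^\perp$ yields
\[
(W^\perp)^{\infty, \n_-} = W^\perp \cap \H_\rho^{\infty, \n_-} \subseteq W^\perp \cap V(\rho) = \{0\},
\]
since $V(\rho) \subseteq W$. Consequently $V(\rho|_{W^\perp}) = \overline{(W^\perp)^{\infty, \n_-}} = \{0\}$.

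Finally, the restricted representation $\rho|_{W^\perp}$ is again a smooth unitary representation of $G^\sharp$, and it is of positive energy at $\bm{d}$ because the spectrum of the generator of $t \mapsto \rho(t)|_{W^\perp}$ is contained in that of $-i\overline{d\rho(\bm{d})} \geq 0$. Under the standing (SG) assumption, \Fref{lem: V_nonzero} applied to $\rho|_{W^\perp}$ would therefore force $W^\perp = \{0\}$, whence $W = \H_\rho$ and $V(\rho)$ is cyclic. I expect the only delicate point to be the careful verification that smooth, $\n_-$-killed vectors of the subrepresentation $\rho|_{W^\perp}$ coincide with $W^\perp \cap \H_\rho^{\infty, \n_-}$; everything else follows by a short bootstrap from \Fref{lem: V_nonzero}.
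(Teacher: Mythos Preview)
Your proposal is correct and follows essentially the same approach as the paper's proof: take the closed $G^\sharp$-invariant subspace $W$ generated by $V(\rho)$, observe that $(W^\perp)^{\infty,\n_-} \subseteq W^\perp \cap V(\rho) = \{0\}$, and apply \Fref{lem: V_nonzero} to the subrepresentation on $W^\perp$. The paper is more terse (it takes $W$ directly as the $G^\sharp$-invariant subspace and only uses the inclusion $(W^\perp)^{\infty,\n_-} \subseteq \H_\rho^{\infty,\n_-}$ rather than equality), but the argument is the same.
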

	\begin{proof}
		Let $W$ be the closed $G^\sharp$-invariant subspace of $\H_\rho$ generated by $V(\rho)$. Then $W^\perp$ carries a smooth representation of $G^\sharp$ that is of positive energy at $\bm{d} \in \g^\sharp$. From $(W^\perp)^{\infty, \n_-} \subseteq \H_{\rho}^{\infty, \n_-} \subseteq V(\rho)$, we obtain that $(W^\perp)^{\infty, \n_-} \subseteq W^\perp \cap V(\rho) = \{0\}$. Using \Fref{lem: V_nonzero} we conclude that $W^\perp = \{0\}$, so $W = \H_\rho$.
	\end{proof}
	
	\subsubsection{Ground state representations}
	
	\noindent
	We now shift our attention to ground state representations, where \Fref{thm: pe_hol_ind} simplifies somewhat. If $\rho$ is a smooth unitary representation of $G^\sharp$ on $\H_\rho$, we define $\H_\rho(0) = \ker \overline{d\rho(\bm{d})}$, $\H_\rho^\infty(0) := \H_\rho(0)\cap \H_\rho^\infty$ and $\H_\rho^{\omega_G}(0) := \H_\rho(0)\cap \H_\rho^{\omega_G}$. It will be convenient to make the following definition:
	
	\begin{definition}
		Let $(\rho, \H_\rho)$ be a smooth unitary representation of $G^\sharp$ that is ground state at $\bm{d} \in \g^\sharp$. We say that $\rho$ is \textit{analytically ground state} at $\bm{d} \in \g^\sharp$ if $\H_\rho^{\omega_G}(0)$ is dense in $\H_\rho(0)$.
	\end{definition}
	
	\begin{lemma}\label{lem: killed_vectors_ground_states}
		Let $(\rho, \H_\rho)$ be a smooth unitary representation of $G^\sharp$ that is of positive energy at $\bm{d} \in \g^\sharp$. Then $\H_\rho^\infty(0) \subseteq\H_\rho^{\infty, \n_-}$. If $\rho$ is analytically ground state at $\bm{d}$, then $V(\rho) = \H_{\rho}(0)$.
	\end{lemma}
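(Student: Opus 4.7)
The plan is to prove the two assertions separately, with the second resting on the first together with Lemma~\ref{lem: subspace_equals_Vrho}.

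For the first assertion, I would argue via Arveson spectral theory. Take $\psi \in \H_\rho^\infty(0)$. Since $\rho(t) = e^{-itH}$ with $H := -i\overline{d\rho(\bm d)}$, the kernel $\H_\rho(0) = \ker H$ is precisely the spectral subspace $\H_\rho(\{0\})$ for the $\R$-action $t \mapsto \rho(t)$, so $\psi \in \H_\rho^\infty(\{0\})$. By Proposition~\ref{prop: properties_arveson_spec}(5), for any $\delta > 0$ and any $\xi \in \g_\C((-\infty, -\delta])$,
\[
d\rho(\xi)\psi \in \H_\rho^\infty((-\infty, -\delta]) = \H_\rho((-\infty, -\delta]) \cap \H_\rho^\infty.
\]
But the positive energy assumption gives $\Spec(H) \subseteq [0,\infty)$, hence $\H_\rho((-\infty, -\delta]) = \{0\}$ and $d\rho(\xi)\psi = 0$. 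Since $\bigcup_{\delta > 0} \g_\C((-\infty, -\delta])$ is by definition dense in $\n_-$, and since the linear map $\g_\C \to \H_\rho, \; \xi \mapsto d\rho(\xi)\psi$ is continuous (as $\psi \in \H_\rho^\infty$ has a smooth orbit map, cf.\ \citep[Lem.\ 3.22]{BasNeeb_ProjReps}), it follows that $d\rho(\xi)\psi = 0$ for every $\xi \in \n_-$, i.e.\ $\psi \in \H_\rho^{\infty, \n_-}$.

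For the second assertion, assume $\rho$ is analytically ground state at $\bm d$. Since $\H_\rho^{\omega_G}(0) \subseteq \H_\rho^\infty(0)$, the first part yields $\H_\rho^{\omega_G}(0) \subseteq \H_\rho^{\infty, \n_-} \subseteq V(\rho)$. Because $V(\rho)$ is closed and $\H_\rho^{\omega_G}(0)$ is dense in $\H_\rho(0)$ by the analyticity hypothesis, we obtain $\H_\rho(0) \subseteq V(\rho)$.

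For the reverse inclusion I would apply Lemma~\ref{lem: subspace_equals_Vrho} with $W := \H_\rho(0)$. The subspace $W$ is $H$-invariant because $H \subseteq G^\alpha$ commutes with the $\R$-action generated by $\bm d$, hence $\rho(h)$ preserves $\ker \overline{d\rho(\bm d)}$; it is cyclic for $G$ by the very definition of ground state at $\bm d$; and it contains the dense set $\H_\rho^{\omega_G}(0)$ of $G$-analytic vectors by the analytically-ground-state assumption. The lemma then gives $W = V(\rho)$, so $V(\rho) = \H_\rho(0)$, completing the proof. No step presents a serious obstacle here; the only subtlety is verifying that $H$ really preserves $\H_\rho(0)$, which is immediate from $[H, \rho(t)] = 0$ for all $h \in H$ and $t \in \R$.
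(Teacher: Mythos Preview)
Your proof is correct and follows essentially the same route as the paper: the first assertion via the spectral inclusion $d\rho(\g_\C((-\infty,-\delta]))\H_\rho^\infty(\{0\}) \subseteq \H_\rho^\infty((-\infty,-\delta]) = \{0\}$ (the paper cites \Fref{prop: arv_spec_subs_additive} directly rather than item~(5) of \Fref{prop: properties_arveson_spec}, but these amount to the same thing), and the second via \Fref{lem: subspace_equals_Vrho}. You actually spell out two points the paper leaves implicit, namely the density/continuity passage from $\bigcup_{\delta>0}\g_\C((-\infty,-\delta])$ to $\n_-$ and the verification of the hypotheses of \Fref{lem: subspace_equals_Vrho}.
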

	\begin{proof}
		Using \Fref{prop: arv_spec_subs_additive}, we obtain that 
		$$d\rho(\g_\C((-\infty, -\delta]))\H_{\rho}^\infty(0) \subseteq \H_\rho^{\infty}((-\infty, -\delta]) = \{0\}, \qquad \forall \delta > 0.$$
		Hence $\H_\rho^\infty(0) \subseteq V(\rho)$. If $\rho$ is analytically ground state at $\bm{d}$, the preceding implies $\H_\rho(0) \subseteq V(\rho)$. Using \Fref{lem: subspace_equals_Vrho} we conclude that $\H_{\rho}(0) = V(\rho)$.
	\end{proof}
	
	\noindent
	The following clarifies the tight relation between unitary representations of $G\rtimes_\alpha \R$ that are analytically ground state at $\bm{d} \in \g^\sharp$ and holomorphic induction:
	
	\begin{theorem}\label{thm: ground_state_hol_ind}
		Consider the setting of \Fref{thm: pe_hol_ind}. The following assertions are equivalent:
		\begin{enumerate}
			\item $\rho$ is analytically ground state at $\bm{d} \in \g^\sharp$.
			\item $\restr{\rho}{G} = \HolInd_H^G(\restr{\sigma}{H})$ and $V(\rho) = \H_\rho(0)$.
		\end{enumerate}
	\end{theorem}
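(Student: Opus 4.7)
The plan is to derive both implications essentially as bookkeeping on top of \Fref{thm: pe_hol_ind} and \Fref{lem: killed_vectors_ground_states}. No new analytic input is required: the substantial content has already been absorbed into those two results, and what remains is to match their hypotheses and conclusions against the definition of an analytically ground state representation. The key observation, used in both directions, is the identification $V(\rho) \cap \H_\rho^{\omega_G} = \H_\rho^{\omega_G}(0)$ whenever $V(\rho) = \H_\rho(0)$.

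For the direction $(1) \Rightarrow (2)$, I would first note that $\rho$ being analytically ground state at $\bm{d}$ includes, by definition, being of positive energy together with the density of $\H_\rho^{\omega_G}(0)$ in $\H_\rho(0)$. The hypotheses of \Fref{lem: killed_vectors_ground_states} are therefore met, yielding $V(\rho) = \H_\rho(0)$. The ground state property then says that $V(\rho) = \H_\rho(0)$ is cyclic for $G$, and the identification above shows that $V(\rho) \cap \H_\rho^{\omega_G}$ is dense in $V(\rho)$. Consequently the hypotheses of assertion $(1)$ in \Fref{thm: pe_hol_ind} are verified, whence that theorem's assertion $(2)$ delivers $\restr{\rho}{G} = \HolInd_H^G(\restr{\sigma}{H})$. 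Combined with the established equality $V(\rho) = \H_\rho(0)$, this is exactly $(2)$.

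For the converse $(2) \Rightarrow (1)$, I would invoke the implication $(2) \Rightarrow (1)$ of \Fref{thm: pe_hol_ind}: assuming $\restr{\rho}{G} = \HolInd_H^G(\restr{\sigma}{H})$ (together with the fact that $\sigma$ is of positive energy at $\bm{d}$, which follows automatically because $\H_\rho(0) = V(\rho)$ is the zero eigenspace of a non-negative operator), that theorem yields that $\rho$ is of positive energy at $\bm{d}$, that $V(\rho)$ is cyclic for $\rho$, and that $V(\rho) \cap \H_\rho^{\omega_G}$ is dense in $V(\rho)$. Substituting the hypothesis $V(\rho) = \H_\rho(0)$ turns these into the statements that $\H_\rho(0)$ is cyclic for $G$ (so $\rho$ is ground state at $\bm{d}$) and that $\H_\rho^{\omega_G}(0)$ is dense in $\H_\rho(0)$, i.e.\ $\rho$ is analytically ground state at $\bm{d}$.

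The whole argument is essentially a direct translation and I do not anticipate a real obstacle; the only point requiring a moment of attention is checking that $\sigma$ inherits positive energy at $\bm{d}$ from the hypotheses in $(2)$, which is built into the setup of \Fref{thm: pe_hol_ind} via the fact that $V(\rho)$ is $\R$-invariant and carries the restriction of the non-negative generator $-i\,\overline{d\rho(\bm{d})}$.
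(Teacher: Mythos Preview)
Your proposal is correct and follows essentially the same route as the paper: both directions are immediate bookkeeping on top of \Fref{thm: pe_hol_ind} and \Fref{lem: killed_vectors_ground_states}. Your treatment is in fact slightly more explicit than the paper's, which just cites \Fref{thm: pe_hol_ind} for $(2)\Rightarrow(1)$ without pausing on the positive-energy hypothesis for $\sigma$; your remark that the $\R$-action on $V(\rho)=\H_\rho(0)$ is trivial (hence $\sigma$ is trivially of positive energy) is the correct justification, though the phrase ``non-negative operator'' is premature at that point since positivity of $-i\,\overline{d\rho(\bm{d})}$ is only established afterwards.
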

	\begin{proof}
		Assume that $(1)$ is valid. Then $V(\rho) = \H_\rho(0)$, by \Fref{lem: killed_vectors_ground_states}, so $(2)$ follows from \Fref{thm: pe_hol_ind}. Suppose conversely that $(2)$ holds true. \Fref{thm: pe_hol_ind} then yields that $\rho$ is of positive energy at $\bm{d}$, that $\H_\rho(0)$ is cyclic for $G$ and that $\H_\rho^{\omega_G}(0)$ is dense in $\H_\rho(0)$. Thus $(1)$ is valid.
	\end{proof}
	
	\noindent
	Let us complement \Fref{thm: ground_state_hol_ind} with the following observation:
	
	\begin{proposition}
		Let $\rho$ be a smooth unitary p.e.\ representation of $G$. Let $\rho_0$ denote its minimal positive extension to $G^\sharp$. Assume that $\rho_0$ satisfies the equivalent conditions of \Fref{thm: pe_hol_ind}. If $\rho$ is irreducible, then $\rho_0$ is analytically ground state and $V(\rho) = \H_{\rho_0}(0)$.
	\end{proposition}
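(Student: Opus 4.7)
My plan is to prove $V(\rho) = \H_{\rho_0}(0)$; the analytically-ground-state property then follows because $V(\rho) \cap \H_{\rho}^{\omega_G}$ is dense in $V(\rho)$ by hypothesis, so $\H_{\rho_0}^{\omega_G}(0) = \H_{\rho_0}(0) \cap \H_{\rho_0}^{\omega_G}$ is dense in $\H_{\rho_0}(0) = V(\rho)$. The inclusion $V(\rho) \subseteq \H_{\rho_0}(0)$ would come from a Schur-type argument. Since $\rho$ is irreducible, $\B(\H_\rho)^G = \C\cdot\id$, so by \Fref{cor: pe_hol_ind_iso_commutants} $\B(V(\rho))^H = \C\cdot\id$; in particular $\sigma|_H$ is irreducible. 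Since $H = (G^\alpha)_0$ is pointwise fixed by $\alpha$, the unitary one-parameter group $t\mapsto \rho_0(t)|_{V(\rho)}$ commutes with $\sigma(H)$, forcing $\rho_0(t)|_{V(\rho)} = e^{it\lambda_0}\id$ for some $\lambda_0 \in \R$. The final statement of \Fref{thm: pe_hol_ind} gives $\inf\Spec(-i\overline{d\rho_0(\bm{d})}) = \inf\Spec(-i\overline{d\sigma(\bm{d})}) = \lambda_0$, while \Fref{prop: positive_extensions_of_repr}$(4)$ combined with irreducibility of $\rho$ (which trivializes the center of $\rho(G)''$) yields $P[0,\epsilon) \neq 0$ for every $\epsilon > 0$, so $\inf\Spec(-i\overline{d\rho_0(\bm{d})}) = 0$. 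Hence $\lambda_0 = 0$ and $V(\rho) \subseteq \H_{\rho_0}(0)$.

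For the reverse inclusion I would fix $w \in \H_{\rho_0}(0)$ with $w \perp V(\rho)$ and aim to show $w \perp \rho(G) V(\rho)$, which by cyclicity of $V(\rho)$ would force $w = 0$. For $v$ in the dense subspace $\mcD_\chi := V(\rho) \cap \H_\rho^{\omega_G}$, the function $f(g) := \langle w, \rho(g) v\rangle$ is real-analytic on $G$, so by \Fref{prop: identity_theorem} and \Fref{prop: jet_vanishes_then_function_trivial} it suffices to show $\langle w, d\rho(x)v\rangle = 0$ for every $x \in \mc{U}(\g_\C)$. By \Fref{thm: pe_hol_ind} and \Fref{thm: hol_induced_equiv_char} (applied to the trivial extension), $d\rho(\n_-)v = 0$, while the $H$-invariance of $V(\rho)$ gives $d\rho(\mc{U}(\h_\C))v \subseteq V(\rho)\cap \H_\rho^\infty$. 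Expanding $x$ in the PBW form $\mc{U}(\g_\C) = \mc{U}(\n_+)\mc{U}(\h_\C)\mc{U}(\n_-)$ kills every summand whose $\n_-$-factor is non-scalar, and reduces the computation to matrix elements $\langle w, d\rho(y_+) v'\rangle$ with $y_+ \in \mc{U}(\n_+)$ and $v' \in V(\rho)\cap\H_\rho^\infty \subseteq \H_{\rho_0}(0)$; the constant term of $y_+$ contributes $0$ because $w \perp V(\rho)$.

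The genuinely delicate point, which I expect to be the main obstacle, is the non-constant part: matrix elements $\langle w, d\rho(\xi_{i_1}\cdots\xi_{i_m})v'\rangle$ with each $\xi_{i_k}\in \n_+$. Since the inclusion $\bigcup_{\delta>0}\g_\C([\delta,\infty)) \subseteq \n_+$ is generally proper, individual $\xi_{i_k}\in \n_+$ need not possess a positive spectral gap. My plan is to approximate each $\xi_{i_k}$ by $\xi_{i_k}^{(N)}\in \g_\C([\delta_{i_k}^{(N)},\infty))$ with $\delta_{i_k}^{(N)} > 0$, invoke the joint continuity of the multi-linear map $(\xi_1,\dots,\xi_m)\mapsto d\rho(\xi_1\cdots \xi_m)v' \in \H_\rho^\infty$ with respect to the strong topology on $\H_\rho^\infty$, and apply \Fref{prop: properties_arveson_spec}$(5)$: this places each approximant in $\H_\rho^\infty([\delta^{(N)}, \infty))$ with $\delta^{(N)} := \sum_k \delta_{i_k}^{(N)} > 0$, which is orthogonal to $w \in \H_{\rho_0}(0)$, and passing to the limit yields vanishing of the desired matrix element. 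Combining with the earlier reductions, the Taylor jet of $f$ at the identity vanishes, so $f \equiv 0$ on the connected $G$; density of $\mcD_\chi$ in $V(\rho)$ and cyclicity of $V(\rho)$ then force $w = 0$, completing the proof that $V(\rho) = \H_{\rho_0}(0)$.
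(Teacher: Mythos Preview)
Your argument is correct. The first half, establishing $V(\rho)\subseteq\H_{\rho_0}(0)$ via irreducibility and \Fref{cor: pe_hol_ind_iso_commutants}, matches the paper's almost verbatim; you deduce $\lambda_0=0$ from \Fref{prop: positive_extensions_of_repr}(4), whereas the paper argues directly that $e^{-it\lambda_0}\rho_0(t)$ would be a positive inner implementation, forcing $\lambda_0\leq 0$ by minimality. Both are fine.

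For the reverse inclusion $\H_{\rho_0}(0)\subseteq V(\rho)$ your route is genuinely different. The paper does not analyse matrix elements at all: it simply invokes \Fref{lem: killed_vectors_ground_states} (itself an immediate consequence of \Fref{prop: arv_spec_subs_additive}) to get $\H_{\rho_0}^\infty(0)\subseteq V(\rho)$, and then observes that smoothing in the $\R$-variable shows $V(\rho)\cap\H_\rho^{\omega_G}\subseteq\overline{\H_{\rho_0}^\infty(0)}$, whence $\overline{\H_{\rho_0}^\infty(0)}=V(\rho)$. Your approach instead shows directly that any $w\in\H_{\rho_0}(0)\ominus V(\rho)$ is orthogonal to $\rho(G)\mcD_\chi$ by killing the Taylor jet of $g\mapsto\langle w,\rho(g)v\rangle$ via PBW and an approximation $\xi_k^{(N)}\to\xi_k$ inside $\n_+$. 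The paper's route is shorter because the heavy lifting has already been packaged into \Fref{lem: killed_vectors_ground_states}; your route is more self-contained and makes explicit that only the inclusion $V(\rho)\subseteq\H_{\rho_0}(0)$ (so that $v'\in V(\rho)\cap\H_\rho^\infty$ satisfies $\rho_0(t)v'=v'$ and is therefore $G^\sharp$-smooth, licensing \Fref{prop: properties_arveson_spec}(5)) and the definition of $\n_+$ as a closure are needed. The approximation step you flag as delicate is indeed necessary in your approach, since an individual $\xi\in\n_+$ need only satisfy $\Spec_{\dot\alpha}(\xi)\subseteq[0,\infty)$, and it goes through exactly as you describe using the joint continuity of $(\xi_1,\dots,\xi_m)\mapsto d\rho(\xi_1\cdots\xi_m)v'$.
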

	\begin{proof}
		Define $V_\sigma := V(\rho)$, $\sigma_0(h,t) := \restr{\rho_0(h,t)}{V_\sigma}$ and $\sigma(h) := \restr{\rho(h)}{V_\sigma}$. Let $\mc{M} := \rho(G)^{\prime \prime}$ be the von Neumann algebra generated by $\rho(G)$. By \Fref{cor: pe_hol_ind_iso_commutants} we have $\B(V_\sigma)^H = \C \, \id_{V_\sigma}$. Thus $\sigma_0(t) = e^{itp} \,\id_{V_\sigma}$ for some $p \in \R$. As $\sigma_0$ is of positive energy, we have $p \geq 0$. By \Fref{thm: pe_hol_ind} we know that $\inf\,\Spec(-i \overline{d\rho_0(\bm{d})}) = p$, so $\rho_1(t) := \rho_0(t)e^{-itp}$ defines a positive inner implementation of $\R \to \Aut(\M),\, t \mapsto \Ad(\rho_0(t))$. As $\rho_0(t)$ is minimal, it follows that $p \leq 0$. Hence $p=0$, and so $V(\rho) \subseteq \H_{\rho_0}(0)$. On the other hand, we know from \Fref{lem: killed_vectors_ground_states} that $\H_{\rho_0}^\infty(0) \subseteq V(\rho)$. Since $\H_{\rho_0}^\infty(0)$ contains all vectors of the form $\int_\R f(t) \rho_0(t)v$ for $f \in C^\infty_c(\R)$ and $v \in V(\rho)\cap \H_\rho^{\omega_G}$, $\overline{\H_{\rho_0}^\infty(0)}$ contains $V(\rho) \cap \H_{\rho}^{\omega_G}$, which is dense in $V(\rho)$. So $\overline{\H_{\rho_0}^\infty(0)} = V(\rho) = \H_{\rho_0}(0)$. This implies that $\rho_0$ is analytically ground state.
	\end{proof}
	
	\subsubsection{Strongly-entire ground state representations for $\T$-actions}
	
	\noindent
	The preceding results become particularly applicable for representations $\rho$ which are both strongly-entire and ground state w.r.t.\ a $\T$-action. In this case, we can always guarantee that they are analytically ground state:
	
	\begin{lemma}\label{lem: projection_leaves_entire_vectors_inv}
		Suppose that $\alpha$ descends to a $\T$-action. Let $\rho$ be a unitary p.e.\ representation of $G \rtimes_\alpha \T$. We write $\H_\rho^{{\mcO}_G}$ for the vectors in $\H_\rho$ that are strongly-entire for the $G$-action. Let $P : \H_\rho \to \H_{\rho}(0)$ denote the orthogonal projection. Then $P \H_\rho^{{\mcO}_G} \subseteq \H_\rho^{{\mcO}_G}$. In particular, if $\restr{\rho}{G}$ is strongly-entire then $\H_{\rho}(0) \cap \H_\rho^{{\mcO}_G}$ is dense in $\H_{\rho}(0)$.
	\end{lemma}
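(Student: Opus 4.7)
The plan is to exploit the fact that since $\alpha$ descends to a $\T$-action, there is some $T > 0$ with $\alpha_T = \id_G$, so $t \mapsto \rho(1_G, t)$ is $T$-periodic. Consequently, the self-adjoint generator $-i\overline{d\rho(\bm{d})}$ has spectrum contained in $\tfrac{2\pi}{T}\Z$, and the spectral projection $P$ onto its kernel $\H_\rho(0)$ admits the explicit averaging formula
\begin{equation*}
P = \frac{1}{T}\int_0^T \rho(1_G, t)\, dt,
\end{equation*}
where the integral is a strong (Bochner-type) integral. Combined with the semidirect product identity $\rho(\exp(s\xi))\rho(1_G, t) = \rho(1_G, t)\rho(\exp(s\dot{\alpha}_{-t}(\xi)))$ for $s \in \R$, $\xi \in \g$ and $t \in [0,T]$, this formula is the key input.

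First, I would show by differentiating under the integral sign (which is justified by the smoothness of $\psi$ and the uniform estimates explained below) that for any $\psi \in \H_\rho^\infty$ one has $P\psi \in \H_\rho^\infty$ together with the identity
\begin{equation*}
d\rho(\xi_1\cdots \xi_n) P\psi = \frac{1}{T}\int_0^T \rho(1_G, t)\, d\rho\bigl(\dot{\alpha}_{-t}(\xi_1)\cdots \dot{\alpha}_{-t}(\xi_n)\bigr)\psi \, dt,
\end{equation*}
valid for all $n \in \N_{\geq 0}$ and $\xi_1, \ldots, \xi_n \in \g_\C$ (extended $\C$-linearly). This would be established inductively using the commutation relation above.

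Next, given a compact subset $B \subseteq \g_\C$, I would define the set $B' := \set{\dot{\alpha}_{-t}(\xi) \st t \in [0, T], \; \xi \in B}$. Since $\dot{\alpha} : \R \times \g_\C \to \g_\C$ is continuous and $[0,T] \times B$ is compact, $B'$ is compact in $\g_\C$. Unitarity of $\rho(1_G, t)$ combined with the displayed identity then yields
\begin{equation*}
\|d\rho(\xi_1\cdots \xi_n)P\psi\| \leq \sup_{t \in [0,T]}\|d\rho(\dot{\alpha}_{-t}(\xi_1)\cdots \dot{\alpha}_{-t}(\xi_n))\psi\|
\end{equation*}
for all $\xi_i \in B$, so that $p_B^n(P\psi) \leq p_{B'}^n(\psi)$. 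Summing over $n$ with the appropriate factorial weights gives $q_B(P\psi) \leq q_{B'}(\psi)$. If $\psi \in \H_\rho^{\mcO_G}$, the right-hand side is finite for every compact $B \subseteq \g_\C$, so $P\psi \in \H_\rho^{\mcO_G}$ as claimed.

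For the final assertion, assume that $\restr{\rho}{G}$ is strongly-entire, so that $\H_\rho^{\mcO_G}$ is dense in $\H_\rho$. Since $P$ is continuous, $P\H_\rho^{\mcO_G}$ is dense in $P\H_\rho = \H_\rho(0)$. By the first part, $P\H_\rho^{\mcO_G} \subseteq \H_\rho(0) \cap \H_\rho^{\mcO_G}$, which is therefore dense in $\H_\rho(0)$. The only non-routine step is verifying that $B'$ is compact; once this is in hand, the estimate $q_B(P\psi) \leq q_{B'}(\psi)$ is immediate, and no further obstacle arises.
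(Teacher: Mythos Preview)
Your proposal is correct and follows essentially the same route as the paper: both use the averaging formula $P=\tfrac{1}{T}\int_0^T\rho(1_G,t)\,dt$, the covariance $d\rho(\xi)\rho(1_G,t)=\rho(1_G,t)d\rho(\dot{\alpha}_{-t}\xi)$, and the compactness of $B'=\dot{\alpha}([0,T]\times B)$ to obtain $q_B(P\psi)\le q_{B'}(\psi)$. The paper phrases the estimate as $p_B^n(\rho(t)\psi)=p_{\dot{\alpha}_{-t}(B)}^n(\psi)\le p_{B'}^n(\psi)$ and then averages, rather than first writing out the integral formula for $d\rho(\xi_1\cdots\xi_n)P\psi$, but this is a purely cosmetic difference.
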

	\begin{proof}
		For a compact subset $B \subseteq \g_\C$ and $\psi \in \H_\rho^\infty$, we write 
		$$p_B^n(\psi) := \sup_{\xi_j \in B}\|d\rho(\xi_1\cdots \xi_n)\psi\|$$
		for $n \in \N_{\geq 0}$ and set $q_B(\psi) := \sum_{n=0}^\infty{1\over n!}p_B^n(\psi)$. Let $\psi \in \H_\rho^{{\mcO}_G}$ and let $B \subseteq \g_\C$ be compact. Then $B^\prime := \alpha(\T \times B) \subseteq \g_\C$ is compact, $\T$-invariant and satisfies $B \subseteq B^\prime$. Observe that 
		$$p_B^n(\rho(t)\psi) \leq p_{B^\prime}^n(\rho(t)\psi) = p_{\dot{\alpha}_{-t}(B^\prime)}^n(\psi) = p_{B^\prime}^n(\psi), \qquad \forall t \in \T.$$
		Identifying $\T \cong \R/2\pi\Z$, recall that $P = {1\over 2\pi}\int_{0}^{2\pi}\rho(t)dt$. Notice using e.g.\ \citep[Lem.\ A.4]{KH_Zellner_inv_smooth_vectors} that $P \H_\rho^\infty \subseteq \H_\rho^\infty$, and moreover that
		$$ p_{B}^n(P\psi) \leq {1\over 2\pi}\int_{0}^{2\pi}p_B^n(\rho(t)\psi) dt \leq  p_{B^\prime}^n(\psi), \qquad \forall \psi \in \H_\rho^\infty, \; n \in \N_{\geq 0}.$$
		We thus find that $q_B(P \psi) \leq q_{B^\prime}(\psi)$. So $P\H_\rho^{{\mcO}_G} \subseteq \H_\rho^{{\mcO}_G}$.
	\end{proof}
	
	\noindent
	Combining \Fref{thm: ground_state_hol_ind} and \Fref{lem: projection_leaves_entire_vectors_inv} we obtain the following:
	
	\begin{theorem}\label{thm: equiv_pe_hol_for_periodic_actions_and_entire_reps}
		Assume that $\alpha$ is a $\T$-action. Let $(\rho, \H_\rho)$ be a unitary representation of $G \rtimes_\alpha \T$. Assume that $\restr{\rho}{G}$ is strongly-entire. Let $\sigma$ be the unitary representation of $H \times \T$ on $V(\rho)$. The following are equivalent:
		\begin{enumerate}
			\item $\rho$ is ground state at $\bm{d} \in \g^\sharp$.
			\item $\restr{\rho}{G} = \HolInd_H^G(\restr{\sigma}{H})$ and $V(\rho) = \H_\rho(0)$.
		\end{enumerate}
		In this case, also $\sigma$ is strongly-entire.
	\end{theorem}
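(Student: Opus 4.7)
The plan is to reduce \Fref{thm: equiv_pe_hol_for_periodic_actions_and_entire_reps} to the already-established \Fref{thm: ground_state_hol_ind} by using the strongly-entire assumption to upgrade \squotes{ground state} to \squotes{analytically ground state}. The direction $(2) \implies (1)$ is immediate: \Fref{thm: ground_state_hol_ind} gives that $(2)$ is equivalent to $\rho$ being analytically ground state, which by definition implies that $\rho$ is ground state at $\bm{d}$.

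For $(1) \implies (2)$, suppose $\rho$ is ground state at $\bm{d} \in \g^\sharp$. To apply \Fref{thm: ground_state_hol_ind} it suffices to show that $\H_\rho^{\omega_G}(0)$ is dense in $\H_\rho(0)$. Since $\restr{\rho}{G}$ is strongly-entire, $\H_\rho^{\mcO_G}$ is dense in $\H_\rho$. Let $P : \H_\rho \to \H_\rho(0)$ denote the orthogonal projection. Then $P$ has dense range in $\H_\rho(0)$, and \Fref{lem: projection_leaves_entire_vectors_inv} gives $P \H_\rho^{\mcO_G} \subseteq \H_\rho^{\mcO_G}$, so $\H_\rho(0) \cap \H_\rho^{\mcO_G}$ is dense in $\H_\rho(0)$. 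By \Fref{cor: entire_vectors_seminorm_finite} we have $\H_\rho^{\mcO_G} \subseteq \H_\rho^{\omega_G}$, hence $\H_\rho^{\omega_G}(0)$ is dense in $\H_\rho(0)$. So $\rho$ is analytically ground state, and \Fref{thm: ground_state_hol_ind} yields assertion $(2)$.

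It remains to show that $\sigma$ is strongly-entire, which I expect to be the only step requiring a concrete computation. Let $v \in \H_\rho(0) \cap \H_\rho^{\mcO_G}$; by the density argument above, such vectors are dense in $V(\rho) = \H_\rho(0)$, so it suffices to show each such $v$ is strongly-entire for the $(H \times \T)$-representation $\sigma$. Since $\h_\C = \ker D$, we have $[\bm{d}, \eta] = 0$ for every $\eta \in \h_\C$, so inside $\mc{U}(\h_\C \oplus \C \bm{d})$ the element $\bm{d}$ is central. Expanding $\prod_{j=1}^n(\eta_j + \lambda_j \bm{d})$ with $\eta_j \in \h_\C$, $\lambda_j \in \C$, and using that $v \in \H_\rho^\infty(0)$ satisfies $d\rho(\bm{d})v = 0$, every term containing at least one $\bm{d}$ acts as zero on $v$. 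Consequently, for compact $B \subseteq \h_\C \oplus \C \bm{d}$ with $\h_\C$-projection $B'' \subseteq \h_\C$ (itself compact), we obtain
\[
\sup_{\xi_j \in B}\|d\sigma(\xi_1 \cdots \xi_n) v\| \leq \sup_{\eta_j \in B''}\|d\rho(\eta_1 \cdots \eta_n) v\| = p_{B''}^n(v),
\]
so $\sum_{n=0}^\infty \tfrac{1}{n!}\sup_{\xi_j \in B}\|d\sigma(\xi_1\cdots \xi_n)v\| \leq q_{B''}(v) < \infty$. Invoking \Fref{lem: entire_vectors_complexified_seminorms} for the $\sigma$-representation, this proves $v \in V(\rho)^{\mcO_\sigma}$, completing the argument.
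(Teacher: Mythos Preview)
Your proof is correct and follows exactly the approach the paper indicates: it explicitly states that the theorem is obtained by combining \Fref{thm: ground_state_hol_ind} with \Fref{lem: projection_leaves_entire_vectors_inv}, which is precisely the reduction you carry out. Your explicit argument for the final clause that $\sigma$ is strongly-entire (using centrality of $\bm{d}$ in $\h_\C \oplus \C\bm{d}$ and $d\rho(\bm{d})v = 0$ to reduce the $q_B$-seminorms for $\sigma$ to those for $\restr{\rho}{G}$) fills in a detail the paper leaves implicit; note that the invocation of \Fref{lem: entire_vectors_complexified_seminorms} at the very end is unnecessary since you already took $B \subseteq \h_\C \oplus \C\bm{d}$ complex.
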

	
	\noindent
	By \Fref{prop: positive_extensions_of_repr}(3), we know that any smooth unitary representation $\rho$ of $G$ which is of p.e.\ w.r.t.\ a smooth $\T$-action $\alpha$ is automatically ground state, and also that the minimal positive extension $\rho_0$ of $\rho$ to $G^\sharp$ descends to $G \rtimes_\alpha \T$. Combining \Fref{thm: equiv_pe_hol_for_periodic_actions_and_entire_reps}, \Fref{cor: pe_hol_ind_iso_commutants} and \Fref{cor: pe_uniqueness}, we obtain:
	
	\begin{corollary}\label{cor: inclusion_of_pe_reps}
		Assume that $\alpha$ is a smooth $\T$-action and that every irreducible unitary representation of $G$ that is of positive energy w.r.t.\ $\alpha$ is strongly-entire. Then there is an injective map $\widehat{G}_{\pos(\alpha)} \hookrightarrow \widehat{H}$, obtained by sending $\rho \in \widehat{G}_{\pos(\alpha)}$ to the irreducible unitary $H$-representation on $V(\rho)$. 
	\end{corollary}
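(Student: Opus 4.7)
\noindent
The plan is to assemble the statement from several earlier results. Given $\rho \in \widehat{G}_{\pos(\alpha)}$, I would first pass to the minimal positive extension $\rho_0$ of $\rho$ to $G^\sharp$ supplied by \Fref{prop: positive_extensions_of_repr}. Since $\alpha$ is a $\T$-action, part $(3)$ of that proposition guarantees that $\rho_0$ descends to a unitary representation of $G\rtimes_\alpha \T$ and that $\rho$ is ground state. Moreover, part $(2)$ gives $\rho_0(G^\sharp)^{\prime\prime} = \rho(G)^{\prime\prime}$, so $\rho_0$ inherits the irreducibility of $\rho$.

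\noindent
Because $\restr{\rho_0}{G} = \rho$ is strongly-entire by hypothesis and $\rho_0$ is ground state at $\bm{d}$, I would then invoke \Fref{thm: equiv_pe_hol_for_periodic_actions_and_entire_reps} to conclude that $V(\rho_0) = \H_{\rho_0}(0)$ and $\restr{\rho_0}{G} = \HolInd_H^G(\restr{\sigma}{H})$, where $\sigma$ denotes the unitary $H \times \T$-representation on $V(\rho_0)$. The identification $V(\rho) = V(\rho_0)$ follows from \Fref{lem: Vrho_G_smooth}, and in particular $V(\rho) \neq \{0\}$, since $\H_{\rho_0}(0)$ is cyclic for $\H_\rho \neq \{0\}$. \Fref{cor: pe_hol_ind_iso_commutants} then supplies a $\ast$-isomorphism $\B(\H_\rho)^G \cong \B(V(\rho))^H$, and irreducibility of $\rho$ forces both commutants to equal $\C\,\id$. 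Hence the unitary $H$-representation on $V(\rho)$ is irreducible and defines an element of $\widehat{H}$.

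\noindent
To see that the assignment $\rho \mapsto [V(\rho)]$ descends to unitary equivalence classes, observe that any $G$-equivariant unitary $\H_{\rho_1}\to \H_{\rho_2}$ sends $\H_{\rho_1}^{\infty, \n_-}$ to $\H_{\rho_2}^{\infty, \n_-}$, because these subspaces are characterized purely in terms of the $G$-action and the subspace $\n_-\subseteq \g_\C$; hence it restricts to an $H$-equivariant unitary $V(\rho_1)\to V(\rho_2)$. Finally, injectivity is immediate from \Fref{cor: pe_uniqueness}$(1)$ applied to the minimal positive extensions $\rho_{0,1}$ and $\rho_{0,2}$: if $V(\rho_1)\cong V(\rho_2)$ as unitary $H$-representations, then $\rho_1 = \restr{\rho_{0,1}}{G} \cong \restr{\rho_{0,2}}{G} = \rho_2$ in $\widehat{G}$. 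Since every ingredient is a direct invocation, I do not anticipate a genuine obstacle; the substantive work has already been absorbed into \Fref{thm: equiv_pe_hol_for_periodic_actions_and_entire_reps}, \Fref{cor: pe_hol_ind_iso_commutants} and \Fref{cor: pe_uniqueness}.
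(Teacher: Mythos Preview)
Your proposal is correct and matches the paper's approach exactly: the paper states the corollary as an immediate consequence of \Fref{prop: positive_extensions_of_repr}(3), \Fref{thm: equiv_pe_hol_for_periodic_actions_and_entire_reps}, \Fref{cor: pe_hol_ind_iso_commutants} and \Fref{cor: pe_uniqueness}, and you have simply spelled out how these pieces fit together, including the identification $V(\rho)=V(\rho_0)$ via \Fref{lem: Vrho_G_smooth} and the check that the map is well defined on equivalence classes.
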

	
	\begin{remark}
		Recall from \Fref{thm: hol_extensions_of_reps} that if $G$ is a finite-dimensional Lie group of type $R$, then every continuous unitary $G$-representation is in fact strongly-entire. \\
	\end{remark}

	\noindent
	It would be beneficial to obtain sufficient conditions for $V(\rho)\cap \H_\rho^{\omega_G}$ to be dense in $V(\rho)$. We state the following related open problem:
	\begin{problem}
		Assume there are $0$-neighborhoods $U \subseteq \g_\C$, $U_- \subseteq \n_-$, $U_0 \subseteq \h_\C$ and $U_+ \subseteq \n_+$ for which the map 
		$$U_+ \times U_0 \times U_- \to U, \qquad (\xi_+, \xi_0, \xi_-) \mapsto \xi_+ \ast \xi_0 \ast \xi_-$$
		is biholomorphic, where $\ast$ is defined by the BCH series. We write $\xi \mapsto (\xi_+, \xi_0, \xi_-)$ for its inverse. Let $\rho$ be a unitary representation of $G$ that is of positive energy. Set $V_\sigma := V(\rho)$, considered as a unitary $H$-representation. Assume that $V_\sigma$ is cyclic for $\rho$. Is it true that $V_\sigma^\omega \subseteq \H_\rho^{\omega_G}$? Taking $v \in V_\sigma^\omega$, the assumptions imply that the map $U \to \C, \; \xi \mapsto \langle v, \sigma(e^{\xi_0}) v\rangle$ is analytic on some $0$-neighborhood. If it can be shown to locally extend the map $\g \to \C, \;\xi \mapsto \langle v, \rho(e^\xi)v\rangle$ on some $0$-neighborhood in $\g$, then it would follow from \citep[Thm.\ 5.2]{Neeb_analytic_vectors} that $v \in \H_\rho^{\omega_G}$.
	\end{problem}
	

	\section{Examples}\label{sec: examples}
	\setcounter{subsection}{1}
	\setcounter{theorem}{0}	
	
	\begin{example}[Finite-dimensional Lie groups of type $R$]\label{ex: type_R}~\\
		Let $G$ be a connected finite-dimensional Lie group of type $R$ and let $\alpha$ be a $\T$-action on $G$. Let $H := (G^\alpha)_0$ be the connected subgroup of $\alpha$-fixed points. In view of \Fref{thm: hol_extensions_of_reps} and \Fref{thm: equiv_pe_hol_for_periodic_actions_and_entire_reps}, any continuous ground state representation $\rho$ of $G$ is holomorphically induced from $V(\rho)$. According to \Fref{cor: inclusion_of_pe_reps}, this defines an injection $\widehat{G}_{\pos(\alpha)} \hookrightarrow \widehat{H}$.
	\end{example}
	
	\begin{example}[Holomorphically induced, but not geometrically]~
		\begin{enumerate}
			\item Consider $G = \SL(2, \R)$ and let $\rho$ be any non-trivial continuous unitary representation. Trivially, we have $\rho = \HolInd_G^G(\rho)$. However, as $\rho$ admits no non-trivial strongly-entire vectors by \Fref{thm: hol_extensions_of_reps}, it is \textit{not} geometrically holomorphically induced from itself.
			
			\item For a slightly less trivial example, consider the group $G = K \times \SL(2, \R)$, where $K$ is a connected compact simple Lie group. Let $T \subseteq K$ be a maximal torus and set $\t := \bm{\mrm{L}}(T)$. Pick a regular element $H \in \t_{reg}$ and let $\Delta_+ := \set{\alpha \in \Delta \st -i\alpha(H) > 0 }$ be the corresponding system of positive roots, where $\Delta\subseteq i\t^\ast$ denotes the set of all roots of $\k$. Consider the $\T$-action on $G$ defined by $\alpha_t(k, x) = (e^{tH}ke^{-tH}, x)$. Let $(\rho, \H_\rho)$ be a continuous irreducible unitary representation of $G$. Then $\rho$ decomposes as $\H_\rho = \H_\nu \otimes \H_\sigma$ for some irreducible unitary $K$- and $\SL(2,\R)$-representations $(\nu, \H_\nu)$ and $(\sigma, \H_\sigma)$, respectively. Then $\rho$ is of positive energy w.r.t.\ $\alpha$ and $V(\rho) = \C_\lambda \otimes \H_\sigma$, where $\C_\lambda \subseteq \H_\nu$ is a lowest-weight subspace. Since $\H_\rho^\omega = \H_\nu \otimes \H_\sigma^\omega$, \Fref{thm: hol_induced_equiv_char} implies that $\rho$ is holomorphically induced from the $T \times \SL(2,\R)$-representation on $\C_\lambda \otimes \H_\sigma$. The latter admits no strongly-entire vectors by \Fref{thm: hol_extensions_of_reps}, so $\rho$ is not geometrically holomorphically induced from the $T \times \SL(2,\R)$-representation on $\C_\lambda \otimes \H_\sigma$.
		\end{enumerate}		
	\end{example}
	
	\begin{example}[Positive energy representations of Heisenberg groups]\label{ex: pe_repr_heis}~\\
		Let $V$ be a real Fr\'echet space equipped with a non-degenerate continuous skew-symmetric bilinear form $\omega$. Let $\beta : \T\to \Sp(V,\omega)$ be a homomorphism with smooth action $\T\times V \to V$. Then $\beta$ is equicontinuous by \citep[Lem.\ A.3]{Neeb_hol_reps}. Define $Dv := \restr{d\over dt}{t=0}\beta_t v$ and consider the closed subspaces 
		\begin{equation}\label{eq: eff_trivial_subspaces}
				V_0 := \ker D = \set{v \in V \st \beta_t v = v \quad \forall t \in \R}, \qquad V_{\eff} := \overline{\Span}\set{\beta_tv - v \st t \in \R, v \in V}.
		\end{equation}
		As $\beta_t^\ast \omega = \omega$ of all $t \in \R$, notice that $V_0$ and $V_{\eff}$ are symplectic complements, so $(V, \omega) \cong (V_0, \omega_0) \oplus (V_\eff, \omega_1) $, where $\omega_0$ and $\omega_1$ are the restrictions of $\omega$ to $V_0$ and $V_\eff$, respectively. Assume that $(V_\eff)_\C$ decomposes as $(V_\eff)_\C \cong L_+ \oplus L_-$ into the positive $(L_+)$ and negative $(L_-)$ Fourier modes of the $\T$-action $\beta$. Let $\mf{heis}(V, \omega)\rtimes_D \R \bm{d}$ be the Lie algebra of $\Heis(V, \omega)\rtimes_\beta \T$. By \Fref{thm: ground_state_hol_ind}, we know for any unitary representation $\rho$ of $\Heis(V, \omega) \rtimes_\beta \T$ which is analytically ground state at $\bm{d}$ that $\restr{\rho}{\Heis(V, \omega)}$ is holomorphically induced by some analytic unitary representation of $\Heis(V_0, \omega_0)$. \\
		
		\noindent
		Let us consider a concrete example. Assume that $\omega_1(v, Dv) > 0 $ for every nonzero $v \in V_\eff$. Let $\mc{J}_1$ be the complex structure on $V$ defined by $\mc{J}_1(v + w) := iv -iw$ for $v \in L_+$ and $w \in L_-$. Then $\mc{J}_1^\ast \omega_1 = \omega_1$ and $\omega_1(v, \mc{J}_1v)>0$ for every $v \in V_\eff$, so $\mc{J}_1$ defines a compatible positive polarization on $V_\eff$. If $\mc{J}_0$ is a compatible positive polarization on $V_0$, then $\mc{J} = \mc{J}_0 \oplus \mc{J}_1$ defines one on $V$. As in \Fref{ex: pe_reprs_of_Heisenberg_groups}, we equip the (now complex) vector space $V$ with the inner product $\langle v, w\rangle_{\mc{J}} := \omega(v, \mc{J}w) + i \omega(v,w)$, making $V$ into a complex pre-Hilbert space, on which $\beta_t$ acts unitarily for any $t \in \T$. Let $\H$ be its Hilbert space completion, and let $\H_0$ and $\H_{1}$ be the closures in $\H$ of $V_0$ and $V_\eff$, respectively. Consider the unitary representation $u : \T \to \U(\H)$ of $\T$ on $\H$ defined by $t \mapsto u_t$, where $u_t$ is the unitary operator on $\H$ extending $\beta_t$. Notice that as unitary $\T$-representations, we have $(V_\eff, \langle \--, \--\rangle_{\mc{J}_1})\cong(L_+, \langle \--, \--\rangle_{L_+})$, where $\langle v, w\rangle_{L_+} := 2i \omega(\overline{v}, w)$ for $v,w \in L_+$. The unitary $\T$-representation $u$ on $\H$ is therefore of positive energy. Let $\F(\H)$ be the Hilbert space completion of the symmetric algebra $\mrm{S}^\bullet(\H)$ w.r.t.\ the inner product \eqref{eq: ip_fock_space}, and let $\rho$ be the b-strongly-entire unitary representation of $\Heis(V, \omega)$ on $\F(\H)$ constructed in \Fref{ex: pe_reprs_of_Heisenberg_groups}. Similarly, we write $\rho_0$ and $\rho_1$ for the representations of $\Heis(V_0, \omega_0)$ and $\Heis(V_\eff, \omega_1)$ on $\F(\H_0)$ and $\F(\H_1)$, respectively. Letting $\T$ act on $\F(\H)$ according to the second quantization $\F(u)$ of $u$, we obtain an extension of $\rho$ to a smooth representation of $\Heis(V, \omega) \rtimes_\beta \T$ on $\F(\H)$, which we denote again by $\rho$. Explicitly, we have $\rho(g, t) = \rho(g)\F(u_t)$ for $(g,t) \in \Heis(V, \omega) \rtimes_\beta \T$. This extension is ground state w.r.t.\ $\beta$. We have $\F(\H) \cong \F(\H_0)\otimes \F(\H_1)$ and 
		$$V(\rho) = \F(\H_0) \otimes \Omega_1 \subseteq \F(\H),$$
		where $\Omega_1 \in \F(\H_1)$ is the vacuum vector. \Fref{thm: equiv_pe_hol_for_periodic_actions_and_entire_reps} implies that $\restr{\rho}{\Heis(V, \omega)}$ is holomorphically induced from the representation $\rho_0$ of $\Heis(V_0, \omega_0)$ on $\F(\H_0)$. Moreover, we have $\F(\H_0)^\infty \otimes \Omega_1 \subseteq \H_\rho^\infty$. Indeed, the vacuum vector $\Omega_1$ is smooth for $\Heis(V_\eff, \omega_1)$, so if $\psi \in \F(\H_0)^\infty$ is a smooth vector for $\Heis(V_0, \omega_0)$ then 
		$$(z,v) \mapsto \rho(z,v)\psi = z\rho_0(v_0)\psi_0 \otimes \rho_1(v_1)\Omega_1$$
		is a smooth map $\Heis(V, \omega) \to \F(\H)$. So provided that $V_0$ is a Banach space, it follows using \Fref{thm: comparing_two_notions_hol_ind} and \Fref{ex: banach_trivial_extension_entire} that $\restr{\rho}{\Heis(V, \omega)}$ is also geometrically holomorphically induced from $\rho_0$. 
	\end{example}
	
	\begin{example}[Metaplectic representation]\label{ex: metaplectic}~\\
		We continue in the notation and setting of \Fref{ex: pe_repr_heis}. Let $\H_\R$ be the real vector space underlying $\H$. The symplectic form $\omega$ on $V$ extends to $\H_\R$ by setting $\omega(v,w) := \Im\langle v, w\rangle_{\mc{J}}$ for $v,w \in \H_\R$. Define 
		$$ \B_\res(\H_\R) := \set{ A \in \B(\H_\R) \st [\mc{J}, A] \in \B_2(\H)},$$
		whose elements are \squotes{close} to being $\C$-linear. It is a real Banach algebra with norm $\|A\|_{\res} := \|A\| + \|[\mc{J}, A]\|_2$, where $\B_2(\H)$ denotes the space of Hilbert-Schmidt operators on $\H$. The restricted symplectic group is defined by 
		$$\Sp_\res(\H_\R, \omega) := \Sp(\H_\R, \omega)\cap \B_\res(\H_\R),$$
		equipped with the subspace topology. Being an algebraic subgroup of $\B_\res(\H_\R)^\times$, we obtain using \citep[Prop.\ IV.14]{Neeb_inf_dim_gps_reps_2004} that $\Sp_\res(\H_\R, \omega)$ is a Banach--Lie group modeled on the Banach--Lie algebra 
		$$\sp_\res(\H_\R, \omega) := \sp(\H_\R, \omega) \cap \B_\res(\H_\R).$$
		By \Fref{ex: pe_repr_heis}, there is an irreducible projective unitary representation $\overline{\rho}$ of the Abelian Banach--Lie group $(\H_\R, +)$ on the symmetric Fock space $\F(\H)$, which is well-known to extend to the semi-direct product $\HSp_\res(\H_\R, \omega) := \H_\R \rtimes \Sp_\res(\H_\R, \omega)$ \citep[Rem.\ 9.12]{neeb_semibounded_inv_cones}. We denote this extension again by $\overline{\rho}$. Let $\widetilde{\Sp}_\res(\H_\R, \omega)$ and $\widetilde{\HSp}_\res(\H_\R, \omega)$ be the central $\T$-extensions of $\Sp_\res(\H_\R, \omega)$ and $\HSp_\res(\H_\R, \omega)$, respectively, obtained by pulling back the central $\T$-extension $\U(\F(\H)) \to \P\U(\F(\H))$ along $\overline{\rho}$. Let 
		\[\rho : \widetilde{\HSp}_\res(H_\R, \omega) \to \U(\F(\H))\]
		be the corresponding lift of $\overline{\rho}$. It is proven in \citep[Thm.\ 9.3, Rem.\ 9.12]{neeb_semibounded_inv_cones} that both $\widetilde{\HSp}_\res(\H_\R, \omega)$ and $\widetilde{\Sp}_\res(\H_\R, \omega)$ are again Banach--Lie groups, and that the (cyclic) vacuum vector $\Omega \in \F(\H)$ is smooth for $\rho$, which implies that $\rho$ is a smooth representation of $\widetilde{\HSp}_\res(H_\R, \omega)$. Identify $\F(\H_0)$ as a subspace of $\F(\H)$ via $\F(\H_0) \hookrightarrow \F(\H), \; \psi_0 \mapsto \psi_0 \otimes \Omega_1$. We observe first that $\rho$ is in fact analytic, and that $\F(\H_0)$ contains a dense set of $\widetilde{\HSp}_\res(\H_\R, \omega)$-analytic vectors. To see this, notice using equation $\textrm{(35)}$ and the subsequent remarks in the proof of \citep[Thm.\ 9.3]{neeb_semibounded_inv_cones} that $\Omega$ is not just a smooth-, but even an analytic vector for the action of $\widetilde{\Sp}_\res(H_\R, \omega)$ on $\F(\H)$. We furthermore know from \Fref{ex: pe_repr_heis} that $\Omega$ is analytic for $\Heis(\H_\R, \omega)$ (and even b-strongly-entire), so the function
		\begin{align*}
			&\Heis(\H_\R, \omega) \times \widetilde{\Sp}_\res(\H_\R, \omega) \to \C,\\
			&(v, A) \mapsto \langle \Omega, \rho(v)\rho(A) \Omega\rangle = \langle \rho(v)^{-1}\Omega, \rho(A)\Omega\rangle
		\end{align*}
		is real-analytic. This implies using \citep[Thm.\ 5.2]{Neeb_analytic_vectors} that $\Omega$ is an analytic vector for the representation $\rho$ of $\widetilde{\HSp}_\res(\H_\R, \omega)$ on $\F(\H)$. As $\Omega$ is cyclic for the action $\rho_0$ of $\Heis((\H_0)_\R, \omega_0)$ on $\F(\H_0)$, it follows that the set of vectors in $\F(\H_0) \subseteq \F(\H)$ that are analytic for the action of $\widetilde{\HSp}_\res(\H_\R, \omega)$ is dense in $\F(\H_0)$. It also follows that $\rho$ is analytic, because $\Omega$ is cyclic for $\rho$. \\
		
		\noindent
		Now, suppose that $G$ is a connected regular BCH Fr\'echet--Lie group with Lie algebra $\g$, and that the homomorphism $\alpha : \T \to \Aut(G)$ defines a smooth $\T$-action on $G$. Assume that $\alpha$ satisfies the assumptions made in \Fref{sec: pe_and_hol_ind}. Let $H := (G^\alpha)_0$ be the connected subgroup of $\alpha$-fixed points in $G$. Consider the $\T$-actions on $\HSp_\res(\H_\R, \omega)$ and $\U(\F(\H))$ defined by $t \cdot (v, x) := (u_t v, u_t xu_t^{-1})$ and $t \cdot U := \F(u_t)U\F(u_t)^{-1}$ respectively, where $(v,x) \in \HSp_\res(\H_\R, \omega)$, $U \in \U(\F(\H))$ and $t \in \T$. This also equips the $\T$-invariant subgroup $\widetilde{\HSp}_\res(H_\R, \omega)\subseteq\HSp_\res(\H_\R, \omega) \times \U(\F(\H))$ with a $\T$-action. Assume that
		\[ \eta : G \to \widetilde{\HSp}_\res(\H_\R, \omega)\]
		is a continuous and $\T$-equivariant homomorphism. Then $\eta$ is automatically analytic by \citep[Thm.\ IV.1.18]{neeb_towards_lie}. Letting $\T$ act on $\F(\H)$ by $t \mapsto \F(u_t)$, notice that $\rho \circ \eta$ extends to a smooth unitary representation of $G \rtimes_\alpha \T$ which is of positive energy at $(0,1) \in \g \rtimes \R$. Moreover, it follows from the preceding that $\F(\H_0)$ contains a dense set of vectors that are analytic for the $G$-action $\rho \circ \eta$ on $\F(\H)$. Letting $\mc{K}\subseteq \F(\H)$ be the closed $(G \rtimes_\alpha \T)$-invariant linear subspace generated by $\F(\H_0)$, it follows that the unitary representation of $G \rtimes_\alpha \T$ on $\mc{K}$ is analytically ground-state at $(0,1) \in \g \rtimes \R$. According to \Fref{thm: ground_state_hol_ind}, it further follows that the $G$-representation $\rho \circ \eta$ on $\mc{K}$ is holomorphically induced from the $H$-representation on $\F(\H_0)$.
	\end{example}

	\begin{example}[Groups of jets]\label{ex: jets}~\\
		Let $K$ be a $1$-connected compact simple Lie group with Lie algebra $\k$. Let $V$ be a finite-dimensional real vector space. We consider the Lie group $J^n_0(V, K)$ of $n$-jets at $0 \in V$ of smooth maps $V \to K$. Let $\gamma : \R \to \GL(V)$ be a continuous representation of $\R$ on $V$ and let $\phi \in \k$. Assume that the $\R$-action $\widetilde{\alpha}_t(f)(x):= e^{t\phi}f(\gamma_{-t}(x))e^{-t\phi}$ on $C_c^\infty(V, K)$ factors through $\T := \R/\Z$. As $\gamma$ fixes the origin, $\widetilde{\alpha}$ descends to a smooth $\T$-action on $J^n_0(V, K)$, denoted $\alpha$. Let $D := \restr{d\over dt}{t=0}\dot{\alpha}_t$ be the corresponding derivation on the Lie algebra $J^n_0(V, \k)$. Let $G$ be a central $\T$-extension of $J^n_0(V, K) \rtimes_\alpha \T$, and let $\bm{d} \in \g := \bm{\mrm{L}}(G)$ cover $(0,1) \in J^n_0(V,\k) \rtimes_D \R$. As usual, we write $H := (G^\alpha)_0 \subseteq G$ for the connected Lie subgroup of $\alpha$-fixed points in $G$, whose Lie algebra is $\h = \ker D$. As $G \cong N \rtimes K$ for some nilpotent Lie group $N$, it follows from \Fref{prop: classification_type_R_LA} that $G$ is of type $R$. By \Fref{ex: type_R}, we thus obtain that any continuous unitary $G$-representation which is of positive energy w.r.t.\ $\alpha$ is holomorphically by some unitary $H$-representation. A classification of $\widehat{G}_{\pos(\alpha)}$ amounts to determining the holomorphically inducible elements in $\widehat{H}$. Unitary positive energy representations of groups of jets are studied in more detail in \citep{Milan_reps_jets}.\\
		
		\noindent
		To make the preceding concrete, suppose that $V = \R^2$, $n = 2k$ for some $k \in \N$, that $\gamma$ is the action of $\T$ on $\R^2$ by rotations and that $\phi=0$. Then $\h \cong \R\oplus_\omega (\R_k[x^2 + y^2]\otimes \k)$, where $\R_k[c]$ denotes the polynomial ring in $c$ truncated at the $k^{th}$ degree, and where $\omega$ is a $2$-cocycle on the Lie algebra $\R_k[x^2 + y^2]\otimes \k$ (which in this case actually must be a coboundary). Every continuous unitary representation $\rho$ of $G$ that is of positive energy w.r.t.\ $\alpha$ is holomorphically induced from the $H$-representation on $V(\rho)$. 
	\end{example}
	
	\begin{example}[Gauge groups]~\\
		Let $M$ be a compact manifold and let $P \to M$ be a principal bundle with structure group $K$, a simple compact Lie group with Lie algebra $\k$. Consider the group of gauge transformations $\Gau(P) = \Gamma(M, \Ad(P))$, where $\Ad(P) = P \times_\Ad K$ is the adjoint bundle. This group is a regular BCH Fr\'echet--Lie group with Lie algebra $\gau(P) = \Gamma(M, P \times_\Ad \k)$ \citep[Thm.\ IV.1.12]{neeb_towards_lie}. Suppose that $\gamma : \T \to \Aut(P)$ is a smooth $\T$-action on $P$ by automorphisms of $P$. Let $\eta : \T \to \Aut(\Ad(P))$ and $\overline{\gamma} : \T \to \Diff(M)$ denote the induced $\T$-actions on $\Ad(P)$ and $M$, respectively. Explicitly, $\eta$ is given by $\eta_t([p, k]) := [\gamma_t(p), k]$ for $p \in P, \, k \in K$ and $t \in \T$. Then $\T$ acts smoothly on $\Gau(P)$ by $\alpha_t(s)  := \eta_t \circ s \circ \overline{\gamma}_{-t}$ for $s \in \Gau(P)$ and $t \in \T$. The paper \citep{BasNeeb_PE_reps_I} studies projective unitary representations of $\Gau(P)$ which are smooth in the sense of admitting a dense set of smooth rays. According to \citep[Cor.\ 4.5, Thm.\ 7.3]{BasNeeb_ProjReps}, these correspond to smooth unitary representations of a central $\T$-extension of $\Gau(P)$. One of the main results of \citep{BasNeeb_ProjReps} is the full classification of smooth projective unitary of the identity component $\Gau(P)_0$ which are of positive energy w.r.t\ $\alpha$, provided that $M$ has no $\T$-fixed points for $\overline{\gamma}$ \citep[Thm.\ 8.10]{BasNeeb_PE_reps_I}. Let us consider a central $\T$-extension $G$ of the connected component $\Gau(P)_0$ of the identity. Suppose that $\alpha$ lifts to a smooth $\T$-action $\widetilde{\alpha}$ on $G$. In view of \citep[Prop.\ 8.6]{BasNeeb_PE_reps_I}, a consequence of the classification \citep[Thm.\ 8.10]{BasNeeb_PE_reps_I} is that every smooth unitary representation $\rho$ of $G$ which is of positive energy w.r.t.\ $\widetilde{\alpha}$ is holomorphically induced from the corresponding representation of $H := (G^{\widetilde{\alpha}})_0$ on $V(\rho)$. The more general case where $M$ is allowed to have $\T$-fixed points is not yet fully understood. One approach would be to determine, in specific cases, the irreducible unitary factor representations of $H$ that are holomorphically inducible to $G$, as an intermediate step towards the classification of the possibly larger class of all p.e.\ factor representations of $G$.
	\end{example}
	
	\begin{example}[Unitary groups of CIA's]~\label{ex: unitary_groups_cia}\\
		An interesting class of examples to which the theory of \Fref{sec: pe_and_hol_ind} applies can be obtained using so-called continuous inverse algebras (CIAs). Suppose that $\A$ is a unital complex Fr\'echet algebra that is a CIA, meaning that its group of units $\A^\times$ is open in $\A$ and that the inversion $a \mapsto a^{-1}$ is continuous $\A \to \A$. Let us suppose further that $\A$ carries a continuous conjugate-linear algebra involution $\A \to \A, \, a \mapsto a^\ast$. In this setting, $\A^\times$ is a complex BCH Fr\'echet--Lie group modeled on $\A$ \citep[Thm.\ 5.6]{Glockner_unit_gps_lie}. Assume that the Lie group $\A^\times$ is moreover regular. A sufficient condition for this is provided in \citep{GlocknerNeeb_unit_gp_regular}. The unitary subgroup
		$$ \U(\A) := \set{a \in \A^\times \st a^\ast = a^{-1}} $$
		is a real Lie subgroup of $\A^\times$, so that it is an embedded submanifold. It is modeled on the Lie algebra
		$$ \mf{u}(\A) := \set{a \in \A \st a^\ast = - a},$$
		equipped with the commutator bracket. To see this, let $U \subseteq \A$ be a $0$-neighborhood s.t.\ $\exp_\A$ maps $U$ diffeomorphically onto its image in $\A^\times$. We may assume that $U =-U$ and that $U^\ast = U$, by shrinking $U$ if necessary. By \citep[Cor.\ 4.11]{Glockner_unit_gps_lie} we know that $\exp_\A(a) = \sum_{n=0}^\infty{1\over n!}a^n$ for all $a \in \A$. Using that both $a\mapsto a^{-1}$ and $a \mapsto a^\ast$ are continuous, it follows that $\exp_\A(a)^\ast = \exp(a^\ast)$ and $\exp_\A(a)^{-1} = \exp_\A(-a)$ for all $a \in U$. This implies that $\exp_\A(U \cap \mf{u}(\A)) = \exp_\A(U) \cap \U(\A)$. As $\U(\A)$ is a closed subgroup of the locally exponential Lie group $\A^\times$, it follows from \citep[Thm.\ IV.3.3]{neeb_towards_lie} that $\U(\A) \subseteq \A^\times$ is a locally exponential Lie subgroup. It is therefore a regular BCH Fr\'echet--Lie group. Notice further that $\mf{u}(\A)_\C = (\A, [\--, \--])$ as complex Lie algebras. \\
		
		\noindent
		Suppose that $\alpha : \R \to \Aut(\A)$ is a homomorphism that has a smooth action $\R \times \A \to \A$ and that has polynomial growth. Assume further that the splitting condition
		$$ \A = \A_- \oplus \A_0 \oplus \A_+ $$
		is satisfied. Setting $G := \U(\A)_0$ and $H := \U(\A_0)_0 = (G^\alpha)_0$, all assumptions of both \Fref{sec: hol_induced} and \Fref{sec: pe_and_hol_ind} are satisfied.\\
		
		\noindent
		Typically, such triples $(\A, \R, \alpha)$ can be obtained as the set of smooth points of a $C^\ast$-dynamical system $(\mc{B}, G, \gamma)$, where $\mc{B}$ is a unital $C^\ast$-algebra, $G$ is a Banach--Lie group and $\gamma : G \to \Aut(\mc{B})$ is a strongly continuous $G$-action on $\mc{B}$ by automorphisms. By \citep[Def.\ 4.1, Thm.\ 6.2]{Neeb_diffvectors}, we know in this setting that the set of smooth points $\A := \mc{B}^\infty$ is a $G$-invariant and $\ast$-closed subalgebra which naturally carries a Fr\'echet topology. Moreover, $\A$ is a CIA and the $G$-action $\gamma : G \times \A \to \A$ is smooth w.r.t.\ this topology. If $G$ is finite-dimensional, then this topology coincides with the one obtained from the embedding $\A \hookrightarrow C^\infty(G, \mc{B})$, where $C^\infty(G, \mc{B})$ carries the smooth compact-open topology \citep[Prop.\ 4.6]{Neeb_diffvectors}. If $\iota : \R \hookrightarrow G$ is a one-parameter subgroup of $G$ for which the corresponding $\R$-action $\alpha := \gamma \circ \iota$ on $\A$ has polynomial growth and satisfies the splitting condition $\A= \A_- \oplus \A_0 \oplus \A_+$, then the triple $(\A, \R, \alpha)$ satisfies all the above assumptions. \\

		\noindent
		As a concrete example, let $\A_\theta := C^\infty_\theta(\T^2)$ be the smooth non-commutative $2$-torus with parameter $\theta \in [0,{1\over 2}]$:
		$$\A_\theta := \set{ \sum_{n,m \in \Z} a_{n,m}u^nv^m \st \sum_{n,m \in \Z}(1 + |n|+ |m|)^k |a_{n,m}| < \infty \text{ for all } k\in \N }, $$
		where $u$ and $v$ are unitary operators satisfying $uv = e^{i2\pi\theta}vu$, and where $\A_\theta$ is equipped with the seminorms $p_k(a) := \sum_{n,m \in \Z}(1 + |n| + |m|)^k |a_{n,m}|$ for $k \in \N_{\geq 0}$. This is a unital Fr\'echet CIA carrying a continuous involution, obtained as the smooth points of the natural $\T^2$-action on the \squotes{continuous} non-commutative $2$-torus $C_\theta(\T^2)$ with parameter $\theta$. It is moreover shown in \citep{Neeb_Glockner_Book} that the Lie group $\A_\theta^\times$ is regular. Consider the smooth and equicontinuous $\T$-action $\alpha$ on $C^\infty_\theta(\T^2)$ satisfying $\alpha_z(u^nv^m) := z^{m}u^nv^m$ for all $n,m \in \Z$ and $z \in \T$. Define $G := \U(\A_\theta)_0$. Then for any unitary representation $\rho$ of $G \rtimes_\alpha \T$ that is analytically ground state w.r.t.\ $\alpha$, we obtain from \Fref{thm: ground_state_hol_ind} that $\restr{\rho}{G}$ is holomorphically induced from the corresponding unitary representation of the connected Abelian group $H :=(\U(\A_\theta)^\alpha)_0 \cong C^\infty(\T, \T)_0$ on $\H_\rho(0)$. In particular, if $\rho(G)^{\prime \prime}$ is a factor, then as $H$ is Abelian, we obtain with \Fref{cor: pe_hol_ind_iso_commutants} that $\restr{\rho}{G}$ is holomorphically induced from a character of $H$. By \Fref{cor: pe_hol_ind_iso_commutants} this implies that $\restr{\rho}{G}$ is irreducible.
	\end{example}
	
	\appendix
	\section{Representations on reproducing kernel Hilbert spaces}\label{sec: reproducing_hspaces}
	
	\setcounter{subsection}{1}
	\setcounter{theorem}{0}
	
	\noindent
	In the following we summarize relevant properties concerning reproducing kernel Hilbert spaces in the context of unitary group representations. Let $\H$ and $V$ be Hilbert spaces and let $G$ be a group. We write $V^G$ or $\Map(G, V)$ for the space of functions $G \to V$ and $V^{(G)}$ for the space of finitely-supported functions $G \to V$.
	
	\begin{definition}
		Suppose that $\H \subseteq V^G$. Then $\H$ is said to \textit{have continuous evaluation maps} if for every $x \in G$ the linear map $\mc{E}_x : \H \to V, \psi\mapsto \psi(x)$ is bounded.
	\end{definition}
	
	\begin{definition}\label{def: pos_kernel}
		A function $Q : G \times G \to \B(V)$ is said to be \textit{positive definite} if 
		$$ \| v\|^2_{Q} := \sum_{x,y \in \supp(v)}\langle v_x, Q(x,y)v_y\rangle_{V} \geq 0, \qquad \forall v \in V^{(G)}. $$
	\end{definition}
	
	\begin{theorem}[{\citep[Thm.\ I.1.4]{Neeb_book_hol_conv}}]\label{thm: properties_repr_kernel}~\\
		Let $Q : G \times G \to \B(V)$ be a function. The following assertions are equivalent:
		\begin{enumerate}
			\item $Q$ is positive definite			
			\item There exists a Hilbert space $\H_Q\subseteq V^G$ with continuous point-evaluations $\mc{E}_x~:~\H_Q \to V$ such that $Q(x,y) = \mc{E}_x \mc{E}_y^\ast$ for all $(x,y) \in G \times G$.
		\end{enumerate}
		In this case $\H_Q$ is unique up to unitary equivalence and $\set{ \mc{E}_x^\ast v \st x \in G, v \in V }$ is total in $\H_Q$.
	\end{theorem}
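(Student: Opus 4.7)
The proof I would give is the standard one for reproducing kernel Hilbert spaces, which I will sketch in four steps.

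The direction $(2) \Rightarrow (1)$ is a direct computation. Given the Hilbert space $\H_Q \subseteq V^G$ with continuous point-evaluations and $Q(x,y) = \mc{E}_x \mc{E}_y^\ast$, I would verify that for any $v \in V^{(G)}$,
\[
 \| v\|^2_Q = \sum_{x,y \in \supp(v)} \langle v_x, \mc{E}_x \mc{E}_y^\ast v_y\rangle_V = \sum_{x,y} \langle \mc{E}_x^\ast v_x, \mc{E}_y^\ast v_y\rangle_{\H_Q} = \bigg\| \sum_{x \in \supp(v)} \mc{E}_x^\ast v_x\bigg\|^2_{\H_Q} \geq 0.
\]

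For the converse $(1) \Rightarrow (2)$, I would carry out a GNS-type construction. Consider the linear space $\H_0 := \Span\{k_x^v : x \in G, v \in V\}$ of formal linear combinations, equipped with the semidefinite sesquilinear form $\langle k_x^v, k_y^w\rangle := \langle v, Q(x,y)w\rangle_V$; equivalently, $\H_0$ can be identified with $V^{(G)}$ with the form $\langle \--, \--\rangle_Q$ from \Fref{def: pos_kernel}. Positive definiteness of $Q$ ensures this form is positive semidefinite, so one may quotient by its null space and complete to obtain a Hilbert space $\widetilde{\H}_Q$. To realize $\widetilde{\H}_Q$ as a subspace of $V^G$, I would define the map $\iota : \widetilde{\H}_Q \to V^G$ by $\iota(\psi)(x) := \langle k_x^v, \psi \rangle_{\widetilde{\H}_Q}$ evaluated suitably — more carefully, by the Cauchy--Schwarz inequality applied to the form on $\H_0$, the linear functional $\widetilde{\H}_Q \to V, \; \psi \mapsto \psi(x)$ extends continuously, and this yields an embedding $\iota : \widetilde{\H}_Q \hookrightarrow V^G$ whose image is $\H_Q$. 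The evaluation map $\mc{E}_x : \H_Q \to V$ is then continuous by construction, and one checks $\mc{E}_x^\ast v = k_x^v$ from the identity $\langle \mc{E}_x^\ast v, k_y^w\rangle = \langle v, k_y^w(x)\rangle_V = \langle v, Q(x,y)w\rangle_V$. Then $\mc{E}_x \mc{E}_y^\ast v = k_y^v(x) = Q(x,y)v$.

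For totality of $\{\mc{E}_x^\ast v : x \in G, v\in V\}$ in $\H_Q$, suppose $\psi \in \H_Q$ is orthogonal to this set. Then for every $x \in G$ and $v \in V$,
\[
 \langle v, \psi(x)\rangle_V = \langle v, \mc{E}_x \psi\rangle_V = \langle \mc{E}_x^\ast v, \psi\rangle_{\H_Q} = 0,
\]
so $\psi(x) = 0$ for all $x \in G$, hence $\psi = 0$ as a function, hence as an element of $\H_Q \subseteq V^G$. For uniqueness, if $\H_Q^1, \H_Q^2 \subseteq V^G$ are two such realizations with evaluation maps $\mc{E}_x^{(1)}, \mc{E}_x^{(2)}$, then the map $(\mc{E}_x^{(1)})^\ast v \mapsto (\mc{E}_x^{(2)})^\ast v$ preserves inner products since both sides evaluate to $\langle v, Q(x,y)w\rangle_V$, and it extends uniquely to a unitary isomorphism $\H_Q^1 \to \H_Q^2$ by totality. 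This unitary is the identity on functions because its composition with evaluation at $x$ agrees on both sides.

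The main subtle point is the identification of the abstract Hilbert space $\widetilde{\H}_Q$ with a subspace of $V^G$ having continuous point evaluations; everything else reduces to checking the reproducing property and bookkeeping with the inner product.
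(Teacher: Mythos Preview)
The paper does not supply its own proof of this theorem; it is quoted from \citep[Thm.\ I.1.4]{Neeb_book_hol_conv}. Your sketch is the standard reproducing-kernel argument and is correct. It also matches the construction the paper alludes to in the proof of \Fref{prop: homog_repr_h_space_unitary_repr}, where $\H_Q$ is described as the completion of $V_\sigma^{(G)}/\mc{N}_Q$ with $\mc{N}_Q = \{f : \|f\|_Q = 0\}$, the maps $q_x : V_\sigma \to \H_Q,\ v \mapsto [\delta_x(v)]$ satisfy $\mc{E}_x = q_x^\ast$, and the embedding into $V_\sigma^G$ is $\psi \mapsto (x \mapsto \mc{E}_x\psi)$---exactly your $k_x^v = \mc{E}_x^\ast v$ and $\iota$.
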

	
	\begin{definition}
		A function $Q : G \times G \to \B(V)$ is said to be a \textit{reproducing kernel} for the Hilbert space $\H$ if $Q$ is positive definite and $\H \cong \H_Q$.
	\end{definition}
	
	\begin{proposition}\label{prop: homog_repr_h_space_unitary_repr}
		Let $G$ be a topological group and $H \subseteq G$ be a closed subgroup. Let $(\sigma, V_\sigma)$ be a strongly continuous unitary $H$-representation. Let $Q \in C(G \times G, \B(V_\sigma))^{H \times H}$, so $Q(xh_1,yh_2) = \sigma(h_1)^{-1}Q(x,y)\sigma(h_2)$ for all $x_1, x_2 \in G$ and $h_1, h_2 \in H$. Assume that $Q$ is positive definite. 
		\begin{enumerate}
			\item The left-regular action of $G$ on $V_\sigma^{(G)}$ induces a unitary $G$-action $\pi$ on $\H_Q$ if and only if $Q$ is $G$-invariant. In this case, there is a function $F : G \to \B(V_\sigma)$ such that $Q(x,y) = F(x^{-1}y)$.
			\item Assume that $Q$ is $G$-invariant. There exists a $G$-equivariant linear map $\H_Q \hookrightarrow \Map(G, V_\sigma)^H$ with continuous point-evaluations $\mc{E}_x$ for $x \in G$. These satisfy the equivariance condition $\mc{E}_{x}\pi(g) = \mc{E}_{g^{-1}x}$ for all $x,y \in G$.
			\item Assume that $Q:G\times G \to \B(V_\sigma)$ is $G$-invariant and strongly continuous. Then the unitary $G$-representation $\H_Q$ is strongly continuous. 
			\item Suppose that $(\rho, \H_\rho)$ is a unitary $G$-representation and that there is a $G$-equivariant injective linear map $\Phi: \H_\rho \hookrightarrow \Map(G, V_\sigma)^H$ having continuous point evaluations $\mc{E}_x := \ev_x \circ \Phi$ for $x \in G$. Then the corresponding kernel $Q$ is $G$-invariant, and $\H_\rho \cong \H_Q$ as unitary $G$-representations.
		\end{enumerate}		
	\end{proposition}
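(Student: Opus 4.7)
The plan is to use \Fref{thm: properties_repr_kernel} throughout, exploiting both the totality in $\H_Q$ of $D := \set{ \mc{E}_x^\ast v \st x \in G, \; v \in V_\sigma }$ and the defining identity $\langle \mc{E}_x^\ast v, \mc{E}_y^\ast w\rangle_{\H_Q} = \langle v, Q(x,y)w\rangle_{V_\sigma}$. Each of the four assertions will then be reduced to a calculation on $D$ and extended by density or by the uniqueness clause of \Fref{thm: properties_repr_kernel}.

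For assertion $(1)$, I would define the candidate action on $D$ by $\pi(g)\mc{E}_x^\ast v := \mc{E}_{gx}^\ast v$, which is the image of left-regular translation under $V_\sigma^{(G)} \to \H_Q$. The reproducing identity gives $\langle \pi(g)\mc{E}_x^\ast v, \pi(g)\mc{E}_y^\ast w\rangle = \langle v, Q(gx, gy) w\rangle$, so $\pi(g)$ is isometric on $D$ (and hence extends to a unitary on $\H_Q$) precisely when $Q(gx, gy) = Q(x,y)$ for all $g,x,y$. Setting $F(g) := Q(e, g)$ then yields $Q(x,y) = F(x^{-1}y)$. For assertion $(2)$, the $H \times H$-equivariance of $Q$ combined with totality of $D$ forces $\mc{E}_{xh} = \sigma(h)^{-1}\mc{E}_x$, so every $\psi \in \H_Q \subseteq \Map(G,V_\sigma)$ is automatically $H$-equivariant. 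A direct calculation on $D$ using $Q(x, gy) = Q(g^{-1}x, y)$ then verifies $\mc{E}_x \pi(g) \mc{E}_y^\ast v = \mc{E}_{g^{-1}x}\mc{E}_y^\ast v$, whence $\mc{E}_x \pi(g) = \mc{E}_{g^{-1}x}$ by density.

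For assertion $(3)$, strong continuity of $Q$ gives
\[
  \|\mc{E}_{g_1y}^\ast v - \mc{E}_{g_2y}^\ast v\|^2 = \langle v, [Q(g_1y, g_1y) - Q(g_1y, g_2y) - Q(g_2y, g_1y) + Q(g_2y, g_2y)]v\rangle \to 0
\]
as $g_1 \to g_2$, so $g \mapsto \pi(g)\mc{E}_y^\ast v$ is continuous into $\H_Q$ for each $y \in G$ and $v \in V_\sigma$. Since $\pi$ acts by uniformly bounded (in fact unitary) operators and $D$ is total, a standard $3\epsilon$-argument upgrades this continuity on $D$ to strong continuity of $\pi$ on all of $\H_Q$.

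Finally, for assertion $(4)$, I would define $Q(x,y) := \mc{E}_x \mc{E}_y^\ast \in \B(V_\sigma)$, which is automatically positive definite. That $\Phi(\H_\rho) \subseteq \Map(G, V_\sigma)^H$ gives $\mc{E}_{xh} = \sigma(h)^{-1}\mc{E}_x$ and hence the $H \times H$-equivariance of $Q$, while the $G$-equivariance of $\Phi$ (with respect to the left-regular action on $\Map(G, V_\sigma)^H$) translates to $\mc{E}_x \rho(g) = \mc{E}_{g^{-1}x}$, which in turn yields $Q(gx, gy) = \mc{E}_x \rho(g^{-1})\rho(g)\mc{E}_y^\ast = Q(x,y)$. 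The only nontrivial step is identifying $\Phi(\H_\rho)$ with $\H_Q$ as $G$-representations: equipping $\Phi(\H_\rho)$ with the pushforward inner product makes it a Hilbert space of $V_\sigma$-valued functions on $G$ with continuous point evaluations $\mc{E}_x$ and reproducing kernel $Q$, so the uniqueness clause of \Fref{thm: properties_repr_kernel} identifies it with $\H_Q$. The overall argument involves no deep obstacle; the main care lies in bookkeeping the equivariance conventions and, in $(4)$, checking that the pushforward inner product on $\Phi(\H_\rho)$ really agrees with the one on $\H_Q$, which again reduces to the reproducing identity on $D$.
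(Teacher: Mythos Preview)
Your proposal is correct and follows essentially the same route as the paper: both arguments work on the total set $D = \{\mc{E}_x^\ast v\}$ via the reproducing identity $\langle \mc{E}_x^\ast v, \mc{E}_y^\ast w\rangle = \langle v, Q(x,y)w\rangle$, and your treatments of (1), (2), and (4) match the paper almost verbatim. The only cosmetic difference is in (3): you prove strong continuity on $D$ by expanding $\|\pi(g_1)\mc{E}_y^\ast v - \pi(g_2)\mc{E}_y^\ast v\|^2$ and then running a $3\epsilon$ argument, whereas the paper invokes the (equivalent, for unitary representations) criterion that continuity of the diagonal matrix coefficients $g \mapsto \langle \psi, \pi(g)\psi\rangle$ on a total set suffices.
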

	\begin{proof}
		Let $l_g$ denote the left $G$-action on itself by left-multiplication. Recall that $\H_Q = \overline{V_\sigma^{(G)}/\mc{N}_Q}^{\langle \--, \--\rangle_Q}$, where $\mc{N}_Q := \set{f \in V_\sigma^{(G)} \st \|f\|_Q = 0}$. For any $x \in G$ we have a map $\delta_x : V_\sigma \hookrightarrow V_\sigma^{(G)}$ defined by considering elements of $V_\sigma$ as functions on $G$ with support $\{x\}$. Let $q_x : V_\sigma \to \H_Q, v \mapsto [\delta_x(v)]$ be its composition with the quotient map $V_\sigma^{(G)} \to \H_Q$. We then have $\mc{E}_x = q_x^\ast$ (cf.\ \citep[Thm.\ I.1.4]{Neeb_book_hol_conv} for more details). The embedding $\H_Q \hookrightarrow V_\sigma^G$ is defined by $f\mapsto f_\psi$, where $f_\psi(x) = \mc{E}_x(\psi)$.
		\begin{enumerate}
			\item For $g \in G$ and $f \in V_\sigma^{(G)}$, we write $g.f:= f \circ l_g^{-1}$ for the left-regular action of $G$ on $V_\sigma^{(G)}$. Let $x,y \in G$. Take $v,w \in V_\sigma$. Then $g.\delta_x(v) = \delta_{gx}(v)$ and $g.\delta_y(w) = \delta_{gy}(w)$ have support on $\{gx\}$ and $\{gy\}$, respectively. Thus $\langle g.\delta_x(v), g.\delta_y(w)\rangle_Q = \langle v, Q(gx, gy)w\rangle$ whereas $\langle q_x(v), q_y(w)\rangle_Q = \langle v, Q(x,y)w\rangle$. The first assertion follows. If $Q$ is $G$-invariant, then $F(x) := Q(e,x)$ satisfies $F(x^{-1}y) = Q(x,y)$.
			\item Let $x \in G$ and $h \in H$. From $Q(xh,y) = \sigma(h)^{-1}Q(x,y)$ it follows that $\mc{E}_{xh}\mc{E}_y^\ast v = \sigma(h)^{-1}\mc{E}_x \mc{E}_y^\ast v$ for any $y \in G$ and $v \in V_\sigma$. As $\{\mc{E}_y^\ast v \st y \in G, v \in V_\sigma\}$ is total in $\H_Q$ by \Fref{thm: properties_repr_kernel}, it follows that $\mc{E}_{xh} = \sigma(h)^{-1}\mc{E}_x$. Thus $f_\psi \in \Map(G, V_\sigma)^H$ for any $\psi \in \H_Q$. We show that $\psi \mapsto f_\psi$ is $G$-equivariant. We have $\pi(g)\mc{E}_x^\ast v = \pi(g)q_x(v) = q_{gx}(v) = \mc{E}_{gx}^\ast(v)$ for every $x,g \in G$ and $v \in V_\sigma$. Hence $\pi(g)\mc{E}_x^\ast = \mc{E}_{gx}^\ast$ and $\mc{E}_x\pi(g) = \mc{E}_{g^{-1}x}$ for every $x,g \in G$. Thus for $\psi \in \H_Q$ we obtain $f_\psi(g^{-1}x) = \mc{E}_{g^{-1}x}\psi = \mc{E}_x\pi(g)\psi = f_{\pi(g)\psi}(x)$, so $\psi \mapsto f_\psi$ is $G$-equivariant.
			
			\item As $G$ acts unitarily on $\H_Q$, it suffices to show that $G \to \C\; g \mapsto \langle \psi, \pi(g)\psi\rangle_Q$ is continuous for any $\psi$ in some total subspace. Consider $\psi = \mc{E}_x^\ast v$ for arbitrary $x \in G$ and $v \in V_\sigma$. Such vectors form a total set in $\H_Q$ by \Fref{thm: properties_repr_kernel}. For $g \in G$, we have
			\begin{equation}\label{eq: apx_comp_cty}
				\langle \psi, \pi(g)\psi\rangle_Q = \langle v, \mc{E}_x \pi(g)\mc{E}_x^\ast v\rangle_V = \langle v, \mc{E}_{x}\mc{E}_{gx}^\ast v\rangle_V = \langle v, Q(x,gx)v\rangle_V.
			\end{equation}
			As $Q : G \times G \to \B(V_\sigma)$ is continuous w.r.t.\ the strong topology, the map $g \mapsto \langle \psi, \pi(g)\psi\rangle_Q$ is continuous.
			
			\item As $\Phi$ is $G$-equivariant, we have $\mc{E}_x \rho(g) = \mc{E}_{g^{-1}x}$ for every $x,g \in G$. As $\rho$ is unitary this implies that the corresponding kernel $Q(x,y) := \mc{E}_x\mc{E}_y^\ast$ is $G$-invariant. This kernel is also positive definite by \Fref{thm: properties_repr_kernel}, so $\H_Q$ is a unitary $G$-representation by the first item. We already know from \Fref{thm: properties_repr_kernel} that $\H_Q \cong \H_\rho$ as Hilbert spaces. The unitary isomorphism $U : \H_Q \to \H_\rho$ is on the dense subspace $V_\sigma^{(G)}/\mc{N}_Q$ given by $U q(f) := \sum_{x \in \supp(f)} \mc{E}_x^\ast f(x)$, where $q : V_\sigma^{(G)} \to \H_Q$ denotes the quotient map. Write $\pi$ for the unitary $G$-action on $\H_Q$. Using $q_x = \mc{E}_x^\ast$, $q_{gx} = \pi(g)q_x$ and $\rho(g)\mc{E}_x^\ast  = \mc{E}_{gx}^\ast$, we obtain that
			$$ U \pi(g)q_x(v) = Uq_{gx}(v) = \mc{E}_{gx}^\ast v = \rho(g)\mc{E}_x^\ast v = \rho(g)Uq_x(v), \qquad \forall v \in V_\sigma.\qedhere$$
		\end{enumerate}
	\end{proof}

%

\end{document}